\theoremstyle{plain}
\newtheorem{thm}{Theorem}[section]
\newtheorem{lem}[thm]{Lemma}
\newtheorem{prop}[thm]{Proposition}
\def\@rst #1 #2other{#1}
\newcommand\MR[1]{\relax\ifhmode\unskip\spacefactor3000 \space\fi
  \MRhref{\expandafter\@rst #1 other}{#1}}
\newcommand{\MRhref}[2]{\href{http://www.ams.org/mathscinet-getitem?mr=#1}{MR#2}}
\theoremstyle{definition}
\newtheorem{defn}[thm]{Definition}
\numberwithin{equation}{section}
\newcommand{\dsb}{\begin{adjustwidth}{2.5em}{0pt}
\begin{footnotesize}}
\newcommand{\dse}{\end{footnotesize}
\end{adjustwidth}}
\newcommand{\ssb}{\begin{adjustwidth}{2.5em}{0pt}}
\newcommand{\sse}{\end{adjustwidth}}
\newcommand{\aryb}{\begin{eqnarray*}}
\newcommand{\arye}{\end{eqnarray*}}
\def\alb#1\ale{\begin{align*}#1\end{align*}}
\def\allb#1\alle{\begin{align}#1\end{align}}
\newcommand{\eqb}{\begin{equation}}
\newcommand{\eqe}{\end{equation}}
\newcommand{\eqbn}{\begin{equation*}}
\newcommand{\eqen}{\end{equation*}}
\newcommand{\BB}{\mathbbm}
\newcommand{\ol}{\overline}
\newcommand{\ul}{\underline}
\newcommand{\op}{\operatorname}
\newcommand{\frk}{\mathfrak}
\newcommand{\eqD}{\overset{d}{=}}
\newcommand{\ep}{\epsilon}
\newcommand{\rta}{\rightarrow}
\newcommand{\wt}{\widetilde}
\newcommand{\wh}{\widehat} 
\newcommand{\mcl}{\mathcal}
\newcommand{\bdy}{\partial}
\newcommand{\rng}{\mathring}
\newcommand{\srta}{\shortrightarrow}
\newcommand{\SLE}{{\mathrm {SLE}}}
\newcommand{\CLE}{{\mathrm {CLE}}}
\newcommand{\bead}{{\operatorname{b}}}
\newcommand{\nat}{{\operatorname{q}}}
\let\originalleft\left
\let\originalright\right
\renewcommand{\left}{\mathopen{}\mathclose\bgroup\originalleft}
\renewcommand{\right}{\aftergroup\egroup\originalright}
\title{Characterizations of SLE$_{\kappa}$ for $\kappa \in (4,8)$ on\\ Liouville quantum gravity} 
\date{  }
\author{
\begin{tabular}{c} Ewain Gwynne\footnote{\url{ewain@uchicago.edu}}\\[-5pt]\small University of Chicago \end{tabular}
\begin{tabular}{c} Jason Miller\footnote{\url{jpmiller@statslab.cam.ac.uk}}\\[-5pt]\small University of Cambridge \end{tabular}
}
\begin{document}

\maketitle

\begin{abstract}
We prove that SLE$_\kappa$ for $\kappa \in (4,8)$ on an independent $\gamma=4/\sqrt{\kappa}$-Liouville quantum gravity (LQG) surface is uniquely characterized by the form of its LQG boundary length process and the form of the conditional law of the unexplored quantum surface given the explored curve-decorated quantum surface up to each time $t$.  We prove variants of this characterization for both whole-plane space-filling SLE$_\kappa$ on an infinite-volume LQG surface and for chordal SLE$_\kappa$ on a finite-volume LQG surface with boundary.  Using the equivalence of Brownian and $\sqrt{8/3}$-LQG surfaces, we deduce that SLE$_6$ on the Brownian disk is uniquely characterized by the form of its boundary length process and that the complementary connected components of the curve up to each time $t$ are themselves conditionally independent Brownian disks given this boundary length process.
 
The results of this paper are used in another paper by the same authors to show that the scaling limit of percolation on random quadrangulations is given by SLE$_6$ on $\sqrt{8/3}$-LQG with respect to the Gromov-Hausdorff-Prokhorov-uniform topology, the natural analog of the Gromov-Hausdorff topology for curve-decorated metric measure spaces.
\end{abstract}

\noindent\textbf{Keywords:} Schramm-Loewner evolution, Liouville quantum gravity, Brownian disk, Gaussian free field, Markovian characterization, Markov property. 
\medskip

\noindent\textbf{AMS Subject Classification:} 60J67, 60G57 

\tableofcontents

\section{Introduction}
\label{sec-intro}
 
\subsection{Overview}
\label{sec-overview}

The Schramm-Loewner evolution ($\SLE$) was introduced by Schramm \cite{schramm0} to describe the scaling limits of the interfaces which arise in discrete two-dimensional models, such as loop-erased random walk, critical percolation, and the uniform spanning tree.  The form of $\SLE$ was derived by Schramm using what is now called its \emph{conformal Markov property}.  This says that if~$\eta$ is an~$\SLE_\kappa$ curve in $\BB H$ from $0$ to~$\infty$ then for each time~$t$ the conditional law of $\eta|_{[t,\infty)}$ given $\eta|_{[0,t]}$ is given by the conformal image of an $\SLE_\kappa$ in $\BB H$ from~$0$ to~$\infty$.  To show that a curve is an $\SLE$ one need only show that this property is satisfied.  Beyond the initial derivation of $\SLE$, this perspective has been very powerful for the purpose of establishing properties of $\SLE$.  Moreover, results of this type have been shown to characterize other $\SLE$-related processes.  For example, it was shown by Sheffield and Werner \cite{shef-werner-cle} that the so-called simple \emph{conformal loop ensembles} ($\CLE$), the loop form of $\SLE_\kappa$ for $\kappa \in (8/3,4]$, are similarly characterized by a variant of the conformal Markov property.

The purpose of the present work is to establish characterizations of various types of $\SLE$ in the spirit of the conformal Markov property but in the context of \emph{Liouville quantum gravity} (LQG). In the special case of SLE$_6$ on $\sqrt{8/3}$-Liouville quantum gravity, we can re-phrase our characterization theorems in terms of the metric space structure of $\sqrt{8/3}$-LQG, as constructed in~\cite{lqg-tbm1,lqg-tbm2,lqg-tbm3}, which gives us characterizations of SLE$_6$ curves on \emph{Brownian surfaces} (generalizations of the Brownian map~\cite{legall-uniqueness,miermont-brownian-map}) which depend only on the metric space structure. Our characterizations will be given in terms of a set of conditions which one would naturally expect any subsequential limit of certain statistical physics models on a random planar map to satisfy.  

Our results play an important role in our proof that the scaling limit of (critical) percolation on random planar maps is given by $\SLE_6$ on $\sqrt{8/3}$-LQG.  This is carried out in the companion paper~\cite{gwynne-miller-perc} (building also on \cite{gwynne-miller-simple-quad,gwynne-miller-sle6}) in which we show that the subsequential limits of percolation on random quadrangulations exist and satisfy the hypotheses of one of our characterization theorems. 

The paper~\cite{gwynne-miller-perc} uses exclusively discrete arguments (based on properties of percolation and random planar maps), so can be read without any knowledge of SLE or LQG. This paper, on the other hand, uses only continuum (SLE/LQG) arguments. Hence, one can think of this paper as providing the continuum input needed to show the convergence of percolation on random planar maps toward SLE$_6$ on $\sqrt{8/3}$-LQG. However, in this paper we also establish characterizations for other variants of SLE$_\kappa$ on $\gamma$-LQG surfaces for $\gamma \in (\sqrt2 , 2)$ and $\kappa  = 16/\gamma^2 \in (4,8)$. We expect that these results may eventually have applications to proving other scaling limit results for other types of statistical mechanics models on random planar maps, e.g., the critical Fortuin-Kasteleyn model~\cite{shef-burger}.

In the remainder of this section, we give a short review of LQG and its relationship to SLE (Section~\ref{sec-lqg-intro}). We then provide informal statements of our main results in the setting of whole-plane space-filling SLE$_{\kappa'}$ (Section~\ref{sec-wpsf-char-intro}), ordinary chordal SLE$_{\kappa'}$ (Section~\ref{sec-bead-char-intro}), and SLE$_6$ on a Brownian surface (Section~\ref{sec-bead-mchar-intro}). We will then give an outline of the rest of the content of the paper in Section~\ref{sec-outline}.

\subsection{Liouville quantum gravity surfaces}
\label{sec-lqg-intro}

Formally, an LQG surface for $\gamma \in (0,2)$ is a random Riemann surface parameterized by a domain $D\subset \BB C$ whose Riemannian metric tensor is $e^{\gamma h(z)} \, dx\otimes dy$, where $h$ is some variant of the Gaussian free field (GFF) on $D$ and $dx\otimes dy$ is the Euclidean metric tensor. For our purposes, we can always assume that $h$ is a \emph{GFF plus a continuous function}, meaning that there is a (possibly random and depending on $h$) continuous function $f$ on $D$ such that $h-f$ is the zero-boundary GFF on $D$ or the whole-plane GFF, as appropriate (see~\cite{shef-gff,ss-contour,ig1,ig4,berestycki-lqg-notes} for more on the GFF).

The above definition does not make rigorous sense since $h$ is a distribution, not a function, so does not take values at points. However, Duplantier and Sheffield~\cite{shef-kpz} showed that one can make rigorous sense of the volume form $\mu_h = ``e^{\gamma h(z)} \, dz"$ associated with an LQG surface as a random measure on $D$ via a regularization procedure. 
This construction is a special case of a more general theory of measures of this form called \emph{Gaussian multiplicative chaos}, which was initiated by Kahane~\cite{kahane}; see~\cite{rhodes-vargas-review,aru-gmc-survey,berestycki-gmt-elementary} for expository works on this theory. 

One can similarly define a random length measure~$\nu_h$ associated with an LQG surface, which is defined on certain curves in $D$ including $\bdy D$ (if $h$ locally looks like a free-boundary GFF near $\bdy D$) and independent Schramm-Loewner evolution~\cite{schramm0} ($\SLE_\kappa$)-type curves for $\kappa = \gamma^2$~\cite{shef-kpz,shef-zipper}. More precisely, one can define $\nu_h$ in the case when $\bdy D$ is piecewise linear segment using a direct regularization procedure, then extend to the boundary of a general domain by conformal covariance (see~\cite[Section 6]{shef-kpz}). One can define $\nu_h$ on an SLE$_\kappa$-type curve using the SLE / GFF coupling results from~\cite{shef-kpz}, or directly as a Gaussian multiplicative chaos measure w.r.t.\ the Minkowski content measure on the curve~\cite{benoist-lqg-chaos}.

The measures $\mu_h$ and $\nu_h$ satisfy a conformal covariance formula~\cite[Proposition 2.1]{shef-kpz}: if $f :   D \rta \wt D$ is a conformal map and  
\eqb \label{eqn-lqg-coord}
\wt h = h\circ f^{-1} + Q\log |(f^{-1})'| ,\quad \op{for} \: Q = \frac{2}{\gamma} + \frac{\gamma}{2} 
\eqe
then $f$ pushes forward $\mu_{ h}$ to $\mu_{\wt h}$ and $\nu_{ h}$ to $\nu_{\wt h}$ (in fact, this holds a.s.\ for all choices of conformal map $f$ simultaneously~\cite{shef-wang-lqg-coord}).  

This leads us to define an LQG surface as an equivalence class of pairs $(D, h)$ consisting of a domain $D \subset \BB C$ and a distribution $h$ on $D$, with two such pairs declared to be equivalent if the distributions are related by a conformal map as in~\eqref{eqn-lqg-coord} which extends to a homeomorphism $D\cup \bdy D \rta \wt D \cup \bdy \wt D$.
In other words, a quantum surface is an equivalence class of measure spaces modulo conformal maps. 
 
The above definition does not require that $D$ be simply connected or even connected. For example, one can make sense of quantum surfaces consisting of a string of beads, each of which is itself a quantum surface homeomorphic to the unit disk. Two such surfaces are illustrated in the left panel of Figure~\ref{fig-thm-illustration}.
 
One can also define quantum surfaces with $k\in\BB N$ marked points as an equivalence class of $k+2$-tuples $(D,h,x_1,\dots,x_k)$ with $x_1,\dots,x_k \in D\cup\bdy D$ with two such $k+2$-tuples declared to be equivalent if there is a conformal map $f $ such that the corresponding fields are related as in~\eqref{eqn-lqg-coord} and $f$ takes the marked points for one $k+2$-tuple to the corresponding marked points of the other. 

If $(D,h,x_1,\dots,x_k)$ is a particular equivalence class representative, we say that $h$ is an \emph{embedding} of the quantum surface into $(D,x_1,\dots,x_k)$. We also define a \emph{sub-surface} of a quantum surface $(D,h)$ to be a surface of the form $(D' , h|_{D'})$ for $D'\subset D$. 

One can also make sense of an LQG surface as a random \emph{metric space}.  This was first done in the special case when $\gamma=\sqrt{8/3}$ in the series of works~\cite{sphere-constructions,tbm-characterization,lqg-tbm1,lqg-tbm2,lqg-tbm3}, which is the only value of $\gamma$ for which we will consider the metric space structure in this paper. See Section~\ref{sec-bead-mchar-intro} for more details. The more recent works~\cite{dddf-lfpp,gm-uniqueness} showed how to define an LQG surface as a metric space for general $\gamma\in (0,2)$, using completely different methods.

LQG surfaces arise as scaling limits of various random planar map models. LQG for $\gamma =\sqrt{8/3}$ corresponds to the scaling limit of uniform random planar maps, and other values of $\gamma$ arise by sampling a planar map with probability proportional to the partition function of an appropriate $\gamma$-dependent statistical mechanics model on the map. For example, weighting by the number of spanning trees corresponds to $\gamma=\sqrt 2$, weighting by the partition function of the Ising model corresponds to $\gamma=\sqrt 3$, and weighting by the number of bipolar orientations corresponds to $\gamma=\sqrt{4/3}$.  So far, scaling limit results for random planar maps toward LQG have been obtained in the Gromov-Hausdorff topology for $\gamma=\sqrt{8/3}$~\cite{legall-uniqueness,miermont-brownian-map} (together with \cite{sphere-constructions,tbm-characterization,lqg-tbm1,lqg-tbm2,lqg-tbm3}) and in the so-called peanosphere sense, which relies on the main theorem of~\cite{wedges}, for all values of $\gamma \in (0,2)$~\cite{shef-burger,kmsw-bipolar,gkmw-burger,lsw-schnyder-wood,bhs-site-perc}. 

Several particular $\gamma$-LQG surfaces, which arise as the scaling limits of random planar maps with various topologies, are defined in~\cite{wedges}. We review the definitions of these surfaces in Section~\ref{sec-wedge}.

It is natural to consider a $\gamma$-LQG surface decorated by an independent $\SLE_\kappa$-type curve for $\kappa = \gamma^2 \in (0,4)$ or $\kappa' = 16/\gamma^2   > 4$.\footnote{Here and throughout this paper we use the imaginary geometry~\cite{ig1,ig2,ig3,ig4} convention of writing $\kappa$ for the SLE parameter when $\kappa \in (0,4)$ and $\kappa' = 16/\kappa$ for the dual parameter.}
One reason why this is natural is that such curve-decorated quantum surfaces describe the scaling limits of statistical mechanics models on random planar maps in the $\gamma$-LQG universality class. Indeed, scaling limits in the peanosphere sense are really statements about the convergence of random planar maps decorated by a space-filling curve toward LQG decorated by space-filling $\SLE$ (as defined in \cite{ig4}). See also~\cite{gwynne-miller-saw} for a scaling limit result for random quadrangulations decorated by a self-avoiding walk toward $\SLE_{8/3}$-decorated $\sqrt{8/3}$-LQG in a variant of the Gromov-Hausdorff topology. 

In the continuum, there are a number of theorems which describe $\gamma$-LQG surfaces decorated by independent $\SLE_\kappa$- or $\SLE_{\kappa'}$-type curves~\cite{shef-zipper,wedges,sphere-constructions}, some of which we discuss just below. 

\subsection{Main results}
\label{sec-results}

\subsubsection{Characterization of space-filling SLE$_{\kappa'}$}
\label{sec-wpsf-char-intro}

\begin{figure}[ht!]
	\begin{center}
		\includegraphics[scale=1]{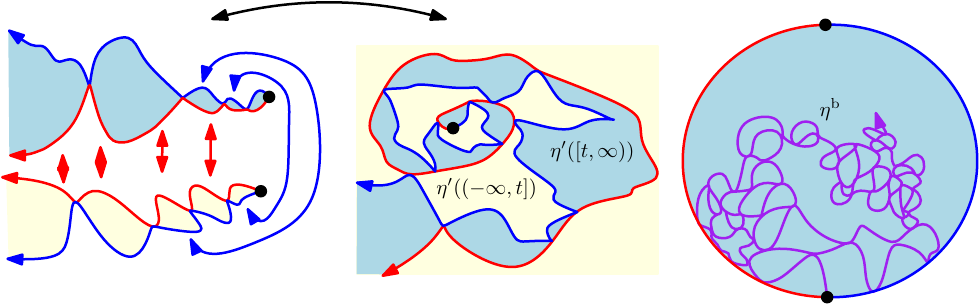}
	\end{center}
	\vspace{-0.025\textheight}
	\caption[SLE curves on LQG surfaces characterized in Chapter~\ref{chap-char}]{\label{fig-thm-illustration} \textbf{Left and middle:} A whole-plane space-filling $\SLE_{\kappa'}$ for $\kappa' \in  (4 , 8)$ on an independent $\gamma$-quantum cone, which is the object characterized in Theorem~\ref{thm-wpsf-char-intro}. The past $\eta'((-\infty, t])$ is shown in yellow and the future $\eta'([t,\infty))$ is shown in blue. Restricting the field $h$ to each of these sets gives two independent beaded quantum surfaces, each of which has the law of a $\frac{3\gamma}{2}$-quantum wedge, which can be conformally welded together according to quantum length along their boundaries to obtain the $\gamma$-quantum cone $(\BB C , h , 0, \infty)$.  In the special case when $\gamma = \sqrt{8/3}$, the $\sqrt{8/3}$-quantum cone admits a metric measure space structure under which it is equivalent to the Brownian plane. In this case, Theorem~\ref{thm-wpsf-char-intro} can be re-phrased as a characterization theorem for whole-plane space-filling $\SLE_6$ on the Brownian plane; see Theorem~\ref{thm-wpsf-mchar}.
	\textbf{Right:} A chordal $\SLE_{\kappa'}$ on an independent bead of a $\frac{3\gamma}{2}$-quantum wedge (i.e., a quantum surface whose law is the same as the conditional law of one of the connected components of the quantum surface parameterized by $\eta'([t,\infty))$ given its quantum area and left/right quantum boundary lengths). 
	This is the setting of our other quantum surface characterization theorem, Theorem~\ref{thm-bead-char-intro}. 
	When $\gamma=\sqrt{8/3}$, a single bead of a $\frac{3\gamma}{2}$-quantum wedge is the same as the quantum disk, which in turn is the same as the Brownian disk when equipped with its $\sqrt{8/3}$-LQG metric and area measure. This gives rise to a characterization of chordal $\SLE_6$ on the Brownian disk; see Theorem~\ref{thm-bead-mchar-intro}. 
	}
\end{figure}

One of the most important relationships between SLE and LQG is the \emph{peanosphere} or \emph{mating of trees} construction of~\cite{wedges}, which we now briefly describe (see Section~\ref{sec-peano-prelim} for a more detailed review). Suppose $(\BB C , h , 0, \infty)$ is a $\gamma$-quantum cone, a particular type of $\gamma$-LQG surface with two marked points which describes the local behavior of any $\gamma$-LQG surface near a point sampled from the $\gamma$-LQG area measure. Let $\eta'$ be a whole-plane space-filling $\SLE_{\kappa'}$ curve from $\infty$ to $\infty$ for $\kappa' = 16/\gamma^2$, independent from $h$. In the case when $\kappa'  \geq 8$, $\eta'$ is just a two-sided variant of chordal $\SLE_{\kappa'}$. In the case when $\kappa' \in (4,8)$ (so ordinary $\SLE_{\kappa'}$ is not space-filling), $\eta'$ is obtained from a two-sided variant of ordinary whole-plane $\SLE_{\kappa'}$ by iteratively filling in the bubbles which it surrounds by $\SLE_{\kappa'}$-type curves. See Section~\ref{sec-wpsf} for a review of the construction and basic properties of space-filling SLE$_{\kappa'}$.

We can parameterize $\eta'$ by $\gamma$-quantum mass with respect to $h$, so that $\mu_h(\eta'([s,t])) = t-s$ for each $s < t$ and $\eta'(0) = 0$. For $t\geq 0$, let $L_t$ (resp.\ $R_t$) be the net change in the $\nu_h$-length of the left (resp.\ right) outer boundary of $\eta'((-\infty,t])$ relative to time 0. Then by~\cite[Theorem~1.9]{wedges}, there is a constant $\alpha = \alpha(\gamma) >0$ such that $Z_t := (L_t, R_t)$ is a pair of correlated Brownian motions with 
\eqb \label{eqn-bm-cov}
\op{Var} L_t = \op{Var} R_t = \alpha|t| \quad\op{and}\quad \op{Cov}(L_t,R_t) = -\alpha \cos\left( \frac{\pi \gamma^2}{4} \right) ,\quad \forall t\in\BB R .
\eqe 
In other words, $\eta'$ is an embedding into $\BB C$ of the space-filling curve on an infinite-volume \emph{peanosphere}, a random curve-decorated topological measure space obtained by gluing together a pair of correlated Brownian motions (see Section~\ref{sec-peanosphere} and Figure~\ref{fig-peano}).

It is also shown in~\cite{wedges} that $\eta'$ possesses a quantum analog of the conformal Markov property. 
To describe this property, we define for each $t\in\BB R$ the past and future curve-decorated quantum surfaces 
\eqb \label{eqn-past-future-surfaces}
\left( \eta'((-\infty,t]) , h|_{\eta'((-\infty,t])} , \eta'|_{(-\infty,t]} \right) \quad \op{and} \quad
\left( \eta'([t,\infty)) , h|_{\eta'([t,\infty))} , \eta'|_{[t,\infty)} \right) .
\eqe 
By~\cite[Theorem~1.9]{wedges}, these two curve-decorated quantum surfaces are independent. Each of the underlying quantum surfaces has the law of a $\frac{3\gamma}{2}$-quantum wedge (equivalently, a weight-$(2-\gamma^2/2)$ quantum wedge)---a quantum surface consisting of a Poissonian string of beads, each of which is itself a finite-volume doubly marked quantum surface homeomorphic to the unit disk $\BB D$ (see~\cite[Definition~4.15]{wedges} or Section~\ref{sec-wedge} for a more detailed description). Moreover, the curves on these quantum surfaces are concatenations of chordal space-filling SLE$_{\kappa'}$ curves, one in each of the beads. Figure~\ref{fig-thm-illustration} illustrates these statements.

It is shown in~\cite[Theorem~1.11]{wedges} that $Z$ a.s.\ determines $(h,\eta')$, modulo scaling and rotation, i.e., there is a (non-explicit) deterministic functional which a.s.\ takes in the Brownian motion $Z$ and outputs the pair $(h,\eta')$, up to rotation and scaling. This theorem does \emph{not} rule out the possibility that there is a different space-filling curve $\wt\eta'$, which has a different law from $\eta'$ and/or is not independent from $h$, with the property that the corresponding left/right boundary length process evolves as a correlated two-dimensional Brownian motion. One of the main results of this paper says that there are no such curves $\wt\eta'$ which also satisfy a version of the above quantum conformal Markov property. In other words, the embedding of the infinite-volume peanosphere into $\BB C$ is canonical in a stronger sense than~\cite[Theorem 1.11]{wedges}. The following is an informal statement of this result.  

\begin{thm}[Whole-plane space-filling SLE$_{\kappa'}$ characterization, informal version] \label{thm-wpsf-char-intro}
Let $\kappa' \in (4,8)$ and $\gamma = 4/\sqrt{\kappa'} \in (\sqrt 2 , 2)$. 
Suppose that $(\wt h, \wt \eta'    )$ is a given coupling where $\wt h$ is an embedding into $(\BB C , 0 , \infty)$ of a $\gamma$-quantum cone and $\wt\eta':\BB R\rta \BB C$ is a random continuous curve (not assumed to be independent from $\wt h$) parameterized by $\gamma$-quantum mass with respect to $\wt h$. Suppose $\wt\eta'$ satisfies the following properties.
\begin{enumerate}
\item (Markov property) For each $t\in\BB R$, the curve-decorated quantum surfaces obtained by restricting $\wt h$ to $\wt\eta'((-\infty,t])$ and to $\wt\eta'([t,\infty))$, resp., and $\wt\eta'$ to $(-\infty,t]$ and $[t,\infty)$, resp.\ (i.e., defined as in~\eqref{eqn-past-future-surfaces} with $(\wt h,\wt\eta')$ in place of $(h,\eta')$) are independent, and the latter quantum surface has the law of a $\frac{3\gamma}{2}$-quantum wedge. \label{item-wpsf-char-wedge-intro}
\item (Left/right boundary length process) The left/right $\gamma$-quantum boundary length process of $\wt\eta'$ with respect to $\wt h$, defined in the same manner as $Z$ above, is a correlated two-dimensional Brownian motion with variances and covariances as in~\eqref{eqn-bm-cov}.   \label{item-wpsf-char-homeo-intro}
\end{enumerate} 
Then $\wt\eta'$ is a whole-plane space-filling SLE$_{\kappa'}$ sampled independently from $\wt h$ then parameterized by $\gamma$-quantum mass with respect to $\wt h$. 
\end{thm}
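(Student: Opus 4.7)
My approach is to reduce the whole-plane characterization to the chordal characterization (Theorem~\ref{thm-bead-char-intro}) applied bead-by-bead on the future $\frac{3\gamma}{2}$-quantum wedge, and then appeal to the mating-of-trees identifications of~\cite[Theorems~1.9 and~1.11]{wedges}. The overall goal is to show that $(\wt h, \wt\eta')$ has the same law as the pair $(h,\eta')$ where $\eta'$ is a whole-plane space-filling $\SLE_{\kappa'}$ sampled independently of the $\gamma$-quantum cone $h$, then parameterized by quantum area.

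First I would apply the Markov hypothesis at time $t=0$ to decompose $(\wt h,\wt\eta')$ into two independent curve-decorated quantum surfaces. The future is a $\frac{3\gamma}{2}$-quantum wedge $\mcl W^+$, which is a.s.\ a Poissonian string of beads, each a finite-volume doubly-marked disk. For each bead $B$ of $\mcl W^+$, let $[\sigma_B,\tau_B]$ be the time interval during which $\wt\eta'$ fills $B$; the restricted curve $\wt\eta'|_{[\sigma_B,\tau_B]}$ is a space-filling curve in $B$ from one marked point to the other. The plan is to verify the hypotheses of the chordal characterization Theorem~\ref{thm-bead-char-intro} inside $B$: the chordal Markov property inside $B$ follows by restricting the whole-plane Markov property to times $t\in[\sigma_B,\tau_B]$ (using that the wedge structure of the future is preserved and that a single bead of a $\frac{3\gamma}{2}$-quantum wedge, conditioned on its area and boundary lengths, has a canonical law); and the correct Brownian-excursion form of the boundary length process inside $B$ follows from hypothesis~\eqref{item-wpsf-char-homeo-intro} together with the observation that $[\sigma_B,\tau_B]$ is a maximal excursion interval for the appropriate component of $\wt Z$ above its running infimum.

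Applying Theorem~\ref{thm-bead-char-intro} then yields that, for each bead $B$, the curve $\wt\eta'|_{[\sigma_B,\tau_B]}$ is a chordal space-filling $\SLE_{\kappa'}$ conditionally independent of $\wt h|_B$ given $B$'s boundary data. The beads themselves are jointly distributed as the Poissonian family of beads of a $\frac{3\gamma}{2}$-quantum wedge, and the restricted curves on distinct beads are conditionally independent by the independence of $\wt Z$ on disjoint excursion intervals together with the Poissonian structure of the wedge. Hence $(\mcl W^+, \wt\eta'|_{[0,\infty)})$ has the law of a $\frac{3\gamma}{2}$-quantum wedge decorated by an independent concatenation of chordal space-filling $\SLE_{\kappa'}$'s on its beads, which is exactly the law of the future of $(h,\eta')$. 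Running the same argument on the past (either by applying the Markov property at times $t_n\to -\infty$ or by an explicit time-reversal symmetry of the construction), the past has the analogous law and is independent of the future. Finally, the coupling of the two wedges along their common interface is determined by the continuity of $\wt Z$ across $t=0$, so by~\cite[Theorem~1.9]{wedges} the conformal welding produces exactly the joint law of a $\gamma$-quantum cone decorated by whole-plane space-filling $\SLE_{\kappa'}$; alternatively, one can invoke~\cite[Theorem~1.11]{wedges} to recover the pair $(\wt h,\wt\eta')$ up to rotation and scaling from $\wt Z$ and then use the normalization of the embedding of the quantum cone at $0$ and $\infty$ to eliminate this ambiguity.

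The main obstacle is the chordal characterization Theorem~\ref{thm-bead-char-intro} itself, which must be proved directly (likely using SLE/LQG coupling techniques in the disk, e.g., the quantum zipper of~\cite{shef-zipper} combined with the mating of trees on the disk); given that result, the whole-plane case reduces to it cleanly. A secondary technical point is verifying that the bead decomposition of $\mcl W^+$ is measurably and consistently read off from the excursion theory of $\wt Z$ simultaneously with the bead decomposition read off from $\wt\eta'$—that is, that the two natural ways of indexing beads (by excursions of a coordinate of $\wt Z$ above its running infimum and by connected components of $\BB C \setminus \wt\eta'([0,\infty))$ of a given orientation) agree. This compatibility is already implicit in the mating-of-trees picture from~\cite{wedges}, so it should be extracted from there rather than reproved.
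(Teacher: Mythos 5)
Your proposal inverts the paper's logical order, and this inversion hides the genuine difficulty rather than resolving it. The paper proves the whole-plane characterization (Theorem~\ref{thm-wpsf-char}) first, via the curve-swapping argument of Sections~\ref{sec-disk-law}--\ref{sec-swapping}, and then deduces the chordal characterization (Theorem~\ref{thm-bead-char}) in Section~\ref{sec-chordal} by embedding a single bead into the whole-plane setting. You propose the reverse: assume the chordal characterization and derive the whole-plane one bead-by-bead. But you explicitly defer the proof of Theorem~\ref{thm-bead-char-intro} to ``SLE/LQG coupling techniques in the disk'' without giving an argument, and this is precisely where the content of the paper lives. As the paper explains in Section~\ref{sec-bead-char-intro}, the chordal characterization is a weak form of conformal removability for SLE$_{\kappa'}$ ($\kappa'\in(4,8)$), which is not known; a ``direct'' proof would face the same removability obstacle that the paper's curve-swapping argument (amortizing distortion over small curve segments using the KPZ diameter estimate of Proposition~\ref{prop-diam-sum}) is designed to circumvent. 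So you have not supplied a proof---you have reduced the statement to a result that is at least as hard and left it unproved.

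Even granting the chordal characterization, the reduction as sketched has gaps. Theorem~\ref{thm-bead-char-intro} characterizes the \emph{non}-space-filling chordal SLE$_{\kappa'}$ on a bead, not the space-filling curve; you would first need to pass from $\wt\eta'|_{[\sigma_B,\tau_B]}$ to the curve obtained by skipping bubbles (the analog of $\eta_{a,b}$) and verify hypothesis~\ref{item-bead-char-ind-intro} of Theorem~\ref{thm-bead-char-intro} for it. But that hypothesis requires knowing that the bubbles cut out by the non-space-filling curve are quantum disks, and establishing this from the whole-plane Markov hypothesis is itself nontrivial (this is the content of Propositions~\ref{prop-sle-disk} and~\ref{prop-general-disk}, which use stopping-time and backward-martingale arguments). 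You would also then need a separate argument to identify the conditional law of the space-filling loops inside the bubbles, which Theorem~\ref{thm-bead-char-intro} does not address. None of these steps are elementary, and the proposal neither supplies them nor flags them as issues beyond a brief parenthetical about ``compatibility of indexings.''
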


See Theorem~\ref{thm-wpsf-char} for a more precise and slightly stronger version of Theorem~\ref{thm-wpsf-char-intro}. 

Condition~\ref{item-wpsf-char-wedge-intro} is a quantum version of the conformal Markov property for the pair $(\wt h , \wt\eta')$, which is analogous to the conformal Markov property of ordinary $\SLE_{\kappa'}$.  Unlike in the case of ordinary $\SLE_{\kappa'}$, however, we do \emph{not} assume that the law of the curve-decorated quantum surfaces $(  \wt\eta'([t,\infty) ) , \wt h|_{\wt\eta'([t,\infty) ) }  ,  \wt\eta'|_{[t,\infty)}   )$ is stationary in $t$. We also emphasize that we only assume an independence statement for curve-decorated quantum surfaces, i.e.\ equivalence classes of triples consisting of a domain, field, and curve modulo conformal maps. In particular, $(\wt\eta'([t,\infty)) , \wt h|_{\wt\eta'([t,\infty))}  ,  \wt\eta'|_{[t,\infty)})$ does not determine the particular embedding of the curve $\wt\eta'|_{[t,\infty)}$ into $\BB C$.

The second part of condition~\ref{item-wpsf-char-homeo-intro} enables us to define the $\gamma$-quantum length measure with respect to $\wt h$ on $\bdy\wt\eta'([t,\infty))$ simultaneously for all $t\in\BB R$, so that condition~\ref{item-wpsf-char-homeo-intro} makes sense. This is because any arc of $\bdy\wt\eta'([t,\infty))$ which is at positive distance from $\wt\eta'(t)$ is contained in $\bdy \wt\eta'([s,\infty))$ for some rational $s\geq t$, so we can define the length measure on this arc in terms of the length measure on $\bdy \wt\eta'([s,\infty))$. This definition does not depend on the particular choice of rational $s\geq t$ by the last part of condition~\ref{item-wpsf-char-homeo-intro}. 

Since the left/right boundary length process determines when and where the curve hits itself, condition~\ref{item-wpsf-char-homeo-intro} implies that $\wt\eta'$ has the same topology as a space-filling SLE$_{\kappa'}$ curve parameterized by $\gamma$-quantum mass with respect to an independent $\gamma$-quantum cone, i.e., there is a homeomorphism $\BB C\rta \BB C$ which takes $\wt\eta'$ to such a curve.

\subsubsection{Characterization of chordal SLE$_{\kappa'}$}
\label{sec-bead-char-intro}

Using Theorem~\ref{thm-wpsf-char-intro}, we can deduce a similar characterization theorem for ordinary (non-space filling) chordal SLE$_{\kappa'}$ on a certain finite-volume LQG surface with boundary and two marked points (which correspond to the initial and terminal points of the SLE). The particular type of surface on which chordal SLE$_{\kappa'}$ has nice properties is a single bead of a $\frac{3\gamma}{2}$-quantum wedge, i.e., one of the connected components of the beaded quantum surfaces in the left panel of Figure~\ref{fig-thm-illustration}. A bead of a $\frac{3\gamma}{2}$-quantum wedge can be represented by $(\BB H , h^\bead , 0,\infty)$, where $h^\bead$ is a certain GFF-type distribution on $\BB H$ (the $\bead$ stands for ``bead"). One typically conditions on either the left/right boundary lengths, i.e., the $\nu_{h^\bead}$-lengths of $(-\infty,0]$ and $[0,\infty)$ or on these left/right boundary lengths and the area $\mu_{h^\bead}(\BB H)$ of the surface. 
The definition of a bead of a $\frac{3\gamma}{2}$-quantum surface first appeared in~\cite[Definition 4.15]{wedges}, and is reviewed in Section~\ref{sec-wedge}. However, one does not need to know the precise definition to understand Theorem~\ref{thm-bead-char-intro} or its proof. 

There is another natural type of finite-volume LQG surface with boundary called the \emph{quantum disk}, which is defined in~\cite[Section~4.5]{wedges} and which arises as the scaling limit of appropriate random planar maps with boundary. As above, one can consider a quantum disk with specified boundary length or area and boundary length. In the special case when $\gamma =\sqrt{8/3}$, a bead of a $\frac{3\gamma}{2}$-quantum wedge is the same as a quantum disk with two marked points on its boundary. See Section~\ref{sec-wedge} for a review of the definition of a quantum disk. 

Suppose now that $(\BB H,h^\bead,0,\infty)$ is a bead of a $\frac{3\gamma}{2}$-quantum wedge with specified left/right boundary lengths or specified area and left/right boundary lengths and $\eta^\bead$ is an independent chordal SLE$_{\kappa'}$ from 0 to $\infty$ in $\BB H$.
Since $\eta^\bead$ is not a space-filling curve, one cannot parameterize $\eta^\bead$ by the $\gamma$-quantum mass like we did in the case of space-filling SLE$_{\kappa'}$. One option is to parameterize $\eta^\bead$ by the $\mu_{h^\bead}$-mass of the region it disconnects from $\infty$.

\begin{defn} \label{def-chordal-parameterization}
Suppose $X$ is a topological space equipped with a measure $\mu$, $x\in X$, and $\eta : [0,T] \rta X$ is a curve with $\eta(T) = x$. We say that $\eta$ is \emph{parameterized by the $\mu$-mass it disconnects from $x$} if the following is true. For $t \in [0,T]$, let $U_t$ be the connected component of $X\setminus \eta([0,t])$ containing $x$ and let $K_t = X\setminus U_t$ be the hull generated by $\eta([0,t])$. Then for each $t\in [0,T]$, 
\eqb  \label{eqn-chordal-parameterization}
\eta(t) = \eta\left( \inf\left\{ s\in [0,T] : \mu(K_s) \geq t\right\} \right) .
\eqe
\end{defn}

Note that whenever a curve parameterized by the $\mu$-mass it disconnects from $x$ cuts off some region $U$ from $x$, the curve remains constant on a time interval of length $\mu(U)$ immediately following the disconnection time. One has the following analogue of Theorem~\ref{thm-wpsf-char-intro} for chordal SLE$_{\kappa'}$. 

\begin{thm}[Ordinary SLE$_{\kappa'}$ characterization, informal version] \label{thm-bead-char-intro}
Let $\kappa' \in (4,8)$ and $\gamma = 4/\sqrt{\kappa'} \in (\sqrt 2 , 2)$. 
Suppose $(\wt h^\bead , \wt \eta^\bead)$ is a coupling where $\wt h^\bead$ is an embedding into  $(\BB H,0,\infty)$ of a single bead of a $\frac{3\gamma}{2}$ quantum wedge with specified left/right boundary lengths or specified area and left/right boundary lengths and $\wt\eta^\bead$ is a curve from 0 to $\infty$ in $\BB H$ (not necessarily independent from $\wt h^\bead$) parameterized by the $\mu_{\wt h^\bead}$-mass which it disconnects from $\infty$. 
Assume that the following hypotheses are satisfied.
\begin{enumerate}
\item (Laws of complementary connected components) Suppose $t>0$ and condition on the $\mu_{\wt h^\bead}$-mass and the $\nu_{\wt h^\bead}$-length of each of the connected components of $\BB H\setminus \wt\eta^\bead([0,t])$. Under this conditioning, the quantum surfaces obtained by restricting $\wt h^\bead$ to these connected components are conditionally independent. The surfaces corresponding to bounded connected components are quantum disks and the surface corresponding to the unbounded connected component is a bead of a $\frac{3\gamma}{2}$-quantum wedge. \label{item-bead-char-ind-intro}
\item (Topology and consistency) There is a pair $(h^\bead , \eta^\bead)$ such that $h^\bead\eqD \wt h^\bead$ and $\eta^\bead$ is an SLE$_{\kappa'}$ from 0 to $\infty$ in $\BB H$ sampled independently from $h^\bead$ and parameterized by the $\mu_{  h^\bead}$-mass which it disconnects from $\infty$ such that the following is true.
There is a homeomorphism $\BB H \rta \BB H$ which takes $\eta^\bead$ to $ \wt\eta^\bead $ and also preserves the quantum length measure on the boundary of each complementary connected component of $\eta^\bead$. \label{item-bead-char-homeo-intro}
\end{enumerate}
Then $\wt\eta^\bead$ is a chordal SLE$_{\kappa'}$ curve independent from $\wt h^\bead$. 
\end{thm}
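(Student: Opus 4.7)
The plan is to reduce Theorem~\ref{thm-bead-char-intro} to the space-filling characterization Theorem~\ref{thm-wpsf-char-intro} by assembling countably many conditionally independent copies of $\wt\eta^\bead$ together with independent standard chordal space-filling SLE$_{\kappa'}$'s in the complementary bubbles to build a candidate curve-decorated $\gamma$-quantum cone, then invoking the space-filling characterization to force each copy of the backbone to be a chordal SLE$_{\kappa'}$.

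Concretely, first I would sample a $\gamma$-quantum cone $(\BB C, \hat h, 0, \infty)$ and decompose it as the conformal welding along quantum length of two independent $\frac{3\gamma}{2}$-quantum wedges, each of which is a Poissonian concatenation of beads $(\BB H, h_i^\bead, 0, \infty)$ with left/right boundary lengths $(\ell_i^L, \ell_i^R)$. In each bead, conditional on $(\ell_i^L, \ell_i^R)$ (and on quantum area, in the variant of the theorem where area is also specified), sample an independent $\wt\eta_i^\bead$ from the conditional law of $\wt\eta^\bead$ given these quantities. By hypothesis~\ref{item-bead-char-ind-intro}, the bubbles cut off by each $\wt\eta_i^\bead$ are conditionally independent quantum disks; into each such bubble I would insert an independent standard space-filling chordal SLE$_{\kappa'}$ with its canonical law on the quantum disk, parameterized by the quantum mass it traces out. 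Hypothesis~\ref{item-bead-char-homeo-intro} ensures that the concatenation of all these pieces yields a single continuous whole-plane curve $\hat\eta'$, parameterized by $\gamma$-quantum mass with respect to $\hat h$, having the same topology as whole-plane space-filling SLE$_{\kappa'}$.

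Next I would verify the two hypotheses of Theorem~\ref{thm-wpsf-char-intro} for $(\hat h, \hat\eta')$. The Markov property~\ref{item-wpsf-char-wedge-intro} is essentially built in: for any $t\in\BB R$, the future curve-decorated surface $\hat\eta'([t,\infty))$ is the Poissonian concatenation of the remaining beads of one of the two wedges, the still-unvisited bubbles cut off by previously explored backbone pieces (independent quantum disks by hypothesis~\ref{item-bead-char-ind-intro}), and the unbounded component of the complement of the backbone piece currently being traced (another bead of a $\frac{3\gamma}{2}$-wedge, again by hypothesis~\ref{item-bead-char-ind-intro}); its independence from the past comes from hypothesis~\ref{item-bead-char-ind-intro} together with the independence of the canonical bubble-filling SLEs. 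For hypothesis~\ref{item-wpsf-char-homeo-intro}, I would use hypothesis~\ref{item-bead-char-homeo-intro} to identify the left/right boundary length process of each $\wt\eta_i^\bead$ with that of a chordal SLE$_{\kappa'}$ on the same bead, then combine with the mating-of-trees identity of~\cite{wedges} for chordal SLE$_{\kappa'}$ on a $\frac{3\gamma}{2}$-wedge to conclude that the combined left/right boundary length process of $\hat\eta'$ is the correlated two-dimensional Brownian motion with the covariance in~\eqref{eqn-bm-cov}.

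Theorem~\ref{thm-wpsf-char-intro} then yields that $\hat\eta'$ is a whole-plane space-filling SLE$_{\kappa'}$ sampled independently of $\hat h$. Restricting to a single bead and removing the independent canonical bubble fillings shows that the backbone $\wt\eta_i^\bead$ in that bead is a chordal SLE$_{\kappa'}$ independent of $h_i^\bead$, which via the conditional sampling used in the construction gives the statement for the original coupling $(\wt h^\bead, \wt\eta^\bead)$. The main technical obstacle I anticipate is carrying out the recursive bubble-filling construction and confirming the Brownian-motion identity simultaneously: one must show that the iteration terminates on a full-measure set and produces a continuous curve parameterized by quantum mass, and one must carefully track how the boundary length contributions from two rather different sources---the given backbones $\wt\eta_i^\bead$ in the beads and the independent canonical space-filling SLEs in the bubbles---glue into a single correlated Brownian motion of the correct covariance, using hypothesis~\ref{item-bead-char-homeo-intro} to control the backbone contribution and existing mating-of-trees results to control the bubble contribution.
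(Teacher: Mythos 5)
Your overall strategy --- build a candidate $\gamma$-quantum cone decorated by a candidate space-filling curve and reduce to Theorem~\ref{thm-wpsf-char-intro} --- is the right one and matches the paper's. However, the way you assemble the candidate contains a genuine gap.

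You propose to replace \emph{every} bead of the future $\frac{3\gamma}{2}$-quantum wedge with an independently sampled copy of $\wt\eta^\bead$, drawn ``from the conditional law of $\wt\eta^\bead$ given'' the bead's area and left/right boundary lengths. But the hypotheses of the theorem give you a candidate coupling $(\wt h^\bead, \wt\eta^\bead)$ for a \emph{single fixed} triple $(\frk a,\frk l^L,\frk l^R)$. The beads of a $\frac{3\gamma}{2}$-quantum wedge form a Poisson point process whose areas and boundary lengths take a.e.\ value in $(0,\infty)^3$; there is no conditional law ``of $\wt\eta^\bead$ given $(\ell_i^L,\ell_i^R)$'' to sample from for those other triples. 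To make your construction well-defined you would need, from the start, a \emph{family} of candidate curves indexed by $(\frk a,\frk l^L,\frk l^R)$ satisfying the hypotheses consistently, which is a strictly stronger assumption than what Theorem~\ref{thm-bead-char-intro} provides. (Hypothesis~\ref{item-bead-char-ind-intro} does produce new candidate beads as $\wt\eta^\bead$ is explored, but only along the specific random path of boundary lengths that arises from the given $(\frk a,\frk l^L,\frk l^R)$, not for arbitrary values.)

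The paper's proof (Section~\ref{sec-chordal}) avoids this by replacing \emph{exactly one} bead. It first randomizes $(\frk a,\frk l^L,\frk l^R)$ by sampling from the bead intensity measure $\BB m$ restricted to a finite-measure set $\frk A\subset(0,\infty)^3$, couples $(\wt h^\bead,\wt\eta^\bead)$ to the first bead of $\mcl S_{0,\infty}$ whose parameters land in $\frk A$, swaps in $\wt{\mcl B}=(\BB H,\wt h^\bead,0,\infty)$ for that one bead while keeping the reference $(h,\eta')$ everywhere else, and then verifies (Lemmas~\ref{lem-bubble-filtration-simplify}--\ref{lem-bead-wpsf-char-wedge}) that the resulting pair satisfies the hypotheses of Theorem~\ref{thm-wpsf-char}. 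Because every other bead is filled by the reference SLE, no family of candidates is needed. Finally, the fixed-$(\frk a,\frk l^L,\frk l^R)$ statement is deduced by a continuity-in-parameters argument (Lemma~\ref{lem-bead-sle-as} plus the end of the proof of Theorem~\ref{thm-bead-char}), exploiting that after exploring for a small time $\ep$ the remaining parameter vector has a law mutually absolutely continuous with Lebesgue measure. Your proposal also omits this final step of passing from random to deterministic $(\frk a,\frk l^L,\frk l^R)$, and the verification of the Markov property is glossed over --- in particular, the independence of $\wt{\mcl F}_{-\infty,t}$ from the future must be checked for $t$ falling strictly inside the swapped bead, which requires the somewhat delicate arguments of Lemmas~\ref{lem-bubble-filtration-simplify} and~\ref{lem-bead-wpsf-char-ind} rather than being ``built in.'' You would also need to note, as the paper does, that the conformal welding producing $\wt h$ from $\wt h^\bead$ and the reference complement is unique because $\bdy\eta'([\ul T,\ol T])$ is a union of non-crossing SLE$_\kappa$ arcs and hence conformally removable.
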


It can be seen from the results of~\cite{wedges} (see Lemma~\ref{lem-bead-sle-as}) that the hypotheses of Theorem~\ref{thm-bead-char-intro} are satisfied in the case when $\wt\eta^\bead$ is in fact a chordal SLE$_{\kappa'}$ curve independent from $\wt h^\bead$. 

We will give a precise statement of Theorem~\ref{thm-bead-char-intro} in Theorem~\ref{thm-bead-char}, which in particular makes precise what we mean by conditioning on the areas and boundary lengths of the connected components of $\BB H\setminus \wt\eta^\bead([0,t])$ using the left/right boundary length process.  The theorem will be deduced from a slightly stronger version of Theorem~\ref{thm-wpsf-char-intro} (Theorem~\ref{thm-wpsf-char}) by restricting attention to a single bead of one of the future quantum surfaces.

Theorem~\ref{thm-bead-char-intro} is closely related to the question of whether SLE$_{\kappa'}$ for $\kappa'\in (4,8)$ is \emph{conformally removable}, i.e., every homeomorphism which is conformal off the range of the path is conformal everywhere. It is known that SLE$_\kappa$ is conformally removable for $\kappa \in (0,4)$~\cite{jones-smirnov-removability}, but it is in general a difficult question to determine whether a fractal carpet like the range of SLE$_{\kappa'}$ for $\kappa'\in (4,8)$ is conformally removable. 

If we knew that SLE$_{\kappa'}$ for $\kappa'\in (4,8)$ were conformally removable, then we would immediately obtain a stronger version of Theorem~\ref{thm-bead-char-intro} where hypothesis~\ref{item-bead-char-ind-intro} is required only to hold for $t=\infty$. Indeed, hypothesis~\ref{item-bead-char-ind-intro} for $t=\infty$ together with the fact that the map in hypothesis~\ref{item-bead-char-homeo-intro} preserves the boundary lengths and areas of the complementary connected components (the areas are encoded by the parameterization of $\wt\eta^\bead$) tells us that the quantum surfaces obtained by restricting $\wt h^\bead$ to the connected components of $\BB H\setminus \wt\eta^\bead$ have the same joint law as the quantum surfaces obtained by restricting $h^\bead$ to the connected components of $\BB  H\setminus \eta^\bead$, where $(\BB H , h^\bead , 0,\infty)$ is a bead of a $\frac{3\gamma}{2}$-quantum wedge with appropriate area and left/right boundary lengths and $\eta^\bead$ is an independent chordal SLE$_{\kappa'}$. This implies that there is a conformal map $\BB H\setminus \eta^\bead \rta \BB H\setminus \wt\eta^\bead$ which preserves areas and left/right boundary lengths. This conformal map can be extended to a homeomorphism $\BB H\rta\BB H$ using condition~\ref{item-bead-char-homeo}, which would then have to be the identity by conformal removability.

Theorem~\ref{thm-bead-char-intro} can therefore be thought of as a weaker version of conformal removability for SLE$_{\kappa'}$. 
The theorem allows us to circumvent dealing with the removability question directly when trying to show that certain random curves are SLE$_{\kappa'}$'s.
See Section~\ref{sec-welding} for further discussion of conformal removability.
See also~\cite{mmq-welding} for a different weaker version of conformal removability for SLE$_{\kappa'}$, which neither implies nor is implied by Theorem~\ref{thm-bead-char-intro}.

\subsubsection{SLE$_6$ on a Brownian surface}
\label{sec-bead-mchar-intro}

It is shown in~\cite{lqg-tbm1,lqg-tbm2,lqg-tbm3} that a $\sqrt{8/3}$-LQG surface admits a metric space structure, i.e., a GFF-type distribution $h$ on a domain $D\subset \BB C$ induces a metric $\frk d_h$ on $D$.
In the special case when $\gamma = \sqrt{8/3}$, we can re-phrase our characterization theorems in terms of this metric space structure, and thereby in terms of so-called Brownian surfaces, which we discuss just below. The metric space version of our chordal SLE characterization theorem allows us to identify the scaling limit of percolation on random quadrangulations in \cite{gwynne-miller-perc}.

The \emph{Brownian map} is a random metric measure space, constructed via a continuum analog of the Schaeffer bijection~\cite{schaeffer-bijection}, which arises as the scaling limit of uniform random planar maps on the sphere~\cite{legall-uniqueness,miermont-brownian-map}. 
A \emph{Brownian surface} is a random metric measure space which locally looks like the Brownian map. Such surfaces include the Brownian plane~\cite{curien-legall-plane}, the Brownian disk~\cite{bet-mier-disk}, and the Brownian half-plane~\cite{gwynne-miller-uihpq,bmr-uihpq}.  
 Certain $\sqrt{8/3}$-LQG surfaces are equivalent as metric measure spaces to these Brownian surfaces. 
\begin{itemize}
\item The quantum sphere is equivalent to the Brownian map.
\item The $\sqrt{8/3}$-quantum cone (which we recall is the surface arising in the peanosphere construction) is equivalent to the Brownian plane.
\item The quantum disk is equivalent to the Brownian disk.
\item The $\sqrt{8/3}$-quantum wedge is equivalent to the Brownian half-plane. 
\end{itemize}
It is shown in~\cite{lqg-tbm3} that the metric measure space structure of a $\sqrt{8/3}$-LQG surface a.s.\ determines its quantum surface structure. Hence the results of~\cite{sphere-constructions,tbm-characterization,lqg-tbm1,lqg-tbm2,lqg-tbm3} can be viewed as endowing a Brownian surface with a canonical conformal structure, and constructing numerous additional Brownian surfaces. 

In particular, these results allow us to make sense of $\SLE_{8/3}$- or $\SLE_6$-type curves on Brownian surfaces (by embedding the surface into a domain in $\BB C$, then drawing an independent SLE curve).  It is natural to expect that such curves respectively arise as the scaling limit of self-avoiding walks and percolation explorations on uniform random planar maps.  In the former case, this was proven in~\cite{gwynne-miller-saw}, building on \cite{gwynne-miller-gluing,gwynne-miller-uihpq} (see also~\cite{gwynne-miller-simple-quad} for the finite-volume case), and in the latter case this is proven in \cite{gwynne-miller-perc}, building on \cite{gwynne-miller-simple-quad,gwynne-miller-sle6} and the present paper.  That is, the results of \cite{gwynne-miller-saw,gwynne-miller-perc} allow us to say that the definitions of $\SLE_{8/3}$ and $\SLE_6$ on Brownian surfaces which come from $\sqrt{8/3}$-LQG are the correct ones because they describe the scaling limits of the corresponding discrete models.

Since the conformal structure of a Brownian surface does not depend on the metric measure space structure in an explicit way, the $\sqrt{8/3}$-LQG metric construction does not yield an explicit description of an $\SLE$ on a Brownian surface which depends only on the metric measure space structure.  The results of the present paper allow us to describe a whole-plane space-filling $\SLE_6$ on the Brownian plane (Theorem~\ref{thm-wpsf-mchar}) or a chordal $\SLE_6$ on the Brownian disk (Theorem~\ref{thm-bead-mchar}) by means of a list of conditions which depend only on the metric measure space structure. 

We give here an informal statement of the disk version of this characterization theorem, since this is the version which is used in~\cite{gwynne-miller-perc}. For the statement, we note that there is a natural way to define a length measure on the boundary of the Brownian disk, which is determined by the metric measure space structure and is equivalent to the $\sqrt{8/3}$-LQG boundary length measure on the boundary of a quantum disk. 

\begin{thm}[Characterization of SLE$_6$ on the Brownian disk, informal version] \label{thm-bead-mchar-intro}
Suppose we are given a random curve-decorated metric measure space $(\wt X^\bead , \wt d^\bead , \wt\mu^\bead ,\wt\eta^\bead)$ where $(\wt X^\bead ,\wt d^\bead ,\wt\mu^\bead)$ is a Brownian disk and $\wt\eta^\bead$ is a random curve between two marked points on its boundary parameterized by the $\wt\mu^\bead$-mass it disconnects from its target point. Assume that the following hypotheses are satisfied.
\begin{enumerate}
\item (Laws of complementary connected components) For each $t > 0$, the connected components of $\wt X\setminus \wt\eta^\bead([0,t])$, each equipped with its internal path metric (i.e., the distance between points is the minimum of the $\wt d^\bead$-lengths of paths contained in the component) and the restriction of $\wt\mu^\bead$ are independent Brownian disks if we condition on their areas and boundary lengths. \label{item-bead-mchar-ind-intro} 
\item (Topology and consistency) There is a doubly marked quantum disk $(\BB D , h^\bead , -i , i)$ and a chordal SLE$_6$ $\eta^\bead$ from $-i$ to $i$ in $\BB D$, sampled independently from $h^\bead$ and parameterized by the $\mu_{h^\bead}$-mass which it disconnects from $i$, such that the following is true. 
There is a homeomorphism $\BB D \rta \wt X^\bead$ which takes $\eta^\bead$ to $\wt\eta^\bead$ also preserves the length measure on the boundary of each complementary connected component of $\eta^\bead$. \label{item-bead-mchar-homeo-intro}
\end{enumerate}
Then $\wt\eta^\bead$ is an independent chordal SLE$_6$ on $\wt X^\bead$.
\end{thm}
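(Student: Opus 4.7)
The strategy is to reduce Theorem~\ref{thm-bead-mchar-intro} to its $\sqrt{8/3}$-LQG counterpart, Theorem~\ref{thm-bead-char-intro}, by invoking the equivalence of Brownian and $\sqrt{8/3}$-LQG surfaces from~\cite{lqg-tbm1,lqg-tbm2,lqg-tbm3}. First, I would use this equivalence to conformally embed $(\wt X^\bead,\wt d^\bead,\wt\mu^\bead)$: there is a GFF-type distribution $\wt h^\bead$ on $\BB H$, unique up to positive dilation, such that $(\BB H,\wt h^\bead,0,\infty)$ represents a quantum disk whose associated $\sqrt{8/3}$-LQG metric measure space is canonically isometric to $(\wt X^\bead,\wt d^\bead,\wt\mu^\bead)$ with the two boundary marked points matching. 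Since $\gamma=\sqrt{8/3}$, a quantum disk with two boundary marked points, conditioned on its area and left/right boundary lengths, has the law of a single bead of a $\tfrac{3\gamma}{2}$-quantum wedge under the same conditioning, so $\wt h^\bead$ is an embedding of such a bead. I push $\wt\eta^\bead$ forward through the isometry, still calling it $\wt\eta^\bead$, to obtain a curve in $\BB H$ from $0$ to $\infty$.

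Next I would verify the two hypotheses of Theorem~\ref{thm-bead-char-intro} for $(\wt h^\bead,\wt\eta^\bead)$. The $\sqrt{8/3}$-LQG area measure $\mu_{\wt h^\bead}$ coincides with $\wt\mu^\bead$ and the $\sqrt{8/3}$-LQG boundary length measure $\nu_{\wt h^\bead}$ coincides with the canonical Brownian-disk boundary length measure, so the parameterization of $\wt\eta^\bead$ by $\wt\mu^\bead$-mass disconnected from the target agrees with the parameterization by $\mu_{\wt h^\bead}$-mass disconnected from $\infty$. Hypothesis~\ref{item-bead-mchar-homeo-intro}, composed with the embedding, yields hypothesis~\ref{item-bead-char-homeo-intro}: the topology and boundary-length-preserving homeomorphism transfer directly. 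For hypothesis~\ref{item-bead-char-ind-intro}, each complementary component $U$ of $\BB H\setminus\wt\eta^\bead([0,t])$ corresponds to a component of $\wt X^\bead\setminus\wt\eta^\bead([0,t])$ that, by hypothesis~\ref{item-bead-mchar-ind-intro}, is a Brownian disk under its internal path metric and the restriction of $\wt\mu^\bead$; moreover these components are conditionally independent given their areas and boundary lengths. Locality of the $\sqrt{8/3}$-LQG metric construction implies that the intrinsic $\sqrt{8/3}$-LQG metric measure space of the quantum surface $(U,\wt h^\bead|_U)$ equals precisely this Brownian-disk structure, and the uniqueness theorem of~\cite{lqg-tbm3} then forces $(U,\wt h^\bead|_U)$ to be a quantum disk (for bounded $U$) or a bead of a $\tfrac{3\gamma}{2}$-quantum wedge (for the unbounded $U$) with the matching area and boundary lengths. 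Since the quantum surface is a deterministic function of the metric measure space, conditional independence of the components transfers from the metric to the quantum side.

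Having confirmed both hypotheses, Theorem~\ref{thm-bead-char-intro} implies that $\wt\eta^\bead$, viewed in the embedded picture, is a chordal $\SLE_6$ from $0$ to $\infty$ in $\BB H$ that is independent of $\wt h^\bead$; pulling back through the canonical isometry gives the desired conclusion on $\wt X^\bead$. The main obstacle is the intrinsic-versus-ambient identification in the verification of hypothesis~\ref{item-bead-char-ind-intro}: one must confirm both that the $\sqrt{8/3}$-LQG metric built from $\wt h^\bead|_U$ equals the \emph{internal} path metric of the component $U\subset\wt X^\bead$ (rather than the restriction of the ambient metric $\wt d^\bead$, which is generally smaller), and that the conformal structure on $U$ coming from the ambient embedding $\wt h^\bead$ agrees with the one that would be produced by intrinsically uniformizing $U$ as a Brownian disk. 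Both statements should follow from locality of the $\sqrt{8/3}$-LQG metric construction combined with the uniqueness results of~\cite{lqg-tbm3}, but it is these facts that make the reduction to Theorem~\ref{thm-bead-char-intro} go through and where the bulk of the technical care is required.
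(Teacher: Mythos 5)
Your high-level strategy matches the paper's: embed the Brownian disk as a quantum disk (equivalently a bead of a $\tfrac{3\gamma}{2}$-quantum wedge when $\gamma=\sqrt{8/3}$) and reduce to the quantum-surface characterization, Theorem~\ref{thm-bead-char-intro} (precise version: Theorem~\ref{thm-bead-char}). The verification of the topology-and-consistency hypothesis transfers as you describe, and you are right that the crux is showing that the quantum-surface law of each complementary component agrees with what the metric hypothesis asserts. However, the way you propose to close this step has a genuine gap, and it is precisely the one the paper flags at the end of Section~\ref{sec-metric-char-result}.

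You write that once the intrinsic $\sqrt{8/3}$-LQG metric measure space of $(U,\wt h^\bead|_U)$ is identified with a Brownian disk, ``the uniqueness theorem of [lqg-tbm3] then forces $(U,\wt h^\bead|_U)$ to be a quantum disk.'' That inference does not follow. The determination result in [lqg-tbm3, Theorem 1.4] says that for a \emph{given} law of $\sqrt{8/3}$-LQG surface, the metric measure space structure a.s.\ determines the quantum surface structure; but the measurable function realizing this determination is permitted to depend on that law. A priori the joint law of the complementary components of $\wt\eta^\bead$ is not that of a collection of independent quantum disks, and one cannot rule out that the measure-preserving isometry between your intrinsically uniformized Brownian disk and the ambient sub-surface $(U,\wt h^\bead|_U)$ fails to be conformal. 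The uniqueness theorem you cite does not, on its own, eliminate this possibility. This is exactly the concern the paper resolves in Section~\ref{sec-metric-function}: Proposition~\ref{prop-metric-function} produces \emph{law-independent} deterministic functions converting between the metric measure space structure and the quantum surface structure of a filled $\sqrt{8/3}$-LQG metric ball (via a local absolute continuity argument reducing to the $\sqrt{8/3}$-quantum cone), and Proposition~\ref{prop-unique-isometry} shows the identity is the only self-isometry of such a filled ball. The proofs of Lemmas~\ref{lem-mchar-wedge} and~\ref{lem-mchar-ind} then cover each component by filled metric balls around a uniformly sampled point, use these two propositions to upgrade the measure-preserving isometry to a conformal map ball-by-ball, and propagate. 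Without these ingredients, the reduction to Theorem~\ref{thm-bead-char-intro} does not close.
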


For $\gamma=\sqrt{8/3}$, a bead of a $\frac{3\gamma}{2}$-quantum wedge is the same as a quantum disk (this is immediate from the definitions in~\cite[Section 4.4 and 4.5]{wedges}), which in turn is equivalent to a Brownian disk, so Theorem~\ref{thm-bead-mchar-intro} is an exact analog of Theorem~\ref{thm-bead-char-intro} for $\kappa'=6$ but with metric spaces in place of quantum surfaces. See also Theorem~\ref{thm-bead-mchar-nat} for a variant of Theorem~\ref{thm-bead-mchar-intro} where we parameterized by quantum natural time (as defined in~\cite{wedges}) instead of by disconnected area.

Theorem~\ref{thm-bead-mchar-intro} is a key tool for showing that such percolation models converge to $\SLE_6$-decorated $\sqrt{8/3}$-LQG surfaces in the Gromov-Hausdorff-Prokhorov-uniform topology~\cite{gwynne-miller-uihpq}, the natural analog of the Gromov-Hausdorff topology for curve-decorated metric measure spaces. 
The reason for this is that the conditions in Theorem~\ref{thm-bead-mchar-intro} are possible to verify for subsequential limits of percolation models on random planar maps.

Indeed, as explained in~\cite{gwynne-miller-perc}, a chordal percolation interface on, e.g., a uniform quadrangulation with simple boundary can be explored one face at a time by the peeling procedue~\cite{angel-peeling}. This allows one to show that the complementary connected components of the interface are independent quadrangulations with simple boundary (which is a discrete analog of condition~\ref{item-bead-mchar-ind-intro}). Furthermore, one can explicitly describe the law of the discrete left/right boundary length process and compute its scaling limit in the Skorokhod topology, which eventually allows us to relate the topology of a subsequential scaling limit of a percolation interface to that of an SLE$_6$.

The above considerations together with Theorem~\ref{thm-bead-mchar-intro} allow us to identify the scaling limit of percolation explorations on uniform random quadrangulations with simple boundary as chordal $\SLE_6$ on the Brownian disk in~\cite{gwynne-miller-perc}. One can also prove similar results for other percolation models on random planar maps using the same strategy, such as site percolation on a uniform triangulation. The results of~\cite{gwynne-miller-perc} in this last case are used in a subsequent series of works by Holden, Sun, and various coauthors to show that uniform triangulations drawn in the plane via the so-called \emph{Cardy embedding} converge to $\sqrt{8/3}$-LQG~\cite{hs-cardy-embedding}.

One way to think about the relationship between Theorem~\ref{thm-bead-mchar-intro} and the results of~\cite{gwynne-miller-perc} is in terms of metric gluing. It is shown in~\cite{gwynne-miller-gluing} that the metric space quotient of two independent quantum wedges identified according to the $\sqrt{8/3}$-quantum length measure on their boundaries is a different $\sqrt{8/3}$-quantum wedge decorated by an SLE$_{8/3}$-type curve. In other words, the $\sqrt{8/3}$-LQG metric is compatible with the conformal welding operations of~\cite{wedges} in the setting when the gluing interface is simple. This statement is what allows us to identify the scaling limit of the self-avoiding walk on random quadrangulations with SLE$_{8/3}$ in~\cite{gwynne-miller-gluing}.

When attempting to identify the scaling limit of percolation on random planar maps, it is natural to ask whether there is an analog of the above metric gluing statements with SLE$_6$ curves in place of SLE$_{8/3}$ curves. In other words, if we have an SLE$_6$ curve on an independent Brownian disk, can we recover the Brownian disk as the metric space quotient of the internal path metrics on the complementary connected components of the curve, glued together according to the natural identification of points on their boundaries? Proving this statement is similar in spirit to proving conformal removability of SLE$_6$ (see~\cite[Section 1.4]{gwynne-miller-gluing} for further discussion of the relationship between conformal removability and metric gluing), and we expect it to be quite difficult. Theorem~\ref{thm-bead-mchar-intro} allows us to identify a subsequential scaling limit of percolation on random planar maps as SLE$_6$ without the need for these sorts of metric gluing statements. 
 

\bigskip

\noindent{\bf Acknowledgements} We thank two anonymous referees for helpful comments on an earlier version of this paper. J.M.\ thanks Institut Henri Poincar\'e for support as a holder of the Poincar\'e chair, during which part of this work was completed.

\subsection{Outline}
\label{sec-outline}

In this subsection we provide a moderately detailed overview of the content of the remainder of the paper and the proofs of our main results. See Figure~\ref{fig-outline} for a schematic illustration of how the sections fit together.

\begin{figure}[ht!]
	\begin{center}
		\includegraphics[scale=.8]{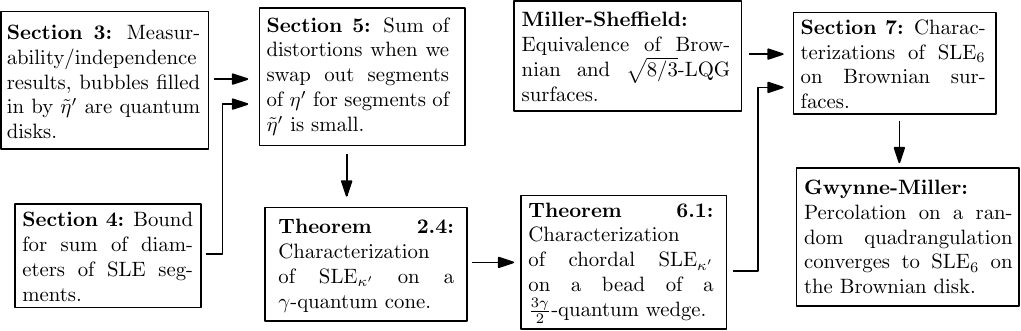}
	\end{center}
	\vspace{-0.025\textheight}
	\caption[Schematic outline of the proof]{\label{fig-outline} Schematic illustration of how the results in this paper fit together. The reader may want to skip Sections 3.2-3.5 on a first read and refer back to the various results as they are used in Section 5. 
	}
\end{figure}

We start in Section~\ref{sec-prelim} by reviewing some background on LQG surfaces, space-filling $\SLE_{\kappa'}$, and the relationships between them. 
We also state a stronger, more precise version of Theorem~\ref{thm-wpsf-char-intro}, namely Theorem~\ref{thm-wpsf-char}.

The next three sections are devoted to the proof of Theorem~\ref{thm-wpsf-char}. The basic idea of the proof is as follows. Let $(\wt h , \wt\eta')$ be the given field/space-filling curve pair from Theorem~\ref{thm-wpsf-char-intro}. Also let $(h,\eta')$ be the (a.s.\ unique by~\cite[Theorem 1.11]{wedges}) pair consisting of an embedding into $(\BB C,0,\infty)$ of a $\gamma$-quantum cone and an independent space-filling SLE$_{\kappa'}$ curve parameterized by $\mu_h$-mass which has the same left/right quantum boundary length process $Z$ as $(\wt h , \wt\eta')$. Then there is a homeomorphism $\BB C\rta\BB C$ which takes $\eta'$ to $\wt\eta'$. We will show that this homeomorphism is the identity map. 

To accomplish this, we will replace quantum surfaces parameterized by $\mu_h$-mass-$1/n$ segments of $\eta'$ with quantum surfaces parameterized by $\mu_{\wt h}$-mass-$1/n$ segments of $\wt\eta'$ one at a time. At each stage of the construction, we get a homeomorphism $\BB C\rta\BB C$ which takes the embedding of one surface to the embedding of the next surface and is conformal on the complement of the $\mu_{ h}$-mass-$1/n$ segment we have just replaced. We will show that the sum of the deviations of these homeomorphisms from the identity map tends to zero when we send the size of the segments we are swapping out to 0. Composing all of the homeomorphisms and sending the size of the segments to zero then shows that the aforementioned map $\BB C\rta\BB C$ which takes $\eta'$ to $\wt\eta'$ must be the identity. 
Our argument has some similarities with the proof of~\cite[Theorem~1.11]{wedges}, which also involves bounding the distortion of the conformal map between two quantum surfaces which differ only in a small segment of a space-filling curve. 

In Section~\ref{sec-disk-law}, we will define several objects in terms of the $\gamma$-quantum cone/space-filling $\SLE_{\kappa'}$ pair $(h,\eta')$  as well as their counterparts with the given pair $(\wt h , \wt\eta')$ in place of $(h,\eta')$. These objects include the beaded quantum surfaces $\mcl S_{a,b}$ (resp.\ $\wt{\mcl S}_{a,b}$) obtained by restricting $h$ (resp.\ $\wt h$) to $\eta'([a,b])$ (resp.\ $\wt\eta'([a,b])$) as well as non-space-filling curves $\eta_{a,b}$ (resp.\ $\wt\eta_{a,b}$) obtained from $\eta'|_{[a,b]}$ (resp.\ $\wt\eta'|_{[a,b]}$) by skipping the intervals of time during which it is filling in a bubble.
We will then prove several statements about the laws of these objects which build on the hypotheses of Theorem~\ref{thm-wpsf-char}. In particular, we will establish the following. 
\begin{itemize}
\item The quantum surface $\mcl S_{a,b}^0$ parameterized by the bubbles cut out by $\eta_{a,b}$ and the quantum surface $\wt{\mcl S}_{a,b}^0$ parameterized by the bubbles cut out by $\wt\eta_{a,b}$ have the same law. More precisely, both of these surfaces are collections of independent quantum disks if we condition on the areas and boundary lengths of their interior connected components (Propositions~\ref{prop-sle-disk} and~\ref{prop-general-disk}).
\item More generally, for $t\in [a,b]$ the joint law of the quantum surface parameterized by the bubbles cut out by $\eta_{a,b}|_{[0,t]}$ and the quantum surface parameterized by the region traced by $\eta'$ after it finishes filling in all of these bubbles is the same as the joint law of the analogous pair of quantum surfaces defined in terms of $(\wt h , \wt\eta')$ (Proposition~\ref{prop-partial-surface-law}).  
\end{itemize} 
These statements will be proven by relating the quantum surfaces we are interested in to other quantum surfaces whose law we know either from the results of~\cite{wedges} or the hypotheses of Theorem~\ref{thm-wpsf-char}. The main purpose of the above statements is that they will allow us to (a) show that each of the overall quantum surfaces in the aforementioned curve-swapping procedure is a $\gamma$-quantum cone and (b) show that the homeomorphism  at each stage of the procedure is the identity map provided the corresponding segment of $\eta'$ does not intersect $\eta_{a,b}$ (i.e., it is entirely contained in one of the bubbles cut out by $\eta_{a,b}$). 

In Section~\ref{sec-diam-sum}, we will prove for given $n\in\BB N$ an estimate for the sum of the squared diameters of the time length-$1/n$ segments of the curve $\eta'|_{[a,b]}$ which intersect the $\SLE_{\kappa'}$-type curve $\eta_{a,b}$ of Section~\ref{sec-disk-law} (Proposition~\ref{prop-diam-sum}).  In particular, we will show that the expectation of this sum tends to zero faster than some negative power of $n$.  (This is the step in the argument that does not extend to $\kappa' \geq 8$ because in this case the sum of the squared diameters of the time length-$1/n$ curve segments will typically be of constant order.)  Combined with the distortion estimates described in~\cite[Section~9]{wedges} (see in particular~\cite[Lemma~9.6]{wedges}) this estimate will eventually allow us to bound how much the conformal maps in the curve-swapping argument mentioned above deviate from the identity.

The argument of Section~\ref{sec-diam-sum} proceeds by way of two other results which are of independent interest. The first of these estimates is a KPZ-type bound for the Lebesgue measure of the $\ep$-neighborhood of a set $X  \subset \BB C$ which is independent from $h$ in terms of the expected number of $\ep$-length intervals needed to cover $(\eta')^{-1}(X)$. This is a variant of~\cite[Theorem~1.1]{ghm-kpz} but with expected Minkowski dimension instead of Hausdorff dimension. The second estimate is a bound for the expected number of  $\ep$-length intervals needed to cover the set of times $t \in [a,b]$ which are not contained in any $\pi/2$-cone interval for $Z$ in $[a,b]$---which is the pre-image of $\eta_{a,b}$ under $\eta'$ (Proposition~\ref{prop-lqg-dim-S}).  We will also introduce in Section~\ref{sec-stability} a regularity condition which is needed in order to apply Proposition~\ref{prop-diam-sum} in the next section. 

Section~\ref{sec-swapping} contains the curve-swapping argument which will eventually lead to a proof of Theorem~\ref{thm-wpsf-char-intro}/Theorem~\ref{thm-wpsf-char}.  The main step is to show that for fixed $a,b\in\BB R$ with $a<b$, the quantum surface $\mcl S_{a,b}$ parameterized by the space-filling $\SLE_{\kappa'}$ segment $\eta'([a,b])$ has the same law as the surface $\wt{\mcl S}_{a,b}$ parameterized by the corresponding segment $\wt\eta'([a,b])$ of our candidate curve (Proposition~\ref{prop-inc-agree}).  To prove this, set $t_{n,k} := a + \frac{k}{n}(b-a)$ for $k\in [0,n]_{\BB Z}$. We will define for each $n\in\BB N$ and each $k\in [0,n]_{\BB Z}$ a quantum surface $\mathring{\mcl C}_{n,k}$, which has the law of a $\gamma$-quantum cone, decorated by a non-space-filling curve $\rng\eta_{n,k}$ (an analog of $\eta_{a,b}$) and a space-filling curve $\rng\eta'_{n,k}$ (an analog of $\eta'$) with the following properties.\footnote{Note that the ring notation has nothing to do with the topological interior; it is just chosen to be visually distinct from the tilde notation.}
\begin{itemize}
\item The sub-surface of $\rng{\mcl C}_{n,0}$ (resp.\ $\rng{\mcl C}_{n,n}$) parameterized by the curve segment $\rng\eta'_{n,0}([a,b])$ (resp.\ $\rng \eta'_{n,n}([a,b])$) has the same law as $\wt{\mcl S}_{a,b}$ (resp.\ $\mcl S_{a,b}$). 
\item The triples $(\mathring{\mcl C}_{n,k}, \rng\eta_{n,k}, \rng\eta'_{n,k})$ are all topologically equivalent. In fact, there exists for each $k \in [1,n]_{\BB Z}$ a homeomorphism  $f_{n,k} : \rng{\mcl C}_{n,k} \rta\rng{\mcl C}_{n,k-1}$ which satisfies $f_{n,k} \circ \rng\eta_{n,k} = \rng\eta_{n,k-1}$ and $f_{n,k} \circ \rng\eta_{n,k}' = \rng\eta'_{n,k-1}$ and which is conformal on $\BB C\setminus \rng\eta_{n,k}([t_{n,k-1} , t_{n,k}])$ where $t_{n,k} = a + \frac{k}{n}(b-a)$ (Lemma~\ref{lem-inc-homeo}). 
\item The map $f_{n,k}$ is the identity map if $\rng\eta_{n,k}'([t_{n,k-1} , t_{n,k}])$ does not intersect $\rng\eta_{n,k}$. 
\end{itemize}
These objects are illustrated in Figure~\ref{fig-swapping-maps}.

The triples $(\mathring{\mcl C}_{n,k}, \rng\eta_{n,k}, \rng\eta'_{n,k})$ will be constructed from $(\wt h, \wt\eta')$ by (roughly speaking) the following procedure, which is illustrated in Figure~\ref{fig-swapping-def}. 
\begin{itemize}
\item First impose a conformal structure on the closure of the union of the bubbles cut out by the curve $\wt\eta_{a,b}$ run up to time $t_{n,k}$ in the same manner that one imposes a conformal structure on the bubbles cut out by $\eta_{a,b}$ run up to time $t_{n,k}$ to get the surface $\mcl S_{a,b}$. The reason we are able to impose a conformal structure in this way is that we know that the bubbles cut out by $\wt\eta_{a,b}$ are quantum disks by the results of Section~\ref{sec-disk-law}. 
\item The first step gives us a quantum surface, which we conformally weld to an independent ``past" $\frac{3\gamma}{2}$-quantum wedge and the ``future" $\frac{3\gamma}{2}$-quantum wedge parameterized by $\wt\eta'([t_{n,k},\infty))$ to get $\rng{\mcl C}_{n,k}$. 
\item The curve $\rng\eta_{n,k}$ is the concatenation of the gluing interface of the bubbles and the future curve $\wt\eta_{a,b}|_{[t_{n,k} , b]}$. The curve $\rng\eta'_{n,k}$ is obtained by filling in the bubbles cut out by $\rng\eta_{n,k}$ with conformal images of segments of $\wt\eta'$, then concatenating the resulting curve with the conformal image of $\wt\eta'|_{[t_{n,k},\infty)}$.  
\end{itemize}
The results of Section~\ref{sec-disk-law} will tell us that the above itemized list of conditions is satisfied for this construction.

We will then use the results of Section~\ref{sec-diam-sum} together with the distortion estimate~\cite[Lemma~9.6]{wedges} to show that the sum of the deviations of the maps $f_{n,k}$ from the identity tends to 0 as $n\rta\infty$. Composing these maps will then give us the desired equality in law $\mcl S_{a,b} \eqD \wt{\mcl S}_{a,b}$. 
Theorem~\ref{thm-wpsf-char} is obtained by applying this equality in law to the quantum surfaces $\{ \mcl S_{(j-1)\ep , j\ep} \}_{j\in\BB N}$ and $\{ \wt{\mcl S}_{(j-1)\ep , j\ep}\}_{j\in\BB N}$ to get that the joint laws of the collections of points $\{\eta'(j\ep)\}_{j\in\BB Z}$ and $\{\wt\eta'(j\ep)\}_{j\in\BB Z}$ are the same, then sending $\ep \rta 0$. 

In the last two sections of the paper we will deduce our other characterization theorems as consequences of Theorem~\ref{thm-wpsf-char}. 
In Section~\ref{sec-chordal}, we will state and prove a precise version of Theorem~\ref{thm-bead-char-intro} (Theorem~\ref{thm-bead-char}). 
The idea of the proof is to identify the surface $(\BB H , \wt h^\bead , 0, \infty)$ with one of the beads of the future $\frac{3\gamma}{2}$-quantum wedge obtained by restricting $\wt h$ to $\wt h|_{\wt\eta'([0,\infty)}$ in the setting of Theorem~\ref{thm-wpsf-char-intro}. 

In Section~\ref{sec-metric-char}, we will restrict attention to the special case when $\kappa'=6$ and prove metric space versions of Theorems~\ref{thm-wpsf-char-intro} and~\ref{thm-bead-char-intro} using the equivalence of Brownian and $\sqrt{8/3}$-LQG surface~\cite{lqg-tbm2} plus the fact that the metric measure space structure and the quantum surface structure of a $\sqrt{8/3}$-LQG surface a.s.\ determine each other~\cite{lqg-tbm3}. 

Appendix~\ref{sec-index} contains an index of some commonly used symbols.

\section{Preliminaries}
\label{sec-prelim}

In this section we will review the definitions of the various objects involved in the statements and proofs of our main results, including LQG surfaces, whole-plane space-filling $\SLE_{\kappa'}$, and the peanosphere construction, and provide references to where more details can be found.  
We also state in Section~\ref{sec-wpsf-char} a stronger version of Theorem~\ref{thm-wpsf-char-intro}, whose proof will occupy most of this paper and which will be used to deduce our other results.

Throughout this paper we fix parameters
\eqb \label{eqn-gamma-kappa}
\gamma \in (0,2) ,\quad \kappa = \gamma^2 \in (0,4),\quad \op{and}\quad \kappa' = \frac{16}{\gamma^2} \in (4,8) .
\eqe
We will often restrict attention to the case when $\gamma \in (\sqrt 2 , 2)$ (equivalently $\kappa \in (2,4)$ and $\kappa' \in (4,8)$) and we will occasionally further restrict to the special case where $\gamma = \sqrt{8/3}$ (equivalently $\kappa = 8/3$ and $\kappa' = 6$), since this is the only case where a metric on $\gamma$-LQG has been constructed.

\subsection{Basic notation}
\label{sec-basic}

Here we record some basic notation which we will use throughout this paper. 

\noindent
We write $\BB N$ for the set of positive integers and $\BB N_0 = \BB N\cup \{0\}$. 
\vspace{6pt}

\noindent
For $a < b \in \BB R$, we define the discrete intervals $[a,b]_{\BB Z} := [a, b]\cap \BB Z$ and $(a,b)_{\BB Z} := (a,b)\cap \BB Z$.
\vspace{6pt}

\noindent
If $a$ and $b$ are two quantities, we write $a\preceq b$ (resp.\ $a \succeq b$) if there is a constant $C>0$ (independent of the parameters of interest) such that $a \leq C b$ (resp.\ $a \geq C b$). We write $a \asymp b$ if $a\preceq b$ and $a \succeq b$.
\vspace{6pt}

\noindent
If $a$ and $b$ are two quantities which depend on a parameter $x$, we write $a = o_x(b)$ (resp.\ $a = O_x(b)$) if $a/b \rta 0$ (resp.\ $a/b$ remains bounded) as $x \rta 0$, or as $x\rta\infty$, depending on context. We write $a= o_x^\infty(b)$ if $a = o_x(b^s)$ for every $s\in \BB R$. 
\vspace{6pt}

\noindent
Unless otherwise stated, all implicit constants in $\asymp, \preceq$, and $\succeq$ and $O_x(\cdot)$ and $o_x(\cdot)$ errors involved in the proof of a result are required to depend only on the auxiliary parameters that the implicit constants in the statement of the result are allowed to depend on.  
\vspace{6pt}

\noindent
For $A\subset\BB C$ and $r  > 0$, we write $B_r(A)$ for the set of points lying at Euclidean distance less than $r$ from $A$. For a singleton we abbreviate $B_r(\{z\}) = B_r(z)$.

\subsection{Liouville quantum gravity surfaces}
\label{sec-lqg-prelim}

In this subsection we review some properties of Liouville quantum gravity surfaces, as defined in Section~\ref{sec-lqg-intro}.  
Throughout, we fix $\gamma, \kappa$, and $\kappa'$ as in~\eqref{eqn-gamma-kappa} and we let $Q = 2/\gamma +\gamma/2$ be as in~\eqref{eqn-lqg-coord}. 
 
\subsubsection{Quantum disks, cones, and wedges} 
\label{sec-wedge}

We will have occasion to consider several particular types of quantum surfaces which are defined in~\cite{wedges}.
Here we give a brief review of these particular types of quantum surfaces, with references to where more information can be found. 

A \emph{quantum disk} is a finite-volume $\gamma$-quantum surface with boundary $(\BB D , h)$ defined in~\cite[Definition~4.21]{wedges}, which can be taken to have fixed boundary length or fixed boundary length and area.  A \emph{singly (resp.\ doubly) marked quantum disk} is a quantum disk together with one (resp.\ two) marked points in $\bdy \BB D$ sampled uniformly (and independently) from the $\gamma$-LQG boundary length measure $\nu_h$. Note that the marked points in~\cite[Definition~4.21]{wedges} correspond to the points $\pm\infty$ in the infinite strip. These points are shown to be sampled uniformly and independently from $\nu_h$ when one conditions on the underlying quantum surface structure in~\cite[Proposition~A.8]{wedges}. 

For $\alpha < Q$, an \emph{$\alpha$-quantum cone} is a doubly-marked quantum surface $(\BB C , h , 0, \infty)$ defined precisely in~\cite[Section~4.3]{wedges} which can be obtained from a whole-plane GFF plus $-\alpha\log|\cdot|$ by ``zooming in near~$0$", so as to fix the additive constant in a canonical way.
In this paper we will be especially interested in the $\gamma$-quantum cone (i.e., $\alpha=\gamma$).
One reason why the case when $\alpha =\gamma$ is special is that if $h$ is some variant of the GFF on a domain $D\subset \BB C$, then near a typical point sampled from the $\gamma$-quantum measure $\mu_h$, the field $h$ locally looks like a whole-plane GFF plus $-\gamma\log|\cdot|$~\cite[Proposition~3.4]{shef-kpz}. 
Hence a $\gamma$-quantum cone describes the behavior of a quantum surface at a quantum typical point.  
It is sometimes convenient to parameterize the set of quantum cones by a different parameter, called the \emph{weight}, which is defined to be
\eqb \label{eqn-cone-weight}
\frk w = 2 \gamma \left( Q - \alpha\right) .
\eqe
We note that the $\gamma$-quantum cone has weight $4-\gamma^2$. 
The reason why the weight parameter is convenient is that it is additive under the gluing and cutting operations for quantum wedges and quantum cones studied in~\cite{wedges}. We will say more about these operations in Section~\ref{sec-welding} just below. We refer to~\cite[Remark 1.3]{wedges} for some discussion and references concerning the connection between the formula~\eqref{eqn-cone-weight} and its half-plane analog~\eqref{eqn-wedge-weight} with earlier predictions for Brownian intersection exponents. In particular, we note that the ``boundary quantum scaling exponent" $\Delta$ arising in the KPZ formula is related to $\frk w$ by $\frk w - 2 = \gamma^2 \Delta$~\cite[Table 1.1]{wedges}. Similar comments apply for the weight of a quantum cone, as defined below. 

For $\alpha \leq Q$, an \emph{$\alpha$-quantum wedge} is a doubly-marked quantum surface $(\BB H , h , 0, \infty)$ defined in~\cite[Section~4.2]{wedges} which can be obtained from a free-boundary GFF on $\BB H$ plus $-\alpha\log|\cdot|$ by ``zooming in near $0$" (so as to fix the additive constant in a canonical way). Quantum wedges in the case when $\alpha \leq Q$ are called \emph{thick wedges} because they describe surfaces homeomorphic to $\BB H$.  

One can also define an \emph{$\alpha$-quantum wedge} for $\alpha \in (Q , Q + \gamma/2)$. In this regime, the surface is no longer homeomorphic to $\BB H$ and is instead described by an ordered Poissonian collection of doubly-marked quantum surfaces, each with the topology of the disk (the two marked points correspond to the points $\pm\infty$ in the infinite strip in~\cite[Definition~4.15]{wedges}). The individual surfaces are called \emph{beads} of the quantum wedge. Instead of looking at a whole quantum wedge, one can consider a single bead of an $\alpha$-quantum wedge conditioned on its left right quantum boundary lengths (i.e., the quantum lengths of the boundary arcs separating the two marked points) or with fixed area and fixed left and right quantum boundary lengths. Such a bead is a sample from the regular conditional distribution of the intensity measure of the Poisson point process of beads conditioned on the left/right boundary lengths or area and left/right boundary lengths of the surface, which gives a probability measure. See~\cite[Section~4.4]{wedges} for more details. Quantum wedges in the case when $\alpha \in (Q , Q +\gamma/2)$ are called \emph{thin wedges}. 

The \emph{weight} of an $\alpha$-quantum wedge for $\alpha  < Q+\gamma/2$ is defined by
\eqb \label{eqn-wedge-weight}
\frk w = \gamma \left( \frac{\gamma}{2} + Q - \alpha \right)  .
\eqe 

The main type of quantum wedge in which we will be interested in this paper is the $\frac{3\gamma}{2}$-quantum wedge, which has weight $2-\gamma^2/2$.  This wedge is thin for $\gamma \in (\sqrt2 , 2)$, which is the main regime in which we will work.  Roughly speaking, the reason why we are interested in the $\frac{3\gamma}{2}$-quantum wedge is that it can be obtained from a $\gamma$-quantum cone by ``cutting it in half" by a pair of $\SLE_{\kappa}$-type curves (see Section~\ref{sec-wpsf}). A $\frac{3\gamma}{2}$-quantum wedge in a neighborhood of either of its marked points locally looks like a $(\frac{4}{\gamma}- \frac{\gamma}{2})$-quantum wedge in a neighborhood of its first marked point, in the sense of local absolute continuity. The latter quantum wedge is thick, and appears in~\cite[Theorem~1.18]{wedges}. 
 
It follows from the definitions in~\cite[Section~4.4 and~4.5]{wedges} that in the special case when $\gamma = \sqrt{8/3}$, a doubly-marked quantum disk has the same law as a single bead of a $\sqrt{6}$-quantum wedge.  More precisely, this follows since the Bessel dimension $\delta = 4/\gamma^2$ from Definition~\cite[Definition~4.15]{wedges} with $\alpha = 3\gamma/2$ coincides with the Bessel dimension $\delta = 3-4/\gamma^2$ from Definition~\cite[Section~4.5]{wedges} when $\gamma=\sqrt{8/3}$.

\subsubsection{Conformal welding and conformal removability}
\label{sec-welding}

As alluded to in Section~\ref{sec-wedge}, the reason for introducing the weight parameter is that it is invariant under various cutting and gluing operations for quantum surfaces.  Suppose $\frk w^-,\frk w^+ > 0$ and $\frk w = \frk w^- + \frk w^+$. It is shown in~\cite[Theorem~1.2]{wedges} that if one cuts a weight-$\frk w$ quantum wedge by an independent chordal $\SLE_\kappa(\frk w^--2 ; \frk w^+-2)$ curve with force points immediately to the left and right of the starting point (or a concatenation of such curves in the thin wedge case) then one obtains a weight-$\frk w^-$ quantum wedge and an independent-$\frk w^+$ quantum wedge which can be glued together according to quantum boundary length to recover the original weight-$\frk w$ quantum wedge. Similarly, by~\cite[Theorem~1.5]{wedges}, if one cuts a weight-$\frk w$ quantum cone by an independent whole-plane $\SLE_\kappa(\frk w-2)$ curve, then one obtains a weight-$\frk w$ quantum wedge whose left and right boundaries can be glued together according to quantum length to recover the original weight-$\frk w$ quantum cone.

More generally, if we are given two quantum surfaces $\mcl S_1$ and $\mcl S_2$ with boundaries and an identification between their boundaries, one can attempt to conformally weld $\mcl S_1$ and $\mcl S_2$ along their boundaries according to this identification. In other words, one can ask if there exists a quantum surface $\mcl S$ such that $\mcl S_1$ and $\mcl S_2$ are sub-surfaces of $\mcl S$ whose union is all of $\mcl S$ and the identification between the boundaries of $\mcl S_1$ and $\mcl S_2$ induced by the inclusion maps agrees with the given identification. 

If such a quantum surface $\mcl S$ exists (which will always be the case in settings we consider), one can further ask if it is unique. The surface $\mcl S$ is endowed with a distinguished subset $A$ corresponding to the gluing interface between $\mcl S_1$ and $\mcl S_2$. Uniqueness of $\mcl S$ is equivalent to the condition that this set $A$ is \emph{conformally removable} in $\mcl S$, i.e.\ every homeomorphism from $\mcl S$ (viewed as a topological space) to itself which is conformal on $\mcl S\setminus A$ is in fact conformal on all of $\mcl S$. 

An important paper on the topic of conformal removability is~\cite{jones-smirnov-removability}, where it is shown in particular that boundaries of H\"older domains are conformally removable in $\BB C$ or in any simply connected sub-domain of~$\BB C$. $\SLE_{\kappa}$ curves for $\kappa \in (0,4)$ are boundaries of H\"older domains~\cite{schramm-sle} so are conformally removable. It is further explained in~\cite[Proposition~3.16]{wedges} that countable, locally finite unions of $\SLE_\kappa$-type curves which do accumulate only at a discrete set of points are conformally removable in $\BB C$ or in any simply connected sub-domain of $\BB C$.

It is a seemingly quite difficult open problem to determine whether an $\SLE_\kappa$ curve is conformally removable for $\kappa \in [4,8)$. 
If we knew that $\SLE_{\kappa'}$-type curves were conformally removable for $\kappa' \in (4,8)$, the proofs in the present paper could be greatly simplified.

\subsubsection{The $\sqrt{8/3}$-LQG metric} 
\label{sec-lqg-metric}

It is shown in~\cite{lqg-tbm1,lqg-tbm2,lqg-tbm3} that in the special case when $\gamma=\sqrt{8/3}$, a $\sqrt{8/3}$-LQG surface admits a metric in addition to its quantum area measure and quantum length measure. In particular, if $D\subset \BB C$ and $h$ is a GFF-type distribution on $D$, then $h$ induces a metric $\frk d_h$ on $D$ called the \emph{$\sqrt{8/3}$-LQG metric}. 
This metric is constructed using a growth process called QLE$(8/3,0)$ which is obtained by, roughly speaking, randomly re-shuffling small increments of an $\SLE_6$ curve in a manner which depends on $h$ and taking a limit as the size of the increments tends to 0 (see~\cite{qle} for a different construction of QLE which is expected, but not yet proven, to be equivalent to the one used to define $\frk d_h$ in the case when $\kappa=6$).  
 
The metric $\frk d_h$ is a well-defined functional of the quantum surface $(D , h)$ in the sense that if $(D,h)$ and $(\wt D , \wt h)$ are related by a conformal map $f : D \rta \wt D$ as in~\eqref{eqn-lqg-coord}, then a.s.\ $\frk d_h(z,w) = \frk d_{\wt h}(f(z) , f(w))$ for each $z,w\in D$. In fact, it can be deduced from the results of~\cite{shef-wang-lqg-coord} and the construction of $\frk d_h$ in~\cite{lqg-tbm1,lqg-tbm2,lqg-tbm3} that this a.s.\ holds for all choices of conformal map $f$ simultaneously. 

The metric $\frk d_h$ is a.s.\ determined by the field $h$, and conversely 
it is shown in~\cite[Theorem~1.4]{lqg-tbm3} that the metric measure space structure of $(D,\frk d_h , \mu_h)$ a.s.\ determines the quantum surface $(D,h)$. Hence in the special case when $\gamma=\sqrt{8/3}$, a $\sqrt{8/3}$-LQG surface can equivalently be viewed as a random metric measure space. 

Several particular types of quantum surfaces are equivalent (i.e., they differ by a measure-preserving isometry) to certain \emph{Brownian surfaces}, random metric measure spaces which arise as the scaling limits of various types of uniform random planar maps; see Section~\ref{sec-bead-mchar-intro}. 
 
\subsubsection{Curves on quantum surfaces} 
\label{sec-surface-curve}

Our main interest in this paper is in quantum surfaces decorated by various types of curves, typically $\SLE_{\kappa}$- or $\SLE_{\kappa'}$-type curves for $\kappa =\gamma^2$ or $\kappa' = 16/\gamma^2$ (as in~\eqref{eqn-gamma-kappa}). It will be important for us to distinguish between curves in subsets of $\BB C$ and \emph{curve-decorated quantum surfaces}.
By the latter, we mean an equivalence class of $(k+3)$-tuples $(D ,h , x_1,\dots,x_k , \eta)$ for some $k\in\BB N$, where $(D,h,x_1,\dots,x_k)$ is an equivalence class representative for a quantum surface with $k$ marked points and $\eta$ is a parameterized curve in $D$, with two such $(k+3)$-tuples $(D,h,x_1,\dots,x_k , \eta)$ and $(\wt D, \wt h, \wt x_1,\dots, \wt x_k , \wt\eta)$ defined two be equivalent if there is a conformal map $f : D\rta \wt D$ which satisfies the conditions of~\eqref{eqn-lqg-coord} and also satisfies $f\circ \eta = \wt\eta$. 

We allow ``curves" $\eta$ which are defined on a general closed subset of $\BB R$, rather than just a single interval, which arise naturally if we want to restrict a curve to the pre-image of some set whose pre-image is not connected.

Since most of our curves will be originally defined on subsets of $\BB C$, we introduce the following notation.

\begin{defn} \label{def-surface-curve}
Let $ (D,h,x_1,\dots,x_k)$ be an embedding of a quantum surface $\mcl S$ and let $\eta$ be a curve in $\BB C$. Let $I$ be the closure of the interior of $ \eta^{-1}(\ol D) $ and define $\eta_{\mcl S}$ to be the curve $\eta|_I$, viewed as a curve on $\mcl S$, so that $(\mcl S , \eta_{\mcl S})$ is a curve-decorated quantum surface represented by the equivalence class of $ (D,h,x_1,\dots,x_k , \eta)$ modulo conformal maps.
\end{defn}

Note that $\eta_{\mcl S}$ does not encode times when $\eta$ bounces off the boundary of $\bdy D$ without spending a positive interval of time in $\ol D$ (this is because we restrict to $I$ instead of $\eta^{-1}(\ol D)$).

\subsubsection{Convergence of quantum surfaces} 
\label{sec-surface-topology}

In this subsection, we will describe how to define a topology on certain families of quantum surfaces (possibly decorated by curves) using the perspective that a quantum surface is the same as an equivalence class of measure spaces modulo conformal maps. We will only write out the definitions explicitly in the case of curve-decorated quantum surfaces. A topology on quantum surfaces without curves is simply obtained by removing all references to the curve $\eta$. 

We first define a metric on certain curve-decorated measure spaces viewed modulo conformal maps, which leads to a notion of convergence for simply connected curve-decorated quantum surfaces.
For $k\in\BB N_0$, let $\BB M_k^{\op{CPU}}$ be the set of all equivalence classes $\mcl K $ of $(k+3)$-tuples $(D , \mu , \eta , x_1,\dots , x_k)$ with $D\subset \BB C$ a simply connected domain, $D\not=\BB C$, $\mu$ a Borel measure on $D$ which is finite on compact subsets of $ D$, $\eta : \BB R\rta \ol D$ a curve which extends continuously to the extended real line $\BB R\cup \{-\infty ,\infty\}$, and $x_1, x_2 ,  \dots , x_k \in D\cup \bdy D$ (with $\bdy D$ viewed as a collection of prime ends). 
Two such $(k+3)$-tuples $(D,\mu, \eta , x_1,\dots , x_k)$ and $(\wt D,\wt \mu , \wt\eta, \wt x_1 , \dots , \wt x_k)$ are declared to be equivalent if there exists a conformal map $f : D\rta \wt D$ such that 
\eqb \label{eqn-cp-equiv}
f_* \mu = \wt \mu ,\quad f \circ \eta = \wt\eta,  \quad \op{and} \quad f(x_j) = \wt x_j,\: \forall j\in [1,k]_{\BB Z} .
\eqe  

The discussion in Section~\ref{sec-lqg-intro} implies that a finite-area curve-decorated $\gamma$-quantum surface $(D,h, \eta , x_1,\dots,x_k)$ with $k$ marked points, for a simply connected domain $D$, $D\not=\BB C$, can be viewed as an element of $\BB M_k^{\op{CPU}}$, with $\mu = \mu_h$ the $\gamma$-quantum area measure.
We note that this element of $\BB M_k^{\op{CPU}}$ a.s.\ determines the quantum surface since the $\gamma$-LQG measure a.s.\ determines the field~\cite{bss-lqg-gff}. Curve-decorated quantum surfaces will be our main examples of elements of $\BB M_k^{\op{CPU}}$. 

To define a metric on $\BB M_k^{\op{CPU}}$, we note that the Riemann mapping theorem implies that each $\mcl K\in \BB M_k^{\op{CPU}}$ admits an embedding of the form $(\BB D , \mu , \eta ,  0, x_2,\dots,x_k)$ (i.e., the domain is $\BB D$ and the first marked point is~$0$).  For a domain $D\subset \BB C$, let $\BB d_D^{\op{P}}$ be the Prokhorov metric on finite Borel measures on $D$ and let $\BB d_D^{\op{U}}$ be the uniform metric on curves in $D$.  We define the \emph{conformal Prokhorov-uniform distance} between elements $\mcl K ,\wt{\mcl K} \in \BB M_k^{\op{CPU}}$ by the formula 
\eqb \label{eqn-cp-dist}
\BB d_k^{\op{CPU}}\left(\mcl K , \wt{\mcl K} \right) =  \inf_{\substack{ (\BB D, \mu , \eta  , x_1 , x_2 , \dots , x_k) \in \mcl K ,\\ (\BB D , \wt\mu  , \wt\eta,  \wt x_1 , \wt x_2, \dots , \wt x_k) \in \wt{\mcl K} } } 
\left\{     \BB d_{\BB D}^{\op{P}}(\mu ,\wt \mu) + \BB d_{\BB D}^{\op{U}}(\eta,\wt\eta) + \sum_{j=1}^k | x_j - \wt x_j|         \right\} .
\eqe  
It is easily verified that $\BB d_k^{\op{CPU}}$ is a metric on $\BB M_k^{\op{CPU}}$, whereby $\mcl K$ and $\wt{\mcl K}$ are $\BB d_{k}^{\op{CPU}}$-close if they can be embedded into $\BB D$ in such a way that their measures are close in the Prokhorov distance, their curves are close in the uniform distance, and their corresponding marked points are close in the Euclidean distance.

Hence the above construction gives us a topology on simply connected curve-decorated quantum surfaces. We will also have occasion to consider convergence of beaded quantum surfaces, i.e.\ those which can be represented as a countable ordered collection of finite quantum surfaces, with each such surface attached to its neighbors at a pair of points, such that quantum area of the beads is locally finite, i.e., the total quantum area of the beads between any two give beads is finite. Examples of beaded quantum surfaces include thin quantum wedges. 

Suppose $\mcl K = (\mcl S , \eta_{\mcl S})$ is a curve-decorated beaded quantum surface with the property that $\eta_{\mcl S}$ enters the beads of~$\mcl S$ in chronological order and does not re-enter any bead after entering a subsequent bead.  Let $T\in (0,\infty]$ be the total mass of the beads in $\mcl S$ and suppose that each bead in $\mcl S$ has $k$ marked points.  We can view $\mcl K$ as a function $[0,T] \rta \BB M_k^{\op{CPU}}$ which is defined at Lebesgue a.e.\ point of $[0,T]$ as follows. For $t\in [0,T]$, we define $\mcl K_t$ to be the curve-decorated quantum surface consisting of the first bead of $\mcl S$ with the property that the sum of the quantum masses of the previous beads (not including the bead itself) is at least $t$, equipped with the segment of $\eta_{\mcl S}$ which is contained in this bead.  We extend this function to all of $[0,\infty)$ by declaring that $\mcl K_t$ is the trivial element of $\BB M_k^{\op{CPU}}$ (i.e., the one whose total mass is 0, whose marked points all coincide, and whose curve is constant at the marked point) for each $t > T$.

We define $\BB M_k^{\op{bead}}$ to be the set of all Borel measurable functions $\mcl K : [0,\infty) \rta \BB M_k^{\op{CPU}}$ which are defined a.e., so that a beaded curve-decorated quantum surface is an element of $\BB M_k^{\op{bead}}$ in the manner described above. We then define a metric on $\BB M_k^{\op{bead}}$ by
\eqb \label{eqn-bead-dist}
\BB d_k^{\op{bead}}\left(  \mcl K , \wt{\mcl K} \right) = \int_0^\infty e^{-t} \left( 1 \wedge \BB d_k^{\op{CPU}}\left( \mcl K_t , \wt{\mcl K}_t \right) \right)  \, dt .
\eqe

\subsection{Space-filling $\SLE_{\kappa'}$ and the peanosphere construction}
\label{sec-peano-prelim}

We will now review the definition of space-filling $\SLE_{\kappa'}$ and its relationship to various $\gamma$-quantum surfaces. We continue to assume that $\gamma$, $\kappa$, and $\kappa'$ are related as in~\eqref{eqn-gamma-kappa}. 

\subsubsection{Imaginary geometry} 
\label{sec-ig-prelim}

Let 
\eqb 
 \chi = \frac{2}{\sqrt\kappa} - \frac{\sqrt\kappa}{2} . \label{eqn-ig-param} 
\eqe
Suppose $D$ is a simply connected domain in $\BB C$ and $h^{\op{IG}}$ is a GFF on $D$ with boundary data chosen in such a way that if $\phi : \BB H\rta D$ i a conformal map, then the boundary data of $h^{\op{IG}}\circ \phi - \chi \op{arg}\phi'$ on $\BB R$ is piecewise constant and changes only finitely many times (the superscript IG stands for ``imaginary geometry" and is used to distinguish this GFF from the GFF-type distributions used elsewhere to define LQG surfaces). The work~\cite{ig1} studies various couplings of $h^{\op{IG}}$ with certain chordal $\SLE_\kappa(\ul\rho)$ and $\SLE_{\kappa'}(\ul\rho)$ curves between points of $\bdy D$ which are called \emph{flow lines} and \emph{counterflow lines} of $h^{\op{IG}}$, respectively. These curves are a.s.\ determined by $h^{\op{IG}}$ and are local sets for $h^{\op{IG}}$ in the sense of~\cite{ss-contour}.  One can consider flow lines with any angle $\theta$ in a certain range depending on the boundary data of $h^{\op{IG}}$, which gives rise to couplings of multiple $\SLE_\kappa(\ul\rho)$ curves started from the same point with the same GFF. Flow lines with the same angle started at different points a.s.\ merge upon intersecting, whereas flow lines with different angles can cross at most once (depending on the angles). 

It is shown in~\cite{ig4} that one can also make sense of flow lines of $h^{\op{IG}}$ started from points in the interior of~$D$. These curves are not exactly $\SLE_\kappa(\ul\rho)$ curves but locally look like such curves. Furthermore, suppose $h^{\op{IG}}$ is a whole-plane GFF, viewed modulo a global additive multiple of $2\pi \chi$ (with $\chi$ as in~\eqref{eqn-ig-param}). Then one can make sense of flow lines of $h^{\op{IG}}$ started from any given point in $\BB C$ as well as counterflow lines of $h^{\op{IG}}$ started from $\infty$ and targeted at any given point in $\BB C$. The flow lines of $h^{\op{IG}}$ for any angle in $[-\pi , \pi)$ are whole-plane $\SLE_\kappa(2-\kappa)$ curves~\cite[Theorem~1.1]{ig4} and the counterflow lines are whole-plane $\SLE_{\kappa'}(\kappa'-6)$ curves~\cite[Theorem~1.6]{ig4}. 
By~\cite[Theorem~1.15]{ig4}, for $z\in\BB C$ it is a.s.\ the case that the flow lines of $h^{\op{IG}}$ started from $z$ with angles $\pm\pi/2$ are the left and right outer boundaries of the counterflow line of $h^{\op{IG}}$ from~$\infty$ to~$z$ if we lift this counterflow line to a path in the universal cover of $\BB C$.

We note that these two flow lines will a.s.\ intersect each other infinitely often if $\kappa \in (2,4)$, but not if $\kappa \leq 2$~\cite[Theorem~1.11]{ig4}. This is related to the fact that SLE$_{\kappa'}(\kappa'-6)$ has cut points for $\kappa' \in (4,8)$, but not for $\kappa'\geq 8$.

\subsubsection{Space-filling $\SLE_{\kappa'}$} 
\label{sec-wpsf}

In this subsection we review the construction of space-filling $\SLE_{\kappa'}$ from~\cite[Sections~1.2.3 and~4.3]{ig4} and~\cite[Section~1.4.1]{wedges}. The basic idea of the construction is to define the outer boundary of $\eta'$ stopped at the first time it hits each specified $z\in\BB Q^2$, which will be a pair of SLE$_\kappa$-type curves, then follow these outer boundaries in order to get a space-filling curve. The outer boundary curves will be defined using imaginary geometry. 

Suppose that $h^{\op{IG}}$ is a whole-plane GFF viewed modulo a global additive multiple of $2\pi\chi$, with $\chi$ as in~\eqref{eqn-ig-param}.  For $z\in\BB Q^2$, let $\eta_z^-$ and $\eta_z^+$ be the flow lines of $h^{\op{IG}}$ started from $z$ with angles $\pi/2$ and $-\pi/2$, respectively (recall Section~\ref{sec-ig-prelim}). 

We define a total order on $\BB Q^2$ by declaring that $z$ comes before $w$ if and only if $z$ lies in a connected component of $\BB C\setminus (\eta_w^-\cup\eta_w^+)$ whose boundary is traced by the left side of $\eta_w^-$ and the right side of $\eta_w^+$.  It follows from the argument of~\cite[Section~4.3]{ig4} that there is a space-filling curve $\eta'$ from $\infty$ to $\infty$ in $\BB C$ which hits points in $\BB Q^2$ in order and is continuous when parameterized by Lebesgue measure. Furthermore, the law of this curve does not depend on the choice of countable dense subset $\BB Q^2$. This curve is called \emph{whole-plane space-filling $\SLE_{\kappa'}$ from $\infty$ to $\infty$}. 

For $\kappa' \geq 8$, whole-plane space-filling $\SLE_{\kappa'}$ is just a two-sided variant of ordinary $\SLE_{\kappa'}$.  For $\kappa'\in (4,8)$, the space-filling $\SLE_{\kappa'}$ curve $\eta'$ evolves in a similar manner to an $\SLE_{\kappa'}$-type curve, but whenever it hits itself and forms a bubble, it enters the bubble and fills it in with a space-filling loop rather than just continuing outside the bubble. In particular, whole-plane space-filling $\SLE_{\kappa'}$ from $\infty$ to $\infty$ in this case is not described by a Loewner evolution, even locally.  The path targeted at a given point $z \in \BB C$ (i.e., parameterized by capacity as seen from $z$) has the law of an $\SLE_{\kappa'}(\kappa'-6)$ process and is in fact the counterflow line of~$h^{\op{IG}}$ from~$\infty$ to~$z$.  That is, this counterflow line can be recovered from~$\eta'$ by skipping all of the bubbles filled in by~$\eta'$ before it hits~$z$. 

One can perform a similar construction to the above starting from a GFF on a proper simply connected sub-domain of $\BB C$ with appropriate boundary data, rather than a GFF on $\BB C$ (this construction is described explicitly in~\cite[Sections~1.2.3 and~4.3]{ig4}).  This gives rise to chordal space-filling $\SLE_{\kappa'}(\rho^L ; \rho^R)$ processes for $\rho^L ,\rho^R \in (-2,\kappa'/2-2)$.  For $\kappa' \geq 8$, chordal space-filling $\SLE_{\kappa'}(\rho^L;\rho^R)$ with $\rho^L,\rho^R \in (-2,\kappa'/2-4]$ is identical to ordinary chordal $\SLE_{\kappa'}(\rho^L;\rho^R)$, and if either $\rho^L \in (\kappa'/2-4,\kappa'/2-2)$ or $\rho^R \in (\kappa'/2-4,\kappa'/2-2)$ or $\kappa' \in (4,8)$ it is obtained from chordal $\SLE_{\kappa'}(\rho^L;\rho^R)$ by iteratively filling in the bubbles it disconnects from its target point. 

As explained in~\cite[Footnote 4]{wedges}, whole-plane space-filling $\SLE_{\kappa'}$ from $\infty$ to $\infty$ can equivalently be constructed from chordal $\SLE_{\kappa'}$, as follows. Let $h^{\op{IG}}$ be a whole-plane GFF viewed modulo a global additive multiple of $2\pi\chi$, as above, and let $\eta_0^-$ and $\eta_0^+$ be the flow lines of $h$ started from~$0$ with angles $\pi/2$ and $-\pi/2$, respectively.  Conditional on $\eta_0^-$ and $\eta_0^+$, sample an independent chordal space-filling $\SLE_{\kappa'}$ in each connected component $U$ of $\BB C\setminus (\eta_0^-\cup \eta_0^+)$, between the two points of $\bdy U$ where $\eta_0^-$ and $\eta_0^+$ intersect (or between 0 and $\infty$, if $\kappa' \geq 8$ in which case $\eta_0^\pm$ do not intersect). 
Then concatenate these chordal space-filling $\SLE_{\kappa'}$ curves.

\subsubsection{Peanosphere construction} 
\label{sec-peanosphere}

It is particularly natural to consider a $\gamma$-quantum cone (recall Section~\ref{sec-wedge}) decorated by an independent whole-plane space-filling $\SLE_{\kappa'}$ curve from $\infty$ to $\infty$. The reason for this is the so-called \emph{peanosphere} or \emph{mating-of-trees} construction, which we now describe. 

Let $\mcl C = (\BB C , h , 0, \infty)$ be a $\gamma$-quantum cone and let $\eta'$ be a whole-plane space-filling $\SLE_{\kappa'}$ curve from $\infty$ to $\infty$ sampled independently from $h$ and parameterized in such a way that $\eta'(0) =0$ and the $\gamma$-quantum area measure satisfies $\mu_h(\eta'([s,t])) = t-s$ whenever $s,t\in\BB R$ with $s<t$. 
For $t\geq 0$, let $L_t$ be equal to the $\gamma$-quantum length of the segment of the left boundary of $\eta'([t,\infty))$ which is shared with $\eta'([0,t])$ minus the $\gamma$-quantum length of the segment of the left boundary of $\eta'([0,t])$ which is not shared with $\eta'([t,\infty))$; and for $t < 0$, let $L_t$ be the $\gamma$-quantum length of the segment of the left boundary of $\eta'([t,0])$ which is not shared with $\eta'((-\infty,t])$ minus the $\gamma$-quantum length of the segment of the left boundary of $\eta'([t,0])$ which is shared with $\eta'((-\infty,t])$. 
Define $R_t$ similarly with ``right" in place of ``left". 
Also let $Z := (L,R) : \BB R\rta \BB R^2$. 

It is shown in~\cite[Theorem~1.9]{wedges} (see also~\cite{kappa8-cov} for the case $\kappa' \geq 8$) that there is a deterministic constant $\alpha = \alpha(\gamma) > 0$ such that $Z$ evolves as a pair of correlated two-dimensional Brownian motions with variances and covariances given by~\eqref{eqn-bm-cov}.  

The Brownian motion $Z$ is referred to as the \emph{peanosphere Brownian motion}. The reason for the name is that $Z$ can be used to construct a random curve-decorated topological space called an \emph{infinite-volume peanosphere}, which a.s.\ differs from $(\BB C ,\eta')$ by a curve-preserving homeomorphism. See Figure~\ref{fig-peano} for an illustration. We remark that there is also a finite-volume analog of the peanosphere construction, with a quantum sphere in place of a $\gamma$-quantum cone and a pair of correlated Brownian excursions in place of a pair of correlated Brownian motions. See~\cite{sphere-constructions} for more details.

By~\cite[Theorem~1.11]{wedges}, $Z$ a.s.\ determines the curve-decorated quantum surface $(\mcl C , \eta'_{\mcl C})$. This determination is local, in the following sense.
For $a,b\in\BB R\cup\{-\infty,\infty\}$ with $a<b$, let $\mcl S_{a,b}$ be the beaded quantum surface parameterized by the interior of $\eta'([a,b])$. 
Then the curve-decorated quantum surface $(\mcl S_{a,b} , \eta'_{\mcl S_{a,b}})$, but not its particular embedding into $\BB C$, is a.s.\ determined by $(Z-Z_a)|_{[a,b]}$ (see, e.g.,~\cite[Lemma~3.12]{ghs-bipolar} for a careful justification of this point). 
Furthermore, by~\cite[Theorem~1.9]{wedges}, for $t\in\BB R$, the quantum surfaces $\mcl S_{-\infty,t}$ and $\mcl S_{t,\infty}$ are $\frac{3\gamma^2}{2}$-quantum wedges and the curve-decorated quantum surfaces $(\mcl S_{-\infty,t} , \eta'_{\mcl S_{-\infty,t}})   $ and $(\mcl S_{t,\infty} , \eta'_{\mcl S_{t,\infty}})$ are independent (here we use Definition~\ref{def-surface-curve}).
In particular, the hypotheses of Theorem~\ref{thm-wpsf-char-intro} are satisfied in the case when $(\wt h , \wt\eta') = (h,\eta')$ is an embedding into $(\BB C , 0, \infty)$ of a $\gamma$-quantum cone together with an independent whole-plane space-filling $\SLE_{\kappa'}$.

\begin{figure}[ht!]
	\begin{center}
		\includegraphics[scale=1]{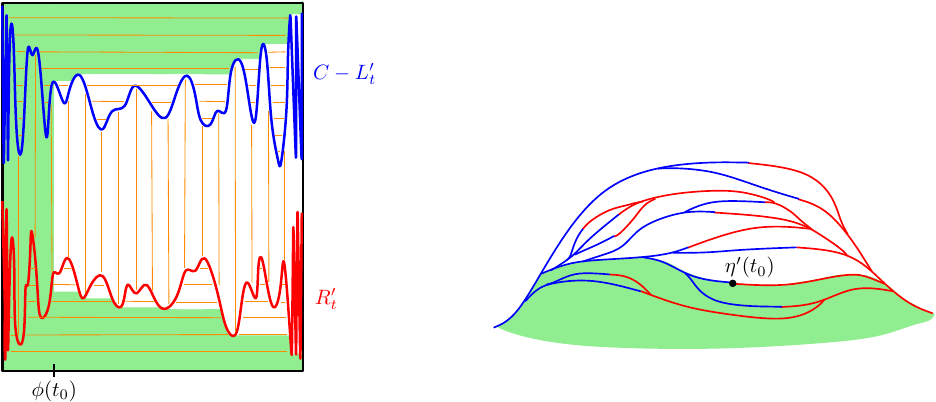}
	\end{center}
	\caption[Peanosphere construction]{\label{fig-peano} 
		An illustration of the definition of the peanosphere as a curve-decorated topological space, which first appeared in~\cite{ghs-bipolar}. Let $Z=(L_t,R_t)_{t\in \BB R}$ be a correlated two-dimensional Brownian motion as in~\eqref{eqn-bm-cov}. Let $\phi: \BB R \rta (0,1)$ be an increasing, continuous and bijective function, and for $t\in(0,1)$ define $ L_t'  :=\phi(L_{\phi^{-1}(t)})$ and $ R_t' :=\phi(R_{\phi^{-1}(t)})$. The left figure shows $ R'$ and $C- L'$, where $C$ is a constant chosen so large that the two graphs do not intersect.
		We draw horizontal lines above the graph of $C- L'$ and below the graph of $ R'$, in addition to vertical lines between the two graphs, and then we identify points which lie on the same horizontal or vertical line segment. By Moore's theorem~\cite{moore} the resulting quotient space $S$ is a topological sphere. This sphere is decorated with a space-filling path $\eta'$ 
		where $\eta'(t)$ for $t \in \BB R$ is the equivalence class of $\phi(t)\in(0,1)$. 
		We call the pair $(S,\eta')$ an \emph{infinite-volume peanosphere}. 
		It is shown in~\cite{wedges} that a $\gamma$-quantum cone decorated by an independent whole-plane space-filling $\SLE_{\kappa'}$ parameterized by $\gamma$-quantum mass is a.s.\ topologically equivalent to the infinite-volume peanosphere constructed from its peanosphere Brownian motion~$Z$. 
	}
\end{figure}

\subsubsection{Describing events in terms of the peanosphere Brownian motion} 
\label{sec-cone-time}

Many objects associated with the pair $(h,\eta')$ can be described explicitly in terms of the peanosphere Brownian motion $Z$. 
Here we list some such objects which will be particularly important for this paper. To describe them, we will need the following definition.

\begin{defn}\label{def-cone-time}
A time $t \in [a,b]$ is called a \emph{$\pi/2$-cone time} for a function $Z = (L,R) : \BB R \rta \BB R^2$ if there exists $t'< t$ such that $L_s \geq L_t$ and $R_s \geq R_t$ for each $s\in [t'   , t ]$. Equivalently, $Z([t'   , t ])$ is contained in the cone $Z(t) + \{z\in \BB C : \op{arg} z \in [0,\pi/2]\}$. We write $  v_Z(t)$ for the infimum of the times $t'$ for which this condition is satisfied, i.e.\ $  v_Z(t)$ is the entrance time of the cone. The \emph{$\pi/2$-cone interval} corresponding to the time $t$ is $[v_Z(t) , t]$ and the corresponding \emph{$\pi/2$-cone excursion} is $(Z-Z_{v_Z(t)})|_{[v_Z(t) , t]}$. 
\end{defn}

\begin{itemize}
\item The curve $\eta'$ hits the left (resp.\ right) outer boundary of $\eta'((-\infty,t])$ at a time $s \geq t$ without forming a bubble if and only if $L$ (resp.\ $R$) attains a running infimum at time $s$ relative to time $t$. 
\item The points where the left and right outer boundaries of $\eta'([t,\infty))$ intersect are precisely the times $s\geq t$ at which $L$ and $R$ attain a simultaneous running infimum. 
\item The set of \emph{bubbles} filled in by $\eta'$ --- i.e., the closures of sets which are disconnected from $\infty$ by $\eta'$ at times when it hits the left or right outer boundary of its past --- are precisely the sets of the form $\eta'([v_Z(t) , t])$ for $t$ a $\pi/2$-cone time for~$Z$. Furthermore, the quantum area (resp.\ quantum boundary length) of the bubble $\eta'([v_Z(t) ,t])$ is equal to $t-v_Z(t)$ (resp.\ $|Z_t-Z_{v_Z(t)}|$) and the boundary of $\eta'([v_Z(t) ,t])$ is traversed by the left (resp.\ right) side of $\eta'$ if and only if $R_t = R_{v_Z(t)}$ (resp.\ $L_t = L_{v_Z(t)}$). 
\end{itemize}

We note that the second and third sets mentioned above are empty for $\kappa'\geq 8$. 

See Figure~\ref{fig-cone-time} for an illustration of Definition~\ref{def-cone-time}. 
A positively correlated Brownian motion (which corresponds to $\kappa' \in (4,8)$) a.s.\ has an uncountable fractal set of $\pi/2$-cone times, whereas an uncorrelated or negatively correlated Brownian motion (corresponding to $\kappa' \geq 8$) a.s.\ has no $\pi/2$-cone times~\cite{shimura-cone,evans-cone}. 
We note that the right endpoint of a $\pi/2$-cone interval containing $t$ is the same as a simultaneous running infimum for $L$ and $R$ relative to time $t$.

\begin{figure}[ht!]
	\begin{center}
		\includegraphics[scale=1]{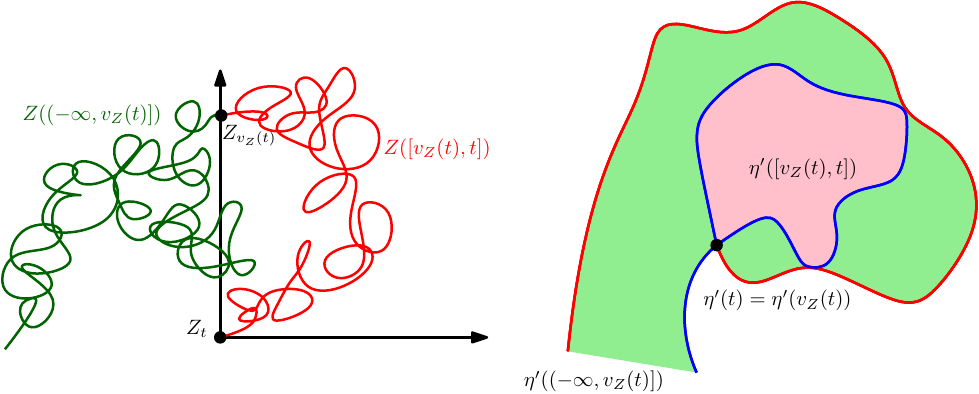}
	\end{center}
	\caption[A $\pi/2$-cone time for Brownian motion]{\label{fig-cone-time} 
\textbf{Left:} A $\pi/2$-cone time $t$ for $Z$ along with the corresponding cone entrance time $v_Z(t)$. \textbf{Right:} The corresponding behavior of the space-filling $\SLE_{\kappa'}$ curve $\eta'$. At time $v_Z(t)$, $\eta'$ closes off a bubble which it fills in during the time interval $[v_Z(t) ,t]$. 
	}
\end{figure}

There are certain special $\pi/2$-cone times (corresponding to special bubbles filled in by $\eta'$) which will be especially important in this paper. 

\begin{defn}\label{def-maximal}
A $\pi/2$-cone time for $Z$ is called a \emph{maximal $\pi/2$-cone time} in an (open or closed) interval $I\subset \BB R$ if $[ v_Z(t) , t] \subset I$ and there is no $\pi/2$-cone time $t'$ for $Z$ such that $[v_Z(t') , t']\subset I$ and $[ v_Z(t) , t] \subset (v_Z(t') , t')$. In this case the interval $[v_Z(t) , t]$ is called a \emph{maximal $\pi/2$-cone interval} for $Z$ in $I$. 
\end{defn}

For example, if $\eta_{-\infty,t}$ denotes the SLE$_{\kappa'}(\kappa'-6)$ curve obtained by parameterizing $\eta'|_{(-\infty,t]}$ by capacity as seen from $\eta'(t)$, then the set of maximal $\pi/2$-cone intervals for $Z$ in $(-\infty,t)$ is in one-to-one correspondence with the set of closures of connected components of $\BB C\setminus \eta_{-\infty,t}$ whose boundaries are entirely traced by either the left side or the right side of $\eta_{-\infty,t}$ via $[v_Z(t) , t] \leftrightarrow \eta'([v_Z(t),t])$. Indeed, this follows from~\cite[Lemma~10.4]{wedges} applied to the time reversal of $\eta'$ (which has the same law as $\eta'$).

\subsection{Stronger characterization theorem for whole-plane space-filling $\SLE_{\kappa'}$}
\label{sec-wpsf-char}

Here we state a precise version of the characterization result Theorem~\ref{thm-wpsf-char-intro}.
This is the version of the theorem which we will actually prove, and the version which will be used to deduce our other characterization theorems in Sections~\ref{sec-chordal} and~\ref{sec-metric-char}. 

Before stating the theorem, we introduce some notation which will be used to formulate the Markov property condition. Let $(\wt h , \wt \eta' , Z)$ be a coupling of an embedding into $\BB C$ of a $\gamma$-quantum cone, a space-filling curve in $\BB R\rta \BB C$, and a correlated two-dimensional Brownian motion with variances and covariance as in~\eqref{eqn-bm-cov}.  For $a,b\in\BB R \cup \{-\infty,\infty\}$ with $a < b$, let $\wt{\mcl F}_{a,b}$ be the $\sigma$-algebra generated by $(Z-Z_a)|_{[a,b]}$ (or $(Z-Z_b)|_{[a,b]}$ if $a = - \infty$ or $Z$ if $a= -\infty$ and $b=\infty$) and the singly marked quantum surfaces $( \wt\eta'([v_Z(s) , s]) , \wt h|_{\wt\eta'([v_Z(s) , s])} , \wt\eta'(s))$, where $s$ ranges over all $\pi/2$-cone times for $Z$ which are maximal in some interval contained in $(a,b)$ with rational endpoints (see Section~\ref{sec-cone-time} for the definition of a $\pi/2$-cone time).  We note that the quantum surfaces parameterized by $\pi/2$-cone times used in the $\sigma$-algebras $\wt{\mcl F}_{a,b}$ correspond to the ``bubbles" filled in by $\wt\eta'$ during the time interval $[a,b]$; see Section~\ref{sec-cone-time}. 

\begin{thm}[Whole-plane space-filling SLE characterization] \label{thm-wpsf-char}
Let $\kappa' \in (4,8)$ and $\gamma = 4/\sqrt{\kappa'} \in (\sqrt 2 , 2)$. 
Suppose that $(\wt h, \wt \eta'  ,Z )$ is a coupling where $\wt h$ is an embedding into $(\BB C , 0 , \infty)$ of a $\gamma$-quantum cone, $\wt\eta':\BB R\rta \BB C$ is a random continuous curve parameterized by $\gamma$-quantum mass with respect to $\wt h$ with $\wt\eta'(0)=0$, and $Z$ is a correlated two-dimensional Brownian motion with variances and covariance as in~\eqref{eqn-bm-cov}. 
Assume that the following conditions are satisfied.
\begin{enumerate}
\item \label{item-wpsf-char-wedge} (Markov property) For each $t\in\BB R$, the $\sigma$-algebras $\wt{\mcl F}_{-\infty,t}$ and $\wt{\mcl F}_{t,\infty}$ defined just above are independent and the doubly marked beaded quantum surface $  (\wt \eta'([t,\infty)) , \wt h|_{\wt\eta'([t,\infty))}  , \wt\eta'(t) , \infty)$ has the law of a $\frac{3\gamma}{2}$-quantum wedge and is independent from $\wt{\mcl F}_{-\infty,t}$. 
\item \label{item-wpsf-char-homeo}  (Topology and consistency) The curve-decorated topological space $(\BB C ,  \wt\eta')$ is equivalent to the infinite-volume peanosphere generated by $Z$. Equivalently, if $((\BB C , h , 0, \infty) ,\eta')$ is the pair consisting of a $\gamma$-quantum cone and an independent space-filling $\SLE_{\kappa'}$ from $\infty$ to $\infty$ parameterized by $\gamma$-quantum mass with respect to $h$ which is determined by $Z$ via~\cite[Theorem~1.11]{wedges}, then there is a homeomorphism $\Phi : \BB C\rta\BB C$ with $\Phi\circ \eta' = \wt\eta'$. Moreover, $\Phi$ a.s.\ pushes forward the $\gamma$-quantum length measure on $\bdy\eta'([t,\infty))$ with respect to $h$ to the $\gamma$-quantum length measure on $\bdy\wt\eta'([t,\infty))$ with respect to~$\wt h$ for each $t\in\BB Q$.
\end{enumerate}
Then $(\wt h, \wt \eta')$ is an embedding into $(\BB C , 0 , \infty)$ of a $\gamma$-quantum cone together with an independent whole-plane space-filling $\SLE_{\kappa'}$ from $\infty$ to $\infty$ parameterized by $\gamma$-quantum mass with respect to $\wt h$. In fact, the map $\Phi$ of condition~\ref{item-wpsf-char-homeo} is a.s.\ given by multiplication by a complex number.
\end{thm}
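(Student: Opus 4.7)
The plan is as follows. Let $(h,\eta')$ be an embedding into $(\BB C,0,\infty)$ of a $\gamma$-quantum cone together with an independent whole-plane space-filling $\SLE_{\kappa'}$ whose peanosphere Brownian motion coincides with the given $Z$; this pair is a.s.\ determined by $Z$ via~\cite[Theorem~1.11]{wedges} and in particular also satisfies the hypotheses of the theorem. By condition~\ref{item-wpsf-char-homeo}, the homeomorphism $\Phi : \BB C\rta \BB C$ with $\Phi\circ\eta' = \wt\eta'$ exists on the same probability space. The goal reduces to showing that for each pair of dyadic rationals $a<b$, the beaded curve-decorated quantum surfaces $\mcl S_{a,b}$ (parameterized by $\eta'([a,b])$) and $\wt{\mcl S}_{a,b}$ (parameterized by $\wt\eta'([a,b])$) agree jointly in law; once this is known, the laws of the discrete skeleta $\{\eta'(j\ep)\}_{j\in\BB Z}$ and $\{\wt\eta'(j\ep)\}_{j\in\BB Z}$ will agree and letting $\ep\rta 0$ will force $\Phi$ to be a conformal automorphism of the $\gamma$-quantum cone, hence multiplication by a constant.

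First I would identify the laws of the complementary quantum surfaces attached to the ``skeleton'' $\wt\eta_{a,b}$, defined as the non-space-filling trace obtained from $\wt\eta'|_{[a,b]}$ by collapsing every maximal $\pi/2$-cone interval of $Z$ in $(a,b)$ to a point. The Markov-plus-wedge hypothesis together with the fact that each bubble $\wt\eta'([v_Z(t),t])$ is measurable with respect to $\wt{\mcl F}_{-\infty,t}$ and has predetermined quantum area $t - v_Z(t)$ and boundary length $|Z_t-Z_{v_Z(t)}|$ should force these bubbles to be conditionally independent quantum disks with exactly the parameters dictated by $Z$; this matches what one gets on the true side from the peanosphere dictionary recalled in Section~\ref{sec-cone-time}. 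More generally, for any $t\in [a,b]$ I would prove that the joint law of (i) the sub-surface $\wt{\mcl S}_{a,b}^0$ parameterized by the bubbles already closed off by $\wt\eta_{a,b}$ up to time $t$ and (ii) the remaining future surface beyond $t$ agrees with its counterpart for $(h,\eta')$. This parallel law identification is the conduit that lets us swap sub-surfaces without changing anything outside.

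Next I would carry out a curve-swapping interpolation: fix $a<b$, set $t_{n,k} = a + (k/n)(b-a)$ for $k \in [0,n]_{\BB Z}$, and build a finite sequence of $\gamma$-quantum cones $\rng{\mcl C}_{n,k}$, each decorated by a skeleton curve $\rng\eta_{n,k}$ and a space-filling curve $\rng\eta_{n,k}'$, which use the $\wt{}$-data on $[a,t_{n,k}]$ and the true data on $[t_{n,k},b]$. The cone $\rng{\mcl C}_{n,k-1}$ is obtained from $\rng{\mcl C}_{n,k}$ by removing the sub-surface parameterized by $\rng\eta'_{n,k}([t_{n,k-1},t_{n,k}])$ and gluing in the corresponding sub-surface taken from the other pair; by the previous paragraph, these two sub-surfaces are equal in law as beaded collections of quantum disks with identical areas and boundary lengths, so this swap can be realized by a homeomorphism $f_{n,k} : \rng{\mcl C}_{n,k}\rta \rng{\mcl C}_{n,k-1}$ that is conformal off $\rng\eta_{n,k}([t_{n,k-1},t_{n,k}])$. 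Crucially, $f_{n,k}$ is the identity on any interval $[t_{n,k-1},t_{n,k}]$ whose image is contained in a single bubble of $\rng\eta_{n,k}$, since then there is no skeleton data to adjust.

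The analytic core, and what I expect to be the main obstacle, is bounding the composed distortion $f_{n,1}\circ\cdots\circ f_{n,n}$ and showing it converges to the identity as $n\rta\infty$. The distortion estimate~\cite[Lemma~9.6]{wedges} bounds the deviation of each $f_{n,k}$ from the identity by a power of the Euclidean diameter of $\eta'([t_{n,k-1},t_{n,k}])$ whenever that segment meets the skeleton $\eta_{a,b}$ of the true pair. What is needed therefore is an estimate of the form
\eqbn
\BB E\Biggl[\sum_{\substack{k \in [1,n]_{\BB Z} \\ \eta'([t_{n,k-1},t_{n,k}]) \cap \eta_{a,b} \neq \emptyset}} \op{diam}\bigl(\eta'([t_{n,k-1},t_{n,k}])\bigr)^{2}\Biggr] = o_n(1),
\eqen
and I would attack it through two independent inputs: a KPZ-type bound converting the expected number of length-$1/n$ pre-image intervals hitting $\eta_{a,b}$ into an upper bound for the Euclidean area of an $\ep$-neighborhood of $\eta_{a,b}$ (analogous to the dimension relation of~\cite{ghm-kpz}), and a purely Brownian estimate controlling the expected covering number of the complement of the union of maximal $\pi/2$-cone intervals of $Z$ in $[a,b]$. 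This is precisely the step that breaks for $\kappa'\geq 8$, since then $Z$ has no $\pi/2$-cone times, the skeleton equals all of $\BB C$, and the above sum has constant order. Granting the estimate, composing the $f_{n,k}$ gives $\mcl S_{a,b}\eqD \wt{\mcl S}_{a,b}$; joint agreement over $(a,b)$ dyadic follows by taking a countable union, and then $\ep\rta 0$ identifies $(\wt h,\wt\eta')$ in law with $(h,\eta')$ and pins $\Phi$ down to be affine.
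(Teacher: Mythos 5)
Your proposal follows essentially the same route as the paper's proof: identify the laws of the bubble surfaces from the Markov-plus-wedge hypotheses (Section~\ref{sec-disk-law}), interpolate by swapping one $1/n$-increment at a time via a homeomorphism conformal off the swapped skeleton segment (Section~\ref{sec-swapping}), control the cumulative distortion via~\cite[Lemma~9.6]{wedges} combined with a diameter-sum estimate, and derive that estimate from a KPZ-type expectation bound together with a Brownian covering-number bound for the complement of the maximal $\pi/2$-cone intervals of $Z$ (Section~\ref{sec-diam-sum}); your diagnosis of the $\kappa'\geq 8$ obstruction is also exactly the paper's. Two places where your sketch is slightly looser than what is actually needed: (i) the paper builds the interpolating cones $\rng{\mcl C}_{n,k}$ entirely out of $\wt{}$-bubbles welded to an \emph{independent} past wedge, with the skeleton's conformal structure on $[a,t_{n,k}]$ re-imposed by the deterministic functional $F_{n,k}$ of Lemma~\ref{lem-full-surface-msrble}, rather than by literally welding $\wt{}$-pieces on one time interval to true pieces on the other --- the latter would require realizing the equality in law of the bubble families as a coupling, which the paper's formulation avoids; and (ii) upgrading the marginal equality $\mcl S_{a,b}\eqD\wt{\mcl S}_{a,b}$ to joint equality of the families $\{\mcl S_{(j-1)\ep,j\ep}\}_{j\in\BB Z}$ and $\{\wt{\mcl S}_{(j-1)\ep,j\ep}\}_{j\in\BB Z}$ is not a matter of ``taking a countable union'': one needs the measurability $\wt{\mcl S}_{a,b}\in\wt{\mcl F}_{a,b}$ (a consequence of Proposition~\ref{prop-inc-agree} and Lemma~\ref{lem-partial-surface-determined}) combined with the independence in condition~\ref{item-wpsf-char-wedge}, and then conformal removability of the boundaries of the $\eta'$-segments to rebuild the full cone from its pieces.
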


As explained after the statement of Theorem~\ref{thm-wpsf-char-intro}, in the setting of Theorem~\ref{thm-wpsf-char} the $\gamma$-quantum length measure on $\bdy\wt\eta'([t,\infty))$ with respect to~$\wt h$ is well-defined for all times $t\in\BB R$ simultaneously since we know that the future unexplored quantum surface is a $\frac{3\gamma}{2}$-quantum wedge.
 
Theorem~\ref{thm-wpsf-char} is slightly stronger than Theorem~\ref{thm-wpsf-char-intro}.  The topology and consistency hypothesis, namely condition~\ref{item-wpsf-char-homeo} of Theorem~\ref{thm-wpsf-char}, is just a more precise version of condition~\ref{item-wpsf-char-homeo-intro} from Theorem~\ref{thm-wpsf-char-intro}.  Both theorems also still make the assumption that the future quantum surface has the law of a $\frac{3\gamma}{2}$-quantum wedge.  However, the quantum domain Markov property assumed in Theorem~\ref{thm-wpsf-char} is weaker than the analogous Markov property assumed in Theorem~\ref{thm-wpsf-char-intro} in two respects.
\begin{itemize}
\item The information about the ``past" which we consider in Theorem~\ref{thm-wpsf-char} is encoded by the $\sigma$-algebra $\wt{\mcl F}_{-\infty,t}$, which is contained in the $\sigma$-algebra generated by the curve-decorated quantum surface\\
 $(  \wt\eta'((-\infty, t] ) , \wt h|_{\wt\eta'((-\infty, t]  ) }  ,  \wt\eta'|_{(-\infty, t] }   )$; whereas in Theorem~\ref{thm-wpsf-char-intro} we consider the entire past curve-decorated quantum surface $(  \wt\eta'((-\infty, t] ) , \wt h|_{\wt\eta'((-\infty, t]  ) }  ,  \wt\eta'|_{(-\infty, t] }   )$.
\item The Markov property in Theorem~\ref{thm-wpsf-char} is split into two parts: instead of assuming independence of the entire past and future curve-decorated quantum surfaces, we assume only that each of $\wt{\mcl F}_{t,\infty}$ and $  (\wt\eta'([t,\infty)) , \wt h|_{\wt\eta'([t,\infty))}  , \wt\eta'(t) , \infty)$ (which are each determined by the future curve-decorated quantum surface) is independent from $\wt{\mcl F}_{-\infty,t}$. Note that we do \emph{not} assume that these two objects are jointly independent from $\wt{\mcl F}_{-\infty,t}$.  
\end{itemize}
We remark that for a space-filling $\SLE_{\kappa'}$ on an independent $\gamma$-quantum cone, for $a<b$ the peanosphere Brownian motion increment $(Z-Z_a)|_{[a,b]}$ a.s.\ determines the corresponding curve-decorated quantum surface $(\eta'([a,b]) , h|_{\eta'([a,b])} , \eta'(a) , \eta'(b))$ but the analogous statement for the given triple $(\wt h , \wt\eta' , Z)$ in Theorem~\ref{thm-wpsf-char} is not known a priori. In particular, it is a priori possible that $\wt{\mcl F}_{-\infty,t}$ is a strict subset of $\sigma(  \wt\eta'((-\infty, t] ) , \wt h|_{\wt\eta'((-\infty, t]  ) }  ,  \wt\eta'|_{(-\infty, t] }   )$.

Sections~\ref{sec-disk-law} through~\ref{sec-swapping} will be devoted to the proof of Theorem~\ref{thm-wpsf-char}.

\section{Laws of surfaces and curves}
\label{sec-disk-law}

Throughout this section we assume we are in the setting of Theorem~\ref{thm-wpsf-char}, so in particular $(\wt h  ,\wt\eta')$ is our given field-curve pair; $(h,\eta')$ is an embedding into $(\BB C , 0, \infty)$ of a $\gamma$-quantum cone decorated by an independent space-filling $\SLE_{\kappa'}$; and $Z = (L,R)$ is the peanosphere Brownian motion for the pair $(h,\eta')$.  

In Section~\ref{sec-surface-def}, we will define several objects associated with the pairs $(h,\eta')$ and $(\wt h , \wt\eta')$ which will be used throughout the remainder of the paper. These include the surface~$\mcl S_{a,b}$ parameterized by $\eta'([a,b])$, the chordal $\SLE_{\kappa'}$-type curve~$\eta_{a,b}$ contained in $\eta'([a,b])$, the surface~$\mcl S_{a,b}^0$ parameterized by the bubbles cut out by~$\eta_{a,b}$, the function~$P_{t,\infty}$ which encodes the quantum areas and left/right quantum boundary lengths of the beads of~$\mcl S_{t,\infty}$, and the analogs of these objects with $(\wt h , \wt\eta')$ in place of $(h,\eta')$. 

In the remaining subsections, we study the laws of the above objects. In Section~\ref{sec-sle-msrble}, we will prove some measurability statements for the objects introduced in Section~\ref{sec-surface-def}, which in particular imply that the surface $\mcl S_{a,b}$ is a.s.\ determined by the structure of the bubbles cut out by the curve $\eta_{a,b}$, in analogy with the measurability statements in~\cite[Theorems~1.16 and~1.17]{wedges}.  In Section~\ref{sec-sle-disk}, we show that the bubbles of the surface $\mcl S_{a,b}^0$ cut out by $\eta_{a,b}$ are independent quantum disks conditional on area and boundary length.  In Section~\ref{sec-general-disk}, we show that the same is true for the analogs of these surfaces with $(\wt h , \wt\eta')$ in place of $(h,\eta')$.  In Section~\ref{sec-partial-surface}, we prove equality of the joint laws of certain collections of quantum surfaces defined in terms of $(h,\eta')$ and their analogs defined in terms of $(\wt h , \wt\eta')$ which will be important in Section~\ref{sec-swapping}.

Most of the arguments of this section are somewhat routine in nature. The reader might therefore find it useful to skip Sections~\ref{sec-sle-msrble} through~\ref{sec-partial-surface} on a first read, and refer back to the various lemmas as they are used.

We will frequently use the notation for curves on quantum surfaces from Definition~\ref{def-surface-curve}: if $h$ is an embedding of a quantum surface $\mcl S$ into a domain $D\subset  \BB C$ and $\eta'$ is a curve in $\BB C$, we write $\eta'_{\mcl S}$ for the curve $\eta'|_{(\eta')^{-1}(\ol D)}$, viewed as a curve on $\mcl S$.

\subsection{Definitions of surfaces and curves}
\label{sec-surface-def}

In this subsection we will define several objects associated with the pairs $(h,\eta')$ and $(\wt h , \wt\eta')$ which we will use throughout the remainder of the paper. 
See Figure~\ref{fig-surface-def} for an illustration.

\begin{figure}[ht!]
	\begin{center}
		\includegraphics[scale=1]{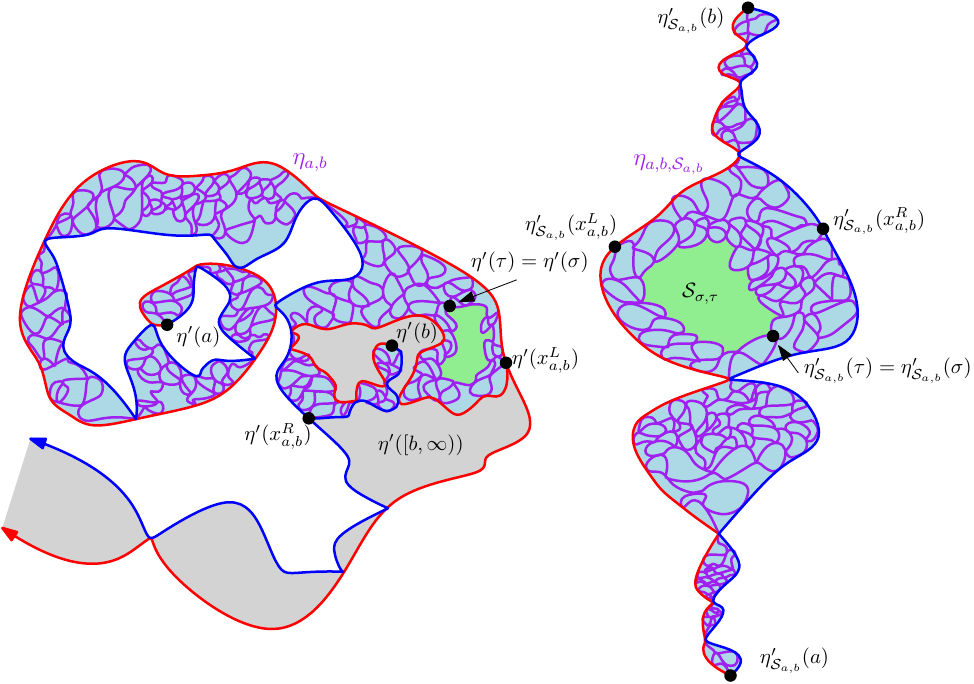}
	\end{center}
	\caption[Ordinary SLE$_\kappa$ obtained by skipping the bubbles filled in by space-filling SLE$_\kappa$]{
\textbf{Left:} The curve segments $\eta'([a,b])$ (light blue) and the curve segment $\eta'([b,\infty))$ (grey) together with the chordal $\SLE_{\kappa'}$-type curve $\eta_{a,b} : [a,b] \rta \eta'([a,b])$ (purple) obtained by skipping the bubbles filled in by $\eta'$ during the time interval $[a,b]$. One of these bubbles is shown in light green. This bubble is equal to the set $\eta'([\sigma , \tau])$ where $[\sigma,\tau] = [\sigma_{a,b}(t) , \tau_{a,b}(t)]$ is the maximal $\tfrac{\pi}{2}$-cone interval for $Z$ in $[a,b]$ which contains the time $t\in [a,b]$. The marked points $\eta'(x_{a,b}^L)$ and $\eta'(x_{a,b}^R)$, respectively, are the points where the left and right outer boundaries, respectively, of $\eta'((-\infty,a])$ and $\eta'([b,\infty))$ meet. 
\textbf{Right:} The beaded quantum surface $\mcl S_{a,b}$ parameterized by $\eta'([a,b])$. Note that $\mcl S_{a,b}$ does not encode the exterior self-intersections of $\eta'([a,b])$; one can think of this surface as being obtained by ``unwinding" $\eta'([a,b])$. The curve $\eta_{a,b,\mcl S_{a,b}}$ on this surface is shown in purple. The surface $\mcl S_{a,b}^0$ is parameterized by the set of bubbles cut out by $\eta_{a,b,\mcl S_{a,b}}$, i.e. the region $\mcl S_{a,b} \setminus \eta_{a,b,\mcl S_{a,b}}$. 
For $\tau=\tau_{a,b}(t)$ and $\sigma=\sigma_{a,b}(t)$, the bubble $\eta'([\sigma,\tau])$ parameterizes the quantum surface $\mcl S_{\sigma,\tau}$ (which we will eventually show is a quantum disk), which is the sub-surface of $\mcl S_{a,b}^0$ corresponding to the bubble cut out by $\eta_{a,b}$ which contains $\eta'(t)$. 
 }\label{fig-surface-def} 
\end{figure}

Define the $\gamma$-quantum cones
\eqb \label{eqn-cone-def}
\mcl C:= \left(\BB C , h , 0,\infty \right) \quad \op{and} \quad \wt{\mcl C} := \left(\BB C , \wt h , 0,\infty \right)
\eqe
For $a,b\in \BB R \cup \{-\infty,\infty\}$ with $a \leq b$, define the quadruply marked and possibly beaded quantum surfaces
\begin{align} \label{eqn-increment-surface}
\mcl S_{a,b}  &:= \left( \eta'([a,b]) , h|_{\eta'([a,b])} , \eta'(a) ,\eta'(b)  , \eta'(x_{a,b}^L ) ,  \eta'(x_{a,b}^R) \right) \quad \op{and} \notag \\
\wt{\mcl S}_{a,b}  &:= \left( \wt\eta'([a,b]) , \wt h|_{\wt\eta'([a,b])} , \wt\eta'(a) , \wt \eta'(b) , \wt \eta'( x_{a,b}^L ), \wt \eta'( x_{a,b}^R )   \right)  
\end{align} 
where here $x_{a,b}^L$ (resp.\ $x_{a,b}^R$) is the first time in $[a,b]$ at which the Brownian motion coordinate $L$ (resp.\ $R$) attains its minimum value on $[a,b]$ or $x_{a,b}^L = x_{a,b}^R = \infty$ if $[a,b]$ is unbounded. By the peanosphere construction~$\eta'(x_{a,b}^L)$ (resp.\ $\eta'(x_{a,b}^R)$) is the point in $\bdy \eta'([a,b])$ where the left (resp.\ right) outer boundaries of $\eta'((-\infty,a])$ and $\eta'([b,\infty))$ meet, and by condition~\ref{item-wpsf-char-homeo} of Theorem~\ref{thm-wpsf-char}, the same is true with $\wt\eta'$ in  place of $\eta'$. Note that in some degenerate cases (e.g., if either~$a$ or~$b$ is equal to~$\infty$) some of the marked points of $\mcl S_{a,b}$ are equal, so there are actually fewer than~$4$ marked points. 

By condition~\ref{item-wpsf-char-wedge} in Theorem~\ref{thm-wpsf-char}, each $\wt{\mcl S}_{t,\infty}$ for $t\in\BB R$ is a $\frac{3\gamma}{2}$-quantum wedge.  By the last statement of~\cite[Theorem~1.9]{wedges}, $\left( \mcl S_{-\infty,t} ,  \mcl S_{t,\infty}\right)$ for $t\in\BB R$ is a pair of independent $\frac{3\gamma}{2}$-quantum wedges which are conformally welded according to quantum length along their boundaries to form $\mcl C$. 
  
For $a,b\in\BB R \cup \{-\infty , \infty\}$ with $a < b$, we define a curve $\eta_{a,b} : [a,b] \rta \eta'([a,b])$ by skipping all of the bubbles filled in by $\eta'$ during the time interval $[a,b]$. More precisely, if $t\in [a,b]$ is such that $\eta'(t)$ is contained in a bubble filled in by $\eta'$ during the time interval $[a,b]$, we let $[\sigma_{a,b}(t),\tau_{a,b}(t)]$ be the time interval during which~$\eta'$ fills in the largest such bubble.  Otherwise, we let $\sigma_{a,b}(t) = \tau_{a,b}(t) = t$. 
Then $\eta'(\sigma_{a,b}(t)) = \eta'(\tau_{a,b}(t))$, and we define $\eta_{a,b}(t) $ to be the common value of these two quantities.
We note that the times $\sigma_{a,b}(t)$ and $\tau_{a,b}(t)$ can be recovered from $\eta_{a,b}$ by the formulas
\begin{align} \label{eqn-tau-def}
  \sigma_{a,b}(t) := \sup\left\{s \leq t \,:\,  \eta_{a,b}(s) \not=  \eta_{a,b}(t) \right\}  \quad \op{and} \quad 
   \tau_{a,b}(t) := \inf\left\{s \geq t \,:\, \eta_{a,b}(s) \not=  \eta_{a,b}(t) \right\}   .
\end{align}

In the special case when $a = -\infty$, the curve $\eta_{-\infty,b}$ is obtained from~$\eta'$ by cutting out the bubbles which~$\eta'$ disconnects from $\eta'(b)$. By translation invariance~\cite[Theorem~1.9]{wedges}, $\eta_{-\infty,b}$ has the same law as $\eta_{-\infty,0}$, which is the chordal $\SLE_{\kappa'}(\kappa'-6)$ counterflow line from $ \infty$ to~$0$ associated with $ \eta'$. In general, $\eta_{a,b}$ is an $\SLE_{\kappa'}$-type curve contained in $\eta'([a,b])$. 
 
Let $\mcl S_{a,b}^0$ be the sub-surface of $\mcl S_{a,b}$ parameterized by the set of bubbles cut out by $\eta_{a,b,\mcl S_{a,b}}$, each marked by the point where they are cut off by $\eta_{a,b,\mcl S_{a,b}}$.  Equivalently, $\mcl S_{a,b}^0$ is parameterized by the set of bubbles filled in by $\eta'_{\mcl S_{a,b}}$ during the time interval $[a,b]$, each marked by the point where $\eta'$ starts (equivalently finishes) filling it in.  We view~$\mcl S_{a,b}^0$ as a quantum surface with (at most) 4 marked points, namely the ones it inherits from~$\mcl S_{a,b}$ (recall~\eqref{eqn-increment-surface}).  By Lemma~\ref{lem-chordal-sle} just below, the bubbles of $\mcl S_{a,b}^0$ are the same as the set of singly-marked quantum surfaces $\mcl S_{v_Z(s) , s} = (\eta'([v_Z(s) , s]) , h|_{\eta'([v_Z(s) , s])} , \eta'(s))$ where $[v_Z(s) ,s]$ ranges over all maximal $\tfrac{\pi}{2}$-cone excursions for $Z$ in $[a,b]$ (recall Section~\ref{sec-cone-time}).  

The surface $\mcl S_{a,b}^0$ is the same as the equivalence class of $\mcl S_{a,b}$ modulo homeomorphisms with are conformal on $\mcl S_{a,b} \setminus \eta_{a,b,\mcl S_{a,b}}$. Since we do not know that the $\SLE_{\kappa'}$-type curve $\eta_{a,b}$ is conformally removable, such a homeomorphism is not necessarily conformal on all of $\mcl S_{a,b}$.  
 
Define the curve $\wt\eta_{a,b}$ and the surface $\wt{\mcl S}_{a,b}^0$ in the same manner as above but with $(\wt h , \wt\eta')$ in place of $(h,\eta')$.  By condition~\ref{item-wpsf-char-homeo} in Theorem~\ref{thm-wpsf-char}, a.s.\ $\Phi\circ \wt\eta_{a,b} = \eta_{a,b}$. Furthermore, the definitions of the times~$\sigma_{a,b}(t)$ and~$\tau_{a,b}(t)$ from~\eqref{eqn-tau-def} are unchanged if we replace $(h,\eta')$ by $(\wt h,\wt\eta')$.  

In the following lemma, we describe the times $\sigma_{a,b}(t)$ and $\tau_{a,b}(t)$ in terms of the $\tfrac{\pi}{2}$-cone times~$s$ for~$Z$ and the associated cone entrance times $v_Z(s)$ (Definition~\ref{def-cone-time}). Recall also Definition~\ref{def-maximal} of a maximal $\tfrac{\pi}{2}$-cone interval. 

\begin{lem} \label{lem-chordal-sle}
Suppose $a,b\in\BB R \cup \{-\infty , \infty\}$ with $a < b$.  
For $t\in [a,b]$, $[\sigma_{a,b}(t) , \tau_{a,b}(t)]$ is the same as the maximal $\tfrac{\pi}{2}$-cone interval for $Z$ in $[a,b]$ containing $t$ (Definition~\ref{def-maximal}), if it exists, or the singleton $\{t\}$ otherwise. In particular, if $\tau_{a,b}(t) \not =t$ then $\tau_{a,b}(t)$ is a $\tfrac{\pi}{2}$-cone time for $Z$ and $v_Z(\tau_{a,b}(t)) = \sigma_{a,b}(t)$. Furthermore, the quantum area (resp.\ quantum boundary length) of the surface $\mcl S_{\sigma_{a,b}(t)  , \tau_{a,b}(t)}$ is equal to $\tau_{a,b}(t) - \sigma_{a,b}(t)$ (resp.\ $|Z_{\tau_{a,b}(t)} - Z_{\sigma_{a,b}(t)}|$). 
\end{lem}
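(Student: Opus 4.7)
The plan is to translate the definition of $[\sigma_{a,b}(t), \tau_{a,b}(t)]$ into the language of $\pi/2$-cone intervals of $Z$ via the explicit dictionary between bubbles filled in by $\eta'$ and $\pi/2$-cone intervals of $Z$ recorded in Section~\ref{sec-cone-time}. First I would apply the third bullet there: the bubbles filled in by $\eta'$ are exactly the sets $\eta'([v_Z(s), s])$ as $s$ ranges over the $\pi/2$-cone times of $Z$, and $\eta'$ fills in such a bubble precisely during the time interval $[v_Z(s), s]$ (so its quantum area is $s - v_Z(s)$ and its quantum boundary length is $|Z_s - Z_{v_Z(s)}|$). Consequently, the bubbles filled in during $[a,b]$ correspond to the $\pi/2$-cone intervals $[v_Z(s), s] \subset [a,b]$, and for such a bubble the condition $\eta'(t) \in \eta'([v_Z(s),s])$ relevant to the definition of $\sigma_{a,b}, \tau_{a,b}$ amounts to $t \in [v_Z(s), s]$.

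Next I would establish the nesting property of $\pi/2$-cone intervals: if $[v_Z(s), s]$ and $[v_Z(s'), s']$ are two $\pi/2$-cone intervals of $Z$ with $s \leq s'$ which share at least one point, then $[v_Z(s), s] \subset [v_Z(s'), s']$. To see this, the shared-point assumption forces $v_Z(s') \leq s$, and then $s \in [v_Z(s'), s']$ combined with the cone condition at $s'$ yields $L_s \geq L_{s'}$ and $R_s \geq R_{s'}$. For $u \in [v_Z(s), s]$ the cone condition at $s$ gives $L_u \geq L_s \geq L_{s'}$ and similarly for $R$; for $u \in [s, s']$ we already have $u \in [v_Z(s'), s']$ and the cone condition at $s'$ applies. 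Thus $L_u \geq L_{s'}$ and $R_u \geq R_{s'}$ for every $u \in [v_Z(s), s']$, which by the infimum definition of $v_Z(s')$ forces $v_Z(s') \leq v_Z(s)$. It follows that the collection of $\pi/2$-cone intervals in $[a,b]$ containing a given $t$ is totally ordered by inclusion, and bounded above by $[a,b]$, so a unique maximal element exists whenever the collection is non-empty; by Definition~\ref{def-maximal} this is the maximal $\pi/2$-cone interval in $[a,b]$ containing $t$. Via the dictionary of the previous paragraph, this maximal cone interval corresponds to the largest bubble filled in during $[a,b]$ containing $\eta'(t)$, i.e.\ it equals $[\sigma_{a,b}(t), \tau_{a,b}(t)]$. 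When no such cone interval exists, the definition of $\sigma_{a,b}, \tau_{a,b}$ gives $\sigma_{a,b}(t) = \tau_{a,b}(t) = t$, matching the singleton case.

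This proves the first assertion; the statement that $\tau_{a,b}(t)$ is a $\pi/2$-cone time with $v_Z(\tau_{a,b}(t)) = \sigma_{a,b}(t)$ when $\tau_{a,b}(t) \not= t$ is then immediate. For the quantum-area claim, since $\eta'$ is parameterized by $\mu_h$-mass we have $\mu_h(\mcl S_{\sigma_{a,b}(t), \tau_{a,b}(t)}) = \tau_{a,b}(t) - \sigma_{a,b}(t)$ directly. The quantum boundary length $|Z_{\tau_{a,b}(t)} - Z_{\sigma_{a,b}(t)}|$ is exactly the formula from the third bullet of Section~\ref{sec-cone-time} applied to the cone time $\tau_{a,b}(t)$. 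The only nontrivial point in this plan is the nesting property of cone intervals; everything else is direct bookkeeping from the peanosphere dictionary, so no serious obstacle is anticipated.
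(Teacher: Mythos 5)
Your proof is correct and follows essentially the same route as the paper, which disposes of the lemma in one line by citing the correspondence between bubbles filled in by $\eta'$ and $\tfrac{\pi}{2}$-cone excursions of $Z$ from Section~\ref{sec-cone-time}. Your explicit verification of the nesting property of $\tfrac{\pi}{2}$-cone intervals (two overlapping cone intervals are nested) simply spells out a fact the paper treats as standard.
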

\begin{proof}
This follows from the correspondence between bubbles filled in by $\eta'$ and $\tfrac{\pi}{2}$-cone excursions for $Z$; see Section~\ref{sec-cone-time}. 
\end{proof}

 We note that Lemma~\ref{lem-chordal-sle} and condition~\ref{item-wpsf-char-homeo} in Theorem~\ref{thm-wpsf-char} imply that
\eqb \label{eqn-bubble-surface-msrble}
( \wt{\mcl S}_{a,b}^0 , \wt\eta_{a,b,\wt{\mcl S}_{a,b}^0} ) \in \wt{\mcl F}_{a,b} 
\eqe
where $\wt{\mcl F}_{a,b} $ is the $\sigma$-algebra defined just above Theorem~\ref{thm-wpsf-char}.

The last object we introduce in this subsection is a function which encodes the areas and left/right quantum boundary lengths of the beads of $\mcl S_{t,\infty}$ and $\wt{\mcl S}_{t,\infty}$. 
For $t\in \BB R$ and $s\geq t$, let 
\eqb \label{eqn-bead-function}
P_{t,\infty}(s) := \left( \ol T_{t}(s) - \ul T_t(s) , L_{\ul T_t(s)} - L_{\ol T_t(s)} , R_{\ul T_t(s)} - R_{\ol T_t(s)} \right) 
\eqe
where $\ol T_t(s)$ (resp.\ $\ul T_t(s)$) is the first time after (resp.\ the last time before) $s$ at which the Brownian motion coordinates $L$ and $R$ attain a simultaneous running infimum relative to time $t$. 
We also set $P_{t,\infty}(s) = (0,0,0)$ for $s\leq t$. 

We remark that $P_{t,\infty}$ a.s.\ determines $\left(  \ol T_t(s) , L_{\ol T_t(s)}   , R_{\ol T_t(s)}  \right)$ (by summing the distinct values taken by $P_{t,\infty}(s')$ for $s' \leq s$) since the set of times where $L$ and $R$ attain a simultaneous running infimum a.s.\ has Minkowski dimension $1 - \kappa'/8 < 1/2$ (c.f.\ Lemma~\ref{lem-inf-subordinator} below) and $Z$ is a.s.\ H\"older continuous of any exponent $<1/2$, so the total variation of $Z$ over this set is a.s.\ equal to 0. 

The significance of the function $P_{t,\infty}$ is contained in the following lemma (c.f.\ Section~\ref{sec-cone-time}). 

\begin{lem} \label{lem-bead-inf}
Let $t\in\BB R$ and $s\geq t$. Almost surely, the coordinates of $P_{t,\infty}(s)$ are a.s.\ equal to the quantum area, left quantum boundary length, and right quantum boundary length of the bead of $\mcl S_{t,\infty}$ containing $\eta_{\mcl S_{t,\infty}}'(s)$, respectively. The same is true with $\wt{\mcl S}_{t,\infty}$ and $\wt\eta'$ in place of $\mcl S_{t,\infty}$ and $\eta'$.  
\end{lem}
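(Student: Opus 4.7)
The plan is to verify the lemma first for $(h,\eta')$ using the dictionary between the peanosphere Brownian motion $Z$ and the surface $\mcl C$ recalled in Section~\ref{sec-cone-time}, and then transfer to $(\wt h, \wt \eta')$ via the homeomorphism $\Phi$ supplied by condition~\ref{item-wpsf-char-homeo} of Theorem~\ref{thm-wpsf-char}.

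For the first step, I would begin by recalling the second bullet of Section~\ref{sec-cone-time}: the times $u \geq t$ at which $L$ and $R$ attain a simultaneous running infimum relative to time $t$ correspond precisely to the points in $\bdy \eta'([t,\infty))$ where the left and right outer boundaries of $\eta'([t,\infty))$ meet. These are exactly the points that separate the consecutive beads of the $\frac{3\gamma}{2}$-quantum wedge $\mcl S_{t,\infty}$ (as recalled in Section~\ref{sec-peanosphere} via~\cite[Theorem~1.9]{wedges}). Consequently, the bead of $\mcl S_{t,\infty}$ containing $\eta'_{\mcl S_{t,\infty}}(s)$ is parameterized precisely by $\eta'([\ul T_t(s), \ol T_t(s)])$, where $\ul T_t(s)$ and $\ol T_t(s)$ are the definitions from~\eqref{eqn-bead-function}. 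Its quantum area is therefore $\mu_h(\eta'([\ul T_t(s), \ol T_t(s)])) = \ol T_t(s) - \ul T_t(s)$ by the quantum-mass parameterization of $\eta'$.

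For the two boundary lengths, I would use that for $u, u' \in [\ul T_t(s), \ol T_t(s)]$ with $u < u'$, no simultaneous running infimum of $L,R$ occurs in $(u, u')$, so the left boundary arcs contributed during $[t, u]$ and $[t,u']$ differ only by an arc on the left boundary of the current bead; hence $L_u - L_{u'}$ equals the $\nu_h$-length of the segment of the left boundary of $\eta'([u', \infty))$ traced by $\eta'|_{[u,u']}$ (this is just the definition of $L$ from Section~\ref{sec-peanosphere}, applied to the increment between times $u$ and $u'$). Taking $u = \ul T_t(s)$ and $u' = \ol T_t(s)$ yields the full left boundary of the bead, whose $\nu_h$-length is accordingly $L_{\ul T_t(s)} - L_{\ol T_t(s)}$. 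The identical argument with $R$ in place of $L$ gives the right boundary length.

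Finally, to transfer to $(\wt h , \wt\eta')$, I would invoke condition~\ref{item-wpsf-char-homeo} of Theorem~\ref{thm-wpsf-char}, which provides a homeomorphism $\Phi : \BB C \to \BB C$ with $\Phi \circ \eta' = \wt\eta'$ and which pushes forward the $\gamma$-quantum length measure on $\bdy \eta'([t,\infty))$ to that on $\bdy \wt\eta'([t,\infty))$ for all rational $t$, together with a density argument to extend to all $t \in \BB R$ (the only subtlety, but it follows by the same ``positive-distance-from-$\eta'(t)$'' argument already used in the discussion just after Theorem~\ref{thm-wpsf-char-intro}). Since $\Phi$ is a curve-preserving homeomorphism, the bead of $\wt{\mcl S}_{t,\infty}$ containing $\wt\eta'_{\wt{\mcl S}_{t,\infty}}(s)$ is the image under $\Phi$ of the corresponding bead of $\mcl S_{t,\infty}$; its quantum area agrees with that of the latter because $\wt\eta'$ is parameterized by $\mu_{\wt h}$-mass and the area is $\ol T_t(s) - \ul T_t(s)$ in both cases; and its left and right $\gamma$-quantum boundary lengths are preserved by the pushforward property of $\Phi$. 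This gives the claim for $\wt{\mcl S}_{t,\infty}$. The only potential obstacle is handling the measure-zero ambiguity in $\ul T_t(s), \ol T_t(s)$ and the boundary-length pushforward at non-rational $t$, both of which are routine given the remark in the paragraph defining $P_{t,\infty}$ about the Minkowski dimension of the simultaneous-infimum set being $1 - \kappa'/8 < 1/2$.
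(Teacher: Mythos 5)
Your proposal is correct and follows essentially the same approach as the paper: identify the beads of $\mcl S_{t,\infty}$ with the intervals between consecutive simultaneous running infima of $L$ and $R$ relative to time $t$, read off the quantum area from the $\mu_h$-mass parameterization of $\eta'$, compute the left/right boundary lengths from the decrements of $L$ and $R$ over the corresponding intervals, and transfer to $(\wt h , \wt\eta')$ via condition~\ref{item-wpsf-char-homeo}. The paper's proof is terser and does not spell out the boundary-length bookkeeping as explicitly as you do, but the argument is the same.
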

\begin{proof}
The beads of $\mcl S_{t,\infty}$ are parameterized by the connected components of the interior of $\eta'([t,\infty))$, which are filled in order by $\eta'|_{[t,\infty)}$. The intervals of time during which $\eta'|_{[t,\infty)}$ is filling in one of these components are the same as the maximal time intervals in $[t,\infty)$ during which $\eta'$ does not hit the left and right boundaries of $\eta'((-\infty,t])$ simultaneously. Since a time when $\eta'|_{[t,\infty)}$ hits its left (resp.\ right) boundary without forming a bubble is the same as a time when $L$ (resp.\ $R$) attains a running infimum relative to time $0$ and $\eta'$ never forms a bubble at a time when it hits it left and right boundary simultaneously, we obtain the first statement of the lemma.  The second statement follows from the first statement together with condition~\ref{item-wpsf-char-homeo} in Theorem~\ref{thm-wpsf-char}. 
\end{proof}

\subsection{Measurability results for $(h,\eta')$} 
\label{sec-sle-msrble} 

In this subsection and the next, we will consider the objects in Section~\ref{sec-surface-def} obtained from the $\gamma$-quantum cone/space-filling $\SLE_{\kappa'}$ pair $(h,\eta')$; we will generalize some of the results about these objects to the analogous objects defined in terms of $(\wt h , \wt\eta')$ in the later subsections.
The focus of the present section is on measurability results for these objects, i.e., statements that some object a.s.\ determines another. 

We will start by proving some measurability results for objects defined in terms of the correlated two-dimensional Brownian motion $Z$, then generalize to statements about $(h,\eta')$ using~\cite[Theorem~1.11]{wedges}.
Our first statement tells us that the restriction of $Z$ to an interval $[a,b]$ is a.s.\ determined by the maximal $\tfrac{\pi}{2}$-cone excursions of $Z$ in $[a,b]$ (Definitions~\ref{def-cone-time} and~\ref{def-maximal}) plus a small amount of additional information. 
For the statement of the lemma, we recall the characterization of the times $\sigma_{a,b}(t)$ and $\tau_{a,b}(t)$ from Lemma~\ref{lem-chordal-sle}. 
\begin{lem} \label{lem-bm-determined}
Let $a,b\in \BB R \cup \{\infty\}$ with $a < b$. Let $ \mcl M_{a,b}$ be the set of times in $[a,b]$ at which the coordinates of $Z$ attain a simultaneous running infimum relative to time $a$ and let
\eqbn
\mcl H_{a,b} := \sigma\left( \sigma_{a,b}(t) , \tau_{a,b}(t),  (Z - Z_{\tau_{a,b}(t)} )|_{[\sigma_{a,b}(t) ,\tau_{a,b}(t)]} \,:\, t\in [a,b]  \right) \vee \sigma\left(  \mcl M_{a,b} \right)  .
\eqen
Then $(Z-Z_a)|_{[a,b]}$ is $\mcl H_{a,b}$-measurable.
\end{lem}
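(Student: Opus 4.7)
The plan is to reconstruct $(Z-Z_a)|_{[a,b]}$ from the data in $\mcl H_{a,b}$ in three steps.  From the functions $(\sigma_{a,b},\tau_{a,b})$ one extracts, via Lemma~\ref{lem-chordal-sle}, the countable family $\{[\sigma_i,\tau_i]\}_i$ of maximal $\pi/2$-cone intervals of $Z$ in $[a,b]$, together with the shifted excursions $(Z-Z_{\tau_i})|_{[\sigma_i,\tau_i]}$.  Thus $Z|_{[\sigma_i,\tau_i]}$ is determined up to the additive constant $Z_{\tau_i}$, and in particular every cone increment $Z_{\tau_i}-Z_{\sigma_i}$ is $\mcl H_{a,b}$-measurable.

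Next, I would show that the backbone $\mcl B:=[a,b]\setminus\bigcup_i(\sigma_i,\tau_i)$ has zero Lebesgue measure.  By Lemma~\ref{lem-chordal-sle} and the bubble--cone interval correspondence of Section~\ref{sec-cone-time}, $\eta'(\mcl B)$ lies in the range of the $\SLE_{\kappa'}$-type curve $\eta_{a,b}$.  For $\kappa'\in(4,8)$ this curve has Hausdorff dimension strictly less than $2$, so its range has zero Lebesgue area, and since $\eta_{a,b}$ is independent of $h$ it also has zero $\mu_h$-mass.  Applying the quantum-mass parameterization $\eta'_*\mathrm{Leb}=\mu_h$, one finds $|\mcl B|\le\mathrm{Leb}\{t\in[a,b]:\eta'(t)\in\eta'(\mcl B)\}=\mu_h(\eta'(\mcl B))=0$.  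In particular $\bigcup_i[\sigma_i,\tau_i]$ is dense in $[a,b]$, and by continuity of $Z$ the reconstruction reduces to determining the additive constants $\{Z_{\tau_i}-Z_a\}_i$.

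I expect this last step to be the main obstacle.  The plan is to exploit the defining property of $\mcl M_{a,b}$, namely that $Z_{t^*}=(\inf_{[a,t^*]}L,\inf_{[a,t^*]}R)$ for every $t^*\in\mcl M_{a,b}$.  Any two candidate reconstructions $Z,Z'$ with identical cone-interval data and the same value at $a$ differ by a continuous function $f=Z-Z'$ that is locally constant on each open cone interval $(\sigma_i,\tau_i)$ and vanishes at $a$.  Requiring $Z'=Z+f$ to share the simultaneous componentwise running-infimum set $\mcl M_{a,b}$ imposes an equation at each $t^*\in\mcl M_{a,b}$ relating the constants of $f$ on the cone intervals in $[a,t^*]$.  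Since $\mcl M_{a,b}$ has positive Hausdorff dimension for $\kappa'\in(4,8)$, these constraints together with $f(a)=0$ and the density of $\bigcup_i[\sigma_i,\tau_i]$ should force $f\equiv 0$, showing that $(Z-Z_a)|_{[a,b]}$ is indeed $\mcl H_{a,b}$-measurable.
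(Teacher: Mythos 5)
Your step 3 is exactly the gap, and it is not a proof.  You correctly identify that the real difficulty is pinning down the additive constants $Z_{\tau_i}-Z_a$ (the paper flags this in the paragraph right after the statement: the backbone has zero Lebesgue measure, "but this fact does not imply Lemma~\ref{lem-bm-determined} since there can be `local time' fluctuations of $Z$ on this small set").  However, your proposal for resolving it --- that the requirement that $Z'=Z+f$ share the set $\mcl M_{a,b}$ and the cone structure "should force $f\equiv 0$" --- is a hope, not an argument.  It is genuinely unclear how to extract a contradiction from a nonzero $f$ by inspecting the constraints at times $t^*\in\mcl M_{a,b}$: each such constraint is an inequality relating $f$ to the shape of $Z$ on $[a,t^*]$, and nothing you write closes the loop.  (Note also that two BMs can have the same simultaneous-running-infimum set and the same excursion shapes without being equal; the uniqueness you need must come from somewhere beyond these soft considerations.)

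The paper's actual mechanism is different and it is the content you are missing.  It first proves a stronger statement for $a=-\infty$ (Lemma~\ref{lem-bm-determined-infty}): conditioning on the excursion data and sampling two conditionally independent copies $Z^1,Z^2$ with that data, one shows $Z^1-Z^2$ is a backward continuous martingale for a suitable filtration, constant on the complement of the ancestor-free set $\mcl R$; since $\mcl R$ has Minkowski dimension $\kappa'/8<1$ and $Z^1-Z^2$ is H\"older of exponent $<1/2$, the quadratic variation vanishes, so $Z^1=Z^2$.  This argument uses crucially the regenerative structure of $\mcl R$, which is why the case $a=-\infty$ must be treated separately.  The general case is then reduced to $a=-\infty$ by working on subintervals $[r_1,r_2]$ disjoint from $\mcl M_{a,b}$ (on such intervals the $(-\infty,r_2]$-maximal cone intervals are contained in the $[a,b]$-maximal ones, so the excursion data in $\mcl H_{a,b}$ suffices), and then stitching across $\mcl M_{a,b}$ using that $\mcl M_{a,b}$ has Minkowski dimension $1-\kappa'/8<1/2$ so the total variation of $Z$ over $\mcl M_{a,b}$ vanishes.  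Neither the backward-martingale/quadratic-variation idea nor the reduction via intervals avoiding $\mcl M_{a,b}$ appears in your proposal.

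A minor remark on your step 2: invoking $\SLE_{\kappa'}$, $\mu_h$, and the quantum-mass parameterization to show the backbone is Lebesgue-null is more machinery than the statement needs; this is a pure two-dimensional-Brownian-motion fact about $\pi/2$-cone times (the paper points to Proposition~\ref{prop-bm-nbd-area}, which gives Minkowski dimension $\le\kappa'/8$).  But this is a matter of economy; the real issue is step 3.
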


The set 
\eqb \label{eqn-cone-complement}
[a,b]\setminus \bigcup_{t\in [a,b]} (\sigma_{a,b}(t) , \tau_{a,b}(t))
\eqe
a.s.\ has zero Lebesgue measure. This fact does not imply Lemma~\ref{lem-bm-determined} since there can be ``local time" fluctuations of $Z$ on this small set, so it is not obvious that there is a unique way to concatenate the excursions $(Z - Z_{\tau_{a,b}(t)} )|_{[\sigma_{a,b}(t) ,\tau_{a,b}(t)]} $ to recover $(Z-Z_a)|_{[a,b]}$. 

We will deduce Lemma~\ref{lem-bm-determined} from the following stronger statement for the case when $a = -\infty$.  The reason why we can get a stronger statement in this case is that the set~\eqref{eqn-cone-complement} is regenerative when $a = -\infty$ (but not for any other values of $a$).

\begin{lem} \label{lem-bm-determined-infty}
Let $b\in\BB R$ and let for $r \leq b$, let
\eqbn
\mcl H_{-\infty,b}'(r) := \sigma\left(   (Z - Z_{\tau_{-\infty,b}(t)})|_{[r,b]}   \right). 
\eqen
Then $(Z-Z_b)|_{[r,b]}$ is $\mcl H_{-\infty,b}'(r)$-measurable.
\end{lem}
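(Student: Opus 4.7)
My plan is to observe that $\tau_{-\infty,b}(b)=b$, so that evaluating the generator of $\mcl H_{-\infty,b}'(r)$ at $t=b$ gives the process $(Z-Z_b)|_{[r,b]}$ directly. The entire lemma will then follow essentially from the definition of $\tau_{-\infty,b}$ together with continuity of $Z$.

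First I would establish the elementary squeeze $\tau_{-\infty,b}(t)\in[t,b]$ for every $t\le b$. Indeed, by Lemma~\ref{lem-chordal-sle}, $\tau_{-\infty,b}(t)$ is either $t$ itself (if $t$ is not in a maximal $\pi/2$-cone interval for $Z$ in $(-\infty,b]$) or the right endpoint of such an interval, which by definition is at most $b$. Setting $t=b$ therefore forces $\tau_{-\infty,b}(b)\in[b,b]=\{b\}$, i.e.\ $\tau_{-\infty,b}(b)=b$. Consequently the generator $(Z-Z_{\tau_{-\infty,b}(t)})|_{[r,b]}$ specialized to $t=b$ equals $(Z-Z_b)|_{[r,b]}$, so this process is $\mcl H_{-\infty,b}'(r)$-measurable.

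If one prefers to take the implicit quantifier over $t$ to range only over $t<b$ (for example over a countable dense subset of $(-\infty,b)$, so that the generating family is countable), I would instead conclude by a limiting argument. The squeeze $\tau_{-\infty,b}(t)\in[t,b]$ gives $\tau_{-\infty,b}(t)\to b$ as $t\to b^-$, and the almost sure continuity of $Z$ then gives $Z_{\tau_{-\infty,b}(t)}\to Z_b$ almost surely. Thus the processes $(Z-Z_{\tau_{-\infty,b}(t)})|_{[r,b]}$ converge uniformly on $[r,b]$ to $(Z-Z_b)|_{[r,b]}$ almost surely, realizing the latter as a uniform a.s.\ limit of $\mcl H_{-\infty,b}'(r)$-measurable random variables and hence itself $\mcl H_{-\infty,b}'(r)$-measurable.

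There is no serious obstacle; the assertion is essentially tautological once the position of $b$ relative to the cone intervals in $(-\infty,b]$ is pinned down, and the only care required is matching whatever convention is used for the quantifier on $t$. The genuine content, by contrast, lies in the companion Lemma~\ref{lem-bm-determined} at general $a$, where one cannot slide the reference point all the way to a boundary of the interval and must instead use the auxiliary data $\mcl M_{a,b}$ together with the cone-interval endpoints $\sigma_{a,b}(t),\tau_{a,b}(t)$ to patch the shifted cone excursions into the single continuous function $Z-Z_a$ on $[a,b]$. The present lemma is the simplification that reflects precisely the regenerativity of the complement of the cone intervals at $a=-\infty$ mentioned just before its statement.
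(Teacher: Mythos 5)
Your proof rests on a misreading of the definition of $\mcl H_{-\infty,b}'(r)$. You treat the generator as a $t$-indexed family of processes, each of which is the full path $s\mapsto Z_s$ on $[r,b]$ translated by the constant $Z_{\tau_{-\infty,b}(t)}$; under this reading, taking $t=b$ (or $t\to b^-$) indeed trivially recovers $(Z-Z_b)|_{[r,b]}$. But that is not what the paper means. The intended generator is the \emph{single} discontinuous process
$W_t := Z_t - Z_{\tau_{-\infty,b}(t)}$, restricted to $t\in[r,b]$; the restriction $|_{[r,b]}$ is in the variable $t$, not in a hidden second time-variable. The paper's own proof makes this explicit, introducing $W_t := Z_t - Z_{\tau_{-\infty,0}(t)}$ and asserting $\sigma(W|_{[t,0]}) = \mcl H_{-\infty,0}'(t)$. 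Your reading is also inconsistent with how the lemma is used in the proof of Lemma~\ref{lem-bm-determined}: there one knows only the translated cone excursions (which is exactly $W$), not a family of constant shifts of the whole path, and one invokes this lemma to upgrade that to knowledge of $(Z-Z_{r_2})|_{[r_1,r_2]}$.

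Under the correct reading, the lemma is genuinely non-trivial, and the very sentence preceding it in the paper explains why: knowing $W$ is equivalent to knowing each $\pi/2$-cone excursion $(Z-Z_{\tau_{-\infty,b}(t)})|_{[\sigma_{-\infty,b}(t),\tau_{-\infty,b}(t)]}$ with its endpoints, and $W\equiv 0$ on the ancestor-free complement of the cone intervals, which encodes no direct information about $Z$ there; so ``local time'' fluctuations of $Z$ across the ancestor-free set could a priori obstruct reconstructing $(Z-Z_b)|_{[r,b]}$ by concatenating the excursions. The paper's argument therefore cannot be a tautology: it conditions on $\mcl H'_{-\infty,0}(r)$, draws two conditionally independent copies $Z^1,Z^2$, shows that the backward conditional-expectation structure (via the regenerative description of the ancestor-free set) makes $Z^1-Z^2$ a continuous backward martingale that is constant off the ancestor-free set $\mcl R$, and then uses that $\mcl R\cap[r,0]$ has Minkowski dimension $\kappa'/8<1$ together with the H\"older continuity of $Z^1-Z^2$ to conclude that its quadratic variation vanishes, forcing $Z^1=Z^2$. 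None of that can be bypassed by evaluating at $t=b$. You also have the logical order reversed: the paper deduces Lemma~\ref{lem-bm-determined} \emph{from} this lemma, so the content you attribute to the companion statement actually lives here.
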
  
\begin{proof}
By translation invariance we can assume without loss of generality that $b = 0$, so that $Z_b = 0$.  Condition on $\mcl H_{-\infty,0}'(r)$ and sample $Z^1$ and $Z^2$ from the regular conditional law of $Z$ given $\mcl H_{-\infty,0}'(r)$ in such a way that they are conditionally independent given $\mcl H_{-\infty,0}'(r)$.  Then $Z^1 \eqD Z^2 \eqD Z$; for each $t\in [r,0]$, $\tau_{-\infty, 0}(t)$ is the right endpoint of the maximal $\tfrac{\pi}{2}$-cone interval in $(-\infty, 0]$ containing $t$ for each of $Z^1$ and $Z^2$; and for each $t\in [r,0]$, $Z^1_t - Z^1_{\tau_{-\infty,0}(t)}  = Z_t^2 - Z_{\tau_{-\infty,0}(t)}^2$.  We must show that $Z^1|_{[r,0]} = Z^2|_{[r,0]}$ a.s. 

To see this, we consider the discontinuous process
\eqbn
W_t :=  Z_t- Z_{\tau_{-\infty,0}(t)}, \quad \forall t \leq 0 . 
\eqen
Since $\tau_{-\infty,0}(t)$ is the smallest $s\geq t$ for which $W_s = 0$, it follows that $\sigma(W|_{[t,0]}) = \mcl H_{-\infty,0}'(t)$. Since each $\tau_{-\infty,0}(t)$ is determined by $Z|_{[t,0]}$,  
\eqbn
W|_{[t,0]} \in \sigma\left( Z^1|_{[t,0]} \right) \cap \sigma\left(Z^2|_{[t,0]} \right) . 
\eqen
For $t \leq 0$, the regular conditional law of $W|_{(-\infty,t]}$ given $Z^1|_{[t,0]}$ can be described as follows.
The process $W|_{(-\infty,t]}$ evolves as a Brownian motion with variances and covariances as in~\eqref{eqn-bm-cov} started from $W_t$ at time $t$ and run backward until the first time it exits the cone $W_{\tau_{-\infty,0}(t)} + [0,\infty)^2$ (this time is $\sigma_{-\infty,0}(t)$). Conditional on $W|_{[\sigma_{-\infty,0}(t) , 0]}$, the rest of the process, $W|_{(-\infty,\sigma_{-\infty,0}(t)]}$, has the same law as $W$.  This conditional law depends only on $W|_{[t,0]}$, so $W|_{(-\infty,t]}$ is conditionally independent from $Z^1|_{[t,0]}$ given $W|_{[t,0]}$. 
 
By our choice of coupling, for each $t\in [r,0]$ the regular conditional law of $Z^2$ given $Z^1|_{[t,0]}$ and $W$ depends only on $W$.  Since the conditional law of $W|_{(-\infty,t]}$ given $Z^1|_{[t,0]}$ depends only on $W|_{[t,0]}$, we infer that the conditional law of $Z^2$ given $Z^1|_{[t,0]}$ depends only on $W|_{[t,0]}$.  Since $W|_{[t,0]}$ is determined by $Z^2|_{[t,0]}$, the regular conditional law of $(Z^2 - Z^2_t)|_{(-\infty, t]}$ given $Z^1|_{[t,0]} $ and $ Z^2|_{[t,0]}$ is the same as its regular conditional law given only $Z^2|_{[t,0]}$, which is the same as the law of $Z$, run backward starting from $0$. 

In particular, if we set
\eqbn
\mcl G_t := \sigma\left( Z^1|_{[t,0]} ,\, Z^2|_{[t,0]} \right) 
\eqen
then for $s \leq t$ it holds that $\BB E\left[ Z_s^2 \,|\, \mcl G_t \right] = Z_t^2$.
By symmetry, $\BB E\left[ Z_s^1 \,|\, \mcl G_t \right] = Z_t^1$. 
Therefore, $Z^1 - Z^2$ is a backward continuous $\{\mcl G_t\}_{t\leq 0}$-martingale. 
By our choice of coupling, $Z^1-Z^2$ is constant on the intersection with $[r,0]$ of every maximal $\tfrac{\pi}{2}$-cone interval for $Z^1$ (equivalently, for $Z^2$) in $(-\infty,0]$. 

The set $\mcl R := (-\infty,0] \setminus \bigcup_{t\leq 0} (\sigma_{-\infty,0}(t) , \tau_{-\infty,0}(t))$ is the set of so-called ancestor free times for $Z$, run backward from time $0$ (see~\cite[Section~10.2]{wedges}). The set $\mcl R$  is easily seen to be regenerative and scale-invariant, so has the law of the range of a stable subordinator run backward from time $0$. The Hausdorff dimension of $\mcl R$ is a.s.\ equal to $\kappa'/8$ (see, e.g.,~\cite[Example 2.3]{ghm-kpz}), so the index of this subordinator is $\kappa'/8$.
In particular, the Minkowski dimension of the set $\mcl R\cap [r,0]$ is a.s.\ equal to $\kappa'/8$. Since $Z^1-Z^2$ is constant on $[r,0]\setminus\mcl R$ and is a.s.\ H\"older continuous of any exponent less than $1/2$, and $\kappa'/8 < 1$ we infer that the quadratic variation of $Z^1-Z^2$ on $[r,0]$ is a.s.\ equal to $0$. Therefore, $(Z^1-Z^2)|_{[r,0]} = 0$ a.s.
\end{proof}

Before we can deduce Lemma~\ref{lem-bm-determined} from Lemma~\ref{lem-bm-determined-infty}, we first need the following fact about the simultaneous running infima of $L$ and $R$ which will tell us that the set $\mcl M_{a,b}$ of Lemma~\ref{lem-bm-determined} is in some sense negligible. 

\begin{lem} \label{lem-inf-subordinator}
Let $ \mcl M_{0,\infty}$ be the set of times $s\geq 0$ at which $L$ and $R$ attain a simultaneous running infimum relative to time $0$. Then $\mcl M_{0,\infty}$ has the law of the range of a $1-\kappa'/8$-stable subordinator. 
\end{lem}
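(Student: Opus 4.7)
The plan is to verify that $\mcl M_{0,\infty}$ is a closed, scale invariant regenerative subset of $[0,\infty)$ containing $0$; conclude via the standard classification that it is the range of a stable subordinator of some index $\alpha \in (0,1)$; and then identify $\alpha = 1 - \kappa'/8$ via the peanosphere correspondence.

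Closedness follows from the continuity of $Z = (L,R)$, and $0 \in \mcl M_{0,\infty}$ trivially. For the regenerative property, let $\tau$ be any $\{\mcl F_t\}$-stopping time with $\tau \in \mcl M_{0,\infty}$ almost surely. Membership $\tau \in \mcl M_{0,\infty}$ is equivalent to $L_\tau = \inf_{s \leq \tau} L_s$ and $R_\tau = \inf_{s \leq \tau} R_s$, so a time $\tau + u$ belongs to $\mcl M_{0,\infty}$ if and only if $u$ is a simultaneous running infimum relative to time $0$ of the shifted Brownian motion $\widehat Z_v := Z_{\tau+v} - Z_\tau$. By the strong Markov property, $\widehat Z$ is independent of $\mcl F_\tau$ and has the same law as $Z$, whence $\{u \geq 0 : \tau + u \in \mcl M_{0,\infty}\}$ is independent of $\mcl M_{0,\infty} \cap [0,\tau]$ and has the same law as $\mcl M_{0,\infty}$. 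Scale invariance $c^2 \mcl M_{0,\infty} \eqD \mcl M_{0,\infty}$ for every $c > 0$ follows from Brownian scaling $(c^{-1} Z_{c^2 t})_{t \geq 0} \eqD (Z_t)_{t \geq 0}$ together with the invariance of the defining condition under multiplication of $Z$ by a positive scalar. By the standard classification of closed, scale invariant regenerative subsets of $[0,\infty)$ containing $0$, the set $\mcl M_{0,\infty}$ is one of $\{0\}$, $[0,\infty)$, or the range of a strictly stable subordinator of some $\alpha \in (0,1)$. For $\kappa' \in (4,8)$ the correlation $-\cos(\pi\gamma^2/4)$ is strictly between $0$ and $1$, so the first two possibilities are ruled out using the existence (and non-fullness) of $\pi/2$-cone times for positively correlated two-dimensional Brownian motion~\cite{shimura-cone,evans-cone}: the set of $s$ with $v_Z(s) \leq 0$ is a.s.\ uncountable, and the coordinates of $Z$ are a.s.\ not monotone.

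The main obstacle is the identification of $\alpha$. For this I would use Lemma~\ref{lem-bead-inf}, which gives a bijection between the gaps of $\mcl M_{0,\infty}$ and the beads of the $\frac{3\gamma}{2}$-quantum wedge $\mcl S_{0,\infty}$ with each gap length equal to the quantum area of the corresponding bead. Since a $\frac{3\gamma}{2}$-quantum wedge is a thin wedge constructed from a Bessel process of dimension $\delta = 4/\gamma^2$ (see~\cite[Section~4.4]{wedges}), whose zero set is the range of a $(1 - \delta/2)$-stable subordinator, the cumulative-quantum-area parameterization of its Poissonian bead decomposition is the range of a stable subordinator of index $1 - \delta/2 = 1 - 2/\gamma^2 = 1 - \kappa'/8$, which is the desired identification.
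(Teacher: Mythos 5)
Your overall strategy is correct and reaches the right answer, but it is a genuinely different route from the paper's.  The paper's proof is much shorter: it observes that $\mcl M_{0,\infty}$ is scale-invariant and regenerative by the Markov property of Brownian motion (so it is empty or the range of a stable subordinator), then identifies the index by computing the Hausdorff dimension of $\mcl M_{0,\infty}$ directly as the set of $\tfrac{\pi}{2}$-cone times $s$ with $v_Z(s)\leq 0$, invoking \cite[Theorem~1]{evans-cone} to get dimension $1-\kappa'/8$.  That is an intrinsic statement about the correlated Brownian motion $Z$ alone, requiring no quantum surface input at all.  You instead identify the index through Lemma~\ref{lem-bead-inf} (the gap lengths of $\mcl M_{0,\infty}$ are the quantum areas of the beads of $\mcl S_{0,\infty}$) together with the Bessel-process construction of the thin $\tfrac{3\gamma}{2}$-quantum wedge.

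The one place where your write-up leaps is the sentence claiming that because the Bessel zero set (in Bessel clock time) is the range of a $(1-\delta/2)$-stable subordinator, the ``cumulative-quantum-area parameterization'' of the bead decomposition is also the range of a $(1-\delta/2)$-stable subordinator.  These two sets are related by a random time change, and time changes do not in general preserve the index of a stable subordinator, so the conclusion does not follow automatically from the cited Bessel fact.  What \emph{is} true, and what your argument actually needs, is that the bead-area intensity measure is itself a power law with exponent $-1-(1-\delta/2)$.  This follows from a scaling computation rather than directly from the Bessel zero set: scaling a Bessel excursion by $\lambda$ multiplies the Bessel excursion measure by $\lambda^{2-\delta}$ (equivalently, the duration intensity is $c\,t^{-(2-\delta/2)}\,dt$), and by the LQG coordinate change it also scales the quantum area of the corresponding bead by $\lambda^2$ (the same factor as the Bessel duration, since it shifts the radial part of the field by $\tfrac{2}{\gamma}\log\lambda$).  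Pushing forward, the bead-area intensity $\BB m$ satisfies $(c\cdot)_*\BB m = c^{1-\delta/2}\BB m$, hence is a power law of the claimed exponent, and the cumulative-area subordinator is $(1-\delta/2)$-stable.  You should either spell out this scaling argument or simply use the paper's cone-time/Hausdorff-dimension route, which is shorter and avoids the quantum wedge machinery entirely.  As a side remark, the paper's route also has the virtue that Lemma~\ref{lem-inf-subordinator} remains a purely Brownian statement, which is conceptually cleaner given how the lemma is later used (e.g.\ inside Lemma~\ref{lem-bm-determined}).
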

\begin{proof}
It is clear from the Markov property of Brownian motion that $\mcl M_{0,\infty}$ is a scale-invariant regenerative set, so is either empty or has the law of the range of a stable subordinator.  A time at which $L$ and $R$ attain a simultaneous running infimum relative to time $0$ is the same as a $\tfrac{\pi}{2}$-cone time for $Z$ whose corresponding $\tfrac{\pi}{2}$-cone interval $[v_Z(s) ,s]$ contains $0$. Hence it follows from~\cite[Theorem~1]{evans-cone} (see also the proofs of~\cite[Lemma~8.5]{wedges}) that the Hausdorff dimension of $\mcl M_{0,\infty}$ is a.s.\ equal to $1-\kappa'/8$ if $\kappa'\in (4,8)$. 
The statement of the lemma follows.
\end{proof}

\begin{proof}[Proof of Lemma~\ref{lem-bm-determined}]
For $r_1,r_2  \in [a,b]$ with $r_1< r_2$, let 
\eqbn
E(r_1,r_2) :=  \left\{ [r_1,r_2] \cap \mcl M_{a,b} = \emptyset\right\} ,
\eqen
and note that $E(r_1,r_2) \in \mcl H_{a,b}$ by definition. 

If $t \in [r_1,r_2]$ and $\sigma_{-\infty,r_2}(t) < a$, then $  \tau_{-\infty , r_2 }(t)  \in \mcl M_{a,b} \cap [t,r_2]$.  Therefore, on the event $E(r_1,r_2)$ we have $\sigma_{-\infty, r_2}(t) \geq a$ for each $t\in [r_1,r_2]$, whence the $\tfrac{\pi}{2}$-cone interval $[\sigma_{-\infty , r_2 }(t) , \tau_{-\infty , r_2 }(t)]$ is entirely contained in the maximal $\tfrac{\pi}{2}$-cone interval $[\sigma_{a,b}(t) , \tau_{a,b}( t)]$ in $[a,b]$.  The times $\sigma_{-\infty , r_2 }(t) $ and $ \tau_{-\infty , r_2 }(t)$ are determined by $(Z-Z_{\tau_{a,b}(t)})|_{[\sigma_{a,b}(t) ,\tau_{a,b}(t)]}$ on this event since $[\sigma_{-\infty , r_2 }(t) , \tau_{-\infty , r_2 }(t)]$ must be the maximal $\tfrac{\pi}{2}$-cone interval for $Z$ in $[\sigma_{a,b}(t) , r_2]$ which contains $t$. 

The preceding paragraph implies that on $E(r_1,r_2)$, each of the Brownian cone excursions $(Z-Z_{\tau_{-\infty,r_2}(t)} )$ in the interval of time $[\sigma_{-\infty , r_2 }(t) , \tau_{-\infty , r_2 }(t)]$ for $t\in [r_1,r_2]$ is a.s.\ determined by $\mcl H_{a,b}$. By Lemma~\ref{lem-bm-determined-infty}, on $E(r_1,r_2)$ the $\sigma$-algebra $\mcl H_{a,b}$ a.s.\ determines $(Z-Z_{r_2})|_{[r_1,r_2]}$.  Exhausting over all rational values of $r_1,r_2\in [a,b]$, we find that $\mcl H_{a,b}$ a.s.\ determines $Z_t - Z_{\ul T_{a}(t)}$ for each $t\in [a,b]$, where $\ul T_{a}(t)$ is the largest $s\leq t$ such that $s\in \mcl M_{a,b}$ (as in~\eqref{eqn-bead-function}). 

The set $\mcl M_{a,b}$ has the law of range of a $1-\kappa'/8$-stable subordinator, so a.s.\ has Minkowski dimension $1-\kappa'/8 < 1/2$. Since $Z$ is a.s.\ H\"older continuous for any exponent smaller than $1/2$, it follows that the total variation of $Z$ over $\mcl M_{a,b}$ is a.s.\ equal to zero. Therefore, $(Z-Z_a)|_{[a,b]}$ is a.s.\ determined by $\{ Z_t - Z_{\ul T_{a,\infty}(t)} : t\in [a,b]\}$, and hence by $\mcl H_{a,b}$. 
\end{proof}

Now we turn our attention to measurability statements for objects defined directly in terms of the pair $(h, \eta')$. 
For this purpose we first need the following statement about the law of $\eta'|_{[a,b]}$. 

\begin{lem}
\label{lem-wpsf-law}
Let $a < b$. 
The conditional law of the curve $\eta'|_{[a,b]}$ given $\eta'|_{\BB R\setminus [a,b]}$ and $h$ is that of a concatenation of independent chordal space-filling $\SLE_{\kappa'}$ curves in the interior connected components of $\eta'([a,b])$, whose laws are described as follows.
\begin{itemize}
\item For each interior connected component $U$ of $\eta'([a,b])$ whose boundary is part of $\bdy \eta'((-\infty,a])$, the conditional law of the segment of $\eta'$ contained in $\ol U$ is that of a chordal space-filling $\SLE_{\kappa'}$ in $U$ between the points where $\eta'$ starts and finishes filling in $U$. 
\item For the interior connected component $U_*$ of $\eta'([a,b])$ whose boundary contains non-trivial arcs of each of $\bdy \eta'((-\infty,a])$ and $\bdy \eta'([b,\infty))$, the conditional law of the segment of $\eta'$ contained in $\ol U_*$ is that of a chordal space-filling $\SLE_{\kappa'}(\kappa'/2-4 ; \kappa'/2-4)$ in $U_*$ between the points where $\eta'$ starts and finishes filling in $U_*$, with force points located at the two points of $\bdy U_*$ where $\bdy \eta'((-\infty,a])$ and $\bdy \eta'([b,\infty))$ meet. 
\item For each interior connected component $U $ of $\eta'([a,b])$ whose boundary is part of $\bdy \eta'([b,\infty))$, the conditional law of the segment of $\eta'$ contained in $\ol U $ is that of a chordal space-filling $\SLE_{\kappa'}(\kappa'/2-4 ; \kappa'/2-4)$ in $U $ between the points where $\eta'$ starts and finishes filling in $U$, with force points located immediately to the left and right of the starting point. 
\end{itemize} 
\end{lem}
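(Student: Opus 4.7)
The plan is to derive the lemma from the imaginary-geometry construction of whole-plane space-filling $\SLE_{\kappa'}$ reviewed in Section~\ref{sec-wpsf}. Since the curve $\eta'$ is coupled with $h$ as independent random variables, conditioning on $h$ does not alter the conditional law of $\eta'|_{[a,b]}$ given $\eta'|_{\BB R\setminus[a,b]}$, and I ignore $h$ from here on. I would work in a coupling in which $\eta'$ is built from a whole-plane GFF $h^{\op{IG}}$ (modulo a global additive multiple of $2\pi\chi$) via the flow lines with angles $\pm\pi/2$ from each point of a countable dense set, as in~\cite[Section~4.3]{ig4}. Under this coupling, the left and right outer boundaries of $\eta'((-\infty,t])$ for $t\in\BB R$ are the flow lines of $h^{\op{IG}}$ from $\eta'(t)$ with angles $+\pi/2$ and $-\pi/2$, respectively.

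The first key step is to identify a local set of $h^{\op{IG}}$ capturing the information in $\sigma(\eta'|_{\BB R\setminus[a,b]})$. I would take this local set to be the two pairs of outer-boundary flow lines started from $\eta'(a)$ and $\eta'(b)$, together with the (chordal) space-filling counterflow lines that fill in the bubbles cut off by these flow lines. By the Markov property of the GFF given a local set~\cite[Proposition~3.4]{ig1}, the conditional law of $h^{\op{IG}}$ restricted to each interior connected component $U$ of $\BB C\setminus(\eta'((-\infty,a])\cup\eta'([b,\infty)))$ is an independent GFF on $U$ with boundary data prescribed by the angles of the flow lines that traced~$\bdy U$. By the uniqueness of counterflow lines given boundary data from~\cite[Section~4.3]{ig4}, the segment of $\eta'$ inside each such $U$ is (up to its parameterization) the chordal space-filling counterflow line of $h^{\op{IG}}|_U$ between the entry and exit points of $U$, and these segments are conditionally independent across $U$.

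Finally I would read off the law in each component from the conditional boundary data using the standard dictionary between GFF boundary-value jumps and force weights $\rho$ for $\SLE_{\kappa'}(\rho^L;\rho^R)$ developed in~\cite{ig1}. For case (i), where $\bdy U\subset\bdy\eta'((-\infty,a])$, both boundary arcs between the entry and exit points carry exactly the boundary data of a plain chordal counterflow line, so that no force points arise and we obtain an ordinary chordal space-filling $\SLE_{\kappa'}$. For case (ii), for the unique $U_*$ whose boundary contains arcs from both $\bdy\eta'((-\infty,a])$ and $\bdy\eta'([b,\infty))$, each of the two meeting points of past and future boundary arcs produces a boundary-value jump corresponding to a force weight $\kappa'/2-4$ located at that meeting point. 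For case (iii), topological considerations force $U$ to be a bubble pinched off by flow lines from $\eta'(b)$, so $\eta'$ enters and exits $U$ at the same pinch point, and the same boundary-value computation places two $\kappa'/2-4$ force points immediately to the left and right of this common start.

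The main obstacle I anticipate is the boundary-value bookkeeping at the meeting points in case (ii) and at the pinch point in case (iii): one must track the $\pm\pi\chi$ boundary-value jumps corresponding to flow-line angle shifts of $\pm\pi/2$ and verify that these indeed translate, via the imaginary-geometry dictionary, to force weight $\rho = \kappa'/2-4$ for the chordal space-filling $\SLE_{\kappa'}(\rho^L;\rho^R)$ realization of the counterflow line. A second technical point is justifying that the aforementioned local set is indeed measurable with respect to $\sigma(\eta'|_{\BB R\setminus[a,b]})$, which reduces to checking that the outer-boundary flow lines from $\eta'(a)$ and $\eta'(b)$ and all the internal bubble-filling counterflow lines are determined by $\eta'|_{\BB R\setminus[a,b]}$---a consequence of the fact that they are the flow/counterflow lines of $h^{\op{IG}}$ a.s.\ drawn by $\eta'$ itself, together with the identification of $\eta'$ with a concrete functional of $h^{\op{IG}}$ from~\cite[Section~4.3]{ig4}.
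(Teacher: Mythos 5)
Your plan follows the same imaginary-geometry strategy as the paper's proof: realize $\eta'$ via flow lines of $h^{\op{IG}}$, use the GFF Markov property for the flow lines from $\eta'(a)$ and $\eta'(b)$ to get the conditional boundary data in each interior component, identify the segment of $\eta'$ inside each component as a counterflow line, and read off the force weights from the boundary data via~\cite[Theorem~1.1]{ig4}. The paper arrives at the conditional law of $h^{\op{IG}}|_{\eta'([a,b])}$ a bit differently--via translation invariance~\cite[Theorem~1.9]{wedges}, the chordal decomposition in \cite[Footnote~4]{wedges}, and the observation that $b$ is a reverse stopping time for the conditional law of $\eta'|_{[a,\infty)}$--which cleanly handles both the dependence on $h$ (which you wave away a bit quickly, since the parameterization of $\eta'|_{\BB R\setminus[a,b]}$ is $h$-dependent) and the local-set measurability question you flag; but these are variants of the same argument.

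One concrete error worth fixing: in case (iii) you assert that ``topological considerations force $U$ to be a bubble pinched off by flow lines from $\eta'(b)$, so $\eta'$ enters and exits $U$ at the same pinch point.'' This is not true in general. A component $U$ with $\bdy U \subset \bdy\eta'([b,\infty))$ can be a bead of the thin future wedge $\mcl S_{-\infty,b}$ bounded by arcs of \emph{both} the $+\pi/2$ and $-\pi/2$ flow lines from $\eta'(b)$; in that case $\eta'$ enters and exits $U$ at two distinct crossing points of those flow lines. The lemma's phrase ``between the points where $\eta'$ starts and finishes filling in $U$'' deliberately allows the endpoints to differ. What actually places both force points at the starting point is the boundary-data computation: as the paper works out from~\cite[Theorem~1.1]{ig4}, the boundary data of $h^{\op{IG}}$ on the side of $\bdy\eta'([b,\infty))$ facing $\eta'([a,b])$ is the same constant $\lambda'(\kappa'/2-3)$ on both the left and right outer boundaries (modulo the $\chi\op{arg}\phi'$ winding term), so the only nontrivial boundary-value jump occurs at the entry point, independently of whether the entry and exit points coincide. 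Your sketch needs this identity of boundary values rather than the incorrect topological shortcut.
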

\begin{proof}
Let $h^{\op{IG}}$ be the whole-plane GFF, viewed modulo a global additive multiple of $2\pi \chi$, used to construct~$\eta'$ as in Section~\ref{sec-wpsf}, where here~$\chi$ is as in~\eqref{eqn-ig-param} ($\op{IG}$ stands for ``imaginary geometry").  Recall that the outer boundary of~$\eta'$ at the first time it hits a rational $z\in\BB C$ is the union of the flow lines of~$h^{\op{IG}}$ started from~$z$ with angles $\pm\tfrac{\pi}{2}$. 

Since $\eta'(\cdot+a)  - \eta'(a) \eqD \eta'$~\cite[Theorem~1.9]{wedges} and by the description of whole-plane space-filling SLE$_{\kappa'}$ in~\cite[Footnote 4]{wedges}, the conditional law of~$\eta'|_{[a,\infty)}$ given $h$ and $\eta'|_{(-\infty,a]}$ is that of a concatenation of chordal space-filling $\SLE_{\kappa'}$ curves in the interior connected components of $\eta'([a,\infty))$.  The time $b$ is a reverse stopping time for the conditional law of $\eta'|_{[a,\infty)}$ given $h$ and $\eta'|_{(-\infty,a]}$ ($b$ is the largest $t \geq a$ such that the set $\BB C\setminus (\eta'((-\infty,a]) \cup \eta'([t,\infty)) )$ has quantum mass $b-a$).  Hence the conditional law of the field $h^{\op{IG}}|_{\eta'([a,b])}$ given $\eta'|_{\BB R\setminus [a,b]}$ and $h$ is that of an independent GFF with Dirichlet boundary conditions in each of the interior connected components of~$\eta'([a,b])$ specified by the boundary data for~$h$ along the outer boundaries of~$\eta'((-\infty,a])$ and~$\eta'([b,\infty))$. In particular, using the description of flow line boundary data in~\cite[Theorem~1.1]{ig4}, we find that with $\lambda' := \pi/\sqrt{\kappa'}$ and~$\chi$ as above, 
\begin{itemize}
\item The boundary data for~$h^{\op{IG}}|_{\eta'([a,b])}$ along the left (resp.\ right) outer boundary of $\eta'((-\infty,a])$ is given by $\lambda'- \chi \op{arg} \phi'$ (resp.\ $-\lambda' - \chi \op{arg}\phi'$), where~$\phi$ denotes a conformal map which takes a given interior connected component of~$\eta'([a,b])$ to~$\BB H$ in such a way that its intersection with the left (resp.\ right) outer boundary of~$\eta'((-\infty,a])$ is sent to $(-\infty,0]$ (resp.\ $[0,\infty)$). 
\item The boundary data for $h^{\op{IG}}|_{\eta'([a,b])}$ along the left (resp.\ right) outer boundary of $\eta'([b,\infty))$ is given by $ \lambda'(\kappa'/2-3) - \chi\op{arg}\phi'$ (resp.\ $-\lambda'(\kappa'/2-3) - \chi\op{arg}\phi'$), with $\phi$ as above.  
\end{itemize}

From the construction of space-filling $\SLE_{\kappa'}$ in~\cite[Section~1.2.3]{ig4}, we see that the segment of $\eta'$ contained in the closure of each interior connected component of $\eta'([a,b])$ is equal to the space-filling $\SLE_{\kappa'}$ counterflow line of $h^{\op{IG}}|_U$. 
The above description of the boundary data for $h^{\op{IG}}|_{U}$ implies that the conditional law of $\eta'$ must be as in the statement of the lemma. 
\end{proof}

We next prove a lemma which implies in particular that the quantum surface decorated by an $\SLE_{\kappa'}$-type curve $(\mcl S_{a,b} , \eta_{a,b,\mcl S_{a,b}} )$ is a.s.\ determined by the curve-decorated quantum surface $(\mcl S_{a,\tau}^0 , \eta_{a,b,\mcl S_{a,\tau}^0})$ from Section~\ref{sec-surface-def}, which we recall only encodes the conformal structure and the topology of the bubbles cut out by $\eta_{a,b,\mcl S_{a,b}}$ (rather than the full conformal structure of $\mcl S_{a,b}$).  One should think of this result as an analog of the measurability statements in~\cite[Theorems~1.16 and~1.17]{wedges} for the pair $(\mcl S_{a,b} , \eta_{a,b,\mcl S_{a,b}})$.

\begin{lem} \label{lem-partial-surface-msrble}
Let $a,t,b\in \BB R$ with $a \leq t \leq b$ and let $\tau=\tau_{a,b}(t)$ be as in~\eqref{eqn-tau-def}.  The following measurability statements hold. 
\begin{enumerate}
\item $(\mcl S_{a,\tau} , \eta'_{\mcl S_{a,\tau}} )$ is a.s.\ determined by $(\mcl S_{a,\tau}^0   , \eta'_{\mcl S_{a,\tau}})$.  \label{item-msrble-wpsf}
\item $(\mcl S_{a,\tau} , \eta_{a,b,\mcl S_{a,\tau}})$ is a.s.\ determined by $(\mcl S_{a,\tau}^0 , \eta_{a,b,\mcl S_{a,\tau}^0})$.   \label{item-msrble-chordal}
\end{enumerate}
\end{lem}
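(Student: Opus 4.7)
The strategy for both parts is to reduce to the local version of~\cite[Theorem~1.11]{wedges} mentioned in Section~\ref{sec-peanosphere}, which states that $(Z-Z_a)|_{[a,\tau]}$ a.s.\ determines the curve-decorated quantum surface $(\mcl S_{a,\tau}, \eta'_{\mcl S_{a,\tau}})$. Thus once $(Z-Z_a)|_{[a,\tau]}$ is recovered from the bubble data, both assertions follow at once, since $\eta_{a,b,\mcl S_{a,\tau}}$ is a deterministic functional of $\eta'_{\mcl S_{a,\tau}}$. By Lemma~\ref{lem-bm-determined} applied with $b$ replaced by $\tau$, the task of reconstructing $(Z-Z_a)|_{[a,\tau]}$ reduces to identifying the cone interval endpoints, the cone excursions $(Z-Z_{\tau_{a,\tau}(t)})|_{[\sigma_{a,\tau}(t),\tau_{a,\tau}(t)]}$, and the simultaneous running-infimum set $\mcl M_{a,\tau}$.

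For part~\ref{item-msrble-wpsf}, Lemma~\ref{lem-chordal-sle} puts the maximal $\pi/2$-cone intervals in $[a,\tau]$ in bijection with the bubbles of $\mcl S_{a,\tau}^0$: the quantum area of a bubble equals $\tau_{a,\tau}(t)-\sigma_{a,\tau}(t)$, and the quantum lengths of its left and right boundary arcs give the corresponding coordinate increments of $Z$ across the cone interval. The cone excursion itself is recovered by applying the finite-volume peanosphere construction of~\cite{sphere-constructions} to each bubble, viewed as a singly-marked quantum disk decorated by the space-filling $\SLE_{\kappa'}$ segment filling it. Concatenating the cone excursions in the order induced by $\eta'_{\mcl S_{a,\tau}^0}$ and identifying the residual closed set as $\mcl M_{a,\tau}$ then yields all the ingredients needed for Lemma~\ref{lem-bm-determined}.

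For part~\ref{item-msrble-chordal}, I will bootstrap from part~\ref{item-msrble-wpsf} via a conditioning argument. The target $(\mcl S_{a,\tau}, \eta_{a,b,\mcl S_{a,\tau}})$ is manifestly invariant under replacing $\eta'$ inside any bubble by any other space-filling curve that enters and exits at the same cut point, since both the quantum surface $\mcl S_{a,\tau}$ and the chordal curve $\eta_{a,b,\mcl S_{a,\tau}}$ are defined without reference to how the space-filling curve traverses the individual bubbles. On the other hand, by an iterated version of Lemma~\ref{lem-wpsf-law}, the conditional law of $\eta'_{\mcl S_{a,\tau}^0}$ given $(\mcl S_{a,\tau}^0, \eta_{a,b,\mcl S_{a,\tau}^0})$ is that of a family of independent chordal space-filling $\SLE_{\kappa'}$'s inside the bubbles. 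Since the measurable map from part~\ref{item-msrble-wpsf} is constant under such resamplings, it is a.s.\ constant on the fibers of the coarsening $(\mcl S_{a,\tau}^0, \eta'_{\mcl S_{a,\tau}^0}) \mapsto (\mcl S_{a,\tau}^0, \eta_{a,b,\mcl S_{a,\tau}^0})$, and hence factors through the latter.

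The main obstacle is the rigorous justification in part~\ref{item-msrble-wpsf} of the finite-volume peanosphere reconstruction of each cone excursion from its bubble: each bubble is a singly-marked quantum disk sampled with specific conditioning on area and boundary length (arising as an atom of the Poisson intensity decomposition of a $\tfrac{3\gamma}{2}$-quantum wedge), so the appropriate finite-volume analog of~\cite[Theorem~1.11]{wedges} from~\cite{sphere-constructions} must be invoked carefully. A secondary technical point is the identification of $\mcl M_{a,\tau}$: although this set is fractal with Hausdorff dimension $\kappa'/8$, Lemma~\ref{lem-inf-subordinator} ensures that $Z$ has zero total variation on it, so only its identification as a closed subset of $[a,\tau]$ (and not the values of $Z$ on it) is needed to apply Lemma~\ref{lem-bm-determined}.
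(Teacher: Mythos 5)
Your overall strategy for both parts matches the paper's, and part~\ref{item-msrble-chordal} is essentially the paper's conditional-independence argument (the paper phrases it as ``$(\mcl S_{a,\tau}^0, \eta'_{\mcl S_{a,\tau}^0})$ is conditionally independent from $(\mcl S_{a,\tau},\eta_{a,b,\mcl S_{a,\tau}})$ given $(\mcl S_{a,\tau}^0,\eta_{a,b,\mcl S_{a,\tau}^0})$, and part~\ref{item-msrble-wpsf} shows the former determines the latter''; your ``factors through the fibers of the coarsening'' is the same fact in different words). However, there is a genuine error in your part~\ref{item-msrble-wpsf}.

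You identify the ``residual closed set'' $[a,\tau]\setminus\bigcup_t(\sigma_{a,\tau}(t),\tau_{a,\tau}(t))$ with $\mcl M_{a,\tau}$, the set of simultaneous running infimum times that is needed to apply Lemma~\ref{lem-bm-determined}. These sets are \emph{not} the same. The residual set is the set of ancestor-free times (the complement of the open maximal cone intervals), which by the discussion in the proof of Lemma~\ref{lem-bm-determined-infty} has Hausdorff dimension $\kappa'/8$. By Lemma~\ref{lem-inf-subordinator}, the set $\mcl M_{a,\tau}$ of simultaneous running infima has dimension $1-\kappa'/8$. For $\kappa'\in(4,8)$ we have $\kappa'/8\in(1/2,1)$ while $1-\kappa'/8\in(0,1/2)$, so $\mcl M_{a,\tau}$ is a.s.\ a strict subset of the residual set. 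The hypothesis of Lemma~\ref{lem-bm-determined} specifically uses the event that $[r_1,r_2]$ avoids $\mcl M_{a,b}$, not the larger ancestor-free set, and its proof makes essential use of the specific characterization of $\mcl M_{a,b}$ via simultaneous running infima relative to $a$ — feeding in the larger set would both shrink the relevant events too much to cover $[a,\tau]$ and break the step that compares $\sigma_{-\infty,r_2}(t)$ with $a$. The paper instead recovers $\mcl M_{a,\tau}$ from the \emph{bead} decomposition of $\mcl S_{a,\tau}^0$ together with the additional marked points, via Lemma~\ref{lem-bead-inf}: the simultaneous running infimum times are exactly the times separating consecutive beads (interior connected components of $\eta'([a,\tau])$), and the quantum masses of the beads plus the marked points pin these down. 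This extra ingredient is missing from your argument and must be added to make part~\ref{item-msrble-wpsf} correct. A smaller point: the curves filling the bubbles are space-filling SLE$_{\kappa'}(\kappa'-6)$ \emph{loops} based at the cut point, not chordal space-filling SLE$_{\kappa'}$'s; this does not affect the logic of part~\ref{item-msrble-chordal}, but your appeal to Lemma~\ref{lem-wpsf-law} should also invoke the Markov property to pass from the bead-level (where Lemma~\ref{lem-wpsf-law} applies) to the bubble-level.
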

\begin{proof}  
We first prove assertion~\ref{item-msrble-wpsf}. 
It is clear that $\mcl S_{a,\tau}^0$ a.s.\ determines $\tau$, since $\tau - a$ is the total quantum mass of $\mcl S_{a,\tau}^0$. 
Recalling Lemma~\ref{lem-chordal-sle}, we find that $(\mcl S_{a,\tau}^0   , \eta'_{\mcl S_{a,\tau}})$ a.s.\ determines the Brownian cone excursions $(Z-Z_t)|_{[v_Z(t) , t]}$ for each maximal $\tfrac{\pi}{2}$-cone time $t$ for $Z$ in $[a,b]$. 
Furthermore, $\mcl S_{a,\tau}^0$ a.s.\ determines the quantum mass of each bead of $\mcl S_{a,\tau}$, which together with the last two marked points of $\mcl S_{a,b}^0$ from~\eqref{eqn-increment-surface} determines the times in $[a,b]$ at which the two coordinates of $Z$ attain a simultaneous running infimum (Lemma~\ref{lem-bead-inf}).  
Hence Lemma~\ref{lem-bm-determined} implies that for each $r\in [a,b]$, $(\mcl S_{a,\tau}^0   , \eta'_{\mcl S_{a,\tau}})$ a.s.\ determines $(Z-Z_a)|_{[a,r]}$ on the event $\{r \leq \tau\}$. Exhausting over all rational $r\in [a,b]$ shows that $(\mcl S_{a,\tau}^0   , \eta'_{\mcl S_{a,\tau}})$ a.s.\ determines $(Z-Z_a)|_{[a,\tau]}$.
It follows from the results of~\cite{wedges} (see, e.g.,~\cite[Lemma~3.12]{ghs-bipolar} for a careful explanation) that for each $r\in [a,b]$, $(Z-Z_a)|_{[a,\tau]}$ a.s.\ determines $(\mcl S_{a,r} , \eta'_{\mcl S_{a,r}} )$ on the event $\{r \leq \tau\}$. Again exhausting over all rational $r\in [a,b]$, we obtain assertion~\ref{item-msrble-wpsf}. 
  
Now we deduce assertion~\ref{item-msrble-chordal} from assertion~\ref{item-msrble-wpsf}. 
Although $\tau$ is not a stopping time for $Z$, it is a stopping time for the conditional law of the curve $\eta_{a,b,\mcl S_{a,b}}$ given $\mcl S_{a,b}$. Hence we can use Lemma~\ref{lem-wpsf-law} and the Markov property of chordal space-filling $\SLE_{\kappa'}(\rho^L;\rho^R)$ to obtain that if we condition on $(\mcl S_{a,\tau} , \eta_{a,b,\mcl S_{a,\tau}})$ then the regular conditional law of $\eta'_{\mcl S_{a,\tau}^0 }  $ is that of an independent space-filling $\SLE_{\kappa'}(\kappa'-6)$ loop in each of the bubbles of $\mcl S_{a,\tau}^0$, based at the marked point of the bubble.  
This conditional law depends only on $(\mcl S_{a,\tau}^0 , \eta_{a,b,\mcl S_{a,\tau}^0})$, so we get the same conditional law if we condition only on $(\mcl S_{a,\tau}^0 , \eta_{a,b,\mcl S_{a,\tau}^0})$. Hence $(\mcl S_{a,\tau}^0 , \eta_{ \mcl S_{a,\tau^0}}' )$ is conditionally independent from $(\mcl S_{a,\tau} , \eta_{a,b,\mcl S_{a,\tau}})$ given $(\mcl S_{a,\tau}^0 , \eta_{a,b,\mcl S_{a,\tau}^0})$. By assertion~\ref{item-msrble-wpsf}, $(\mcl S_{a,\tau}^0 , \eta_{ \mcl S_{a,\tau}^0}' )$ a.s.\ determines $(\mcl S_{a,\tau} , \eta_{a,b,\mcl S_{a,\tau}})$, so in fact $(\mcl S_{a,\tau}^0 , \eta_{a,b,\mcl S_{a,\tau}^0})$ a.s.\ determines $(\mcl S_{a,\tau} , \eta_{a,b,\mcl S_{a,\tau}})$, which is assertion~\ref{item-msrble-chordal}.  
\end{proof}

From Lemma~\ref{lem-partial-surface-msrble}, we easily deduce a measurability statement for the whole $\gamma$-quantum cone $\mcl C$ from~\eqref{eqn-cone-def}.

\begin{lem} \label{lem-full-surface-msrble}
Let $a,t,b\in \BB R$ with $a \leq t \leq b$ and let $\tau=\tau_{a,b}(t)$ be as in~\eqref{eqn-tau-def}. 
The $\gamma$-quantum cone~$\mcl C$, the future $\frac{3\gamma}{2}$-quantum wedge $\mcl S_{a,\infty}$, and the curve $\eta_{a,b,\mcl C}|_{[a,t]}$ are a.s.\ determined by the $4$-tuple $\left(\mcl S_{-\infty,a} ,  \mcl S_{a,\tau}^0 ,  \eta_{a,b , \mcl S_{a,\tau}^0} ,    \mcl S_{\tau,\infty}    \right)$. 
\end{lem}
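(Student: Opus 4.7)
The plan is to reconstruct the three objects from the 4-tuple in sequence, combining Lemma~\ref{lem-partial-surface-msrble} with conformal welding. Every welding interface appearing below will be a finite union of $\SLE_\kappa$-type arcs with $\kappa = \gamma^2 \in (0,4)$, hence conformally removable by~\cite[Proposition~3.16]{wedges}; consequently each welding will be uniquely determined by the two quantum surfaces to be glued together with the $\gamma$-quantum length measure $\nu_h$ on the arcs to be identified.

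First I would apply part~\ref{item-msrble-chordal} of Lemma~\ref{lem-partial-surface-msrble} to $(\mcl S_{a,\tau}^0, \eta_{a,b,\mcl S_{a,\tau}^0})$ to produce $(\mcl S_{a,\tau}, \eta_{a,b,\mcl S_{a,\tau}})$ as a measurable function of the 4-tuple. For $s\in[a,t]$ the maximal $\pi/2$-cone interval for $Z$ in $[a,b]$ containing $s$ is either disjoint from $\{t\}$ or equals $[\sigma_{a,b}(t),\tau_{a,b}(t)]$, and in both cases $\tau_{a,b}(s) \leq \tau$, so $\eta_{a,b}(s) = \eta'(\tau_{a,b}(s)) \in \ol{\eta'([a,\tau])}$. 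Hence $\eta_{a,b,\mcl S_{a,\tau}}|_{[a,t]}$ will be exactly the desired curve $\eta_{a,b,\mcl C}|_{[a,t]}$ once $\mcl S_{a,\tau}$ has been identified as a sub-surface of $\mcl C$ via the welding steps below.

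Next I would obtain the future $\tfrac{3\gamma}{2}$-quantum wedge $\mcl S_{a,\infty}$ by conformally welding $\mcl S_{a,\tau}$ and $\mcl S_{\tau,\infty}$ according to $\nu_h$-length. The four marked points of each surface (cf.~\eqref{eqn-increment-surface}) decompose its boundary into the arcs that border $\eta'((-\infty,a])$ and those that border the other surface, and the welding identifies the latter pair. The corresponding welding interface, viewed inside $\mcl S_{a,\infty}$, is the outer boundary of $\eta'([a,\tau])$ relative to $\eta'([\tau,\infty))$, which is a finite concatenation of flow lines of the imaginary-geometry GFF of angles $\pm\pi/2$ (Section~\ref{sec-ig-prelim}) and hence conformally removable as above. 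Finally $\mcl C$ itself is obtained by conformally welding $\mcl S_{-\infty,a}$ to $\mcl S_{a,\infty}$ according to $\nu_h$-length; this is the content of the last assertion of~\cite[Theorem~1.9]{wedges}, recalled just below~\eqref{eqn-increment-surface}.

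The hard part will be making the welding of $\mcl S_{a,\tau}$ and $\mcl S_{\tau,\infty}$ fully precise: I would need to verify that the marked-point decompositions of their boundaries produce arcs of matching $\nu_h$-length and that the welded surface is determined modulo conformal maps. The former should follow from the fact that $\mcl S_{a,\tau}$ and $\mcl S_{\tau,\infty}$ are the restrictions of $h$ to complementary sub-regions of $\mcl S_{a,\infty}$ whose common boundary is an $\SLE_\kappa$-type curve system, combined with the conformal covariance of $\nu_h$; the latter is the conformal removability recalled in the first paragraph.
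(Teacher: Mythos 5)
Your proposal is correct and follows essentially the same route as the paper's proof: apply Lemma~\ref{lem-partial-surface-msrble}(\ref{item-msrble-chordal}) to recover $(\mcl S_{a,\tau},\eta_{a,b,\mcl S_{a,\tau}})$, then conformally weld $\mcl S_{a,\tau}$ to $\mcl S_{\tau,\infty}$ and $\mcl S_{-\infty,a}$ to the result, with uniqueness of each welding justified by the conformal removability of the $\SLE_\kappa$-type boundaries via~\cite[Proposition~3.16]{wedges}. You additionally make explicit why $\eta_{a,b,\mcl C}|_{[a,t]}$ is a curve living in $\eta'([a,\tau])$, a point the paper's proof leaves implicit; that verification is correct.
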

\begin{proof}
By Lemma~\ref{lem-partial-surface-msrble}, the curve-decorated quantum surface $(\mcl S_{a,\tau}^0 ,  \eta_{a,b , \mcl S_{a,\tau}^0})$ a.s.\ determines $(\mcl S_{a,\tau} ,  \eta_{a,b , \mcl S_{a,\tau}})$. By the peanosphere construction, the surface $\mcl S_{a,\infty}$ is obtained by conformally welding together $\mcl S_{a,\tau}$ and $\mcl S_{\tau,\infty}$ along their boundaries and the surface $\mcl C$ is obtained by conformally welding together $\mcl S_{-\infty,a}$ and $\mcl S_{a,\infty}$ together along their boundaries. The boundary of each of the space-filling $\SLE_{\kappa'}$ segments $\eta'((-\infty,a])$, $\eta'([a,\tau])$, and $\eta'([\tau,\infty))$ consists of a finite union of segments of non-crossing $\SLE_{\kappa}$-type curves for $\kappa = 16/\kappa' \in (0,4)$, so is a.s.\ conformally removable by~\cite[Proposition~3.16]{wedges}.  Hence there is a.s.\ only one way to perform the above conformal welding operations, and we obtain the statement of the lemma.
\end{proof}

\subsection{Bubbles cut out by $\eta_{a,b}$ are quantum disks}
\label{sec-sle-disk}

Suppose we are in the setting of Section~\ref{sec-surface-def}, and recall in particular the chordal $\SLE_{\kappa'}$-type curves $\eta_{a,b} : [a,b] \rta \eta'([a,b])$, the associated maximal $\tfrac{\pi}{2}$-cone intervals $[\sigma_{a,b}(t) , \tau_{a,b}(t)]$ from~\eqref{eqn-tau-def}, and the surface $\mcl S_{a,b}^0$ parameterized by the bubbles cut out by $\eta_{a,b}$. The goal of this subsection is to prove that the bubbles of $\mcl S_{a,b}^0$ are conditionally independent quantum disks if we condition on the values of $Z$ outside of the time intervals corresponding to these bubbles. In particular, we will prove the following proposition.  

\begin{prop} \label{prop-sle-disk}
Let $a,b\in \BB R \cup \{-\infty , \infty\}$ with $a < b$.  
Almost surely, the conditional law of the bubbles of the quantum surface $\mcl S_{a,b}^0$ given $Z|_{(-\infty,a]}$, $Z|_{[b,\infty)}$, and $\{ Z_{\tau_{a,b}(t)} : t\in [a,b]\}$ is that of a collection of independent singly marked quantum disks with given areas and boundary lengths. 
\end{prop}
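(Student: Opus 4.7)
The plan is to combine Lemma~\ref{lem-chordal-sle}'s identification of bubbles with Brownian cone excursions, the finite-volume peanosphere correspondence for the quantum disk, and the Brownian Markov property.

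\emph{Bubbles and cone excursions.} By Lemma~\ref{lem-chordal-sle} the bubbles of $\mcl S_{a,b}^0$ are exactly the singly marked quantum surfaces $\mcl S_{\sigma,\tau}$ indexed by maximal $\tfrac{\pi}{2}$-cone intervals $[\sigma,\tau]$ of $Z$ in $[a,b]$, each carrying quantum area $\tau-\sigma$ and quantum boundary length $|Z_\tau - Z_\sigma|$. The function $t \mapsto Z_{\tau_{a,b}(t)}$ is constant equal to $Z_\tau$ on each cone interval and agrees with $Z_t$ on the complement, so by continuity of $Z$ it determines the family of maximal cone intervals together with the values $Z_\sigma$ and $Z_\tau$ at all endpoints. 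In particular, all bubble areas and boundary lengths lie in the conditioning $\sigma$-algebra. Moreover, by the local form of~\cite[Theorem~1.11]{wedges} (see~\cite[Lemma~3.12]{ghs-bipolar}) applied to each cone interval, the curve-decorated quantum surface $(\mcl S_{\sigma,\tau}, \eta'_{\mcl S_{\sigma,\tau}})$ is a measurable function of the single cone excursion $(Z-Z_\tau)|_{[\sigma,\tau]}$.

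\emph{Each bubble is marginally a quantum disk.} By Section~\ref{sec-cone-time}, for each maximal cone interval one has either $L_\sigma = L_\tau$ or $R_\sigma = R_\tau$, so the cone excursion $(Z-Z_\tau)|_{[\sigma,\tau]}$ is a two-dimensional Brownian excursion in $[0,\infty)^2$ from a point on one of the coordinate axes back to the origin, of length $\tau-\sigma$. This is precisely the left/right boundary length encoding of a singly marked quantum disk of area $\tau-\sigma$ and boundary length $|Z_\tau-Z_\sigma|$ decorated by an independent chordal space-filling $\SLE_{\kappa'}(\kappa'-6)$ loop based at the marked boundary point, as supplied by the finite-volume peanosphere construction of~\cite{sphere-constructions} (see also~\cite[Section~10]{wedges}). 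Combined with the measurability from the previous paragraph, this identifies the marginal law of each $\mcl S_{\sigma,\tau}$ as a singly marked quantum disk with the stated parameters.

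\emph{Conditional independence.} The conditioning $\sigma$-algebra pins down $Z$ outside $[a,b]$ as well as $Z$ on the (closed) complement of the interiors of all cone intervals in $[a,b]$, together with all endpoint values $Z_\sigma, Z_\tau$. Applying the strong Markov property of the two-dimensional Brownian motion $Z$ at the successive cone endpoints, the family of cone excursions $\{(Z-Z_\tau)|_{[\sigma,\tau]}\}$ is conditionally independent given this data, each being a Brownian bridge constrained to the cone $[0,\infty)^2$ with the prescribed endpoints. Since each $\mcl S_{\sigma,\tau}$ is a measurable function of its own cone excursion, the bubbles are conditionally independent, and by the previous step each is a singly marked quantum disk with the prescribed area and boundary length.

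\emph{Main obstacle.} The only nontrivial input is the second step: extracting the marginal law of a single bubble, pulled out of the whole-plane peanosphere, as a standalone quantum disk. This depends on a consistency statement between the infinite-volume construction and the finite-volume peanosphere of~\cite{sphere-constructions}. The remaining difficulty is purely measure-theoretic, since maximal cone intervals form an uncountable fractal family; one handles this by exhausting through cone intervals of length at least $\ep$ and letting $\ep \downarrow 0$.
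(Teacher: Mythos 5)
The overall skeleton of your argument — identify bubbles with maximal $\tfrac\pi2$-cone excursions, show each excursion encodes a quantum disk, deduce conditional independence from conditional independence of the excursions — matches the paper's. However, there is a genuine gap in your second step, and it is not the measure-theoretic one you flag at the end.

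Your claim that the cone excursion $(Z-Z_\tau)|_{[\sigma,\tau]}$ is ``precisely the left/right boundary length encoding of a singly marked quantum disk \dots as supplied by the finite-volume peanosphere construction'' conflates two a priori distinct determination functions. The whole-plane measurability statement (\cite[Theorem~1.11]{wedges}, \cite[Lemma~3.12]{ghs-bipolar}) says $\mcl S_{\sigma,\tau}$ is a.s.\ equal to some measurable function $F$ applied to the cone excursion, where $F$ is produced by restricting the global whole-plane determination. Separately, the finite-volume encoding of a quantum disk by its boundary length process gives another function $G$. Your argument needs $F = G$ (equivalently, that $F$ applied to a Brownian cone excursion with prescribed endpoints produces a quantum disk), and that is not a citation — it is the main content of the paper's proof. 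The paper establishes it by first observing that \cite[Theorem~1.17]{wedges} gives the quantum-disk identification for bubbles $\mcl S_{\sigma^r,\tau^r}$ cut out by the chordal curve $\eta_{-\infty,r}$ at a \emph{deterministic} time $r$ (Lemma~\ref{lem-bubble-cond-law}); the maximal cone interval $[\sigma,\tau]$ in $[a,b]$ need not coincide with $[\sigma^r,\tau^r]$ for any fixed $r$, and $\sigma,\tau$ are not stopping times for $Z$, so the paper introduces the reverse filtration $\{\mcl F^r\}_{r\ge t}$, shows the random time $\rho$ in~\eqref{eqn-bubble-time} is a reverse stopping time for which $[\sigma^r,\tau^r]=[\sigma,\tau]$ once $r$ is close to $\rho$ (Lemma~\ref{lem-bubble-time}), and then runs a backward martingale convergence argument. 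Your proposal contains no substitute for this; the assertion that what remains is ``purely measure-theoretic'' and handled by an $\ep$-exhaustion over cone intervals of length $\ge \ep$ is mistaken, because even for a single fixed maximal cone interval the identification of $F$ with $G$ is unproved.

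A secondary (but related) imprecision: you invoke the strong Markov property of $Z$ ``at the successive cone endpoints'' to get conditional independence of the excursions, but the endpoints $\sigma,\tau$ of a maximal cone interval in $[a,b]$ are precisely \emph{not} stopping times for $Z$ (the paper flags this explicitly in Section~\ref{sec-general-disk}). The paper instead reads off conditional independence directly from the structure of the conditioning $\sigma$-algebra, together with the measurability of each bubble with respect to its own cone excursion (which is supplied by \cite[Theorem~2.1]{sphere-constructions} once the quantum-disk identification has been established — i.e.\ your step (b) is a prerequisite for the measurability you want, not the other way around).
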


Recall that the bubbles of $\mcl S_{a,b}^0$ are the same as the quantum surfaces $\mcl S_{\sigma_{a,b}(t) ,\tau_{a,b}(t)}$ for $t\in (a,b)$ with $\tau_{a,b}(t) \not= t$. 
The main difficulty in the proof of Proposition~\ref{prop-sle-disk} lies in showing that each of these bubbles is a quantum disk conditional on its quantum area and boundary length. 

Throughout most of this subsection, we fix $a,b\in \BB R \cup \{-\infty , \infty\}$ with $a < b$ and $t \in (a,b)$ and to lighten notation we write
\eqb \label{eqn-sle-disk-abbrv}
 \sigma = \sigma_{a,b}(t) \quad \op{and} \quad  \tau  =\tau_{a,b}(t)    .
\eqe 
We will only allow $t$ to vary at the very end, when we argue that the bubbles are conditionally independent. 
The idea of the proof is to relate the bubble $\mcl S_{\sigma,\tau}$ to the bubbles cut out by the $\SLE_{\kappa'}(\kappa'-6)$ curves $\eta_{-\infty,r}$ for $r\in \BB R$, which we know are quantum disks by~\cite[Theorem~1.17]{wedges} (see Lemma~\ref{lem-bubble-cond-law}). 
 
Throughout the proof, we use the following notation. For $r \geq t$, let $\sigma^r = \sigma_{-\infty,r}(t)$ and $\tau^r = \tau_{-\infty ,r}(t)$ be as in~\eqref{eqn-tau-def} with $a=-\infty$ and $b=r$. 
Equivalently, by Lemma~\ref{lem-chordal-sle}, $[\sigma^r,\tau^r]$ is the maximal $\tfrac{\pi}{2}$-cone interval for $Z$ in $(-\infty,r]$ which contains $t$. 
Define the $\sigma$-algebra
\eqb \label{eqn-disk-filtration}
\mcl F^r := \sigma\left( \sigma^r , \tau^r ,  Z|_{\BB R\setminus [\sigma^r , \tau^r]} \right).
\eqe 
The main reason for our interest in the bubbles $\mcl S_{\sigma^r,\tau^r}$ is that these bubbles are in fact quantum disks conditional on $\mcl F^r$.  

\begin{lem} \label{lem-bubble-cond-law}
If $r\geq t$ and we condition on the $\sigma$-algebra $\mcl F^r$ of~\eqref{eqn-disk-filtration}, then the conditional law of the quantum surface $\mcl S_{\sigma^r,\tau^r}$ parameterized by $\eta'([\sigma^r,\tau^r])$ is that of a singly marked quantum disk with area $ \tau^r - \sigma^r$ and boundary length $|Z_{\tau^r } - Z_{\sigma^r}|$.
\end{lem}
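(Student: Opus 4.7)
The plan is to identify $\mcl S_{\sigma^r,\tau^r}$ as one of the bubbles cut off by the chordal $\SLE_{\kappa'}(\kappa'-6)$-type curve $\eta_{-\infty,r}$ and then appeal to the peanosphere/quantum-disk correspondence from~\cite{wedges}. Applying Lemma~\ref{lem-chordal-sle} with $a=-\infty$ and $b=r$, the interval $[\sigma^r,\tau^r]$ is the maximal $\pi/2$-cone interval for $Z$ in $(-\infty,r]$ containing $t$, so $\mcl S_{\sigma^r,\tau^r}$ is precisely the singly-marked bubble of $\eta_{-\infty,r}$ that contains $\eta'(t)$, marked at the cut-off point. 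Since $\eta'$ is independent of $h$ and $\eta_{-\infty,r}$, reparameterized by capacity as seen from $\eta'(r)$, has the law of a chordal $\SLE_{\kappa'}(\kappa'-6)$ from $\infty$ to $\eta'(r)$ (recall Section~\ref{sec-wpsf} and~\cite[Footnote~4]{wedges}), the pair $(\mcl C,\eta_{-\infty,r})$ is a $\gamma$-quantum cone decorated by an independent $\SLE_{\kappa'}(\kappa'-6)$, the exact setting of the cone-excursion decomposition of~\cite{wedges}.

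Next I would invoke the bubble-decomposition results of~\cite{wedges} to identify the \emph{unconditional} quantum-disk law of this bubble. By~\cite[Theorem~1.11]{wedges} (see also~\cite[Lemma~3.12]{ghs-bipolar}), the singly-marked quantum surface $\mcl S_{\sigma^r,\tau^r}$ is an a.s.\ measurable function of the cone excursion $(Z-Z_{\tau^r})|_{[\sigma^r,\tau^r]}$. The correspondence between $\pi/2$-cone excursions of $Z$ and bubbles of $\SLE_{\kappa'}(\kappa'-6)$ on a quantum cone, developed in~\cite[Section~10]{wedges}, identifies the resulting surface --- given only the duration $\tau^r-\sigma^r$ and displacement $|Z_{\tau^r}-Z_{\sigma^r}|$ --- as a singly-marked quantum disk of that area and boundary length.

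The remaining step, and the one I expect to be the main obstacle, is to promote the conditioning from $(\tau^r-\sigma^r,|Z_{\tau^r}-Z_{\sigma^r}|)$ to the full $\sigma$-algebra $\mcl F^r$. For this I would use the following Brownian-excursion independence fact: conditionally on $\mcl F^r$, the interior excursion $(Z-Z_{\tau^r})|_{[\sigma^r,\tau^r]}$ has the law of a correlated Brownian bridge in the closed positive quadrant from $Z_{\sigma^r}-Z_{\tau^r}$ at time $\sigma^r$ to $0$ at time $\tau^r$, conditioned to stay in the quadrant. Crucially, this conditional law depends only on the duration and displacement, both of which are $\mcl F^r$-measurable; the maximality-in-$(-\infty,r]$ constraint is already enforced by conditioning on $Z|_{\BB R\setminus[\sigma^r,\tau^r]}$. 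Combining this with the measurability from the previous paragraph yields the lemma.

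The subtlety in this last step is that $(\sigma^r,\tau^r)$ are random functionals of $Z$, so the conditioning requires care. The argument is essentially a disintegration using the strong Markov property of $Z$ combined with the time-reversal/scaling properties of Brownian $\pi/2$-cone excursions that already underpin Lemma~\ref{lem-bm-determined-infty}, but one has to check that the maximality constraint on the cone interval is genuinely captured by the exterior data and not by any hidden feature of the interior excursion.
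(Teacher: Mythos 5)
Your overall plan is correct and would work, but it takes a genuinely different route from the paper's, and the step you flag as subtle is where all the real content lies.

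The paper does not compute the conditional law of the Brownian cone excursion at all. Instead, it introduces the $\sigma$-algebra $\mcl H_1^r$ generated by the future curve-decorated quantum surface $(\mcl S_{r,\infty},\eta'_{\mcl S_{r,\infty}})$ together with the ordered sequence of areas and boundary lengths of \emph{all} the bubbles of $\mcl S_{-\infty,r}^0$, and invokes~\cite[Theorem~1.17]{wedges} to say the bubbles are conditionally independent quantum disks given $\mcl H_1^r$. It then enlarges to $\mcl H_2^r$ by also conditioning on the \emph{other} bubble surfaces (which leaves the law of $\mcl S_{\sigma^r,\tau^r}$ unchanged, by the conditional independence), and the whole proof reduces to a single measurability claim: $\mcl F^r\subset\mcl H_2^r$. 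That last claim is what uses the regenerative/stable-subordinator structure of~\cite[Proposition~10.3]{wedges}: the jumps of the induced $\kappa'/4$-stable processes are exactly the excursion displacements $Z_s-Z_{v_Z(s)}$, and these jumps determine $Z|_{(-\infty,r]\setminus[\sigma^r,\tau^r]}$. So the paper never needs any explicit description of the interior excursion's conditional law.

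Your proposal instead tries to establish directly that, given $\mcl F^r$, the interior cone excursion has a Brownian-bridge-in-the-cone law depending only on $\tau^r-\sigma^r$ and $Z_{\sigma^r}-Z_{\tau^r}$, and then to transfer this through the measurability of $\mcl S_{\sigma^r,\tau^r}$ as a function of the excursion. The underlying technical input is the same regenerative decomposition, but it gets used to do an explicit Brownian disintegration rather than a $\sigma$-algebra inclusion. This is more than the paper needs, and it is exactly the step you defer. Your parenthetical observation that the maximality constraint is ``already enforced by conditioning on $Z|_{\BB R\setminus[\sigma^r,\tau^r]}$'' is essentially right (given the interior stays in the cone, maximality in $(-\infty,r]$ depends only on the exterior path), but verifying the disintegration rigorously, with the random non-stopping times $\sigma^r,\tau^r$, is where the work is. The paper's $\mcl H_2^r$ trick is the cleaner way to package that work, since Theorem~1.17 of~\cite{wedges} has already done the excursion-theoretic disintegration at the level of quantum surfaces; you would be re-deriving a version of it at the level of Brownian paths. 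If you fill in the disintegration using the regenerative-set / stable-process facts cited in Lemma~\ref{lem-bm-determined-infty} and~\cite[Section~10]{wedges}, your route closes; but as written, it sketches exactly the point the paper resolves by the inclusion $\mcl F^r\subset\mcl H_2^r$.
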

\begin{proof}
We will deduce the lemma from the results of~\cite{wedges}. 
Recall that the curve $\eta_{-\infty,r}$ is a whole-plane $\SLE_{\kappa'}(\kappa'-6)$ from $\infty$ to $\eta'(r)$. 
Furthermore, by the translation invariance of the law of the pair $(h,\eta')$~\cite[Theorem~1.9]{wedges}, $(\BB C , h , \eta'(r) , \infty)$ is a $\gamma$-quantum cone independent from $\eta_{-\infty,r}$ (viewed as a curve modulo monotone re-parameterization). 

Let $\mcl H^r_1$ be the $\sigma$-algebra generated by the curve-decorated quantum surface $(\mcl S_{r,\infty} , \eta'_{\mcl S_{r,\infty}})$ and the quantum areas and the quantum boundary lengths of all of the bubbles of the quantum surfaces $\mcl S_{-\infty,r}^0$ in the order in which they are cut out by $\eta_{-\infty,r}$.  
Note that $\mcl S_{-\infty,r}^0$, and $\eta_{-\infty,r,\mcl S_{-\infty,r}^0}$ are determined by $(\mcl S_{-\infty,r} , \eta'_{\mcl S_{-\infty,r}})$ so are independent from $(\mcl S_{r,\infty} , \eta'_{\mcl S_{r,\infty}})$. 
 
By~\cite[Theorem~1.17]{wedges} and the Markov property of whole-plane space-filling $\SLE_{\kappa'}$, if we condition on $\mcl H_1^r$ then the conditional law of the curve-decorated quantum surface $( \mcl S_{-\infty,r}^0 , \eta'_{\mcl S_{-\infty,r}^0})$ is that of a collection of independent singly marked quantum disks with given areas and boundary lengths, each decorated by a space-filling $\SLE_{\kappa'}(\kappa'-6)$ loop based at the marked point. 

We can determine from $\mcl H^r_1$ which of the bubbles of $\mcl S_{-\infty,r}^0$ is $\mcl S_{\sigma^r, \tau^r}$ as follows. If we read off the quantum areas of the bubbles of $\mcl S_{-\infty,r}^0$ in reverse chronological order, then $\mcl S_{\sigma^r,\tau^r}$ is the first bubble which we encounter with the property that the sum of the quantum areas of the previous bubbles is at least $t$. 

Consequently, the conditional law of $\mcl S_{\sigma^r,\tau^r}$ given $\mcl H_1^r$ is that of a singly marked quantum disk with given area and boundary length. Furthermore, by the above independence statement, we get the same conditional law for $\mcl S_{\sigma^r,\tau^r}$ if we further condition on the $\sigma$-algebra $\mcl H_2^r$ generated by $\mcl H_1^r$ and the curve-decorated quantum surfaces $(\mcl D , \eta'_{\mcl D})$ for each bubble $\mcl D$ of $\mcl S_{-\infty,r}^0$ other than $\mcl S_{\sigma^r,\tau^r}$. 

By the relationship between $\tfrac{\pi}{2}$-cone excursions for $Z$ and bubbles filled in by $\eta'$, we have $\mu_h(\mcl S_{\sigma^r,\tau^r}) =  \tau^r - \sigma^r$ and $\nu_h(\bdy \mcl S_{\sigma^r,\tau^r}) = |Z_{\tau^r } - Z_{\sigma^r}|$ (note that one of the two coordinates of $Z_{\tau^r } - Z_{\sigma^r}$ is zero and the other is negative).
Hence it remains to show that a.s.\ $\mcl F^r\subset\mcl H_2^r$. The $\sigma$-algebra $\mcl H_2^r$ a.s.\ determines $(Z-Z_r)|_{[r,\infty)}$, $ \tau^r - \sigma^r$, $Z_{\tau^r } - Z_{\sigma^r}$, and $(Z-Z_{v_Z(s)})|_{[v_Z(s) ,s]}$ for each maximal $\tfrac{\pi}{2}$-cone time $s$ for $Z$ in $(-\infty , r]$ other than $[\sigma^r,\tau^r]$. 

As explained in~\cite[Proposition~10.3]{wedges}, the complement of the union of all of the maximal $\tfrac{\pi}{2}$-cone excursions for $Z$ in $(-\infty,r]$ (the so-called ancestor free times for $Z$, run backward from time $r$) has the law of the range of $\kappa'/8$-stable subordinator, and if we pre-compose $Z$ with this stable subordinator we obtain a pair of independent $\kappa'/4$-stable processes. These $\kappa'/4$-stable processes are a.s.\ determined by their jumps, which in turn are a.s.\ determined by the quantities $Z_s - Z_{v_Z(s)}$ as $s$ ranges over all maximal $\tfrac{\pi}{2}$-cone times for $Z$ in $(-\infty,r]$. From this, we deduce that the maximal $\tfrac{\pi}{2}$-cone excursions $(Z-Z_{v_Z(s)})|_{[v_Z(s) ,s]}$ for $Z$ in $(-\infty , r]$ other than $[\sigma^r,\tau^r]$ a.s.\ determine the values of $Z - Z_r$ on $(-\infty, r] \setminus [\sigma^r,\tau^r]$. 
Hence $\mcl F^r \subset \mcl H_2^r$. 
\end{proof}
 
Our next lemma will allow us to compare the laws of the bubble $\mcl S_{\sigma,\tau}$ of~\eqref{eqn-sle-disk-abbrv} and the quantum disk $\mcl S_{\sigma^r,\tau^r}$. 

\begin{lem} \label{lem-bubble-time}
Suppose we are in the setting described just above.
For $r' \geq r \geq t$, we have (in the notation~\eqref{eqn-disk-filtration}) $\mcl F^{r'} \subset \mcl F^r$, so $\{\mcl F^r\}_{r\geq t}$ is a decreasing filtration. 
Furthermore, if we let $\sigma$ and $\tau$ be as in~\eqref{eqn-sle-disk-abbrv}, then 
\eqb \label{eqn-bubble-time}
\rho = \sup\left\{ r \geq t : [\sigma^r,\tau^r] =  [\sigma,\tau]  \right\}
\eqe
is a reverse stopping time for $\{\mcl F^r\}_{r\geq t}$ and a.s.\ $\rho > t$. Furthermore, there a.s.\ exists $\delta > 0$ such that $[\sigma^r ,\tau^r] = [\sigma , \tau]$ for each $r \in [\rho - \delta ,\rho]$. 
\end{lem}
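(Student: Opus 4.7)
The proof throughout relies on a \emph{cone-property trick}: since $[\sigma^r,\tau^r]$ is itself a $\tfrac{\pi}{2}$-cone interval, $Z_u \geq Z_{\tau^r}$ coordinate-wise for all $u \in [\sigma^r,\tau^r]$. Consequently any cone-type condition on an interval that crosses the hidden window $[\sigma^r,\tau^r]$ reduces to checking that condition on the visible complement together with a comparison involving the endpoint values $Z_{\sigma^r},Z_{\tau^r}$, which lie in $\mcl F^r$ by continuity of $Z$.

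The inclusion $\mcl F^{r'} \subseteq \mcl F^r$ uses the obvious monotonicity $[\sigma^r,\tau^r] \subseteq [\sigma^{r'},\tau^{r'}]$ coming from $(-\infty,r] \subseteq (-\infty,r']$. To conclude, I will show $[\sigma^{r'},\tau^{r'}] \in \mcl F^r$ by characterizing $\tau^{r'}$ as the largest $s \in [\tau^r,r']$ that is a $\tfrac{\pi}{2}$-cone time with $v_Z(s) \leq t$, a condition which, though it involves cone checks crossing the hidden window $[\sigma^r,\tau^r]$, is $\mcl F^r$-measurable by the trick. Once $[\sigma^{r'},\tau^{r'}]$ is known, $Z|_{\BB R\setminus[\sigma^{r'},\tau^{r'}]}$ is simply a restriction of $Z|_{\BB R\setminus[\sigma^r,\tau^r]}$ and hence $\mcl F^r$-measurable.

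For the reverse-stopping-time assertion I will verify $\{\rho \geq r\} \in \mcl F^r$. Monotonicity of $r \mapsto [\sigma^r,\tau^r]$ shows that $A := \{r \geq t : [\sigma^r,\tau^r] = [\sigma,\tau]\}$ is an interval with left endpoint $\tau$, so
\[
\{\rho \geq r\} \;=\; \{\tau \geq r\} \,\cup\, \big(\{\tau < r\} \cap \{[\sigma^r,\tau^r] = [\sigma,\tau]\}\big).
\]
The event $\{\tau \geq r\}$ is the existence of a $\tfrac{\pi}{2}$-cone time $s \in [r,b]$ with $[v_Z(s),s] \subseteq [a,b]$ and $v_Z(s) \leq t$; on $\{\tau < r\}$, the equality $[\sigma^r,\tau^r] = [\sigma,\tau]$ reduces (using maximality of $\tau$ in $[a,b]$, which forces any competing cone time $s \in (\tau,r]$ with $v_Z(s)\le t$ to have $v_Z(s)<a$) to the non-existence of such a competing $s$. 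Both are cone-type conditions, hence $\mcl F^r$-measurable by the trick.

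For $\rho > t$ a.s., I argue $\tau > t$ a.s. The complement of $\bigcup_s (v_Z(s),s)$ (the ``ancestor-free'' set appearing in the proof of Lemma~\ref{lem-bm-determined-infty}) has Hausdorff dimension $\kappa'/8<1$ and hence Lebesgue measure zero a.s.; Fubini together with the stationarity of $Z$ gives that for the fixed time $t$, a.s.\ $t$ lies in the interior of some cone interval, and for fixed $a<t<b$ the enclosing maximal cone interval in $[a,b]$ a.s.\ strictly contains $t$, so $\tau > t$ and hence $\rho \geq \tau > t$. Finally, for the left-neighborhood claim I let $\rho^*$ be the infimum of right endpoints of cone intervals that strictly contain $[\sigma,\tau]$; by the nesting structure of cones, $A \supseteq [\tau,\rho^*)$, so $\rho = \rho^*$ and any $\delta \in (0,\rho - \tau)$ works once $\rho > \tau$ is established. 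When $a = -\infty$ this is automatic because such a strictly larger cone would have to extend past $b$ (else contradicting maximality of $\tau$ in $[a,b]$), so $\rho \geq b > \tau$ a.s. For general $a$ the obstacle, which is the main technical step, is ruling out cone times $s_n \downarrow \tau$ with $v_Z(s_n) < a$: by definition of $\sigma = v_Z(\tau)$, $Z$ has at least one coordinate strictly less than $Z_\tau$ at points arbitrarily close to $\sigma$ on the left, while $Z_{s_n}\to Z_\tau$ by continuity, so the cone condition $Z_u \geq Z_{s_n}$ must fail at some $u \in (v_Z(s_n),\sigma)$ once $n$ is large, contradicting $s_n$ being a cone time with cone entrance below $a$.
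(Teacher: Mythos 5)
Your proof is correct and follows the same overall route as the paper's, with your ``cone-property trick'' making explicit the mechanism the paper uses implicitly when it says that knowing $\sigma^r,\tau^r$ and $Z|_{\BB R\setminus[\sigma^r,\tau^r]}$ determines every $\tfrac{\pi}{2}$-cone interval not contained in the hidden window. Two differences are worth noting. For the reverse-stopping-time claim, the paper simply observes that $\{\rho\geq r\}$ a.s.\ equals $\{[\sigma^r,\tau^r]\subset[a,b]\}$, which is $\mcl F^r$-measurable for free because $\sigma^r,\tau^r\in\mcl F^r$ by definition; your decomposition into $\{\tau\geq r\}$ and $\{\tau<r,\,[\sigma^r,\tau^r]=[\sigma,\tau]\}$ followed by the trick is valid, but you are doing unnecessary work when the single condition $[\sigma^r,\tau^r]\subset[a,b]$ already suffices. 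For the key step $\rho>\tau$, the paper assumes by symmetry that $\tau$ is a left cone time, uses the a.s.\ facts that $\sigma$ is not a local minimum of $R$ and $\sigma>a$ to produce $s_*\in(a,\sigma)$ with $R_{s_*}<R_\tau$, then closes with continuity of $R$ near $\tau$. Your version fixes $u^*\in(a,\sigma)$ with some coordinate of $Z_{u^*}$ strictly below the corresponding coordinate of $Z_\tau$ (such $u^*$ exist by the very definition of $\sigma=v_Z(\tau)$ as an infimum, no separate local-minimum fact needed) and closes with continuity of $Z$ at $\tau$. This is the same idea, phrased marginally more cleanly. Your Fubini/Hausdorff-dimension argument for $\tau>t$ a.s.\ is a welcome addition, since the paper silently assumes that $\tau$ is a genuine cone time.

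One point to tighten. You assert that ``any $\delta\in(0,\rho-\tau)$ works'' for the left-neighborhood claim, which would require $\rho$ itself to lie in $A:=\{r:[\sigma^r,\tau^r]=[\sigma,\tau]\}$. In fact $\rho\notin A$. The set $C=\{s\geq t: Z_u\geq Z_s\text{ coordinate-wise }\forall u\in[t,s]\}$ is closed (the defining inequalities pass to both one-sided limits by continuity of $Z$; indeed $C$ is the range of a $(1-\kappa'/8)$-stable subordinator), so $\rho=\inf\bigl(C\cap(\tau,\infty)\bigr)$ belongs to $C$ once we know $\rho>\tau$, and therefore $\tau^\rho=\rho\neq\tau$. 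Hence $A=[\tau,\rho)$ rather than $[\tau,\rho]$, and $[\rho-\delta,\rho]\not\subset A$ for any $\delta>0$. The paper's statement of the lemma has the same imprecision (its $[\rho-\delta,\rho]$ should read $[\rho-\delta,\rho)$); this is inconsequential for the downstream application in the proof of Proposition~\ref{prop-sle-disk}, which only ever evaluates $[\sigma^{\rho_n},\tau^{\rho_n}]$ at the strictly smaller dyadic approximations $\rho_n<\rho$, but you should state the conclusion with the half-open interval.
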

\begin{proof}
The $\sigma$-algebra $\mcl F^r$ determines $\sigma^r$, $\tau^r$, and $Z_s$ for each $s\in \BB R\setminus [\sigma^r,\tau^r]$. This information determines each $\tfrac{\pi}{2}$-cone interval for $Z$ which is not contained in $[\sigma^r,\tau^r]$, so in particular determines the endpoints of the maximal $\tfrac{\pi}{2}$-cone interval for $Z$ in $I$ which contains $[\sigma^r,\tau^r]$ for each open (possibly unbounded) interval in $\BB R$ which contains $[\sigma^r,\tau^r]$. 

If $r' \geq r$, then $[\sigma^r ,\tau^r]$ is a $\tfrac{\pi}{2}$-cone interval in $(-\infty , r']$ so is contained in $[\sigma^{r'} , \tau^{r'}]$. 
Hence the preceding paragraph implies that $\sigma^{r'} , \tau^{r'} \in \mcl F^r$. It is clear that $\mcl F^{r'}$ is determined by $[\sigma^{r'} , \tau^{r'}]$ and $Z_s$ for $s\in \BB R\setminus [\sigma^r,\tau^r]$, whence $\mcl F^{r'} \subset \mcl F^r$. 

We next argue that $\rho > t$ a.s., i.e.\ there a.s.\ exists some $r >t$ for which $[\sigma^r ,\tau^r] = [\sigma,\tau]$. 
By symmetry, we can assume without loss of generality that $\tau$ is a left $\tfrac{\pi}{2}$-cone time for $Z$, i.e.\ $R_\sigma = R_\tau$.
Almost surely, $\sigma$ is not a local minimum for $R$ and a.s.\ $\sigma > a$, so there a.s.\ exists $\ep > 0$ and $s_* \in (a,\sigma)$ such that $R_{s_*} \leq R_\tau - \ep$. 
By continuity, there a.s.\ exists $\delta > 0$ such that $R_s \geq R_\tau - \ep$ for each $s\in [\tau , \tau+\delta]$.
If $s$ is a $\tfrac{\pi}{2}$-cone time for $Z$ belonging to $(\tau,\tau+\delta]$, then the cone entrance time satisfies $v_Z(s) \geq s_* \geq a$ so by maximality of $[\sigma,\tau]$, we must have $v_Z(s) \geq \tau$. 

If $r \in (\tau , \tau+\delta]$, then $[\sigma,\tau]$ is a $\tfrac{\pi}{2}$-cone interval for $Z$ in $(-\infty, r]$ so by maximality of $[\sigma^r,\tau^r]$, we have $\tau^r \in [\tau,\tau+\delta]$ and $\sigma^r \leq \sigma$. By the preceding paragraph, $[\sigma^r,\tau^r] =  [\sigma,\tau] $ for each such $r$. Hence $\rho \geq \tau+\delta$ and $[\sigma^r ,\tau^r] = [\sigma , \tau]$ for each $r \in [\rho - \delta ,\rho]$. 
  
Since $\rho >t$ a.s.\ and the intervals $[\sigma^r,\tau^r]$ are decreasing in $r$, if $r\geq t$ then a.s.\ $\rho \geq r$ if and only if $[\sigma^r,\tau^r]\subset [a,b]$. This latter event is $\mcl F^r$-measurable, so $\{\rho\geq r\} \in \mcl F^r$. 
\end{proof}

\begin{proof}[Proof of Proposition~\ref{prop-sle-disk}]
We first consider the single bubble $\mcl S_{\sigma,\tau}$. 
Let $\rho$ be as in~\eqref{eqn-bubble-time} and for $n\in\BB N$, let $\rho_n = 2^{-n} \lfloor 2^n\rho \rfloor$ be the largest integer multiple of $2^{-n}$ which is smaller than $\rho$. By Lemma~\ref{lem-bubble-time}, each $\rho_n$ is a reverse stopping time for the reverse filtration $\{\mcl F^r\}_{r\geq t}$ and a.s.\ $\rho_n$ increases to $\rho$.  By Lemma~\ref{lem-bubble-cond-law}, for each $n\in\BB N$ the conditional law of $\mcl S_{\sigma^{\rho_n} ,\tau^{\rho_n}}$ given $\mcl F^{\rho_n}$ is that of a singly marked quantum disk with area $\tau^{\rho_n} - \sigma^{\rho_n}$ and boundary length $|Z_{\tau^{\rho_n}} - Z_{\sigma^{\rho_n}}|$.

By the last statement of Lemma~\ref{lem-bubble-time}, a.s.\ $[\sigma^{\rho_n} , \tau^{\rho_n}] = [\sigma,\tau]$ and hence $\mcl S_{\sigma^{\rho_n} ,\tau^{\rho_n}} = \mcl S_{\sigma , \tau}$ for all large enough $n\in\BB N$. Since $\mcl F^\rho = \bigcap_{n\in\BB N} \mcl F^{\rho_n}$, the backward martingale convergence theorem implies that the conditional law of $\mcl S_{\sigma,\tau}$ given $\mcl F^{\rho}$ is that of a singly marked quantum disk with area $\tau-\sigma$ and boundary length $|Z_\tau-Z_\sigma|$.  We get the same conditional law if we condition only on $\sigma$, $\tau$, and $ Z_\tau - Z_\sigma$.  By~\cite[Theorem~2.1]{sphere-constructions}, the curve-decorated quantum surface $(\mcl S_{\sigma,\tau} , \eta'_{\mcl S_{\sigma,\tau}})$ is a.s.\ determined by its peanosphere Brownian motion $(Z-Z_\sigma)|_{[\sigma,\tau]}$, which will be important just below. 

We now allow $t$ to vary and prove that the bubbles of $\mcl S_{a,b}^0$ are conditionally independent given $Z|_{(-\infty,a]}$, $Z|_{[b,\infty)}$, and $\{ Z_{\tau_{a,b}(t)} : t\in [a,b]\}$.  Indeed, under this conditioning the Brownian motion increments $(Z-Z_{v_Z(s)})|_{[v_Z(s) , s]}$ as $s$ ranges over all maximal $\tfrac{\pi}{2}$-cone times for $Z$ in $[a,b]$ (Definition~\ref{def-maximal}) are conditionally independent. By the above measurability statement, each of these Brownian motion increments a.s.\ determines the corresponding bubble of $\mcl S_{a,b}^0$. The proposition statement follows.
\end{proof}

\subsection{Bubbles cut out by $\wt\eta_{a,b}$ are quantum disks}
\label{sec-general-disk}

Recall the future curves $\wt\eta_{a,b} : [a,b] \rta \wt\eta'([a,b])$ associated with $(\wt h , \wt\eta')$ and the surface $\wt{\mcl S}_{a,b}^0$ parameterized by the bubbles it cuts out.  
We recall also the times $\sigma_{a,b}(t)$ and $\tau_{a,b}(t)$ from~\eqref{eqn-tau-def}, and note that the definitions of these times do not change if we replace $\eta_{a,b}$ with $\wt\eta_{a,b}$. 

In this subsection we will prove the exact analog of Proposition~\ref{prop-sle-disk} for the pair $(\wt h , \wt\eta')$.  

\begin{prop} \label{prop-general-disk}
Let $a,b\in \BB R \cup \{-\infty , \infty\}$ with $a < b$.  
If we condition on $Z|_{(-\infty , a]}$, $Z|_{[b,\infty)}$, and $\{ Z_{\tau_{a,b}(t)} : t\in [a,b] \} $ then the conditional law of the bubbles of the quantum surface $\wt{\mcl S}_{a,b}^0$ is that of a collection of independent quantum disks with given areas and boundary lengths.
\end{prop}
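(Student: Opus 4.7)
The plan is to adapt the proof of Proposition~\ref{prop-sle-disk} to $(\wt h, \wt\eta')$, replacing the SLE-specific input from \cite[Theorem~1.17]{wedges} with the thin-wedge structure of the future surface guaranteed by the Markov property hypothesis of Theorem~\ref{thm-wpsf-char}.

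The key step is to establish the analog of Lemma~\ref{lem-bubble-cond-law}: for each rational $r\in\BB Q$, the conditional law of $\wt{\mcl S}_{\sigma^r,\tau^r}$ given $\wt{\mcl F}^r := \sigma(\sigma^r,\tau^r,Z|_{\BB R\setminus[\sigma^r,\tau^r]})$ is that of a singly marked quantum disk with area $\tau^r-\sigma^r$ and boundary length $|Z_{\tau^r}-Z_{\sigma^r}|$. The idea is to approximate $\sigma^r$ from below by rational times $r''<\sigma^r$. For each such $r''$, condition~\ref{item-wpsf-char-wedge} of Theorem~\ref{thm-wpsf-char} gives that $\wt{\mcl S}_{r'',\infty}$ is a $\frac{3\gamma}{2}$-quantum wedge. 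The defining thin-wedge property reviewed in Section~\ref{sec-wedge} then says that, conditional on the full sequence of bead areas and boundary lengths---equivalently, by Lemma~\ref{lem-bead-inf}, conditional on $(Z-Z_{r''})|_{[r'',\infty)}$---the beads of $\wt{\mcl S}_{r'',\infty}$ are independent quantum disks. Applied to the bead of $\wt{\mcl S}_{r'',\infty}$ containing $\wt\eta'(t)$ (for $t$ fixed in the interior of $[\sigma^r,\tau^r]$), whose time interval $[T_1(r''),T_2(r'')]$ is the maximal interval around $t$ on which $(L,R)$ does not attain a simultaneous running infimum relative to time $r''$, this yields that the bead is a quantum disk with area $T_2(r'')-T_1(r'')$ and boundary length $|Z_{T_2(r'')}-Z_{T_1(r'')}|$. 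Since the set of simultaneous running infimum times of $(L,R)$ relative to $r''$ is monotone non-decreasing in $r''$, and at $r''=\sigma^r$ no such times lie in $(\sigma^r,\tau^r)$ (a.s., by non-coincidence of the two coordinates' infima with $Z_{\tau^r}$ inside the cone), one obtains $T_1(r'')\uparrow\sigma^r$ and $T_2(r'')\downarrow\tau^r$ as $r''\uparrow\sigma^r$ through rationals. A backward martingale convergence argument along the decreasing filtration generated by $(Z-Z_{r''})|_{[r'',\infty)}$, together with continuity of the quantum disk law in area and boundary length, then gives that $\wt{\mcl S}_{\sigma^r,\tau^r}$ is a quantum disk with the claimed parameters; the conditioning is upgraded to $\wt{\mcl F}^r$ using the independence assertion of the Markov property at time $r$.

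Once this analog of Lemma~\ref{lem-bubble-cond-law} is in hand, the rest transfers essentially verbatim from the proof of Proposition~\ref{prop-sle-disk}: define $\rho$ via~\eqref{eqn-bubble-time}, approximate by a decreasing sequence $\rho_n\in\BB Q$ with $\rho_n\downarrow\rho$ as in Lemma~\ref{lem-bubble-time}, and pass to the limit via backward martingale convergence on $\{\wt{\mcl F}^r\}$, using that $[\sigma^{\rho_n},\tau^{\rho_n}]=[\sigma,\tau]$ for $n$ large. For the joint conditional independence of different bubbles given $\mcl G := \sigma(Z|_{(-\infty,a]},Z|_{[b,\infty)},\{Z_{\tau_{a,b}(t)}:t\in[a,b]\})$, I cannot directly copy the argument of Proposition~\ref{prop-sle-disk}, which relies on the measurability statement ``bubble determined by its Brownian cone excursion'' from \cite[Theorem~2.1]{sphere-constructions}---unavailable for $(\wt h,\wt\eta')$. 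Instead, I would iterate the Markov property at rational times $t^*\in(\tau_i,\sigma_{i+1})$ between consecutive max cone intervals, observing that $\mcl G$ splits cleanly as $(\mcl G\cap\wt{\mcl F}_{-\infty,t^*})\vee(\mcl G\cap\wt{\mcl F}_{t^*,\infty})$, so that the unconditional independence $\wt{\mcl F}_{-\infty,t^*}\perp\wt{\mcl F}_{t^*,\infty}$ lifts to conditional independence of the bubbles on either side of $t^*$ given $\mcl G$.

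The main obstacle is the rigorous verification in the analog of Lemma~\ref{lem-bubble-cond-law} that the bead endpoints $T_1(r''),T_2(r'')$ converge to $\sigma^r,\tau^r$ as $r''\uparrow\sigma^r$, which requires a careful analysis of simultaneous running infima of the correlated Brownian motion $(L,R)$ near the endpoints of a max $\pi/2$-cone interval. A secondary concern is upgrading the conditioning from ``bead-level data of $\wt{\mcl S}_{r'',\infty}$'' to the full $\sigma$-algebra $\wt{\mcl F}^r$, which involves combining the thin-wedge property with the independence-of-past-and-future clause of the Markov property at the deterministic time $r$.
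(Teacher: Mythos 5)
The proposal has a fundamental error that collapses its key step.  You assert that the ``defining thin-wedge property'' implies that, conditional on the sequence of bead areas and boundary lengths, the beads of $\wt{\mcl S}_{r'',\infty}$ are independent \emph{quantum disks}.  This is false for general $\gamma \in (\sqrt 2, 2)$: the beads of a $\frac{3\gamma}{2}$-quantum wedge are, by definition, beads of a $\frac{3\gamma}{2}$-quantum wedge, and these coincide with (doubly marked) quantum disks \emph{only} in the special case $\gamma = \sqrt{8/3}$, as noted at the end of Section~\ref{sec-wedge}.  Condition~\ref{item-wpsf-char-wedge} thus gives you that the bead $\wt{\mcl D}^n$ containing $\wt\eta'(t)$ is a thin-wedge bead given the bead data $P_{\sigma^n,\infty}$, not that it is a quantum disk.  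The quantum disk law for the bubble $\wt{\mcl S}_{\sigma,\tau}$ cannot be read off directly from the Markov hypothesis.  The paper instead uses the hypothesis only to establish the equality in law $(\wt{\mcl D}^n, P_{\sigma^n,\infty}) \eqD (\mcl D^n, P_{\sigma^n,\infty})$ (both being thin-wedge beads), passes to the limit $n\to\infty$ to get $(\wt{\mcl S}_{\sigma,\tau}, P_{\sigma,\infty}) \eqD (\mcl S_{\sigma,\tau}, P_{\sigma,\infty})$, and \emph{then} invokes Proposition~\ref{prop-sle-disk}---the SLE-specific result you tried to avoid---to identify the right side, and hence the left side, as a quantum disk.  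You cannot ``replace the SLE-specific input from~\cite[Theorem~1.17]{wedges}''; the comparison with the SLE case is essential, and proving an analog of Lemma~\ref{lem-bubble-cond-law} for $(\wt h, \wt\eta')$ is not the route the paper takes.

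Two further issues.  First, you approximate $\sigma^r$ from \emph{below} with rational $r'' \uparrow \sigma^r$, whereas the paper approximates from \emph{above} with the dyadic stopping times $\sigma^n = 2^{-n}\lceil 2^n\sigma\rceil \geq \sigma$.  This matters: for $\sigma^n \geq \sigma$, the time $\tau$ remains a simultaneous running infimum time for $(L,R)$ relative to $\sigma^n$ (since $[\sigma^n,\tau]\subset[\sigma,\tau]$ lies in the cone), so the bead containing $t$ is contained in $[\sigma^n,\tau]$ and the convergence to $\wt{\mcl S}_{\sigma,\tau}$ is transparent via Caratheodory convergence.  For $r'' < \sigma = v_Z(\tau)$, the time $\tau$ is \emph{not} a simultaneous running infimum time relative to $r''$ (since $Z$ exits the cone $Z_\tau + [0,\infty)^2$ just before $\sigma$), so $T_2(r'') > \tau$ strictly; establishing $T_2(r'')\downarrow\tau$ requires a nontrivial argument about accumulation of cone times $s > \tau$ with $v_Z(s) < \sigma$, which your appeal to ``non-coincidence of infima'' does not supply.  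The paper does need to extend condition~\ref{item-wpsf-char-wedge} to the stopping times $\sigma^n$ via Lemma~\ref{lem-bm-future-ind}, but that is a far smaller price than your convergence problem.  Second, your parenthetical ``equivalently, by Lemma~\ref{lem-bead-inf}, conditional on $(Z-Z_{r''})|_{[r'',\infty)}$'' is not an equivalence: $P_{r'',\infty}$ is a proper functional of $(Z-Z_{r''})|_{[r'',\infty)}$, and the thin-wedge hypothesis controls the conditional law of the beads given $P_{r'',\infty}$, not given all of $(Z-Z_{r''})|_{[r'',\infty)}$.
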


As in Section~\ref{sec-sle-disk}, throughout most of this subsection we will fix $a,b\in \BB R \cup \{-\infty , \infty\}$ with $a < b$ and $t \in (a,b)$ and write $\tau = \tau_{a,b}(t)$ and $\sigma = \sigma_{a,b}(t)$, as in~\eqref{eqn-sle-disk-abbrv}.

The main difficulty in the proof of Proposition~\ref{prop-general-disk} is showing that the single bubble $\wt{\mcl S}_{\sigma,\tau}$ has the law of a quantum disk under certain conditioning. This will be accomplished in Lemma~\ref{lem-general-disk0}. 
Then, in the proof of Proposition~\ref{prop-general-disk} itself, we will allow $t$ to vary. 
 
The idea of the proof of Proposition~\ref{prop-general-disk} is to use the fact that the surfaces~$\wt{\mcl S}_{r,\infty}$ and~$\mcl S_{r,\infty}$ agree in law for each $r\in \BB R$ (condition~\ref{item-wpsf-char-wedge} in Theorem~\ref{thm-wpsf-char}) and a limiting argument as $r$ decreases to~$\sigma$ to relate the laws of~$\wt{\mcl S}_{\sigma,\tau}$ and~$\mcl S_{\sigma,\tau}$; we know that the law of the latter is that of a quantum disk by Proposition~\ref{prop-sle-disk}. This is carried out in Lemma~\ref{lem-general-disk0}. However, we will need that $\wt{\mcl S}_{\sigma,\tau}$ is still a quantum disk if we condition on certain information, so before stating and proving Lemma~\ref{lem-general-disk0} we will need to establish some probabilistic properties of the times $\sigma$ and $\tau$. In particular, we will define a filtration for which these times are stopping times (Lemmas~\ref{lem-partial-filtration} and~\ref{lem-partial-surface-determined}) and establish a certain strong Markov property for stopping times with respect to this filtration (Lemma~\ref{lem-bm-future-ind}). 
 
The times $ \sigma$ and $\tau$ are \emph{not} stopping times for $Z$ or even for the filtration $\{\wt{\mcl F}_{-\infty,r}\}_{r\in \BB R}$ of Theorem~\ref{thm-wpsf-char} since, using the characterization from Lemma~\ref{lem-chordal-sle}, we cannot see if a given $\tfrac{\pi}{2}$-cone time for $Z$ is maximal in $[a,b]$ without knowing some information about what happens after this time. However, as we will see just below, these times are stopping times for a larger filtration which we introduce in the following lemma.

\begin{lem} \label{lem-partial-filtration}
For $r\in\BB R$, let $\wt{\mcl F}_{-\infty,r}$ be the $\sigma$-algebra of Theorem~\ref{thm-wpsf-char} and let $P_{r,\infty}$ be the function in~\eqref{eqn-bead-function} which encodes the ordered sequence of areas and boundary lengths of the beads of $\mcl S_{r,\infty}$ (equivalently $\wt{\mcl S}_{r,\infty}$). 
Define $\sigma$-algebras
\eqb \label{eqn-partial-filtration-def}
\mcl G_r := \sigma\left( (Z-Z_r)|_{(-\infty,r]} \right) \vee \sigma( P_{r,\infty} ) \quad \op{and} \quad
\wt{\mcl G}_r := \wt{\mcl F}_{-\infty,r} \vee \sigma( P_{r,\infty} ) .
\eqe 
For each $r\in\BB R$, we have $\mcl G_r \subset \wt{\mcl G}_r$. Furthermore, the $\sigma$-algebras $\{ \mcl G_r \}_{r\in\BB R}$ and $\{ \wt{\mcl G}_r \}_{r\in\BB R}$ are increasing in $r$.
\end{lem}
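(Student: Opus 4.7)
The claim $\mcl G_r \subset \wt{\mcl G}_r$ is essentially immediate from the definition of $\wt{\mcl F}_{-\infty,r}$ given just before Theorem~\ref{thm-wpsf-char}, which in the $a=-\infty$ case is generated by $(Z-Z_r)|_{(-\infty,r]}$ together with various marked quantum surfaces. Hence $\sigma((Z-Z_r)|_{(-\infty,r]}) \subset \wt{\mcl F}_{-\infty,r}$, and combining with $\sigma(P_{r,\infty})$ on both sides gives the inclusion. I would dispose of this in one sentence.

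The main content is the monotonicity of the two filtrations. For $r < r'$, the inclusion $\wt{\mcl F}_{-\infty,r} \subset \wt{\mcl F}_{-\infty,r'}$ is immediate from the description of $\wt{\mcl F}_{-\infty,r}$, since any interval in $(-\infty,r)$ with rational endpoints is also contained in $(-\infty,r')$, so the generating quantum surfaces for $\wt{\mcl F}_{-\infty,r}$ form a subset of those for $\wt{\mcl F}_{-\infty,r'}$, and the Brownian information $(Z-Z_r)|_{(-\infty,r]}$ is encoded in $(Z-Z_{r'})|_{(-\infty,r']}$ up to a deterministic shift. Thus once we show $\sigma(P_{r,\infty}) \subset \mcl G_{r'}$, we simultaneously get $\mcl G_r \subset \mcl G_{r'}$ and $\wt{\mcl G}_r \subset \wt{\mcl G}_{r'}$.

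The heart of the proof is therefore showing that $P_{r,\infty}$ is $\mcl G_{r'}$-measurable. The plan is as follows. Let $\mcl M_{r,\infty}$ denote the set of $s \geq r$ at which $(L,R)$ attains a simultaneous running infimum relative to time $r$, so that $P_{r,\infty}$ is determined by $\mcl M_{r,\infty}$ together with the values of $L$ and $R$ on $\mcl M_{r,\infty}$, all recorded relative to the value at time $r$. First, observe that $(Z-Z_{r'})|_{(-\infty,r']} \in \mcl G_{r'}$ determines $(Z-Z_r)|_{(-\infty,r']}$ by the identity $Z_s - Z_r = (Z_s - Z_{r'}) - (Z_r - Z_{r'})$, so $\mcl M_{r,\infty} \cap [r,r']$ and the values of $L-L_r$, $R-R_r$ on this set are all $\mcl G_{r'}$-measurable. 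For times after $r'$, set $m_L := \min_{[r,r']}(L-L_{r'})$ and $m_R := \min_{[r,r']}(R-R_{r'})$, both of which lie in $\mcl G_{r'}$. A time $u > r'$ belongs to $\mcl M_{r,\infty}$ if and only if $u \in \mcl M_{r',\infty}$ and $L_u - L_{r'} \leq m_L$ and $R_u - R_{r'} \leq m_R$; this is the key observation. Since $L$ and $R$ are monotonically nonincreasing along $\mcl M_{r',\infty}$, the increments recorded by $P_{r',\infty}$ can be telescoped to recover $L_u - L_{r'}$ and $R_u - R_{r'}$ for every $u \in \mcl M_{r',\infty}$, which is $\sigma(P_{r',\infty})$-measurable and hence in $\mcl G_{r'}$.

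Combining these two steps identifies $\mcl M_{r,\infty}$ and the values of $(L-L_r, R-R_r)$ on it as $\mcl G_{r'}$-measurable, hence $P_{r,\infty} \in \mcl G_{r'}$ and so $\mcl G_r \subset \mcl G_{r'}$. Applying the same conclusion to the $\wt{\mcl G}$-filtration (using $\wt{\mcl F}_{-\infty,r} \subset \wt{\mcl F}_{-\infty,r'}$ in place of the Brownian part) yields $\wt{\mcl G}_r \subset \wt{\mcl G}_{r'}$. The main conceptual obstacle is just checking the characterization of $\mcl M_{r,\infty} \cap (r',\infty)$ in terms of $\mcl M_{r',\infty}$, $m_L$, $m_R$; everything else is a direct consequence of the definitions.
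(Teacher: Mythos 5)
Your proposal is correct and takes essentially the same approach as the paper: reduce to showing $P_{r,\infty} \in \mcl G_{r'}$ by splitting the simultaneous running-infimum set $\mcl M_{r,\infty}$ into its piece on $[r,r']$ (read directly off the Brownian path) and its piece on $(r',\infty)$, which you characterize via $\mcl M_{r',\infty}$ together with the running minima $m_L,m_R$ of $(Z-Z_{r'})$ on $[r,r']$. Your ``key observation'' is precisely the one the paper records.

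One small caution on the telescoping step: the fact that $P_{r',\infty}$ determines $(L_u - L_{r'}, R_u - R_{r'})$ for each $u\in\mcl M_{r',\infty}$ does not follow from monotonicity along $\mcl M_{r',\infty}$ alone---that would not rule out a Cantor-function-type decrease of $Z$ supported on $\mcl M_{r',\infty}$, which the gap increments recorded by $P_{r',\infty}$ would miss. The paper closes this in the remark immediately after \eqref{eqn-bead-function}: $\mcl M_{r',\infty}$ has Minkowski dimension $1-\kappa'/8 < 1/2$ while $Z$ is a.s.\ H\"older of every exponent $<1/2$, so the total variation of $Z$ over $\mcl M_{r',\infty}$ vanishes and summing the gap increments really does recover the running values of $L-L_{r'}$ and $R-R_{r'}$. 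You should invoke that remark (rather than monotonicity) to make the telescoping rigorous; with that citation in place, everything else in your argument goes through.
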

\begin{proof}
By definition, for each $r\in\BB R$ it holds that $\sigma\left( (Z-Z_r)|_{(-\infty,r]} \right) \subset \wt{\mcl F}_{-\infty,r}$.  Therefore, $\mcl G_r \subset \wt{\mcl G}_r$. 

It is clear that $ \sigma\left( (Z-Z_r)|_{(-\infty,r]} \right) \subset \sigma\left( (Z-Z_{r'})|_{(-\infty,r']} \right)$ and $\wt{\mcl F}_{-\infty, r} \subset \wt{\mcl F}_{-\infty, r'}$ for $r \leq r'$. Hence to show that our $\sigma$-algebras are increasing in $r$, it suffices to show that $P_{r,\infty} $ is a.s.\ determined by $  (Z-Z_{r'})|_{(-\infty,r']} $ and $ P_{r' ,\infty}  $ for $r \leq r'$. 

By definition, $P_{r,\infty}$ determines and is determined by $\{(s-r , L_s - L_r , R_s - R_r) \}_{s \in \Sigma_r}$, where $\Sigma_r$ is the set of times $s\geq r$ at which $L$ and $R$ attain a simultaneous running infimum.  Each time in $s \in \Sigma_r \cap [r',\infty)$ also belongs to $\Sigma_{r'}$, and the corresponding triple $(s-r , L_s - L_r , R_s - R_r)$ is determined by $(s-r' , L_s - L_{r'} , R_s - R_{r'})$ and $(Z-Z_{r'})|_{(-\infty , r']}$. Furthermore, if $s'\in \Sigma_{r'}$ then we can determine whether $s' \in \Sigma_r$ from the triple $(s-r' , L_s - L_{r'} , R_s - R_{r'})$ together with $(Z-Z_{r'})|_{(-\infty , r']}$. Hence $P_{r,\infty} $ is a.s.\ determined by $  (Z-Z_{r'})|_{(-\infty,r']} $ and $ P_{r' ,\infty}  $ for $r \leq r'$ and we obtain the statement of the lemma. 
\end{proof}

The next lemma implies in particular that $\sigma$ and $\tau$ are stopping times for both of the filtrations introduced in Lemma~\ref{lem-partial-filtration}.

\begin{lem} \label{lem-partial-surface-determined}
Define the $\sigma$-algebras $\{\wt{\mcl G}_r\}_{r\in\BB R}$ as in~\eqref{eqn-partial-filtration-def}.  The times $\sigma$ and $\tau $ in Proposition~\ref{prop-partial-surface-law} are stopping times for $\{ \mcl G_r\}_{r\in\BB R} $ (and hence also for $\{\wt{\mcl G}_r\}_{r\in\BB R}$).  Furthermore, the curve-decorated quantum surface $( \mcl S_{a,\tau }^0 , \eta_{a,b,\mcl S_{a,\tau}^0})$ is $ \mcl G_{\tau }$-measurable and the curve-decorated quantum surface $(\wt{\mcl S}_{a,\tau }^0 , \wt\eta_{a,b,\wt{\mcl S}_{a,\tau}^0})$ is $\wt{\mcl G}_{\tau }$-measurable.   
\end{lem}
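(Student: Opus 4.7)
The proof naturally breaks into three steps, with the third being the main obstacle.

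For the stopping time property, I would characterize $\tau$ via Lemma~\ref{lem-chordal-sle} as the right endpoint of the maximal $\pi/2$-cone interval for $Z$ in $[a,b]$ containing $t$ (or $\tau=t$ if none exists). Given $\mcl G_r$, every candidate cone interval in $[a,b]$ containing $t$ is visible: if its right endpoint $s$ lies in $[a,r]$, the interval appears directly in $(Z-Z_r)|_{(-\infty,r]}$; if $s>r$, then $v_Z(s)\leq t\leq r$ forces $s\in\Sigma_r$, so $s$ and $Z_s-Z_r$ are encoded by $P_{r,\infty}$ while $v_Z(s)$ is recovered from $(Z-Z_r)|_{[a,r]}$. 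Maximality in $[a,b]$ can be checked among the visible candidates, since any strictly larger cone containing a visible one has its cone entrance time $\leq v_Z(s)\leq r$ and is therefore visible too. This yields $\{\tau\leq r\}\in\mcl G_r$. A symmetric enumeration proves $\{\sigma\leq r\}\in\mcl G_r$, splitting into $\tau\leq r$ and $\sigma\leq r<\tau$, where the latter forces $\tau\in\Sigma_r$.

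For the $\mcl G_\tau$-measurability of $(\mcl S_{a,\tau}^0,\eta_{a,b,\mcl S_{a,\tau}^0})$, I plan to invoke the first assertion of Lemma~\ref{lem-partial-surface-msrble}, which says that $(\mcl S_{a,\tau},\eta'_{\mcl S_{a,\tau}})$, and hence its less informative quotient $(\mcl S_{a,\tau}^0,\eta_{a,b,\mcl S_{a,\tau}^0})$, is a.s.\ determined by $(Z-Z_a)|_{[a,\tau]}$. On $\{\tau\leq r\}$ this increment is a measurable function of $\tau$ (which is $\mcl G_r$-measurable by the previous step) and $(Z-Z_r)|_{(-\infty,r]}$, via a shift and restriction, so $(\mcl S_{a,\tau}^0,\eta_{a,b,\mcl S_{a,\tau}^0})$ is $\mcl G_r$-measurable on $\{\tau\leq r\}$, and the standard stopping time $\sigma$-algebra definition then yields $\mcl G_\tau$-measurability.

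The subtle third assertion requires splitting the bubbles of $\wt{\mcl S}_{a,\tau}^0$ into the ``early'' ones $\wt{\mcl S}_{\sigma'',\tau''}$ with $\tau''<\tau$ and the ``current'' one $\wt{\mcl S}_{\sigma,\tau}$. Every early bubble is enclosed in a rational open interval $(q_1,q_2)\subset(a,\tau)\subset(-\infty,\tau)$ obtained by choosing rationals $q_1\in(a,\sigma'')$ and $q_2\in(\tau'',\tau)$, which exist a.s., and in which the maximality of $[\sigma'',\tau'']$ in $[a,b]$ is preserved; hence each early bubble belongs to $\wt{\mcl F}_{-\infty,\tau}$ by definition. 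The main obstacle is the current bubble $\wt{\mcl S}_{\sigma,\tau}$, whose right endpoint equals $\tau$ and therefore cannot be enclosed in any rational subinterval of $(-\infty,\tau)$. To handle it, I would note that for every rational $r>\tau$ the bubble is maximal in some rational $(q_1,q_2)$ with $q_2<r$ and is hence $\wt{\mcl F}_{-\infty,r}$-measurable; writing $\{\tau\leq r\}=\bigcap_n\{\tau<r+1/n\}$ and applying this at $r+1/n$ places any event determined by $\wt{\mcl S}_{\sigma,\tau}$ and intersected with $\{\tau\leq r\}$ in $\bigcap_n\wt{\mcl G}_{r+1/n}=\wt{\mcl G}_{r+}$. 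Passing to the right-continuous completion of the filtration (harmless since $\tau$ a.s.\ avoids any fixed time) then upgrades this to $\wt{\mcl G}_\tau$-measurability.
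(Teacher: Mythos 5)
Your overall strategy is close to the paper's: both treat the stopping-time claim first, then the $\mcl G_\tau$-measurability, then the $\wt{\mcl G}_\tau$-measurability, and your stopping-time argument and your route to the second assertion (via $(Z-Z_a)|_{[a,\tau]}$ determining $(\mcl S_{a,\tau},\eta'_{\mcl S_{a,\tau}})$, a fact established inside the proof of Lemma~\ref{lem-partial-surface-msrble} though not exactly its statement) are essentially compatible with what the paper does. The third step, however, has a genuine gap.

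You correctly observe that the current bubble $[\sigma,\tau]$ cannot be enclosed in a rational subinterval of $(-\infty,\tau)$, but you then try to repair this by passing to the right-continuous completion $\wt{\mcl G}_{r+}$, dismissing the change of filtration as ``harmless since $\tau$ a.s.\ avoids any fixed time.'' This is not a valid justification: the lemma asserts membership in $\wt{\mcl G}_\tau$ defined relative to the filtration $\{\wt{\mcl G}_r\}$ as given, and avoidance of fixed times by $\tau$ does not imply $\wt{\mcl G}_\tau = \wt{\mcl G}_{\tau+}$, nor does it license replacing the filtration. More to the point, the detour is unnecessary because the thing you actually need is not ``the bubble is enclosed in a rational subinterval of $(-\infty,\tau)$'' but rather ``on $\{\tau\leq r\}$, the bubble is $\wt{\mcl G}_r$-measurable,'' and the latter \emph{does} hold directly. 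On $\{\tau<r\}$ (which differs from $\{\tau\leq r\}$ by a null set for deterministic $r$, and a.s.\ $\tau<b$), one can pick rationals $q_1\in(a,\sigma)$, $q_2\in(\tau,r\wedge b)$ in a measurable way as a function of the stopping times $\sigma$ and $\tau$ (which are $\wt{\mcl G}_r$-measurable on $\{\tau\leq r\}$ by the first part of the lemma), and then $[\sigma,\tau]$ is maximal in $(q_1,q_2)\subset(-\infty,r)$, so the corresponding quantum surface is recoverable from $\wt{\mcl F}_{-\infty,r}$ and $(\sigma,\tau)$. This gives $\wt{\mcl G}_r$-measurability on $\{\tau\leq r\}$ directly, hence $\wt{\mcl G}_\tau$-measurability without any appeal to right-continuity. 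This is in effect what the paper does: it asserts the bubbles are ``a.s.\ determined by $\wt{\mcl F}_{-\infty,r}$ and $\tau$ on $\{\tau\leq r\}$'' and does not pass to $\wt{\mcl G}_{r+}$. The paper also then appeals to condition~\ref{item-wpsf-char-homeo} to transport the combinatorial gluing structure from the $(h,\eta')$ side, and you would also need to say a word about why the curve $\wt\eta_{a,b,\wt{\mcl S}_{a,\tau}^0}$ on the bubble surface (not just the collection of bubbles) is measurable; the topological equivalence from condition~\ref{item-wpsf-char-homeo} plus the $\mcl G_\tau$-measurability you established in the second step supply this.
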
 
\begin{proof}
We first check that $\sigma$ and $\tau$ are stopping times for $\{\mcl G_r\}_{r\in\BB R}$.  Recall from Lemma~\ref{lem-chordal-sle} that $[\sigma , \tau]$ is the maximal $\tfrac{\pi}{2}$-cone interval for $Z$ in $[a,b]$ which contains $t$. 

If $r \notin [a,t]$, then $\{\sigma \leq r\}$ is either the empty event or the whole probability space and if $r\in [a,t]$, then $\{\sigma \leq r\}$ is a.s.\ the same as the event that there is a $\tfrac{\pi}{2}$-cone time $s$ for $Z$ in $[t,b]$ such that $v_Z(s) \in [a,r]$.  A $\tfrac{\pi}{2}$-cone time $s$ for $Z$ in $[t,b]$ with $v_Z(s) \leq r$ is the same as a time $s\in [t,b]$ at which the two coordinates of $(Z-Z_r)|_{[r,\infty)}$ attain a simultaneous running infimum, and for such a $\tfrac{\pi}{2}$-cone time $s$ we have $v_Z(s) \in [a,r]$ if and only if there is a $v \in [a,r]$ such that either
\eqbn
L_{v} - L_r \leq L_s - L_r \quad \op{or} \quad R_{v} - R_r \leq R_s - R_r  .
\eqen
It is clear that $(Z-Z_r)|_{(-\infty,r]}$ and $P_{r,\infty}$ together a.s.\ determine whether this is the case. Thus $\{\sigma \leq r\} \in \mcl G_r$, so $\sigma$ is a $\{\mcl G_r\}_{r\in\BB R}$-stopping time.

Now we consider the time $\tau$. The event $\{\tau \leq r\}$ is proper and non-empty only if $r \in [t,b]$.  For such an $r$, we a.s.\ have $\tau \leq r$ if and only if the maximal $\tfrac{\pi}{2}$-cone interval $[v_Z(s) , s]$ for $Z$ in $[a,r]$ which contains $t$ is also maximal in $[a,b]$.  The $\tfrac{\pi}{2}$-cone interval $[v_Z(s) , s]$ is maximal in $[a,b]$ if and only if there is no $\tfrac{\pi}{2}$-cone time $s'$ for $Z$ with $s' > r$ and $v_Z(s') \in [a , v_Z(s)]$. Such a $\tfrac{\pi}{2}$-cone time $s'$ is the same as a time $s' \in [r,b]$ such that the two coordinates of $(Z-Z_r)|_{[r,\infty)}$ attain a simultaneous running infimum at time $s'$; there is no $v \in [s , r]$ such that either $L_{v} - L_r \leq L_{s'} - L_r $ or $R_{v} - R_r \leq R_{s'} - R_r $; and there is a $v \in [a,v_Z(s)]$ such that either $L_{v} - L_r \leq L_{s'} - L_r $ or $R_{v} - R_r \leq R_{s'} - R_r $.  We can tell whether such a time $s'$ exists from $(Z-Z_r)|_{(-\infty,r]}$ and $P_{r,\infty}$. Hence $\{\tau\leq r\} \in \mcl G_r$ and $\tau$ is a $\{\mcl G_r\}_{r\in\BB R}$-stopping time.

By~\cite[Theorem~2.1]{sphere-constructions} and Proposition~\ref{prop-sle-disk}, each of the bubbles $\mcl S_{v_Z(s) , s}$ of $ \mcl S_{a,\tau }^0$ is a.s.\ determined by the corresponding $\tfrac{\pi}{2}$-cone excursion $(Z-Z_{v_Z(s)})|_{[v_Z(s) ,s]}$ for $Z$, so is a.s.\ determined by $\mcl G_r$ on the event $\{\tau\leq r\}$. By the peanosphere construction the equivalence class of the curve-decorated quantum surface $( \mcl S_{a,\tau }^0 , \eta_{a,b,  \mcl S_{a,\tau}^0})$ modulo curve-preserving homeomorphisms is a.s.\ determined by $(Z-Z_\tau)|_{(-\infty,\tau]}$. Hence $(\mcl S_{a,\tau }^0 , \eta_{a,b,\mcl S_{a,\tau}^0}) \in \mcl G_{\tau }$

The bubbles of the quantum surface $\wt{\mcl S}_{a, \tau }^0$ is parameterized by the ordered sequence of bubbles cut out by the curve $\wt\eta_{a,b,\wt{\mcl S}_{a,r}^0}$ before time $\tau $, so are a.s.\ determined by $\wt{\mcl F}_{-\infty,r}$ and $\tau $ on the event $\{\tau  \leq r\}$.  By condition~\ref{item-wpsf-char-homeo} of Theorem~\ref{thm-wpsf-char}, $(\wt{\mcl S}_{a,\tau }^0 , \wt\eta_{a,b,\wt{\mcl S}_{a,\tau}^0})$ a.s.\ differs from $(\mcl S_{a,\tau }^0 , \eta_{a,b,\mcl S_{a,\tau}^0})$ by a curve-preserving homeomorphism, so by the above the equivalence class of $( \mcl S_{a,\tau }^0 , \eta_{a,b,  \mcl S_{a,\tau}^0})$ modulo curve-preserving homeomorphisms is a.s.\ determined by $(Z-Z_\tau)|_{(-\infty,\tau]} \in \wt{\mcl G}_\tau$.  Therefore $(\wt{\mcl S}_{a,\tau }^0 , \wt\eta_{a,b,\wt{\mcl S}_{a,\tau}^0}) \in \wt{\mcl G}_\tau$. 
\end{proof}

It is easy to see from the conditions of Theorem~\ref{thm-wpsf-char} that for $r\in\BB R$, each of $\wt{\mcl F}_{r,\infty}$ and $\wt{\mcl S}_{r,\infty}$ is conditionally independent from the $\sigma$-algebra $\wt{\mcl G}_r$ of~\eqref{eqn-partial-filtration-def} given the function $P_{r,\infty}$ of~\eqref{eqn-bead-function}. The next lemma extends this property to stopping times for this filtration (which, by Lemma~\ref{lem-partial-surface-determined}, includes the times $\sigma$ and $\tau$). 
  
\begin{lem} \label{lem-bm-future-ind}
Let $T$ be an a.s.\ finite stopping time for the filtration $\{\wt{\mcl G}_r\}_{r\in\BB R}$ of Lemma~\ref{lem-partial-filtration}.  Each of $\wt{\mcl F}_{T,\infty}$ and $\wt{\mcl S}_{T,\infty}$ is conditionally independent from $\wt{\mcl G}_T$ given $P_{T,\infty}$. 
\end{lem}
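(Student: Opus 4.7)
The strategy is standard optional-sampling style: first handle a deterministic time, where the conclusion is essentially immediate from the hypotheses of Theorem~\ref{thm-wpsf-char}, and then bootstrap to a general stopping time via dyadic approximation.

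For a fixed $t \in \BB R$, condition~\ref{item-wpsf-char-wedge} of Theorem~\ref{thm-wpsf-char} provides the unconditional independences $\wt{\mcl F}_{-\infty,t} \perp \wt{\mcl F}_{t,\infty}$ and $\wt{\mcl F}_{-\infty,t} \perp \wt{\mcl S}_{t,\infty}$. Since $P_{t,\infty}$ is a deterministic functional of $(Z-Z_t)|_{[t,\infty)}$, so lies in $\wt{\mcl F}_{t,\infty}$, and is also determined by the beaded surface $\wt{\mcl S}_{t,\infty}$ itself (via Lemma~\ref{lem-bead-inf}), the elementary fact that unconditional independence $X \perp Y$ together with $f(X) = Z$ implies $X \perp Y \mid Z$ gives the conditional independence at deterministic times: $\wt{\mcl F}_{t,\infty} \perp \wt{\mcl G}_t \mid P_{t,\infty}$ and $\wt{\mcl S}_{t,\infty} \perp \wt{\mcl G}_t \mid P_{t,\infty}$, using $\wt{\mcl G}_t = \wt{\mcl F}_{-\infty,t} \vee \sigma(P_{t,\infty})$.

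Next, for a stopping time $T$ taking values in a countable set $\{s_i\} \subset \BB R$, decompose $\Omega = \bigsqcup_i \{T = s_i\}$. The event $\{T = s_i\}$ lies in $\wt{\mcl G}_{s_i}$, and on this event each of $\wt{\mcl G}_T$, $\wt{\mcl F}_{T,\infty}$, $\wt{\mcl S}_{T,\infty}$, $P_{T,\infty}$ coincides with its $s_i$-version. A routine summation, using the deterministic case on each piece, yields the lemma for such $T$. For a general a.s.\ finite stopping time $T$, approximate from above by the discrete stopping times $T_n := 2^{-n}\lceil 2^n T \rceil$, so that $T_n \downarrow T$ and $\wt{\mcl G}_T \subset \wt{\mcl G}_{T_n}$. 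The discrete case applied to $T_n$ gives conditional independence of $\wt{\mcl F}_{T_n,\infty}$ (resp.\ $\wt{\mcl S}_{T_n,\infty}$) from $\wt{\mcl G}_{T_n}$ given $P_{T_n,\infty}$, from which the conclusion at $T$ will follow by a standard martingale/limit argument once one checks that $\bigvee_n \wt{\mcl F}_{T_n,\infty} = \wt{\mcl F}_{T,\infty}$, $\bigvee_n \sigma(\wt{\mcl S}_{T_n,\infty}) = \sigma(\wt{\mcl S}_{T,\infty})$, and $P_{T_n,\infty} \to P_{T,\infty}$ almost surely.

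The main obstacle is the third step, namely the identification of these limiting $\sigma$-algebras and the convergence $P_{T_n,\infty} \to P_{T,\infty}$. For the Brownian component the continuity of $Z$ gives $(Z-Z_{T_n})|_{[T_n,\infty)} \to (Z-Z_T)|_{[T,\infty)}$ locally uniformly. For the singly marked bubble surfaces in the definition of $\wt{\mcl F}_{a,b}$, any $\pi/2$-cone time $s$ for $Z$ maximal in some rational sub-interval $(p,q) \subset (T,\infty)$ satisfies $(p,q) \subset (T_n,\infty)$ for all large $n$, so the corresponding bubble is captured by $\wt{\mcl F}_{T_n,\infty}$ eventually; conversely any generator of $\wt{\mcl F}_{T_n,\infty}$ is a generator of $\wt{\mcl F}_{T,\infty}$ since $T_n \geq T$. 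For $P_{T_n,\infty}\to P_{T,\infty}$ one notes that $T$ a.s.\ fails to be a time at which $(L,R)$ attains a simultaneous running infimum --- since the set of such times has Lebesgue measure zero by Lemma~\ref{lem-inf-subordinator} and $T$ is a $\{\wt{\mcl G}_r\}$-stopping time, hence after conditioning lies a.s.\ outside this Lebesgue-null regenerative set --- so the bead structure of $\wt{\mcl S}_{T,\infty}$ is continuous at $T$ from the right. Once these technical approximation statements are established, passing $n \to \infty$ in the identity $E[\phi \mathbf{1}_B \mid P_{T_n,\infty}] = E[\phi \mid P_{T_n,\infty}]\,E[\mathbf{1}_B \mid P_{T_n,\infty}]$ for $\phi$ bounded $\wt{\mcl F}_{T_{n_0},\infty}$-measurable and $B \in \wt{\mcl G}_T$, and then enlarging $\phi$ by a monotone class argument, completes the proof.
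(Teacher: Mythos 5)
Your proposal's first two steps are correct and match the paper: the deterministic case follows immediately from condition~\ref{item-wpsf-char-wedge} of Theorem~\ref{thm-wpsf-char} (together with the observation that $P_{r,\infty}$ is determined by each of $\wt{\mcl F}_{r,\infty}$ and $\wt{\mcl S}_{r,\infty}$), and the countable-valued case follows by decomposing over $\{T = s_i\}$. In fact you are more explicit than the paper about the discrete-valued step, which the paper compresses into ``by the above statement for deterministic times, it follows that the statement of the lemma is true for each $T^n$.''

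The genuine gap is in the final passage from the dyadic approximants $T_n$ to $T$. You assert the conclusion will follow ``by a standard martingale/limit argument'' once one checks that $\bigvee_n \wt{\mcl F}_{T_n,\infty} = \wt{\mcl F}_{T,\infty}$, $\bigvee_n \sigma(\wt{\mcl S}_{T_n,\infty}) = \sigma(\wt{\mcl S}_{T,\infty})$, and $P_{T_n,\infty}\to P_{T,\infty}$. But the conditioning $\sigma$-algebras $\sigma(P_{T_n,\infty})$ are \emph{not nested}: for $m>n$ it is not true that $\sigma(P_{T_m,\infty})\subset\sigma(P_{T_n,\infty})$ or vice versa. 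Consequently there is no forward or backward martingale convergence theorem available, and a.s.\ convergence $P_{T_n,\infty}\to P_{T,\infty}$ does not imply $\BB E[\cdot\mid P_{T_n,\infty}]\to\BB E[\cdot\mid P_{T,\infty}]$ in $L^1$ (conditional expectation is badly discontinuous in its conditioning argument). This is exactly the obstruction the paper's proof is designed to circumvent: it introduces the auxiliary $\sigma$-algebras $\mcl H_{T^m} := \sigma\left((Z-Z_T)|_{[T,T^m]},\, P_{T^m,\infty}\right)$, which \emph{are} decreasing in $m$, satisfy $\sigma(P_{T,\infty})\subset\mcl H_{T^{m'}}\subset\mcl H_{T^m}\subset\wt{\mcl G}_{T^m}$ for $m'\geq m$, and have intersection $\sigma(P_{T,\infty})$. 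After upgrading the conditional independence at $T^n$ to conditional independence of $\wt{\mcl F}_{T^n,\infty}$ (resp.\ $\wt{\mcl S}_{T^n,\infty}$) from $\wt{\mcl G}_T$ given $\mcl H_{T^m}$ for every $m\geq n$ (using that $\mcl H_{T^n}$ is determined by $\mcl H_{T^m}$ together with $\wt{\mcl F}_{T^m,\infty}$), the paper applies the \emph{backward} martingale convergence theorem to $\BB P[A\cap B\mid\mcl H_{T^m}]$ as $m\to\infty$ with $n$ fixed; only after this first limit does the second limit $n\to\infty$ — where the generator-approximation facts you identify come into play — complete the proof. Without some such interpolating monotone filtration, your limit step is not justified.

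A smaller point: your auxiliary observation that ``$T$ a.s.\ fails to be a time at which $(L,R)$ attains a simultaneous running infimum'' is not needed for $P_{T_n,\infty}\to P_{T,\infty}$, and as worded it is not quite coherent — in the definition~\eqref{eqn-bead-function} the running infima are taken relative to time $t$ itself, so $t$ is always (trivially) such a time. The a.s.\ convergence $P_{T_n,\infty}\to P_{T,\infty}$ follows directly from the continuity of $Z$ and the monotone convergence of the sets of simultaneous running infima relative to time $T_n$ down to those relative to time $T$.
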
  

We note that Lemma~\ref{lem-bm-future-ind} does \emph{not} imply that the pair $( \wt{\mcl F}_{T,\infty} , \wt{\mcl S}_{T,\infty} ) $ is jointly conditionally independent from $\wt{\mcl G}_T$ given $P_{T,\infty}$.

\begin{proof}[Proof of Lemma~\ref{lem-bm-future-ind}]
First we consider the case when $T=r$ is deterministic.  By condition~\ref{item-wpsf-char-wedge} in Theorem~\ref{thm-wpsf-char}, each of $\wt{\mcl F}_{r,\infty}$ and $\wt{\mcl S}_{r,\infty}$ is independent from $\wt{\mcl F}_{-\infty,r}$. It is clear from the definition~\eqref{eqn-bead-function} that $P_{r,\infty}$ is a.s.\ determined by each of $\wt{\mcl F}_{r,\infty}$ and $\wt{\mcl S}_{r,\infty}$. Hence the statement of the lemma is true if $T$ is deterministic.
 
Now transfer to a general stopping time $T$ as in the statement of the lemma via a limiting argument. For $n\in\BB N$, let $T^n =  2^{-n} \lceil 2^n T \rceil $, so that each $T^n$ is a stopping time for $\{\wt{\mcl G}_r\}_{r\in\BB R}$ and $T^n$ a.s.\ decreases to $T$.  For $n\in\BB N$, let
\eqbn
\mcl H_{T^n}:= \sigma\left( (Z-Z_T)|_{[T,T^n]}  , P_{T^n , \infty} \right)  .
\eqen
Then $\sigma(P_{T,\infty}) \subset \mcl H_{T^n} \subset \wt{\mcl G}_{T^n}$ and $\bigcap_{n=1}^\infty \mcl H_{T^n} = \sigma(P_{T,\infty})$.  By the above statement for deterministic times and since $T^n$ takes on only countably many possible values, it follows that the statement of the lemma is true for each $T^n$, i.e.\ for each $n\in\BB N$, each of $\wt{\mcl F}_{T^n ,\infty}$ and $\wt{\mcl S}_{T^n ,\infty}$ is conditionally independent from $\wt{\mcl G}_{T^n}$ given $P_{T^n,\infty}$.  Since $\wt{\mcl G}_{T } \subset \wt{\mcl G}_{T^n}$ and $P_{T^n,\infty} \subset \mcl H_{T^n}$, also each of $\wt{\mcl F}_{T^n ,\infty}$ and $\wt{\mcl S}_{T^n ,\infty}$ is conditionally independent from $\wt{\mcl G}_{T }$ given $\mcl H_{T^n}$. 
 
For $m\geq n$, the time $T^n$ and the $\sigma$-algebra $\mcl H_{T^n}$ are a.s.\ determined by $(Z-Z_{T^m})|_{[T^m , \infty)} \in \wt{\mcl F}_{T^m,\infty}$, hence $\mcl H_{T^n}$ is conditionally independent from $\wt{\mcl G}_T$ given $\mcl H_{T^m}$.  Therefore each of $\wt{\mcl F}_{T^n ,\infty}$ and $\wt{\mcl S}_{T^n ,\infty}$ is conditionally independent from $\wt{\mcl G}_{T }$ given $\mcl H_{T^m}$.
Taking a limit as $m\rta \infty$ ($n$ fixed) and applying the backward martingale convergence theorem to $\BB P\left[ A\cap B \,|\, \mcl H_{T^m} \right]  $ for events $A  \in \wt{\mcl F}_{T^n,\infty}$ or $A\in \sigma(\wt{\mcl S}_{T^n,\infty})$ and $B \in \wt{\mcl G}_T$ shows that each of $\wt{\mcl F}_{T^n ,\infty}$ and $\wt{\mcl S}_{T^n ,\infty}$ is conditionally independent from $\wt{\mcl G}_T$ given $P_{T,\infty}$. 

By the continuity of $Z$, the $\wt{\mcl F}_{T^n,\infty}$-measurable functions which equal $Z-Z_{T^n}$ on $[T^n,\infty)$ and $0$ on $(-\infty,T^n]$ converge uniformly to $(Z-Z_T)|_{[T,\infty)}$ on $[T,\infty)$. 
If $[v_Z(s) , s]$ is a $\pi/2$-cone interval contained in $(T,\infty)$, then a.s.\ $[v_Z(s) ,s]$ is contained in $[T^n,\infty)$ for large enough $n$. 
On the other hand, if $a < t < b$ are rational times such that $\sigma_{a,b}(t) = T$ then since the maximal $\pi/2$-cone interval $[\sigma_{a,b}(t) , \tau_{a,b}(t)]$ can be approximated arbitrarily closely by smaller $\pi/2$-cone intervals contained in $[\sigma_{a,b}(t) , \tau_{a,b}(t)]$
and since $\wt\eta'$ is continuous, the $\wt{\mcl F}_{T^n,\infty}$-measurable quantum surfaces $\wt{\mcl S}_{\sigma_{a\vee T^n ,b }(t)  , \tau_{a \vee T^n  ,b  }(t)  }$ converge a.s.\ in the topology of Section~\ref{sec-surface-topology} to $\wt{\mcl S}_{\sigma_{a,b}(t)  , \tau_{a,b}(t) }$. 
From these convergence statements and conditional independence of $\wt{\mcl F}_{T^n,\infty}$ and $\wt{\mcl G}_T$ given $P_{T,\infty}$, we infer that $\wt{\mcl F}_{T,\infty}$ is conditionally independent from $\wt{\mcl G}_T$ given $P_{T,\infty}$.

Furthermore, by continuity of $\wt\eta'$ a.s.\ $ \wt{\mcl S}_{T^n ,\infty} \rta \wt{\mcl S}_{T,\infty}$ with respect to the topology induced by the metric~\eqref{eqn-bead-dist} of Section~\ref{sec-surface-topology}.  Therefore $\wt{\mcl S}_{T  ,\infty}  $ is conditionally independent from $\wt{\mcl G}_T$ given $P_{T,\infty}$ and we conclude.
\end{proof}

The following lemma tells us the conditional law of the single bubble $\wt{\mcl S}_{\sigma,\tau}$ of $\wt{\mcl S}_{a,b}^0$ given the previous bubbles as well as the segments of the Brownian motion $Z$ corresponding to the complement of the bubble. This lemma is the key ingredient in the proof of Proposition~\ref{prop-general-disk}. 
  
\begin{lem} \label{lem-general-disk0} 
Almost surely, the conditional law of the quantum surface $\wt{\mcl S}_{\sigma,\tau}$ parameterized by the bubble $\wt\eta'([ \sigma, \tau])$ cut out by $\wt\eta_{a,b}$ given the $\sigma$-algebra $\wt{\mcl G}_\sigma$ of~\eqref{eqn-partial-filtration-def} and $(Z-Z_\tau)|_{[\tau,\infty)}$ is that of a singly marked quantum disk with area $\tau-\sigma$ and boundary length $|Z_\tau - Z_\sigma|$.  
\end{lem}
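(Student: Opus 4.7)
The plan is to use Lemma~\ref{lem-bm-future-ind} to reduce conditioning on the large $\sigma$-algebra $\wt{\mcl G}_\sigma$ to conditioning only on $P_{\sigma,\infty}$, and then to transfer the quantum-disk conditional law known for the ``true'' pair $(h,\eta')$---via Proposition~\ref{prop-sle-disk}---to the given pair $(\wt h,\wt\eta')$ by a limiting argument as $r\downarrow\sigma$.

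First I would check that the singly marked quantum surface $\wt{\mcl S}_{\sigma,\tau}$ with marked point $\wt\eta'(\tau)$ is measurable with respect to $\wt{\mcl F}_{\sigma,\infty}$ (suitably extended to a stopping time).  Since $[\sigma,\tau]$ is a maximal $\tfrac{\pi}{2}$-cone interval for $Z$ in $[a,b]$ and a.s.\ $a<\sigma$ and $\tau<b$, one can a.s.\ choose rationals $a''\in(a,\sigma)$ and $b''\in(\tau,b)$ for which $[\sigma,\tau]$ is already maximal in $(a'',b'')$, so that $\wt{\mcl S}_{\sigma,\tau}$ enters the definition of $\wt{\mcl F}_{a'',b''}$, and an approximation by cone intervals $[\sigma',\tau']$ with $\sigma'>\sigma$ places it in $\wt{\mcl F}_{\sigma,\infty}$.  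By Lemma~\ref{lem-partial-surface-determined}, $\sigma$ is a stopping time for $\{\wt{\mcl G}_r\}$, so Lemma~\ref{lem-bm-future-ind} gives
\eqbn
\op{Law}\!\left( \wt{\mcl S}_{\sigma,\tau} , (Z-Z_\tau)|_{[\tau,\infty)} \,\big|\, \wt{\mcl G}_\sigma \right) = \op{Law}\!\left( \wt{\mcl S}_{\sigma,\tau} , (Z-Z_\tau)|_{[\tau,\infty)} \,\big|\, P_{\sigma,\infty} \right) ,
\eqen
and by disintegration the same equality holds for $\wt{\mcl S}_{\sigma,\tau}$ conditioned on the larger $\sigma$-algebra obtained by adjoining $(Z-Z_\tau)|_{[\tau,\infty)}$ to each side.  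The analogous reduction applies to $\mcl S_{\sigma,\tau}$ by running the same argument for the pair $(h,\eta')$, which itself satisfies the hypotheses of Theorem~\ref{thm-wpsf-char} with the same $Z$, and Proposition~\ref{prop-sle-disk} identifies the resulting conditional law as that of a singly marked quantum disk with area $\tau-\sigma$ and boundary length $|Z_\tau-Z_\sigma|$.

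It remains to show that the conditional laws of $\wt{\mcl S}_{\sigma,\tau}$ and $\mcl S_{\sigma,\tau}$ given $P_{\sigma,\infty}$ and $(Z-Z_\tau)|_{[\tau,\infty)}$ coincide.  For this I would follow the limiting strategy flagged in the introduction to Section~\ref{sec-disk-law}: for rational $r\in(\sigma,\tau)$ close to $\sigma$, by condition~\ref{item-wpsf-char-wedge} of Theorem~\ref{thm-wpsf-char} both $\wt{\mcl S}_{r,\infty}$ and $\mcl S_{r,\infty}$ are $\tfrac{3\gamma}{2}$-quantum wedges with matching bead function $P_{r,\infty}$, inside which $\wt{\mcl S}_{r,\tau}$ and $\mcl S_{r,\tau}$ appear as subsurfaces carved out by the segments of $\wt\eta'$, resp.\ $\eta'$, between times $r$ and $\tau$.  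Using condition~\ref{item-wpsf-char-homeo} of Theorem~\ref{thm-wpsf-char} (which matches the topology and the boundary-length measure) together with the conformal removability of the $\SLE_\kappa$-type arcs on the wedge boundary~\cite[Proposition~3.16]{wedges}, one identifies $\wt{\mcl S}_{r,\tau}$ with $\mcl S_{r,\tau}$ as quantum surfaces; sending $r\downarrow\sigma$ along rationals and invoking the backward martingale convergence theorem---exactly as in the final step of the proof of Proposition~\ref{prop-sle-disk}---then transfers the quantum-disk law to $\wt{\mcl S}_{\sigma,\tau}$.

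The main obstacle is the pre-limit identification in the third step: one must argue that matching wedge laws plus condition~\ref{item-wpsf-char-homeo} actually forces $\wt{\mcl S}_{r,\tau}$ and $\mcl S_{r,\tau}$ to agree as quantum surfaces (and in a manner compatible with the beaded-surface topology of Section~\ref{sec-surface-topology} as $r\downarrow\sigma$), rather than only as length-decorated topological objects.  The delicacy is that the interior boundary of $\wt{\mcl S}_{r,\tau}$ inside $\wt{\mcl S}_{r,\infty}$ is traced by the space-filling curve $\wt\eta'$ itself, which has positive Lebesgue measure and is not known to be conformally removable for $\kappa'\in(4,8)$; the argument must therefore rely only on removability of the $\SLE_\kappa$ outer-boundary arcs of $\wt\eta'((-\infty,r])$, not of $\wt\eta'$ itself.
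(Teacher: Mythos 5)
Your first two steps (reducing the conditioning on $\wt{\mcl G}_\sigma$ to conditioning on $P_{\sigma,\infty}$ via Lemma~\ref{lem-bm-future-ind}, then transferring the quantum-disk law from $\mcl S_{\sigma,\tau}$ by a limiting argument as one approaches $\sigma$ from above) are the right overall structure, and they match the shape of the paper's proof. But the execution of the core step is wrong in a way that you yourself flag as an obstacle, and the proposed repair (removability of the $\SLE_\kappa$-type outer-boundary arcs) does not close the gap.

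Specifically, your third paragraph tries to establish that $\wt{\mcl S}_{r,\tau}$ and $\mcl S_{r,\tau}$ are equal \emph{as quantum surfaces} (i.e.\ pathwise, as random variables). This cannot follow from the hypotheses: condition~\ref{item-wpsf-char-homeo} provides only a homeomorphism $\Phi$, not a conformal map, and to deploy removability of the outer boundary of $\wt\eta'([r,\tau])$ one would first need to know that $\Phi$ is conformal \emph{off} that boundary, which is precisely what is in doubt. If this pathwise identification were available for any $r$ it would hold for all $a<b$ and Theorem~\ref{thm-wpsf-char} would be immediate, which it manifestly is not (this is exactly the removability question the paper says it circumvents). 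Fortunately, a pathwise identification is overkill: the lemma only needs an \emph{equality in law}. And this is essentially free from the hypotheses. For dyadic $\sigma^n=2^{-n}\lceil 2^n\sigma\rceil\geq\sigma$ (so $\sigma^n$ is a countable-valued stopping time for $\{\wt{\mcl G}_r\}$), condition~\ref{item-wpsf-char-wedge} says that, given $\wt{\mcl G}_{\sigma^n}$, the bead $\wt{\mcl D}^n$ of the wedge $\wt{\mcl S}_{\sigma^n,\infty}$ containing $\wt\eta'(t)$ is a single bead of a $\frac{3\gamma}{2}$-quantum wedge with area and left/right boundary lengths read off from $P_{\sigma^n,\infty}$ — which is the same conditional law as that of the analogous bead $\mcl D^n$ of $\mcl S_{\sigma^n,\infty}$. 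So $(\wt{\mcl D}^n,P_{\sigma^n,\infty})\eqD(\mcl D^n,P_{\sigma^n,\infty})$ with no removability needed. Letting $n\to\infty$ (so $\wt{\mcl D}^n\to\wt{\mcl S}_{\sigma,\tau}$ and $\mcl D^n\to\mcl S_{\sigma,\tau}$ in the topology of Section~\ref{sec-surface-topology}) gives $(\wt{\mcl S}_{\sigma,\tau},P_{\sigma,\infty})\eqD(\mcl S_{\sigma,\tau},P_{\sigma,\infty})$, and Proposition~\ref{prop-sle-disk} then identifies the conditional law.

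A secondary issue: in your second paragraph you invoke Lemma~\ref{lem-bm-future-ind} as if it gave \emph{joint} conditional independence of $\wt{\mcl F}_{T,\infty}$ and $\wt{\mcl S}_{T,\infty}$ from $\wt{\mcl G}_T$ given $P_{T,\infty}$. The lemma is explicitly only a marginal statement (the remark after it warns about this). Since $(Z-Z_\tau)|_{[\tau,\infty)}\in\wt{\mcl F}_{\tau,\infty}$ while $\wt{\mcl S}_{\sigma,\tau}$ lives in a different $\sigma$-algebra, one must stitch the two conditional independences together by hand: e.g.\ first show $\wt{\mcl D}^n$ is conditionally independent from $\wt{\mcl G}_\sigma$ given $P_{\sigma,\infty}$ via a two-stage sampling argument (sample $P_{\sigma^n,\infty}$ given $\wt{\mcl G}_\sigma$, which depends only on $P_{\sigma,\infty}$; then sample $\wt{\mcl D}^n$ given $P_{\sigma^n,\infty}$), pass to the limit for $\wt{\mcl S}_{\sigma,\tau}$, and separately treat $(Z-Z_\tau)|_{[\tau,\infty)}$ using that $\tau$ is also a $\{\wt{\mcl G}_r\}$-stopping time and that $P_{\tau,\infty}$ is a.s.\ determined by $P_{\sigma,\infty}$.
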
 
\begin{proof}
The basic idea of the proof is that the quantum surfaces $\mcl S_{r,\infty}$ and $\wt{\mcl S}_{r,\infty}$ agree in law for each $r\in\BB R$; and as $r$ decreases to $\sigma$, the bead of $\mcl S_{r,\infty}$ (resp.\ $\wt{\mcl S}_{r,\infty}$) which contains $\eta_{\mcl S_{r,\infty}}'(t)$ (resp.\ $\wt\eta_{\wt{\mcl S}_{r,\infty}}'(t)$) converges in the topology of quantum surfaces (Section~\ref{sec-surface-topology}) to $\mcl S_{\sigma,\tau}$ (resp.\ $\wt{\mcl S}_{\sigma,\tau}$). Since we know that $\mcl S_{\sigma,\tau}$ is a quantum disk (Proposition~\ref{prop-sle-disk}), this will show that also $\wt{\mcl S}_{\sigma,\tau}$ is a quantum disk. One then has to check that one gets the same conditional law when conditioning on $\wt{\mcl G}_\sigma$ and $(Z-Z_\tau)|_{[\tau,\infty)}$, which follows from the previous lemmas and some abstract nonsense. 
\medskip

\noindent\textit{Step 1: setup.}
For $n\in\BB N$, let $\sigma^n =   2^{-n} \lceil 2^n \sigma \rceil  $. 
By Lemma~\ref{lem-partial-surface-determined}, $\sigma $ is a stopping time for the filtration $\{\wt{\mcl G}_r \}_{r \in\BB R}$, hence the same is the case for each $\sigma^n$. 
By condition~\ref{item-wpsf-char-wedge} in Theorem~\ref{thm-wpsf-char}, for each $n\in\BB N$ the conditional law of $\wt{\mcl S}_{\sigma^n,\infty}$ given $ \wt{\mcl G}_{\sigma^n} $ is that of a collection of independent beads of a $\frac{3\gamma}{2}$-quantum wedge with given areas and left/right quantum boundary lengths. 

If $t > \sigma^n  $, let $\wt{\mcl D}^n$ be the bead of $\wt{\mcl S}_{\sigma^n,\infty}$ which contains $\wt\eta'_{\wt{\mcl S}_{\sigma^n,\infty}}(t)$, equivalently the first bead of $\wt{\mcl S}_{\sigma^n,\infty}$ with the property that the total quantum mass of the previous beads is at least $t-\sigma^n$ (including $\wt{\mcl D}^n$ itself). If $t \leq \sigma^n$ let $\wt{\mcl D^n}$ be the trivial one-point quantum surface. 
Also let $\tau^n$ be the time at which $\wt\eta'_{\wt{\mcl S}_{\sigma^n,\infty}}$ finishes filling in this bead, equivalently $\tau^n -\sigma^n$ is the quantum mass of $\wt{\mcl D}^n$.    
 
The conditional law of $\wt{\mcl D}^n$ given  $ \wt{\mcl G}_{\sigma^n} $ is that of a single bead of a $\frac{3\gamma}{2}$-quantum wedge with given area and left/right boundary lengths. This conditional law depends only on the realization of $P_{\sigma^n ,\infty}$, so in particular $\wt{\mcl D}^n$ is conditionally independent from $\wt{\mcl G}_{\sigma^n}$ given $P_{\sigma^n,\infty}$. 
\medskip

\noindent\textit{Step 2: bubbles agree in law.}
We will now compare $\wt{\mcl D}^n$ to the analogous object defined in terms of the pair $(h , \eta')$. 
For $n\in\BB N$, let $\mcl D^n$ be the bead of $\mcl S_{\sigma^n,\infty}$ which contains $ \eta'_{ \mcl S_{\sigma^n,\infty}}(t)$ (or a point if $t \leq \sigma^n$). 
By the conclusion of the preceding paragraph applied in the case when $(\wt h , \wt\eta') = (h,\eta')$, we find that the conditional law of $\mcl D^n$ given $\sigma^n$ and $P_{\sigma^n,\infty}$ is that of a single bead of a $\frac{3\gamma}{2}$-quantum wedge with given area and left/right boundary lengths (here we note that the definitions of $\sigma^n$ and $P_{\sigma^n,\infty}$ depend only on $(h,\eta')$), i.e.\ the conditional laws of $\wt{\mcl D}^n$ and $\mcl D^n$ given $P_{\sigma^n,\infty}$ are the same. Therefore, 
\eqb \label{eqn-disk-same-law}
\left( \wt{\mcl D}^n ,   P_{\sigma^n,\infty} \right)  \eqD \left( \mcl D^n ,   P_{\sigma^n,\infty} \right)  , \quad \forall n\in\BB N .
\eqe  
 
By continuity of $\wt\eta'$, as $n\rta \infty$ the sub-domain of $\BB C$ with two marked boundary points which parameterizes $\wt{\mcl D}^n$ converges a.s.\ in the Caratheodory sense to $(\wt\eta'([\sigma , \tau]) ,  \wt\eta'(\tau))$. Hence $\wt{\mcl D}^n$ converges a.s.\ in the topology of Section~\ref{sec-surface-topology} to $\wt{\mcl S}_{\sigma,\tau}$. 
Furthermore, since $\sigma^n\rta\sigma$ a.s., by continuity of $Z$ a.s.\ $P_{\sigma^n,\infty} \rta P_{\sigma,\infty}$ uniformly. 
Therefore, a.s.\ 
\eqb \label{eqn-general-disk-as}
\left( \wt{\mcl D}^n ,  P_{\sigma^n,\infty} \right) \rta \left( \wt{\mcl S}_{\sigma,\tau} ,   P_{\sigma ,\infty} \right) .
\eqe 
Similarly, a.s.\ 
\eqb \label{eqn-sle-disk-as}
\left(  \mcl D^n ,   P_{\sigma^n,\infty} \right) \rta \left(  \mcl S_{\sigma,\tau} ,   P_{\sigma ,\infty} \right) .
\eqe 
The convergence~\eqref{eqn-sle-disk-as} also occurs in law, so by~\eqref{eqn-disk-same-law} and~\eqref{eqn-general-disk-as}, 
\eqb \label{eqn-disk-same-law'} 
 \left( \wt{\mcl S}_{\sigma,\tau} ,   P_{\sigma ,\infty} \right)  \eqD    \left(  \mcl S_{\sigma,\tau} ,   P_{\sigma ,\infty} \right) .
\eqe 
In particular, Proposition~\ref{prop-sle-disk} implies that the conditional law of $\wt{\mcl S}_{\sigma,\tau}$ given its area and boundary length (which are necessarily equal to $\tau-\sigma$ and $|Z_\tau - Z_\sigma|$, respectively) is that of a singly marked quantum disk with given area and boundary length. 
Furthermore, since $P_{\sigma,\infty}$ is a.s.\ determined by $ \sigma$, $Z_\tau-Z_\sigma$, and $(Z-Z_\tau)|_{[\tau,\infty)}$, Proposition~\ref{prop-sle-disk} implies that $\wt{\mcl S}_{\sigma,\tau}$ is conditionally independent from $P_{\sigma,\infty}$ given $\tau-\sigma$ and $|Z_\tau - Z_\sigma|$. Note that $\tau-\sigma$ and $|Z_\tau - Z_\sigma|$ are a.s.\ determined by $P_{\sigma,\infty}$ since these quantities give the area and left/right boundary lengths of the first bead of $\wt{\mcl S}_{\sigma,\infty}$. 
\medskip

\noindent\textit{Step 3: adding extra conditioning.}
We will now argue that we get the same conditional law if we further condition on $\wt{\mcl G}_\sigma$ and $(Z-Z_\tau)|_{[\tau,\infty)}$. 
We showed above that $\wt{\mcl D}^n$ is conditionally independent from $\wt{\mcl G}_{\sigma^n}$ given $P_{\sigma^n,\infty}$ for each $n\in\BB N$. 
Since $\wt{\mcl G}_\sigma \subset \wt{\mcl G}_{\sigma^n}$ for each $n\in\BB N$ (Lemma~\ref{lem-partial-filtration}), $\wt{\mcl D}^n$ is conditionally independent from $\wt{\mcl G}_{\sigma }$ given $P_{\sigma^n,\infty}$. The function $P_{\sigma^n,\infty}$ is determined by $(Z-Z_\sigma)|_{[\sigma,\infty)}$ so by Lemma~\ref{lem-bm-future-ind} is conditionally independent from $\wt{\mcl G}_{\sigma}$ given $P_{\sigma,\infty}$. Hence the conditional law of $\wt{\mcl D}^n$ given $\wt{\mcl G}_{\sigma }$ can be sampled from as follows.
\begin{enumerate}
\item Sample $P_{\sigma^n,\infty}$ from its conditional law given $\wt{\mcl G}_{\sigma }$ (which depends only on $P_{\sigma,\infty}$).
\item Sample $\wt{\mcl D}^n$ from its conditional law given $P_{\sigma^n,\infty}$. 
\end{enumerate}
In particular, this conditional law depends only on $P_{\sigma,\infty}$ so $\wt{\mcl D}^n$ is conditionally independent from $\wt{\mcl G}_\sigma$ given $P_{\sigma,\infty}$. 
Since this holds for each $n\in\BB N$, it follows from~\eqref{eqn-general-disk-as} that $\wt{\mcl S}_{\sigma,\tau}$ is conditionally independent from $\wt{\mcl G}_{\sigma }$ given $P_{\sigma,\infty}$. 

By Lemma~\ref{lem-bm-future-ind} and since $\tau$ is a $\{\wt{\mcl G}_r\}_{r\in\BB R}$-stopping time (Lemma~\ref{lem-partial-surface-determined}), the Brownian motion segment $(Z-Z_\tau)|_{[\tau,\infty)}$ is conditionally independent from $\wt{\mcl G}_\tau$ given $P_{\tau,\infty}$. Since the two coordinates of $(Z-Z_\sigma)|_{[\sigma,\infty)}$ attain a simultaneous running infimum at time $\tau$, it follows from~\eqref{eqn-bead-function} that $P_{\tau,\infty} = P_{\sigma,\infty}|_{[\tau,\infty)}$. The time $\tau$ is determined by $P_{\sigma,\infty}$ since $\tau-\sigma$ is the length of the first interval of times on which $P_{\tau,\infty}$ is constant. Therefore, $P_{\tau,\infty}$ is a.s.\ determined by $P_{\sigma,\infty}$ so $(Z-Z_\tau)|_{[\tau,\infty)}$ is conditionally independent from $\wt{\mcl G}_\tau$ given $P_{\sigma,\infty}$. 
It is clear that $\wt{\mcl G}_\sigma$ and $\sigma(\wt{\mcl S}_{\tau,\sigma})$ are contained in $\wt{\mcl G}_\tau$. 
By combining this with the preceding paragraph, we see that $\wt{\mcl S}_{\sigma,\tau}$ is conditionally independent from $\wt{\mcl G}_\sigma$ and $(Z-Z_\tau)|_{[\tau,\infty)}$ given $P_{\sigma,\infty}$. 

Hence the conditional law of $\wt{\mcl S}_{\sigma,\tau}$ given $\wt{\mcl G}_\sigma$ and $(Z-Z_\tau)|_{[\tau,\infty)}$ is the same as its conditional law given only $P_{\sigma,\infty}$. The discussion just after~\eqref{eqn-disk-same-law'} implies that this conditional law is that of a singly marked quantum disk with area $\tau-\sigma$ and boundary length $Z_\sigma - Z_\tau$.  
\end{proof}

\begin{proof}[Proof of Proposition~\ref{prop-general-disk}]
Fix times $a < t_1 < \dots <  t_n < b$. By Lemma~\ref{lem-general-disk0}, for each $k\in [1,n]_{\BB Z}$ the conditional law of the bubble $\wt{\mcl S}_{\sigma_{a,b}(t_k) , \tau_{a,b}(t_k)}$ given $\wt{\mcl G}_{  \sigma_{a,b}(t_k)} $ and $(Z-Z_{\tau_{a,b}(t_k)})|_{[\tau_{a,b}(t_k), \infty)}$ is that of a singly marked quantum disk with area $\tau_{a,b}(t_k) - \sigma_{a,b}(t_k)$ and boundary length $|Z_{\tau_{a,b}(t_k)} - Z_{\sigma_{a,b}(t_k)}|$. 

By definition, $\wt{\mcl G}_{  \sigma_{a,b}(t_k)} $ and $(Z-Z_{\tau_{a,b}(t_k)})|_{[\tau_{a,b}(t_k), \infty)}$ together a.s.\ determine $Z_s$ for $s\in \BB R\setminus [\sigma_{a,b}(t_k) , \tau_{a,b}(t_k)]$ (here we note that $\wt{\mcl S}_{\sigma_{a,b}(t_k) , \tau_{a,b}(t_k)}$ is the same as the first bead of $\wt{\mcl S}_{\sigma_{a,b}(t_k) , \infty}$, so its area and boundary length are $\wt{\mcl G}_{  \sigma_{a,b}(t_k)} $-measurable). In particular, $\wt{\mcl G}_{  \sigma_{a,b}(t_k)} $ and $(Z-Z_{\tau_{a,b}(t_k)})|_{[\tau_{a,b}(t_k), \infty)}$ together a.s.\ determine $Z|_{(-\infty , a]}$, $Z|_{[b,\infty)}$, and $\{ Z_{\tau_{a,b}(t)} : t\in [a,b] \} $. 
Furthermore, the $\sigma$-algebra $\wt{\mcl G}_{  \sigma_{a,b}(t_k)} $ a.s.\ determines the previous bubbles $\wt{\mcl S}_{\sigma_{a,b}(t_j) , \tau_{a,b}(t_j)}$ for $j\in [1,k-1]_{\BB Z}$ which are not equal to $\wt{\mcl S}_{\sigma_{a,b}(t_k) , \tau_{a,b}(t_k)}$. 

The preceding two paragraphs together imply that the conditional law of $\wt{\mcl S}_{\sigma_{a,b}(t_k) , \tau_{a,b}(t_k)}$ given $Z|_{(-\infty , a]}$, $Z|_{[b,\infty)}$, $\{ Z_{\tau_{a,b}(t)} : t\in [a,b] \} $, and 
$\wt{\mcl S}_{\sigma_{a,b}(t_j) , \tau_{a,b}(t_j)}$ for $j\in [1,k-1]_{\BB Z}$ such that this bubble is not equal to $\wt{\mcl S}_{\sigma_{a,b}(t_k) , \tau_{a,b}(t_k)}$
is that of a singly marked quantum disk with area $\tau_{a,b}(t_k) - \sigma_{a,b}(t_k)$ and boundary length $|Z_{\tau_{a,b}(t_k)} - Z_{\sigma_{a,b}(t_k)}|$. 
This holds for each $k \in [1,n]_{\BB Z}$, so we infer that the conditional law of the distinct bubbles in $\{ \wt{\mcl S}_{\sigma_{a,b}(t_k) , \tau_{a,b}(t_k)} \}_{k\in [1,n]_{\BB Z}}$ given $Z|_{(-\infty , a]}$, $Z|_{[b,\infty)}$, and $\{ Z_{\tau_{a,b}(t)} : t\in [a,b] \} $ is that of a collection of independent singly marked quantum disks with given areas and boundary lengths. Since the number of times $n$ we are considering can be made arbitrarily large, we conclude.
\end{proof}

\subsection{Joint laws of intermediate surfaces}
\label{sec-partial-surface}

In this subsection we will prove an equality in law between certain quantum surfaces defined in terms of $(\wt h , \wt\eta')$ and their counterparts defined in terms of $(h,\eta')$ which builds on the results of the previous two subsections. This result will be a key input in the ``curve-swapping" argument used in Section~\ref{sec-swapping} to prove Theorem~\ref{thm-wpsf-char}. 
Throughout, we fix $a,t,b\in\BB R $ with $a < t < b$ and we let $\sigma = \sigma_{a,b}(t)$ and $\tau = \tau_{a,b}(t)$ be as in~\eqref{eqn-tau-def}.  
We recall the future surface $\mcl S_{a,\infty}$ and the surface $\mcl S_{a,\tau}^0$ parameterized by the bubbles cut out by $\eta_{a,\tau} = \eta_{a,b}|_{[0,\tau]}$ from Section~\ref{sec-surface-def} and their counterparts with $(\wt h ,\wt\eta')$ in place of $(h,\eta')$. 
 
The main goal of this subsection is to prove the following. 

\begin{prop} \label{prop-partial-surface-law}
In the setting described just above,
\begin{align} \label{eqn-partial-surface-law}
 \left(\wt{\mcl S}_{a,\tau}^0 , \wt \eta_{a,b , \wt{\mcl S}_{a,\tau}^0} ,       \wt{\mcl S}_{\tau,\infty}    \right) 
 \eqD \left( \mcl S_{a,\tau}^0 ,  \eta_{a,b , \mcl S_{a,\tau}^0} ,    \mcl S_{\tau,\infty}    \right)  . 
\end{align}  
\end{prop}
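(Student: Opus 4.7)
The plan is to establish the desired equality in law by conditioning on the peanosphere Brownian motion $Z$ (which has the same distribution on both sides of~\eqref{eqn-partial-surface-law}) and showing that the regular conditional laws of the two triples given $Z$ coincide. The idea is that $Z$ records all of the combinatorial/metric data (ordering of bubbles and beads, their areas, and their left/right boundary lengths) on both sides, while Propositions~\ref{prop-sle-disk} and~\ref{prop-general-disk} together with the Markov property assumed in Theorem~\ref{thm-wpsf-char} pin down each piece conditionally to be an independent quantum disk or a bead of a $\tfrac{3\gamma}{2}$-quantum wedge.

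More concretely, first observe that by Lemma~\ref{lem-chordal-sle}, the stopping time $\tau$ and the collection of bubble time-intervals $\{[\sigma_{a,b}(s),\tau_{a,b}(s)]\}_{s\in [a,\tau]}$ (together with their left/right orientations) are measurable functions of $Z$ alone, and the same is therefore true for the area/boundary-length data of every bubble of $\mcl S_{a,\tau}^0$ and of $\wt{\mcl S}_{a,\tau}^0$. Similarly, by Lemma~\ref{lem-bead-inf} the ordered sequence of areas and left/right boundary lengths of the beads of $\mcl S_{\tau,\infty}$ and of $\wt{\mcl S}_{\tau,\infty}$ is encoded by the $Z$-measurable function $P_{\tau,\infty}$ from~\eqref{eqn-bead-function}. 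Moreover, since the curve $\eta_{a,b,\mcl S_{a,\tau}^0}$ (resp.\ $\wt\eta_{a,b,\wt{\mcl S}_{a,\tau}^0}$) merely records the temporal order in which the bubbles are visited, its realization as a curve on the beaded quantum surface is determined by the bubble data together with $Z$.

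I then describe the conditional law of the triple $( \wt{\mcl S}_{a,\tau}^0 , \wt\eta_{a,b , \wt{\mcl S}_{a,\tau}^0} , \wt{\mcl S}_{\tau,\infty} )$ given $Z$ in two steps. For the past part, Proposition~\ref{prop-general-disk} says that conditional on $Z$ the bubbles of $\wt{\mcl S}_{a,\tau}^0$ are independent singly marked quantum disks with the area/boundary-length data prescribed by $Z$. For the future part, Lemma~\ref{lem-partial-surface-determined} says $\tau$ is a stopping time for $\{\wt{\mcl G}_r\}_{r\in\BB R}$ and that $\wt{\mcl S}_{a,\tau}^0\in \wt{\mcl G}_\tau$, so Lemma~\ref{lem-bm-future-ind} gives that $\wt{\mcl S}_{\tau,\infty}$ is conditionally independent of $\wt{\mcl G}_\tau$ given $P_{\tau,\infty}\in\sigma(Z)$; condition~\ref{item-wpsf-char-wedge} of Theorem~\ref{thm-wpsf-char} then identifies the conditional law as that of a collection of independent beads of a $\tfrac{3\gamma}{2}$-quantum wedge with the $Z$-prescribed areas and boundary lengths. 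Combining these with the $Z$-measurability of the curve yields an explicit description of the conditional law of the left-hand triple given $Z$.

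Running exactly the same argument on the right-hand side using Proposition~\ref{prop-sle-disk} in place of Proposition~\ref{prop-general-disk} and the analog of Lemma~\ref{lem-bm-future-ind} for $(h,\eta')$ (which is immediate from~\cite[Theorem~1.9]{wedges}), one obtains the same explicit conditional description for $( \mcl S_{a,\tau}^0 , \eta_{a,b , \mcl S_{a,\tau}^0} , \mcl S_{\tau,\infty} )$ given $Z$. Integrating against the common law of $Z$ then gives~\eqref{eqn-partial-surface-law}. The main obstacle I foresee is the bookkeeping required to make precise the assertion that the curve-on-quantum-surface objects $\eta_{a,b,\mcl S_{a,\tau}^0}$ and $\wt\eta_{a,b,\wt{\mcl S}_{a,\tau}^0}$ are genuinely recovered from $Z$ together with the bubble data (so that they are not an additional source of randomness), and the related point that the future curve structure does not enter the statement---this is handled by inspecting the definition of $\eta_{a,b,\mcl S_{a,\tau}^0}$ from Definition~\ref{def-surface-curve} and Lemma~\ref{lem-chordal-sle}, but the argument must be written out carefully.
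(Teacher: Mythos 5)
Your proposal has a genuine gap, and it sits at the very first step: you condition on the full peanosphere Brownian motion $Z$ and then invoke Propositions~\ref{prop-sle-disk} and~\ref{prop-general-disk} to describe the resulting conditional laws of the bubbles, but those propositions condition on strictly less information than $Z$ --- namely on $Z|_{(-\infty,a]}$, $Z|_{[b,\infty)}$, and the skeleton $\{Z_{\tau_{a,b}(t)}:t\in[a,b]\}$, which omits the cone-excursion increments $(Z-Z_{\tau_{a,b}(t)})|_{[\sigma_{a,b}(t),\tau_{a,b}(t)]}$. This omission is essential, not cosmetic. On the right-hand side of~\eqref{eqn-partial-surface-law}, each bubble $\mcl S_{\sigma,\tau}$ together with its curve is a.s.\ \emph{determined} by the corresponding cone-excursion increment (this is pointed out explicitly in the proof of Proposition~\ref{prop-sle-disk}, citing \cite[Theorem~2.1]{sphere-constructions}), so conditional on the full $Z$ the right-hand triple is a.s.\ a point mass, not a product of independent quantum disks. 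On the left-hand side, you have no control over the conditional law of the bubbles given the full $Z$: condition~\ref{item-wpsf-char-homeo} only pins down boundary lengths and areas, not conformal structures, and establishing that the $\wt{\mcl S}$-bubbles are $Z$-measurable with the same deterministic description as the $\mcl S$-bubbles is morally equivalent to the conclusion of Theorem~\ref{thm-wpsf-char} that the whole curve-decorated surface is determined by $Z$. So the two conditional laws you would need to match are, respectively, unproven and a point mass, and the argument does not close.

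The paper's proof is careful to work with the coarser $\sigma$-algebra where the propositions genuinely apply. Lemma~\ref{lem-partial-surface-initial} first gives the equality in law of $(\wt{\mcl S}_{a,b}^0,\wt\eta_{a,b,\wt{\mcl S}_{a,b}^0})$ and $(\mcl S_{a,b}^0,\eta_{a,b,\mcl S_{a,b}^0})$ jointly with $Z|_{\BB R\setminus[a,b]}$ and the skeleton (by combining Propositions~\ref{prop-sle-disk} and~\ref{prop-general-disk} with the fact that the topology of the bubble decomposition is the same deterministic function of the skeleton for both pairs), and then applies a deterministic functional to extract the triple $(\wt{\mcl S}_{a,\tau}^0, \wt\eta_{a,b,\wt{\mcl S}_{a,\tau}^0}, P_{\tau,\infty})$. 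The future surface is then handled by Lemma~\ref{lem-partial-surface-future}, which shows the conditional law of $\wt{\mcl S}_{\tau,\infty}$ given $\wt{\mcl G}_\tau$ (resp.\ of $\mcl S_{\tau,\infty}$ given $\mcl G_\tau$) depends only on $P_{\tau,\infty}$, and since the past triple is measurable with respect to $\wt{\mcl G}_\tau$ (Lemma~\ref{lem-partial-surface-determined}), one can glue. Your use of Lemma~\ref{lem-bm-future-ind} for the future part is essentially in the right direction, and your plan for identifying the curve as a deterministic function of the bubble data plus the skeleton is also correct in spirit; the fatal flaw is localized to treating the conditional-law-given-$Z$ as if Propositions~\ref{prop-sle-disk} and~\ref{prop-general-disk} applied after conditioning on all of $Z$, when in fact the whole point of those propositions is that they condition on strictly less.
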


We emphasize that $\wt{\mcl S}_{a,b}^0$ for $a < b$ encodes only the quantum surfaces parameterized by the bubbles cut out by $\wt\eta'$ in the time interval $[a,b]$ and the topology of how these quantum surfaces are glued together but not the whole quantum surface structure of $\wt{\mcl S}_{a,b}$. 
 
The proof of Proposition~\ref{prop-partial-surface-law} proceeds by way of two lemmas, which are in turn consequences of the results of Sections~\ref{sec-sle-disk} and~\ref{sec-general-disk}. The first lemma is essentially an extension of Proposition~\ref{prop-general-disk}.

\begin{lem} \label{lem-partial-surface-initial}
We have the equalities in law
\begin{align} \label{eqn-full-surface-initial}
&\left(\wt{\mcl S}_{a,b}^0 ,\wt \eta_{a,b , \wt{\mcl S}_{a,b}^0} ,   Z|_{\BB R\setminus [a,b]} , \{ Z_{\tau_{a,b}(s)}   : s\in [a,b]\}  \right) \notag \\ 
&\qquad \eqD \left( \mcl S_{a,b}^0 , \eta_{a,b , \mcl S_{a,b}^0} ,  Z|_{\BB R\setminus [a,b]} , \{ Z_{\tau_{a,b}(s)}   : s\in [a,b]\}  \right)  
\end{align}
and 
\eqb \label{eqn-partial-surface-initial}
\left( \wt{\mcl S}_{a,\tau}^0 ,\wt \eta_{a,b , \wt{\mcl S}_{a,\tau}^0}      , P_{\tau,\infty}   \right) \eqD \left( \mcl S_{a,\tau}^0 , \eta_{a,b , \mcl S_{a,\tau}^0}   , P_{\tau,\infty}  \right)  .
\eqe 
\end{lem}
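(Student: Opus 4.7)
The plan for both equalities is to condition on the entire peanosphere Brownian motion $Z$ and verify that the two sides have the same conditional law. Both sides of each equality involve $Z$-measurable quantities on the ``outside'' (either $Z|_{\BB R\setminus[a,b]}$ and $\{Z_{\tau_{a,b}(s)}\}$, or $P_{\tau,\infty}$), so once the conditional laws given $Z$ of the curve-decorated bubble surfaces match, the joint equality in law follows by integration.

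For (\ref{eqn-full-surface-initial}), by Lemma \ref{lem-chordal-sle} the collection of maximal $\pi/2$-cone excursions for $Z$ in $[a,b]$ together with $\{Z_{\tau_{a,b}(s)}:s\in[a,b]\}$ determine the combinatorial structure of the bubbles cut out by $\eta_{a,b}$: their ordering along the chordal interface, on which side (left or right) each one lies relative to $\eta_{a,b}$, their boundary lengths $|Z_{\tau_{a,b}(s)}-Z_{\sigma_{a,b}(s)}|$, and their quantum areas $\tau_{a,b}(s)-\sigma_{a,b}(s)$. The topology-and-consistency hypothesis (item \ref{item-wpsf-char-homeo} of Theorem \ref{thm-wpsf-char}) provides a homeomorphism $\Phi$ taking $\eta'$ to $\wt\eta'$, hence $\eta_{a,b}$ to $\wt\eta_{a,b}$, so the curve-decorated topological structure of $(\wt{\mcl S}_{a,b}^0,\wt\eta_{a,b,\wt{\mcl S}_{a,b}^0})$ matches that of $(\mcl S_{a,b}^0,\eta_{a,b,\mcl S_{a,b}^0})$ and is entirely $\sigma(Z)$-measurable. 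By Propositions \ref{prop-sle-disk} and \ref{prop-general-disk}, the conditional law of the collection of bubble quantum surfaces given $Z$ is on each side that of a family of independent singly marked quantum disks with the same $Z$-determined areas and boundary lengths. Combining gives equality of conditional laws given $Z$, and integrating yields (\ref{eqn-full-surface-initial}).

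For (\ref{eqn-partial-surface-initial}), note first that $P_{\tau,\infty}$ is a deterministic functional of $Z$, so the same random variable appears on both sides. By Lemma \ref{lem-partial-surface-determined}, $\tau$ is a stopping time for the filtration $\{\wt{\mcl G}_r\}$, and the identity of the particular bubbles cut out before time $\tau$ (equivalently, the set of maximal $\pi/2$-cone intervals for $Z$ in $[a,b]$ lying to the left of $\tau$) is $\sigma(Z)$-measurable. Repeating the argument above, restricted to this $Z$-measurable sub-collection of bubbles, gives that the conditional law of $(\wt{\mcl S}_{a,\tau}^0,\wt\eta_{a,b,\wt{\mcl S}_{a,\tau}^0})$ given $Z$ equals that of $(\mcl S_{a,\tau}^0,\eta_{a,b,\mcl S_{a,\tau}^0})$ given $Z$; then integration against the common law of $Z$ (which carries $P_{\tau,\infty}$) produces (\ref{eqn-partial-surface-initial}).

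The main obstacle will be the careful bookkeeping required to check that every feature of $(\mcl S_{a,b}^0,\eta_{a,b,\mcl S_{a,b}^0})$ that is not captured by the internal quantum surface structure of the individual bubbles---their cyclic ordering along the chordal curve, their left/right labeling, the marked point at which $\eta_{a,b}$ pinches each one off, and the way successive bubbles are glued at their tips---can be read off from $Z$ alone in a way that agrees with the corresponding reading for $\wt\eta_{a,b}$. This is precisely where condition \ref{item-wpsf-char-homeo} of Theorem \ref{thm-wpsf-char} is needed, since it forces the curve-bubble incidence data for $\wt\eta_{a,b}$ to agree pointwise with that for $\eta_{a,b}$ via $\Phi$; once this is in hand, Propositions \ref{prop-sle-disk} and \ref{prop-general-disk} supply the matching of the bubble surfaces themselves.
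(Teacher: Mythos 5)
There is a genuine flaw in the conditioning you propose. You plan to ``condition on the entire peanosphere Brownian motion $Z$'' and show the conditional laws of the bubble surfaces match. But for the reference pair $(h,\eta')$, the curve-decorated quantum surface $(\mcl S_{a,b}^0 , \eta_{a,b,\mcl S_{a,b}^0})$ is a.s.\ \emph{determined} by $Z$ (this is used explicitly in the proof of Proposition~\ref{prop-sle-disk}: each bubble $\mcl S_{\sigma,\tau}$ is determined by its own cone excursion $(Z-Z_\sigma)|_{[\sigma,\tau]}$, via~\cite[Theorem~2.1]{sphere-constructions}). So given all of $Z$, that side's conditional law is a point mass, not a collection of independent quantum disks. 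Propositions~\ref{prop-sle-disk} and~\ref{prop-general-disk} only deliver the quantum-disk conditional law under the strictly \emph{weaker} conditioning on $Z|_{(-\infty,a]}$, $Z|_{[b,\infty)}$, and $\{Z_{\tau_{a,b}(s)} : s\in [a,b]\}$ — precisely the quantities recorded in~\eqref{eqn-full-surface-initial} — and not on the internal fluctuations $(Z-Z_{\sigma_{a,b}(s)})|_{[\sigma_{a,b}(s),\tau_{a,b}(s)]}$ inside each bubble interval. Indeed, with the full $Z$ conditioned on, the two conditional laws do not agree at all (and whether the $(\wt h,\wt\eta')$ bubbles are even $\sigma(Z)$-measurable is not yet known at this stage of the argument), so the integration step you propose has nothing to integrate.

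The fix is to condition only on $\sigma(Z|_{\BB R\setminus[a,b]}, \{Z_{\tau_{a,b}(s)}\})$, which is what the paper does: under that conditioning both sides give independent quantum disks with matching areas/boundary lengths, and condition~\ref{item-wpsf-char-homeo} shows the curve-decorated topological structure is the same deterministic functional of $\{Z_{\tau_{a,b}(s)}\}$ on both sides, giving~\eqref{eqn-full-surface-initial}. For~\eqref{eqn-partial-surface-initial}, the paper then observes that $\tau$, the restriction of the bubble surface to the bubbles filled in before time $\tau$, and $P_{\tau,\infty}$ are each obtained by applying the \emph{same} deterministic functional to both sides of~\eqref{eqn-full-surface-initial} (using that $P_{\tau,\infty}$ is determined by $Z|_{[b,\infty)}$ and $\{Z_{\tau_{a,b}(s)}\}$ alone). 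Your appeal to Lemma~\ref{lem-partial-surface-determined} is not wrong, but it is an unnecessary detour once~\eqref{eqn-full-surface-initial} is in hand; the substantive problem remains the over-conditioning described above.
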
 
\begin{proof}
By Propositions~\ref{prop-sle-disk} and~\ref{prop-general-disk}, the conditional laws of the bubbles of $\mcl S_{a,b}^0$ and those of $\wt{\mcl S}_{a,b}^0$ given $ Z|_{\BB R\setminus [a,b]}$ and $\{ Z_{\tau_{a,b}(s)}   : s\in [a,b]\}$ a.s.\ agree.
By condition~\ref{item-wpsf-char-homeo} in Theorem~\ref{thm-wpsf-char}, the equivalence classes of the curve-decorated quantum surfaces $(\wt{\mcl S}_{a,b}^0 ,\wt \eta_{a,b , \wt{\mcl S}_{a,b}^0} )$ and $(\mcl S_{a,\tau}^0 , \eta_{a,b , \mcl S_{a,b}^0})$ are a.s.\ given by the same deterministic functional of $\{ Z_{\tau_{a,b}(s)}   : s\in [a,b]\}$ (c.f.~\cite[Figure~1.15, Line~3]{wedges}).  Hence~\eqref{eqn-full-surface-initial} holds. 

We now deduce~\eqref{eqn-partial-surface-initial} from~\eqref{eqn-full-surface-initial}. 
By~\eqref{eqn-tau-def} (resp.\ its analog with $\wt\eta_{a,b}$ in place of $\eta_{a,b}$), the time $\tau = \tau_{a,b}(t)$ is the infimum of the times $s \geq t$ such that $\eta_{a,b}(s) \not= \eta_{a,b}(t)$ (resp.\ $\wt\eta_{a,b}(s) \not= \wt\eta_{a,b}(t)$), so can be obtained by applying the same deterministic functional to either the left or the right side of~\eqref{eqn-full-surface-initial}. 
The quantum surface $\wt{\mcl S}_{a,\tau}^0$ is parameterized by the bubbles of $\wt{\mcl S}_{a,b}^0$ filled in by $\wt\eta'$ before time $\tau = \tau_{a,b}(t)$, so is a.s.\ determined by the left side of~\eqref{eqn-full-surface-initial}. 
Similarly, $\mcl S_{a,b}^0$ is a.s.\ determined by the right side of~\eqref{eqn-full-surface-initial}, in the same deterministic manner.  
None of the times $\ul T_{\tau }$ or $\ol T_{\tau }$ appearing in the definition~\eqref{eqn-bead-function} of $P_{\tau,\infty}$ is contained in one of the intervals $(\sigma_{a,b}(s) , \tau_{a,b}(s))$ for $s\in [a,b]$. 
Therefore, $P_{\tau,\infty}$ is a.s.\ determined by $  Z|_{[b,\infty)}  $ and $ \{ Z_{\tau_{a,b}(s)} : s \in [a,b]\}$. 
We therefore obtain~\eqref{eqn-partial-surface-initial} by applying the same deterministic functional to both sides of~\eqref{eqn-full-surface-initial}.  
\end{proof}

The second lemma we need for the proof of Proposition~\ref{prop-partial-surface-law} is an extension of Lemma~\ref{lem-bm-future-ind}.

\begin{lem} \label{lem-partial-surface-future}
Let $T$ be a stopping time for the filtration $\{ \mcl G_r\}_{r\in\BB R}$ of Lemma~\ref{lem-partial-filtration}.
The following conditional laws a.s.\ coincide.
\begin{enumerate}
\item The conditional law of $\wt{\mcl S}_{T,\infty}$ given $\wt{\mcl G}_T$. 
\item The conditional law of $ \mcl S_{T,\infty}$ given $\mcl G_T$.
\item The conditional law of either $\mcl S_{T,\infty}$ or $\wt{\mcl S}_{T,\infty}$ given $P_{T,\infty}$. 
\end{enumerate}  
\end{lem}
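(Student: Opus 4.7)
I would split the statement into three separate equalities: (a) (1) equals the conditional law of $\wt{\mcl S}_{T,\infty}$ given $P_{T,\infty}$; (b) (2) equals the conditional law of $\mcl S_{T,\infty}$ given $P_{T,\infty}$; and (c) these two conditional laws given $P_{T,\infty}$ coincide. Equality (a) follows immediately from Lemma~\ref{lem-bm-future-ind}: since $\mcl G_r \subset \wt{\mcl G}_r$ by Lemma~\ref{lem-partial-filtration}, every $\{\mcl G_r\}$-stopping time is also a $\{\wt{\mcl G}_r\}$-stopping time, so $\wt{\mcl S}_{T,\infty}$ is conditionally independent from $\wt{\mcl G}_T$ given $P_{T,\infty}$, and $P_{T,\infty}$ is itself $\wt{\mcl G}_T$-measurable.

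Equality (b) will be obtained by applying Lemma~\ref{lem-bm-future-ind} in the ``base'' setting where $(\wt h,\wt\eta') = (h,\eta')$, for which the hypotheses of Theorem~\ref{thm-wpsf-char} hold (as noted at the end of Section~\ref{sec-peanosphere}). In this setting the singly marked quantum surfaces appearing in the definition of $\wt{\mcl F}_{-\infty,r}$ are quantum disks that are a.s.\ determined by the Brownian motion $Z$ alone (by Proposition~\ref{prop-sle-disk} together with~\cite[Theorem~2.1]{sphere-constructions}), so $\wt{\mcl G}_r$ reduces to $\mcl G_r$ and Lemma~\ref{lem-bm-future-ind} gives the conditional independence of $\mcl S_{T,\infty}$ from $\mcl G_T$ given $P_{T,\infty}$.

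For (c), I would first handle deterministic $T = r$. By~\cite[Theorem~1.9]{wedges} and condition~\ref{item-wpsf-char-wedge} of Theorem~\ref{thm-wpsf-char}, each of $\mcl S_{r,\infty}$ and $\wt{\mcl S}_{r,\infty}$ has the law of a $\frac{3\gamma}{2}$-quantum wedge, and $P_{r,\infty}$ is the same deterministic functional (the sequence of areas and left/right boundary lengths of the beads, by Lemma~\ref{lem-bead-inf}) of each surface; hence the joint laws $(\mcl S_{r,\infty},P_{r,\infty})$ and $(\wt{\mcl S}_{r,\infty},P_{r,\infty})$ agree, and therefore so do the conditional laws given $P_{r,\infty}$. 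For a general stopping time $T$ I would approximate by the dyadic times $T^n := 2^{-n}\lceil 2^n T\rceil$, following the final steps of the proof of Lemma~\ref{lem-bm-future-ind}. Each $T^n$ takes countably many values; conditioning on the atom $\{T^n = k 2^{-n}\}$ (which is a deterministic event) the deterministic case yields the equality of conditional laws at the deterministic time $k 2^{-n}$, and summing over $k$ (using that $P_{T^n,\infty}$ depends only on $Z$ and hence has the same marginal law on both sides) gives $(\mcl S_{T^n,\infty},P_{T^n,\infty}) \eqD (\wt{\mcl S}_{T^n,\infty},P_{T^n,\infty})$. The convergence $\wt{\mcl S}_{T^n,\infty} \rta \wt{\mcl S}_{T,\infty}$ and $P_{T^n,\infty} \rta P_{T,\infty}$ in the relevant topologies (see Section~\ref{sec-surface-topology}) was already established in the last paragraph of the proof of Lemma~\ref{lem-bm-future-ind}, with the identical statement applying to $\mcl S_{T^n,\infty}$; passing to the limit gives the equality of joint laws at $T$ and hence (c).

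The only subtle point is the dyadic limiting step in the proof of (c), but this is a direct transcription of the argument already carried out in the proof of Lemma~\ref{lem-bm-future-ind}. No new estimates are needed, so I expect the proof to be short and essentially a combination of the two applications of Lemma~\ref{lem-bm-future-ind} together with the wedge law identity at deterministic times.
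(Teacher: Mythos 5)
Your proposal is correct and mirrors the paper's proof: two applications of Lemma~\ref{lem-bm-future-ind} (once for $(\wt h,\wt\eta')$ and once for the base pair $(h,\eta')$) reduce the claim to the joint-law identity $(\wt{\mcl S}_{T,\infty},P_{T,\infty})\eqD(\mcl S_{T,\infty},P_{T,\infty})$, which is then established for deterministic times, for dyadic stopping times, and in general by taking limits along $T^n = 2^{-n}\lceil 2^n T\rceil$. The only slip is the remark that $\{T^n = k2^{-n}\}$ is a ``deterministic event''---it is not---but the argument survives because this event lies in $\mcl G_{k2^{-n}}$, and the conditional independence from parts (a)--(b) means conditioning on it does not alter the conditional law of the future surface given $P_{k2^{-n},\infty}$.
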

\begin{proof}
The time $T$ is also a stopping time for the larger filtration $\{\wt{\mcl G}_r\}_{r\in\BB R}$
so by Lemma~\ref{lem-bm-future-ind}, the conditional law of $\wt{\mcl S}_{T,\infty}$ given $\wt{\mcl G}_T$ is the same as its conditional law given only $P_{T,\infty}$. 
By the same lemma, applied in the special case when $(\wt h , \wt\eta') = (h,\eta')$, we find that the conditional law of  $\mcl S_{T,\infty}$ given $ \mcl G_T$ is the same as its conditional law given only $P_{T,\infty}$.  
Hence it suffices to show that 
\eqb \label{eqn-future-bdy-eqd}
(\wt{\mcl S}_{T,\infty} , P_{T,\infty}) \eqD (\mcl S_{T,\infty} , P_{T,\infty}) .
\eqe

In the case when $T =r$ is deterministic, the conditions in Theorem~\ref{thm-wpsf-char} imply that both sides of~\eqref{eqn-future-bdy-eqd} have the same law as a $\frac{3\gamma}{2}$-quantum wedge together with the function which gives the areas and left/right boundary lengths of its beads. Hence~\eqref{eqn-future-bdy-eqd} holds in this case. 
In the case when $T$ takes on only countably many possible values, we infer from the deterministic case that the conditional law of $\wt{\mcl S}_{T,\infty}$ given $P_{T,\infty}$ is that of a sequence of beads of a $\frac{3\gamma}{2}$-quantum wedge with areas and left/right boundary lengths specified by $P_{T,\infty}$; and the same holds for $\mcl S_{T,\infty}$. Hence~\eqref{eqn-future-bdy-eqd} holds in the case when $T$ takes on only countably many values. 

In general, we let $T^n  =2^{-n} \lceil 2^{n} T \rceil $, so that each $T^n$ is a stopping time for $\{\mcl G_r\}_{r\in\BB N}$ and $T^n\rta T$ a.s. By taking the limits of both sides of~\eqref{eqn-future-bdy-eqd} with $T^n$ in place of $T$, we obtain~\eqref{eqn-future-bdy-eqd} in general. 
\end{proof}

\begin{proof}[Proof of Proposition~\ref{prop-partial-surface-law}]   
By Lemma~\ref{lem-partial-surface-determined}, $\tau$ is a stopping time for the filtration $\{ \mcl G_r\}_{r\in\BB R}$ of Lemma~\ref{lem-partial-filtration}. Furthermore, the triples in~\eqref{eqn-partial-surface-initial} from Lemma~\ref{lem-partial-surface-initial} satisfy
\eqbn
\left(\wt{\mcl S}_{a,\tau}^0 ,\wt \eta_{a,b , \wt{\mcl S}_{a,\tau}^0} ,  P_{\tau,\infty}   \right) \in \wt{\mcl G}_\tau 
\quad \op{and} \quad
\left(  \mcl S_{a,\tau}^0 , \eta_{a,b , \mcl S_{a,\tau}^0}    ,    P_{\tau,\infty}   \right) \in \mcl G_\tau . 
\eqen 
Hence Lemma~\ref{lem-partial-surface-future} implies that the conditional law of $\wt{\mcl S}_{\tau,\infty}$ given the left side of~\eqref{eqn-partial-surface-initial} a.s.\ coincides with the conditional law of $\mcl S_{\tau,\infty}$ given the right side of~\eqref{eqn-partial-surface-initial} (and this conditional law depends only on $P_{\tau,\infty}$). 
By combining this with~\eqref{eqn-partial-surface-initial}, we obtain~\eqref{eqn-partial-surface-law}.  
\end{proof}

\section{Diameter estimate for space-filling SLE segments}
\label{sec-diam-sum}

In this section we will prove an estimate (Proposition~\ref{prop-diam-sum} just below) which will be used to bound the diameters of the hulls (and thereby, via~\cite[Lemma~9.6]{wedges} the distortion of the conformal maps) in the curve-swapping argument used in the proof of Theorem~\ref{thm-wpsf-char}. In Section~\ref{sec-stability}, we will also introduce a regularity event which is similar to the one used in~\cite[Section~9.4.2]{wedges} which will enable us to apply our estimate. We emphasize that Proposition~\ref{prop-diam-sum} and the regularity event of Section~\ref{sec-stability} are the only results from this section used later in the paper, but some of the results involved in the proof of Proposition~\ref{prop-diam-sum} are of independent interest.
 
Before stating our estimate, we first define a particular embedding of a $\gamma$-quantum cone which is also used in~\cite[Section~9.4.2]{wedges}.  Fix once and for all a smooth non-negative bump function $\phi$ supported on $(-1/100, 0]$ with total integral one.  Suppose $h$ is an embedding of a $\gamma$-quantum cone in $(\BB C , 0, \infty)$.  For $r \geq 0$, let $h_r(0)$ be the circle average of $h$ over $\bdy B_r(0)$. 
For $t\in\BB R$, define
\eqb \label{eqn-smooth-bm}
V_t := \frac{1}{\gamma} \log \BB E\left[ \mu_h\left(  B_{e^{-t}}(0)  \right)  \,|\, h_{e^{-t}}(0)  \right]  .
\eqe   
For $C \in \BB R$, we say that the embedding $h$ is a \emph{$C$-smooth canonical description} of $(\BB C , h , 0, \infty)$ if 
\eqb \label{eqn-smooth-def}
\inf\left\{ s \in \BB R : \int_{\BB R}  V_t \phi(s-t) \, dt = C \right\} =0 .
\eqe 
Note that if $h$ is a $C$-smooth canonical description and $a > 0$, then $h + a$ is a $ C + a$-smooth canonical description. We will need to choose the parameter $C$ appropriately in Section~\ref{sec-stability} in order to make the probability of a certain event close to 1.
A smooth canonical description is not unique: applying a rotation gives another smooth canonical description.  Similarly to the circle average embedding, if $h$ is a $C$-smooth canonical description then $h|_{\BB D}$ agrees in law with the restriction to $\BB D$ of $h^0 - \gamma \log |\cdot| + \xi$, where $h^0$ is a whole-plane GFF normalized so that its circle average over $\bdy \BB D$ is zero and $\xi$ is a random variable whose absolute value is stochastically dominated by a constant (depending only on $C$) plus the modulus of centered Gaussian random variable with variance bounded above by a universal constant.

Our reason for considering the smooth canonical description is that this is the embedding used in~\cite[Section~9]{wedges} to prove that a certain regularity condition for the $\gamma$-quantum cone holds whenever we ``swap out" part of the surface and replace it with another surface (see Section~\ref{sec-stability}). This estimate will be important in Section~\ref{sec-swapping}. The advantage of the smooth canonical description over the circle average embedding is that it behaves more nicely than the circle average embedding under conformal maps between subsets of $\BB C$ which are not affine.

The main result of this section is the following proposition. 

\begin{prop} \label{prop-diam-sum}
Let $\kappa' \in (4,8)$ and $\gamma = 4/\sqrt{\kappa'}$. 
Let $(\BB C ,h , 0, \infty)$ be a $\gamma$-quantum cone and suppose that $h$ is either a circle-average embedding or a $C$-smooth canonical description for some $C\in\BB R$. 
Also let $\eta'$ be a whole-plane space-filling $\SLE_{\kappa'}$ from $\infty$ to $\infty$ independent from $h$ and parameterized by $\gamma$-quantum mass with respect to $h$. 
Let $a,b\in\BB R$ with $a<b$ and let $S$ be a deterministic closed subset of $\BB D\setminus \{0\}$. 

Recall the chordal $\SLE_{\kappa'}$-type curve $\eta_{a,b} : [a,b] \rta \eta'([a,b])$ from Section~\ref{sec-disk-law}.  
For $n\in\BB N$ and $k\in[1,n]_{\BB Z}$, let $t_{n,k} := a + \frac{k}{n}(b-a)$ and let 
\eqbn 
G_{n,k} := \left\{\eta'([t_{n,k-1} , t_{n,k}]) \cap \eta_{a,b} \not=\emptyset\right\}  \cap \left\{ \eta_{a,b}([t_{n,k-1} , t_{n,k}]) \subset S  \right\} .
\eqen
There is an $\alpha = \alpha(\kappa') >0$ such that for $n\in\BB N$,
\eqb \label{eqn-diam-sum}
 \sum_{k=1}^n \BB E\left[ \op{diam}(\eta'([t_{n,k-1} , t_{n,k}]))^2 \BB 1_{G_{n,k} } \right] \leq n^{-\alpha + o_n(1)} 
\eqe 
where the rate of convergence of the $o_n(1)$ depends on $C,a,b,S$, and $\kappa'$. 
\end{prop}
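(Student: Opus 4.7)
The plan is to combine two ingredients highlighted in the outline: (i) an upper bound on the number of length-$\ep$ intervals needed to cover the cone-free set $\mcl T := [a,b]\setminus\bigcup_t(\sigma_{a,b}(t),\tau_{a,b}(t))$ for $Z$ in $[a,b]$, which will be obtained in Proposition~\ref{prop-bm-nbd-area}, and (ii) a KPZ-type bound for the Lebesgue measure of the $\ep$-neighborhood of a subset of $\BB C$ independent from $h$, which is a variant of the Hausdorff dimension relation~\cite[Theorem~1.1]{ghm-kpz}.

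\textbf{Counting.} By Lemma~\ref{lem-chordal-sle}, $\eta_{a,b}$ is precisely $\eta'$ restricted to $\mcl T$, so $G_{n,k}$ forces $[t_{n,k-1},t_{n,k}]$ to intersect $\mcl T$ and to have image in $\ol S$. The count $N:=\#\{k:G_{n,k}\}$ is therefore bounded by the number of length-$1/n$ intervals needed to cover $\mcl T\cap\eta_{a,b}^{-1}(\ol S)$; since the Minkowski dimension of $\mcl T$ is $\kappa'/8<1$ for $\kappa'\in(4,8)$, Proposition~\ref{prop-bm-nbd-area} will give $\BB E[N]\le n^{\kappa'/8+o(1)}$.

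\textbf{Diameters.} Since $S\subset\BB D\setminus\{0\}$ is compact and the canonical embedding hypothesis makes $h$ restricted to a compact neighborhood of $\ol S$ absolutely continuous with respect to a whole-plane GFF plus a controlled Gaussian perturbation, standard moment bounds on $\mu_h$-masses of small Euclidean balls in $\ol S$ together with the KPZ relation will yield, for some $d>2$ depending only on $\gamma$ and for $p$ in a small range, a bound of the form $\BB E[\op{diam}(\eta'([t_{n,k-1},t_{n,k}]))^p\BB 1\{\eta'([t_{n,k-1},t_{n,k}])\cap\ol S\ne\emptyset\}]\le n^{-1-p/d+o(1)}$. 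By Markov's inequality and a union bound over $k$, with probability $1-o_n^\infty(1)$ every relevant segment has Euclidean diameter at most $D_*:=n^{-1/d+\delta}$ for arbitrarily small $\delta>0$.

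\textbf{Combining via dyadic decomposition.} The naive bound $N\cdot D_*^2$ does not suffice because the exponent $\kappa'/8-2/d$ need not be strictly negative; this is exactly the gap the KPZ-neighborhood estimate closes. The plan is to bucket the segments by diameter scale $\op{diam}_k\in(2^{-j-1},2^{-j}]$ and control the count in each bucket both by $N$ and by $n\cdot\mu_h(B_{2^{-j+1}}(\eta_{a,b}([a,b])\cap\ol S))$ (segments in a bucket have quantum mass $1/n$, are interior-disjoint, and sit in a $2^{-j+1}$-Euclidean neighborhood of the chord). Applying the KPZ-type Lebesgue neighborhood bound to the $\eta'$-preimage of this neighborhood, whose quantum covering is controlled by the cone-free estimate, gives a decay of order $2^{-j(1-\kappa'/8)+o(j)}$ in expectation. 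Summing $\sum_j 4^{-j}\cdot(\text{bucket count})$ then produces a convergent series whose dominant term decays as a strictly negative power of $n$, yielding the required $\alpha(\kappa')>0$. The regularity condition for the embedding from Section~\ref{sec-stability} will be used to ensure uniform two-sided control of $\mu_h$ on $\ol S$. The main obstacle is this quantitative balancing of the cone-free dimension, the KPZ neighborhood decay, and the Gaussian moment bounds for $h|_{\ol S}$: as noted in the outline, the balance fails for $\kappa'\ge 8$, where $\mcl T$ becomes a single point and the crucial saving from $\kappa'/8<1$ disappears.
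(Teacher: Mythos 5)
Your two input ingredients are the right ones (the cone-free covering estimate of Proposition~\ref{prop-bm-nbd-area} and the KPZ-type neighborhood bound of Proposition~\ref{prop-dimM-upper}), and the first two bullets of your outline are on the right track. However, the ``combining'' step does not close, and this is where the paper does something essentially different from what you propose.

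Concretely: your dyadic-bucket argument cannot beat the worst case in which essentially all of the $N$ relevant segments sit at the maximal typical diameter scale $D_* \approx n^{-1/d}$ with $d=2+\gamma^2/2$. At that scale neither of your two bucket bounds gives a decaying exponent. Using $C_j\le N$ gives $\BB E[4^{-j_0}N]\le n^{\kappa'/8 - 2/d + o_n(1)}$, and since $2/d = \kappa'/(\kappa'+4)$ one has $\kappa'/8 - \kappa'/(\kappa'+4) > 0$ throughout $(4,8)$ (e.g.\ $3/20$ at $\kappa'=6$), so this grows. The alternative $\mu_h$-bound does not help either. (As a side point, the exponent $2^{-j(1-\kappa'/8)}$ that you quote for $\BB E[\mu_h(B_{2^{-j}}(\eta_{a,b}\cap \ol S))]$ is the \emph{Lebesgue}-area exponent from Proposition~\ref{prop-sle-nbd-area}, not a quantum-area exponent; the quantum-area exponent is $(2+\gamma^2/2)(1-\kappa'/8)$, obtained by viewing $\mu_h(B_\ep(\eta_{a,b}))$ as the Lebesgue measure of the $\ep^{2+\gamma^2/2}$-neighborhood of the cone-free time set. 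But even with this correction, balancing $4^{-j}\cdot n\cdot 2^{-j(2+\gamma^2/2)(1-\kappa'/8)}$ against the diameter cutoff requires $\kappa'/(\kappa'+4) > \kappa'/8$, i.e.\ $\kappa'<4$.) Summing over $j$ does not rescue the boundary scale $j_0$, which is where all the mass can concentrate.

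The idea you are missing is that space-filling SLE segments are \emph{fat}: by Lemma~\ref{lem-sle-ball}, on a high-probability event $\mcl E_{\ep^u}$, a segment of $\eta'$ of diameter at least $\delta^{1-u}$ contains a Euclidean ball of radius $\delta$, hence $\op{diam}^2 \preceq \op{Area}^{1-u}$ for every relevant segment. The paper then applies H\"older's inequality with exponents $(1-u)^{-1}, u^{-1}$ to get
\[
\sum_k \op{diam}\bigl(\eta'([t_{n,k-1},t_{n,k}])\bigr)^2 \BB 1_{G_{n,k}}
\preceq n^u\Bigl(\sum_k \op{Area}\bigl(\eta'([t_{n,k-1},t_{n,k}])\bigr)\BB 1_{G_{n,k}}\Bigr)^{1-u}
\preceq n^u\,\op{Area}\bigl(B_{\ep_n}(\eta_{a,b}\cap S)\bigr)^{1-u},
\]
and this is exactly the quantity that Proposition~\ref{prop-sle-nbd-area} controls, yielding $n^{u}\ep_n^{(1-\kappa'/8)(1-u)+o(1)}$. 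The final exponent is then obtained by tuning $\ep_n = n^{-(2+\gamma^2/2+p)^{-1}}$ and $p>2\gamma^2$ so as to balance the main term against the probability of the bad event $\mcl G_n^c$ from Lemma~\ref{lem-sle-diam}, and letting $u\to 0$. It is the $\op{diam}^2\preceq\op{Area}^{1-u}$ step together with the H\"older step that turns the sum of squared diameters into a (near-)linear functional of Lebesgue area, and without this step the cone-free dimension $\kappa'/8<1$ alone does not produce a negative exponent. Your proposal does not contain a replacement for this step, so as written the balancing fails.
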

 
The important point in Proposition~\ref{prop-diam-sum} is that the right side of~\eqref{eqn-diam-sum} decays like a negative power of $n$. For our purposes the particular value of $\alpha$ does not matter. Out proof can give an explicit bound for $\alpha$, but we do not attempt to optimize it so we will not record the bound.

Throughout this section, we will frequently abuse notation by using the same symbol for a curve and its image, when there is no danger of ambiguity.
To prove Proposition~\ref{prop-diam-sum}, we will first prove an upper bound for $\BB E\left[ \op{Area}\left( B_\ep(\eta_{a,b}\cap S) \right) \right]$, where here $\op{Area}$ denotes two-dimensional Lebesgue measure and $B_\ep(\cdot)$ denotes the Euclidean $\ep$-neighborhood (recall Section~\ref{sec-basic}). To do this we will use a version of the KPZ formula (Proposition~\ref{prop-dimM-upper}) which relates the expected Lebesgue measure of the $\ep$-neighborhood of a subset of $\BB C$ which is independent from $h$ (but not from $\eta'$) to the expected number of $\ep$-length intervals needed to cover its pre-image under $\eta'$. This statement is similar to the main result of~\cite{ghm-kpz}, but we prove only a one-sided bound and our estimate concerns expected areas rather than a.s.\ Hausdorff dimensions. The proof of our KPZ formula is given in Section~\ref{sec-dimM-upper}. 

In Section~\ref{sec-sle-area}, we will use our KPZ formula to reduce the problem of estimating $\BB E\left[ \op{Area}\left( B_\ep(\eta_{a,b}\cap S) \right) \right]$ to the problem of estimating the expected number of $\ep$-length intervals needed to cover a certain subset of~$\BB R$ described in terms of the peanosphere Brownian motion $Z$. We will then estimate this quantity via a Brownian motion calculation. Actually, the curve $\eta_{a,b}$ is not independent from $h$ (since $\eta'$ is parameterized by $\mu_h$-mass) so we will instead apply the KPZ formula to bound $\BB E\left[\sup_{a,b\in\BB R ,\: a < b} \op{Area}\left( B_\ep(\eta_{a,b}\cap S) \right) \right]$, noting that the supremum inside the expectation depends only on $\eta'$ viewed modulo time parameterization.
 
In Section~\ref{sec-diam-sum-proof}, we will deduce the diameter estimate~\eqref{eqn-diam-sum} from the aforementioned estimate for $\BB E\left[ \op{Area}\left( B_\ep(\eta_{a,b}\cap S) \right) \right]$. This is accomplished by means of an estimate (stated as Lemma~\ref{lem-sle-ball} below) from~\cite{ghm-kpz} which tells us that the diameter of a space-filling SLE segment is very unlikely to be larger than the square of its area, up to an $o(1)$ error in the exponent. 

In Section~\ref{sec-stability}, we will introduce a regularity event (defined in terms of the $C$-smooth canonical description) which will allow us to apply the estimate of Proposition~\ref{prop-diam-sum} in Section~\ref{sec-swapping}.

\subsection{KPZ formula for expected areas}
\label{sec-dimM-upper}

Let $\kappa' > 4$ and $\gamma = 4/\sqrt{\kappa'}$.  
Let $(\BB C ,h , 0, \infty)$ be a $\gamma$-quantum cone and let $\eta'$ be an independent whole-plane space-filling $\SLE_{\kappa'}$ from $\infty$ to $\infty$ parameterized by $\gamma$-quantum mass with respect to $h$. 
In this subsection, we will prove a KPZ-type formula which gives an upper bound for the expected Lebesgue measure of the $\ep$-neighborhood of a set $X\subset \BB C$ which is independent from $h$ (but not necessarily from $\eta'$) in terms of the number of $\ep$-length intervals needed to cover a set $\wh X\subset \BB R$ with $\eta'(\wh X) = X$. 
We emphasize that, unlike most of the results of this paper, the results of this subsection are valid for any $\kappa' > 4$ (and hence any $\gamma \in (0,2)$) rather than only for $\kappa' \in (4,8)$ ($\gamma \in (\sqrt 2, 2)$). 

\begin{prop} \label{prop-dimM-upper}
Let $\kappa' > 4$ and $\gamma = 4/\sqrt{\kappa'}$. 
Let $(\BB C ,h , 0, \infty)$ be a $\gamma$-quantum cone and suppose that $h$ is either a circle-average embedding or a $C$-smooth canonical description for some $C\in\BB R$. 
Also let $\eta'$ be a whole-plane space-filling $\SLE_{\kappa'}$ from $\infty$ to $\infty$ independent from $h$ and parameterized by $\gamma$-quantum mass with respect to $h$. 
Let $S$ be a deterministic closed subset of $\BB D $ and let $X$ be a random subset of $S$ such that the pair consisting of the set $X$ and the curve $\eta'$, viewed modulo monotone re-parameterization, is independent from $h$ (e.g., $X$ could be determined by $\eta'$, viewed modulo monotone-reparameterization).

Let $\wh X\subset \BB R$ be a random set such that $X\subset \eta'(\wh X)$ a.s.\ (e.g., $\wh X = (\eta')^{-1}(X)$). 
For $\ep > 0$, let $N_\ep$ be the number of intervals of length $\ep$ needed to cover $\wh X$. 
Suppose that for some $\beta\in [0,1]$ and $A > 0$, we have
\eqb \label{eqn-ball-count-limsup}
  \BB E[N_\ep]  \leq A \ep^{- \beta } , \quad\forall \ep \in (0,1) .
\eqe 
Then for any
\eqb
\Delta < 2 + \frac{\gamma^2}{2} \beta^2  -  \left(2 + \frac{\gamma^2}{2} \right) \beta  ,
\eqe
it holds that
\eqb \label{eqn-dimM-upper}
 \BB E\left[ \op{Area}(B_\ep(X)) \right] \preceq  \ep^\Delta 
\eqe
with the implicit constant depending only on $\Delta ,A, C,S$ (not on the particular choice of $X$). 

\end{prop}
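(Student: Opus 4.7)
This is a one-sided KPZ relation, so I would follow the standard Gaussian multiplicative chaos / KPZ scheme, with the simplification that only an upper bound on a moment (not an almost-sure dimension statement) is needed. The overall strategy is to cover a neighborhood of $S$ by Euclidean balls of a carefully chosen radius $r \ge \ep$, bound the expected number of such balls that intersect $X$ using the independence of $(X,\eta'\ \text{mod reparam})$ from $h$, and then optimize in $r$. The fact that $S \subset \BB D \setminus \{0\}$ is crucial: away from $0$, the log-singularity of the $\gamma$-quantum cone disappears, so on a compact subset of $\BB D\setminus\{0\}$ the field $h$ is mutually absolutely continuous with (a bounded perturbation of) an ordinary whole-plane GFF, and standard Gaussian estimates for the $\gamma$-LQG measure are available.

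\textbf{Step 1 (reduction to a ball-hitting probability).} Fix $r \in [\ep, 1]$ and cover $S$ by $O(r^{-2})$ balls $B_r(z_j)$. Since $B_\ep(X) \subset \bigcup_j B_{r+\ep}(z_j) \cdot \BB 1\{B_{2r}(z_j)\cap X\neq\emptyset\}$, we get
\[ \BB E[\op{Area}(B_\ep(X))] \;\lesssim\; r^2 \sum_j \BB P[B_{2r}(z_j)\cap X \neq \emptyset]. \]
Because $X \subset \eta'(\wh X)$, the event $\{B_{2r}(z_j)\cap X \neq \emptyset\}$ is contained in $\{\wh X \cap I_j \neq \emptyset\}$ where $I_j := (\eta')^{-1}(B_{2r}(z_j))$ is a (random) subset of $\BB R$ with Lebesgue measure $\mu_h(B_{2r}(z_j))$.

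\textbf{Step 2 (quantum mass tail and Gaussian reweighting).} Using that $h|_S$ is locally absolutely continuous with respect to a free GFF and the standard moment bounds for the $\gamma$-LQG measure, for each $q \in (0, 4/\gamma^2)$ and each $z \in S$,
\[ \BB E\bigl[\mu_h(B_{2r}(z))^q\bigr] \;\leq\; C_q\, r^{\xi(q)}, \qquad \xi(q) = (2+\tfrac{\gamma^2}{2})q - \tfrac{\gamma^2}{2} q^2, \]
with $C_q$ depending on $q,\gamma$, and $\op{dist}(S,0)$. This is the source of the quadratic term in the target exponent. Define the event $F_j = \{\mu_h(B_{2r}(z_j)) \le \lambda\}$ for a parameter $\lambda$ to be chosen. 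On $F_j^c$, Markov's inequality gives $\BB P[F_j^c]\le C_q r^{\xi(q)}\lambda^{-q}$.

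\textbf{Step 3 (hitting estimate on the good event).} On $F_j$, the set $I_j$ has Lebesgue measure $\le \lambda$. By the independence of $(X,\eta' \bmod\text{reparam})$ from $h$ we may condition on $h$ and then, working with the canonical (quantum-mass) parameterization of $\eta'$, cover $\wh X$ by its $N_\lambda$ intervals of length $\lambda$. For $\wh X \cap I_j \neq \emptyset$ at least one of these intervals must intersect $I_j$; since $I_j$ is an a.e.-disjoint union of intervals of total length $\le \lambda$, a first-moment union bound over the $N_\lambda$ intervals yields
\[ \BB P[\wh X \cap I_j \neq \emptyset,\, F_j] \;\lesssim\; \BB E[N_\lambda] \;\le\; \lambda^{-\beta + o_\lambda(1)}, \]
using the hypothesis \eqref{eqn-ball-count-limsup}. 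Summing over $j$ gives
\[ \BB E[\op{Area}(B_\ep(X))] \;\lesssim\; \lambda^{-\beta+o(1)} \;+\; r^{\xi(q)}\lambda^{-q}. \]

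\textbf{Step 4 (optimization).} Choosing $\lambda$ and $q$ to balance the two terms (and then taking $q\to\beta$ to match the KPZ quadratic), one obtains
\[ \BB E[\op{Area}(B_\ep(X))] \;\le\; \ep^{\,2 - (2+\gamma^2/2)\beta + (\gamma^2/2)\beta^2 + o(1)}, \]
which is exactly \eqref{eqn-dimM-upper}. The smooth canonical description versus circle-average embedding distinction plays no role here beyond uniformly bounded multiplicative constants, since both embeddings have the same law on $\BB D$ up to a random shift with Gaussian tails.

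\textbf{Main obstacle.} The delicate point is the quadratic term $-(\gamma^2/2)\beta^2$ in the target exponent. This forces one to run the above Markov/optimization argument with the \emph{optimal} exponent $q$ matching the quantum dimension $\beta$ of $\wh X$, rather than just using a crude $q=1$ bound, and it requires genuinely sharp Gaussian estimates on $\mu_h$ of small Euclidean balls (with the correct first-order scaling $r^{\xi(q)}$). The other technical annoyance is making the step ``cover $I_j$ by $\lambda$-intervals in the $\wh X$-cover" rigorous despite the fact that $\wh X$ is defined using the quantum parameterization of $\eta'$, which depends on $h$; this is handled by working under the conditional law given $(X, \eta' \bmod\text{reparam})$, where the quantum-mass parameterization is a deterministic monotone re-timing of a canonical parameterization independent of $h$.
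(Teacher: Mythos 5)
There is a genuine gap in Step 3. The bound $\BB P[\wh X \cap I_j \neq \emptyset,\, F_j] \lesssim \BB E[N_\lambda]$ is vacuous (the right-hand side is always at least $1$), so summing over the $O(r^{-2})$ balls and multiplying by $r^2$ produces only the trivial bound $O(\lambda^{-\beta})$. What your argument actually needs is that the \emph{total count} of indices $j$ with $\wh X \cap I_j \neq \emptyset$ and $F_j$ occurring is $\lesssim N_\lambda$, which would require each $\lambda$-interval $J$ in the cover of $\wh X$ to meet only boundedly many of the sets $I_j$, i.e.\ that $\eta'(J)$ meet only boundedly many of the balls $B_{2r}(z_j)$. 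But $\eta'(J)$ is a connected set of quantum mass $\lambda$ with uncontrolled Euclidean diameter, and conditioning on the small-mass events $F_j$ does not help; no such multiplicity bound holds. In fact, if it did, your scheme applied with $\beta = 1$ (take $X = \eta'([0,1]) \cap S$, $\wh X = [0,1]$) would give $\BB E[\op{Area}(B_\ep(X))] \lesssim \ep^{c}$ for some $c>0$, which is false since $X$ then has positive Lebesgue measure with positive probability and the sharp exponent at $\beta = 1$ is $0$. The deeper obstruction is that conditioning on the quantum mass being small and applying Markov's inequality cannot exploit the independence of $h$ from $(X, \eta' \bmod\text{reparam})$, because the hitting event $\{B_{2r}(z_j) \cap X \neq \emptyset\}$ is measurable with respect to $(X, \eta' \bmod\text{reparam})$ alone and carries no information about $h$.

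The paper's proof goes through an \emph{atypically large} mass event instead. For $\alpha > 0$ it defines the random set $X_\ep^\alpha \subset X$ of points $z$ for which some $\eta'$-excursion starting at $z$ and running to its first exit of $B_{\ep^{q^2}}(z)$ has $\mu_h$-mass at least $\ep^{2+\gamma^2/2-\alpha}$. Independence of $h$ from $(X,\eta' \bmod\text{reparam})$, together with the Gaussian estimate of Lemma~\ref{lem-mu_h-lower}, shows that each $z\in X$ lies in $X_\ep^\alpha$ with conditional probability at least $\ep^{\alpha^2/(2\gamma^2)+o(1)}$ given $(X,\wh\eta')$, which yields the key lower bound $\BB E[\op{Area}(B_{\ep^{q^2}}(X_\ep^\alpha)) \mid \wh\eta', X] \geq \ep^{\alpha^2/(2\gamma^2)+o(1)} \op{Area}(B_{\ep^{q^2}}(X))$ (Lemma~\ref{lem-area-X_ep}). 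Separately, a cover of $\wh X$ by intervals of length $\wt\ep/2 := \ep^{2+\gamma^2/2-\alpha}/2$ yields, on the regularity event of Lemma~\ref{lem-sle-ball}, a cover of $X_\ep^\alpha$ by at most $N_{\wt\ep/2}$ sets of Euclidean diameter at most $2\ep^{q^2}$: an excursion of mass $\geq \wt\ep$ cannot fit inside a time-interval of length $\wt\ep/2$, so the large-diameter pieces are automatically truncated. Combining the two bounds, taking expectations, and optimizing over $\alpha$ (the optimum is $\alpha = \gamma^2\beta$) gives~\eqref{eqn-dimM-upper}. The missing idea in your proposal is precisely this thinning by an atypical large-mass event, which is what converts the independence of $h$ from $(X,\eta')$ into a productive inequality.
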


Proposition~\ref{prop-dimM-upper} is a one-sided version of the KPZ formula~\cite{kpz-scaling,shef-kpz}. The proposition is closely related to~\cite[Theorem~1.1]{ghm-kpz} (which gives an analogous relation for Hausdorff dimension, with an equality in place of an inequality) and~\cite[Proposition~3.4]{ghs-dist-exponent} (which gives an upper bound for the Minkowski content of $(\eta')^{-1}(X)$ in terms of the Minkowski content of $X$). See also~\cite{aru-kpz,grv-kpz,bjrv-gmt-duality,benjamini-schramm-cascades,wedges,shef-renormalization,shef-kpz,rhodes-vargas-log-kpz,gp-kpz} for additional rigorous versions of the KPZ formula. 

Proposition~\ref{prop-dimM-upper} and the aforementioned related results are especially useful for computing dimensions and exponents for sets defined in terms of SLE. The reason for this is that for many SLE sets of interest, the set $(\eta')^{-1}(X)$ admits a simple description in terms of the peanosphere Brownian motion $Z$. This reduces an SLE computation to a Brownian motion computation which is often much easier. 
We will apply Proposition~\ref{prop-dimM-upper} in Section~\ref{sec-sle-area} below to prove an upper bound for the expected area of the $\ep$-neighborhood of $\eta_{a,b}\cap S$, where $\eta_{a,b} $ is the curve appearing in Proposition~\ref{prop-diam-sum}. 
This bound, in turn, will be the key input in the proof of Proposition~\ref{prop-diam-sum}. 

The proof of the KPZ relation Proposition~\ref{prop-dimM-upper} is similar in spirit to the proof of the upper bound for the Hausdorff dimension of $X$ in~\cite[Theorem~1.1]{ghm-kpz}. 
We first state a basic preliminary estimate for space-filling $\SLE_{\kappa'}$ (Lemma~\ref{lem-sle-ball}) and a basic estimate for the $\gamma$-LQG measure (Lemma~\ref{lem-mu_h-lower}) which follow from estimates in~\cite{ghm-kpz}. We then consider for $\alpha>0$ the set $X_\ep^\alpha \subset X$ which is, roughly speaking, the set of points in $X$ which are contained in a segment of $\eta'$ with diameter of order $\ep$ and $\mu_h$-mass of order $\ep^{2+\frac{\gamma^2}{2}-\alpha}$. The set $X_\ep^\alpha$ is typically much smaller than $X$ since the $\mu_h$-mass of a Lebesgue-typical segment of $\eta'$ with diameter of order $\ep$ is of order $\ep^{2+\frac{\gamma^2}{2}}$. 
However, since $X$ is independent from $h$ one can prove a lower bound for the area of $B_\ep(X_\ep^\alpha)$ in terms of the area of $B_\ep(X )$ (Lemma~\ref{lem-area-X_ep}). 
We will then prove an upper bound for the area of $B_\ep(X_\ep^\alpha)$ in terms of the quantities $N_{\ep}$ from Proposition~\ref{prop-dimM-upper}, deduce from this and Lemma~\ref{lem-area-X_ep} an upper bound for $\op{Area}(B_\ep(X))$ in terms of $N_\ep$ and $\alpha$, and optimize over $\alpha$ to conclude.
 
The following SLE estimate from~\cite{ghm-kpz} is one of the key inputs in the proof of Proposition~\ref{prop-dimM-upper}, and will also be used in the proof of Proposition~\ref{prop-diam-sum} below.

\begin{lem} \label{lem-sle-ball}
Fix $u  \in (0,1)$ and for $\ep > 0$, let $\mcl E_\ep = \mcl E_\ep(u)$ be the event that the following is true. For each $\delta \in (0,\ep]$ and each $a,b\in\BB R$ with $a<b$ such that $\eta'([a,b]) \subset \BB D$ and $\op{diam}(\eta'([a,b])) \geq \delta^{1-u}$, the set $\eta'([a,b])$ contains a Euclidean ball of radius at least $\delta$. The $\BB P\left[ \mcl E_\ep^c \right] = o_\ep^\infty(\ep)$. 
\end{lem}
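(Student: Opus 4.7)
The plan is to deduce this from scale-by-scale estimates for the boundary of $\eta'([a,b])$, then upgrade to a uniform statement via a union bound over dyadic scales and basepoints. A closely related argument appears in \cite{ghm-kpz}, whose strategy I would adapt.

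First I would establish a per-scale estimate. Fix $\delta \in (0,\ep]$ and a basepoint $z_0 \in \BB D$, and consider the event $F_{\delta,z_0}$ that there exist $a < b$ with $\eta'([a,b]) \subset \BB D$, $\op{diam}(\eta'([a,b])) \geq \delta^{1-u}$, $\eta'([a,b]) \cap B_{\delta^{1-u}/100}(z_0) \not= \emptyset$, and $\eta'([a,b])$ containing no Euclidean ball of radius $\delta$. On $F_{\delta,z_0}$, every point of $\eta'([a,b])$ lies within Euclidean distance $\delta$ of the boundary $\bdy \eta'([a,b])$. As recalled in Section~\ref{sec-wpsf}, this boundary is a concatenation of flow lines of the auxiliary whole-plane GFF used to construct $\eta'$, which are $\SLE_\kappa$-type curves with $\kappa = 16/\kappa' < 4$. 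For such curves, the probability that two points on the boundary at Euclidean distance $\delta^{1-u}$ apart come within Euclidean distance $\delta$ of each other decays faster than any polynomial in $\delta$, by conformal radius/crossing estimates along the lines of those used in \cite{ghm-kpz}. Combining this over the finitely many pairs of boundary arcs that can appear yields $\BB P[F_{\delta,z_0}] = o_\delta^\infty(\delta)$ uniformly in $z_0 \in \BB D$.

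Second, I would combine these via a union bound. For each dyadic $\delta \in (0,\ep]$, I cover $\BB D$ by $O(\delta^{-2(1-u)})$ balls of radius $\delta^{1-u}/100$; then any $a < b$ with $\op{diam}(\eta'([a,b])) \geq \delta^{1-u}$ and $\eta'([a,b]) \subset \BB D$ has $\eta'([a,b])$ meeting at least one such basepoint ball. Hence on $\mcl E_\ep^c$, there exist some dyadic $\delta$ and some $z_0$ for which $F_{\delta,z_0}$ occurs. Summing the per-scale bound over basepoints and over the $O(\log \ep^{-1})$ relevant dyadic scales gives $\BB P[\mcl E_\ep^c] = o_\ep^\infty(\ep)$, as desired.

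The main obstacle is making the two-point estimate on boundary flow lines quantitative and uniform in~$z_0$: one must show that two such flow lines do not come within Euclidean distance $\delta$ of each other at macroscopic scale $\delta^{1-u}$ except with probability decaying faster than any polynomial in $\delta$. This relies on the imaginary geometry construction of $\eta'$ from \cite{ig1,ig4}, a conditioning argument to identify the relevant flow lines on the event that $\eta'([a,b]) \subset \BB D$ has the required properties, and standard crossing estimates for chordal $\SLE_\kappa$. This is essentially the content of the key lemmas leading up to \cite[Proposition~3.4]{ghm-kpz}.
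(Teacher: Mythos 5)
The paper does not prove this lemma: its entire proof is a citation to \cite[Proposition~3.4]{ghm-kpz}, with the superpolynomial rate $o_\ep^\infty(\ep)$ attributed to \cite[Remark~3.9]{ghm-kpz} and ultimately to estimates in \cite{hs-euclidean}. So the relevant comparison is with the argument in those references, not with anything internal to this paper, and your proposal is an attempt to reconstruct a proof that the authors deliberately outsource.

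Within your reconstruction there is a genuine gap at the key estimate. You claim that ``the probability that two points on the boundary at Euclidean distance $\delta^{1-u}$ apart come within Euclidean distance $\delta$ of each other decays faster than any polynomial in $\delta$.'' For a simple $\SLE_\kappa$-type curve ($\kappa = 16/\kappa' < 4$), the probability that two disjoint boundary arcs at macroscopic separation approach each other to within distance $\delta$ at \emph{some} point is governed by a polynomial exponent (an arm/Green's function type exponent), not a superpolynomial rate. Summing a polynomial bound ``over the finitely many pairs of boundary arcs'' therefore cannot produce the $o_\delta^\infty(\delta)$ you need. What the hypothesis $\op{diam}(\eta'([a,b])) \geq \delta^{1-u}$ together with ``no inscribed ball of radius $\delta$'' actually forces is that the left and right bounding flow lines stay within distance $\delta$ of each other over an entire macroscopic stretch, not merely at one pair of points; it is this \emph{persistent} closeness over order $\delta^{-u}$ many scale-$\delta$ blocks that, by iterating a one-step polynomial estimate via the Markov property, can yield a bound of the form $\delta^{c\delta^{-u}}$. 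Your sketch never isolates this iteration, and as written the per-pair estimate does not give the conclusion. There is also a secondary issue you gloss over: for $\kappa' \in (4,8)$, $\eta'([a,b])$ is a beaded region whose boundary consists of many flow-line segments and pinch points, so one cannot simply speak of ``two boundary arcs''; the reduction to a statement about a single pair of flow lines, uniformly over all intervals $[a,b]$, needs to be made explicit. Finally, I would caution against asserting that your route ``is essentially the content of'' \cite[Proposition~3.4]{ghm-kpz}: the argument there and in \cite{hs-euclidean} proceeds rather differently, by comparing diameters, areas, and inscribed radii of space-filling $\SLE$ cells of fixed quantum mass and showing these are uniformly comparable up to $o(1)$ exponents with superpolynomially high probability, then locating such a cell inside any segment $\eta'([a,b])$ of the stated diameter. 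If you wish to reprove the lemma rather than cite it, the cell-comparison route is the one with an existing rigorous implementation; your flow-line approach would need the persistent-closeness iteration worked out in detail before it could be considered a proof.
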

\begin{proof}
This is a re-statement of~\cite[Proposition~3.4]{ghm-kpz} but with a sub-polynomial bound for the rate at which $\BB P[\mcl E_\ep^c] \rta 0$. As explained in~\cite[Remark~3.9]{ghm-kpz}, this bound follows from results in~\cite{hs-euclidean}. 
\end{proof}

Next we state a basic preliminary estimate for the $\gamma$-quantum area measure $\mu_h$. 

\begin{lem} \label{lem-mu_h-lower}
Suppose that $h$ is either a circle-average embedding or a $C$-smooth canonical description of a $\gamma$-quantum cone for some $C\in\BB R$.  For each closed set $S\subset \BB D  $ and each $\alpha >0$, 
\eqb \label{eqn-mu_h-lower}
\BB P\left[ \mu_h(B_\ep(z)) \geq \ep^{2 + \frac{\gamma^2}{2} - \alpha} \right] \geq \ep^{\frac{\alpha^2}{2\gamma^2} +o_\ep(1) } , \quad \forall z\in S ,\quad \forall \ep \in (0,1) 
\eqe 
where the rate of convergence of the $o_\ep(1)$ depends only on $C$ and $S$. 
\end{lem}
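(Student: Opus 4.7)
The plan is to reduce the claim to a Gaussian tail bound on the circle average $h_\ep(z)$ and combine this with the standard scaling relation $\mu_h(B_\ep(z))\asymp \ep^{2+\gamma^2/2}e^{\gamma h_\ep(z)}$, which holds with conditional probability bounded below uniformly in $\ep$ and $z\in S$.

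First, I would reduce to the case of the circle-average embedding: the $C$-smooth canonical description differs from a circle-average embedding by an additive random variable of stochastically bounded modulus (as noted in the paragraph preceding the lemma), which contributes only a multiplicative factor $\ep^{o_\ep(1)}$ to $\mu_h(B_\ep(z))$ on an event of probability $1-o_\ep(1)$, absorbable into the $o_\ep(1)$ exponent. For the circle-average embedding, $h|_{\BB D}\eqD h^0|_{\BB D}-\gamma\log|\cdot|$ with $h^0$ a whole-plane GFF with zero mean on $\bdy\BB D$. For $z\in S$ and $\ep$ smaller than, say, half the distance from $S$ to $\{0\}\cup\bdy\BB D$, the circle average $h_\ep(z)=h^0_\ep(z)-\gamma\log|z|$ is a Gaussian random variable with variance $\log\ep^{-1}+O_S(1)$ and bounded mean. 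Setting $E_\ep(z):=\{h_\ep(z)\geq(\alpha/\gamma)\log\ep^{-1}\}$, a standard Gaussian lower tail estimate gives
\eqbn
\BB P[E_\ep(z)] \geq \ep^{\alpha^2/(2\gamma^2)+o_\ep(1)}, \qquad z\in S,\; \ep\in(0,1),
\eqen
uniformly in $z\in S$, at a rate depending only on $S$.

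Next, I would show that conditional on $h_\ep(z)$, with probability bounded below by a positive constant $c_S>0$ one has $\mu_h(B_\ep(z))\geq c_S\,\ep^{2+\gamma^2/2}e^{\gamma h_\ep(z)}$. For this, I apply the Markov decomposition of $h^0$ around $\bdy B_\ep(z)$: write $h^0|_{B_\ep(z)}=h^0_\ep(z)+\frk h^z+\wt h^z$, where $\frk h^z$ is the harmonic extension to $B_\ep(z)$ of $h^0|_{\bdy B_\ep(z)}-h^0_\ep(z)$ and $\wt h^z$ is an independent zero-boundary GFF on $B_\ep(z)$; by the rotational symmetry of the covariance kernel, $\frk h^z$ is also independent of $h^0_\ep(z)$. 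Applying the conformal covariance rule~\eqref{eqn-lqg-coord} to the scaling $w\mapsto(w-z)/\ep$ yields
\eqbn
\mu_h(B_\ep(z)) = \ep^{2+\gamma^2/2}\,e^{\gamma h_\ep(z)}\,Y_\ep(z),
\eqen
where $Y_\ep(z)$ is the $\gamma$-LQG mass of $\BB D$ with respect to the sum of a zero-boundary GFF on $\BB D$, a harmonic function with Gaussian boundary values of uniformly bounded variance (the rescaled $\frk h^z$), and a bounded deterministic correction arising from $-\gamma\log|z+\ep\cdot|+\gamma\log|z|$. Standard GMC estimates then yield $\BB P[Y_\ep(z)\geq c_S]\geq c_S$ uniformly in $z\in S$ and small $\ep$.

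Since $Y_\ep(z)$ is independent of $h_\ep(z)$, combining the two steps gives $\BB P[\mu_h(B_\ep(z))\geq c_S\,\ep^{2+\gamma^2/2-\alpha}]\geq c_S\,\ep^{\alpha^2/(2\gamma^2)+o_\ep(1)}$, and absorbing the constant $c_S$ into the $\ep^{o_\ep(1)}$ factor completes the proof. The main technical step is the uniform-in-$z,\ep$ lower bound on the residual factor $Y_\ep(z)$; this is a routine Gaussian multiplicative chaos manipulation, but requires some care to keep the bounds uniform over $z\in S$ and small $\ep$.
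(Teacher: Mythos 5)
Your argument is correct, but it takes a route that differs from the paper's in one substantive way. The paper, after establishing the same Gaussian lower-tail bound on $h_\ep(z)$, cites a super-polynomial one-sided LQG tail estimate of the form $\BB P[\mu_h(B_\ep(z)) < \ep^{2+\gamma^2/2+u}e^{\gamma h_\ep(z)}] = o_\ep^\infty(\ep)$ for fixed $u\in(0,1)$, intersects this high-probability event with the Gaussian tail event via inclusion-exclusion (so no independence is needed), and removes the extra $u$ by a limiting argument at the end. You instead peel off $h_\ep(z)$ exactly, writing $\mu_h(B_\ep(z)) = \ep^{2+\gamma^2/2}e^{\gamma h_\ep(z)}Y_\ep(z)$ via the Markov decomposition around $\bdy B_\ep(z)$ together with the LQG coordinate change for the dilation $w\mapsto(w-z)/\ep$, observe that $Y_\ep(z)$ is independent of $h_\ep(z)$ (since the harmonic and zero-boundary parts are uncorrelated with the zeroth Fourier mode), and use a uniform positive-probability lower bound $\BB P[Y_\ep(z)\geq c_S]\geq c_S$. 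Both work and land on the same exponent; your version avoids the auxiliary parameter $u$ and gives a cleaner multiplicative constant, at the cost of having to actually establish the independence and the uniform GMC lower bound on $Y_\ep(z)$ (the latter requiring the scale invariance of the whole-plane GFF and the uniform boundedness of $-\gamma\log|z+\ep\cdot|+\gamma\log|z|$ for $z\in S$, $\ep$ small), whereas the paper can simply invoke a black-box lemma from \cite{ghm-kpz}. One minor imprecision to tighten: when you reduce from the $C$-smooth canonical description to the circle-average embedding, the additive error $\chi$ is not obviously independent of $h^0|_{\BB D}$, so you should argue as the paper does via $\BB P[A\cap B]\geq\BB P[A]-\BB P[B^c]$ with $\BB P[B^c]=\BB P[|\chi|>(\log\ep^{-1})^{2/3}]=o_\ep^\infty(\ep)$ (a genuine super-polynomial bound is needed here, not just $1-o_\ep(1)$, so that it is dominated by the target probability $\ep^{\alpha^2/(2\gamma^2)+o_\ep(1)}$).
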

\begin{proof}
Recall that $h|_{\BB D}$ agrees in law with the restriction to $\BB D$ of a whole-plane GFF normalized so that its circle average over $\bdy \BB D$ is a constant depending only on $C$ plus $-\gamma \log |\cdot|$ plus a random variable $\xi$ whose absolute value has at most a Gaussian tail (we set $\xi = 0$ in the case of a circle-average embedding).  For $\ep  > 0$ and $z\in \BB C$, let $h_\ep(z)$ be its circle average over $\bdy B_\ep(z)$.  For each $z\in S$ and each $\ep \in (0,  \frac12 \op{dist}(z , \bdy\BB D  ) ]$, the circle average $h_\ep(z) -\xi$ is Gaussian with mean at least $-O(1)$ and variance $\log\ep^{-1} + O(1)$, where here $O(1)$ denotes a finite quantity which is bounded above in absolute value by constants depending only on $C$ and $S$ (see, e.g., the calculations in~\cite[Section~3.1]{shef-kpz}). Note that here we use that $-\gamma\log|\cdot|$ is non-negative on $\BB D$.

Since $\BB P[|\xi| > (\log \ep^{-1})^{2/3}] = o_\ep^\infty(\ep)$, for $z\in S$ and $\ep \in (0,1)$, 
\eqbn
\BB P\left[ h_\ep(z) \geq \frac{\alpha}{\gamma} \log \ep^{-1} \right] 
\geq \ep^{\frac{\alpha^2}{2\gamma^2} + o_\ep(1)} .
\eqen
On the other hand, by standard tail estimates for the $\gamma$-LQG measure (see, e.g.,~\cite[Lemma~3.12]{ghm-kpz}), for each $u \in (0,1)$ it holds that
\eqbn
\BB P\left[ \mu_h(B_\ep(z)) < \ep^{2+\frac{\gamma^2}{2}  + u }  e^{\gamma h_\ep(z)}  \right]  = o_\ep^\infty(\ep) .
\eqen
Combining the above two estimates and sending $u\rta0 $ yields~\eqref{eqn-mu_h-lower}.
\end{proof}

In the remainder of this subsection, we assume we are in the setting of Proposition~\ref{prop-dimM-upper}.  
Let $\wh\eta'$ be the curve $\eta'$, viewed modulo monotone parameterization, so that $\wh\eta'$ is independent from $h$.  
Fix a small parameter $u \in (0,1/100)$ and let
\eqbn
q := 1-u \in (0,1) .
\eqen
Let $K$ be a deterministic upper bound for the number of times $\eta'$ can hit any given point (such a bound exists; see, e.g.,~\cite[Corollary~6.5]{ghm-kpz}). For $z\in \BB C$ and $i\in \{1,\dots,K\}$, let $\tau^i(z)$ be the $i$th time at which $\eta'$ hits $z$ (if it exists) and for $\ep \in (0,1)$, define
\eqb \label{eqn-multihit-time}
\sigma_\ep^i(z) := \inf\left\{t\geq \tau^i(z) :   \eta'(t) \notin B_{\ep^{q}}(z) \right\} .
\eqe 
We note that the times $\tau^i(z)$ and $\sigma_\ep^i(z)$ depend on the parameterization of $\eta'$, hence on $h$, but the sets $\eta'([\tau^i(z) , \sigma_\ep^i(z)])$ depend only on $\wh\eta'$, so are independent from $h$. 

For $\alpha>0$ and $\ep \in (0,1)$, define 
\eqb  \label{eqn-X_ep-def}
X_\ep^\alpha := \left\{z\in X  :  \exists i\in\{1,,\dots,K\} \: \text{with} \: \mu_h\left(\eta'([\tau^i(z) , \sigma_\ep^i(z) ]) \right) \geq  \ep^{ 2+\frac{\gamma^2}{2} -\alpha  }   \right\} .
\eqe 

\begin{lem} \label{lem-area-X_ep} 
For $\ep \in (0,1)$, define the event $\mcl E_{\ep^u} = \mcl E_{\ep^u}(u)$ as in Lemma~\ref{lem-sle-ball} (with $\ep^u$ in place of $\ep$) and let $X_\ep^\alpha$ be as in~\eqref{eqn-X_ep-def}. 
For $\alpha > 0$ and $\ep \in (0,1)$, a.s.\ 
\eqb \label{eqn-area-X_ep}
\BB E\left[\op{Area} \left( B_{\ep^{q} } (X_\ep^\alpha ) \right)  \,|\, \wh\eta' , X \right] \BB 1_{\mcl E_{\ep^{u}}} \geq \ep^{ \frac{\alpha^2}{2\gamma^2}  + o_\ep(1) }  \op{Area}( B_{\ep^{q}}(X) )  \BB 1_{\mcl E_{\ep^{u}}} .
\eqe  
where the rate of the $o_\ep(1)$ is deterministic and depends only on $S$, $C$, $u$, $\alpha$. 
\end{lem}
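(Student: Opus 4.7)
The plan is to reduce the area bound to a uniform pointwise conditional probability bound for points of $X$. First I would use Fubini for conditional expectations to write
\begin{equation*}
\BB E\bigl[\op{Area}(B_{\ep^{q^2}}(X_\ep^\alpha)) \,\big|\, \wh\eta', X\bigr] = \int_{\BB C} \BB P\bigl[\op{dist}(w, X_\ep^\alpha) < \ep^{q^2} \,\big|\, \wh\eta', X \bigr] \, dw,
\end{equation*}
and, for each $w \in B_{\ep^{q^2}}(X)$, pick some $z(w) \in X$ with $|w - z(w)| < \ep^{q^2}$, so that $\{z(w) \in X_\ep^\alpha\} \subset \{\op{dist}(w, X_\ep^\alpha) < \ep^{q^2}\}$. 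The problem then reduces to establishing the pointwise estimate
\begin{equation*}
\BB P[z \in X_\ep^\alpha \mid \wh\eta', X] \BB 1_{\mcl E_{\ep^u}} \geq \ep^{\alpha^2/(2\gamma^2) + o_\ep(1)} \BB 1_{\mcl E_{\ep^u}} \qquad \text{for each } z \in X,
\end{equation*}
uniformly in $z$ and in the realization of $(\wh\eta', X)$, and then integrating over $w \in B_{\ep^{q^2}}(X)$.

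To prove this pointwise bound I would fix $z \in X$ and consider the arc $A(z) := \eta'([\tau^1(z), \sigma_\ep^1(z)])$. The key observation is that $A(z)$ depends only on $\wh\eta'$ and $z$ (not on the $\mu_h$-parameterization of $\eta'$), is contained in $B_{\ep^{q^2}}(z)$, and has diameter at least $\ep^{q^2}$ because $\eta'$ must reach $\bdy B_{\ep^{q^2}}(z)$ at time $\sigma_\ep^1(z)$. On the event $\mcl E_{\ep^u}$, applying Lemma~\ref{lem-sle-ball} with $\delta = \ep^{q^3} = \ep^{q^2/(1-u)}$ (which lies in $(0,\ep^u]$ for all $\ep$ small enough, since $q > 1 > u$) produces a point $z_0 = z_0(z) \in B_{\ep^{q^2}}(z)$ with $B_{\ep^{q^3}}(z_0) \subset A(z)$.

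The next step is to apply the $\mu_h$ lower bound. Conditional on $(\wh\eta', X)$, the field $h$ still has its unconditional law by the independence hypothesis, so Lemma~\ref{lem-mu_h-lower} is available for the $\wh\eta'$-measurable ball $B_{\ep^{q^3}}(z_0)$. I would set $\alpha' := 2 + \gamma^2/2 - q^{-3}(2 + \gamma^2/2 - \alpha)$, chosen so that $(\ep^{q^3})^{2 + \gamma^2/2 - \alpha'} = \ep^{2 + \gamma^2/2 - \alpha}$; note that $\alpha' \to \alpha$ as $u \to 0$. Lemma~\ref{lem-mu_h-lower} with $\ep^{q^3}$ in place of $\ep$ and $\alpha'$ in place of $\alpha$, together with the uniformity in $z_0$ over a fixed compact subset of $\BB D \setminus \{0\}$ containing all possible values of $z_0$ for small $\ep$, then gives
\begin{equation*}
\BB P\bigl[\mu_h(B_{\ep^{q^3}}(z_0)) \geq \ep^{2 + \gamma^2/2 - \alpha} \,\big|\, \wh\eta', X \bigr] \geq \ep^{q^3 (\alpha')^2/(2\gamma^2) + o_\ep(1)}.
\end{equation*}
On $\mcl E_{\ep^u}$, the event on the left forces $\mu_h(A(z)) \geq \mu_h(B_{\ep^{q^3}}(z_0)) \geq \ep^{2 + \gamma^2/2 - \alpha}$ and hence $z \in X_\ep^\alpha$.

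The main obstacle is the discrepancy between the inscribed-ball radius $\ep^{q^3}$ guaranteed by Lemma~\ref{lem-sle-ball} and the natural scale $\ep^{q^2}$ of the arc, which forces $\alpha' > \alpha$ and inserts an extra factor of $q^3 > 1$ into the probability exponent. Verifying that $q^3 (\alpha')^2/(2\gamma^2) = \alpha^2/(2\gamma^2) + o_\ep(1)$ requires the $u$-dependent error to be absorbed into the $o_\ep(1)$ rate, which is compatible with the statement (the rate is allowed to depend on $u$) and is harmless in the eventual use of the lemma, since Proposition~\ref{prop-dimM-upper} will send $u \to 0$ in its optimization. A minor subsidiary point is checking that $z_0$ always lies in a fixed compact subset of $\BB D \setminus \{0\}$, so that Lemma~\ref{lem-mu_h-lower} can be invoked with uniform constants; this is handled by taking $\ep$ small enough that $B_{\ep^{q^2}}(z) \subset K$ for some compact $K \subset \BB D \setminus \{0\}$ containing $S$.
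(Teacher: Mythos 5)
Your proposal is correct and follows essentially the same structure as the paper's proof: reduce to the pointwise conditional probability bound $\BB P[z \in X_\ep^\alpha \mid \wh\eta', X] \BB 1_{\mcl E_{\ep^u}} \geq \ep^{\alpha^2/(2\gamma^2)+o_\ep(1)} \BB 1_{\mcl E_{\ep^u}}$ via Fubini, obtain it by finding a Euclidean ball inscribed in $\eta'([\tau^i(z),\sigma_\ep^i(z)])$ using the event $\mcl E_{\ep^u}$, and then invoke Lemma~\ref{lem-mu_h-lower} for that ball using the independence of $h$ from $(\wh\eta',X)$. In fact your bookkeeping is slightly more careful than the paper's: since $\op{diam}\,\eta'([\tau^i(z),\sigma_\ep^i(z)]) \geq \ep^{q^2}$ and $\mcl E_{\ep^u}$ requires $\delta^{1-u} \leq \op{diam}$ and $\delta \leq \ep^u$, the inscribed radius one gets is $\ep^{q^3}$, as you state, not $\ep^q$ as the paper asserts. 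One cautionary remark on your closing paragraph: the resulting surplus $q^3(\alpha')^2/(2\gamma^2) - \alpha^2/(2\gamma^2)$ is a $u$-dependent \emph{constant}, not a quantity that tends to $0$ as $\ep \to 0$ for fixed $u$, so strictly speaking it cannot be absorbed into the $o_\ep(1)$ in the lemma as stated. This discrepancy is implicitly present in the paper's own argument as well, and is genuinely harmless only because Proposition~\ref{prop-dimM-upper} sends $u\to 0$ at the end and because the inequality~\eqref{eqn-area-compare'} there is applied before optimizing over $\alpha$; but if one wanted a fully airtight phrasing of the lemma, its exponent would need to be replaced by a $u$-dependent quantity converging to $\alpha^2/(2\gamma^2)$ as $u\to 0$, rather than by $\alpha^2/(2\gamma^2)+o_\ep(1)$.
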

\begin{proof}
We have, a.s.,
\eqb \label{eqn-X_ep-integral}
\BB E\left[  \op{Area}  ( B_{\ep^{q} } (X_\ep^\alpha ) ) \,|\, \wh\eta' , X \right] \BB 1_{\mcl E_{\ep^{u}}} =     \BB 1_{\mcl E_{\ep^{u}}} \int_{B_{\ep^{q}}(X)}  \BB P\left[ \op{dist}(w,X_\ep^\alpha) \leq \ep^{q} \,|\, \wh\eta' , X \right]     \, dw  .
\eqe
By the definition of $\mcl E_{\ep^{u}}$ and~\eqref{eqn-multihit-time}, on $\mcl E_{\ep^u}$, for the set $\wh\eta'([\tau^i(z), \sigma^i_\ep(z) ])$ contains a ball of radius at least $\ep $ for each $z\in S$ and $i\in\{1,\dots,K\}$. Since $X\subset S$ a.s.\ and $h$ is independent from $(\wh\eta' , X)$, we can apply Lemma~\ref{lem-mu_h-lower} to get that a.s.\ 
\eqbn \label{eqn-G_ep-prob}
\BB P\left[ z \in X_\ep^\alpha \,|\, \wh\eta' , X \right] \BB 1_{\mcl E_{\ep^{u}}} \geq \ep^{\frac{\alpha^2}{2\gamma^2}  + o_\ep(1) } \BB 1_{\mcl E_{\ep^{u}}} , 
\quad \forall z\in X ,
\eqen  
with the $o_\ep(1)$ satisfying the conditions in the lemma statement. 
For each $w\in B_{\ep^{q}}(X)$, there is a $z\in X$ with $|z-w| \leq \ep^{q}$. For this choice of $z$, a.s.\ 
\eqb \label{eqn-dist-prob}
\BB P\left[ \op{dist}(w,X_\ep^\alpha) \leq \ep^{q} \,|\, \wh\eta' , X \right] \BB 1_{\mcl E_{\ep^{u}}} 
\geq \BB P\left[ z \in X_\ep^\alpha \,|\, \wh\eta' , X \right] \BB 1_{\mcl E_{\ep^{u}}} 
\geq \ep^{\frac{\alpha^2}{2\gamma^2} + o_\ep(1) } \BB 1_{\mcl E_{\ep^{u}}} .
\eqe  
By combining~\eqref{eqn-X_ep-integral} and~\eqref{eqn-dist-prob}, we get~\eqref{eqn-area-X_ep}.
\end{proof}

\begin{proof}[Proof of Proposition~\ref{prop-dimM-upper}] 
Let $u\in (0,1/100)$ and let $q = 1-u$ be as above. Also fix $\alpha > 0$. Define the event $\mcl E_{\ep^u} = \mcl E_{\ep^u}(u)$ as in Lemma~\ref{lem-sle-ball} and the set $X_\ep^\alpha$ as in~\eqref{eqn-X_ep-def}. Also let 
\eqbn
\wh X_\ep^\alpha := \wh X \cap (\eta')^{-1}(X_\ep^\alpha) 
\eqen
so that $X_\ep^\alpha \subset \eta'(\wh X_\ep^\alpha)$. 
Write $\op{len}$ for one-dimensional Lebesgue measure.
We will prove an upper bound for $\BB E\left[ \op{Area} B_{\ep^{q} } (X_\ep^\alpha )   \BB 1_{\mcl E_{\ep^{u}}} \right]$ in terms of $\BB E\left[ \op{len}(\wh X_\ep^\alpha) \right]$, transfer this to an upper bound for $\BB E\left[ \op{Area}( B_{\ep^{q}}(X) )  \right]$ using Lemma~\ref{lem-area-X_ep}, then optimize over all possible values of $\alpha$. 

Throughout the proof, for $\ep \in (0,1)$ we define
\eqbn
\wt \ep  := \ep^{  2+\frac{\gamma^2}{2}  -\alpha } 
\eqen
and we recall that $N_{\wt\ep/2}$ is the minimal number of intervals of length $\wt\ep/2$ needed to cover $\wh X$. 
\medskip

\noindent\textit{Step 1: covers of $\wh X$ and $X_\ep^\alpha$.} Let $\mcl I_\ep$ be a set of at most $N_{\wt\ep/2}$ closed intervals of length $\wt\ep/2$ whose union contains $\wh X$. 
Let $\mcl I_\ep^\alpha$ be the set of those intervals $I\in\mcl I_\ep$ such that the following is true. 
There is a $z\in S$ and an $i\in \{1,\dots,K\}$ such that $\mu_h(\eta'( [\tau^i(z) , \sigma_\ep^i(z)])) \geq \wt\ep$ and $\tau^i(z) \in I$, where here we define $\tau^i(z)$ and $\sigma_\ep^i(z)$ as in~\eqref{eqn-multihit-time} and the discussion just above with respect to the quantum mass parameterization (i.e., the parameterization of $\eta'$).  

For $I\in\mcl I_\ep^\alpha$, let $t_I$ be the infimum of the times $\tau^i(z) \in I $ for $z\in S$ and $i\in \{1,\dots,K\}$ for which the above condition is satisfied and let $\wt I := I\cap [t_I,\infty)$ be the sub-interval of $I$ lying to the right of $t_I$. 
Also let
\eqbn
\wt{\mcl I}_\ep^\alpha := \left\{\wt I \,:\, I\in \mcl I_\ep^\alpha \right\} .
\eqen 
By~\eqref{eqn-X_ep-def}, for each $z\in X_\ep^\alpha$ there exists $i\in \{1,\dots,K\}$ such that $\mu_h(\eta'( [\tau^i(z) , \sigma_\ep^i(z)])) \geq \wt\ep$. Since the union of the intervals in $\mcl I_\ep$ covers $\wh X$ and $X_\ep^\alpha \subset X\subset \eta'(\wh X)$, we infer that
\eqbn
X_\ep^\alpha \subset \bigcup_{\wt I \in \wt{\mcl I}_\ep^\alpha} \eta'(\wt I) .
\eqen
\medskip

\noindent\textit{Step 2: diameter bound for $\eta'(\wt I)$.}
We now claim that on $\mcl E_{ \ep^{u} }$,
\eqb \label{eqn-spatial-diam-upper}
\op{diam} \eta (\wt I) \leq  2\ep^{q} \qquad \forall \wt I \in \wt{\mcl I}_\ep^\alpha .
\eqe
Indeed, suppose to the contrary that $\mcl E_{ \ep^{u} }$ but $\op{diam} \eta (\wt I) > 2\ep^{q}$ for some $\wt I\in\wt{\mcl I}_\ep^\alpha$. 
If we let $t_I$ be the left endpoint of $\wt I$ and $z := \eta'(t_I)$, then $t_I = \tau^i(z)$ for some $i\in \{1,\dots,K\}$ and $\mu_h\left(\eta([\tau^i(z) , \sigma_\ep^i(z)]) \right) \geq  \ep^{ 2+\frac{\gamma^2}{2} -\alpha  }$ for this $i$. 
Since we are assuming that $\op{diam} \eta (\wt I) > 2\ep^{q}$, by the definition~\eqref{eqn-multihit-time} of $\sigma_\ep^i(z)$ we must also have $\sigma_\ep^i(z) \in \wt I$. 
Therefore
\eqbn
\op{len}(\wt I) = \mu_h\left(\eta(\wt I)\right) \geq \mu_h\left(\eta([\tau^i(z) , \sigma^i_\ep(z) ]) \right) \geq  \wt \ep . 
\eqen
But, $\op{len}(\wt I) \leq  \op{len}(I) = \wt\ep/2$ by the definition of $\mcl I_\ep$, so this is a contradiction.  
\medskip

\noindent\textit{Step 3: conclusion.}
By~\eqref{eqn-spatial-diam-upper}, on $\mcl E_{\ep^{u}} $ the collection $\{\eta'(\wt I) : I \in \wt{\mcl I}_\ep^\alpha\}$ is a covering of $X_\ep^\alpha$ by at most $N_{\wt\ep/2}$ sets of diameter at most $2\ep^{q}$. The $\ep^{q}$-neighborhoods of these sets cover $B_{\ep^{q}}(X_\ep^\alpha)$ and the Euclidean area of each such neighborhood is at most a universal constant times $\ep^{2q}$. Therefore, 
\eqbn
\op{Area}\left(B_{\ep^{q}}(X_\ep^\alpha) \right)\BB 1_{\mcl E_{\ep^{u}}}  \preceq \ep^{2q} N_{\wt\ep/2}  \BB 1_{\mcl E_{\ep^{u}}}  
\eqen  
with universal implicit constant. 
Since $\mcl E_{\ep^{u}} $ depends only on the unparameterized curve $\wh\eta'$, we can take expectations of both sides conditional on $\wh\eta' , X$ to get that a.s.\ 
\eqbn
\BB E\left[ \op{Area}\left(B_{\ep^{q}}(X_\ep^\alpha) \right)    \,|\, \wh\eta' , X  \right] \BB 1_{\mcl E_{\ep^u}} \leq \ep^{2q} \BB E\left[   N_{\wt\ep/2} \,|\, \wh\eta' , X \right] \BB 1_{\mcl E_{\ep^u}}   .
\eqen
By combining this with Lemma~\ref{lem-area-X_ep}, we get that a.s.\
\eqb \label{eqn-area-compare}
   \op{Area}( B_{\ep^{q}}(X) )   \BB 1_{\mcl E_{\ep^u}} \preceq \ep^{ 2q - \frac{\alpha^2}{2 \gamma^2}   + o_\ep(1) } \BB E\left[   N_{\wt\ep/2} \,|\, \wh\eta' , X \right] \BB 1_{\mcl E_{\ep^u}}   ,
\eqe
where the rate of convergence of the $o_\ep(1)$ depends only on $S , C , u , \alpha$.  

If we assume that~\eqref{eqn-ball-count-limsup} holds, then we can take expectations of both sides of~\eqref{eqn-area-compare} to get
\eqbn
\BB E\left[ \op{Area}( B_{\ep^{q}}(X) )   \BB 1_{\mcl E_{\ep^u}} \right] \leq \ep^{ 2q - \frac{\alpha^2}{2 \gamma^2} - \left( 2 + \frac{\gamma^2}{2} -\alpha   \right) \beta  +  o_\ep(1) }  ,
\eqen
where the rate of convergence of the $o_\ep(1)$ depends only on $A$, $C$, $S$, $u$, $\alpha$ (recall that $A$ is the constant from~\eqref{eqn-ball-count-limsup}). 
By Lemma~\ref{lem-sle-ball}, $\BB P[\mcl E_{\ep^u}] = 1- o_\ep^\infty(\ep)$ so since $X\subset S\subset \BB D$, 
\eqb \label{eqn-area-compare'} 
\BB E\left[ \op{Area}( B_{\ep^{q}}(X) )  \right]   \leq \ep^{ 2q - \frac{\alpha^2}{2 \gamma^2} - \left( 2 + \frac{\gamma^2}{2} -\alpha   \right) \beta   +  o_\ep(1) }   + o_\ep^\infty(\ep) .
\eqe 
The exponent $ 2  - \frac{\alpha^2}{2 \gamma^2} - \left( 2 + \frac{\gamma^2}{2} -\alpha   \right) \beta$ is minimized over all possibly choices of $\alpha$ by taking $\alpha = \gamma^2 \beta$. Choosing this $\alpha$ and making $u$ sufficiently close to zero (and hence $q $ sufficiently close to 1), depending on $\Delta$, yields~\eqref{eqn-dimM-upper}.  The reason why the implicit constant in~\eqref{eqn-dimM-upper} does not depend on the particular choice of $X$ is that the $o_\ep(1)$ and the $o_\ep^\infty(\ep)$ in~\eqref{eqn-area-compare'} do not depend on the particular choice of $X$. 
\end{proof}

\subsection{Expected area of the $\epsilon$-neighborhood of the SLE curve}
\label{sec-sle-area}

As above, $(\BB C ,h , 0, \infty)$ denotes a $\gamma$-quantum cone and $h$ is either either a circle-average embedding or a $C$-smooth canonical description for some $C\in\BB R$. Also, $\eta'$ is an independent whole-plane space-filling SLE$_{\kappa'}$ from $\infty$ to $\infty$ parameterized by $\mu_h$-mass. 
The diameter estimate in Proposition~\ref{prop-diam-sum} will turn out to be a straightforward consequence of the following area estimate, which we will prove using Proposition~\ref{prop-dimM-upper}. 

\begin{prop} \label{prop-sle-nbd-area}
Let $S \subset \BB D\setminus \{0\}$ be a closed set. 
There exists $\Delta = \Delta(\kappa')  > 0$ such that 
\eqb \label{eqn-eucl-dim-S}
\BB E\left[\sup_{\substack{a,b\in \BB R  \\ a < b}} \op{Area}\left( B_\ep(\eta_{a,b} \cap S) \right) \right] \preceq \ep^\Delta ,
\eqe
with the implicit constant depending only on $C$ and $S$. 
\end{prop}

In fact, the statement of Proposition~\ref{prop-sle-nbd-area} is somewhat stronger than what we need: really we only need~\eqref{eqn-eucl-dim-S} for a fixed choice of $a$ and $b$. However, we have to prove a bound for the supremum over $a$ and $b$ since for a fixed choice of $a$ and $b$ the curve $\eta_{a,b}$ depends on $h$ (since $\eta'$ is parameterized by $\mu_h$-mass) so the KPZ formula of Proposition~\ref{prop-dimM-upper} does not apply to $\eta_{a,b}\cap S$. 

Throughout the rest of this subsection, for a set $\wh X \subset \BB R$ we write $N_\ep(\wh X)$ for the minimal number of intervals of length $\ep$ needed to cover $\wh X$ (c.f.\ Proposition~\ref{prop-dimM-upper}). 
Proposition~\ref{prop-sle-nbd-area} is a straightforward consequence of Proposition~\ref{prop-dimM-upper} combined with the following estimate.

\begin{prop} \label{prop-lqg-dim-S}
Let $S \subset \BB D\setminus \{0\}$ be a closed set. 
There exists $\beta = \beta(\kappa')   \in (0,1) $ such that 
\eqb \label{eqn-lqg-dim-S}
\BB E\left[\sup_{\substack{a,b\in \BB R  \\ a < b}}   N_\ep\left([a,b] \cap (\eta')^{-1}(\eta_{a,b} \cap S) \right)   \right] \preceq \ep^{- \beta} ,
\eqe
with the implicit constant depending only on $C$ and $S$. 
\end{prop}

\begin{proof}[Proof of Proposition~\ref{prop-sle-nbd-area}, assuming Proposition~\ref{prop-lqg-dim-S}]
Observe that the random variable
\[ \sup_{a,b\in \BB R , \: a < b} \op{Area}\left( B_\ep(\eta_{a,b} \cap S) \right)\]
does not depend on the time parameterization of $\eta'$, so is independent from $h$.  For $\ep > 0$, let $a_\ep  < b_\ep$ be chosen so that
\eqb
 \op{Area}\left( B_\ep(\eta_{a_\ep , b_\ep} \cap S) \right)   \geq \frac12  \sup_{\substack{a,b\in \BB R  \\ a < b}} \op{Area}\left( B_\ep(\eta_{a,b} \cap S) \right)  .
\eqe
Then $a_\ep ,b_\ep$ depend on $h$ (since $\eta'$ is paramterized by $\mu_h$-mass), but we can choose $a_\ep , b_\ep$ in such a way that the curve segment $\eta_{a_\ep , b_\ep}$ depends only on $\eta'$ viewed modulo time parameterization, so is independent from $h$. We may therefore apply Proposition~\ref{prop-dimM-upper} with $X = \eta_{a_\ep , b_\ep}$. By Proposition~\ref{prop-lqg-dim-S}, there exists $\beta = \beta(\kappa') \in (0,1)$ such that for every $\delta \in (0,1)$, 
\eqbn
\BB E\left[ N_\delta\left([a_\ep ,b_\ep ] \cap (\eta')^{-1}(\eta_{a_\ep ,b_\ep } \cap S)\right) \right]
\leq \BB E\left[\sup_{\substack{a,b\in \BB R  \\ a < b}}   N_\delta\left( [a,b] \cap (\eta')^{-1}(\eta_{a,b} \cap S) \right)   \right]
 \preceq \delta^\beta ,
\eqen
with the implicit constant depending only on $C$ and $S$. 
Since $\eta'$ hits each point of $\eta_{a_\ep ,b_\ep}$ during the time interval $[a_\ep ,b_\ep]$, it follows that $\eta([a_\ep ,b_\ep ] \cap \eta^{-1}(\eta_{a_\ep ,b_\ep } \cap S)) = \eta_{a_\ep,b_\ep} \cap S$. 
 Therefore, Proposition~\ref{prop-dimM-upper} gives $\BB E\left[\op{Area}\left( B_\ep(\eta_{a_\ep , b_\ep} \cap S) \right) \right] \preceq \ep^\Delta$ for any $\Delta < 2 + \gamma^2\beta^2/2  -  (2+\gamma^2/2)\beta  $. We can choose $\Delta >0$ since $\beta < 1$. By our choice of $a_\ep,b_\ep$, this implies~\eqref{eqn-eucl-dim-S}. 
\end{proof}

The rest of this subsection is devoted to the proof of Proposition~\ref{prop-lqg-dim-S}. The key observation for the proof is as follows. 
Let $Z$ be the peanosphere Brownian motion for $(h,\eta')$, so that $Z$ is a two-dimensional correlated Brownian motion with variances and covariances as in~\eqref{eqn-bm-cov}. 
Then for $a,b\in\BB R$ with $a < b$, the set $[a,b] \cap (\eta')^{-1}(\eta_{a,b})$ is equal to the set of times $t\in [a,b]$ such that $t$ is not contained in any $\pi/2$-cone interval for $Z$ (Definition~\ref{def-cone-time}) which is itself contained in $[a,b]$. This allows us to deduce Proposition~\ref{prop-lqg-dim-S} from Brownian motion estimates. The main Brownian motion estimate needed for the proof is the following lemma.

\begin{lem} \label{lem-no-cone} 
For $t\in\BB R$ and $\ep > 0$, let $\ol\tau_\ep(t)$ be the smallest $\pi/2$-cone time $s$ for $Z$ with $[t , t+\ep] \subset [v_Z(s) , s]$. 
For $T \in (2\ep, \ep^{-100})$, 
\eqb \label{eqn-no-cone}
\BB P\left[ \max\left\{ \ol\tau_\ep(t) - t , t -   v_Z(\ol\tau_\ep(t)) \right\}    > T \right] \preceq  \ep^{1-\kappa'/8} (\log\ep^{-1} \max\{ T^{-(1-\kappa'/8)} , T^{2-8/\kappa' -\kappa'/8} \}      
\eqe 
with the implicit constant depending only on $\kappa'$.  
\end{lem}
\begin{proof}
The time $\ol\tau_\ep(t)$ can equivalently be described as the smallest time after $t+\ep$ at which the coordinates $L$ and $R$ of $Z$ attain a simultaneous running infimum relative to time $t$. We also let $\ul\tau_\ep(t) \in (t,t+\ep)$ be the largest time before $t+\ep$ at which $L$ and $R$ attain a simultaneous running infimum relative to time $t$.  
\medskip

\noindent\textit{Upper bound for $ \ol\tau_\ep(t) $.}
By Lemma~\ref{lem-inf-subordinator}, the law of $\ol\tau_\ep(t) - t$ is that of the first passage time after $\ep$ of a $1-\kappa'/8$-stable subordinator. By the arcsine law for the first passage time (see, e.g.,~\cite[Section 3.1.1]{bertoin-sub}), the law of $\ol\tau_\ep(t) - t$ is given by
\eqbn
\frac{\sin((1-\kappa'/8)\pi)}{\pi} \ep^{1-\kappa'/8} x^{-1}(x-\ep)^{-(1-\kappa'/8)} \,dx ,\quad \forall x > \ep .
\eqen
Integrating gives
\eqb \label{eqn-overshoot-prob}
\BB P\left[ \ol\tau_\ep(t) - t > T \right] 
\asymp \ep^{1-\kappa'/8} \int_T^\infty x^{-1}(x-\ep)^{-(1-\kappa'/8)} \,dx
\preceq \ep^{1-\kappa'/8} \int_T^\infty  x^{-(2-\kappa'/8)} \,dx
\asymp \ep^{1-\kappa'/8} T^{-(1-\kappa'/8)} .
\eqe 
\medskip

\noindent\textit{Lower bound for $v_Z(\ol\tau_\ep(t)) $.}
If $v_Z(\ol\tau_\ep(t)) < t-T$, then
\eqb \label{eqn-cone-long}
\inf_{s\in [t-T ,t]} (L_s - L_t) \geq (L_{\ol\tau_\ep(t)} - L_t) \quad \op{and} \quad \inf_{s\in [t-T,t]} (R_s - R_t) \geq (R_{\ol\tau_\ep(t)} - R_t) .
\eqe 
Since $\ol\tau_\ep(t)$ depends only on $(Z-Z_t)|_{[t,\infty)}$, the quantity $Z_{\ol\tau_\ep(t)} - Z_t$ is independent from $(Z-Z_t)|_{(-\infty, t]}$. 
It follows from the estimate~\cite[Equation~(4.3)]{shimura-cone} for the probability that a Brownian motion stays in a cone for an interval of time together with a linear change of coordinates (c.f.~\cite[Lemma~2.8]{ghs-dist-exponent}) that the conditional probability given $(Z-Z_t)|_{[t,\infty)}$ that~\eqref{eqn-cone-long} holds is at most a $\kappa'$-dependent constant times
\eqb \label{eqn-cone-long-prob0}
  \left[ (L_t - L_{\ol\tau_\ep(t)}  )  \wedge (R_t - R_{\ol\tau_\ep(t)}  )   \right]      \left[ (L_t - L_{\ol\tau_\ep(t)}  )  \vee (R_t - R_{\ol\tau_\ep(t)}  )   \right]^{\kappa'/4-1}    T^{-\kappa'/8}    .
\eqe 
By~\eqref{eqn-cone-long-prob0} and the elementary inequality $(a\wedge b)  (a\vee b)^{\kappa'/4-1}  \leq (a\vee b)^{\kappa'/4}$, 
\eqb \label{eqn-cone-long-prob}
\BB P\left[ t - v_Z(\ol\tau_\ep(t)) > T \,|\,   (Z-Z_t)|_{[t,\infty)} \right] \preceq    |Z_t - Z_{\ol\tau_\ep(t)}|^{\kappa'/4} T^{-\kappa'/8} . 
\eqe
 
To estimate the expectation of~\eqref{eqn-cone-long-prob}, we introduce a truncation. Let
\eqb 
G_\ep(t) := \left\{ |Z_{t+s} - Z_t| \leq s^{1/2 }  (\log \ep^{-1})^{4/\kappa'}   , \quad \forall s \in [t+\ep , T] \right\} \cap \left\{ \ol\tau_\ep(t) - t \leq T \right\} .
\eqe 
Then $G_\ep(t) \in \sigma\left( (Z-Z_t)|_{[t,\infty)} \right)$ and, by the Gaussian tail bound together with~\eqref{eqn-overshoot-prob} (recall that $T\leq \ep^{-100}$),
\eqb \label{eqn-cone-long-event}
\BB P\left[ G_\ep(t)^c \right] 
\preceq \ep^{1-\kappa'/8} T^{-(1-\kappa'/8)} + o_\ep^\infty(\ep) 
\preceq \ep^{1-\kappa'/8} T^{-(1-\kappa'/8)}  .
\eqe
If $G_\ep(t)$ occurs, then the right side of~\eqref{eqn-cone-long-prob} is at most 
\eqb  \label{eqn-cone-long-trunc}
T^{-\kappa'/8}  ( \log\ep^{-1} ) (\ol\tau_\ep(t) - t)^{ \kappa'/8} \BB 1_{(\ol\tau_\ep(t) - t \leq T     )}  .
\eqe 
We now take the unconditional expectation of~\eqref{eqn-cone-long-trunc} (which is an upper bound for the right side of~\eqref{eqn-cone-long-prob}), then apply~\eqref{eqn-overshoot-prob}. This gives 
\allb \label{eqn-no-cone-left}
\BB P\left[ v_Z(\ol\tau_\ep(t)) - t > T ,\, G_\ep(t) \right] 
&\preceq T^{-\kappa'/8}  ( \log\ep^{-1} ) \BB E\left[ (\ol\tau_\ep(t) - t)^{ \kappa'/8}  \BB 1_{(\ol\tau_\ep(t) - t \leq T     )}  \right] \notag\\
&\leq T^{-\kappa'/8} (\log \ep^{-1} ) \int_\ep^T \BB P\left[ (\ol\tau_\ep(t) - t)^{ \kappa'/8}  > x \right] \,dx \notag\\
&\preceq T^{-\kappa'/8} (\log \ep^{-1} ) \ep^{1-\kappa'/8}    \int_\ep^T  x^{-(8/\kappa' - 1)} \,dx \notag\\
&\preceq T^{2-8/\kappa' -\kappa'/8} (\log \ep^{-1} ) \ep^{1-\kappa'/8} . 
\alle 
We now obtain~\eqref{eqn-no-cone} by combining the estimates~\eqref{eqn-overshoot-prob},~\eqref{eqn-cone-long-event}, and~\eqref{eqn-no-cone-left}.
\end{proof}

Using Lemma~\ref{lem-no-cone}, we get the following variant of Proposition~\ref{prop-lqg-dim-S} where we restrict to times $a,b$ in a large finite interval.

\begin{lem} \label{lem-lqg-dim-sup}
Fix $u \in (0,1-\kappa'/8)$. There exists $\wt\beta = \wt\beta(u,\kappa') \in (0,1)$ such that
\eqb \label{eqn-lqg-dim-sup}
\BB E\left[\sup_{\substack{a,b \in [-\ep^{-u} , \ep^{-u} ] \\ a < b}} N_\ep([a,b] \cap (\eta')^{-1}(\eta_{a,b}) )   \right] \preceq \ep^{-\wt\beta} ,
\eqe
with the implicit constant depending only on $u$.
\end{lem}
\begin{proof}
Let $\alpha \in (0,1)$, to be chosen later. With $\ol\tau_\ep(t)$ as in Lemma~\ref{lem-no-cone}, let
\eqb
\mcl T^\alpha_\ep := \left\{ t\in [-\ep^{-u} , \ep^{-u}] \cap (\ep\BB Z) : \max\left\{ \ol\tau_\ep(t) - t , t -   v_Z(\ol\tau_\ep(t)) \right\} > \ep^\alpha \right\} .
\eqe
To lighten notation, also set $\theta:= \max\{1-\kappa'/8, \kappa'/8 + 8/\kappa'- 2\}$. 
By Lemma~\ref{lem-no-cone} applied with $T =\ep^\alpha$, for each $t \in  [-\ep^{-u} , \ep^{-u}] \cap (\ep\BB Z)$, 
\eqb
\BB P\left[  t \in \mcl T_\ep^\alpha \right] \preceq  \ep^{1-\kappa'/8 - \alpha\theta} \log\ep^{-1} .
\eqe
By a union bound over $O_\ep(\ep^{-1-u})$ times $t \in   [-\ep^{-u} , \ep^{-u}] \cap (\ep\BB Z)$, 
\eqbn \label{eqn-bad-time-count}
\BB E\left[ \# \mcl T_\ep^\alpha \right] \preceq \ep^{-u - \kappa'/8 - \alpha\theta} \log\ep^{-1}  .
\eqen 

Now consider some $a,b$ with $-\ep^{-u} \leq a  <b \leq \ep^{-u}$.
If $s \in [a,b] \cap (\eta')^{-1}(\eta_{a,b})$ then by definition there is no $\pi/2$-cone interval for $Z$ which contains $s$ and is contained in $[a,b]$. By the definition of $\ol\tau_\ep(t)$, it follows that if $t \in  [-\ep^{-u} , \ep^{-u}] \cap (\ep\BB Z)$ is chosen so that $s\in [t,t+\ep]$, then either $\ol\tau_\ep(t)  > b$ or $   v_Z(\ol\tau_\ep(t))  < a$. 
Therefore, $[a,b] \cap (\eta')^{-1}(\eta_{a,b})$ is contained in the union of $[a , a+\ep^\alpha]$, $[b-\ep^\alpha,b]$, and and the intervals $[t,t+\ep]$ for $t \in \mcl T_\ep^\alpha$.
Hence $N_\ep([a,b] \cap (\eta')^{-1}(\eta_{a,b}) ) \leq 2 \ep^{-(1-\alpha)} + \#\mcl T_\ep^\alpha$. 
By~\eqref{eqn-bad-time-count}, 
\eqb
\BB E\left[ \sup_{\substack{a,b \in [-\ep^{-u} , \ep^{-u} ] \\ a < b}} N_\ep([a,b] \cap (\eta')^{-1}(\eta_{a,b}) ) \right] \preceq  \ep^{-(1-\alpha)}  + \ep^{-u-\kappa'/8 - \alpha\theta} \log\ep^{-1} .
\eqe
We now choose $\alpha$ so that $-1 +\alpha = - u - \kappa'/8 - \alpha\theta$, i.e., $\alpha = (1-u-\kappa'/8)/(1+\theta)$. This gives~\eqref{eqn-lqg-dim-sup} for any $\wt\beta > 1 - (1-u-\kappa'/8) / (1+\theta)$, which lies in $(0,1)$ since $u \in (0,1-\kappa'/8)$ and $\theta > 0$. 
\end{proof}

To deduce Proposition~\ref{prop-lqg-dim-S} from Lemma~\ref{lem-lqg-dim-sup}, we will use some basic SLE/LQG estimates to argue that it is very unlikely for $\eta_{a,b}$ to intersect $S$ if $a,b\notin [-\ep^{-u} , \ep^{-u}]$. To do this we need some intermediate lemmas.

\begin{lem} \label{lem-sle-segment-contain}
Let $a \leq b < c \leq d$. Then $\eta_{a,d} \cap \eta'([b,c]) \subset \eta_{b,c}$.
\end{lem}
\begin{proof} 
We will prove that $\eta'([b,c]) \setminus \eta_{b,c} \subset \eta'([b,c]) \setminus \eta_{a,d}  $. Suppose $t \in [b,c]$ with $\eta'(t) \in \eta'([b,c]) \setminus \eta_{b,c}$. Then there is a $\pi/2$-cone time $s$ for $Z$ such that $t\in [v_Z(s) ,s] \subset [b,c]$. Since $[v_Z(s) , s]$ is also contained in $[a,d]$, we have $\eta'(t) \notin \eta_{a,d}$. 
\end{proof}

The following lemma will be used to give an a priori upper bound for $N_\ep(\eta_{a,b}\cap S)$, even when $a$ and $b$ are not necessarily contained in $[-\ep^{-u} , \ep^{-u}]$. 

\begin{lem} \label{lem-S-cover}
Let $S$ be a closed subset of $\BB D\setminus \{0\}$. 
For each $p \in (1,4/\gamma^2)$, 
\eqb \label{eqn-S-cover}
\BB E\left[ N_\ep((\eta')^{-1}(S) )^p \right] \preceq   \ep^{-p } 
\eqe
with the implicit constant depending only on $C, S,p,\gamma$.
\end{lem}
\begin{proof}
Let $U$ be an open set such that $S\subset U$ and $\ol U\subset \BB D\setminus \{0\}$. 
Let $\mcl R_\ep^{\op{in}}$ (resp.\ $\mcl R_\ep^{\op{out}}$) be the set of $t\in \ep\BB Z$ such that $\eta'([t,t+\ep])$ intersects $S$ and $\eta'([t,t+\ep])$ is (resp.\ is not) contained in $U$. Then $N_\ep((\eta')^{-1}(S) ) \leq \#\mcl R_\ep^{\op{in}} + \#\mcl R_\ep^{\op{out}}$. 

Since $\eta'$ is parameterized by $\mu_h$-mass, we have
\eqbn
\#\mcl R_\ep^{\op{in}} \leq \ep^{-1} \mu_h(U) .
\eqen
Recall that $\ol U \subset \BB D\setminus \{0\}$ and $h|_{\BB D}$ agrees in law with a whole-plane GFF plus $-\gamma\log|\cdot|$ plus a Gaussian random variable with constant order mean and variance. We can therefore apply a standard estimate for GMC measures (see, e.g.,~\cite[Theorem 2.11]{rhodes-vargas-review}) to get that $\mu_h(U)$ has finite moments up to order $4/\gamma^2$, so
\eqb \label{eqn-S-cover-in}
\BB E\left[ (\#\mcl R_\ep^{\op{in}})^p \right] \preceq \ep^{-p} .
\eqe

To bound $\#\mcl R_\ep^{\op{out}}$, let $d$ be the Euclidean distance from $S$ to $\bdy U$. By Lemma~\ref{lem-sle-ball}, it holds with probability $1-o_\delta^\infty(\delta)$ that every segment of $\eta$ which is contained in $\BB D$ and has Euclidean diameter at least $d$ contains a Euclidean ball of radius at least $\delta$. If this is the case, then each of the segments $\eta'([t,t+\ep])$ for $t\in \mcl R_\ep^{\op{out}}$ contains a Euclidean ball of radius at least $\delta$, and these Euclidean balls are disjoint. Therefore,  
\eqbn
\BB P\left[\pi \delta^2 \#\mcl R_\ep^{\op{out}} > \op{Area}(U) \right]  = o_\delta^\infty(\delta)  
\eqen
at a rate which depends only on $S,U$. 
Hence the law of $\#\mcl R_\ep^{\op{out}}$ has a superpolynomially small upper tail, so $\BB E\left[ (\#\mcl R_\ep^{\op{out}})^p \right] \preceq 1$. 
Combining this with~\eqref{eqn-S-cover-in} gives~\eqref{eqn-S-cover}.
\end{proof}

\begin{proof}[Proof of Proposition~\ref{prop-lqg-dim-S}]
Fix $u \in (0,1-\kappa'/8)$ be chosen in a manner depending only on $\kappa'$ (e.g., we could take $u = 1/2 - \kappa'/16$) and let 
\eqbn
F_\ep := \left\{S \subset \eta'([-\ep^{-u} , \ep^{-u}]) \right\} . 
\eqen
By a basic SLE / LQG estimate (see, e.g.,~\cite[Lemma A.4]{ghs-dist-exponent}), there exists $\beta_0 = \beta_0(u,\kappa')$ such that
\eqb \label{eqn-lqg-dim-S-event}
\BB P\left[ F_\ep^c \right] \preceq \ep^{\beta_0} .
\eqe
By Lemma~\ref{lem-sle-segment-contain}, if $F_\ep$ occurs, then for $a<b$,
\eqbn
\eta_{a,b} \cap S \subset \eta_{a,b} \cap \eta'([a\vee(-\ep^{-u}) , b \wedge \ep^{-u}]) \subset  \eta_{a\vee(-\ep^{-u}) , b \wedge \ep^{-u}}  .
\eqen
Hence, on $F_\ep$, 
\eqb \label{eqn-lqg-dim-S-compare}
[a,b] \cap (\eta')^{-1}(\eta_{a,b}\cap S) \subset [a',b'] \cap (\eta')^{-1}(\eta_{a',b'}\cap S)  
\eqe
where $a' =a\vee(-\ep^{-u}) $ and $b' =  b \wedge \ep^{-u} $.

Obviously, $N_\ep([a,b] \cap (\eta')^{-1}(\eta_{a,b}\cap S) ) \leq N_\ep((\eta')^{-1}(S) )$. From this and~\eqref{eqn-lqg-dim-S-compare}, we get
\eqb \label{eqn-lqg-dim-S-split}
\sup_{\substack{a,b\in\BB R \\ a<b} }  N_\ep\left([a,b] \cap (\eta')^{-1}(\eta_{a,b}\cap S) \right) 
\leq \sup_{\substack{a,b \in [-\ep^{-u} , \ep^{-u} ] \\ a < b}} N_\ep([a,b] \cap (\eta')^{-1}(\eta_{a,b}\cap S)) \BB 1_{F_\ep} + N_\ep((\eta')^{-1}(S)) \BB 1_{F_\ep^c} . 
\eqe
We now take the expectation of both sides of~\eqref{eqn-lqg-dim-S-split}. To bound the expectation of the first term on the right side, we use Lemma~\ref{lem-lqg-dim-sup} and to bound the second term on the right side we use H\"older's inequality followed by~\eqref{eqn-lqg-dim-S-event} and Lemma~\ref{lem-S-cover}.
This gives that for $p \in (1,4/\gamma^2)$, 
\eqbn
\BB E\left[\sup_{\substack{a,b\in\BB R \\ a<b} }   N_\ep\left([a,b] \cap (\eta')^{-1}(\eta_{a,b}\cap S)\right)   \right]
\preceq \ep^{-\wt\beta}  + \ep^{- 1 + (1-1/p)\beta_0} .
\eqen
Thus~\eqref{eqn-lqg-dim-S} holds with $\beta = \max\{\wt\beta ,  1 - (1-1/p)\beta_0\}$. 
\end{proof}

\subsection{Proof of Proposition~\ref{prop-diam-sum}}
\label{sec-diam-sum-proof}

The idea of the proof is to use Lemma~\ref{lem-sle-ball} to reduce the problem of estimating squared diameters of our space-filling SLE segments to the problem of estimating the area of a small neighborhood of $\eta_{a,b} \cap S$, which we can bound using Proposition~\ref{prop-sle-nbd-area}. We will need the following technical lemma.

\begin{lem} \label{lem-sle-diam}
Suppose we are in the setting of Proposition~\ref{prop-diam-sum}. 
For $n\in\BB N$ and $p > 2\gamma^2$, 
\eqb \label{eqn-sle-diam}
 \BB P\left[ \sup_{k\in [1,n]_{\BB Z}}  \op{diam}\left(\eta'([t_{n,k-1} , t_{n,k}]) ) \right) \BB 1_{G_{n,k}}  > n^{ -(2+\gamma^2/2 + p)^{-1} } \right] \leq n^{- \tfrac{p^2 -2\gamma^2}{\gamma^2(4+\gamma^2 + 2p)}  + o_n(1)} 
\eqe 
where the rate of convergence of the $o_\ep(1)$ depends only on $a,b,p$, and $\gamma$.
\end{lem}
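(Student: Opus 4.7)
The plan is to combine three inputs: Lemma~\ref{lem-sle-ball}, which on a high-probability event forces any segment of diameter at least $\delta$ to contain a Euclidean ball whose radius is a polynomial function of $\delta$; the area estimate in Proposition~\ref{prop-sle-nbd-area} controlling $\BB E[\op{Area}(B_\delta(\eta_{a,b}\cap S))]$; and the standard Gaussian tail estimate for the circle average $h_r(z)$ (the same ingredient that underlies Lemma~\ref{lem-mu_h-lower}) which quantifies how unlikely it is for a ball of radius $r$ to carry very little $\gamma$-LQG mass. A crucial structural point is that the unparameterized curve $\wh\eta'$, and hence the set $\eta_{a,b}\cap S$, is independent of the field $h$, so these last two inputs can be applied essentially separately.

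Setting $\delta := n^{-s}$ with $s=(2+\gamma^2/2+p)^{-1}$, fix a small $u>0$ and let $r:=\delta^{1/(1-u)}$. On the event $\mcl E_{\delta}(u)$ of Lemma~\ref{lem-sle-ball}, for every $k\in[1,n]_{\BB Z}$ with $D_k:=\op{diam}(\eta'([t_{n,k-1},t_{n,k}])) > \delta$ the segment $\eta'([t_{n,k-1},t_{n,k}])$ contains a ball $B_r(z_k)$; since the $\mu_h$-mass of the whole segment is $1/n$, we have $\mu_h(B_r(z_k))\leq 1/n$. On $G_{n,k}$ the segment meets $\eta_{a,b}\cap S$ and has diameter $D_k$, so $z_k\in B_{D_k}(\eta_{a,b}\cap S)$. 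A dyadic decomposition over the scales $D_k\in(2^j\delta,2^{j+1}\delta]$ for $j\geq 0$ reduces the problem to showing, at each scale $j$, that with appropriately small probability there exists a point $z$ in an $r$-lattice covering $B_{2^{j+1}\delta}(\eta_{a,b}\cap S)$ with $\mu_h(B_r(z))\leq 1/n$.

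To estimate the latter, I would first condition on $\wh\eta'$. The number of lattice sites is at most $\preceq r^{-2}\op{Area}(B_{2^{j+2}\delta}(\eta_{a,b}\cap S))$. By independence of $h$ and $\wh\eta'$, the $\mu_h$-tail for a single fixed ball can be computed separately: defining $\alpha=\alpha_u$ by $r^{2+\gamma^2/2+\alpha}=1/n$ (so $\alpha\to p$ as $u\to 0$, using $r=\delta^{1/(1-u)}$ and $\delta=n^{-s}$), the tail bound for the Gaussian circle average gives $\BB P[\mu_h(B_r(z))\leq 1/n]\leq r^{\alpha^2/(2\gamma^2)+o_n(1)}$. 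A union bound, followed by taking expectations using Proposition~\ref{prop-sle-nbd-area} (which yields $\BB E[\op{Area}(B_{2^{j+2}\delta}(\eta_{a,b}\cap S))]\leq (2^j\delta)^{1-\kappa'/8+o_n(1)}$ with $\kappa'/8=2/\gamma^2$), produces a bound at scale $j$ of the form $(2^j\delta)^{A(u)+o_n(1)}$ with $A(u)\to (p^2-2\gamma^2)/(2\gamma^2)$ as $u\to 0$, after absorbing the $1-\kappa'/8$ and the $-2$ from the lattice count. The dyadic sum over $j$ converges because the condition $p>2\gamma^2$ makes $A(u)$ strictly positive for $u$ small enough.

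Converting the final bound back to $n$ via $\delta=n^{-s}$ and sending $u\to 0$ should yield the target exponent $(p^2-2\gamma^2)/(\gamma^2(4+\gamma^2+2p))$, with the $o_n(1)$ loss absorbing all of the approximation errors. The main obstacle is the bookkeeping: the $o_n(1)$ errors from Lemma~\ref{lem-sle-ball} (which forces $r=\delta^{1/(1-u)}$ rather than $r=\delta$), from the area estimate, and from the Gaussian tail all need to be tracked carefully and the limit $u\to 0$ taken in the right way so that they collapse to a single $o_n(1)$ in the exponent. The sub-polynomial decay in Lemma~\ref{lem-sle-ball} (as explained in its proof via~\cite{hs-euclidean}) is what makes the contribution of the complementary event negligible at the rate claimed.
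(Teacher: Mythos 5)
Your route differs substantively from the paper's proof of this lemma: you invoke the area estimate of Proposition~\ref{prop-sle-nbd-area} via a dyadic decomposition over the diameter scale, whereas the paper's proof of Lemma~\ref{lem-sle-diam} does not use Proposition~\ref{prop-sle-nbd-area} at all. The paper's argument is a single-scale one: on the event $\mcl E_{\ep^{1/(1-u)}}$ of Lemma~\ref{lem-sle-ball}, a segment of diameter $\geq \ep$ contained in $S$ encloses a Euclidean ball of radius $\ep^{1/(1-u)}$ with $\mu_h$-mass at most $(b-a)/n = \ep^{2+\gamma^2/2+p}$; one then applies the Gaussian tail of the circle average to bound the probability of such a small mass, and union-bounds over a lattice covering of $S$. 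Proposition~\ref{prop-sle-nbd-area} enters the paper only afterward, in the proof of Proposition~\ref{prop-diam-sum} itself.

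More importantly, the arithmetic you propose does not close. With a ball radius $r = \delta^{1/(1-u)}$ that is independent of the scale $j$, the expected bound at scale $j$ factors as $r^{\alpha^2/(2\gamma^2) - 2 + o_n(1)}\cdot(2^j\delta)^{1-\kappa'/8+o_n(1)}$, i.e.\ as a product of a $\delta$-power and a separate $2^j$-power, \emph{not} a single power of $2^j\delta$ with exponent $A(u)$ as you claim. Since $1-\kappa'/8>0$, the $(2^j)^{1-\kappa'/8}$ factor grows with $j$; summing over the range $j \lesssim \log_2(1/\delta)$ (which is unavoidable because the diameters run up to order one), the sum is dominated by the top scale and contributes a factor $\delta^{-(1-\kappa'/8)+o_n(1)}$, exactly cancelling the area gain. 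The net bound is $\delta^{p^2/(2\gamma^2)-2+o_n(1)}$, no better than a bare $r^{-2}$ lattice covering of $\BB D$, and strictly weaker in the exponent than the target, which in $\delta$-units is $\delta^{p^2/(2\gamma^2)-1}$. To make the dyadic decomposition pay off you would need to let the ball radius grow with the scale, say $r_j\asymp(2^j\delta)^{1/(1-u)}$, so the mass tail at larger scales is much smaller; then the $j=0$ term dominates and one obtains $\delta^{p^2/(2\gamma^2)-1-\kappa'/8+o_n(1)}$, which improves on the crude bound by $\delta^{1-\kappa'/8}$ but is still short of the exponent asserted in the lemma.

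For the paper's purposes this gap is not fatal: each variant yields a strictly positive $n$-exponent under $p>2\gamma^2$, and the paper stresses that only the positivity of $\alpha$ matters in Proposition~\ref{prop-diam-sum}. But you should not claim that your bookkeeping collapses to the stated exponent $\tfrac{p^2-2\gamma^2}{\gamma^2(4+\gamma^2+2p)}$; it does not, and the claim that "$A(u)\to(p^2-2\gamma^2)/(2\gamma^2)$ after absorbing the $1-\kappa'/8$ and the $-2$" is an arithmetic error. Either follow the paper's simpler single-scale argument, or, if any $\alpha>0$ suffices, fix the scale-dependent radius and accept the resulting smaller exponent.
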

\begin{proof}
Fix $u\in (0,1)$ (which we will eventually send to 0) and let
\eqbn
\ep := ((b-a) n^{-1} )^{(2+\gamma^2/2 + p)^{-1}}   .
\eqen 
Let $\mcl E_{\ep^{\frac{1}{1-u}}}$ be the event of Lemma~\ref{lem-sle-ball} with $\ep^{\frac{1}{1-u}}$ in place of $\ep$. 
 
By standard estimates for the LQG area measure (see, e.g.,~\cite[Lemma~A.1]{ghs-dist-exponent} or the proof of Lemma~\ref{lem-mu_h-lower}) and recalling the relationship between $h$ and a whole-plane GFF, for each $z\in B_{1-r}(0)\setminus B_r(0)$,
\eqbn
\BB P\left[ \mu_h(B_\ep(z)) < \ep^{2+\gamma^2/2 + p}  \right] \leq \ep^{\frac{p^2}{2\gamma^2} + o_\ep(1)} .
\eqen
If $\mcl E_{\ep^{\frac{1}{1-u}}}$ occurs, then each segment of $\eta'$ contained in $S$ with diameter at least $\ep$ contains a Euclidean ball of radius at least $\ep^{\frac{1}{1-u}}$. 
By the union bound, the conditional probability given $\eta'$ that any such segment has quantum mass smaller than $\ep^{2+\gamma^2/2 + p}$ is at most $\ep^{\frac{p^2}{2\gamma^2} -1 + o_u(1) +  o_\ep(1)}$. Recalling the definition of $\ep$, we see that the probability that there is a segment of $\eta'$ contained in $S$ with quantum mass at most $(b-a) n^{-1}$ and diameter at least $n^{-(2+\gamma^2/2 + p)^{-1} }$ is at most 
\eqbn
n^{-(2+\gamma^2/2 + p)^{-1} \left( \frac{p^2}{2\gamma^2} -1 \right) + o_u(1) + o_n(1)} .
\eqen
Sending $u\rta 0$ now yields the statement of the lemma. 
\end{proof}

\begin{proof}[Proof of Proposition~\ref{prop-diam-sum}] 
We start by introducing some regularity events which occur with high probability.
Fix $p > 2\gamma^2$ (to be chosen later) and for $n\in\BB N$, let
\eqbn
\ep_n := n^{ -(2+\gamma^2/2 + p)^{-1} } \quad \op{and} \quad
\mcl G_n := \left\{  \sup_{k\in [1,n]_{\BB Z}} \op{diam}\left(\eta'([t_{n,k-1} , t_{n,k}])) \right)  \BB 1_{G_{n,k}}  \leq \ep_n    \right\} .
\eqen
By Lemma~\ref{lem-sle-diam},
\eqb  \label{eqn-sup-diam-prob}
\BB P[\mcl G_n^c] \leq n^{-\tfrac{p^2 -2\gamma^2}{\gamma^2(4+\gamma^2 + 2p)} + o_n(1) } .
\eqe
 
Let $u \in (0,1/2)$ be a small constant. Let $\mcl E_{\ep_n^u}$ be as in Lemma~\ref{lem-sle-ball} with this choice of $u$ and with $\ep = \ep_n^u$. 
By Lemma~\ref{lem-sle-ball}, $\BB P[\mcl E_{\ep_n^u }^c ] = o_n^\infty(n)$. 

If $  \mcl G_n$ occurs then each of the segments $\eta'([t_{n,k-1} , t_{n,k}]))$ for $k\in [1,n]_{\BB Z}$ such that $G_{n,k}$ occurs has diameter at most $\ep_n  < \ep_n^u$.  
Hence, if also $\mcl E_{\ep_n^u}$ occurs, then each such segment contains a Euclidean ball of radius at least $\op{diam}\left(\eta'([t_{n,k-1} , t_{n,k}])\right)^{\frac{1}{1-u}}$. 
Therefore,
\begin{align} \label{eqn-diam-area}
\op{diam}\left(\eta'([t_{n,k-1} , t_{n,k}])\right)^2 \BB 1_{G_{n,k}}  
\preceq   \op{Area}\left(\eta'([t_{n,k-1} , t_{n,k}])\right)^{ 1-u }  \BB 1_{G_{n,k}}    
\end{align}
with universal implicit constant.  
 
On the event $\mcl G_n$, each of the segments $\eta'([t_{n,k-1} , t_{n,k}]))$ for $k\in[1,n]_{\BB Z}$ such that $  G_{n,k}$ occurs is contained in the $\ep_n$-neighborhood of $\eta_{a,b} \cap S $. 
By combining this with~\eqref{eqn-diam-area}, applying H\"older's inequality, and noting that the space-filling SLE segments $\eta'([t_{n,k-1} , t_{n,k}])$ intersect only along their boundaries, we find that on $  \mcl G_n \cap \mcl E_{\ep_n^u}$, 
\alb
\sum_{k=1}^n \op{diam}\left(\eta'([t_{n,k-1} , t_{n,k}])\right)^2 \BB 1_{G_{n,k}}  
&\preceq  \sum_{k=1}^n \op{Area}\left(\eta'([t_{n,k-1} , t_{n,k}])\right)^{1-u} \BB 1_{G_{n,k}}  \\
&\preceq  n^u \left( \sum_{k=1}^n \op{Area}\left(\eta'([t_{n,k-1} , t_{n,k}])\right) \BB 1_{G_{n,k}} \right)^{1-u}   \\
&\preceq  n^u   \op{Area}\left( B_{\ep_n } \left(\eta_{a,b} \cap S  \right)  \right)^{1-u}   .
\ale
It is clear that the sum on the left is always at most $4 n$, so by~\eqref{eqn-sup-diam-prob}, 
\eqbn
\BB E\left[ \sum_{k=1}^n \op{diam}\left(\eta'([t_{n,k-1} , t_{n,k}])\right)^2 \BB 1_{G_{n,k}} \right] 
\preceq n^u \BB E\left[ \op{Area}\left( B_{ \ep_n } \left(\eta_{a,b} \cap S \right) \right)     \right]^{1-u}  +   n^{1 -\frac{p^2 -2\gamma^2}{\gamma^2(4+\gamma^2 + 2p)} + o_n(1) }     ,
\eqen
where here we have used Jensen's inequality to bring the $1-u$ outside of the expectation.
By Proposition~\ref{prop-sle-nbd-area}, the right side of this inequality is at most
\eqb \label{eqn-diam-sum-conclude}
  n^{   -  (1-u) (2+\gamma^2/2 + p)^{-1} \Delta +   u +   o_n(1)   }     +   n^{1 -\frac{p^2 -2\gamma^2}{\gamma^2(4+\gamma^2 + 2p)} + o_n(1) }    
\eqe 
where $\Delta$ is as in Proposition~\ref{prop-sle-nbd-area}. 
We now choose $p > 2\gamma^2$ to be sufficiently large that $\frac{p^2 -2\gamma^2}{\gamma^2(4+\gamma^2 + 2p)} > 1$, which makes it so that both powers of $n$ on the right side of ~\eqref{eqn-diam-sum-conclude} are positive. Making $u$ sufficiently small now yields the statement of the proposition for an appropriate choice of $\alpha$. 
\end{proof}

\subsection{Stability event}
\label{sec-stability}

Recall the definition of a $C$-smooth canonical description of a $\gamma$-quantum cone from the beginning of this section. 
In this subsection we use this embedding to define a regularity event on which the diameters of space-filling SLE increments satisfy a certain bound. 
Suppose $\mcl C = (\BB C , h , 0, \infty)$ is a $\gamma$-quantum cone and $\eta'$ is a whole-plane space-filling $\SLE_{\kappa'}$ independent from $h$ and parameterized by $\gamma$-quantum mass with respect to $h$ so that $\eta'(0) = 0$. 

Following~\cite[Section~9.4.2]{wedges}, for $C  , t_0 \in \BB R$, a closed simply connected set $S\subset \BB D\setminus \{0\}$ with non-empty interior, $r > 0$, and $a,b \in \BB R$ with $a<b$, we let $\mcl A_{C,t_0}(S,r,a,b)$ be the event that the following is true. Suppose that $K\subset \BB C \setminus \{0\}$ be a hull (i.e., $K$ is compact and the complement of $K$ in the Riemann sphere is simply connected) and suppose there exists a conformal map $f$ from the unbounded connected component of $\BB C \setminus \eta'([a,b])$ to $\BB C\setminus K$ such that the following holds. 
\begin{enumerate}
\item $f(0) =0$, $f(\infty)=\infty$, and $\lim_{z\rta\infty} f(z)/z > 0$. \label{item-stability-map}
\item Suppose $h'$ is any distribution on $\BB C$ which agrees with the translated LQG pushforward field $h( f^{-1}(\cdot) + \eta'(t_0))  + Q\log|(f^{-1})'|$ on $\BB C\setminus K$ and whose circle average $h'_r(0)$ over $\bdy B_r(0)$ is well-defined for $r \geq 1$. For every such distribution $h'$, if we define $V_t'$ for $t\leq 0$ as in~\eqref{eqn-smooth-bm} with $h'$ in place of $h$, then~\eqref{eqn-smooth-def} holds (for our given choice of $C$) with $V'$ in place of $V$.   \label{item-stability-field}
\end{enumerate} 
Then, for every hull $K$ as above, we have $K\subset S$ and $\op{diam}(K) \geq r$. See Figure~\ref{fig-stability} for an illustration. 

\begin{figure}[ht!]
\begin{center}
\includegraphics[scale=.75]{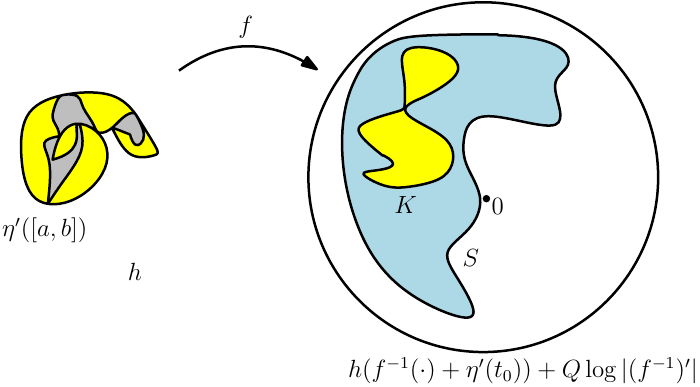}
\end{center}
\caption[Illustration of the stability event]{
Illustration of the definition of the event $\mcl A_{C,t_0}(S,r,a,b)$ that the following is true: if we conformally map the unbounded connected component to $\BB C\setminus \eta'([a,b])$ to $\BB C\setminus K$ via a conformal map $f$ with $f(0) =0$, $f(\infty)=\infty$, and $\lim_{z\rta\infty} f(z)/z > 0$ and which pushes forward $h$ to another field with the same normalization, then $K\subset S$ and $\op{diam}(K)\geq r$. Our interest in this event is primarily due to Lemma~\ref{lem-stable-coeff}.}\label{fig-stability}
\end{figure} 

The event $\mcl A_{0,0}(S,r,a,b)$ is the one considered in~\cite[Section~9.4.2]{wedges}. The event $\mcl A_{C,t_0}(S,r,a,b)$ is defined in the same way but with the $C$-smooth canonical description in place of the $0$-smooth canonical description and the addition of the translation by $\eta'(t_0)$ in the LQG coordinate change formula. 
As explained in~\cite[Remark~9.16]{wedges}, the event $\mcl A_{C,t_0}(S,r,a,b)$ is measurable with respect to the $\sigma$-algebra $\mcl F_{a,b}$ generated by the field $h\circ F_{a,b}^{-1} + Q\log |(F_{a,b}^{-1})'|$, where $F_{a,b} : \BB C\setminus \ol{\BB D} \rta \BB C\setminus \eta'([a,b])$ is the unique conformal transformation with $\lim_{z\rta\infty} z(F_{a,b}(z) - z) = 0$. In particular, $\mcl A_{C,t_0}(S,r,a,b)$ depends only on the curve-decorated quantum surface $(\mcl C ,\eta'_{\mcl C})$, not its particular embedding into $\BB C$. 

The reason why we include the parameters $C$ and $t_0$ is so that for fixed times $ a<b$, we can choose the other parameters in such a way that $\BB P[\mcl A_{C,t_0}(S,r,a,b)]$ is as close to 1 as we like. In order to arrange that $\BB P[\mcl A_{0,0}(S,r,a,b)]$ is close to 1, we would have to adjust $a$ and $b$.

\begin{lem} \label{lem-stable-prob}
For each $\ep \in (0,1)$ and each $a,b\in\BB R$ with $ a<b$, there exists parameters $C$, $t_0$, $S$, and $r$ as above such that $\BB P\left[ \mcl A_{C,t_0}(S,r,a,b) \right] \geq 1-\ep$. 
\end{lem}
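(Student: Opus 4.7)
The plan is to show that conditions~\ref{item-stability-map} and~\ref{item-stability-field} in the definition of $\mcl A_{C,t_0}(S,r,a,b)$ essentially pin down a single random hull $K_0 = K_0(C,t_0,a,b)$ (given the curve-decorated quantum surface $(\mcl C, \eta'_{\mcl C})$), and then to tune the four parameters so that $K_0 \subset S$ and $\op{diam}(K_0)\geq r$ with probability at least $1-\ep$. This adapts the argument used in~\cite[Section~9.4.2]{wedges} for the case $C=t_0=0$.

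First, I would verify uniqueness of the pair $(K,f)$. Given any closed hull $K\subset \BB C\setminus\{0\}$, conditions $f(0)=0$, $f(\infty)=\infty$, and $\lim_{z\to\infty}f(z)/z>0$ use up all three real degrees of freedom in the conformal group between $\BB C\setminus\eta'([a,b])$ and $\BB C\setminus K$, so $f$ is determined by $K$. Scaling $K\mapsto \lambda K$ (equivalently $f\mapsto \lambda f$) shifts the pushforward field $h(f^{-1}(\cdot)+\eta'(t_0))+Q\log|(f^{-1})'|$ in a controlled way — the field gets rescaled and translated by $-Q\log\lambda$ — which shifts the integrated smoothed circle-average in~\eqref{eqn-smooth-bm} monotonically and continuously in $\log\lambda$. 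So there is a unique $\lambda_0$ for which~\eqref{eqn-smooth-def} holds at level $C$, giving a canonical hull $K_0$; the event $\mcl A_{C,t_0}(S,r,a,b)$ then reduces to $\{K_0\subset S\}\cap\{\op{diam}(K_0)\geq r\}$.

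Next, I would reduce to a convenient $t_0$. Choose $t_0 < a$, so that a.s.\ $\eta'(t_0)$ lies in the interior of $\eta'((-\infty,a])$, hence $0$ lies in the unbounded component of $\BB C\setminus(\eta'([a,b])-\eta'(t_0))$. By the translation invariance~\cite[Theorem~1.9]{wedges}, the shifted pair $(h(\cdot+\eta'(t_0)),\eta'(\cdot+t_0)-\eta'(t_0))$ has the same joint law as $(h,\eta')$, so $K_0$ for parameters $(C,t_0,a,b)$ has the same law as the analogous canonical hull for $(C,0,a-t_0,b-t_0)$. Now, from the scaling computation above, the family $\{K_0^C\}_{C\in\BB R}$ of canonical hulls (other parameters fixed) satisfies $K_0^C = e^{-s(C)}\,K_0^{C_0}$ for some affine function $s(C)$ with positive slope depending only on $Q$. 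Since $K_0^{C_0}$ is a.s.\ a compact hull in $\BB C\setminus\{0\}$ of positive diameter, by choosing $C$ appropriately we can arrange that $\log\op{diam}(K_0^C)$ and $\log\min_{z\in K_0^C}|z|$ are jointly concentrated inside any prescribed window.

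Finally, I would choose $S$ and $r$. With $C$ tuned as above, pick $0<\delta<R<1$ so that $\BB P[K_0\subset A_{\delta,R}]\geq 1-\ep/2$, where $A_{\delta,R} := \{z : \delta\leq |z|\leq R\}$ is a closed annulus contained in $\BB D\setminus\{0\}$ (connected with non-empty interior). Then pick $r>0$ with $\BB P[\op{diam}(K_0)\geq r]\geq 1-\ep/2$, which is possible since $\op{diam}(K_0)>0$ a.s. Taking $S = A_{\delta,R}$ and combining via a union bound gives $\BB P[\mcl A_{C,t_0}(S,r,a,b)]\geq 1-\ep$. The main obstacle is the uniqueness step: making rigorous that condition~\ref{item-stability-field} selects a unique scale $\lambda_0$ requires careful book-keeping of how the circle-average process $V_t'$ transforms under $z\mapsto \lambda z$ and under different choices of the arbitrary extension of $h'$ over $K$. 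This is essentially the same monotonicity/continuity analysis carried out in~\cite[Section~9.4.2]{wedges}, and we would invoke it in our slightly more general setting.
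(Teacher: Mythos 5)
Your proposal has the right intuition — the stability event reduces (up to rotation) to a condition on a single canonical hull, and the freedom in $C$ and $t_0$ should let us place that hull where we want — but there is a concrete error at the crucial step, and the overall structure misses the clean reduction that the paper uses.

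The error is the claim that the family of canonical hulls satisfies $K_0^C = e^{-s(C)} K_0^{C_0}$ for a \emph{deterministic} affine function $s(C)$. This is false. If $h$ is a $C_0$-smooth canonical description, then the $C$-smooth canonical description of the same quantum surface is $h(\cdot/\lambda) - Q\log\lambda$, where $\log\lambda = \inf\{s : \int V_t \phi(s-t)\,dt = C\}$ with $V$ built from $h$ as in~\eqref{eqn-smooth-bm}. This $\log\lambda$ is a hitting time of a stochastic process: it is a \emph{random} function of $C$, not an affine deterministic one. Consequently, tuning $C$ does not give you deterministic control over $\log\op{diam}(K_0^C)$ and $\log\min_{z\in K_0^C}|z|$, and the step where you ``concentrate'' the hull inside a prescribed annulus does not go through as written. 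You end up needing precisely the sort of tightness estimate for the canonical hull that is the content of the hard part of~\cite[Lemma~9.7]{wedges}, which you gesture at but never actually invoke in a way that closes the argument.

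The paper avoids all of this by reducing directly to the base case. The statement proved at the end of~\cite[Section~9.4.2]{wedges} gives $\BB P[\mcl A_{0,0}(S,r,a',b')]\ge 1-\ep$ for \emph{some} $a'<b'$ (not ones you choose), some $S$, and some $r$. The paper then picks $C$ and $t_0$ so that the affine map $x\mapsto e^{-\gamma C}x + t_0$ carries $[a,b]$ onto $[a',b']$, and observes that this affine reparameterization of time corresponds exactly to the \emph{deterministic} operation $\wh h = h(\cdot+\eta'(t_0))+C$, $\wh\eta'=\eta'(e^{-\gamma C}\cdot+t_0)-\eta'(t_0)$, which preserves the law of the pair by translation invariance and the quantum cone scaling property~\cite[Proposition~4.13(i)]{wedges}. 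One then checks directly that $\mcl A_{0,0}(S,r,a',b')$ for $(h,\eta')$ implies $\mcl A_{C,t_0}(S,r,a,b)$ for $(\wh h,\wh\eta')$, which is a short bookkeeping argument with no tightness estimate needed. Both parameters $C$ and $t_0$ are genuinely used — $C$ to match the lengths of the intervals, $t_0$ to match an endpoint — which is a different use of the freedom than your choice ``$t_0 < a$.'' I'd recommend rewriting your argument along these lines: cite the $\mcl A_{0,0}$ result from~\cite{wedges} as a black box and reduce to it by the deterministic rescaling/translation, rather than trying to re-establish concentration of the canonical hull.
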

\begin{proof}
It is proven at the end of the proof of~\cite[Lemma~9.7]{wedges} (which appears at the end of~\cite[Section 9.4.2]{wedges}) that there exists $ a' < b'$, a closed simply connected set $S \subset \BB D\setminus \{0\}$ with non-empty interior, and $r>0$ such that 
\eqbn
\BB P\left[ \mcl A_{0,0}(S,r,a',b')  \right] \geq 1-\ep .
\eqen
We will deduce the statement of the lemma for fixed $a$ and $b$ by scaling and translation. 
Let $C  , t_0 \in \BB R$ be chosen so that
\eqbn
[a',b'] = \left[ e^{-\gamma C} a  + t_0 , e^{-\gamma C} b   +t_0 \right].
\eqen
Let $\wh h:= h(\cdot+ \eta'(t_0) ) + C$ and $\wh\eta' :=   \eta'( e^{-\gamma C} \cdot + t_0) - \eta'(t_0)$. By the translation invariance of the law of the pair $(h,\eta')$~\cite[Theorem~1.9]{wedges} and the scaling property of the law of the quantum cone~\cite[Proposition~4.13(i)]{wedges}, we infer that the surface-curve pairs $((\BB C , \wh h , 0 , \infty) , \wh\eta')$ and $((\BB C , h , 0,\infty) , \eta')$ agree in law.
Furthermore, $\wh\eta'([a,b]) = \eta'([a' , b'])$. 

To prove the statement of the lemma, it suffices to show that if $\mcl A_{0,0}(S,r,a',b')$ occurs, then $\mcl A_{C,t_0}(S,r,a,b)$ occurs with $\wh h$ in place of $h$.
Suppose $K$, $f$, and $\wh h'$ are a hull, conformal map, and distribution as in the definition of $\mcl A_{C,t_0}(S,r,a,b)$ with $\wh h$ in place of $h$.
Since $\wh h'$ satisfies the condition~\eqref{eqn-smooth-def} in the definition of the $C$-smooth canonical description and by the scaling property of the $\gamma$-LQG measure, we infer that the field $h' := \wh h'   - C $ satisfies the condition in the definition of the $0$-smooth canonical description (i.e., with $C=0$). 
Since the conditions on $K$ and $f$ in the definition of $\mcl A_{C,t_0}(S,r,a,b)$ do not depend on $C$ and $t_0$ and since 
\eqbn
h'|_{\BB C\setminus K} = \wh h(f^{-1}(\cdot) +\eta'(t_0))  + Q\log|(f^{-1})'|  -C  = h \circ f^{-1}  + Q\log|(f^{-1})'|   
\eqen 
we see that the triple $(K,f,h')$ satisfies the conditions in the definition of $\mcl A_{0,0}(S,r,a',b')$. 
Hence if $\mcl A_{0,0}(S,r,a',b')$ occurs, then $K\subset S$ and $\op{diam}(K) \geq r$, so $\mcl A_{C,t_0}(S,r,a,b)$ occurs.   
\end{proof}

The main reason for our interest in the events $\mcl A_{C,t_0}(S,r,a,b)$ comes from the following elementary lemma, which is an easy consequence of the complex analysis facts described in~\cite[Section~9.3]{wedges}.   

\begin{lem} \label{lem-stable-coeff}
Suppose $\mcl A_{C,t_0}(S,r,a,b)$ occurs and let $K$ and $f$ be a hull and a conformal map as in the definition of $\mcl A_{C,t_0}(S,r,a,b)$. 
Expand $f$ as a Laurent series at $\infty$, 
\eqbn
f(z) = \alpha_{-1} z + \alpha_0   + \sum_{j=1}^\infty \alpha_j z^{-j} 
\eqen
for $\{\alpha_j\}_{j\geq -1}$ complex coefficients. Note that in fact $\alpha_{-1}$ is positive and real by our choice of $f$. 
Then $ r/4 \leq \alpha_{-1} \leq 4/r $ and $|\alpha_0 | \leq  4/r + 1$.
\end{lem}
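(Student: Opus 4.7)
The plan is to reduce the statement to standard facts about normalized univalent maps on the exterior of the unit disk. The key preliminary observation is that the composition $g := f \circ F_{a,b} : \{|z|>1\} \to \BB C \setminus K$ is a normalized conformal map with Laurent expansion $g(z) = \alpha_{-1} z + \alpha_0 + O(1/z)$ at infinity, since $F_{a,b}(z) = z + O(1/z)$ by definition of $F_{a,b}$. Throughout, I use that on $\mcl A_{C,t_0}(S,r,a,b)$, $K \subset S \subset \BB D \setminus \{0\}$ and $\op{diam}(K) \geq r$.

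For the bounds on $\alpha_{-1}$, I recall that $\alpha_{-1}$ equals the (exterior) logarithmic capacity of $K$. The Koebe $1/4$ theorem, applied to $1/\hat g(1/w)$ where $\hat g(z) := (g(z) - \alpha_0)/\alpha_{-1}$, gives $\op{diam}(K) \leq 4\alpha_{-1}$, so $\alpha_{-1} \geq \op{diam}(K)/4 \geq r/4$. For the upper bound, monotonicity of logarithmic capacity under set inclusion combined with $K \subset \BB D$ yields $\alpha_{-1} \leq \op{cap}(\ol{\BB D}) = 1$, and since $r \leq \op{diam}(K) \leq 2$ we have $1 \leq 4/r$, hence $\alpha_{-1} \leq 4/r$.

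For $\alpha_0$, the condition $f(0)=0$ implicitly requires $0 \in U_{a,b}$, so I set $z_0 := F_{a,b}^{-1}(0) \in \{|z|>1\}$ and obtain $g(z_0)=0$. Define the auxiliary map
\eqbn
\phi(w) := \frac{1}{g^{-1}(1/w)} , \quad w \in \BB D \setminus \{0\},
\eqen
with $\phi(0) := 0$. Since $K$ is a compact subset of $\BB D$, for every $w \in \BB D \setminus \{0\}$ we have $1/w \in \BB C \setminus K$, so $g^{-1}(1/w) \in \{|z|>1\}$ and $|\phi(w)| < 1$; thus $\phi : \BB D \to \BB D$ is holomorphic with $\phi(0) = 0$. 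A direct computation from $g^{-1}(z) = z/\alpha_{-1} - \alpha_0/\alpha_{-1} + O(1/z)$ gives the Taylor expansion $\phi(w) = \alpha_{-1} w + \alpha_{-1} \alpha_0 w^2 + O(w^3)$. Applying the Schwarz--Pick inequality to $\psi(w) := \phi(w)/w : \BB D \to \ol{\BB D}$, which satisfies $\psi(0) = \alpha_{-1}$ and $\psi'(0) = \alpha_{-1}\alpha_0$, yields $|\alpha_{-1} \alpha_0| \leq 1 - \alpha_{-1}^2$; hence $|\alpha_0| \leq 1/\alpha_{-1} - \alpha_{-1} \leq 4/r$, which is strictly stronger than the claimed bound $|\alpha_0| \leq 4/r + 1$.

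The argument is essentially mechanical; the only point requiring care is that the auxiliary map $\phi$ is a genuine holomorphic self-map of $\BB D$, which uses both $0 \in U_{a,b}$ (so that $z_0$ exists and $\phi$ extends across the origin) and $K \subset \BB D$ (so the inversion $w \mapsto 1/w$ stays inside the domain of $g^{-1}$). I do not expect a substantive obstacle.
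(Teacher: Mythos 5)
Your composition $g := f\circ F_{a,b}$ does not have the Laurent expansion $\alpha_{-1}z + \alpha_0 + O(1/z)$, and this gap propagates through the entire argument. You take the paper's displayed normalization $\lim_{z\to\infty} z\bigl(F_{a,b}(z)-z\bigr)=0$ at face value and conclude $F_{a,b}(z)=z+O(1/z)$, i.e.\ that $F_{a,b}$ has derivative $1$ at $\infty$. But a conformal map from $\BB C\setminus\ol{\BB D}$ onto the unbounded component of $\BB C\setminus\eta'([a,b])$ has derivative at $\infty$ equal to the logarithmic capacity of the hull $\eta'([a,b])$; on $\mcl A_{C,t_0}(S,r,a,b)$ this hull is a proper compact subset of $\BB D$ with $\op{diam}\geq r$, so its capacity lies strictly in $(0,1)$ and is certainly not $1$. (That displayed normalization in Section~\ref{sec-stability} cannot be literally correct for a surjective map onto the exterior of $\eta'([a,b])$, and the paper's own proof of this lemma makes no use of $F_{a,b}$.) Writing the true leading coefficient of $F_{a,b}$ as $\beta_{-1}$ and its constant term as $\beta_0$, the map $g$ actually has expansion $\alpha_{-1}\beta_{-1}z + (\alpha_{-1}\beta_0+\alpha_0)+O(1/z)$, so what your subsequent estimates actually control is $\alpha_{-1}\beta_{-1}$ and $\alpha_{-1}\beta_0+\alpha_0$, not $\alpha_{-1}$ and $\alpha_0$.

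The clearest symptom that something has gone wrong is your derived bound $\alpha_{-1}\leq 1$, which is strictly stronger than the claimed $\alpha_{-1}\leq 4/r$ and is in fact false in general: $\alpha_{-1}=\wt\beta_{-1}/\beta_{-1}$ is a ratio of two capacities and is large precisely when $\eta'([a,b])$ is near the minimal allowed size (capacity $\sim r/4$) while $K$ is near the maximal allowed size (capacity $\sim 1$). The paper's proof handles this by factoring $f=\wt g\circ g^{-1}$ with $g,\wt g:\BB C\setminus\ol{\BB D}$ onto the two exteriors, each normalized to have positive real derivative at $\infty$, computing~\eqref{eqn-coeff-compare}, and then bounding each of $\beta_{-1},\wt\beta_{-1}\in[r/4,1]$ and $|\beta_0|,|\wt\beta_0|\leq 1$ separately before taking the ratio/difference. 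Your Schwarz--Pick idea does correctly bound the coefficients $\wt\beta_{-1}$ and $\wt\beta_0$ of $\wt g$ (and in fact gives the slightly sharper $|\wt\beta_0|\leq 1/\wt\beta_{-1}-\wt\beta_{-1}$), so your computation is fine as an estimate for $\wt g$; but to turn it into an estimate for $f$ you still need the corresponding two-sided bounds on $\beta_{-1},\beta_0$, which is exactly the paper's two-factor argument and cannot be avoided by assuming $\beta_{-1}=1$, $\beta_0=0$.
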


Lemma~\ref{lem-stable-coeff} will allow us to control how much Euclidean diameters of certain sets are distorted when we replace the quantum surface parameterized by $ \eta'([a,b]) $ by another quantum surface with the same area and boundary length and then embed the new quantum surface thus obtained into $\BB C$ via the $C$-smooth canonical description. This is what will enable us to apply Proposition~\ref{prop-diam-sum} to bound the distortions of the conformal maps in the curve-swapping argument of Section~\ref{sec-swapping}. 

\begin{proof}[Proof of Lemma~\ref{lem-stable-coeff}]
Since the statement of the lemma does not depend on the particular choice of embedding $h$, we can assume without loss of generality that $h$ is normalized so that $h(\cdot + \eta'(t_0))$ is a $C$-smooth centering description (so that condition~\ref{item-stability-field} in the definition of $\mcl A_{C,t_0}(S,r,a,b)$ holds with $f$ equal to the identity map and $K$ equal to the hull generated by $\eta'([a,b])$). 

Let $g$ be the unique conformal map from the $\BB C\setminus \ol{\BB D}$ to the unbounded connected component of $\BB C\setminus \eta'([a,b])$ whose Laurent expansion at $\infty$ is given by
\eqbn
g (z) = \beta_{  -1} z + \beta_{ 0}   + \sum_{j=1}^\infty \beta_{ j} z^{-j} 
\eqen
with $\beta_{ -1}$ positive and real. 
Also let $\wt g : \BB C\setminus \ol{\BB D} \rta \BB C\setminus K$ be the unique conformal map from the $\BB C\setminus \ol{\BB D}$ to the unbounded connected component of $\BB C\setminus K$ whose Laurent expansion at $\infty$ is given by
\eqbn
\wt g(z) = \wt \beta_{ -1} z + \wt\beta_{ 0}  + \sum_{j=1}^\infty \wt\beta_{j} z^{-j}  .
\eqen
Then $f = \wt g \circ g^{-1}$. 

As $z\rta\infty$, we have $g (z) = \beta_{ -1} z  + \beta_{ 0} + O_z(1/z)$
and pre-composing with $g^{-1}$ gives $g^{-1}(z) = \beta_{-1}^{-1} z  - \beta_{-1}^{-1}  \beta_{0}  + O_z(1/z)$ (here we use that $g^{-1}(z)/z$ tends to a finite constant as $z\rta\infty$). Similarly, $\wt g(z) = \wt\beta_{ -1} z + \wt\beta_{0} + O_z(1/z)$. Therefore, 
\eqbn
f(z) = \wt \beta_{ -1} \beta_{ -1}^{-1} z -  \wt \beta_{ -1} \beta_{ -1}^{-1}  \beta_{ 0}  +  \wt\beta_{0}  + O_z(1/z)
\eqen
whence 
\eqb \label{eqn-coeff-compare}
\alpha_{-1} = \wt\beta_{ -1} \beta_{-1}^{-1} \quad \op{and}\quad \alpha_0 = -  \wt\beta_{ -1} \beta_{ -1}^{-1}  \beta_{ 0}  +  \wt\beta_{ 0} .
\eqe 

Suppose now that $\mcl A_{C,t_0}(S,r,a,b)$ occurs.
By standard estimates for conformal maps (see in particular~\cite[Proposition~9.11]{wedges}), 
\eqbn
\frac{1}{4} \op{diam}(\eta'([a,b]) ) \leq \beta_{ -1} \leq R, 
\eqen
where $R$ is the radius of the smallest closed ball containing $\eta'([a,b])$. By the definition of $\mcl A_{C,t_0}(S,r,a,b)$ (applied with $f$ equal to the identity), on this event we have $\op{diam}(\eta'([a,b]) ) \geq r$ and $R \leq 1$. Therefore, $\frac14 r \leq \beta_{-1} \leq 1$. Similarly, $\frac14 r \leq \wt\beta_{ -1} \leq 1$. 

As explained in~\cite[Section~9.3]{wedges}, $\beta_{ 0}$ is equal to $\BB E[w]$, where $w$ is sampled according to harmonic measure from $\infty$ on the outer boundary of $\eta'([a,b])$ (normalized to be a probability measure). By our choice of embedding of $h$ and the definition of $\mcl A_{C,t_0}(S,r,a,b)$, we have $\eta'([a,b]) \subset S \subset \BB D$, so $|\beta_{ 0}| \leq 1$. Similarly $|\wt\beta_{ 0}| \leq 1$. 

The statement of the lemma follows by combining the above estimates with~\eqref{eqn-coeff-compare}. 
\end{proof}

\section{The curve-swapping argument}
\label{sec-swapping}

In this section we will prove Theorem~\ref{thm-wpsf-char}.
Throughout, we continue to use the notation introduced in Section~\ref{sec-surface-def}. 

Fix $a,b\in\BB R$ with $a< b$. Recall from Section~\ref{sec-surface-def} the surface $\mcl S_{a,b}$ (resp.\ $\wt{\mcl S}_{a,b}$) obtained by restricting $h$ (resp.\ $\wt h$) to $\eta'([a,b])$ (resp.\ $\wt\eta'([a,b])$) and the non-space-filling curve $\eta_{a,b,\mcl S_{a,b}}$ (resp.\ $\wt\eta_{a,b,\wt{\mcl S}_{a,b}}$) on this surface.  
The main input in the proof of Theorem~\ref{thm-wpsf-char} is the following proposition, which most of this section will be devoted to proving.

\begin{prop} \label{prop-inc-agree}
One has $  (\wt{\mcl S}_{a,b} , \wt\eta_{a,b,\wt{\mcl S}_{a,b}}) \eqD (\mcl S_{a,b} , \eta_{a,b,\mcl S_{a,b} })$. 
\end{prop}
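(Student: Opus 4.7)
The plan is to use the curve-swapping procedure outlined in Section~\ref{sec-outline}. For each $n \in \BB N$ and each $k \in [0,n]_{\BB Z}$, set $t_{n,k} := a + \frac{k}{n}(b-a)$ and build a triple $(\mathring{\mcl C}_{n,k}, \rng\eta_{n,k}, \rng\eta'_{n,k})$ consisting of a $\gamma$-quantum cone decorated by a non-space-filling curve (analog of $\eta_{a,b}$) and a space-filling curve (analog of $\eta'$), as follows. Start from $(\wt{\mcl C}, \wt\eta_{a,b}, \wt\eta')$ and use the fact from Proposition~\ref{prop-general-disk} that the bubbles cut out by $\wt\eta_{a,b}$ restricted to $[a,t_{n,k}]$ are conditionally independent quantum disks with given areas and boundary lengths. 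Impose a conformal structure on the union of these bubbles by conformally welding in the same topological arrangement used to build $\mcl S_{a,b}^0$ from $\eta_{a,b}$. By Proposition~\ref{prop-partial-surface-law}, the resulting curve-decorated beaded quantum surface has the same joint law with the future $\tfrac{3\gamma}{2}$-wedge $\wt{\mcl S}_{t_{n,k},\infty}$ as $(\mcl S_{a,t_{n,k}}^0,\eta_{a,b,\mcl S_{a,t_{n,k}}^0}, \mcl S_{t_{n,k},\infty})$ does with its analogue. Weld an independent past $\tfrac{3\gamma}{2}$-wedge to the left and $\wt{\mcl S}_{t_{n,k},\infty}$ to the right along boundary length to produce the $\gamma$-quantum cone $\mathring{\mcl C}_{n,k}$ (here conformal removability of $\SLE_\kappa$-type curves from~\cite[Proposition~3.16]{wedges} justifies that the welding is unique). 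Define $\rng\eta_{n,k}$ by concatenating the welding interface with $\wt\eta_{a,b}|_{[t_{n,k},b]}$, and define $\rng\eta'_{n,k}$ by filling the bubbles of $\rng\eta_{n,k}$ using conformal images of the appropriate segments of $\eta'$ (for $s < t_{n,k}$, whose conditional laws are determined by Lemma~\ref{lem-wpsf-law}) and $\wt\eta'|_{[t_{n,k},\infty)}$.

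By construction, $\rng{\mcl C}_{n,0}$ restricted to $\rng\eta'_{n,0}([a,b])$ yields $\wt{\mcl S}_{a,b}$ with curve $\wt\eta_{a,b}$, while $\rng{\mcl C}_{n,n}$ restricted to $\rng\eta'_{n,n}([a,b])$ yields $\mcl S_{a,b}$ with curve $\eta_{a,b}$. Furthermore, $\rng{\mcl C}_{n,k}$ and $\rng{\mcl C}_{n,k-1}$ differ only on the swapping block corresponding to $[t_{n,k-1}, t_{n,k}]$: the measurability statements in Lemma~\ref{lem-partial-surface-msrble} and Lemma~\ref{lem-full-surface-msrble} guarantee that outside this block one gets the same curve-decorated quantum surface. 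Using these measurability facts together with conformal removability of the non-swapped boundaries, one obtains a homeomorphism $f_{n,k}: \rng{\mcl C}_{n,k}\to \rng{\mcl C}_{n,k-1}$ with $f_{n,k}\circ\rng\eta_{n,k} = \rng\eta_{n,k-1}$ and $f_{n,k}\circ\rng\eta'_{n,k} = \rng\eta'_{n,k-1}$ that is conformal on $\BB C\setminus\rng\eta_{n,k}([t_{n,k-1},t_{n,k}])$; it is the identity whenever $\rng\eta'_{n,k}([t_{n,k-1},t_{n,k}])$ does not touch $\rng\eta_{n,k}$.

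To compare the embeddings at the two endpoints, embed each $\rng{\mcl C}_{n,k}$ using the $C$-smooth canonical description fixed by the stability event $\mcl A_{C,t_0}(S,r,a,b)$ of Section~\ref{sec-stability}. On this event, Lemma~\ref{lem-stable-coeff} bounds the Laurent coefficients of the conformal maps $f_{n,k}$ uniformly in $n,k$, and the distortion estimate~\cite[Lemma~9.6]{wedges} then controls $\|f_{n,k} - \mathrm{id}\|_\infty$ on $\BB C$ by a constant multiple of $\op{diam}(\rng\eta'_{n,k}([t_{n,k-1},t_{n,k}]))$ whenever this segment meets $\rng\eta_{n,k}$, and by zero otherwise. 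Composing the $f_{n,k}$'s for $k=1,\dots,n$ and using the triangle inequality bounds the total deviation of $f_{n,1}\circ\cdots\circ f_{n,n}$ from the identity by (a constant times) a sum of squared diameters via Cauchy--Schwarz — precisely the quantity controlled by Proposition~\ref{prop-diam-sum}, which gives $n^{-\alpha+o_n(1)}\to 0$.

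The main obstacle will be the middle step: defining the triples $(\rng{\mcl C}_{n,k}, \rng\eta_{n,k}, \rng\eta'_{n,k})$ carefully enough that one can verify both the three itemized properties and the joint equalities in law on either side that make the telescoping argument work. This amounts to a delicate application of the disk-law results of Section~\ref{sec-disk-law} (Propositions~\ref{prop-sle-disk}, \ref{prop-general-disk}, \ref{prop-partial-surface-law}) together with conformal removability of the $\SLE_\kappa$ welding interfaces from~\cite[Proposition~3.16]{wedges}. Once this construction is in place, sending $n\to\infty$ on the stability event (whose probability can be made $\geq 1-\ep$ by Lemma~\ref{lem-stable-prob}) and then letting $\ep \to 0$ yields $\wt{\mcl S}_{a,b}\eqD \mcl S_{a,b}$ together with their marked curves, which is the proposition.
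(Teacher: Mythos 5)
Your overall architecture matches the paper's: interpolate between $(\wt{\mcl C},\wt\eta')$ and $(\mcl C,\eta')$ by swapping out one $\mu_h$-mass-$\tfrac{1}{n}$ segment at a time, use Propositions~\ref{prop-sle-disk}, \ref{prop-general-disk}, \ref{prop-partial-surface-law} plus conformal removability of the $\SLE_\kappa$ gluing interfaces to build the intermediate cones $\rng{\mcl C}_{n,k}$ and the connecting homeomorphisms $f_{n,k}$ (Lemma~\ref{lem-inc-homeo}), and then use the stability event together with Proposition~\ref{prop-diam-sum} to argue that the composition $f_{n,1}\circ\cdots\circ f_{n,n}$ tends to the identity.

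However, your treatment of the distortion step has several genuine gaps.

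First, the embedding is not chosen to be the $C$-smooth canonical description. In Section~\ref{sec-surface-inc-embed} each $\mathring h_{n,k}$ is normalized so that the maps $f_{n,n\srta k}$ have Laurent expansion $z + O_z(1/z)$ at $\infty$, i.e.\ so that all the hulls $\BB K_{n,k}$ have unit capacity and zero harmonic center. The $C$-smooth canonical description enters only as a reference embedding: one defines $\mathring h_{n,k}^C$ by an affine change of variable from $\mathring h_{n,k}$ and uses the stability event (via Lemma~\ref{lem-stable-coeff}) to bound the scaling factor $\rho_{n,k}$, so that $\op{diam}(K_{n,k})$ is controlled by $\op{diam}(K_{n,k}^C)$. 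If you instead embed each $\rng{\mcl C}_{n,k}$ directly by its own $C$-smooth canonical description, the maps $f_{n,k}$ acquire nontrivial linear and constant terms at $\infty$, and the composition in~\eqref{eqn-map-decomp} no longer telescopes cleanly; you would then have to control the product of those affine pieces separately, which your sketch does not address.

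Second, \cite[Lemma~9.6]{wedges} does not give a sup-norm bound on $\BB C$ by $\op{diam}(K_{n,k})$. The correct estimate (equation~\eqref{eqn-use-distortion0}) is $|f_{n,k}(z)-z| \preceq \op{diam}(K_{n,k})^2 \, |z - \op{hc}(K_{n,k})|^{-1}$, valid only once $z$ is at distance at least $c_1\op{diam}(K_{n,k})$ from $K_{n,k}$; it degrades as $z$ approaches the hull. To make the composition argument work one has to control where the partial compositions $\BB f_{n,k}(z)$ land relative to the hulls $K_{n,j}$, which is the point of the events $M_{n,k}(z)$ in~\eqref{eqn-min-dist} and Lemmas~\ref{lem-end-distortion} and~\ref{lem-distortion-conclude}. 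Your sketch omits this entirely. The squared diameter shows up directly from the distortion estimate, not from Cauchy--Schwarz, and the final step is to show that the set of $z$ for which $M_n(z)$ fails for infinitely many $n$ shrinks to the nowhere dense set $\rng\eta_{n,n}([a,b])$ (Lemma~\ref{lem-distortion-conclude}), which requires a separate Koebe-type argument that you do not mention. These are not cosmetic omissions; without them the telescoping sum diverges near the curve.
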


Throughout the proof of Proposition~\ref{prop-inc-agree}, we fix $a < b$ and do not always make dependencies on $a$ and $b$ explicit.
To prove the proposition, we will start in Section~\ref{sec-surface-inc-def} by defining for each $n\in\BB N$ and each $k\in[0,n]_{\BB Z}$ a $\gamma$-quantum cone $\mathring{\mcl C}_{n,k}$ decorated by a non-space-filling curve $\rng\eta_{n,k}$ and a space-filling curve $\mathring{\eta}'_{n,k}$ via a deterministic procedure involving the pairs $(\wt h , \wt\eta')$ and $(h,\eta')$. These curve-decorated quantum surfaces give rise to an interpolation $(\mathring{\mcl C}_{n,k}, \rng\eta_{n,k} , \mathring{\eta}'_{n,k} )$, $k \in [0,n]_{\BB Z}$, between the laws of $ (\wt{\mcl S}_{a,b} , \wt\eta_{a,b,\wt{\mcl S}_{a,b}})$ and $(\mcl S_{a,b} , \eta_{a,b,\mcl S_{a,b} }) $ in the following manner. The sub-surface of $\mathring{\mcl C}_{n,0}$ (resp.\ $\rng{\mcl C}_{n,n}$) parameterized by $\rng\eta'_{n,0}([a,b])$ (resp.\ $\rng{\eta}'_{n,n}([a,b])$), decorated by the curve $\rng\eta_{n,0}$ (resp.\ $\rng\eta_{n,n}$) is exactly $(\wt{\mcl S}_{a,b} , \wt\eta_{a,b,\wt{\mcl S}_{a,b}})$ (resp.\ has the same law as $(\mcl S_{a,b} , \eta_{a,b,\mcl S_{a,b} })$). 
Furthermore, the laws of the first and last triples $( \mathring{\mcl C}_{n,0}, \rng\eta_{n,0} , \mathring{\eta}'_{n,0} )$ and $( \mathring{\mcl C}_{n,n}, \rng\eta_{n,n} , \mathring{\eta}'_{n,n } )$ do not depend on $n$. For $n\in\BB N$ and $k \in [0,n]_{\BB Z}$ let
\eqb \label{eqn-inc-def}
t_{n,k} := a + \frac{k}{n} (b-a). 
\eqe 

Roughly speaking, each surface $\mathring{\mcl C}_{n,k-1}$ is obtained from $\mathring{\mcl C}_{n,k}$ by replacing the sub-surface of $\mathring{\mcl C}_{n,k}$ parameterized by $\mathring \eta_{n,k}([ t_{n,k-1},t_{n,k} ])$ by the sub-surface of our given $\gamma$-quantum cone $\wt{\mcl C}$ parameterized by $\wt\eta'_{\wt{\mcl C}}([ t_{n,k-1},t_{n,k}])$. See Figure~\ref{fig-swapping-maps} for an illustration of the surfaces $\rng{\mcl C}_{n,k}$ (plus some additional notation).

As explained in Section~\ref{sec-surface-inc-embed}, the triples $( \mathring{\mcl C}_{n,k}, \rng\eta_{n,k} , \mathring{\eta}'_{n,k} )$ will be defined in such a way that they all have the same topology. More precisely, if we embed these curve-decorated quantum surfaces into $\BB C$ and identify the curves with their images under these embeddings, then there exists for each $n\in\BB N$ and $k_1,k_2\in [0,n]_{\BB Z}$ a homeomorphism $f_{n,k_1,k_2} : \BB C\rta\BB C$ which satisfies $f_{n,k_1,k_2} \circ \rng\eta_{n,k_1} = \rng\eta_{n,k_2}$ and $f_{n,k_1,k_2} \circ \rng\eta_{n,k_1}' = \rng\eta_{n,k_2}'$. Furthermore, this homeomorphism can be taken to be conformal everywhere except on a small segment of the curve $\rng\eta_{n,k_1}$. 

The curves $\rng\eta_{n,k }$ are not known to be conformally removable (they are non-space filling $\SLE_{\kappa'}$-type curves) so we cannot conclude from the above that the maps $f_{n,k_1,k_2}$ are conformal. However, in Section~\ref{sec-surface-inc-distortion} we will use Proposition~\ref{prop-diam-sum} and an elementary distortion estimate for conformal maps (namely,~\cite[Lemma~9.6]{wedges}), to show that if we choose our embeddings in a certain way, then the maps $f_{n,k , k-1}$ are likely to be close to the identity when $n$ is large, in the sense that the sum over all $k$ of the distortion of these maps is small. Composing these maps and taking a limit as $n\rta\infty$ will show that the map $f_{n,n,0}$ (which does not depend on $n$) is the identity map, which will prove Proposition~\ref{prop-inc-agree}. 

In Section~\ref{sec-wpsf-char-proof}, we will use Proposition~\ref{prop-inc-agree} to conclude the proof of Theorem~\ref{thm-wpsf-char}. The basic idea is that the proposition statement applied with $[a,b] = [\ep(j-1),\ep j]$ for $j\in\BB Z$ together with the conformal removability of the boundaries of the segments $\eta'([\ep(j-1),\ep j])$ implies that the sets of times $\{\eta'(\ep j)\}_{j\in\BB Z}$ and $\{\wt\eta'(\ep j)\}_{j\in\BB Z}$ agree in law for each $\ep > 0$, so we can send $\ep\rta 0$ and use the continuity of the curves $\eta'$ and $\wt\eta'$ to conclude.

\subsection{Defining the intermediate curves and surfaces}
\label{sec-surface-inc-def}

In this subsection we will define the curve-decorated quantum surfaces $\left(  \mathring{\mcl C}_{n,k} , \rng\eta_{n,k} ,  \mathring{\eta}'_{n,k} \right)$ for $n\in\BB N$ and $k\in [0,n]_{\BB Z}$ in the outline just after the statement of Proposition~\ref{prop-inc-agree}. 
This will be accomplished using the objects introduced in Section~\ref{sec-surface-def} together with the results of Section~\ref{sec-partial-surface}.

For $t\in [a,b]$ define the time $\tau (t) = \tau_{a,b}(t) \in [a,b]$ as in~\eqref{eqn-tau-def}.
Also recall the surfaces $\mcl S_{a,b}^0 \subset \mcl S_{a,b}$ and $\wt{\mcl S}_{a,b}^0 \subset \wt{\mcl S}_{a,b}$ parameterized by the bubbles cut out by $\eta_{a,b}$ and $\wt\eta_{a,b}$, respectively. 
\medskip

\noindent\textit{Defining $\gamma$-quantum cones $\rng{\mcl C}_{n,k}$.}
By Lemma~\ref{lem-full-surface-msrble}, for each $k \in [0,n]_{\BB Z}$ there is a deterministic functional $F_{n,k}$ which takes in the $4$-tuple
\eqbn
\left( \mcl S_{-\infty,a} , \mcl S_{a,\tau(t_{n,k}) }^0 ,  \eta_{a,b , \mcl S_{a,\tau(t_{n,k})}^0} ,    \mcl S_{\tau(t_{n,k}) ,\infty}    \right) 
\eqen
and a.s.\ outputs the triple $( \mcl C , \mcl S_{a,\infty}, \eta_{a,b,\mcl C}|_{[a , t_{n,k} ]} )$. 
In particular, the functional $F_{n,k}$ imposes a conformal structure on all of $\mcl S_{a,\tau(t_{n,k})}^0$ (not just on its bubbles) to get $\mcl S_{a,\tau(t_{n,k})}$, conformally welds $\mcl S_{a,\tau(t_{n,k})}^0$ and $\mcl S_{\tau(t_{n,k}) ,\infty}$ together according to quantum length along their boundaries to get $\mcl S_{a,\infty}$, then conformally welds $\mcl S_{a,\infty}$ and $\mcl S_{-\infty,a}$ together along their boundaries to get $\mcl C$. The curve $\eta_{a,b,\mcl C}|_{[a , t_{n,k} ]} $ is the image of $ \eta_{a,b , \mcl S_{a,\tau(t_{n,k})}^0}|_{[a,t_{n,k}]}$ under the conformal welding map. 

Let $\rng{\mcl S}_{-\infty,a}$ be $\frac{3\gamma}{2}$-quantum wedge, independent from everything else. 
By Proposition~\ref{prop-partial-surface-law} and the independence of $\mcl S_{-\infty,a}$ and $(\mcl S_{a,\infty} , \eta'_{\mcl S_{a,\infty}})$, we have for each $k\in [1,n]_{\BB Z}$ the equality of joint laws
\eqb \label{eqn-use-partial-surface-law}
 \left( \rng{\mcl S}_{-\infty,a} , \wt{\mcl S}_{a,\tau(t_{n,k}) }^0 , \wt \eta_{a,b , \wt{\mcl S}_{a,\tau(t_{n,k})}^0} ,       \wt{\mcl S}_{\tau(t_{n,k}) ,\infty}   \right) 
 \eqD \left(  \mcl S_{-\infty,a} ,  \mcl S_{a,\tau(t_{n,k}) }^0 ,  \eta_{a,b , \mcl S_{a,\tau(t_{n,k}) }^0} ,    \mcl S_{\tau(t_{n,k}) ,\infty}    \right)  .
\eqe 
Hence we can apply $F_{n,k}$ to the left $4$-tuple in~\eqref{eqn-use-partial-surface-law} to define quantum surfaces each with a distinguished sub-surface and a distinguished curve
\eqb \label{eqn-surface-inc-function}
\left(\mathring{\mcl C}_{n,k} , \mathring{\mcl S}_{n,k} ,  \mathring \eta_{n,k}|_{[a , t_{n,k}]}  \right) := F_{n,k} \left( \rng{\mcl S}_{-\infty,a} , \wt{\mcl S}_{a,\tau(t_{n,k})}^0 , \wt \eta_{a,b , \wt{\mcl S}_{a,\tau(t_{n,k})}^0} ,       \wt{\mcl S}_{\tau(t_{n,k}) ,\infty}    \right) 
\eqe 
for $k\in [0,n]_{\BB Z}$ such that
\eqb \label{eqn-surface-inc-law}
\left(\mathring{\mcl C}_{n,k} , \mathring{\mcl S}_{n,k} , \mathring \eta_{n,k}|_{[a , t_{n,k}]} \right) \eqD \left( \mcl C , \mcl S_{a,\infty}, \eta_{a,b,\mcl C}|_{[a ,t_{n,k} ]} \right).
\eqe   
See Figure~\ref{fig-swapping-def} for an illustration of the above construction. 
\medskip
 
\begin{figure}[ht!]
\begin{center}
\includegraphics[scale=.8]{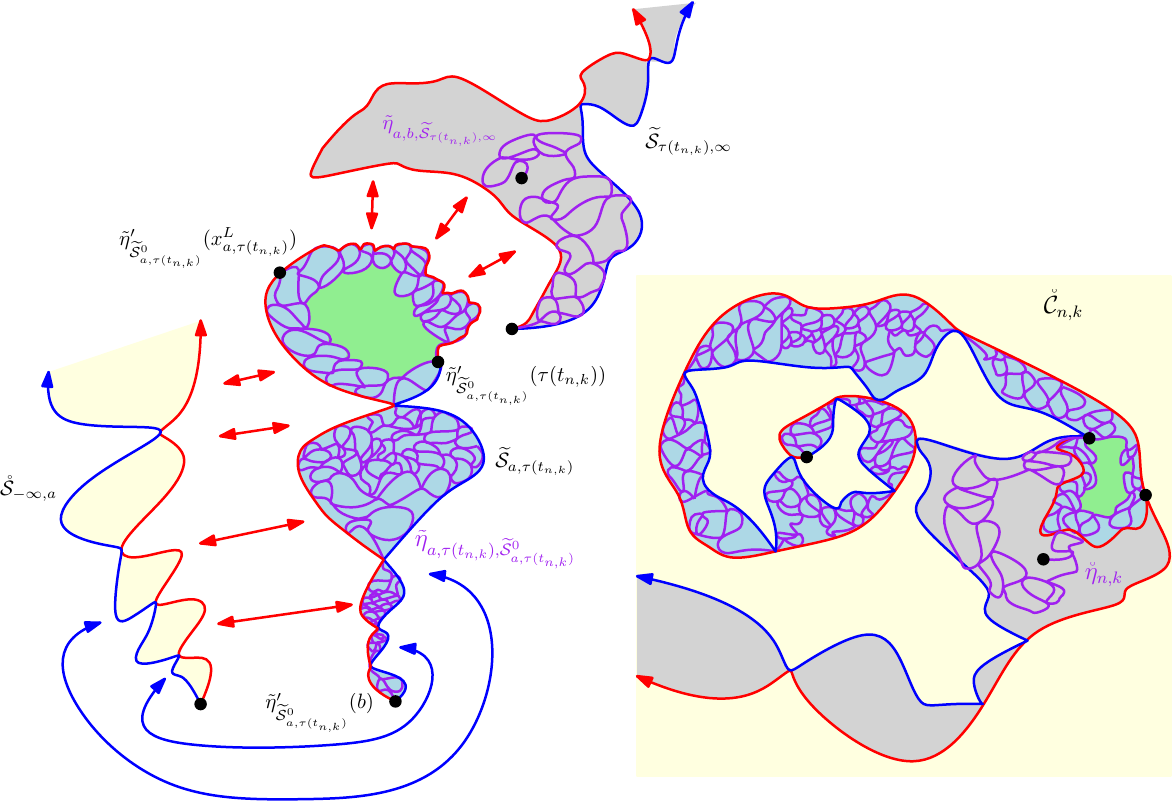}
\end{center}
\caption[Intermediate SLE-decorated LQG surfaces used in the proof of Theorem~\ref{thm-wpsf-char}]{
The $\gamma$-quantum cone $\rng{\mcl C}_{n,k}$ is determined by the surfaces $\rng{\mcl S}_{-\infty,a} $ (light yellow), $\wt{\mcl S}_{a,\tau(t_{n,k}) }^0$ (light blue), and $    \wt{\mcl S}_{\tau(t_{n,k}),\infty}$ (grey) in the same manner that the original $\gamma$-quantum cone $\mcl C $ is determined by $\mcl S_{-\infty,a} $, $\mcl S_{a,\tau(t_{n,k}) }^0$, and $    \mcl S_{\tau(t_{n,k}),\infty}$. In particular, to get $\rng{\mcl C}_{n,k}$ we impose a conformal structure on all of $\wt{\mcl S}_{a,\tau(t_{n,k})}^0$ (not just on its bubbles) then conformally weld the resulting surface together with $\rng{\mcl S}_{-\infty,a} $ and  $ \wt{\mcl S}_{\tau(t_{n,k}),\infty}$ according to quantum length along their boundaries.
The curve $\rng\eta_{n,k} : [a,b] \rta \rng{\mcl C}_{n,k}$ (purple) is the concatenation of the images of the curves $\wt{\mcl S}_{a,\tau(t_{n,k}) }^0 , \wt \eta_{a,b , \wt{\mcl S}_{a,\tau(t_{n,k})}^0}|_{[a,t_{n,k}]}$ and $\wt\eta_{a,b,\wt{\mcl S}_{\tau(t_{n,k}),\infty} }|_{[t_{n,k} , \infty)}$.
The space-filling curve $\rng\eta'_{n,k}$ (not shown) is obtained by filling in the bubbles cut out by $\rng\eta_{n,k}$ with conformal images of segments of $\wt\eta'$ then concatenating the resulting curve with the images of the space-filling curve segments $\rng\eta'|_{\rng{\mcl S}_{-\infty,a}}$ and $\wt\eta'|_{[t_{n,k},\infty)}$ under the conformal welding map. 
 }\label{fig-swapping-def} 
\end{figure}

\noindent\textit{Defining non-space-filling and space-filling curves $\rng\eta_{n,k}$ and $\rng\eta_{n,k}'$.}
We will now extend the $\SLE_{\kappa'}$-type curves $ \mathring \eta_{n,k}|_{[a,t_{n,k}]}$ to space-filling $\SLE_{\kappa'}$-type curves defined on all of $[0,\infty)$. 
We first extend $\mathring \eta_{n,k}$ to all of $[a,b]$ by concatenating it with the curve on $\rng{\mcl C}_{n,k}$ which corresponds to the image under the conformal welding map of the curve $\wt\eta_{a,b,\wt{\mcl S}_{\tau(t_{n,k}),\infty} }|_{[t_{n,k} , \infty)}$ on $\wt{\mcl S}_{\tau(t_{n,k}),\infty}$. 

To obtain a space-filling curve, we first fill in the bubbles cut off by $\rng \eta_{n,k}$ (thus extended) with conformal images of segments of $\wt\eta'$. 
More precisely, let $\mcl I$ be the set of intervals in $[a,b]$ of the form $[\sigma(t) , \tau(t)]$ for $t \in [a,b]$ with $\tau(t) \not=t$, equivalently the set of maximal $\pi/2$-cone intervals for $Z$ in $[a,b]$, or the set of intervals in $[a,b]$ on which $\wt\eta_{n,k}$ is constant. 
For $I = [\sigma(t) ,\tau(t)] \in {\mcl I}$, define the singly marked quantum surface $\wt{\mcl S}_I := \wt{\mcl S}_{\sigma(t) , \tau(t)}$ parameterized by $I$. 
Then the surfaces $\wt{\mcl S}_I$ for $I\in\mcl I$ are the same as the bubbles of $\wt{\mcl S}_{a,b}^0$.
Furthermore, each of the curves $\wt\eta'_{\wt{\mcl S}_I}$ is a space-filling loop which starts and ends at the marked point of $\wt{\mcl S}_{\sigma(t) , \tau(t)}$. 

By the definitions of $\mathring{\mcl C}_{n,k}$ and of $\mathring\eta_{n,k}$, each of the surfaces $\wt{\mcl S}_I$ for $I\in\mcl I$ is a sub-surface of $\rng{\mcl C}_{n,k}$ (in fact, a sub-surface of the future wedge $\rng{\mcl S}_{n,k}$) cut out by $\rng\eta_{n,k}$, the curve $\rng\eta_{n,k}$ is constant on $I$, and the marked point of $\wt{\mcl S}_I$ corresponds to the single point $\rng\eta_{n,k}(I)$. We let $\rng\eta'_{n,k}|_{[a,b]}$ be the curve on $\rng{\mcl C}_{n,k}$ which agrees with the image under the conformal welding map of $\wt\eta'_{\wt{\mcl S}_I}$ on each interval $I \in \mcl I$ and is equal to $\rng \eta_{n,k}(t)$ at each time $t$ in the nowhere dense set $[a,b]\setminus \bigcup_{I\in\mcl I} I$.  
Then $\mathring\eta'_{n,k}|_{[a,b]}$ is a continuous curve on $\mathring{\mcl C}_{n,k}$.
We extend $\mathring\eta'_{n,k}|_{[a,b]}$ to a continuous curve $\BB R \rta \rng{\mcl C}_{n,k}$ which fills all of $\rng{\mcl C}_{n,k}$ as follows. For $s >b$, we define $\mathring\eta'_{n,k}(s)$ to be the image of $\eta'_{\mcl S_{-\infty,a}}(s)$ under the conformal welding map. We also let $\rng\eta'_{\rng{\mcl S}_{-\infty,a}}$ be a curve on $\rng{\mcl S}_{-\infty,a}$ whose conditional law given everything else is the same as the conditional law of the space-filling SLE $\eta'_{\mcl S_{-\infty,a}}$ given $\mcl S_{-\infty,a}$ and declare the $\rng\eta'_{n,k}(s) $ is equal to the image of $ \rng\eta'_{\rng{\mcl S}_{-\infty,a}}(s)$ under the conformal welding map for $s < a$.
\medskip

\noindent\textit{Some observations.}
We record some observations about the above objects which are immediate from the construction.
\begin{itemize}
\item The $\frac{3\gamma}{2}$-quantum wedge $\rng{\mcl S}_{n,k} $ is the sub-surface of $\rng{\mcl C}_{n,k}$ parameterized by $\rng\eta'_{n,k}([a,\infty))$. 
\item The curve $\rng\eta_{n,k}$ can be recovered from $\rng\eta'_{n,k}$ in the same manner that $\eta_{a,b}$ is recovered from $\eta'$ (i.e., by cutting out the bubbles filled in by $\rng\eta'_{n,k}|_{[a,b]}$). 
\item Since each connected component of the interior of $\wt\eta'([t_{n,k} , \infty))$ is contained in either $\wt{\mcl S}_{\tau(t_{n,k}),\infty}$ or in one of the bubbles $\wt{\mcl S}_I$, we see that the sub-surface of $\rng{\mcl C}_{n,k}$ parameterized by $\rng\eta_{n,k}'([t_{n,k},\infty))$ is the same as $\wt{\mcl S}_{t_{n,k}, \infty}$. Hence $\wt{\mcl S}_{t_{n,k}, \infty}$ (not just $\wt{\mcl S}_{\tau(t_{n,k}) ,\infty} $) is a sub-surface of $\rng{\mcl C}_{n,k}$.
\item Neither $\left(\mathring{\mcl C}_{n,n} ,  \rng{\mcl S}_{n,n},  \mathring \eta_{n,n}' \right) $ nor $\left(\mathring{\mcl C}_{n,0} , \rng{\mcl S}_{n,0} , \mathring \eta_{n,0}' \right) $ depends on $n$. 
\end{itemize}

The triple $\left(\mathring{\mcl C}_{n,n} , \rng{\mcl S}_{n,k}, \mathring \eta_{n,n} \right) $ has the same law as $( \mcl C , \mcl S_{a,\infty} ,  \eta_{a,b,\mcl C}  )$ (i.e., a $\gamma$-quantum cone decorated by an independent chordal $\SLE_{\kappa'}$ segment). The surface $\mcl S_{a,b}$ is a.s.\ equal to the union of the curve $\eta_{a,b,\mcl C}$ and the set of points which it disconnects from its target point in $\mcl S_{a,\infty}$. Hence $(\mcl S_{a,b} , \eta_{a,b})$ has the same law as the sub-surface of $\rng{\mcl S}_{n,n}$ parameterized by the union of $\rng\eta_{n,n}$ and the set of bubbles cut out by this curve in $\rng{\mcl S}_{n,k}$, equivalently the sub-surface of $\rng{\mcl S}_{n,n}$ parameterized by $\rng\eta'_{n,n}([a,b])$, decorated by the curve $\rng\eta_{n,n}$. 

The surface $ \mathring{\mcl C}_{n,0} $ is obtained by conformally welding together the quantum surfaces~$\rng{\mcl S}_{-\infty,a}$ and~$\wt{\mcl S}_{a,\infty}$ and~$\mathring\eta'_{n,0}$ is the concatenation of the images of~$\rng\eta'_{\rng{\mcl S}_{-\infty,a}}$ and~$\wt\eta'_{\wt{\mcl S}_{a,\infty}}$ under the conformal welding map. In particular, the sub-surface of~$\rng{\mcl S}_{n,0}$ parameterized by~$\rng\eta'_{n,0}([a,b])$, decorated by the curve~$\rng\eta_{n,0}$, is the same as the surface~$\wt{\mcl S}_{a,b}$, decorated by the curve~$\wt \eta'_{\wt{\mcl S}_{a,b}}$.

Therefore, in order to prove Proposition~\ref{prop-inc-agree}, it suffices to show that the sub-surface of~$\rng{\mcl S}_{n,n}$ parameterized by~$\rng\eta'_{n,n}([a,b])$ and the sub-surface of~$\rng{\mcl S}_{n,0}$ parameterized by~$\rng\eta'_{n,0}([a,b])$ a.s.\ agree (as quantum surfaces). This statement will follow immediately from the following proposition, which we will prove in the next subsections.  
  
\begin{prop} \label{prop-embedding-equiv}
Almost surely, $(\rng{\mcl C}_{n,n} , \rng{\eta}'_{n,n}) = (\rng{\mcl C}_{n,0} , \rng{\eta}'_{n,0})$ as curve-decorated quantum surfaces.
\end{prop}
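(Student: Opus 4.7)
The plan is to build, for each $n$ and each $k\in[1,n]_{\BB Z}$, a homeomorphism $f_{n,k}: \rng{\mcl C}_{n,k}\to \rng{\mcl C}_{n,k-1}$ which intertwines the two pairs of curves, is conformal off the short arc $\rng\eta_{n,k}([t_{n,k-1},t_{n,k}])$, and is the identity whenever $\rng\eta'_{n,k}([t_{n,k-1},t_{n,k}])$ does not meet $\rng\eta_{n,k}$ (i.e.\ lies inside a single bubble). To produce $f_{n,k}$, observe from Section~\ref{sec-surface-inc-def} that $\rng{\mcl C}_{n,k}$ and $\rng{\mcl C}_{n,k-1}$ are obtained by applying the welding functional $F_{n,\cdot}$ to the same past wedge $\rng{\mcl S}_{-\infty,a}$ and the same total collection of bubble and future-wedge data; the only genuine change between $k$ and $k-1$ is the reassignment of the piece of $\wt\eta_{a,b}$ and of the bubbles it cuts out during $[t_{n,k-1},t_{n,k}]$ between the bubble side and the future-wedge side of the welding. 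All other bubbles, the curves outside $[t_{n,k-1},t_{n,k}]$, and all quantum lengths and areas are preserved. Consequently there is a homeomorphism $f_{n,k}$ of the underlying topological spheres which is the identity on these fixed pieces and conformal on the complement of $\rng\eta_{n,k}([t_{n,k-1},t_{n,k}])$.

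To make quantitative sense of these maps, embed each $\rng{\mcl C}_{n,k}$ into $(\BB C,0,\infty)$ using a $C$-smooth canonical description, so that $f_{n,k}$ becomes a homeomorphism $\BB C\to \BB C$. Let $K_{n,k}$ be the hull generated in this embedding by $\rng\eta'_{n,k}([t_{n,k-1},t_{n,k}])$, and let $G_{n,k}$ be the event that this segment meets $\rng\eta_{n,k}$; on $G_{n,k}^c$ we have $f_{n,k}=\op{id}$. On $G_{n,k}$ intersected with a stability event $\mcl A_{C,0}(S,r,t_{n,k-1},t_{n,k})$ as in Section~\ref{sec-stability}, Lemma~\ref{lem-stable-coeff} controls the leading Laurent coefficients at $\infty$ of the conformal map from the unbounded component of $\BB C\setminus K_{n,k}$ to its image under $f_{n,k}$, so the distortion estimate \cite[Lemma~9.6]{wedges} gives
\eqb \label{eqn-plan-dist-bound}
\sup_{z\in B_R(0)\setminus K_{n,k}} |f_{n,k}(z)-z| \preceq \op{diam}(K_{n,k})^2
\eqe
with implicit constant depending on $R$, $S$, $r$, $C$ only.

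Compose to get $\wh f_n := f_{n,1}\circ\cdots\circ f_{n,n} : \rng{\mcl C}_{n,n}\to\rng{\mcl C}_{n,0}$, which by construction satisfies $\wh f_n\circ \rng\eta'_{n,n}=\rng\eta'_{n,0}$. By the triangle inequality and~\eqref{eqn-plan-dist-bound}, the sup-norm of $\wh f_n - \op{id}$ on any $B_R(0)$ is bounded by a constant times $\sum_{k=1}^n \op{diam}(K_{n,k})^2 \BB 1_{G_{n,k}}$. Using the equality of joint laws~\eqref{eqn-surface-inc-law}, this sum has the same distribution as the corresponding sum for the reference pair $(h,\eta')$, so Proposition~\ref{prop-diam-sum} shows its expectation is $O(n^{-\alpha+o_n(1)})$. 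Since neither $(\rng{\mcl C}_{n,n},\rng\eta'_{n,n})$ nor $(\rng{\mcl C}_{n,0},\rng\eta'_{n,0})$ depends on $n$, passing to a subsequence along which $\wh f_n\to\op{id}$ locally uniformly a.s.\ on the intersection of the stability events forces the two curve-decorated quantum surfaces to agree on that event; by Lemma~\ref{lem-stable-prob} we can make the complement have arbitrarily small probability, and sending this probability to $0$ yields the proposition.

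The principal obstacle will be implementing the first two steps cleanly: defining $f_{n,k}$ as a genuine homeomorphism of $\BB C$ (rather than only up to the ambiguity allowed by the not-known-to-be-removable curve $\rng\eta_{n,k}$) and verifying that, under the chosen smooth canonical embeddings of consecutive $\rng{\mcl C}_{n,k}$, the stability events for different values of $k$ can be arranged compatibly so that~\eqref{eqn-plan-dist-bound} applies with the same constant across the whole chain. This will require using that the smooth canonical description is preserved (up to controlled error) under the local surgery performed in passing from $\rng{\mcl C}_{n,k}$ to $\rng{\mcl C}_{n,k-1}$, together with a uniform-compactness argument showing all hulls $K_{n,k}$ remain in a fixed $B_R(0)$, which is precisely what the event $\mcl A_{C,t_0}(S,r,\cdot,\cdot)$ was designed to supply.
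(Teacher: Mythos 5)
Your high-level strategy — replace increments one at a time, control the distortion of each swap, show the total distortion vanishes as $n\to\infty$ — is the same as the paper's. But the argument as written has two genuine gaps, one of which is the crux of the whole proof.

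The serious gap is the step where you pass from the per-step bound~\eqref{eqn-plan-dist-bound} to a bound on $\sup_{B_R(0)}|\wh f_n - \op{id}|$. The distortion estimate $|f_{n,k}(z)-z|\preceq\op{diam}(K_{n,k})^2$ holds only for $z$ at distance at least a constant multiple of $\op{diam}(K_{n,k})$ from the hull $K_{n,k}$. When you telescope $\wh f_n = f_{n,1}\circ\cdots\circ f_{n,n}$, the intermediate iterates $\BB f_{n,j}(z) := f_{n,j+1}\circ\cdots\circ f_{n,n}(z)$ are the points to which you must apply the estimate, and you have no control over their distances to the hulls $K_{n,j}$ without further argument. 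A priori, even a small cumulative drift could push some $\BB f_{n,j}(z)$ into a hull and invalidate all subsequent estimates. This is exactly what Lemma~\ref{lem-distortion-conclude} handles in the paper, via a bootstrap: one restricts to the regularity event $M_{n,k}(z)$ of~\eqref{eqn-min-dist} and then uses the Koebe quarter theorem and a Bieberbach-type coefficient bound to show that, for $z$ bounded away from $\rng\eta_{n,n}([a,b])$ and for $n$ large, the iterates $\BB f_{n,j}(z)$ stay at distance $\gtrsim n^{-u/2}$ from the corresponding hulls, so the regularity event actually holds. Without this self-improving argument, the claim "$\wh f_n\to\op{id}$ locally uniformly" is simply asserted rather than derived from~\eqref{eqn-plan-dist-bound} plus Proposition~\ref{prop-diam-sum}.

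A second, more structural issue is your choice to embed every $\rng{\mcl C}_{n,k}$ directly by its $C$-smooth canonical description. With that choice the maps $f_{n,k}$ will have Laurent expansion $\alpha_{n,k}z + \beta_{n,k} + O(1/z)$ where $\alpha_{n,k}$ and $\beta_{n,k}$ are only bounded (via Lemma~\ref{lem-stable-coeff}, at constant order depending on $r$), not close to $1$ and $0$; composing $n$ such maps could therefore accumulate an error of order $(4/r)^n$. The paper sidesteps this by instead fixing a single global normalization of the embeddings $\rng h_{n,k}$ under which $f_{n,k_1\srta k_2}(z) = z + O_z(1/z)$ identically (see~\eqref{eqn-inc-homeo-asymp}), and then uses the $C$-smooth canonical description $\rng h_{n,k}^C$ only as an auxiliary coordinate system in Lemma~\ref{lem-use-distortion}: the stability event $\mcl A_{C,t_0}(S,r,a,b)$ (applied once to the fixed interval $[a,b]$, not to each $[t_{n,k-1},t_{n,k}]$) bounds the conversion factors $\rho_{n,k}$ between the two embeddings, so $\op{diam}(K_{n,k})\asymp\op{diam}(K_{n,k}^C)$, which is what lets Proposition~\ref{prop-diam-sum} kick in. Using $\mcl A_{C,0}(S,r,t_{n,k-1},t_{n,k})$ for each $k$ separately does not give the bounded-scaling conclusion you need (and Lemma~\ref{lem-stable-prob} makes the probability of $\mcl A_{C,t_0}$ large only after tuning $C,t_0,S,r$ for a \emph{fixed} interval).

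The remaining ingredients in your outline — construction of $f_{n,k}$ via the welding functionals, the observation that $f_{n,k}=\op{id}$ off $G_{n,k}$, the reduction to a sum of squared diameters, the use of~\eqref{eqn-surface-inc-law} to invoke Proposition~\ref{prop-diam-sum}, and the use of Lemma~\ref{lem-stable-prob} to make the stability event likely — are the same as the paper's and are fine.
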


\subsection{Embedding the intermediate surfaces}
\label{sec-surface-inc-embed}

Suppose we are in the setting of Section~\ref{sec-surface-inc-def}. 
For $k \in [0,n]_{\BB Z}$, let~$\mathring h_{n,k}$ be an embedding of the $\gamma$-quantum cone~$\mathring{\mcl C}_{n,k}$ into~$\BB C $. The field~$\mathring h_{n,k}$ is only defined up to a complex affine transformation, but for now we work with an arbitrary choice of embedding (we will specify a particular choice later). By a slight abuse of notation, we identify the curves~$\eta_{n,k}$ and~$\mathring\eta'_{n,k}$ with their images under this embedding, so that $\mathring \eta_{n,k} : [a, b]\rta \BB C$ and $\mathring\eta'_{n,k} : \BB R \rta\BB C$.

\begin{lem} \label{lem-inc-homeo}
Fix any choice of embeddings $\mathring h_{n,k}$ of the quantum surfaces~$\mathring{\mcl C}_{n,k}$ as above.  For each $k_1,k_2 \in [0,n]_{\BB Z}$, there is a homeomorphism $f = f_{n,k_1  \srta    k_2} : \BB C\rta \BB C$ such that $f \circ \mathring\eta_{n,k_1}  =   \mathring\eta_{n,k_2}$ and $f \circ \mathring\eta'_{n,k_1} = \mathring\eta'_{n,k_2}$.  Furthermore, $f$ is conformal on $\BB C\setminus \mathring\eta_{n,k_1}([t_{n,k_1\wedge k_2} , t_{n,k_1\vee k_2} ])$ and satisfies
\eqb \label{eqn-inc-homeo-lqg}
\mathring h_{n,k_2}   =    \mathring h_{n,k_1} \circ f^{-1} + Q\log |(f^{-1})'|  
\quad \text{on} \quad \BB C\setminus \mathring\eta_{n,k_2}([t_{n,k_1\wedge k_2} , t_{n,k_1\vee k_2} ]) .
\eqe 
\end{lem}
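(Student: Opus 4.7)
The plan is to first reduce to the adjacent case $|k_1-k_2|=1$ by composition, then construct $f_{n, k \srta k-1}$ using the observation that $\rng{\mcl C}_{n,k}$ and $\rng{\mcl C}_{n,k-1}$ agree as quantum surfaces outside the non-space-filling curve segment $\rng\eta_{n,\cdot}([t_{n,k-1}, t_{n,k}])$.

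For the reduction, assuming $f_{n, k \srta k-1}$ has been constructed for each $k\in [1,n]_{\BB Z}$, I define, for $k_1 > k_2$,
\[ f_{n, k_1 \srta k_2} := f_{n, k_2+1 \srta k_2} \circ f_{n, k_2+2 \srta k_2+1} \circ \cdots \circ f_{n, k_1 \srta k_1-1}, \]
for $k_1 < k_2$ take the inverse, and for $k_1 = k_2$ take the identity. The telescoping gives $f \circ \rng\eta_{n, k_1} = \rng\eta_{n, k_2}$ and $f \circ \rng\eta'_{n,k_1} = \rng\eta'_{n,k_2}$, the composition is conformal on the intersection of the factors' conformality domains (which equals $\BB C \setminus \rng\eta_{n,k_1}([t_{n,k_1\wedge k_2}, t_{n,k_1\vee k_2}])$), and~\eqref{eqn-inc-homeo-lqg} follows from applying the individual coordinate changes in sequence.

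For the adjacent case $k_2 = k-1$: by the construction in Section~\ref{sec-surface-inc-def}, both $\rng{\mcl C}_{n,k}$ and $\rng{\mcl C}_{n,k-1}$ are obtained by conformally welding the following common pieces: (i) the past wedge $\rng{\mcl S}_{-\infty,a}$; (ii) the conformalized bubbles and $\SLE_{\kappa'}$-type curve up to time $t_{n,k-1}$, obtained from the functional $F_{n,k-1}$ (which agrees with the restriction of the output of $F_{n,k}$ to the same time interval by the Markov-type consistency of the SLE welding); (iii) each bubble $\wt{\mcl S}_{\sigma(s),\tau(s)}$ filled in by $\wt\eta'$ during $[t_{n,k-1}, t_{n,k}]$, which is the same singly marked quantum disk in both cones; and (iv) the future wedge $\wt{\mcl S}_{t_{n,k}, \infty}$, identified inside both cones via the third bullet of observations at the end of Section~\ref{sec-surface-inc-def}. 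The two cones differ only in how these pieces are glued along the curve $\wt\eta_{a,b}|_{[t_{n,k-1}, t_{n,k}]}$. Thus the complement of $\rng\eta_{n,\cdot}([t_{n,k-1}, t_{n,k}])$ is the same quantum surface in both cones. Since the boundary arcs separating these common pieces are finite unions of $\SLE_\kappa$-type curves for $\kappa = 16/\kappa' \in (0,4)$ --- which are conformally removable by~\cite[Proposition~3.16]{wedges} --- the embeddings $\rng h_{n,k}$ and $\rng h_{n,k-1}$ give rise to a unique conformal map $f_0 : \BB C\setminus \rng\eta_{n,k}([t_{n,k-1}, t_{n,k}]) \rta \BB C\setminus \rng\eta_{n,k-1}([t_{n,k-1}, t_{n,k}])$ satisfying~\eqref{eqn-inc-homeo-lqg}. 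I extend $f_0$ by setting $f \circ \rng\eta_{n,k}(t) := \rng\eta_{n,k-1}(t)$ for $t\in [t_{n,k-1}, t_{n,k}]$; continuity and bijectivity of the extension follow since the curves $\rng\eta'_{n,k}|_{[t_{n,k-1}, t_{n,k}]}$ and $\rng\eta'_{n,k-1}|_{[t_{n,k-1}, t_{n,k}]}$ have the same peanosphere Brownian-motion encoding and their image regions are therefore curve-preservingly homeomorphic in a way that matches $f_0$ on the boundary.

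The main obstacle is rigorously verifying the claim that the complement of the swap curve is the same quantum surface in $\rng{\mcl C}_{n,k}$ and $\rng{\mcl C}_{n,k-1}$; this reduces to checking that the deterministic functionals $F_{n,k}$ and $F_{n,k-1}$ from Lemma~\ref{lem-full-surface-msrble} produce consistent conformal structures on the overlapping portion corresponding to $[a, t_{n,k-1}]$. I expect this to follow by unpacking the conformal welding inside these functionals and invoking the Markov consistency of SLE$_{\kappa'}$ welding under restriction to an earlier stopping time, combined with the conformal removability of $\SLE_\kappa$ boundaries to pin down the identification uniquely.
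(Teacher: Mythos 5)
Your overall strategy tracks the paper's proof quite closely: both arguments observe that the two cones are built from the same pieces, welded together in almost the same way, so that they agree (as quantum surfaces, decorated by curves) off the segment $\mathring\eta_{n,\cdot}([t_{n,k_1\wedge k_2},t_{n,k_1\vee k_2}])$, and both then invoke conformal removability of the $\SLE_\kappa$-type boundaries to upgrade this to a conformal identification.  Your reduction to adjacent $k_1,k_2$ by composition is fine but not actually necessary — the paper runs the argument directly for general $k_1,k_2$.

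The place where your proposal does not yet close is exactly the point you flag as the ``main obstacle,'' and it is not a cosmetic issue: showing that the conformal structures induced on the common region $\mathring\eta'_{n,\cdot}([a,t_{n,k_1\wedge k_2}])$ by the two welding procedures agree is the heart of the lemma, and ``Markov consistency of the $\SLE_{\kappa'}$ welding'' does not follow from the construction alone.  The issue is that each $F_{n,k}$ first imposes a conformal structure on the \emph{entire} closure of the bubble region $\wt{\mcl S}^0_{a,\tau(t_{n,k})}$ (not just on the individual bubbles) before welding in the past and future wedges, and one must rule out the possibility that $F_{n,k}$ and $F_{n,k-1}$ impose \emph{different} conformal structures on the sub-region parameterized by $[a,\tau(t_{n,k-1})]$: the $\SLE_{\kappa'}$-type gluing interface $\eta_{a,b}$ is not known to be conformally removable, so the extension of conformal structure to the bubble closure is not automatically unique.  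The paper resolves this with a genuine measurability input, Lemma~\ref{lem-partial-surface-msrble}: it shows that $(\mathcal{S}^0_{a,\tau},\eta_{a,b,\mathcal{S}^0_{a,\tau}})$ a.s.\ determines the full curve-decorated quantum surface $(\mathcal{S}_{a,\tau},\eta_{a,b,\mathcal{S}_{a,\tau}})$.  Since the bubble-level data fed into $F_{n,k}$ and $F_{n,k-1}$ on the time interval $[a,t_{n,k_1\wedge k_2}]$ are literally the same, Lemma~\ref{lem-partial-surface-msrble} (combined with~\eqref{eqn-surface-inc-law}, which identifies the law as that of $\mathcal{S}_{a,\tau(t_{n,k_1\wedge k_2})}$) forces the two quantum-surface structures to coincide.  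You should cite and use this lemma explicitly; gesturing at ``Markov consistency'' leaves the gap open, since the consistency is precisely what Lemma~\ref{lem-partial-surface-msrble} establishes and its proof is itself nontrivial (it goes through Lemma~\ref{lem-bm-determined} and a conditional-independence argument).

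Two smaller remarks.  First, the paper first builds $f$ as a global homeomorphism directly from the welding identification and then argues conformality on a larger domain; you go the other way (conformal map first, then extend across the swap curve), which is fine in principle, but you should make explicit that the extension is well-defined and injective — this uses that the two curves $\rng\eta_{n,k}|_{[t_{n,k-1},t_{n,k}]}$ and $\rng\eta_{n,k-1}|_{[t_{n,k-1},t_{n,k}]}$ have the same self-intersection pattern, which follows from condition~\ref{item-wpsf-char-homeo} of Theorem~\ref{thm-wpsf-char}.  Second, before invoking removability you also need to handle the boundary $\bdy\mathring\eta'_{n,k_1}([a,\infty))$ separating the past wedge from the future wedge; the paper uses~\eqref{eqn-surface-inc-law} to argue that this set has the law of a pair of non-crossing whole-plane $\SLE_\kappa(2-\kappa)$ curves and is hence removable by~\cite[Proposition~3.16]{wedges} — worth including this step explicitly in your write-up.
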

\begin{proof}
By definition (recall the discussion just before~\eqref{eqn-use-partial-surface-law}), the surface $\rng{\mcl C}_{n,k}$ is obtained by extending the conformal structure on the bubbles of $\wt{\mcl S}_{a , \tau(t_{n,k}) }^0$ to a conformal structure on the closure of their union, then identifying the surfaces $\rng{\mcl S}_{-\infty,a}$, $\wt{\mcl S}_{a,\tau(t_{n,k})}^0$, and $\wt{\mcl S}_{\tau(t_{n,k}) , \infty}$ according to quantum length along their boundaries. 
By condition~\ref{item-wpsf-char-homeo} in Theorem~\ref{thm-wpsf-char}, the equivalence relation on the boundaries of these three surfaces used to produce $\rng{\mcl C}_{n,k}$ is the same as the one used to produce the original surface $\wt{\mcl C} = (\BB C , \wt h , 0, \infty)$. 

The curve $\rng\eta_{n,k}|_{[a,t_{n,k}]}$ cuts out the bubbles of the image of $\wt{\mcl S}_{a,\tau(t_{n,k})}^0$ under the conformal welding map in the same order that $\wt\eta_{a,b}|_{[a,t_{n,k}]}$ cuts out the bubbles of $\wt{\mcl S}_{a,\tau(t_{n,k})}^0$ and is parameterized by the quantum mass of the bubbles it cuts out. By definition, the curve $\rng\eta_{n,k}|_{[t_{n,k},\infty)}$ is the image of $\wt\eta_{a,b,\wt{\mcl S}_{\tau(t_{n,k}) , \infty)}}|_{[t_{n,k} , \infty)}$ under the conformal welding map. 
  
This holds for each $k\in [0,n]_{\BB Z}$, so for $k_1,k_2\in [0,n]_{\BB Z}$ there exists a homeomorphism $f : \BB C\rta \BB C$ which is conformal on $\BB C\setminus (\bdy \mathring\eta'_{n,k_1}([a,\infty)) \cup \mathring\eta_{n,k_1}([a,t_{n,k_1\vee k_2} ]))$ such that 
\eqb  \label{eqn-inc-homeo-lqg0}
\mathring h_{n,k_2}   =    \mathring h_{n,k_1} \circ f^{-1} + Q\log |(f^{-1})'|  \quad \text{on} \quad  \BB C\setminus    \left(\bdy \mathring\eta'_{n,k_2}([a,\infty)) \cup \mathring\eta_{n,k_2}([a, t_{n,k_1\vee k_2} ])    \right)   , 
\eqe  
$f(\mathring\eta'_{n,k_1}([a,\infty)) )   =  \mathring\eta'_{n,k_2}([a,\infty))$, and $f\circ \mathring\eta_{n,k_1} = \mathring\eta_{n,k_2}$. 
Since the curves $\rng\eta'_{n,k }$ were defined by interpolating the curves $\rng\eta_{n,k}$ with conformal images of segments of the same fixed curve, it follows that also $f\circ \rng\eta'_{n,k_1} = \rng\eta'_{n,k_2}$. 
 
Now we will show that $f$ is in fact conformal on $\BB C\setminus \mathring\eta_{n,k_1}([t_{n,k_1\wedge k_2} , t_{n,k_1\vee k_2} ])$ and check~\eqref{eqn-inc-homeo-lqg}.  
We first argue that $f$ extends conformally across $\bdy \mathring\eta'_{n,k_1}([a,\infty))$ using a conformal removability argument (recall Section~\ref{sec-welding}).
By~\eqref{eqn-surface-inc-law}, each of the pairs $(\mathring{\mcl C}_{n,k_1} , \mathring{\mcl S}_{n,k_1})$ agrees in law with $(\mcl C, \mcl S_{a,\infty})$. 
Since $\mathring{\mcl S}_{n,k_1}$ is the sub-surface of $\mathring{\mcl C}_{n,k_1}$ parameterized by $\rng\eta'_{n,k_1}([a,\infty))$, we find that $\bdy\eta'_{n,k_1}([a,\infty)) \eqD \bdy\eta'([a,\infty))$. By~\cite[Footnote 4]{wedges} and translation invariance of the law of $\eta'$~\cite[Theorem~1.9]{wedges}, the common law of these two boundaries is that of a pair of non-crossing whole-plane $\SLE_{\kappa}(2-\kappa)$ curves, for $\kappa =16/\kappa'$.
By~\cite[Proposition~3.16]{wedges}, $\bdy\eta'_{n,k_1}([a,\infty))$ is conformally removable. Hence $f$ is a.s.\ conformal on $\BB C\setminus \eta_{n,k_1}([a , t_{n,k_1 \vee k_2}])$.  

It remains to check that $f$ extends conformally across $\eta_{n,k_1}([a, t_{n,k_1\wedge k_2}])$. 
By~\eqref{eqn-surface-inc-function}, the sub-surfaces of $\rng{\mcl C}_{n,k_1}$ and $\rng{\mcl C}_{n,k_2}$, respectively, parameterized by the bubbles cut out by $\rng\eta_{n,k_1}|_{[a,\tau(t_{n,k_1})]}$ and $\rng\eta_{n,k_2}|_{[a,\tau(t_{n,k_2})]}$
(i.e., the analogs of $\mcl S_{a,\tau(t_{n,k_1\wedge k_2}) }^0$ for the pairs $(\rng{\mcl C}_{n,k_1} , \rng\eta_{n,k_1})$ and $(\rng{\mcl C}_{n,k_2} , \rng\eta_{n,k_2})$
equipped with the curves $\rng\eta_{n,k_1}|_{[a,\tau(t_{n,k_1})]}$ and $\rng\eta_{n,k_2}|_{[a,\tau(t_{n,k_2})]}$) are equivalent as curve-decorated quantum surfaces.
By~\eqref{eqn-surface-inc-law}, the quantum surfaces 
$(\mathring\eta_{n,k_1}'([a ,  t_{n,k_1\wedge k_2 } ]) , \mathring h_{n,k_1} |_{\mathring\eta_{n,k_1}'([a ,  t_{n,k_1\wedge k_2 } ])})$ and 
$(\mathring\eta_{n,k_2}'([a ,  t_{n,k_1\wedge k_2 } ]) , \mathring h_{n,k_2} |_{\mathring\eta_{n,k_2}'([a ,  t_{n,k_1\wedge k_2 } ])})$ 
each have the same law as $ \mcl S_{a,\tau(t_{n,k_1\wedge k_2})}$. By Lemma~\ref{lem-partial-surface-msrble}, these last two quantum surfaces are equivalent. 

Hence there exists a homeomorphism $\mathring\eta_{n,k_1}'([ a,  t_{n,k_1\wedge k_2} ]) \rta  \mathring\eta_{n,k_2}'([ a,  t_{n,k_1\wedge k_2} ]) $ which is conformal on the interior of $\mathring\eta_{n,k_1}'([ a,  t_{n,k_1\wedge k_2} ])$ and pushes forward $\mathring h_{n,k_1} |_{\mathring\eta_{n,k_1}'([ a,  t_{n,k_1\wedge k_2} ])}$ to $\mathring h_{n,k_2} |_{\mathring\eta_{n,k_2}'([ a,  t_{n,k_1\wedge k_2} ])}$ via the LQG coordinate change formula. By~\eqref{eqn-inc-homeo-lqg0}, this homeomorphism agrees with $f|_{\mathring\eta_{n,k_1}'([ a,  t_{n,k_1\wedge k_2} ]) }$. In particular $f$ is conformal on the interior of $\mathring\eta_{n,k_1}'([ a,  t_{n,k_1\wedge k_2} ])$, which contains $\rng\eta_{n,k_1}([ a,  t_{n,k_1\wedge k_2} ]) \setminus \bdy  \mathring\eta_{n,k_1}'([ a,  t_{n,k_1\wedge k_2} ])$.

By~\eqref{eqn-surface-inc-law}, the boundary of $\mathring\eta_{n,k_1}'([ a,  t_{n,k_1\wedge k_2} ])$ is a locally finite union of segments of $\SLE_\kappa$-type curves for $\kappa = 16/\kappa'$, so is conformally removable. 
Thus $f$ is conformal on $\BB C\setminus \mathring\eta_{n,k_1}'([   t_{n,k_1\wedge k_2} , t_{n,k_1\vee k_2} ])$ and~\eqref{eqn-inc-homeo-lqg} holds.
\end{proof}

Since the curves $\rng\eta'_{n,k_1}$ and $\rng\eta'_{n,k_2}$ are space-filling, the condition that $f_{n,k_1\srta k_2} \circ \mathring\eta_{n,k_1}'  =   \mathring\eta_{n,k_2}'$ uniquely determines the map $f_{n,k_1\srta k_2}$ from Lemma~\ref{lem-inc-homeo}. This same condition also allows us to deduce that the maps $f_{n,k_1\srta k_2}$ are compatible in the sense that 
\eqb \label{eqn-inc-homeo-compat}
 f_{n,k_1\srta k_3} = f_{n,k_2\srta k_3} \circ f_{n,k_2\srta k_1}^{-1} ,\quad \forall k_1,k_2 ,k_3 \in [0,n]_{\BB Z}  .
\eqe  

Recall that the embeddings $\mathring h_{n,k}$ are only defined up to a complex affine transformation. We now specify this transformation. 
For $k\in [0,n]_{\BB Z}$, let $\BB K_{n,k}$ be the set of points in $\BB C$ which are disconnected from $\infty$ by the image of the space-filling curve segment $\mathring\eta'_{n,k}([a,b])$ under the embedding $\mathring h_{n,k}$. 
 
Each of the maps $f_{n,k_1\srta k_2}$ in Lemma~\ref{lem-inc-homeo} restricts to a conformal map $\BB C\setminus \BB K_{n,k_1} \rta  \BB C \setminus  \BB K_{n,k_2}$
By Laurent expansion at $\infty$, we can write $f_{n,k_1\srta k_2} = \alpha_{n,k_1\srta k_2} z + \beta_{n,k_1\srta k_2} + O_z(1/z) $ as $z\rta\infty$ for complex coefficients $\alpha_{n,k_1\srta k_2} $ and $ \beta_{n,k_1\srta k_2}$. 
By possibly applying a complex affine transformation, we can assume without loss of generality that each of the embeddings $\mathring h_{n,k}$ are such that $\alpha_{n,n\srta k} = 1$ and $\beta_{n,n\srta k} =0$ for each $k\in [0,n]_{\BB Z}$. 
By~\eqref{eqn-inc-homeo-compat} this implies that
\eqb \label{eqn-inc-homeo-asymp}
f_{n,k_1\srta k_2} = z + O_z(1/z)  \quad \text{as $z\rta\infty$} \quad \forall k_1,k_2\in [0,n]_{\BB Z}. 
\eqe 

There are still two complex degrees of freedom which come from pre-composing each of the fields $\mathring h_{n,k}$ with the same complex affine transformation. We fix all but one of these degrees of freedom (corresponding to a simultaneous rotation of all of our embeddings) by requiring that there is a conformal map $\BB C\setminus \ol{\BB D} \rta \BB  C\setminus \BB K_{n,n}$ which is of order $z + O_z(1/z)$ as $z\rta\infty$. 
Since we have already assumed that $f_{n,k_1\srta k_2} = z + O_z(1/z)$, each of the $\BB K_{n,k}$'s has the same capacity and harmonic center, so the condition of the preceding sentence is equivalent to requiring that each $\BB K_{n,k}$ has unit capacity and zero harmonic center, in the terminology of~\cite[Section~9]{wedges}. We henceforth assume that our embeddings have been selected in this manner.
     
Our choices of embeddings $\mathring h_{n,n}$ and $\mathring h_{n,0}$ depend only on the curve-decorated quantum surfaces $(\mathring{\mcl C}_{n,n} , \mathring\eta'_{n,n})$ and $(\mathring{\mcl C}_{n,0} , \mathring\eta'_{n,0})$, respectively, which we recall do not depend on $n$. In particular, the hulls $\BB K_{n,n}$ and $\BB K_{n,0}$ and the map $  f_{n,n\srta 0}$ do not depend on $n$. We emphasize this point by writing
\eqb \label{eqn-end-hull}
\BB K = \BB K_{n,n} , \quad \wt{\BB K} := \BB K_{n,0} ,\quad \op{and} \quad \BB f =  f_{n,n\srta 0} : \BB C \rta \BB C.
\eqe 
We note that $\BB f$ maps $\BB C\setminus \BB K$ to $ \BB C\setminus \wt{\BB K}$ conformally.  In the next subsection we will prove the following statement which immediately implies Proposition~\ref{prop-embedding-equiv}.

\begin{prop} \label{prop-hull-agree}
The map $\BB f $ defined in~\eqref{eqn-end-hull} is a.s.\ equal to the identity.
\end{prop}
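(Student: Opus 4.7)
The plan is to bound the total distortion of the composition
\[
\BB f = f_{n,1\srta 0} \circ f_{n,2\srta 1} \circ \cdots \circ f_{n,n\srta n-1}
\]
from the identity, and then use that $\BB f$ does not depend on $n$ to send $n\rta\infty$. The key dichotomy at each step $k$ is whether the space-filling segment $\rng\eta'_{n,k}([t_{n,k-1},t_{n,k}])$ intersects the non-space-filling curve $\rng\eta_{n,k}$ or not.

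First I would show that when $\rng\eta'_{n,k}([t_{n,k-1},t_{n,k}]) \cap \rng\eta_{n,k} = \emptyset$, the segment is contained in a single bubble cut out by $\rng\eta_{n,k}$. By the construction in Section~\ref{sec-surface-inc-def}, this bubble is the same conformal image of a segment of $\wt\eta'$ in both $\rng{\mcl C}_{n,k}$ and $\rng{\mcl C}_{n,k-1}$; combined with the normalization \eqref{eqn-inc-homeo-asymp} and conformal removability of the outer boundary of the past surface (\cite[Proposition~3.16]{wedges}), this forces $f_{n,k\srta k-1}$ to be the identity. On the event $G_{n,k} := \{\rng\eta'_{n,k}([t_{n,k-1},t_{n,k}]) \cap \rng\eta_{n,k} \neq \emptyset\}$, I would apply the distortion estimate \cite[Lemma~9.6]{wedges} for conformal maps of the form $z + O_z(1/z)$ near $\infty$ to obtain
\[
|f_{n,k\srta k-1}(z) - z| \preceq \op{diam}\bigl(\rng\eta'_{n,k}([t_{n,k-1},t_{n,k}])\bigr)^2 \big/ \op{dist}(z, \BB K_{n,k}\cup \BB K_{n,k-1})
\]
uniformly for $z$ bounded away from $\BB K_{n,k} \cup \BB K_{n,k-1}$.

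To deploy Proposition~\ref{prop-diam-sum}, I would normalize each $\rng h_{n,k}$ so that $\rng h_{n,k}(\cdot + \rng\eta_{n,k}(t_0))$ is a $C$-smooth canonical description of the quantum cone $\rng{\mcl C}_{n,k}$, for a common choice of $C$, with the residual rotational degree of freedom fixed compatibly with the normalization of $\BB K_{n,n}$ used in \eqref{eqn-end-hull}. The stability event $\mcl A_{C,t_0}(S,r,a,b)$ of Section~\ref{sec-stability}, which by Lemma~\ref{lem-stable-prob} can be made to have probability at least $1-\ep$, together with Lemma~\ref{lem-stable-coeff}, yields uniform control on the Laurent coefficients of the maps between the embeddings and in particular confines all the hulls $\BB K_{n,k}$ to a common compact subset of $\BB C$. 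Applying Proposition~\ref{prop-diam-sum} on this event then gives
\[
\sum_{k=1}^n \BB E\left[\op{diam}\bigl(\rng\eta'_{n,k}([t_{n,k-1},t_{n,k}])\bigr)^2 \BB 1_{G_{n,k}}\right] \leq n^{-\alpha + o_n(1)}.
\]
Combining this with the per-step distortion bound and composing shows that $\sup_{z\in K'} |\BB f(z) - z| \rta 0$ in probability as $n\rta\infty$, uniformly on any compact $K' \subset \BB C\setminus \BB K$.

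Since $\BB f$, $\BB K$, and $\wt{\BB K}$ do not depend on $n$, the above forces $\BB f = \op{id}$ on $\BB C\setminus \BB K$ almost surely on the stability event; taking $\ep\rta 0$ extends this to hold almost surely. Conformal removability of $\bdy\BB K$---a finite union of $\SLE_\kappa$-type curves for $\kappa = 16/\kappa' \in (0,4)$, removable by \cite[Proposition~3.16]{wedges}---then extends $\BB f = \op{id}$ to all of $\BB C$ and identifies $\BB K = \wt{\BB K}$. The main obstacle will be the coordination of the embedding normalizations: Proposition~\ref{prop-diam-sum} is stated for a specific (circle-average or smooth canonical) embedding, while the embeddings $\rng h_{n,k}$ are essentially forced upon us by the welding construction and the asymptotic condition \eqref{eqn-inc-homeo-asymp}. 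A secondary subtlety is propagating the per-step bounds through the $n$-fold composition—since the hulls $\BB K_{n,k}$ vary with $k$, controlling $\op{dist}(z, \BB K_{n,k})$ uniformly requires that the stability event keep all of them in a common compact, which is precisely the purpose of $\mcl A_{C,t_0}(S,r,a,b)$.
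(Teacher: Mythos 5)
Your outline captures the right skeleton (dichotomy on whether the segment meets the non-space-filling curve, distortion estimate from \cite[Lemma~9.6]{wedges}, Proposition~\ref{prop-diam-sum} to sum the squared diameters, stability event to relate embeddings), but two steps that you treat as routine are actually where the real work lies, and as written your argument does not close.

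First, the composition step. You write ``combining this with the per-step distortion bound and composing shows $\sup_{z\in K'}|\BB f(z)-z|\rta 0$,'' but this is not a matter of summing $n$ independent error terms. To apply the per-step bound to $f_{n,k}$ you need the \emph{current} point $\BB f_{n,k}(z)$---not $z$ itself---to stay bounded away from the \emph{small} hull $K_{n,k}$ (not $\BB K_{n,k}\cup\BB K_{n,k-1}$ as you wrote; the denominator in \cite[Lemma~9.6]{wedges} involves the hull generated by the single segment being swapped, which is what makes the bound useful). The hulls $K_{n,k}$ wander around $\rng\eta_{n,k}$, and the stability event $\mcl A_{C,t_0}(S,r,a,b)$ does \emph{not} keep $\BB f_{n,j}(z)$ away from them---it only controls Laurent coefficients (scaling and translation) of the change-of-embedding map. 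The paper introduces an entirely separate mechanism to handle this: the events $M_{n,k}(z)$ of~\eqref{eqn-min-dist}, requiring $\op{dist}(\BB f_{n,j}(z), K_{n,j}) \geq n^{-u}$ for all $j\geq k$, and Lemma~\ref{lem-distortion-conclude}, which is a complex-analysis argument (Taylor expansion plus the Koebe quarter theorem) showing that for $z$ off the curve $\rng\eta_{n,n}([a,b])$, $M_n(z)$ eventually holds. Your proposal omits this and attributes the needed control to the wrong ingredient.

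Second, the conclusion. You establish $\BB f = \op{id}$ on $\BB C\setminus\BB K$ and then propose to extend to the interior of $\BB K$ by ``conformal removability of $\bdy\BB K$.'' This does not work. Knowing $\BB f = \op{id}$ on $\BB C\setminus\BB K$ forces $\BB K = \wt{\BB K}$ (by continuity on $\bdy\BB K$), but then $\BB f|_{\BB K}$ is a self-homeomorphism of $\BB K$ fixing $\bdy\BB K$ pointwise and conformal off the \emph{non-space-filling} SLE$_{\kappa'}$-type curve $\rng\eta_{n,n}([a,b])$. To conclude it is the identity you would need removability of that curve---precisely the open problem this paper is designed to circumvent (Section~\ref{sec-welding}). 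The paper avoids this by having Lemma~\ref{lem-distortion-conclude} show $\BB f(z)=z$ for \emph{every} $z\notin\rng\eta_{n,n}([a,b])$ (including points in the interior of $\BB K$); since this curve has empty interior, continuity alone then gives $\BB f=\op{id}$ everywhere, with no appeal to removability of an $\SLE_{\kappa'}$ curve.

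A smaller issue: you propose to take each $\rng h_{n,k}$ to be a $C$-smooth canonical description. The paper instead keeps the embeddings in the ``compatible'' normalization of Section~\ref{sec-surface-inc-embed} (so every $f_{n,k_1\srta k_2}$ is $z+O_z(1/z)$ at $\infty$, which is what makes the telescoping in~\eqref{eqn-map-decomp} and the distortion bound work), and then uses Lemma~\ref{lem-stable-coeff} on the stability event to transfer the diameter bound from the smooth canonical description embedding to the compatible one (Lemma~\ref{lem-use-distortion}). If you instead forced each $\rng h_{n,k}$ to be a smooth canonical description outright, the maps between consecutive embeddings would no longer have the clean asymptotics~\eqref{eqn-inc-homeo-asymp}, and you would need a separate argument to show the accumulated affine corrections vanish.
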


\subsection{Distortion bound}
\label{sec-surface-inc-distortion}

\begin{figure}[ht!]
\begin{center}
\includegraphics[scale=.8]{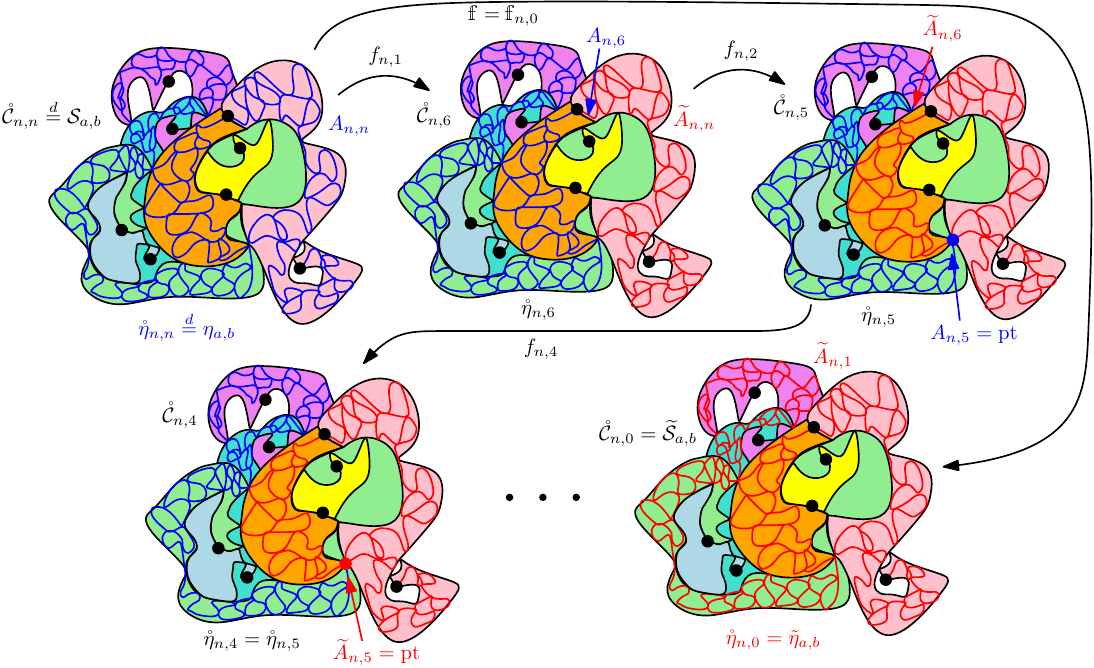}
\end{center}
\caption[Maps between intermediate curve-decorated surfaces]{
Illustration of the intermediate quantum surfaces $\rng{\mcl C}_{n,k}$, the non-space-filling curves $\rng\eta_{n,k}$ on them, and the maps $f_{n,k} = f_{n,k\srta  k-1}$ between them for $n=7$. At each step, we replace the quantum surface parameterized by the space-filling SLE segment $ \rng\eta_{n,k}'([t_{n,k-1} , t_{n,k}])$ (which coincides with the quantum surface parameterized by $\rng\eta_{n,n}'([t_{n,k-1} , t_{n,k}])$) by the quantum surface parameterized by $\rng\eta_{n,k-1}'([t_{n,k-1} ,t_{n,k}])$ (which coincides with the quantum surface parameterized by $\rng\eta_{n,0}'([t_{n,k-1} , t_{n,k}])$). This is indicated by changing the color of the corresponding segment of $\rng\eta_{n,k}$ from blue to red. Each of the maps $f_{n,k}$ is conformal on the complement of $A_{n,k} = \rng\eta_{n,k}([t_{n,k-1} , t_{n,k}])$ and takes $\BB C\setminus K_{n,k}$ to $\BB C\setminus \wt K_{n,k}$. If $E_{n,k}$ does not occur---i.e., $\rng\eta_{n,n}'([t_{n,k-1} , t_{n,k}]) \cap  \rng\eta_{n,n}   $ is a single point---which here is the case for $k=5$ (yellow) and $k=3$ (light blue, map not shown), then $f_{n,k}$ is the identity map. We show in Section~\ref{sec-surface-inc-distortion} that $\BB f$, the composition of all of the $f_{n,k}$'s (which does not depend on $n$), is a.s.\ the identity map by showing that the sum of the distortions of the $f_{n,k}$'s is small when $n$ is large. 
 }\label{fig-swapping-maps} 
\end{figure}

Suppose we are in the setting of Section~\ref{sec-surface-inc-embed}. In particular, define the embeddings $\mathring h_{n,k}$ of $\mathring{\mcl C}_{n,k}$, the curves $\mathring\eta_{n,k}$ (identified with their image under this embedding), and the hulls $\BB K_{n,k}$ as in that subsection. Here we will prove Proposition~\ref{prop-hull-agree}, which will complete the proof of Proposition~\ref{prop-inc-agree}. 

To lighten notation, for $k\in [1,n]_{\BB Z}$ we make the following definitions (see Figure~\ref{fig-swapping-maps} for an illustration).  
\begin{itemize} 
\item Let $ A_{n,k} := \mathring \eta_{n,k}([t_{n,k-1} , t_{n,k}])$ and $\wt A_{n,k} := \mathring \eta_{n,k-1}([t_{n,k-1} , t_{n,k}])$.  
\item Let $K_{n,k}$ (resp.\ $\wt K_{n,k}$) be the set of points in $\BB C$ which are disconnected from $\infty$ by $A_{n,k} $ (resp.\ $\wt A_{n,k}$). 
\item Let $f_{n,k} := f_{n,k\srta  k-1}$ and $\BB f_{n,k} := f_{n,n\srta  k}$, so that $\BB f_{n,n} = \op{Id}$ and $\BB f_{n,0} = \BB f$.  
\end{itemize} 
Note that each of the hulls $K_{n,k}$ for $k\in [1,n]_{\BB Z}$ and $\wt K_{n,k+1}$ for $k\in [0,n-1]_{\BB Z}$ is contained in the hull $ \BB K_{n,k}$ defined at the end of Section~\ref{sec-surface-inc-embed}. 
Furthermore, Lemma~\ref{lem-inc-homeo} implies that $f_{n,k}$ restricts to a conformal map from $\BB C\setminus A_{n,k}$ to $\BB C\setminus \wt A_{n,k}$. 

For $k\in [1,n]_{\BB Z}$, let $E_{n,k}$ be the event that $\mathring\eta'_{n,k} ([t_{n,k-1} , t_{n,k}])$ intersects $\mathring\eta_{n,k} $ for some (equivalently every, by Lemma~\ref{lem-inc-homeo}) $k\in [0,n]_{\BB Z}$. On the event $E_{n,k}^c$, the curve $\mathring\eta_{n,k}$ is constant on $[t_{n,k-1} , t_{n,k}]$, so $A_{n,k}$ and $\wt A_{n,k}$ (hence also $K_{n,k}$ and $\wt K_{n,k}$) are singletons. 
By the Riemann removable singularities theorem, in this case $f_{n,k}$ is conformal on all of $\BB C$ so by~\eqref{eqn-inc-homeo-asymp} $f_{n,k}$ is the identity map. 
  
By~\eqref{eqn-inc-homeo-compat},  
\eqb \label{eqn-map-decomp}
\BB f_{n,k} =  f_{n,k+1} \circ \dots \circ f_{n,n} \quad\op{and}\quad \BB f = f_{n,1} \circ\dots\circ f_{n,n} .
\eqe 
We will now bound the deviation of $\BB f$ from the identity by bounding the deviation of each $f_{n,k}$ from the identity and summing over $k\in [1,n]_{\BB Z}$. 
By the discussion just above, we already know that $f_{n,k}$ is equal to the identity map on $E_{n,k}^c$ so we only need to consider the event $E_{n,k}$. 

By~\cite[Lemma~9.6]{wedges} and~\eqref{eqn-inc-homeo-asymp}, there are universal constants $c_1,c_2 > 0$ such that the following is true. For each $k\in [1,n]_{\BB Z}$ and each $z\in\BB C$ with $\op{dist}(z, K_{n,k}) \geq c_1 \op{diam}(K_{n,k}) $, it holds that
\eqb \label{eqn-use-distortion0}
|f_{n,k}(z) -z| \leq c_2 \op{diam}(K_{n,k})^2 |z - \op{hc}(K_{n,k}) |^{-1}  \BB 1_{E_{n,k}} , 
\eqe 
where here $\op{hc}(K_{n,k})$ is the harmonic center of $K_{n,k}$, i.e.\ the constant coefficient in the Laurent expansion of $f_{n,k}$ at $\infty$ of a conformal map $\BB C\setminus \ol{\BB D} \rta \BB C\setminus K_{n,k}$ which takes $\infty$ to $\infty$ with positive real derivative at $\infty$, and $E_{n,k}$ is the event defined just above (recall that $f_{n,k}$ is the identity on $E_{n,k}^c$, which is why we do not need to restrict left hand side to $E_{n,k}$). The terminology ``harmonic center" is taken from~\cite[Section 9]{wedges}.

In light of~\eqref{eqn-use-distortion0}, we are led to estimate $\op{diam}(K_{n,k})^2$ on the event $E_{n,k}$. It turns out that this is much easier to do if the embeddings $\mathring h_{n,k}$ are replaced by the $C$-smooth canonical descriptions of the surfaces $\rng{\mcl C}_{n,k}$ (defined in Section~\ref{sec-diam-sum}). 
In order to compare our given embeddings to the $C$-smooth canonical descriptions, we will work on the regularity event of Section~\ref{sec-stability}.
We will eventually apply Proposition~\ref{prop-diam-sum} to bound the expected deviation of $\BB f$ from the identity on this event. 

Since $(\rng{\mcl C}_{n,n} , \rng\eta'_{n,n}([a,b]) ) \eqD (\mcl C , \eta' ([a,b]) )$ has the law of a $\gamma$-quantum cone decorated by an independent space-filling $\SLE_{\kappa'}$ increment, we can sample a space-filling $\SLE_{\kappa'}$ curve $\rng\eta''_{n,n}$ from $\infty$ to $\infty$, independent from $\rng h_{n,n}$ and parameterized by $\gamma$-quantum mass with respect to $\rng{h}_{n,n}$, which a.s.\ satisfies $\rng \eta''_{n,n}([a,b]) = \rng\eta'_{n,n}([a,b])$ and which is conditionally independent from everything else given $(\rng{\mcl C}_{n,n} , \rng\eta'_{n,n}([a,b]) )$.  
 
Let $C , t_0 \in \BB R$, let $S$ be a closed simply connected subset of $\BB D\setminus \{0\}$ with non-empty interior, and define the event $\mcl A_{C,t_0}(S,r,a,b)$ as in Section~\ref{sec-stability} with the curve-decorated quantum surface $((\rng{\mcl C}_{n,n} - \rng\eta''_{n,n}(t_0) ) , \rng\eta''_{n,n} ) \eqD (\mcl C , \eta'_{\mcl C}  )$ in place of $(\mcl C , \eta'_{\mcl C}  )$. The reason why we translate by $ \rng\eta''_{n,n}(t_0)$ is to cancel out the translation in condition~\ref{item-stability-field} in the definition of $\mcl A_{C,t_0}(S,r,a,b)$. 

For $k \in [0,n]_{\BB Z}$, let $\rho_{n,k}   > 0$ and $ \beta_{n,k}  \in \BB C$ be chosen so that the field
\eqb \label{eqn-smooth-switch}
\mathring h_{n,k}^C   := \mathring h_{n,k}(\rho_{n,k}^{-1}( \cdot - \beta_{n,k}) ) + Q\log \rho_{n,k}^{-1}
\eqe 
is a $C$-smooth canonical description of $\mathring{\mcl C}_{n,k}$. Let 
\eqbn
K_{n,k}^C  := \rho_{n,k}  K_{n,k} + \beta_{n,k}
\eqen
be the embedding of $K_{n,k}$ into $\BB C$ corresponding to $\mathring h_{n,k}^C $.

On $\mcl A_{C,t_0}(S,r,a,b)$, we can arrange that the upper bound in~\eqref{eqn-use-distortion0} depends on $\op{diam}(K_{n,k}^C)$ (which can be bounded using Proposition~\ref{prop-diam-sum}) instead of $\op{diam}(K_{n,k})$. 

\begin{lem} \label{lem-use-distortion}
On $\mcl A_{C,t_0}(S,r,a,b)$, it holds for each $k\in [1,n]_{\BB Z}$ and each $z\in\BB C$ with $\op{dist}(z, K_{n,k}) \geq \frac{4 c_1}{r} \op{diam}(K_{n,k}^C)  $ that
\eqb \label{eqn-use-distortion}
|f_{n,k}(z) -z| \preceq \op{diam}(K_{n,k}^C)^2 \op{dist}\left(z , \op{CH}(K_{n,k}) \right)^{-1} \BB 1_{E_{n,k}} \BB 1_{(K_{n,k}^C\subset S)} 
\eqe 
with a deterministic implicit constant depending only on $r$, 
where here $\op{CH}(\cdot)$ denotes the convex hull.  
\end{lem}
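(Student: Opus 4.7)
The plan is to derive~\eqref{eqn-use-distortion} from the elementary distortion bound~\eqref{eqn-use-distortion0} by converting the quantities in that estimate from the original embedding $\mathring h_{n,k}$ to the $C$-smooth canonical embedding $\mathring h_{n,k}^C$. Writing $\psi_{n,k}(w) = \rho_{n,k} w + \beta_{n,k}$ for the complex affine map relating the two embeddings as in~\eqref{eqn-smooth-switch}, we have $K_{n,k}^C = \psi_{n,k}(K_{n,k})$ and hence $\op{diam}(K_{n,k}^C) = \rho_{n,k}\op{diam}(K_{n,k})$. A two-sided bound $\rho_{n,k}\in [r/4, 4/r]$ will suffice for the conversion: the hypothesis $\op{dist}(z, K_{n,k})\geq (4c_1/r)\op{diam}(K_{n,k}^C)$ then implies $\op{dist}(z, K_{n,k})\geq c_1\op{diam}(K_{n,k})$ as required by~\eqref{eqn-use-distortion0}; the prefactor converts via $\op{diam}(K_{n,k})^2\leq (4/r)^2 \op{diam}(K_{n,k}^C)^2$; and the harmonic center $\op{hc}(K_{n,k})$ lies in $\op{CH}(K_{n,k})$, so $|z - \op{hc}(K_{n,k})|^{-1}\leq \op{dist}(z, \op{CH}(K_{n,k}))^{-1}$.

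The crux is therefore the scaling bound $\rho_{n,k}\in [r/4, 4/r]$ on the event $\mcl A_{C,t_0}(S,r,a,b)\cap\{K_{n,k}^C\subset S\}$. To obtain this I will apply Lemma~\ref{lem-stable-coeff} to the composition $\Psi_{n,k} := \psi_{n,k}\circ \BB f_{n,k} : \BB C\setminus \BB K \to \BB C\setminus \BB K_{n,k}^C$, where $\BB f_{n,k} = f_{n,n\srta k}$ is the homeomorphism provided by Lemma~\ref{lem-inc-homeo}, which is conformal on $\BB C\setminus \rng\eta_{n,n}([t_{n,k}, b])$ and hence on $\BB C\setminus \BB K$. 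By the normalization~\eqref{eqn-inc-homeo-asymp}, $\BB f_{n,k}(z) = z + O_z(1/z)$ at $\infty$, so $\Psi_{n,k}(z) = \rho_{n,k} z + \beta_{n,k} + O_z(1/z)$ at $\infty$ and the leading Laurent coefficient of $\Psi_{n,k}$ is $\rho_{n,k}$. Since $\mcl A_{C,t_0}(S,r,a,b)$ holds for $\rng{\mcl C}_{n,n}$ with the independent space-filling SLE $\rng\eta''_{n,n}$, Lemma~\ref{lem-stable-coeff} applied to the pair $(\BB K_{n,k}^C, \Psi_{n,k})$ then yields the desired bound $\rho_{n,k}\in [r/4, 4/r]$ (and incidentally $|\beta_{n,k}|\leq 4/r + 1$).

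The main technical point requiring care is the precise verification that $(\BB K_{n,k}^C, \Psi_{n,k})$ is a valid pair $(K, f)$ in the definition of $\mcl A_{C,t_0}(S,r,a,b)$. This splits into: (i) the normalization $\Psi_{n,k}(0) = 0$, which may force an auxiliary translation of the embedding that does not affect $\rho_{n,k}$; and (ii) verifying condition~\ref{item-stability-field}, i.e.\ that the LQG pushforward of the relevant embedding of $\rng{\mcl C}_{n,n}$ under $\Psi_{n,k}^{-1}$ agrees with $\mathring h_{n,k}^C$ on $\BB C\setminus \BB K_{n,k}^C$ and hence satisfies the $C$-smooth centering condition. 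Point (ii) is essentially by construction, since $\mathring h_{n,k}^C$ is defined to be a $C$-smooth canonical description of $\rng{\mcl C}_{n,k}$, but needs to be spelled out using the conformal welding of Section~\ref{sec-surface-inc-def} to confirm that $\Psi_{n,k}$ indeed intertwines the two embeddings, and that the normalization of $\mathring h_{n,n}$ from Section~\ref{sec-surface-inc-embed} is compatible with the $C$-smooth convention up to the bounded affine adjustment accounted for by $\psi_{n,n}$.
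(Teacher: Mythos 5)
Your proposal is correct and takes essentially the same route as the paper: both deduce~\eqref{eqn-use-distortion} from~\eqref{eqn-use-distortion0} by converting via the affine change of embedding $\psi_{n,k}$, obtain the scaling bound $\rho_{n,k}\in[r/4,4/r]$ by applying Lemma~\ref{lem-stable-coeff} to the map $\rho_{n,k}\BB f_{n,k}+\beta_{n,k}$ (and hull $\rho_{n,k}\BB K_{n,k}+\beta_{n,k}$) on the event $\mcl A_{C,t_0}(S,r,a,b)$, and use $|z-\op{hc}(K_{n,k})|\geq\op{dist}(z,\op{CH}(K_{n,k}))$. The verifications you flag in your final paragraph (that $\rho_{n,k}\BB f_{n,k}+\beta_{n,k}$ intertwines the embeddings and that the relevant normalizations match the definition of $\mcl A_{C,t_0}(S,r,a,b)$) are precisely the points the paper's proof relies on, handled via the setup of Section~\ref{sec-surface-inc-embed} and the fact that $\mathring h_{n,k}^C$ is by construction the $C$-smooth canonical description of $\rng{\mcl C}_{n,k}$.
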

\begin{proof}
We will bound the right side of~\eqref{eqn-use-distortion0} in terms of $\op{diam}(K_{n,k}^C)$ and $\op{dist}\left(z , \op{CH}(K_{n,k})\right)$. 
Recall the definition of the maps $\BB f_{n,k} =f_{n,n\srta  k}$ from the discussion just after Proposition~\ref{prop-hull-agree}. By Lemma~\ref{lem-inc-homeo}, $\BB f_{n,k}$ restricts to a conformal map $\BB C\setminus \BB K \rta \BB C\setminus \BB K_{n,k}$. 
The restriction of the field $\mathring h_{n,k}^C$ to $\BB C\setminus  (\rho_{n,k} \BB K_{n,k} + \beta_{n,k})$ is the LQG pushforward of the field $\mathring h_{n,n}  |_{\BB C\setminus \BB K }$ under the conformal map $\rho_{n,k} \BB f_{n,k} + \beta_{n,k}$, which looks like $z\mapsto \rho_{n,k} z + \beta_{n,k}$ at $\infty$.   
By the definition of $\mcl A_{C,t_0}(S,r,a,b)$ (applied with $K = \rho_{n,k} \BB K_{n,k} + \beta_{n,k}$ and $f=  \rho_{n,k} \BB f_{n,k} + \beta_{n,k}$), on this event  
\eqb \label{eqn-hull-contain}
K_{n,k}^C \subset \rho_{n,k} \BB K_{n,k} + \beta_{n,k}  \subset S   .
\eqe 
Furthermore, by~\eqref{eqn-inc-homeo-asymp}, $\rho_{n,k} \BB f_{n,k}(z) + \beta_{n,k} = \rho_{n,k} z + O_z(1)$ as $z\rta\infty$, so Lemma~\ref{lem-stable-coeff} shows that 
\eqb \label{eqn-use-stable-coeff}
\frac{r}{4} \leq \rho_{n,k} \leq \frac{4}{r} .
\eqe
Together,~\eqref{eqn-hull-contain} and~\eqref{eqn-use-stable-coeff} imply that
\eqb \label{eqn-hull-diam}
\op{diam}(K_{n,k}) =   \frac{ \op{diam}(K_{n,k}^C) }{\rho_{n,k}}   \leq \frac{4}{r} \op{diam}(K_{n,k}^C)   .
\eqe
The harmonic center $\op{hc}(K_{n,k})$ can be expressed as the expected value of a random variable sampled from harmonic measure from $\infty$ on $\bdy K_{n,k}$. Consequently, $\op{hc}(K_{n,k}) \subset \op{CH}(K_{n,k})$, so 
\eqb \label{eqn-hull-dist}
|z - \op{hc}(K_{n,k})| \geq \op{dist}\left(z , \op{CH}(K_{n,k}) \right)  .
\eqe
By plugging~\eqref{eqn-hull-contain},~\eqref{eqn-hull-diam}, and~\eqref{eqn-hull-dist} into~\eqref{eqn-use-distortion0} we find that~\eqref{eqn-use-distortion} holds.
\end{proof}

The estimate of Lemma~\ref{lem-use-distortion} is our key tool for showing that the maps $f_{n,k}$ are close to the identity (recall~\eqref{eqn-map-decomp}), but in order to apply it we need to introduce a regularity event to make sure that the inverse distance factor is not too big. 
To this end, fix $u>0$ to be chosen later in a manner depending only on $\gamma$ and for $n\in\BB N$ and $k\in [1,n]_{\BB Z}$, define events
\eqb \label{eqn-min-dist}
 M_{n,k}(z ) := \left\{ \op{dist}\left( \BB f_{n,j}(z)  ,  K_{n,j} \right)   \geq n^{-u} , \: \forall j \in [k,n]_{\BB Z}  \right\}   \quad \op{and} \quad
 M_n(z) := M_{n,1}(z) .
\eqe

\begin{lem} \label{lem-end-distortion}
Let $\alpha =\alpha(\kappa')>0$ be the constant from Proposition~\ref{prop-diam-sum} and suppose that $u\in (0,\alpha/4)$. 
With $M_{n,k}(z)$ as in~\eqref{eqn-min-dist} and $\BB f_{n,k}$ as in the beginning of this subsection, it is a.s.\ the case that on $\mcl A_{C,t_0}(S,r,a,b)$,
\eqb \label{eqn-end-distortion}
\max_{k\in [1,n]_{\BB Z}} \sup\left\{ |\BB f_{n,k-1}(z) - z| \,:\, z\in\BB C ,\, M_{n,k}(z) \, \op{occurs} \right\}   = O_n(n^{-(\alpha/2-u)} ) 
\eqe 
as $n\rta\infty$ along powers of 2, at a possibly random rate. In particular, on $\mcl A_{C,t_0}(S,r,a,b)$, a.s.\
\eqb \label{eqn-end-distortion0}
  \sup\left\{ |\BB f (z) - z| \,:\, z\in\BB C ,\, M_n(z) \, \op{occurs} \right\}   = O_n(n^{-(\alpha/2-u)} )   
\eqe 
as $n\rta\infty$ along powers of 2.
\end{lem}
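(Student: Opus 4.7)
First I would telescope the deviation using the compatibility relation~\eqref{eqn-inc-homeo-compat} and $\BB f_{n,n} = \op{Id}$: for $k \in [1,n]_{\BB Z}$,
\eqbn
\BB f_{n,k-1}(z) - z \;=\; \sum_{j=k}^{n} \bigl(\BB f_{n,j-1}(z) - \BB f_{n,j}(z)\bigr) \;=\; \sum_{j=k}^{n} \bigl( f_{n,j}(\BB f_{n,j}(z)) - \BB f_{n,j}(z) \bigr).
\eqen
For each summand I would apply Lemma~\ref{lem-use-distortion} to $f_{n,j}$ at the point $\BB f_{n,j}(z)$. On $M_{n,k}(z)$ the point $\BB f_{n,j}(z)$ is at Euclidean distance at least $n^{-u}$ from $K_{n,j}$ for every $j \in [k,n]_{\BB Z}$, and the bound $\op{diam}(\op{CH}(K_{n,j})) = \op{diam}(K_{n,j}) \leq (4/r)\op{diam}(K_{n,j}^C)$ from~\eqref{eqn-hull-diam} then gives $\op{dist}(\BB f_{n,j}(z),\op{CH}(K_{n,j})) \geq \tfrac12 n^{-u}$ provided $\op{diam}(K_{n,j}^C)$ is small compared to $n^{-u}$. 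Substituting into~\eqref{eqn-use-distortion} and summing,
\eqbn
|\BB f_{n,k-1}(z) - z| \, \BB 1_{M_{n,k}(z)} \, \BB 1_{\mcl A_{C,t_0}(S,r,a,b)} \;\preceq\; n^{u}\, \Sigma_n ,\qquad \Sigma_n := \sum_{j=1}^{n} \op{diam}(K_{n,j}^C)^2 \BB 1_{E_{n,j}} \BB 1_{(K_{n,j}^C \subset S)} .
\eqen

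Since the right-hand side is uniform in $z$ and $k$, it suffices to control $\Sigma_n$ via Proposition~\ref{prop-diam-sum}. Here the key input is the marginal equality in law~\eqref{eqn-surface-inc-law}: for each fixed $j$, the pair $(\rng{\mcl C}_{n,j}, \rng\eta_{n,j}|_{[a,t_{n,j}]})$ is equidistributed with the reference pair $(\mcl C, \eta_{a,b,\mcl C}|_{[a,t_{n,j}]})$, so under the respective $C$-smooth canonical descriptions the hull $K_{n,j}^C$ has the same law as the hull of $\eta_{a,b,\mcl C}([t_{n,j-1},t_{n,j}])$. Using that $A_{n,j} \subset \rng\eta'_{n,j}([t_{n,j-1}, t_{n,j}])$ is a connected set (so its hull has the same diameter as itself) and that the event $E_{n,j} \cap \{K_{n,j}^C \subset S\}$ corresponds to a subset of the event $G_{n,j}$ of Proposition~\ref{prop-diam-sum}, the termwise comparison gives
\eqbn
\BB E[\Sigma_n] \;\leq\; \sum_{j=1}^{n} \BB E\bigl[\op{diam}(\eta'([t_{n,j-1},t_{n,j}]))^2 \, \BB 1_{G_{n,j}}\bigr] \;\leq\; n^{-\alpha + o_n(1)} .
\eqen

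Finally, Markov's inequality yields $\BB P[\Sigma_n > n^{-\alpha/2}] \leq n^{-\alpha/2 + o_n(1)}$; restricting $n$ to powers of $2$ makes this summable, so Borel--Cantelli gives almost surely $\Sigma_n \leq n^{-\alpha/2}$ for all sufficiently large such $n$. Plugging this into the pointwise bound proves~\eqref{eqn-end-distortion}, and~\eqref{eqn-end-distortion0} follows by specializing to $k=1$ since $\BB f = \BB f_{n,0}$ and $M_n = M_{n,1}$. The main technical point I would handle with care is the ``near-field'' regime in which the distance hypothesis $\op{dist}(\BB f_{n,j}(z),K_{n,j}) \geq (4c_1/r)\op{diam}(K_{n,j}^C)$ of Lemma~\ref{lem-use-distortion} might fail for some $j$: this is precisely where the requirement $u < \alpha/4$ enters, for on the same good event $\{\Sigma_n \leq n^{-\alpha/2}\}$ one has $\max_j \op{diam}(K_{n,j}^C) \BB 1_{E_{n,j}} \BB 1_{(K_{n,j}^C \subset S)} \leq \Sigma_n^{1/2} \leq n^{-\alpha/4} \ll n^{-u}$, so the distance hypothesis holds simultaneously for every $j$ and the above estimate is justified uniformly in $j$ without additional truncation.
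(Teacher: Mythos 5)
Your proposal is correct and follows essentially the same approach as the paper's proof: telescope via the compatibility relation, apply Lemma~\ref{lem-use-distortion} termwise, control $\Sigma_n$ in expectation via the law equality~\eqref{eqn-surface-inc-law} and Proposition~\ref{prop-diam-sum}, then conclude by Markov and Borel--Cantelli. The only cosmetic difference is that the paper introduces a modified event $\wh M_{n,k}(z)$ (replacing $\op{dist}(\cdot,K_{n,j})\geq n^{-u}$ with $\op{dist}(\cdot,\op{CH}(K_{n,j}))\geq\max\{\tfrac{4c_1}{r}\op{diam}(K_{n,j}^C),\tfrac12 n^{-u}\}$) and works with it before showing $M_{n,k}(z)\setminus\wh M_{n,k}(z)$ is eventually empty, whereas you verify the distance hypothesis of Lemma~\ref{lem-use-distortion} directly on the good event using $\max_j\op{diam}(K_{n,j}^C)\leq\Sigma_n^{1/2}\leq n^{-\alpha/4}\ll n^{-u}$; both arguments invoke $u<\alpha/4$ at exactly the same point.
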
 
\begin{proof} 
For $n\in\BB N$ and $k\in [1,n]_{\BB Z}$, define the following slightly modified version of $M_{n,k}(z)$: 
\eqb \label{eqn-min-dist'}
\wh M_{n,k}(z ) := \left\{ \op{dist}\left( \BB f_{n,j}(z) , \op{CH}(K_{n,j}) \right)   \geq \max\left\{ \frac{4 c_1}{r} \op{diam}(K_{n,j}^C) ,\, \frac12 n^{-u} \right\} , \: \forall j \in [k,n]_{\BB Z}  \right\}   ,
\eqe  
where here $c_1$ is the constant introduced just above~\eqref{eqn-use-distortion}. 
We will use $\wh M_{n,k}(z)$ in place of $M_{n,k}(z)$ throughout most of the proof, and at the very end argue that $M_{n,k}(z)\setminus \wh M_{n,k}(z)$ is unlikely when $n$ is large. 

By~\eqref{eqn-map-decomp} and Lemma~\ref{lem-use-distortion} (applied with $\BB f_{n,j}(z)$ for $j\in [k,n]_{\BB Z}$ in place of $z$), on $\wh M_{n,k}(z) \cap \mcl A_{C,t_0}(S,r,a,b)$,
\eqb \label{eqn-use-min-dist}
|\BB f_{n,j-1}(z) - \BB f_{n,j}(z) | \preceq n^u \op{diam}(K_{n,k}^C)^2  \BB 1_{E_{n,k}} \BB 1_{(K_{n,k}^C\subset S)} ,\quad \forall j \in [k,n]_{\BB Z}
\eqe 
with a deterministic implicit constant depending only on $r$. 
By the triangle inequality followed by~\eqref{eqn-use-min-dist}, for each $k \in [1,n]_{\BB Z}$,
\begin{align} \label{eqn-use-distortion'}
|\BB f_{n,k-1} (z) - z|  \BB 1_{\wh M_{n,k}(z)} \BB 1_{\mcl A_{C,t_0}(S,r,a,b)} 
&\leq \sum_{j=k }^n | \BB f_{n,j-1}(z) - \BB f_{n,j}(z)| \BB 1_{\wh M_{n,k}(z)} \BB 1_{\mcl A_{C,t_0}(S,r,a,b)}  \notag\\ 
&\preceq n^u \sum_{j=1}^n \op{diam}(K_{n,j}^C)^2  \BB 1_{E_{n,j}} \BB 1_{(K_{n,j}^C\subset S)}  
\end{align}
with deterministic implicit constant depending only on $r$. 

By~\eqref{eqn-surface-inc-law}, each $ K_{n,j}^C  $ has the same law as the set of points disconnected from $\infty$ by $ \eta_{a,b}([t_{n,j-1} , t_{n,j}])$ if the field $h$ is the $C$-smooth canonical description. 
We always have $\eta_{a,b}([t_{n,j-1} , t_{n,j}]) \subset \eta'([t_{n,j-1} , t_{n,j}])$ and if $\eta'([t_{n,j-1} , t_{n,j}])$ does not intersect $\eta_{a,b}$ then $\eta_{a,b}([t_{n,j-1} , t_{n,j}])$ is a single point. Hence $\op{diam}(K_{n,j}^C) \BB 1_{E_{n,j}} \BB 1_{(K_{n,j}^C\subset S)}  $ is stochastically dominated by 
$\op{diam}(\eta'([t_{n,j-1} , t_{n,j}])) \BB 1_{G_{n,j} } $,
where 
\eqbn 
G_{n,j} := \left\{\eta'([t_{n,j-1} , t_{n,j}]) \cap \eta_{a,b} \not=\emptyset\right\}  \cap \left\{ \eta_{a,b}([t_{n,j-1} , t_{n,j}]) \subset S  \right\} .
\eqen
Hence the expectation of the right side of~\eqref{eqn-use-distortion'} satisfies
\eqb \label{eqn-diam-sum-compare}
\BB E\left[ n^u \sum_{j=1}^n \op{diam}(K_{n,j}^C)^2  \BB 1_{E_{n,j}} \BB 1_{(K_{n,j}^C\subset S)}   \right]  
\leq n^u \sum_{j=1}^n \BB E\left[ \op{diam}(\eta'([t_{n,j-1} , t_{n,j}]))^2 \BB 1_{G_{n,j} } \right] .
\eqe 
By Proposition~\ref{prop-diam-sum}, the right side of this inequality is $O_n(n^{-(\alpha-u)})$, at a rate depending only on $C$, $a$, $b$, $S$, and $\kappa'$.  
By Chebyshev's inequality and the Borel-Cantelli lemma, it is a.s.\ the case that 
\eqb \label{eqn-diam-max-compare}
 \max_{j \in[1,n]_{\BB Z}} \left( \op{diam}(K_{n,j}^C)^2 \BB 1_{E_{n,j}} \BB 1_{(K_{n,j}^C\subset S)} \right)   
 \leq  n^u \sum_{j=1}^n \op{diam}(K_{n,j}^C)^2  \BB 1_{E_{n,j}} \BB 1_{(K_{n,j}^C\subset S)} 
 = O_n(n^{-(\alpha/2 - u) })
\eqe 
as $n\rta\infty$ along powers of $2$. 

By~\eqref{eqn-use-distortion'} and the second bound of~\eqref{eqn-diam-max-compare}, if $\mcl A_{C,t_0}(S,r,a,b)$ occurs then it is a.s.\ the case that~\eqref{eqn-end-distortion} holds with $\wh M_{n,k}(z)$ in place of $M_{n,k}(z)$.  

By~\eqref{eqn-diam-max-compare}, it is a.s.\ the case that 
\eqbn
\max_{j\in [1,n]_{\BB Z}} \frac{4 c_1}{r} \op{diam}(K_{n,j}^C)  = O_n(n^{-(\alpha/2-u)}) = o_n(n^{-u})
\eqen
as $n\rta\infty$ along powers of 2, uniformly over all $j\in [1,n]_{\BB Z}$. 
In particular, for large enough dyadic values of $n$ the convex hull $\op{CH}(K_{n,j}^C)$ is contained in the $\frac12 n^{-u}$-neighborhood of $K_{n,j}^C$. 
Recalling the definition~\eqref{eqn-min-dist} of $M_{n,k}(z)$, we see that there a.s.\ exists a random $n_0 \in \BB N$ such that for each dyadic $n\geq n_0$, $z\in \BB C$, and $k\in [1,n]_{\BB Z}$, the event $M_{n,k}(z) \setminus \wh M_{n,k}(z)$ does not occur. Hence~\eqref{eqn-end-distortion} follows from the analogous statement with $\wh M_{n,k}(z)$ in place of $M_{n,k}(z)$. 

The relation~\eqref{eqn-end-distortion0} is just~\eqref{eqn-end-distortion} with $k=1$. 
\end{proof}

To apply Lemma~\ref{lem-end-distortion}, we need to show that $M_n(z)$ is likely to occur for most points of $\BB C$ when $n$ is large. This is the purpose of the next lemma. 
 
\begin{lem} \label{lem-distortion-conclude}
If $\mcl A_{C,t_0}(S,r,a,b)$ occurs, then for each $z\in \BB C\setminus \rng\eta_{n,n}([a,b])$ 
it holds that $M_n(z)$ occurs for sufficiently large dyadic values of $n\in\BB N$ (here we recall from Section~\ref{sec-surface-inc-def} that $\rng\eta_{n,n}([a,b])$ does not depend on $n$). 
Hence on $\mcl A_{C,t_0}(S,r,a,b)$, we have $\BB f(z) = z$ for each $z\in \BB C\setminus \rng\eta_{n,n}([a,b])$. 
\end{lem}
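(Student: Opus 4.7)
The plan is to prove the first assertion---that $M_n(z)$ occurs for all sufficiently large dyadic $n$---after which the second assertion follows immediately: applying~\eqref{eqn-end-distortion0} along such a sequence gives $|\BB f(z) - z| \leq O_n(n^{-(\alpha/2-u)})$, and since neither $\BB f$ nor $z$ depends on $n$, we must have $\BB f(z) = z$.

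I would proceed by backward induction on $k$ from $n$ down to $1$, showing $\op{dist}(\BB f_{n,k}(z), K_{n,k}) \geq n^{-u}$ for each $k$. Set $d_0 := \op{dist}(z, \rng\eta_{n,n}([a,b])) > 0$. The base case $k = n$ follows from $\BB f_{n,n} = \op{id}$ combined with $A_{n,n} \subset \rng\eta_{n,n}([a,b])$ and the uniform bound $\max_k \op{diam}(K_{n,k}) = O_n(n^{-(\alpha/2-u)}) = o_n(1)$ from~\eqref{eqn-diam-max-compare}: for large $n$ we get $\op{dist}(z, K_{n,n}) \geq d_0 - \op{diam}(K_{n,n}) \geq d_0/2 \geq n^{-u}$. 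For the inductive step, once the claim holds for $j \in [k,n]_{\BB Z}$ the event $M_{n,k}(z)$ occurs, so by~\eqref{eqn-end-distortion} we have $|\BB f_{n,k-1}(z) - z| = O_n(n^{-(\alpha/2-u)}) = o_n(n^{-u})$; by the triangle inequality it then suffices to show $\op{dist}(z, K_{n,k-1}) \geq 2 n^{-u}$, and using the uniform diameter bound this reduces to showing that the anchor set $A_{n,k-1} = \BB f_{n,k-1}(\rng\eta_{n,n}([t_{n,k-2}, t_{n,k-1}]))$ lies within Euclidean distance $d_0/3$ of $\rng\eta_{n,n}([a,b])$.

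The main obstacle is thus to control $\sup_{w \in \rng\eta_{n,n}([a,b])} |\BB f_{n,k}(w) - w|$ uniformly in $k$, which~\eqref{eqn-end-distortion0} does not provide since the event $M_{n,k}$ fails for points on the curve. I would run an auxiliary backward induction in parallel, bounding this supremum by $o_n(1)$ uniformly in $k$: for a fixed curve point $w$ one isolates the single index $j$ such that $w \in \rng\eta_{n,n}([t_{n,j-1}, t_{n,j}])$---for which only the trivial diameter bound $|f_{n,j}(w') - w'| \leq \op{diam}(K_{n,j}) + \op{diam}(\wt K_{n,j})$ is available, yielding $O_n(n^{-(\alpha/2-u)})$ via~\eqref{eqn-diam-max-compare}---from all other indices, for which the distortion estimate~\cite[Lemma~9.6]{wedges} applied as in Lemma~\ref{lem-use-distortion} gives the quadratic-in-diameter bound with inverse-distance factor $n^u$, summable to $o_n(1)$ via Proposition~\ref{prop-diam-sum}. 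This curve-level uniform control, combined with the uniform diameter bound, gives Hausdorff closeness of $A_{n,k-1}$ to $\rng\eta_{n,n}([t_{n,k-2}, t_{n,k-1}])$ of order $o_n(1)$ uniformly in $k$, closing the main induction.
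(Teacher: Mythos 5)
Your reduction of the second assertion to the first is correct, and the base case $k=n$ of your main backward induction is fine. The gap is in the \emph{auxiliary} backward induction you use to control $\sup_{w\in\rng\eta_{n,n}([a,b])}|\BB f_{n,k}(w)-w|$. For an index $j'\neq j$ you want to invoke Lemma~\ref{lem-use-distortion}, but that estimate requires $\op{dist}(\BB f_{n,j'}(w),K_{n,j'})\geq\frac{4c_1}{r}\op{diam}(K_{n,j'}^C)$, and to get the stated factor $n^u$ you in fact need $\op{dist}(\BB f_{n,j'}(w),K_{n,j'})\geq n^{-u}$. However, $\BB f_{n,j'}(w)=\rng\eta_{n,j'}(t)$ is itself a point on the curve $\rng\eta_{n,j'}$, which is an $\SLE_{\kappa'}$-type curve for $\kappa'\in(4,8)$ and therefore has a dense set of double points: the segment $A_{n,j'}=\rng\eta_{n,j'}([t_{n,j'-1},t_{n,j'}])$ generating $K_{n,j'}$ is touched (or approached at every scale) by the rest of the curve, so there is no lower bound of the form $n^{-u}$ for generic $(w,j')$. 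For those indices the only available control is the trivial $O(\op{diam}(K_{n,j'}))$, and the sum $\sum_{j'}\op{diam}(K_{n,j'})$ is \emph{not} controlled by Proposition~\ref{prop-diam-sum}, which bounds only the sum of squared diameters; Cauchy--Schwarz gives $\sum_{j'}\op{diam}(K_{n,j'})\lesssim n^{1/2-\alpha/2+o_n(1)}$, which diverges since $\alpha<1$. So the proposed "curve-level uniform control" does not follow, and the induction does not close.

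The paper's proof avoids controlling curve points entirely. It fixes $z$ off the curve with $\op{dist}(z,\rng\eta_{n,n}([a,b]))\geq n^{-u/2}$ and defines $k_z^n$ as the largest index for which $\op{dist}(\BB f_{n,k}(z),\rng\eta_{n,k}([a,b]))\leq\tfrac18 n^{-u/2}$. If $k_z^n>0$, then the event $M_{n,k_z^n+1}(w)$ holds for every $w$ in the small ball $B_{n^{-u}}(z)$ (the distance condition at levels $>k_z^n$ is guaranteed by maximality of $k_z^n$), so~\eqref{eqn-end-distortion} controls $\BB f_{n,k_z^n}$ on that whole ball. A Taylor expansion, the Bieberbach second-coefficient bound, and the Koebe quarter theorem then yield $\BB f_{n,k_z^n}'(z)=1+o_n(1)$, hence $\op{dist}(\BB f_{n,k_z^n}(z),\rng\eta_{n,k_z^n}([a,b]))>\tfrac18 n^{-u/2}$, contradicting the definition of $k_z^n$. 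This "maximal bad index plus Koebe distortion" device converts the global statement you need into a derivative estimate at the single off-curve point $z$, which is exactly what~\eqref{eqn-end-distortion} supplies; it never needs to know how the maps move points on the curve itself.
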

\begin{proof}
Throughout the proof we assume that $\mcl A_{C,t_0}(S,r,a,b)$ occurs.
For $z\in\BB C \setminus \rng\eta_{n,n}([a,b])$ and $n\in\BB N$, let $k_z^n$ be the largest $k \in [1,n]_{\BB Z}$ for which $\op{dist}\left(\BB f_{n,k }(z) , \rng\eta_{n,k}([a,b]) \right) \leq  \frac18 n^{-u/2}$, or $k_z^n=0$ if no such $k$ exists. We claim that it is a.s.\ the case that for each $z\in\BB C$ with $\op{dist}(z,\rng\eta_{n,n}([a,b]) )\geq n^{-u/2}$, we have $k_z^n = 0$ for large enough dyadic values of $n\in\BB N$.
 
By the definition~\eqref{eqn-min-dist} of $M_{n,k}(z)$, if $n$ is chosen sufficiently large that $\op{dist}(z,\rng\eta_{n,n}([a,b]) )\geq n^{-u/2}$, then the event $M_{n,k_z^n+1}(w)$ occurs for each $w\in B_{n^{-u}}(z)$. By~\eqref{eqn-end-distortion}, it is a.s.\ the case that  
\eqb \label{eqn-min-dist-deriv}
\sup\left\{  |\BB f_{n,k_z^n} (w) - w| \,:\,  w\in B_{n^{-u}}(z) \right\}        = O_n(n^{-(\alpha/2-u)} )  ,
\eqe 
as $n\rta\infty$ along powers of 2.
We will use~\eqref{eqn-min-dist-deriv} and some complex analysis estimates to show that $|\BB f_{n,k_z^n }'(z)|$ is close to 1 when $n$ is large, then use then the Koebe quarter theorem to conclude that $\op{dist}\left(\BB f_{n,k_z^n }(z) , \rng\eta_{n,k}([a,b]) \right)  >  \frac18 n^{-u/2}$ for large enough $n$, which will contradict the definition of $k_z^n$ unless $k_z^n = 0$. 

Throughout the rest of the proof we assume that $n$ is chosen sufficiently large that $\op{dist}(z,\rng\eta_{n,n}([a,b]) )\geq n^{-u/2}$, so that $\BB f_{n,k_z^n}$ is conformal on $B_{n^{-u/2}}(z)$. 
Choose $w\in   B_{n^{-u}}(z)$. By Taylor's theorem with remainder,
\eqbn
\BB f_{n,k_z^n }(w) = \BB f_{n,k_z^n }(z) + \BB f_{n,k_z^n }'(z)(w-z) + O_n\left( |z-w|^2 \sup_{x \in B_{n^{-u}}(z)} |\BB f_{n,k_z^n }''(x)| \right) .
\eqen
Applying~\eqref{eqn-min-dist-deriv} and re-arranging gives
\allb \label{eqn-min-dist-taylor}
&    \BB f_{n,k_z^n }'(z)(z-w) = z-w + O_n\left( |z-w|^2 \sup_{x \in B_{n^{-u}}(z)} |\BB f_{n,k_z^n}''(x)| \right)  + O_n(n^{-(\alpha/2-u)} ) \notag \\
&\qquad \Rightarrow  \BB f_{n,k_z^n}'(z) =  1 +  O_n\left( n^{-u} \sup_{x \in B_{n^{-u}}(z)}| \BB f_{n,k_z^n}''(x)| \right)  + O_n(n^{-(\alpha/2- 2u)} ) .
\alle

We need to bound the second derivative term in~\eqref{eqn-min-dist-taylor}. 
By re-scaling and applying the (easy) second coefficient case of the Bieberbach-de Branges theorem, we get $|\BB f_{n,k_z^n}''(x)| \leq 4 n^{u/2} | \BB f_{n,k_z^n}'(x)|$ for $x\in B_{n^{-u}}(z)$. By the Koebe quarter theorem and since $\BB f_{n,k_z^n}$ maps $\rng\eta_{n,n}([a,b])$ to $\rng\eta_{n,k_z^n}([a,b])$,  
\eqb \label{eqn-min-dist-koebe}
| \BB f_{n,k_z^n}'(x)| \leq 4  \frac{  \op{dist}\left( \BB f_{n,k_z^n}(x) , \rng\eta_{n,k_z^n}([a,b])  \right)    }{  \op{dist}\left( x, \rng\eta_{n, n}([a,b])  \right)} ,\quad \forall x \in B_{n^{-u}}(z) . 
\eqe  
By definition of $k_z^n$, the numerator on the right is at most $\frac18 n^{-u/2} +2 n^{-u}$ and since $\op{dist}(z,\rng\eta_{n,n}([a,b]) )\geq n^{-u/2}$ the denominator is at least $n^{-u/2} - 2 n^{-u}$. 
Therefore, $| \BB f_{n,k_z^n}'(x)| = O_n(1)$ so by~\eqref{eqn-min-dist-taylor} and our above bound for $|\BB f_{n,k_z^n}''(x)| $, we have $\BB f_{n,k_z^n}'(z) =  1  + o_n(1) $. In particular,~\eqref{eqn-min-dist-koebe} applied with $x=z$ shows that
\eqbn
\op{dist}\left( \BB f_{n,k_z^n}(z ) , \rng\eta_{n,k_z^n}([a,b])  \right) \geq \frac14 (1+o_n(1)) \op{dist}\left( x, \rng\eta_{n,k_z^n}([a,b])  \right) \geq \frac14 n^{-u/2}(1+o_n(1)) .
\eqen
This contradicts the definition of $k_z^n$ for large enough $n$ unless $k_z^n=0$, so we conclude that $k_z^n$ must be equal to 0 for large enough dyadic values of $n$. By~\eqref{eqn-min-dist} this implies that $M_n(z)$ occurs for large enough dyadic values of $n$.  
The last statement of the lemma follows from the first statement and~\eqref{eqn-end-distortion0}.
\end{proof}

\begin{proof}[Proof of Propositions~\ref{prop-inc-agree}, \ref{prop-embedding-equiv}, and~\ref{prop-hull-agree}]
Lemma~\ref{lem-distortion-conclude} implies that on $\mcl A_{C,t_0}(S,r,a,b)$, we have $\BB f(z) = z$ for each $z\in \BB C\setminus \rng\eta_{n,n}([a,b])$. Since $ \rng\eta_{n,n}([a,b])$ has empty interior and $\BB f$ is continuous, a.s.\ $\BB f$ is the identity map on $\mcl A_{C,t_0}(S,r,a,b)$. By Lemma~\ref{lem-stable-prob}, for given $a<b$ we can choose $C,t_0,S$, and $r$ in such a way that $\BB P[\mcl A_{C,t_0}(S,r,a,b)]$ is as close to 1 as we like. We thus obtain Proposition~\ref{prop-hull-agree}.  Propositions~\ref{prop-inc-agree} and~\ref{prop-embedding-equiv} immediately follow.
\end{proof}

\subsection{Proof of Theorem~\ref{thm-wpsf-char}}
\label{sec-wpsf-char-proof}

Proposition~\ref{prop-inc-agree} implies that for each $a,b\in\BB Q$ with $a<  b$, the curve-decorated quantum surfaces $(\mcl S_{a,b} , \eta_{a,b}) $ and $ (\wt{\mcl S}_{a,b } , \wt\eta_{a,b })$ agree in law. By Lemma~\ref{lem-partial-surface-determined} (applied with $t =b$), $ \mcl S_{a,b}$ is a.s.\ determined by $(\mcl S_{a,b}^0 , \eta_{a,b,\mcl S_{a,b}^0})$. Hence $\wt{\mcl S}_{a,b}$ is a.s.\ determined by $(\wt{\mcl S}_{a,b}^0 , \wt\eta_{a,b,\wt{\mcl S}_{a,b}^0})$.  In particular, by~\eqref{eqn-bubble-surface-msrble}, $\wt{\mcl S}_{a,b} $ is measurable with respect to the $\sigma$-algebra $\wt{\mcl F}_{a,b}$ of Theorem~\ref{thm-wpsf-char}. 

Now fix $\ep > 0$. By the preceding paragraph, $\wt{\mcl S}_{(j-1) \ep , j\ep} \in  \wt{\mcl F}_{(j-1)\ep , j \ep} \subset \wt{\mcl F}_{(j-1)\ep , \infty} \cap \wt{\mcl F}_{-\infty, j\ep} $ for each $j\in\BB Z$. By condition~\ref{item-wpsf-char-wedge} in the theorem statement, the quantum surfaces $\{\wt{\mcl S}_{(j-1) \ep , j\ep}\}_{j\in\BB Z}$ are independent, so
\eqbn
\{\wt{\mcl S}_{(j-1) \ep , j\ep}\}_{j\in\BB Z} \eqD \{ \mcl S_{(j-1) \ep , j\ep}\}_{j\in\BB Z}.
\eqen

Recall the $\gamma$-quantum cones $\mcl C = (\BB C ,h , 0 ,\infty)$ and $\wt{\mcl C} = (\BB C , \wt h , 0, \infty)$ from~\eqref{eqn-cone-def}.  By condition~\ref{item-wpsf-char-homeo} in the theorem statement, $\wt{\mcl C}$ is obtained by conformally welding the quantum surfaces $\{\wt{\mcl S}_{(j-1) \ep , j\ep}\}_{j\in\BB Z}$ in chronological order according to $\gamma$-quantum length along their boundaries, and $\wt\eta'_{\wt{\mcl C}}$ fills in these surfaces in chronological order. Similar statements hold for the pair $(\mcl C , \eta'_{\mcl C})$. 

Each of the surfaces $\mcl S_{(j-1) \ep , j\ep}$ is parameterized by the space-filling SLE segment $\eta'_{\mcl C}([(j-1)\ep , j\ep])$.  The boundary of each of these segments is a finite union of $\SLE_{\kappa}$-type curves for $\kappa = 16/\kappa'$.  Therefore,~\cite[Proposition~3.16]{wedges} implies that a.s.\ the union of the boundaries of these segments is conformally removable. Hence there is only one way to conformally weld together the quantum surfaces $\mcl S_{(j-1) \ep , j\ep}$ according to quantum length along their boundaries. We infer from the above descriptions of $(\wt{\mcl C} , \wt\eta'_{\wt{\mcl C}})$ and $(\mcl C , \eta'_{\mcl C})$ that
\eqbn
\left(\wt{\mcl C} , \{\wt \eta'_{\wt{\mcl C}}(j\ep)\}_{j\in\BB Z} \right) \eqD \left(\mcl C , \{\eta'_{ \mcl C }(j\ep)\}_{j\in\BB Z} \right).
\eqen
Since $\ep >0$ can be made arbitrarily small and the curves $ \wt \eta'_{\wt{\mcl C}}$ and $\eta'_{\mcl C}$ are continuous, we infer that in fact $(\wt{\mcl C} , \wt\eta'_{\wt{\mcl C}}) \eqD (\mcl C , \eta'_{\mcl C})$. 

By condition~\ref{item-wpsf-char-homeo} in the theorem statement, $Z$ is the peanosphere Brownian motion for each of the SLE-decorated quantum cones $(\wt{\mcl C} , \wt\eta'_{\wt{\mcl C}}) $ and $ (\mcl C , \eta'_{\mcl C})$, so by~\cite[Theorem~1.11]{wedges}, a.s.\ $(\wt{\mcl C} , \wt\eta'_{\wt{\mcl C}}) = (\mcl C , \eta'_{\mcl C})$ as curve-decorated quantum surfaces. Hence there a.s.\ exists a conformal automorphism of $\BB C$ which fixes 0 (i.e., multiplication by a complex number) which takes $ \eta'$ to $\wt\eta'$. Since $\Phi\circ \eta' = \wt\eta'$ and $\eta'$ is space-filling, it follows that this conformal automorphism coincides with $\Phi$.  \qed

\section{Characterization of chordal SLE on a bead of a thin wedge}
\label{sec-chordal}

In this section we will state and prove an analog of Theorem~\ref{thm-wpsf-char} for a chordal $\SLE_{\kappa'}$ on a single bead of a $\frac{3\gamma}{2}$-quantum wedge, which is a precise version of Theorem~\ref{thm-bead-char-intro}.

\subsection{Statement and outline}
\label{sec-chordal-statement}
 
Suppose $h^\bead$ is an embedding into $(\BB H , 0 , \infty)$ of a single bead of a $\frac{3\gamma}{2}$-quantum wedge with specified area $\frk a$ and left/right boundary lengths $\frk l^L$ and $\frk l^R$. Let $\eta^\bead : [0,\infty) \rta \ol{\BB H}$ be an independent chordal $\SLE_{\kappa'}$ from $0$ to $\infty$ in $\BB H$, with some choice of parameterization.  For $t  \geq 0$, let $L_t^\bead$ (resp.\ $R_t^\bead$) be the $\nu_{h^\bead}$-length of the boundary arc of the unbounded connected component of $\BB H\setminus \eta^\bead([0,t])$ which lies to the left (resp.\ right) of $\eta^\bead(t)$ and define the \emph{boundary length process} $Z_t^\bead := (L_t^\bead, R_t^\bead)$. 
Note that $Z_0^\bead = (\frk l^L , \frk l^R)$.

\begin{figure}[ht!]
\begin{center}
\includegraphics[scale=1]{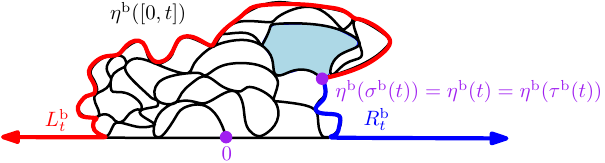} 
\caption[SLE boundary length process]{\label{fig-bdy-process-sle} An SLE$_{\kappa'}$ curve $\eta^\bead$ on a bead of a $\frac{3\gamma}{2}$-quantum wedge (parameterized by $\BB H$). If we parameterize $\eta^\bead$ by the $\mu_h$-mass it disconnects from $\infty$, then for a typical time $t$ the curve $\eta^\bead$ is constant on the time interval $[\sigma^\bead(t) , \tau^\bead(t)]$, and $\tau^\bead(t)  -\sigma^\bead(t)$ gives the $\mu_h$-mass of the bubble disconnected from $\infty$ by $\eta^\bead$ at time $\sigma^\bead(t)$ (light blue). The boundary length process $L_t^\bead$ (resp.\ $R_t^\bead$) gives the $\nu_h$-length of the red (resp.\ blue) arc. }
\end{center}
\end{figure}

The process $L_t^\bead$ (resp.\ $R_t^\bead$) is a right continuous process with left hand limits with no upward jumps, but with a downward jump whenever $\eta^\bead$ disconnects a bubble from $\infty$ on its left (resp.\ right) side. The magnitude of the downward jump corresponds to the boundary length of the bubble.  In contrast to the peanosphere Brownian motion $Z$ from Theorem~\ref{thm-wpsf-char}, the process $Z^\bead$ does not a.s.\ determine the pair $(h^\bead , \eta^\bead)$ modulo conformal maps for any choice of parameterization because it does not encode the behavior of the field $h^\bead$ away from the curve $\eta^\bead$.  It does, however, encode the equivalence class of $(\BB D , \eta)$ modulo curve-preserving homeomorphisms.
  
For our chordal SLE characterization theorem, we will parameterize $\eta^\bead$ in such a way that the boundary length process $Z^\bead$ \emph{does} encode the quantum mass of the bubbles cut off from $\infty$ by $\eta^\bead$. In particular, we will parameterize $\eta^\bead$ by the $\mu_{h^\bead}$-mass it disconnects from $\infty$, as defined in Definition~\ref{def-chordal-parameterization}. With this choice of parameterization, the process $Z^\bead$ encodes the quantum areas of the connected components of $\BB H\setminus\eta^\bead([0,t])$ in the following manner. For $t\in [0,\frk a]$, let $[\sigma^\bead(t) , \tau^\bead(t)]$ be the largest interval containing $t$ on which $Z^\bead$ is constant (note that this interval may be a single point).  Also let $D_t$ be the last bubble disconnected from $\infty$ by $\eta^\bead$ before time $t$ (or $D_t = \eta^\bead(t)$ if no such last bubble exists). As noted above, the boundary length of $D_t$ is encoded by the magnitude of the downward jump of $L^\bead$ or $R^\bead$ at time $t$, and this together with our choice of parameterization gives
\eqb \label{eqn-bead-bubble-process}
\mu_{h^\bead}(D_t) = \tau^\bead(t) - \sigma^\bead(t)   \quad \op{and} \quad \nu_{h^\bead}(\bdy D_t) = | Z_{\sigma^\bead(t)}  - \lim_{s\rta \sigma^\bead(t)^-} Z^\bead_s|  .
\eqe 
See Figure~\ref{fig-bdy-process-sle} for an illustration of the above definitions.

We now state our chordal $\SLE_{\kappa'}$ characterization theorem.

\begin{thm}[Chordal SLE characterization on a bead of a thin wedge] \label{thm-bead-char}
Let $\kappa' \in (4,8)$ and $\gamma = 4/\sqrt{\kappa'} \in (\sqrt 2 , 2)$. 
Let $(\frk a , \frk l^L, \frk l^R )\in (0,\infty)^3$ and suppose we are given a random triple $(\wt h^\bead , \wt \eta^\bead , Z^\bead )$ where $\wt h^\bead$ is an embedding into $(\BB H , 0 , \infty)$ of a single bead of a $\frac{3\gamma}{2}$-quantum wedge with area $\frk a$ and left/right boundary lengths $\frk l^L$ and $\frk l^R$, $\wt\eta^\bead : [0,\frk a] \rta \ol{\BB H}$ is a random continuous curve parameterized by the $\mu_{\wt h^\bead}$-mass it disconnects from $\infty$ (Definition~\ref{def-chordal-parameterization}), and $Z^\bead = (L^\bead, R^\bead)$ has the law of the boundary length process of a chordal $\SLE_{\kappa'}$ on a doubly marked quantum disk with area $\frk a$ and left/right boundary lengths $\frk l^L$ and $\frk l^R$, parameterized by the quantum mass it disconnects from $\infty$.
Assume that the following conditions are satisfied.
\begin{enumerate}
\item (Law of complementary connected components) Suppose $t\in [0,\frk a]$ and we condition on $Z^\bead|_{[0,t]}$ and the time $\tau^\bead(t)$ from~\eqref{eqn-bead-bubble-process}. The conditional law of the collection of singly marked quantum surfaces obtained by restricting $\wt h^\bead$ to the bubbles disconnected from $\infty$ by $\wt\eta^\bead$ before time $t$, each marked by the point where $\wt\eta^\bead$ finishes tracing its boundary, is that of a collection of independent singly marked quantum disks parameterized by the intervals of time in $[0,\tau^\bead(t)]$ on which $Z^\bead$ is constant, with areas and boundary lengths determined by $Z^\bead$ as in~\eqref{eqn-bead-bubble-process}. The conditional law of the doubly marked quantum surface obtained by restricting $\wt h^\bead$ to the unbounded connected component of $\BB H\setminus \wt\eta^\bead([0,t])$, with marked points $\wt\eta^\bead(t)$ and $\infty$, is that of a single bead of a $\frac{3\gamma}{2}$-quantum wedge with area $\frk a -\tau^\bead(t)$ and left/right boundary lengths $L_t^\bead$ and $R_t^\bead$, and this quantum surface is independent from the quantum surfaces in the preceding sentence.
\label{item-bead-char-wedge}  
\item (Topology and consistency) 
There is a pair $((\BB H , h^\bead , 0, \infty) ,\eta^\bead)$ consisting of a bead of a $\frac{3\gamma}{2}$-quantum wedge with area $\frk a$ and left/right boundary lengths $\frk l^L$ and $\frk l^R$ and an independent chordal $\SLE_{\kappa'}$ from $0$ to $\infty$ in $\BB H$ parameterized by the $\mu_{h^\bead}$-mass it disconnects from $\infty$ and a homeomorphism $\Phi^\bead : \BB H\rta\BB H$ with $\Phi^\bead\circ \eta^\bead  = \wt\eta^\bead $. Moreover, $\Phi^\bead$ pushes forward the $\gamma$-quantum length measure with respect to $h^\bead$ on the boundary of the unbounded connected component of $\BB H\setminus \wt\eta^\bead([0,t])$ to the  $\gamma$-quantum length measure with respect to $\wt h^\bead$ on the boundary of the unbounded connected component of $\BB H\setminus \wt\eta^\bead([0,t])$ for each $t\in [0,\frk a] \cap \BB Q$. \label{item-bead-char-homeo} 
\end{enumerate}
Then $(\wt h^\bead,\wt\eta^\bead)$ is an embedding into $(\BB H , 0 , \infty)$ of a single bead of a $\frac{3\gamma}{2}$-quantum wedge together with an independent chordal $\SLE_{\kappa'}$ from $0$ to $\infty$ in $\BB H$ parameterized by the $\mu_{\wt h^\bead}$-mass it disconnects from $\infty$.  
\end{thm}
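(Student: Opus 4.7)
The plan is to reduce Theorem~\ref{thm-bead-char} to the whole-plane characterization Theorem~\ref{thm-wpsf-char} by embedding the given bead $(\BB H,\wt h^\bead,0,\infty)$ into a full curve-decorated $\gamma$-quantum cone, and then reading off the conclusion on the single bead. To do this, first I would sample independently of $(\wt h^\bead,\wt\eta^\bead)$: (i) a past $\frac{3\gamma}{2}$-quantum wedge $\wt{\mcl W}^-$; (ii) a Poissonian collection of additional beads of a $\frac{3\gamma}{2}$-quantum wedge, sampled from the standard bead intensity measure, to be attached to either side of our given bead so that together with $(\BB H,\wt h^\bead,0,\infty)$ they form a $\frac{3\gamma}{2}$-quantum wedge $\wt{\mcl W}^+$; and (iii) on each additional bead of $\wt{\mcl W}^+$, an independent chordal $\SLE_{\kappa'}$ between its two marked points, parameterized by disconnected quantum mass. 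I then conformally weld $\wt{\mcl W}^-$ and $\wt{\mcl W}^+$ together along their boundaries according to quantum length to obtain, by \cite[Theorem~1.5]{wedges}, an embedding $(\BB C,\wt h,0,\infty)$ of a $\gamma$-quantum cone. Finally I extend the collection of chordal curves on the beads of $\wt{\mcl W}^+$ (using $\wt\eta^\bead$ on our target bead) to a global space-filling curve $\wt\eta'$ by filling each bubble it disconnects with an independent space-filling $\SLE_{\kappa'}$ loop based at the pinch point---on the target bead this is justified because hypothesis~\ref{item-bead-char-ind} identifies these bubbles as independent quantum disks, and on the other beads it follows from \cite[Theorem~1.17]{wedges}---together with an independent space-filling $\SLE_{\kappa'}$ in the beads of $\wt{\mcl W}^-$. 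Parameterize $\wt\eta'$ by $\gamma$-quantum mass with respect to $\wt h$, shifted so that $\wt\eta'(0)$ equals the initial point of the target bead.

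Next I would check that $(\wt h,\wt\eta',Z)$ satisfies the hypotheses of Theorem~\ref{thm-wpsf-char}, where $Z$ is the left/right boundary length process of $\wt\eta'$. The Markov property condition~\ref{item-wpsf-char-wedge} follows from hypothesis~\ref{item-bead-char-ind} on the target bead (which gives the bead-level quantum Markov property across the bubbles of $\wt\eta^\bead$) together with the independence of the additional beads of $\wt{\mcl W}^+$, the independence of $\wt{\mcl W}^-$, and the strong Markov property of the bead point process itself. For the boundary length process: within the target bead, the hypothesis that $Z^\bead$ agrees in law with the boundary-length process of chordal $\SLE_{\kappa'}$ on a matching quantum disk, combined with the quantum-disk law of the bubbles and the peanosphere construction inside each bubble, gives that the contribution of the bead to $Z$ is a Brownian excursion with the covariance~\eqref{eqn-bm-cov}; concatenating over the beads of $\wt{\mcl W}^+$ (whose joint law is the standard Poissonian one by construction) and welding to $\wt{\mcl W}^-$ reproduces the two-sided correlated Brownian motion~\eqref{eqn-bm-cov}. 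Condition~\ref{item-wpsf-char-homeo} is obtained by stitching together the bead-level homeomorphism $\Phi^\bead$ from hypothesis~\ref{item-bead-char-homeo} with the identity on the other beads and on $\wt{\mcl W}^-$ (possible because $\Phi^\bead$ preserves boundary quantum length at rational times), yielding a global $\Phi:\BB C\rta\BB C$ with $\Phi\circ\eta'=\wt\eta'$ and the required quantum-length pushforward property.

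Applying Theorem~\ref{thm-wpsf-char} then shows that $\wt\eta'$ is a whole-plane space-filling $\SLE_{\kappa'}$ sampled independently of $\wt h$. Restricting to the target bead and using the description of space-filling $\SLE_{\kappa'}$ on a single bead (Lemma~\ref{lem-wpsf-law} together with \cite[Footnote~4]{wedges}), one obtains a chordal space-filling $\SLE_{\kappa'}$ in $\BB H$ independent from $\wt h^\bead$; erasing the bubbles it fills produces an ordinary chordal $\SLE_{\kappa'}$ from $0$ to $\infty$ in $\BB H$ independent from $\wt h^\bead$, which by construction is $\wt\eta^\bead$. The main obstacle I anticipate is the boundary-length step: the hypothesis only prescribes the law of $Z^\bead$ on the single given bead, so one must argue carefully that inserting this bead into the Poissonian bead structure of $\wt{\mcl W}^+$ and then welding to $\wt{\mcl W}^-$ yields the correct joint law of excursions and furthermore that the concatenation of the resulting excursion-level processes across all beads and the past wedge is genuinely a two-dimensional correlated Brownian motion with covariance~\eqref{eqn-bm-cov}, rather than some other process with the same per-bead excursion marginals. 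A secondary point is that since $\SLE_{\kappa'}$ for $\kappa'\in(4,8)$ is not known to be conformally removable, the homeomorphism $\Phi^\bead$ of hypothesis~\ref{item-bead-char-homeo} must be carried through the entire argument rather than being replaced by a conformal map, but this is exactly the situation Theorem~\ref{thm-wpsf-char} was designed to handle.
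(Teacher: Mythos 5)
Your overall strategy — reduce to Theorem~\ref{thm-wpsf-char} by embedding the bead into a $\gamma$-quantum cone decorated by a space-filling curve, then read off the conclusion on the bead — matches the paper's approach at a high level, but the construction of $\wt{\mcl W}^+$ as you describe it has a fatal flaw for the \emph{deterministic} area/boundary length triple $(\frk a , \frk l^L , \frk l^R)$ that appears in the theorem statement. The bead intensity measure $\BB M$ pushes forward to a measure $\BB m$ on $(0,\infty)^3$ that is mutually absolutely continuous with respect to Lebesgue measure, so a genuine $\frac{3\gamma}{2}$-quantum wedge a.s.\ does \emph{not} have a bead with a given deterministic area and left/right boundary lengths. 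Attaching independent Poissonian beads to either side of $(\BB H,\wt h^\bead,0,\infty)$ therefore produces a beaded surface whose bead point process is singular with respect to that of a $\frac{3\gamma}{2}$-quantum wedge. Consequently, for any $t$ before the given bead is filled in, the future surface $\wt{\mcl S}_{t,\infty}$ does \emph{not} have the law of a $\frac{3\gamma}{2}$-quantum wedge, so condition~\ref{item-wpsf-char-wedge} of Theorem~\ref{thm-wpsf-char} fails and the reduction breaks down. The paper resolves this by first proving the theorem with $(\frk a , \frk l^L , \frk l^R)$ sampled from $\BB m(\frk A)^{-1}\BB m|_{\frk A}$ for a Borel set $\frk A$ with $0 < \BB m(\frk A) < \infty$; with this randomization the given bead can be identified in law with the first bead of $\mcl S_{0,\infty}$ whose area/boundary length vector lies in $\frk A$. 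The deterministic case is then obtained at the very end via absolute continuity of $\BB m$ w.r.t.\ Lebesgue plus a limiting argument that runs the curve for a small time $\ep$ so that the residual area/boundary length triple becomes random with an absolutely continuous law.

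A secondary point: by building up the quantum cone from scratch (attaching fresh Poissonian beads, a fresh past wedge, and fresh space-filling loops), you are forced to verify separately that the left/right boundary length process of the resulting $\wt\eta'$ really is a correlated Brownian motion with the covariance in~\eqref{eqn-bm-cov} — a concern you correctly flag yourself. The paper sidesteps this entirely by running the construction in reverse: it starts with a genuine triple $(h,\eta',Z)$ of a $\gamma$-quantum cone, an independent space-filling $\SLE_{\kappa'}$, and its peanosphere Brownian motion $Z$; couples so that $(\mcl B , \eta_{0,\infty,\mcl B})$ agrees with $((\BB H , h^\bead , 0, \infty),\eta^\bead)$ as a curve-decorated quantum surface; and then \emph{replaces} $\mcl B$ by $\wt{\mcl B} := (\BB H , \wt h^\bead , 0, \infty)$ via conformal welding. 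Since the complement $\mcl C\setminus\mcl B$ and all bubble quantum surfaces are shared, and the process $Z$ and the topology depend only on data that is common to $(h,\eta')$ and $(\wt h ,\wt\eta')$, the same $Z$ automatically serves as the process in the statement of Theorem~\ref{thm-wpsf-char} for both pairs, with no additional verification needed. Apart from the need to randomize $(\frk a , \frk l^L , \frk l^R)$, this reversal of the construction is the other main simplification in the paper's argument relative to yours.
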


We will prove in Lemma~\ref{lem-bead-sle-as} below that the hypotheses of Theorem~\ref{thm-bead-char} are satisfied when $(\wt h^\bead ,\wt\eta^\bead) = (h^\bead , \eta^\bead)$, at least for almost every triple $(\frk a , \frk l^L , \frk l^R)$. This statement is not needed for the proof of Theorem~\ref{thm-bead-char} but the theorem statement is vacuous without it.

By the same explanation given in Section~\ref{sec-wpsf-char-intro}, the conditions of Theorem~\ref{thm-bead-char} enable us to define the $\gamma$-quantum length measure with respect to $\wt h^\bead$ on the boundary of each connected component of $\BB H\setminus \wt\eta^\bead([0,t])$ for all $t\in [0,\frk a]$ simultaneously. 
In fact, by considering a time immediately before $\wt\eta^\bead$ disconnects a given bubble from $\infty$, we see that the homeomorphism $\Phi^\bead$ of condition~\ref{item-bead-char-homeo} is boundary-length preserving on each connected component of $\BB H\setminus \eta^\bead([0,t])$ for each $t\in [0,\frk a]$. 
 
In the next subsection we will deduce Theorem~\ref{thm-bead-char} from Theorem~\ref{thm-wpsf-char}. Here we give an outline of the proof.
We will first prove a variant of Theorem~\ref{thm-bead-char} where the area/boundary length triple $(\frk a , \frk l^L, \frk l^R)$ is random. 
The reason for this is that, as we will see below, this enables us to couple $(\wt h^\bead , \wt\eta^\bead)$ with a pair $(\wt h , \wt \eta' )$ consisting of an embedding into $(\BB C , 0, \infty)$ of a $\gamma$-quantum cone and a space-filling curve in $\BB C$ which can be shown to satisfy the hypotheses of Theorem~\ref{thm-wpsf-char}.  The quantum surface $(\BB H ,\wt h^\bead  , 0 ,\infty)$ will be one of the beads of the future quantum surface $\wt{\mcl S}_{0,\infty}$ and the curve $\wt\eta'$ restricted to this bead will be obtained from $\wt\eta^\bead$ by filling in each of the complementary connected components of $\wt\eta^\bead$ by a space-filling $\SLE_{\kappa'}$ loop.
Such a coupling does not work for a deterministic triple $(\frk a , \frk l^L , \frk l^R)$ since a $\frac{3\gamma}{2}$-quantum wedge does \emph{not} a.s.\ have a bead with area $\frk a$ and left/right boundary lengths $\frk l^L$ and $\frk l^R$. 

Once we have set up our coupling, we will first check that the hypotheses of Theorem~\ref{thm-bead-char} are satisfied in the case when $\wt\eta^\bead$ is actually a SLE$_{\kappa'}$ independent from $\wt h^\bead$ (Lemmas~\ref{lem-bead-sle} and~\ref{lem-bead-sle-as}). 
We will then argue that the pair $(\wt h ,\wt\eta')$ satisfies the hypotheses of Theorem~\ref{thm-wpsf-char} (Lemma~\ref{lem-wpsf-to-chordal}). The proof that this is the case is straightforward, but there are a few technicalities in verifying the various independence statements.

\subsection{Proof of Theorem~\ref{thm-bead-char}}
\label{sec-chordal-proof}

We now proceed with the details of the proof. Let $\BB M$ be the infinite measure on beads of a $\frac{3\gamma}{2}$-quantum wedge (recall~\cite[Definition~4.15]{wedges}) and let $\BB m $ be the infinite measure on $(0,\infty)^3$ which is the pushforward of $\BB M$ under the function which assigns to each bead its vector of area and left/right quantum boundary lengths. 
Let $\frk A \subset (0,\infty)^3$ be a Borel set such that $\BB m(\frk A)$ is finite and positive. 

Throughout most of this section, we assume that we are in the setting of Theorem~\ref{thm-bead-char} except that the triple $(\frk a , \frk l^L , \frk l^R)$ is sampled from the probability measure $\BB m(\frk A)^{-1} \BB m|_{\frk A}$, instead of being deterministic. We assume that the conditions in the theorem statement all hold for the conditional law given $(\frk a , \frk l^L , \frk l^R)$. 
We will only return to the case of deterministic $(\frk a , \frk l^L , \frk l^R)$ at the very end of the proof. 

Let $\mcl C = (\BB C , h  , 0 , \infty)$ be a $\gamma$-quantum cone and let $\eta'$ be a whole-plane space-filling $\SLE_{\kappa'}$ independent from $h$ and parameterized by $\gamma$-quantum mass with respect to $h$ in such a way that $\eta'(0) = 0$. Let $Z$ be the corresponding peanosphere Brownian motion. Define the quantum surfaces $\mcl S_{a,b}$ parameterized by $\eta'([a,b])$ for $a < b$ as in~\eqref{eqn-increment-surface}. Recall in particular that each of the future beaded surfaces $\mcl S_{t,\infty}$ for $t\in\BB R$ has the law of a $\frac{3\gamma}{2}$-quantum wedge. 

Let $\mcl B$ be the first bead of the $\frac{3\gamma}{2}$-quantum wedge $\mcl S_{0,\infty}$ whose vector of area and left/right quantum boundary lengths belongs to $\frk A$. Let $\ul T$ and $\ol T$ be the times at which $\eta'$ starts and finishes filling in this bead, so that
\eqb \label{eqn-bead-embed}
\mcl B = \mcl S_{\ul T , \ol T} = \left( \eta'([\ul T , \ol T]) , h|_{\eta'([\ul T , \ol T])  }  ,  \eta'(\ul T) , \eta'(\ol T) \right)  
\eqe 
and $\ol T - \ul T$ is the quantum mass of $\mcl B$. 
By our choice of $\frk A$ and since the beads of $\mcl S_{0,\infty}$ are a Poisson point process sampled from $\BB M$, we find that $\mcl B$ is well-defined a.s.\ and that the law of $\mcl B$ is that of a single bead of a $\frac{3\gamma}{2}$-quantum wedge conditioned to have area and left/right quantum boundary lengths in $\frk A$, i.e.\ $\mcl B \eqD (\BB H , h^\bead , 0, \infty)$. 

Let $\eta_{0,\infty}$ be the curve obtained from $\eta'$ by skipping the bubbles filled in by $\eta'$ in the time interval $[0,\infty)$, as in Section~\ref{sec-surface-def}, and define the times $\sigma(t)  = \sigma_{0,\infty}(t)$ and $\tau = \tau_{0,\infty}(t)$ for $t\geq 0$ as in~\eqref{eqn-tau-def}, so that $\eta_{0,\infty}$ is constant on each time interval $[\sigma(t) , \tau(t)]$. 

By~\cite[Footnote 4]{wedges} the conditional law of $\eta_{0,\infty}(\cdot - \ul T) |_{[\ul T , \ol T]}$ given $\wt h$ and $\eta'(\ul T , \ol T)$ is that of a chordal $\SLE_{\kappa'}$ from $\eta'(\ul T)$ to $\eta'(\ol T)$ in $\eta'([\ul T , \ol T])$, parameterized by the $\mu_h$-mass of the region it disconnects from $\eta'(\ol T)$. Therefore the curve-decorated quantum surface $(\mcl B , \eta_{0,\infty,\mcl B}(\cdot - \ul T) )$ agrees in law with $((\BB H , h^\bead , 0, \infty) , \eta^\bead)$ (the latter viewed as a curve-decorated quantum surface). 

Thus, we can couple $(\wt h^\bead , \wt\eta^\bead)$ and $(h^\bead,\eta^\bead)$ with $(h ,\eta')$ in such a way that a.s.\ $(\mcl B , \eta_{0,\infty,\mcl B}(\cdot - \ul T) )$ and $((\BB H , h^\bead , 0, \infty) , \eta^\bead)$ agree as curve-decorated quantum surfaces and $(h,\eta')$ is conditionally independent from $(\wt h^\bead , \wt\eta^\bead)$ and $(h^\bead,\eta^\bead)$ given $(\mcl B , \eta_{0,\infty,\mcl B}(\cdot - \ul T) )$. Henceforth fix such a coupling. 

For our choice of coupling, $\frk a = \ol T - \ul T$ and corresponding boundary length process appearing in Theorem~\ref{thm-bead-char} is given by
\eqb \label{eqn-bead-process-peano}
Z^\bead_t = (L_t^\bead, R_t^\bead) =   (Z_{\tau(t) - \ul T}    -  Z_{\ul T}) + (Z_{\ol T}  - Z_{\ul T}) ,\quad \forall t\in [0,\frk a]  .
\eqe 
Note that the term $Z_{\ol T} - Z_{\ul T}$ comes from the fact that $Z^\bead_0 = (\frk l^L , \frk l^R)$. 
Furthermore, the time intervals on which $Z^\bead$ is constant defined just above Theorem~\ref{thm-bead-char} satisfy $\sigma^\bead(t) = \sigma(t  ) - \ul T$ and $\tau^\bead(t) = \tau(t ) - \ul T$. 

The above coupling together with the results of Section~\ref{sec-disk-law} allows us to deduce the following lemma, which tells us that for random $(\frk a , \frk l^L , \frk l^R)$, the conditions of Theorem~\ref{thm-bead-char} are satisfied in the special case when $(\wt h^\bead , \wt\eta^\bead) = (h^\bead,\eta^\bead)$.

\begin{lem} \label{lem-bead-sle}
For any choice of set $\frk A \subset(0,\infty)^3$ as above, a slightly stronger version of condition~\ref{item-bead-char-wedge} in Theorem~\ref{thm-bead-char} holds with $(h^\bead , \eta^\bead)$ in place of $(\wt h^\bead , \wt\eta^\bead )$. 

More precisely, for $t\in [0,\frk a]$ let $\mcl W^\bead_t$ be the doubly marked quantum surface obtained by restricting $h^\bead$ to the unbounded connected component of $\BB H\setminus \eta^\bead([0,t])$, with marked points $\eta^\bead(t)$ and $\infty$. 
If we condition on $(\frk a , \frk l^L , \frk l^R)$, $Z^\bead|_{[0,t]}$, and the time $\tau^\bead(t)$ from~\eqref{eqn-bead-bubble-process}, then the conditional law of $\mcl W^\bead_t$ is that of a single bead of a $\frac{3\gamma}{2}$-quantum wedge with area $\frk a -\tau^\bead(t)$ and left/right boundary lengths $L_t^\bead$ and $R_t^\bead$. The conditional law of the collection of singly marked quantum surfaces obtained by restricting $ h^\bead$ to the bubbles disconnected from $\infty$ by $ \eta^\bead$ before time $t$, each marked by the point where $ \eta^\bead$ finishes tracing its boundary, is that of a collection of independent singly marked quantum disks parameterized by the intervals of time in $[0,\tau^\bead(t)]$ on which $Z^\bead$ is constant, with areas and boundary lengths determined by $Z^\bead$ as in~\eqref{eqn-bead-bubble-process}.

Furthermore, the curve-decorated quantum surfaces $( \mcl W^\bead_t ,  \eta^\bead_{ \mcl W^\bead_t})$ and $(\mcl B\setminus \mcl W^\bead_t , \eta^\bead_{\mcl B\setminus \mcl W^\bead_t})$ are conditionally independent given $(\frk a ,\frk l^L , \frk l^R)$, $Z_t^\bead$, and $\tau^\bead(t)$. 
\end{lem}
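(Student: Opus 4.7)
The plan is to transfer every assertion to the coupled $\gamma$-quantum cone/space-filling $\SLE_{\kappa'}$ pair $(h,\eta')$ and then deploy the machinery of Section~\ref{sec-disk-law}. Under the coupling, Lemma~\ref{lem-chordal-sle} identifies the bubbles disconnected from $\infty$ by $\eta^\bead$ before time $t$ with the bubbles of the surface $\mcl S_{\ul T,\,\ul T+\tau^\bead(t)}^0$ defined in Section~\ref{sec-surface-def}, while the doubly marked surface $\mcl W_t^\bead$ is the first bead of $\mcl S_{\ul T+\tau^\bead(t),\,\infty}$ (the bead which $\eta'$ is in the process of filling at time $\ul T+\tau^\bead(t)$). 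Writing $T:=\ul T+\tau^\bead(t)$, I would first verify that both $\ul T$ and $T$ are stopping times for the filtration $\{\mcl G_r\}_{r\in\BB R}$ from Lemma~\ref{lem-partial-filtration}, since whether $\ul T\leq r$ (respectively $T\leq r$) can be read off from $(Z-Z_r)|_{(-\infty,r]}$ together with $P_{r,\infty}$, which together encode the ordered sequence of beads of $\mcl S_{0,\infty}$ and their area/boundary-length data.

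For the bubble statement, I would apply Proposition~\ref{prop-sle-disk} at $(a,b)=(\ul T,T)$. The proposition is stated for deterministic $a<b$; extending to the stopping-time pair $(\ul T,T)$ follows by disintegrating over the joint law of $(\frk a,\frk l^L,\frk l^R,\ul T,T)$, which is itself a functional of $Z$ and the independent choice of $\frk A$. The conclusion is that, conditional on $Z|_{(-\infty,\ul T]}$, $Z|_{[T,\infty)}$, and $\{Z_{\tau_{\ul T,T}(s)}:s\in[\ul T,T]\}$, the bubbles of $\mcl S_{\ul T,T}^0$ are independent singly marked quantum disks whose areas and boundary lengths equal the magnitudes of the corresponding $\pi/2$-cone excursions of $Z$ inside $[\ul T,T]$. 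By~\eqref{eqn-bead-process-peano} these magnitudes are precisely the jumps of $Z^\bead|_{[0,t]}$ prescribed by~\eqref{eqn-bead-bubble-process}; moreover, since each bubble is driven by a disjoint $\pi/2$-cone segment of $Z$, averaging out the extra conditioning reduces us to the coarser $\sigma$-algebra generated by $(\frk a,\frk l^L,\frk l^R)$, $Z^\bead|_{[0,t]}$, and $\tau^\bead(t)$ without disturbing the independent-disk structure.

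For $\mcl W_t^\bead$, I would invoke Lemma~\ref{lem-bm-future-ind} at the $\{\mcl G_r\}$-stopping time $T$: this gives that $\mcl S_{T,\infty}$ is conditionally independent of $\mcl G_T$ given $P_{T,\infty}$, and that its conditional law is that of a $\frac{3\gamma}{2}$-quantum wedge whose beads have areas and boundary lengths read off from $P_{T,\infty}$. Restricting to the first bead and using~\eqref{eqn-bead-process-peano} to identify its area as $\frk a-\tau^\bead(t)$ and its left/right boundary lengths as $L_t^\bead$ and $R_t^\bead$ yields the claim about $\mcl W_t^\bead$. Joint conditional independence of the bubble collection and $\mcl W_t^\bead$ then falls out of the structure: by Lemma~\ref{lem-partial-surface-determined} the bubble collection is $\mcl G_T$-measurable, while $\mcl W_t^\bead$ is conditionally independent of $\mcl G_T$ given $P_{T,\infty}$, and the given conditioning data $(\frk a,\frk l^L,\frk l^R)$, $Z^\bead|_{[0,t]}$, and $\tau^\bead(t)$ is sufficient to determine $P_{T,\infty}$ up to Brownian-motion noise that is independent of the past.

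The main obstacle will be the careful bookkeeping needed to justify the stopping-time extensions of Proposition~\ref{prop-sle-disk} and Lemma~\ref{lem-bm-future-ind}, and to verify that the coarser $\sigma$-algebra generated by $(\frk a,\frk l^L,\frk l^R,Z^\bead|_{[0,t]},\tau^\bead(t))$ is simultaneously rich enough to decouple the bubbles from $\mcl W_t^\bead$ and coarse enough that it does not alter the claimed conditional quantum-disk and quantum-wedge laws.
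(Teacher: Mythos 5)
Your high-level strategy — transfer to the coupled $\gamma$-quantum cone pair $(h,\eta')$, identify the bubbles and the unexplored component with objects from Section~\ref{sec-disk-law}, deploy the filtration $\{\mcl G_r\}$ and Lemma~\ref{lem-bm-future-ind} — is the paper's strategy. But there are two concrete gaps worth fixing.

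First, you propose to apply Proposition~\ref{prop-sle-disk} at the random stopping-time pair $(\ul T, T)$ and justify this by ``disintegrating over the joint law.'' This is not a rigorous extension: the proposition's conditioning $\sigma$-algebra (namely $Z|_{(-\infty,a]}$, $Z|_{[b,\infty)}$, and the cone-endpoint values) is tied to the deterministic interval $[a,b]$, and the class of stopping times is not automatically compatible with it without a genuine approximation or strong-Markov argument of the type carried out in Section~\ref{sec-general-disk}. The paper sidesteps this entirely: it applies Proposition~\ref{prop-sle-disk} at $(a,b)=(0,\infty)$, observes via~\eqref{eqn-bead-process-peano} that $Z^\bead$ is a functional of $\{Z_{\tau_{0,\infty}(s)}:s\geq 0\}$, and then merely \emph{selects} the bubbles whose cone intervals lie in $[\ul T,T]$. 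Since $\ul T$, $T$, and the identity of the selected bubbles are already determined by the conditioning $\sigma$-algebra of the proposition, no stopping-time extension is needed and the conditional quantum-disk structure is inherited directly.

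Second, the final assertion of the lemma concerns the \emph{curve-decorated} quantum surfaces $(\mcl W^\bead_t, \eta^\bead_{\mcl W^\bead_t})$ and $(\mcl B\setminus\mcl W^\bead_t, \eta^\bead_{\mcl B\setminus\mcl W^\bead_t})$, but your proposal only treats the underlying quantum surfaces. To bring in the curve you need the domain Markov property of chordal $\SLE_{\kappa'}$: conditionally on $\mcl G_{\ul T+t}$ and $\mcl W^\bead_t$, the curve $\eta^\bead|_{[t,\frk a]}$ is a chordal $\SLE_{\kappa'}$ in $\mcl W^\bead_t$ between its two marked points, so the conditional law of the curve-decorated pair $(\mcl W^\bead_t,\eta^\bead_{\mcl W^\bead_t})$ depends only on the area and left/right boundary lengths of $\mcl W^\bead_t$. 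That, together with the $\mcl G_{\ul T+t}$-measurability of the past curve-decorated surface (which the paper deduces from the fact that $(\mcl S_{a,b},\eta'_{\mcl S_{a,b}})$ is determined by $(Z-Z_a)|_{[a,b]}$), gives the conditional independence after the coarsening step. Your remark that the coarse data determines $P_{T,\infty}$ ``up to Brownian-motion noise'' gestures at the coarsening argument but does not substitute for it: what one actually uses is that if $X$ is conditionally independent of $\mcl F$ given $\mcl H\subset\mcl F$ and the conditional law of $X$ given $\mcl H$ is $\mcl H'$-measurable, then $X$ is conditionally independent of $\mcl F$ given $\mcl H'$.
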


We note that the last statement about curve-decorated quantum surfaces is not part of the hypotheses of Theorem~\ref{thm-bead-char}, and is stronger than the independence assertion in condition~\ref{item-bead-char-wedge} of Theorem~\ref{thm-bead-char}.

\begin{proof}[Proof of Lemma~\ref{lem-bead-sle}]
Recall the $\gamma$-quantum cone/space-filling $\SLE_{\kappa'}$ pair $(h,\eta')$ defined above. 
For $t\geq 0$, let $P_{t,\infty}$ be the function as in~\eqref{eqn-bead-function} which encodes the areas and left/right boundary lengths of the beads of the surface $\mcl S_{t,\infty}$. Then $[\ul T , \ol T]$ is the leftmost interval of times on which $P_{0,\infty}(s) \in \frk A$ and $(\frk a , \frk l^L ,\frk l^R)$ is the value of $P_{0,\infty}$ on this interval. 
Furthermore, if we define the time intervals $s\in [\sigma(s) , \tau(s)] \subset [0,\infty)$ on which $\eta_{0,\infty}$ is constant as above, then for each $t\in [0,\frk a] = [0, \ol T - \ul T]$, we have by our choice of coupling that
\eqb \label{eqn-cone-bead-surface}
\mcl W^\bead_t = \mcl S_{\tau (\ul T+t) , \ol T}  .
\eqe  

By~\eqref{eqn-bead-process-peano}, $Z^\bead$ is determined by $\{Z_{\tau (s)} : s \geq 0\}$. It therefore follows from Proposition~\ref{prop-sle-disk} that the conditional law given $Z^\bead$ of the collection of singly marked quantum surfaces obtained by restricting $ h^\bead$ to the bubbles disconnected from $\infty$ by $ \eta^\bead$ before time $t$ is as in the statement of the lemma.

To study the conditional law of the future curve-decorated quantum surface $( \mcl W^\bead_t ,  \eta^\bead_{ \mcl W^\bead_t})$, 
we first observe that $\mcl W_t^\bead$ is either empty or is a bead of $\mcl S_{\ul T + t,\infty}$ (since it corresponds to a connected component of $\BB C\setminus \eta'((-\infty,\ul T+t])$). 

Let $\mcl G_r = \sigma\left( (Z-Z_r)|_{(-\infty,r]} \right) \vee \sigma( P_{r,\infty} ) $ for $r\in\BB R$ be the $\sigma$-algebra of Lemma~\ref{lem-partial-filtration}. 
Our above description of the times $\ul T$ and $\ol T$ in terms of $P_{0,\infty}$ shows that $\ul T$, $\ol T$, and the area and left/right quantum boundary lengths $(\frk a , \frk l^L , \frk l^R)$ of $\mcl B$ are $\mcl G_0$-measurable. 
By this and~\eqref{eqn-cone-bead-surface}, the conditional law of $\mcl W^\bead_t$ given $\mcl G_{\ul T +t}$ is that of a single bead of a $\frac{3\gamma}{2}$-quantum wedge with given area and left/right boundary lengths. It is clear from the domain Markov property that the conditional law of $\eta^\bead|_{[t, \frk a ]}$ given $\mcl G_{\ul T +t}$ and $\mcl W^\bead_t$ is that of a chordal $\SLE_{\kappa'}$ between the two marked points of $\mcl W^\bead_t$, parameterized by the quantum mass it disconnects from its target point. 
In particular, the conditional law of $( \mcl W^\bead_t ,  \eta^\bead_{ \mcl W^\bead_t})$ depends only on the area and left/right quantum boundary lengths of $\mcl W^\bead_t$, which are determined by $(\frk a ,\frk l^L , \frk l^R)$, $Z_t^\bead$, and $\tau^\bead(t)$.  

The restricted Brownian motion $(Z-Z_{\ul T+t})|_{(-\infty,\ul T + t]}$, the vector $(\frk a , \frk l^L,\frk l^R)$, and the time $\tau^\bead(t) = \tau(\ul T + t) - \ul T$ are all $ \mcl G_{\ul T+t}$-measurable. Since each curve-decorated quantum surface $(\mcl S_{a,b}, \eta'_{\mcl S_{a,b}})$ is a.s.\ determined by $(Z-Z_a)|_{[a,b]}$ and $\eta^\bead_{\mcl S_{-\infty,\ul T + t}}$ is a.s.\ determined by $(\mcl S_{-\infty,\ul T+t} , \eta'_{\mcl S_{-\infty,\ul T + t}})$ and $\tau (t)$, we infer that $(\mcl B\setminus \mcl W^\bead_t , \eta^\bead_{\mcl B\setminus \mcl W^\bead_t}) \in \mcl G_{\ul T+ t}$. In particular, the collection of singly marked quantum surfaces obtained by restricting $ h^\bead$ to the bubbles disconnected from $\infty$ by $ \eta^\bead$ before time $t$ is $\mcl G_{\ul T+t}$-measurable.
The statement of the lemma follows by combining this with our above description of the conditional law of this collection of singly marked quantum surfaces given $Z^\bead $ and the conditional law of $\mcl W^\bead_t$ given $\mcl G_{\ul T+t}$. 
\end{proof}

We record the following consequence of Lemma~\ref{lem-bead-sle}, which says that the hypotheses of Theorem~\ref{thm-bead-char} in the case when $(\wt h^\bead ,\wt\eta^\bead) = (h^\bead ,\eta^\bead)$ are satisfied for almost every deterministic choice of area and left/right boundary length vector $(\frk a , \frk l^L , \frk l^R)$. 
 
\begin{lem} \label{lem-bead-sle-as}
For Lebesgue-a.e.\ triple $(\frk a_0 , \frk l_0^L , \frk l_0^R) \in (0,\infty)^3$, the conclusion of Lemma~\ref{lem-bead-sle} remains true if we fix this value of $(\frk a , \frk l^L , \frk l^R) = (\frk a_0 , \frk l_0^L , \frk l_0^R)$ instead of sampling $(\frk a , \frk l^L , \frk l^R)$ from $\frk A$.
\end{lem}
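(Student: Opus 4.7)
The plan is to deduce the almost-everywhere statement from Lemma~\ref{lem-bead-sle} by a disintegration argument. First I would observe that each of the three assertions of Lemma~\ref{lem-bead-sle}---the conditional law of $\mcl W_t^\bead$ given the boundary length process and $\tau^\bead(t)$, the conditional law of the collection of bubbles, and the conditional independence of $(\mcl W_t^\bead, \eta_{\mcl W_t^\bead}^\bead)$ and $(\mcl B\setminus\mcl W_t^\bead, \eta_{\mcl B\setminus \mcl W_t^\bead}^\bead)$---can be phrased as the coincidence of certain conditional expectations of bounded continuous test functions on the Polish space $\BB M_k^{\op{bead}}$ from Section~\ref{sec-surface-topology}. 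Both sides of such an equality are jointly measurable functions of $(\frk a,\frk l^L,\frk l^R,Z^\bead|_{[0,t]},\tau^\bead(t))$ and of the test function, so by restricting attention to a countable convergence-determining class of test functions and to a countable dense family of times $t\in\BB Q\cap[0,\frk a]$, Lemma~\ref{lem-bead-sle} reduces to a single countable collection of $\BB P$-a.s.\ equalities between measurable functions of $(\frk a,\frk l^L,\frk l^R)$ and the other conditioning data.

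Next I would apply Lemma~\ref{lem-bead-sle} with $\frk A$ ranging over a countable basis of Borel sets in $(0,\infty)^3$ with finite positive $\BB m$-measure, for instance the dyadic boxes contained in $[2^{-n},2^n]^3$ for $n\in\BB N$. For each such $\frk A$, the conclusion of Lemma~\ref{lem-bead-sle} amounts to saying that the countable collection of equalities from the previous paragraph holds for $\BB m$-a.e.\ triple in $\frk A$. Taking a countable union of the corresponding $\BB m$-null exceptional sets yields that these equalities hold for $\BB m$-a.e.\ $(\frk a_0,\frk l_0^L,\frk l_0^R)\in(0,\infty)^3$, which is precisely the conclusion of Lemma~\ref{lem-bead-sle} with $(\frk a,\frk l^L,\frk l^R)$ fixed at any such triple.

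Finally I would verify that $\BB m$ is absolutely continuous with respect to Lebesgue measure on $(0,\infty)^3$, so that any $\BB m$-null set is also Lebesgue-null and the $\BB m$-a.e.\ statement from the previous paragraph upgrades to the desired Lebesgue-a.e.\ statement. This should follow from the construction of a thin wedge in~\cite[Definition~4.15]{wedges}: the beads are in bijection with excursions of a Bessel process whose lifetime distribution has a smooth density, and conditionally on an excursion the area and left/right quantum boundary lengths of the corresponding bead have jointly smooth laws, by standard computations for the $\gamma$-LQG area and boundary length measures of a GFF-type field. The main obstacle is largely bookkeeping---ensuring the conditional laws in Lemma~\ref{lem-bead-sle} depend jointly measurably on the parameter $(\frk a,\frk l^L,\frk l^R)$---which is automatic because the conditioned bead is itself defined as a regular conditional distribution in~\cite[Definition~4.15]{wedges}.
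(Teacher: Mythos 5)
Your approach is the same as the paper's: (i) observe that Lemma~\ref{lem-bead-sle} asserts a conditional-law equality that holds $\BB m(\frk A)^{-1}\BB m|_{\frk A}$-a.s.; (ii) let $\frk A$ range over a countable exhausting family to upgrade this to an $\BB m$-a.e.\ statement; (iii) convert $\BB m$-a.e.\ to Lebesgue-a.e.\ by an absolute-continuity fact. Your step (i) spelled out via countable convergence-determining classes of test functions and countable dense times is a valid way to formalize the paper's more terse ``letting $\frk A$ vary.''

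There is a direction slip in step (iii) that is worth flagging. You write that you would show $\BB m$ is absolutely continuous with respect to Lebesgue measure (i.e., $\BB m\ll\op{Leb}$) ``so that any $\BB m$-null set is also Lebesgue-null.'' But $\BB m\ll\op{Leb}$ gives the reverse implication (Lebesgue-null $\Rightarrow$ $\BB m$-null); what you actually need to convert the $\BB m$-a.e.\ exceptional set into a Lebesgue-a.e.\ one is $\op{Leb}\ll\BB m$, which requires the density $d\BB m/d\op{Leb}$ to be positive Lebesgue-a.e., not merely to exist and be smooth. The Bessel-excursion/LQG-measure computation you sketch naturally gives the existence of a density; positivity is a separate point that still needs an argument. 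The paper sidesteps this by asserting \emph{mutual} absolute continuity outright, and its route there is different from yours: it invokes the fact that the areas and left/right boundary lengths of the beads of $\mcl S_{0,\infty}$ form a Poisson point process with intensity $\BB m$ and uses Lemma~\ref{lem-bead-inf}, which identifies these triples with the increments of $Z$ recorded in $P_{0,\infty}$; from the Brownian motion description one reads off that the law of the triple $(s-r, L_r - L_s, R_r - R_s)$ at a simultaneous running infimum has a positive density, giving mutual absolute continuity directly. Your Bessel-excursion computation could also be made to work, but you would need to supply the positivity step, whereas the paper's reduction to $Z$ makes it essentially immediate.
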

\begin{proof}
Lemma~\ref{lem-bead-sle} is a statement about the conditional law of $(h^\bead , \eta^\bead)$ given $(\frk a , \frk l^L , \frk l^R)$ which holds $\BB m(\frk A)^{-1} \BB m|_{\frk A}$-a.s. Letting $\frk A$ vary shows that the statement of the lemma holds for $\BB m$-a.e.\ triple $(\frk a_0 , \frk l_0^L , \frk l_0^R) \in (0,\infty)^3$. Since the areas and left/right boundary lengths of the beads of $\mcl S_{0,\infty}$ are a Poisson point process with intensity measure $\BB m$ and by Lemma~\ref{lem-bead-inf}, we infer that $\BB m$ is mutually absolutely continuous with respect to Lebesgue measure on $(0,\infty)^3$. The statement of the lemma follows.
\end{proof}

Using Lemma~\ref{lem-bead-sle}, we see that the conditions in Theorem~\ref{thm-bead-char} actually imply a slightly stronger set of conditions after possibly re-choosing $(h^\bead , \eta^\bead , \Phi^\bead)$. 
 
\begin{lem} \label{lem-bead-char-conformal}
In the statement of Theorem~\ref{thm-bead-char}, we can choose the pair $(h^\bead ,\eta^\bead)$ and the homeomorphism $\Phi^\bead$ in condition~\ref{item-bead-char-homeo} in such a way that $\Phi^\bead$ is conformal on each connected component of $\BB H\setminus \eta^\bead([0,t])$ and pushes forward $h^\bead|_{ \BB H\setminus \eta^\bead([0,t])}$ to $\wt h^\bead|_{\BB H\setminus \wt\eta^\bead([0,t])}$ via the $\gamma$-LQG coordinate change formula. 
\end{lem}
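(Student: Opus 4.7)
The plan is to re-couple $(h^\bead,\eta^\bead)$ with $(\wt h^\bead,\wt\eta^\bead)$ so that corresponding complementary quantum surfaces are a.s.\ equal (not merely equal in law), and then to build $\Phi^\bead$ by piecing together the resulting component-wise conformal maps. The marginal law of $(h^\bead,\eta^\bead)$ does not change, so the hypotheses of Theorem~\ref{thm-bead-char} continue to hold.

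First I would observe that the joint conditional laws of the complementary quantum surfaces given $Z^\bead$ agree on both sides. Applied at $t=\frk a$, condition~\ref{item-bead-char-wedge} says that the singly marked quantum surfaces obtained by restricting $\wt h^\bead$ to the bubbles cut off by $\wt\eta^\bead$, each marked by the point where $\wt\eta^\bead$ finishes tracing its boundary, are, conditionally on $Z^\bead$, independent singly marked quantum disks with areas and boundary lengths read off from~\eqref{eqn-bead-bubble-process}. Lemma~\ref{lem-bead-sle-as} gives exactly the same conditional law for the complementary quantum surfaces of the independent pair $(h^\bead,\eta^\bead)$. Since the marginal law of $Z^\bead$ is common to both pairs by hypothesis, I can re-couple $(h^\bead,\eta^\bead)$ with $(\wt h^\bead,\wt\eta^\bead)$ so that the two boundary length processes coincide a.s.\ and, conditional on this, the collections of complementary quantum disks are a.s.\ pairwise equal as singly marked quantum surfaces. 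This replaces the pair $(h^\bead,\eta^\bead)$ and the homeomorphism $\Phi^\bead$ supplied by condition~\ref{item-bead-char-homeo} with a new pair with the same marginal distribution.

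Under the new coupling, for each connected component $U$ of $\BB H\setminus\eta^\bead([0,\frk a])$ and the corresponding component $\wt U$ of $\BB H\setminus\wt\eta^\bead([0,\frk a])$, the a.s.\ equality of quantum surfaces provides a conformal isomorphism $\psi_U:U\to\wt U$ which preserves the two designated marked boundary points and satisfies $\wt h^\bead|_{\wt U}=h^\bead|_U\circ\psi_U^{-1}+Q\log|(\psi_U^{-1})'|$. Since the boundary of each $U$ is a finite concatenation of $\SLE_\kappa$-type arcs ($\kappa=16/\kappa'$) bounding a H\"older domain, each $\psi_U$ extends to a homeomorphism $\ol U\to\ol{\wt U}$, and by conformal covariance it pushes the $\gamma$-quantum length measure on $\bdy U$ forward to that on $\bdy\wt U$. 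I would then define $\Phi^\bead:\BB H\to\BB H$ by setting $\Phi^\bead|_{\ol U}=\psi_U$ on each closure; the identities $\Phi^\bead\circ\eta^\bead=\wt\eta^\bead$ and that $\Phi^\bead$ preserves quantum length along each boundary arc are then immediate from the parameterization by disconnected mass and the equality of $Z^\bead$'s.

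The main obstacle, and the step requiring the most care, is verifying that the pieces $\psi_U$ paste to a single homeomorphism of $\BB H$, i.e., that the candidate boundary values of $\psi_U$ on $\bdy U$ and $\psi_{U'}$ on $\bdy U'$ agree on any shared arc of $\eta^\bead$. For this I would use that the boundary of each component is a finite concatenation of $\SLE_\kappa$-type curves, so by~\cite[Proposition~3.16]{wedges} the union $\bigcup_U\bdy U$ is conformally removable. Combined with the fact that both $\eta^\bead$ and $\wt\eta^\bead$ glue their left and right sides by $\gamma$-quantum length (on the $(h^\bead,\eta^\bead)$ side because $\eta^\bead$ is an independent chordal SLE$_{\kappa'}$ on a $\frac{3\gamma}{2}$-wedge bead, and on the $(\wt h^\bead,\wt\eta^\bead)$ side by the quantum-length preserving property in condition~\ref{item-bead-char-homeo} of Theorem~\ref{thm-bead-char}), the two boundary identifications are forced to agree. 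Once this pasting is justified, the resulting $\Phi^\bead$ is a homeomorphism $\BB H\to\BB H$ with the desired conformal and LQG coordinate-change properties on each complementary component.
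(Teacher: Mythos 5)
Your overall strategy (re-couple so the complementary quantum surfaces agree a.s.\ as quantum surfaces, then replace the given homeomorphism by the component-wise conformal maps) is the one the paper uses, and the first half of your argument is correct. However, there is a genuine gap in the pasting step, and the tool you invoke there is both inapplicable and not the right one.

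You assert that $\bigcup_U \bdy U$ is conformally removable by~\cite[Proposition~3.16]{wedges}. That proposition covers \emph{locally finite} unions of non-crossing $\SLE_\kappa$-type curves accumulating only at a discrete set of points. The set $\bigcup_U \bdy U$ here is the full range of the chordal $\SLE_{\kappa'}$ curve $\eta^\bead$ with $\kappa'\in(4,8)$, whose removability is precisely the open problem that this lemma is designed to route around (see the discussion after Theorem~\ref{thm-bead-char-intro} and in Section~\ref{sec-welding}); it is emphatically \emph{not} a locally finite union of $\SLE_\kappa$ arcs. Moreover, even if removability were known, it is not the right hypothesis: removability would promote a given homeomorphism that is conformal off the set to a conformal one, but it does not, by itself, tell you that a family of conformal maps defined only on the open components can be pasted into a single homeomorphism of~$\BB H$.

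What the paper does instead is to \emph{start} from a homeomorphism $\rng\Phi^\bead:\BB H\to\BB H$ that already exists: after re-coupling, condition~\ref{item-bead-char-homeo} of Theorem~\ref{thm-bead-char} (together with the fact that $(\BB H,\eta^\bead)$ modulo a curve- and boundary-length-measure-preserving homeomorphism is determined by $Z^\bead$ via the peanosphere construction) supplies a curve-preserving and boundary-measure-preserving homeomorphism from $(\BB H,\rng\eta^\bead)$ to $(\BB H,\wt\eta^\bead)$. One then observes that for each component $\rng D$, the conformal map $f_{\rng D}$ (your $\psi_U$) preserves quantum boundary length and matches the marked points, so $f_{\rng D}|_{\bdy\rng D}=\rng\Phi^\bead|_{\bdy\rng D}$; thus \emph{replacing} $\rng\Phi^\bead$ by $f_{\rng D}$ inside each $\rng D$ yields a map that agrees with the original on the boundary skeleton and therefore remains a homeomorphism. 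The consistency you were worried about is obtained for free from the a priori existence of $\rng\Phi^\bead$ plus the uniqueness of boundary-length-preserving bijections between arcs with matched endpoints; conformal removability is not invoked. You should also note that a small extra step is needed at the end, checking that the quantum-length-preservation in condition~\ref{item-bead-char-homeo} survives the replacement: this uses that a.s.\ $\nu_{\wt h^\bead}(\wt\eta^\bead([0,t])\cap\wt\eta^\bead([t,\frk a]))=0$, so $\nu_{\wt h^\bead}$-a.e.\ boundary point of the unbounded component lies on the boundary of some bubble.
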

\begin{proof}
By condition~\ref{item-bead-char-wedge} in Theorem~\ref{thm-bead-char} and Lemma~\ref{lem-bead-sle}, each applied with $t = \frk a$, the collection of singly-marked quantum surfaces obtained by restricting $\wt h^\bead$ to the bubbles cut out by $\wt\eta^\bead$ and the collection of singly-marked quantum surfaces obtained by restricting~$h^\bead$ to the bubbles cut out by $ \eta^\bead$ have the same conditional law given~$Z^\bead$. 
Hence we can find a coupling of $(\wt h^\bead , \wt\eta^\bead)$ and $(h^\bead , \eta^\bead)$ with another pair $(\rng h^\bead , \rng\eta^\bead) \eqD (h^\bead , \eta^\bead)$ such that~$Z^\bead$ is the left/right boundary length process of $(\rng h^\bead , \rng\eta^\bead)$ and the quantum surface obtained by restricting~$\rng h^\bead$ to each of the bubbles disconnected from $\infty$ by $\rng\eta^\bead$ agrees with the quantum surface obtained by restricting $\wt h^\bead$ to the corresponding bubble disconnected from $\infty$ by $\wt\eta^\bead$. We will prove that the conditions of Theorem~\ref{thm-bead-char} plus the additional condition in the statement of the lemma are satisfied with $(\rng h^\bead , \rng\eta^\bead)$ in place of $(h^\bead , \eta^\bead)$. 
 
The discussion just above Lemma~\ref{lem-bead-sle} together with the peanosphere construction (see~\cite[Figure~1.15, Line~3]{wedges}) imply that the curve-decorated topological space $(\BB H , \eta^\bead)$ is a.s.\ determined by $Z^\bead$ modulo a curve-preserving homeomorphism which also preserves the $\gamma$-quantum boundary length measure on each connected component of $\BB H\setminus \eta^\bead$. 
Hence condition~\ref{item-bead-char-homeo} in Theorem~\ref{thm-bead-char} implies that a.s.\ $(\BB H , \wt\eta^\bead)$, $(\BB H , \eta^\bead)$, and $(\BB H , \rng\eta^\bead)$ differ by curve-preserving homeomorphisms which also preserve the $\gamma$-quantum boundary length measure on each complementary connected component of the curve. Let $\rng\Phi^\bead : \BB H \rta \BB H$ be such a curve- and boundary measure-preserving homeomorphism from $(\BB H , \rng\eta^\bead)$ to $(\BB H , \wt\eta^\bead)$. 
 
By definition of $\rng\eta^\bead$, for each connected component $\rng D$ of $\BB H\setminus \rng\eta^\bead$, there exists a conformal map $f_{\rng D} : \rng D \rta \rng\Phi^\bead(\rng D)$ which pushes forward $\rng h^\bead|_{\rng D}$ to $\wt h^\bead|_{\rng\Phi^\bead(\rng D)}$ via the $\gamma$-LQG coordinate change formula. 
The map $f_{\rng D}$ preserves the $\gamma$-quantum boundary length measure on $\bdy \rng D$, so $\rng\Phi^\bead|_{\bdy \rng D} = f_{\rng D}|_{\bdy \rng D}$. 
Therefore, if we re-define~$\rng\Phi^\bead$ to be equal to $f_{\rng D}$ on each such connected component~$\rng D$, then~$\Phi^\bead$ remains a curve- and boundary measure-preserving homeomorphism and pushes forward each $\rng h^\bead|_{\rng D}$ to $\wt h^\bead|_{\rng\Phi^\bead(\rng D)}$ via the $\gamma$-LQG coordinate change formula.  

Since condition~\ref{item-bead-char-wedge} depends on the pair $(h^\bead,\eta^\bead)$ only via the function $Z^\bead$, it remains only to check the boundary length consistency statement in condition~\ref{item-bead-char-homeo} with $(\rng h^\bead , \rng\eta^\bead , \rng \Phi^\bead)$ in place of $(h^\bead , \eta^\bead , \Phi^\bead)$. To see this, we observe that condition~\ref{item-bead-char-homeo} in Theorem~\ref{thm-bead-char} (together with the analogous property for $(h^\bead,\eta^\bead)$) imply that a.s.\ $\nu_{\wt h^\bead}(\wt\eta^\bead([0,t]) \cap \wt\eta^\bead([t,\frk a]) ) =0$ for each $t\in [0,\frk a]$. Hence $\nu_{\wt h^\bead}$-a.e.\ point on the boundary of the unbounded connected component of $\wt\eta^\bead([0,t])$ lies on the boundary of a connected component of $\BB H\setminus \wt\eta^\bead$. Since $\rng\Phi^\bead$ preserves the $\gamma$-quantum boundary length measure on each such component, we conclude. 
\end{proof}

Henceforth assume that $(h^\bead,\eta^\bead)$ and $\Phi^\bead$ have been chosen as in Lemma~\ref{lem-bead-char-conformal}. Recall from the discussion above Lemma~\ref{lem-bead-sle} that we have coupled $(\wt h^\bead , \wt\eta^\bead)$ and $(h^\bead,\eta^\bead)$ with $(h ,\eta')$ in such a way that a.s.\ $(\mcl B , \eta_{0,\infty,\mcl B}(\cdot - \ul T) )$ and $((\BB H , h^\bead , 0, \infty) , \eta^\bead)$ agree as curve-decorated quantum surfaces and $(h,\eta')$ is conditionally independent from $(\wt h^\bead , \wt\eta^\bead)$ and $(h^\bead,\eta^\bead)$ given $(\mcl B , \eta_{0,\infty,\mcl B}(\cdot - \ul T) )$. 
We now use this coupling to construct a pair $(\wt h , \wt\eta')$ which we will eventually show satisfies the conditions of Theorem~\ref{thm-wpsf-char}. 

Let $(\eta')^\bead : [0,\frk a] \rta \BB H$ be the space-filling curve which is the image of $\eta'_{\mcl B}(\cdot - \ul T)$ under the embedding~$h^\bead$ of~$\mcl B$ into $(\BB H , 0, \infty)$. Let $(\wt\eta')^\bead := \Phi^\bead \circ ( \eta')^\bead : [0,\frk a] \rta \BB H$. 
By the condition on $\Phi^\bead$ in Lemma~\ref{lem-bead-char-conformal} and our choice of coupling, the conditional law of $(\wt\eta')^\bead$ given $(h^\bead,\eta^\bead)$ is obtained by replacing each of the constant increments $\wt\eta^\bead |_{[\sigma^\bead(t) , \tau^\bead(t)]}$ by an independent space-filling $\SLE_{\kappa'}$ loop based at $\wt\eta^\bead(t)$ in the bubble disconnected from $\infty$ by $\wt\eta^\bead$ at time $\sigma^\bead(t)$ based at $\wt\eta^\bead(t)$, parameterized by $\gamma$-quantum mass with respect to $\wt h^\bead$. 

Let $\wt{\mcl C} = (\BB C , \wt h , 0, \infty)$ be the quantum surface obtained from the $\gamma$-quantum cone $\mcl C$ by replacing the bead $\mcl B$ with the quantum surface $\wt{\mcl B} := (\BB H , \wt h^\bead , 0,\infty) \eqD \mcl B$, equivalently the quantum surface obtained by conformally welding together $(\BB H , \wt h^\bead , 0,\infty)$ and the complementary sub-surface $\mcl C\setminus\mcl B$ according to $\gamma$-quantum lengths along their boundaries. 
The boundary of the region $\eta'([\ul T , \ol T])$ in $\BB C$ which parameterizes $\mcl B$ is a union of two non-crossing $\SLE_\kappa$ segments, for $\kappa = 16/\kappa' \in (0,4)$, so is a.s.\ conformally removable, so there is a unique way to perform this conformal welding and $(\BB C , \wt h , 0, \infty)$ is well-defined. By our choice of coupling, $(h,\eta')$ is conditionally independent from $(\wt h^\bead ,\wt\eta^\bead)$ given $\mcl B$, so $(\BB C, \wt h , 0, \infty)$ has the law of a $\gamma$-quantum cone. 

Let $\wt\eta' : \BB R\rta \BB C$ be the curve which is the image under the conformal welding map of the concatenation of $\eta_{\mcl C\setminus \mcl B}'|_{(-\infty,\ul T]}$, $\wt\eta'_{\wt{\mcl B}}(\cdot + \ul T)$, and $\eta_{\mcl C\setminus \mcl B}'|_{[\ol T , \infty)}$. Then $\wt\eta'$ is a space-filling curve from $\infty$ to $\infty$ parameterized by $\gamma$-quantum mass with respect to $\wt h$. 
The main input in the proof of Theorem~\ref{thm-bead-char} is the following lemma.

\begin{lem} \label{lem-wpsf-to-chordal}
The conditions of Theorem~\ref{thm-wpsf-char} are satisfied for the pairs $(h,\eta')$ and $(\wt h ,\wt\eta')$ defined above.
Hence $(\wt h , \wt\eta')$ is an embedding into $(\BB C , 0 , \infty)$ of a $\gamma$-quantum cone together with an independent whole-plane space-filling $\SLE_{\kappa'}$ from $\infty$ to $\infty$ parameterized by $\gamma$-quantum mass with respect to $\wt h$. 
\end{lem}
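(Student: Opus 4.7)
The plan is to verify the two conditions of Theorem~\ref{thm-wpsf-char} for $(\wt h, \wt\eta')$ together with $Z$, the peanosphere Brownian motion of $(h, \eta')$, after which Theorem~\ref{thm-wpsf-char} gives the conclusion. First I would confirm that $Z$ is in fact the left/right quantum boundary length process of $(\wt h, \wt\eta')$. Outside the bead $\mcl B = \mcl S_{\ul T, \ol T}$, the conformal welding defining $\wt{\mcl C}$ respects $\gamma$-quantum length along the gluing interface, so the boundary length process of $\wt\eta'$ agrees with that of $\eta'$ (namely $Z$) at times outside $[\ul T, \ol T]$. Inside the bead, $(\wt\eta')^\bead = \Phi^\bead \circ (\eta')^\bead$, where by Lemma~\ref{lem-bead-char-conformal} the map $\Phi^\bead$ preserves $\gamma$-quantum length on each complementary connected component of $\eta^\bead$; combined with~\eqref{eqn-bead-process-peano}, this identifies the boundary length process on $[\ul T, \ol T]$ with $Z|_{[\ul T, \ol T]}$ after the natural shift.

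For condition~\ref{item-wpsf-char-homeo} (topology and consistency), I would construct a homeomorphism $\Phi : \BB C \rta \BB C$ with $\Phi \circ \eta' = \wt\eta'$ by patching two pieces. Outside $\eta'([\ul T, \ol T])$, use the canonical conformal identification between the embeddings of $\mcl C \setminus \mcl B$ and $\wt{\mcl C} \setminus \wt{\mcl B}$ arising from the conformal welding. Inside the embedded bead, use the conjugate of $\Phi^\bead$ by the conformal maps embedding $\mcl B$ and $\wt{\mcl B}$ into $\BB H$. The two pieces agree along $\bdy \eta'([\ul T, \ol T])$ by quantum length preservation; the identity $\Phi \circ \eta' = \wt\eta'$ holds by construction of $\wt\eta'$; and the required pushforward of $\gamma$-quantum length on $\bdy \wt\eta'([t,\infty))$ for $t \in \BB Q$ is automatic outside the bead and is supplied by Lemma~\ref{lem-bead-char-conformal} inside it. Consequently $(\BB C, \wt\eta')$ is equivalent to the infinite-volume peanosphere generated by $Z$.

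For condition~\ref{item-wpsf-char-wedge} (Markov property at a fixed $t \in \BB R$), I would condition on $(\frk a, \frk l^L, \frk l^R, \ul T, \ol T)$ and then case-split. When $t \leq \ul T$, the surface $\wt{\mcl S}_{t,\infty}$ is obtained from the $\frac{3\gamma}{2}$-wedge $\mcl S_{t,\infty}$ by swapping the single bead $\mcl B$ for $\wt{\mcl B}$; since both beads have the same conditional law given $(\frk a, \frk l^L, \frk l^R)$, the Poissonian structure of the wedge gives $\wt{\mcl S}_{t, \infty} \eqD \mcl S_{t, \infty}$. When $t \geq \ol T$ one has $\wt{\mcl S}_{t, \infty} = \mcl S_{t, \infty}$ via the welding identification. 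The delicate case $\ul T < t < \ol T$ uses hypothesis~\ref{item-bead-char-wedge} of Theorem~\ref{thm-bead-char} at time $t - \ul T$: the unbounded component of $\BB H \setminus \wt\eta^\bead([0, t - \ul T])$ is conditionally a single bead of a $\frac{3\gamma}{2}$-wedge with the correct area and boundary lengths, independent of the already-cut bubbles; welding this bead to the conditionally independent $\frac{3\gamma}{2}$-wedge $\mcl S_{\ol T, \infty}$ yields a $\frac{3\gamma}{2}$-wedge. The independence of $\wt{\mcl F}_{t, \infty}$ and $\wt{\mcl S}_{t, \infty}$ from $\wt{\mcl F}_{-\infty, t}$ is assembled from the Markov property of $(h, \eta')$ outside the bead and hypothesis~\ref{item-bead-char-wedge} inside it, using that $\ul T, \ol T$ are measurable with respect to the filtration $\{\mcl G_r\}_{r \in \BB R}$ of Lemma~\ref{lem-partial-filtration} together with Lemma~\ref{lem-bm-future-ind}.

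The main obstacle is the third case together with the fact that for a fixed deterministic $t$ one does not know a priori which case applies, so the argument must be assembled uniformly across the partition of the probability space according to the position of $t$ relative to $[\ul T, \ol T]$. The coupling built earlier, in which $(h, \eta')$ is conditionally independent of $(\wt h^\bead, \wt\eta^\bead)$ given $\mcl B$, is essential here: it is what permits transferring the independence assertion of hypothesis~\ref{item-bead-char-wedge} from within the bead to $\wt{\mcl F}_{-\infty, t}$ on all of $\wt{\mcl C}$. Once both conditions of Theorem~\ref{thm-wpsf-char} are verified, that theorem gives the desired conclusion that $(\wt h, \wt\eta')$ is an embedding of a $\gamma$-quantum cone together with an independent whole-plane space-filling $\SLE_{\kappa'}$ parameterized by $\gamma$-quantum mass.
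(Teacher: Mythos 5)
Your proposal is correct and takes essentially the same approach as the paper: construct $\Phi$ by patching $\Phi^\bead$ on the bead with the welding identification outside, then verify the Markov property of condition~\ref{item-wpsf-char-wedge} by decomposing according to the position of $t$ relative to $[\ul T, \ol T]$. The paper carries out exactly the same strategy via Lemmas~\ref{lem-bubble-filtration-simplify}--\ref{lem-bead-wpsf-char-wedge}, with the case split handled uniformly through the time $\ul\tau(t)$ and the $\sigma$-algebra $\wt{\mcl G}_r$ of Lemma~\ref{lem-partial-filtration}, which is what resolves the issue you flag about not knowing a priori where $t$ falls.
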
 

For the proof of Lemma~\ref{lem-wpsf-to-chordal}, 
we recall the definitions of the quantum surfaces $\mcl S_{a,b}$ for $a,b\in\BB R\cup \{-\infty , \infty\}$ and the times $\sigma(t) = \sigma_{0,\infty}(t)$ and $\tau(t) = \tau_{0,\infty}(t)$ for $t \geq 0$ from above, and define the quantum surfaces $\wt{\mcl S}_{a,b}$ for $a < b$ obtained by restricting $\wt h$ to $\wt\eta'([a,b])$, as in~\eqref{eqn-increment-surface}. 

We start by defining the homeomorphism $\Phi : \BB C \rta \BB C$ appearing in condition~\ref{item-wpsf-char-homeo} in Theorem~\ref{thm-wpsf-char}. 
Let $\Phi_{\mcl C} : \mcl C \rta \wt{\mcl C}$ be the homeomorphism which is given by the identity map on the sub-surface of $\mcl C$ parameterized by $\mcl C\setminus \eta_{\mcl C}'([\ul T , \ol T])$ and which is given by the homeomorphism $\Phi^\bead$ from Lemma~\ref{lem-bead-char-conformal} (viewed as a map between quantum surfaces) on the sub-surface of $\mcl C$ parameterized by $ \eta_{\mcl C}'([\ul T , \ol T])$. Let $\Phi : \BB C\rta\BB C$ be the map corresponding to $\Phi_{\mcl C}$ for the embeddings $h$ and $\wt h$. 
The above construction of $\wt\eta'$ implies that $\Phi\circ \eta' = \wt\eta'$. 
By condition~\ref{item-bead-char-homeo} in Theorem~\ref{thm-bead-char} and our construction of $\wt h$, it is a.s.\ the case that $\Phi$ pushes forward the $\gamma$-quantum length measure with respect to $h^\bead$ on the boundary of the unbounded connected component of $\BB H\setminus \wt\eta^\bead([0,t])$ to the  $\gamma$-quantum length measure with respect to $\wt h^\bead$ on the boundary of the unbounded connected component of $\BB H\setminus \wt\eta^\bead([0,t])$ for each $t\in [0,\frk a] \cap \BB Q$. Thus condition~\ref{item-wpsf-char-homeo} in Theorem~\ref{thm-wpsf-char} is satisfied.
 
It remains to check condition~\ref{item-wpsf-char-wedge} in Theorem~\ref{thm-wpsf-char}.
We first observe that, to our choice of the triple $(h^\bead , \eta^\bead , \Phi^\bead)$ from Lemma~\ref{lem-bead-char-conformal} and the conformal removability of $\bdy \eta'([\ul T , \ol T])$, the map $\Phi$ is in fact conformal on $\BB C\setminus \eta_{0,\infty}( [\ul T , \ol T] ) $, with $\eta_{0,\infty}$ the future chordal $\SLE_{\kappa'}$ curve obtained by skipping the bubbles filled in by $\eta'$ during the time interval $[0,\infty)$ as above and pushes forward $h|_{  \BB C\setminus \eta_{0,\infty}( [\ul T , \ol T] ) }$ to the corresponding restriction of $\wt h$ via the $\gamma$-LQG coordinate change formula~\eqref{eqn-lqg-coord}. In particular, we have the equalities of curve-decorated quantum surfaces
\allb 
&\left( \mcl S_{-\infty,\ul T} , \eta'_{\mcl S_{-\infty,\ul T}} \right) = \left( \wt{\mcl S}_{-\infty,\ul T} , \wt\eta'_{\wt{\mcl S}_{-\infty,\ul T}} \right), \quad 
\left( \mcl S_{\ol T, \infty} , \eta'_{\mcl S_{\ol T ,\infty}} \right) = \left( \wt{\mcl S}_{\ol T ,\infty} , \wt\eta'_{\wt{\mcl S}_{\ol T ,\infty}} \right) , 
\quad \op{and} \notag \\
&\left( \mcl S_{\sigma(t) , \tau(t) } , \eta'_{\mcl S_{\sigma(t) , \tau(t) }} \right) = \left( \wt{\mcl S}_{\sigma(t) , \tau(t) } , \wt\eta'_{\sigma(t) , \tau(t) } \right) ,\quad \forall t \geq 0 . \label{eqn-most-surface-agree}
\alle

\begin{lem} \label{lem-bubble-filtration-simplify}
Define the $\sigma$-algebras $\wt{\mcl F}_{-\infty,t}$ and $\wt{\mcl F}_{t,\infty}$ as in Theorem~\ref{thm-wpsf-char} for $(\wt h , \wt\eta')$ as above. 
For each $t\geq 0$, we have $\wt{\mcl F}_{t,\infty} = \sigma\left(  (Z-Z_t)|_{[t,\infty)} \right) $. Furthermore, $\ol T$ is a stopping time for $Z$ and for each $t\geq 0$, $\wt{\mcl F}_{-\infty, t\wedge \ol T} = \sigma\left(   (Z-Z_{t\wedge \ol T} )|_{(-\infty , t\wedge \ol T]}   \right)$.  
\end{lem}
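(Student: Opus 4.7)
The plan is to verify each of the three claims in turn, exploiting the explicit coupling constructed just above between $(\wt h, \wt\eta')$ and the $\gamma$-quantum cone/space-filling $\SLE_{\kappa'}$ pair $(h,\eta')$.

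For the stopping time claim, I would use Lemma~\ref{lem-bead-inf} together with the fact that the beads of $\mcl S_{0,\infty}$ are in bijection with the maximal intervals between consecutive simultaneous running infima of $L$ and $R$ relative to time $0$, with the area and left/right boundary lengths of a bead equal to the length of the interval and the decrements of $L$ and $R$ over it. Since $\mcl B$ was defined as the first such bead with characteristics $(\ol T - \ul T, L_{\ul T}-L_{\ol T}, R_{\ul T}-R_{\ol T})$ lying in $\frk A$, both $\ul T$ and $\ol T$ are explicit functionals of $Z$, and the event $\{\ol T \leq t\}$ is measurable with respect to $\sigma(Z|_{(-\infty, t]})$.

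For the equality $\wt{\mcl F}_{t,\infty} = \sigma((Z-Z_t)|_{[t,\infty)})$, the inclusion $\supset$ is immediate from the definition of $\wt{\mcl F}_{t,\infty}$, so the substance is the reverse inclusion: each singly-marked bubble quantum surface $(\wt\eta'([v_Z(s),s]), \wt h|_{\wt\eta'([v_Z(s),s])}, \wt\eta'(s))$ corresponding to a $\pi/2$-cone time $s$ of $Z$ maximal in a rational sub-interval of $(t,\infty)$ must be $\sigma((Z-Z_t)|_{[t,\infty)})$-measurable. By~\cite[Theorem~1.11]{wedges} applied to $(h,\eta')$, the whole curve-decorated quantum cone $(\mcl C, \eta'_{\mcl C})$ is determined by $Z$ modulo the rotational ambiguity fixed by our embedding, so each bubble of $\eta'$ is a deterministic functional of the corresponding Brownian excursion of $Z$. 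Meanwhile, by our construction the curve-decorated quantum surface $(\wt{\mcl B}, (\wt\eta')^\bead)$ is distributed as a single bead of a $\frac{3\gamma}{2}$-quantum wedge decorated by an independent chordal space-filling $\SLE_{\kappa'}$ from $0$ to $\infty$ parameterized by quantum mass, so by~\cite[Theorem~2.1]{sphere-constructions} it is also determined by its peanosphere Brownian excursion $Z^\bead = Z|_{[\ul T,\ol T]}$ (up to the additive shift from the boundary length normalization). Combining these two determination statements with the conformal welding description of $\wt h$ (identity outside the bead, pushforward of $\wt h^\bead$ via the welding map on the bead region $U=\eta'([\ul T,\ol T])$) and of $\wt\eta'=\Phi\circ\eta'$, the bubble quantum surface in question depends only on $Z|_{[v_Z(s),s]}$, which lies in $\sigma((Z-Z_t)|_{[t,\infty)})$ since $[v_Z(s),s] \subset (t,\infty)$.

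The past sigma-algebra claim $\wt{\mcl F}_{-\infty, t\wedge\ol T} = \sigma((Z-Z_{t\wedge\ol T})|_{(-\infty, t\wedge\ol T]})$ follows by an entirely parallel argument. The $\supset$ inclusion is by definition, and for the $\subset$ inclusion the stopping-time property of $\ol T$ from the first part ensures that every cone interval entering the definition of $\wt{\mcl F}_{-\infty, t\wedge\ol T}$ is contained in $(-\infty, t\wedge\ol T]$ and hence its corresponding bubble quantum surface is a measurable function of $(Z-Z_{t\wedge\ol T})|_{(-\infty, t\wedge\ol T]}$, by the same determination argument applied either to $(h,\eta')$ (for cone intervals outside $[\ul T,\ol T]$) or to the bead $(\wt{\mcl B},(\wt\eta')^\bead)$ (for cone intervals inside $[\ul T,\ol T]$).

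The main obstacle is to apply the determination result of~\cite[Theorem~2.1]{sphere-constructions} to the bead $(\wt{\mcl B},(\wt\eta')^\bead)$ without circular reliance on the conclusion of Theorem~\ref{thm-wpsf-char} itself. This is legitimate because, by construction, $(\wt{\mcl B},(\wt\eta')^\bead)$ arises as the image under $\Phi^\bead$ of a genuine bead-plus-independent-space-filling-$\SLE_{\kappa'}$ pair, so the joint law matches the setting of the sphere-constructions determination theorem directly, with no assumption on the a priori unknown characterization of $(\wt h,\wt\eta')$.
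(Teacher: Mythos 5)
Your plan for the stopping-time claim matches the paper exactly: $\ol T$ is read off explicitly from the simultaneous-running-infimum structure of $Z$, using Lemma~\ref{lem-bead-inf}. However, your treatment of the bubble measurability claims has a genuine gap.

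You want to handle the $\pi/2$-cone intervals $[v_Z(s),s]\subset[\ul T,\ol T]$ by invoking the determination theorem of~\cite[Theorem~2.1]{sphere-constructions} for the pair $(\wt{\mcl B},(\wt\eta')^\bead)$, and to justify this you assert that the pair ``is distributed as a single bead of a $\frac{3\gamma}{2}$-quantum wedge decorated by an independent chordal space-filling $\SLE_{\kappa'}$.'' That assertion is not available a priori: whether $\wt\eta^\bead$ (hence $(\wt\eta')^\bead$) is independent of $\wt h^\bead$ is precisely the conclusion of Theorem~\ref{thm-bead-char}, not a hypothesis. Your attempted rescue---that $(\wt{\mcl B},(\wt\eta')^\bead)$ arises as the image under $\Phi^\bead$ of a genuine pair $(h^\bead,(\eta')^\bead)$---does not repair this: $\Phi^\bead$ is only a homeomorphism, and curve-decorated quantum surfaces are equivalence classes modulo \emph{conformal} maps, so pushing forward through $\Phi^\bead$ does not transport the joint law. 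Without conformal removability of $\eta^\bead$ (which is not known), one cannot conclude that the field $\wt h^\bead$ and the pushforward of $h^\bead$ through $\Phi^\bead$ define the same quantum surface, nor that the two curve-decorated surfaces have the same law. Applying a determination theorem whose hypothesis is a law statement is therefore circular.

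The paper circumvents this precisely by using Lemma~\ref{lem-bead-char-conformal}, which allows choosing $\Phi^\bead$ to be conformal (and field-transporting via the LQG coordinate change) on each complementary connected component of $\eta^\bead$. Combined with the conformal removability of $\bdy\eta'([\ul T,\ol T])$, this gives the much stronger statement~\eqref{eqn-most-surface-agree}: the bubble surfaces $\mcl S_{\sigma(t),\tau(t)}$ and $\wt{\mcl S}_{\sigma(t),\tau(t)}$ are \emph{equal}, not merely equal in law. From there one observes that every $\pi/2$-cone interval in $[t,\infty)$ (for $t\geq 0$) sits inside some maximal $\pi/2$-cone interval in $[0,\infty)$, so all relevant bubble surfaces $\wt{\mcl S}_{v_Z(s),s}$ coincide with the reference ones $\mcl S_{v_Z(s),s}$, which are then locally determined by $Z$ via~\cite[Theorem~1.11]{wedges} applied to $(h,\eta')$---where the joint law \emph{is} known. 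For the past $\sigma$-algebra, the paper adds a nested-or-disjoint argument showing every cone interval in $(-\infty,t\wedge\ol T)$ lies either in $(-\infty,\ul T]$ or in $[\ul T,t\wedge\ol T]$, and then again invokes~\eqref{eqn-most-surface-agree}. You should replace your appeal to the law of $(\wt{\mcl B},(\wt\eta')^\bead)$ with the literal-equality statement~\eqref{eqn-most-surface-agree} to close the gap.
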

\begin{proof}
For $t\geq 0$, each $\pi/2$-cone interval $[v_Z(s) , s]$ for $Z$ which is contained in $[t,\infty)$ is contained in the maximal $\pi/2$-cone interval $[\sigma(r) , \tau(r)]$ in $[0,\infty)$ for any $r\in (v_Z(s) , s)$. By~\eqref{eqn-most-surface-agree}, a.s.\ $\mcl S_{v_Z(s) , s }  =  \wt{\mcl S}_{v_Z(s) , s } $ 
for each such $\pi/2$-cone interval $[v_Z(s) , s]$. 
Since the peanosphere Brownian motion determines the quantum surfaces corresponding to segments of $\eta'$ in a local manner, we infer that the quantum surfaces $ \wt{\mcl S}_{v_Z(s) , s } $ as $s$ ranges over all $\pi/2$-cone intervals contained in $[t,\infty)$ are a.s.\ determined by $(Z-Z_t)|_{[t,\infty)}$, which gives the first statement of the lemma. 

The time $\ol T$ is the smallest $t\geq 0$ such that the following is true: $L$ and $R$ attain a simultaneous running infimum relative to time $0$ at time $t$ and if $t'$ denotes the last time strictly before $t$ at which $L$ and $R$ attain a simultaneous running infimum, then the vector $(t-t', L_{t'} - L_t , R_{t'}  -R_t ) $ belongs to $\frk A$. Hence $\ol T$ is a stopping time for $Z$ (and hence also for $\{\wt{\mcl F}_{-\infty,t}\}_{t\geq 0}$). 

To prove the formula for $\wt{\mcl F}_{-\infty, t\wedge \ol T}$, let $[v_Z(s) , s]$ be a $\pi/2$-cone interval for~$Z$ in $(-\infty,t \wedge \ol T)$. Since~$L$ and~$R$ attain a simultaneous running infimum relative to time $0$ at time~$\ul T$, it follows that $\ul T$ is a $\pi/2$-cone time for $Z$ with $v_Z(\ul T) < 0$. Since $\ol T$ is the next time after $\ul T$ at which $L$ and $R$ attain a simultaneous running infimum relative to time 0, it cannot be the case that $s \in (\ul T , t\wedge \ol T)$ and $v_Z(s) < 0$. 
Since two $\pi/2$-cone intervals for $Z$ are either nested or disjoint, either $[v_Z(s) , s] \subset (-\infty,\ul T]$ or $[v_Z(s) , s] \subset [\ul T , t \wedge \ol T]$. In the latter case, $[v_Z(s) , s]$ is contained in a maximal $\pi/2$-cone time for $Z$ in $[0,\infty)$. By~\eqref{eqn-most-surface-agree}, a.s.\ $\mcl S_{v_Z(s) , s }  =  \wt{\mcl S}_{v_Z(s) , s } $ 
for each such $\pi/2$-cone interval $[v_Z(s) , s]$ and we conclude as in the case of $\wt{\mcl F}_{t,\infty}$.  
\end{proof}

\begin{lem} \label{lem-bead-wpsf-char-ind}
For each $t\in\BB R$, the $\sigma$-algebras $\wt{\mcl F}_{-\infty,t}$ and $\wt{\mcl F}_{t,\infty}$ from the statement of Theorem~\ref{thm-wpsf-char} are independent.  
\end{lem}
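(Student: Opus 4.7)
The plan is to reduce the claim to the Markov property of the peanosphere Brownian motion $Z$ together with the conditional independence of the bead $\wt{\mcl B}:=(\BB H,\wt h^\bead,0,\infty)$. Recall from the coupling described in the paragraph after Lemma~\ref{lem-bead-char-conformal} that $\wt{\mcl B}$ is conditionally independent of $(h,\eta')$ given $(\frk a,\frk l^L,\frk l^R)$, and that this triple is measurable with respect to $Z|_{[\ul T,\ol T]}$ by~\cite[Theorem~2.1]{sphere-constructions}. Write $\sigma_t:=\sigma((Z-Z_t)|_{(-\infty,t]})$ and $\tau_t:=\sigma((Z-Z_t)|_{[t,\infty)})$, so that $\sigma_t\perp\tau_t$ by the Brownian Markov property.

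For $t\in[0,\ol T]$, Lemma~\ref{lem-bubble-filtration-simplify} identifies $\wt{\mcl F}_{-\infty,t}=\sigma_t$ and $\wt{\mcl F}_{t,\infty}=\tau_t$, so independence is immediate. By symmetry in past and future, it therefore suffices to treat the remaining case $t<0$; the case $t>\ol T$ is identical with the roles of past and future reversed.

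For $t<0$, the strategy is to establish the inclusions
\begin{equation*}
\wt{\mcl F}_{-\infty,t}\subset\sigma_t, \qquad \wt{\mcl F}_{t,\infty}\subset\tau_t\vee\sigma(\wt{\mcl B}),
\end{equation*}
by repeating the bubble-by-bubble analysis of the proof of Lemma~\ref{lem-bubble-filtration-simplify}. The first inclusion is immediate: every $\pi/2$-cone interval $[v_Z(s),s]\subset(-\infty,t)$ is disjoint from $[\ul T,\ol T]$ (since $\ul T\geq 0>t$), so by the first equality in~\eqref{eqn-most-surface-agree} the bubble $\wt{\mcl S}_{v_Z(s),s}$ agrees with $\mcl S_{v_Z(s),s}$ and is determined by the Brownian excursion via~\cite[Theorem~2.1]{sphere-constructions}. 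For the second inclusion, the same reasoning handles every bubble whose cone interval is disjoint from $[\ul T,\ol T]$; the remaining bubbles either have cone intervals containing $[\ul T,\ol T]$ (giving a beaded surface obtained by welding $\wt{\mcl B}$ to $Z$-measurable pieces supplied by~\eqref{eqn-most-surface-agree}) or are sub-bubbles of $\wt{\mcl B}$ cut out by $\wt\eta^\bead$, and in either case they are measurable functions of $(Z|_{[v_Z(s),s]},\wt{\mcl B})\subset\tau_t\vee\sigma(\wt{\mcl B})$.

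It then remains to show that $\sigma_t$ is independent of $\tau_t\vee\sigma(\wt{\mcl B})$. Since $t<0<\ul T$, the triple $(\frk a,\frk l^L,\frk l^R)$ is $\tau_t$-measurable, and by our coupling the conditional law of $\wt{\mcl B}$ given $(h,\eta')$ depends only on this triple. Combining this with the Brownian Markov property $\sigma_t\perp\tau_t$, a routine conditioning argument shows that the joint conditional distribution of $((Z-Z_t)|_{[t,\infty)},\wt{\mcl B})$ given $(Z-Z_t)|_{(-\infty,t]}$ coincides with its unconditional distribution, which is the desired independence. The main (though rather mild) obstacle is the second inclusion above: one must carefully catalogue the maximal $\pi/2$-cone intervals in rational sub-intervals of $(t,\infty)$ that intersect $[\ul T,\ol T]$ and verify that each corresponding bubble is assembled from $\wt{\mcl B}$ and $Z$-measurable pieces, which is a direct extension of the bookkeeping in the proof of Lemma~\ref{lem-bubble-filtration-simplify}.
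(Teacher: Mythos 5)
Your strategy matches the paper's: combine the Brownian Markov property with the conditional independence coming from the coupling, and split into cases according to where the bead interval $[\ul T,\ol T]$ sits relative to $t$. However, two intermediate claims are false as written. First, the coupling does \emph{not} give that $\wt{\mcl B}$ is conditionally independent of $(h,\eta')$ given $(\frk a,\frk l^L,\frk l^R)$: it gives conditional independence given the \emph{curve-decorated} quantum surface $(\mcl B,\eta_{0,\infty,\mcl B}(\cdot-\ul T))$, which carries strictly more information; the hypotheses of Theorem~\ref{thm-bead-char} put no constraint on the conditional law of $(\wt h^\bead,\wt\eta^\bead)$ given $(h^\bead,\eta^\bead)$ forcing it to factor through the area/boundary-length triple. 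Second, the inclusion $\wt{\mcl F}_{t,\infty}\subset\tau_t\vee\sigma(\wt{\mcl B})$ fails: $\wt{\mcl F}_{t,\infty}$ contains the singly-marked quantum surfaces parameterized by $\pi/2$-cone excursions that are strictly contained in $[\ul T,\ol T]$ (sub-bubbles traced by the space-filling curve inside the bead), and knowing the quantum surface $\wt{\mcl B}$ without the curve on it, even together with $\tau_t$, does not determine which piece of $\wt{\mcl B}$ a given excursion interval parameterizes.

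Both slips are repairable, since the objects that actually appear in the coupling retain the measurability you need: $(\mcl B,\eta_{0,\infty,\mcl B}(\cdot-\ul T))$ is a function of $(Z-Z_{\ul T})|_{[\ul T,\ol T]}$ and hence $\tau_t$-measurable for $t<0$ (resp.\ $\sigma_t$-measurable on $\{t>\ol T\}$), and the correct auxiliary $\sigma$-algebra is the one generated by the curve-decorated quantum surface $(\wt{\mcl S}_{\ul T,\ol T},\wt\eta'_{\wt{\mcl S}_{\ul T,\ol T}})$, which is what the paper's proof in fact uses, invoking its conditional independence from $(h,\eta')$ given $(\mcl S_{\ul T,\ol T},\eta'_{\mcl S_{\ul T,\ol T}})$. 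A further minor point: for fixed deterministic $t\geq 0$, ``$t\in[0,\ol T]$'' and ``$t>\ol T$'' are random events rather than deterministic cases, so the split must be carried out on the $\sigma_t$-measurable events $\{t\leq \ol T\}$ and $\{t>\ol T\}$ and then the two conditional statements combined, as the paper does.
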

\begin{proof}
If $t < 0$, then our definition of $\wt h , \wt\eta'$ shows that $(\wt{\mcl S}_{-\infty ,  t} , \wt\eta'_{\wt{\mcl S}_{-\infty,t}}) =  ( {\mcl S}_{-\infty,t} ,  \eta'_{ {\mcl S}_{-\infty,t}})$ is independent from the pair consisting of $(h^\bead , \eta^\bead)$ and the future quantum wedge $ ( {\mcl S}_{t,\infty} ,  \eta'_{ {\mcl S}_{t,\infty}})$. Since this curve-decorated quantum surface and this pair of curve-decorated quantum surfaces determine $\wt{\mcl F}_{-\infty,t}$ and $\wt{\mcl F}_{t,\infty}$, respectively, this gives the lemma statement for $t < 0$. 
Hence we can restrict attention to the case when $t\geq 0$, which is more involved since to construct $\wt h , \wt\eta'$ we modified $  h|_{ \eta'([0,\infty)}$ and $\eta'|_{[0,\infty)}$. 

By Lemma~\ref{lem-bubble-filtration-simplify}, for $t\geq 0$ the conditional law of $(Z-Z_t)|_{[t ,\infty)}$ given $\wt{\mcl F}_{-\infty,t}$ on the event $\{t \leq \ol T\}$ is the same as its marginal law. 
Since $(\wt{\mcl S}_{\ul T , \ol T} , \wt\eta'_{\wt{\mcl S}_{\ul T , \ol T}})$ is conditionally independent from $(h,\eta')$ (and hence from $Z$) given $(\mcl S_{\ul T , \ol T} , \eta'_{\mcl S_{\ul T , \ol T}})$, the conditional law of $(Z-Z_t)|_{[t,\infty)}$ given $(Z-Z_t)|_{(-\infty,t]}$ and $(\wt{\mcl S}_{\ul T , \ol T} , \wt\eta'_{\wt{\mcl S}_{\ul T , \ol T}})$ on the event $\{ t > \ol T\}$ is the same as its marginal law. 
The $\sigma$-algebra $\wt{\mcl F}_{-\infty,t}$ is clearly contained in the $\sigma$-algebra generated by $(Z-Z_t)|_{(-\infty,t]}$ and $(\wt{\mcl S}_{\ul T , \ol T} , \wt\eta'_{\wt{\mcl S}_{\ul T , \ol T}})$. Hence the conditional law of $(Z-Z_t)|_{[t,\infty)}$ given $\wt{\mcl F}_{-\infty,t}$ is the same as its marginal law. By Lemma~\ref{lem-bubble-filtration-simplify}, $\wt{\mcl F}_{t,\infty} = \sigma\left(  (Z-Z_t)|_{[t,\infty)} \right) $ so the statement of the lemma follows. 
\end{proof}

We now check the last remaining condition of Theorem~\ref{thm-wpsf-char}.  

\begin{lem} \label{lem-bead-wpsf-char-wedge}
For each $t\in\BB R$, the future quantum surface $\wt{\mcl S}_{t,\infty}$ has the law of a $\frac{3\gamma}{2}$-quantum wedge and is independent from $\wt{\mcl F}_{-\infty,t}$.
\end{lem}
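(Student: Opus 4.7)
The proof splits into cases based on the position of $t$ relative to the exploration times $\ul T$ and $\ol T$ of the bead $\mcl B$ by the space-filling curve $\eta'$. In all cases, the strategy is to compare $\wt{\mcl S}_{t,\infty}$ to the corresponding surface $\mcl S_{t,\infty}$ for the $\gamma$-quantum cone pair $(h,\eta')$, using two facts that are built into our coupling: $\wt{\mcl B}$ is conditionally independent from $(h,\eta')$ given $\mcl B$, and $\wt{\mcl B}$ and $\mcl B$ have the same conditional law given their common vector of area and left/right boundary lengths.

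For $t\geq \ol T$, by construction $\wt\eta'([t,\infty))$ lies entirely outside the modified bead, so $\wt{\mcl S}_{t,\infty}= \mcl S_{t,\infty}$ as quantum surfaces; the latter is a $\frac{3\gamma}{2}$-quantum wedge by~\cite[Theorem~1.9]{wedges}. I will then argue that $\wt{\mcl F}_{-\infty,t}\subset\sigma(\mcl S_{-\infty,t},\eta'_{\mcl S_{-\infty,t}},\wt{\mcl B},\wt\eta'_{\wt{\mcl B}})$, since $Z|_{(-\infty,t]}$ together with all the bubble quantum surfaces indexed by cone intervals in $(-\infty,t)$ other than $[\ul T,\ol T]$ are deterministic functions of $(\mcl S_{-\infty,t},\eta'_{\mcl S_{-\infty,t}})$, while the bubble surface for the cone interval $[\ul T,\ol T]$ is precisely $(\wt{\mcl B},\wt\eta'_{\wt{\mcl B}})$. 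Combining the independence of $\mcl S_{t,\infty}$ from $(\mcl S_{-\infty,t},\eta'_{\mcl S_{-\infty,t}})$ with the conditional independence of $\wt{\mcl B}$ from $(h,\eta')$ given $\mcl B\subset \mcl S_{-\infty,t}$ then shows $\mcl S_{t,\infty}$ is independent of $\wt{\mcl F}_{-\infty,t}$.

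For $t<\ol T$, Lemma~\ref{lem-bubble-filtration-simplify} reduces $\wt{\mcl F}_{-\infty,t}$ to $\sigma((Z-Z_t)|_{(-\infty,t]})$. The plan is to show that the conditional law of $\wt{\mcl S}_{t,\infty}$ given $(\mcl S_{t,\infty},Z|_{(-\infty,t]})$ depends only on $\mcl S_{t,\infty}$ and that $\wt{\mcl S}_{t,\infty}\eqD \mcl S_{t,\infty}$; combined with the independence of $\mcl S_{t,\infty}$ from $Z|_{(-\infty,t]}$, this will give both that $\wt{\mcl S}_{t,\infty}$ is a $\frac{3\gamma}{2}$-wedge and that it is independent from $Z|_{(-\infty,t]}=\wt{\mcl F}_{-\infty,t}$. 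For $t\leq \ul T$, one of the beads of the wedge $\mcl S_{t,\infty}$ is $\mcl B$, and $\wt{\mcl S}_{t,\infty}$ is obtained by substituting $\wt{\mcl B}$ for $\mcl B$; conditional on the parameter vectors of all beads of $\mcl S_{t,\infty}$, the beads are independent single beads of a $\frac{3\gamma}{2}$-wedge with the given parameters, so replacing $\mcl B$ by $\wt{\mcl B}$ (which is iid with $\mcl B$ given the parameters and conditionally independent of everything else) preserves the joint law. For $t\in(\ul T,\ol T)$, the first bead of $\mcl S_{t,\infty}$ is $\mcl S_{t,\ol T}$ (the ``unexplored'' portion of $\mcl B$), and correspondingly $\wt{\mcl S}_{t,\infty}$ has the analogous unexplored piece $\wt{\mcl S}_{t,\ol T}$ of $\wt{\mcl B}$ as its leading sub-surface, glued to $\mcl S_{\ol T,\infty}$ along the single boundary point $\wt\eta'(\ol T)=\eta'(\ol T)$. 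Condition~\ref{item-bead-char-wedge} of Theorem~\ref{thm-bead-char}, together with Lemma~\ref{lem-bead-sle} applied at time $s=t-\ul T$, tells me that $\wt{\mcl S}_{t,\ol T}$ and $\mcl S_{t,\ol T}$ have the same conditional law given $Z^\bead|_{[0,s]}$ and $\tau^\bead(s)$ (each being a single bead of a $\frac{3\gamma}{2}$-wedge with parameters determined by $Z$), so the same substitution argument goes through.

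The main obstacle will be a careful treatment of the case $t\in(\ul T,\ol T)$: I need to verify that $\wt{\mcl S}_{t,\ol T}$, viewed as the sub-surface of $\wt{\mcl B}$ parameterized by $(\wt\eta')^\bead([t-\ul T,\frk a])$, really does agree with the first bead of $\mcl S_{t,\infty}$ in conditional law given $Z|_{(-\infty,t]}$. This requires handling the situation where $t-\ul T$ lies in the interior of an interval on which $Z^\bead$ is constant, i.e.\ where $(\wt\eta')^\bead$ is in the middle of filling in a ``current bubble'' at time $t-\ul T$, so that $\wt{\mcl S}_{t,\ol T}$ consists of the remainder of that bubble glued to the sub-surface of $\wt{\mcl B}$ parameterized by the unbounded component of $\BB H\setminus \wt\eta^\bead([0,\tau^\bead(t-\ul T)])$, and likewise for $\mcl S_{t,\ol T}$.
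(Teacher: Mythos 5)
Your approach is essentially the same as the paper's: decompose the future surface, compare it piecewise to $\mcl S_{t,\infty}$ (whose law is known), and use the coupling to show the substitution of $\wt{\mcl B}$ for $\mcl B$ preserves the relevant conditional laws. The main difference is organizational. Rather than splitting into the cases $t\geq\ol T$, $t\leq\ul T$, $t\in(\ul T,\ol T)$, the paper defines a single auxiliary time $\ul\tau(t)$ (equal to $\ul T$ if $t<\ul T$, to $\tau_{0,\infty}(t)$ if $t\in[\ul T,\ol T]$, and to $t$ if $t>\ol T$) and uniformly decomposes $\wt{\mcl S}_{t,\infty}$ into the concatenation $\wt{\mcl S}_{t,\ul\tau(t)}\sqcup\wt{\mcl S}_{\ul\tau(t),\ol T\vee t}\sqcup\wt{\mcl S}_{\ol T\vee t,\infty}$ — which is precisely the current-bubble-remainder / unexplored-part-of-bead / rest-of-wedge splitting you identify in your last paragraph, applied to all $t\geq 0$ at once. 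It also conditions on $\wt{\mcl G}_t = \wt{\mcl F}_{-\infty,t}\vee\sigma(P_{t,\infty})$ to keep track of the bead parameters, rather than on $(\mcl S_{t,\infty},Z|_{(-\infty,t]})$, which lets it identify the conditional law of each piece directly and assemble them. Two imprecisions in your sketch worth flagging: for $t\in(\ul T,\ol T)$, the ``first bead of $\mcl S_{t,\infty}$'' is generally \emph{not} $\mcl S_{t,\ol T}$ --- the beads of $\mcl S_{t,\infty}$ are delimited by simultaneous running infima of $L,R$ relative to time $t$, so $\mcl S_{t,\ol T}$ typically breaks into several beads (the pieces of the current bubble, followed by $\mcl S_{\tau_{0,\infty}(t),\ol T}$) --- you essentially correct this in your final paragraph; and $[\ul T,\ol T]$ is not itself a $\pi/2$-cone interval for $Z$ (one has $v_Z(\ol T)<0<\ul T$), so the $\sigma$-algebra $\wt{\mcl F}_{-\infty,t}$ sees the structure of $\wt{\mcl B}$ only through the sub-bubbles cut out by the chordal SLE inside the bead, not via any single bubble surface indexed by $[\ul T,\ol T]$. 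Both points are harmless but need care in the writeup.
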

\begin{proof}
It is clear that the statement of the lemma holds for $t < 0$. 
To check the condition for $t > 0$, we define for $r\in\BB R$ the $\sigma$-algebra 
\eqb \label{eqn-partial-sigma-algebra}
\wt{\mcl G}_r := \wt{\mcl F}_{-\infty , r} \vee \sigma(P_{r,\infty})  
\eqe 
as in Lemma~\ref{lem-partial-filtration}. Then both $\ul T$ and $\ol T$ are $\wt{\mcl G}_0$-measurable. Also define for $t\geq 0$
\eqb \label{eqn-ul-tau}
\ul \tau(t) := 
\begin{cases}
\ul T ,\quad &t < \ul T \\
\tau(t) ,\quad &t \in [ \ul T , \ol T] \\
t ,\quad &t > \ol T .
\end{cases} 
\eqe 
where $\tau(t ) = \tau_{0,\infty}(t)$ is as above. Since $\ul T$ and $\ol T$ are determined by $P_{0,\infty}$ and, whenever $t \leq \ol T$, $\tau(t)$ is the left endpoint of the interval on which $P_{t,\infty}$ is constant whose right endpoint is $\ol T$, we have $\ul\tau(t) \in \wt{\mcl G}_t$.  

The beaded quantum surface $\wt{\mcl S}_{t,\infty}$ is the concatenation of the beaded quantum surfaces $ \wt{\mcl S}_{t , \ul\tau(t)}$, $\wt{\mcl S}_{\ul\tau(t) , \ol T \vee t}$, and $\wt{\mcl S}_{\ol T\vee t, \infty}$, with $\ul\tau(t)$ as in~\eqref{eqn-ul-tau} (note that the first two of these quantum surfaces are degenerate if $t > \ol T$). We will now consider the conditional laws of these quantum surfaces given the $\sigma$-algebra $\wt{\mcl G}_t$ of~\eqref{eqn-partial-sigma-algebra}. 

Since $\wt\eta'([t,\ul\tau(t)])$ is either a single point or is contained in either $\wt\eta'([t, \ul T])$ or in one of the bubbles cut out by $\wt\eta_{0,\infty}$,~\eqref{eqn-most-surface-agree} implies that a.s.\ $ \wt{\mcl S}_{t , \ul\tau(t)}   =  \mcl S_{t , \ul\tau(t)} $. 
The conditional law of this quantum surface given $\wt{\mcl G}_t$ is that of an ordered collection of beads of a $\frac{3\gamma}{2}$-quantum wedge with areas and left/right boundary lengths specified by $P_{t,\infty}|_{[t,\ul\tau(t)]}$.    

The quantum surface $\wt{\mcl S}_{\ul\tau(t) , \ol T \vee t}$ is equal to $\wt{\mcl S}_{\ul T , \ol T}$ if $t < \ul T$, is equal to the quantum surface obtained by restricting $\wt h^\bead$ to the unbounded connected component of $\BB H\setminus \wt\eta^\bead([0,t])$ if $t\in [\ul T ,\ol T]$, and is equal to a single point if $t > \ol T$. 
By condition~\ref{item-bead-char-wedge} in Theorem~\ref{thm-bead-char} and Lemma~\ref{lem-bubble-filtration-simplify}, we infer that the conditional law of $\wt{\mcl S}_{\ul\tau(t) , \ol T \vee t}$ given $\wt{\mcl G}_{t}$ and $ \wt{\mcl S}_{t , \ul\tau(t)}  $ is that of a single bead of a $\frac{3\gamma}{2}$-quantum wedge with area and left/right boundary lengths given by the value of $P_{t,\infty}$ on $[\ul\tau(t) , \ol T]$ (or a single point, if this interval is empty).  
By combining this with the previous paragraph, we find that the conditional law of $\wt{\mcl S}_{t , \ol T \vee t} $ given $\wt{\mcl G}_{t}$ is that of an ordered collection of beads of a $\frac{3\gamma}{2}$-quantum wedge with areas and left/right boundary lengths specified by $P_{t,\infty}|_{[t, \ol T \vee t]}$.     

We have $\wt{\mcl S}_{\ol T \vee t , \infty} = \mcl S_{\ol T \vee t , \infty}$, and the conditional law of this quantum surface given $\wt{\mcl G}_t$ and $\wt{\mcl S}_{t , \ol T \vee t} $ is that of a collection of independent beads of a $\frac{3\gamma}{2}$-quantum wedge with areas and left/right boundary lengths specified by $P_{t,\infty}|_{[\ol T\vee t , \infty)}$. Hence the conditional law of $\wt{\mcl S}_{t,\infty}$ given $\wt{\mcl G}_t$ is that of an ordered collection of beads of a $\frac{3\gamma}{2}$-quantum wedge with areas and left/right boundary lengths specified by $P_{t,\infty}$. This is the same as the conditional law of $\mcl S_{t,\infty}$ given $P_{t,\infty}$. Averaging over all possible realizations of $P_{t,\infty}$ shows that the conditional laws of $\wt{\mcl S}_{t,\infty}$ and $\mcl S_{t,\infty}$ given $\wt{\mcl F}_{-\infty,t}$ agree. That is, $\wt{\mcl S}_{t,\infty}$ is a $\frac{3\gamma}{2}$-quantum wedge independent from $\wt{\mcl F}_{-\infty,t}$, as required.  
\end{proof}

\begin{proof}[Proof of Lemma~\ref{lem-wpsf-to-chordal}]
Combining the above results shows that the conditions of Theorem~\ref{thm-wpsf-char} are satisfied for the pairs $(h,\eta')$ and $(\wt h ,\wt\eta')$ defined above, and the last statement of Lemma~\ref{lem-wpsf-to-chordal} follows from Theorem~\ref{thm-wpsf-char}. 
\end{proof}

We can now conclude the proof of Theorem~\ref{thm-bead-char}. The main remaining obstacle is to transfer from the case where $(\frk a , \frk l^L , \frk l^R)$ is random (which we have been considering throughout this subsection) to the case when $(\frk a , \frk l^L , \frk l^R)$ is deterministic. 

\begin{proof}[Proof of Theorem~\ref{thm-bead-char}]
Let $\BB m$ be the infinite measure on beads of a $\frac{3\gamma}{2}$-quantum wedge and let $\frk A\subset (0,\infty)^3$ be a Borel set such that $\BB m(\frk A)$ is finite and positive, as in the beginning of this subsection. 
Lemma~\ref{lem-wpsf-to-chordal} together with Theorem~\ref{thm-wpsf-char} immediately imply the variant of Theorem~\ref{thm-bead-char} where $(\frk a , \frk l^L , \frk l^R)$ is sampled from $\BB m(\frk A)^{-1} \BB m|_{\frk A}$ (i.e., the setting of most of this section). 

Since $\BB m(\frk A)^{-1} \BB m|_{\frk A}$ is mutually absolutely continuous with respect to Lebesgue measure on $\frk A$ (see the proof of Lemma~\ref{lem-bead-sle-as}), we find that the theorem statement remains true if instead $(\frk a , \frk l^L , \frk l^R)$ is sampled from any probability measure $P$ on $(0,\infty)^3$ which is mutually absolutely continuous with respect to Lebesgue measure restricted to some Borel subset of $(0,\infty)^3$. 

Suppose now that $(\frk a , \frk l^L, \frk l^R)$ is deterministic, as in the theorem statement. The idea of the proof is to conformally map out a small initial segment of the curve $\wt\eta'$ in order to obtain a new curve-decorated quantum surface which satisfies the hypotheses of the theorem statement for a random choice of area and left/right boundary lengths (so we can apply the preceding paragraph). We will then send the size of the small initial segment to zero to conclude. 

 For $t\in [0,\frk a]$, let $\wt{\mcl W}^\bead_t$ be the doubly marked quantum surface parameterized by the unbounded connected component of $\BB H\setminus \wt\eta^\bead([0,t])$, with marked points $\wt\eta^\bead(t)$ and $\infty$, so that by condition~\ref{item-bead-char-wedge} in the theorem statement, $\wt{\mcl W}_t^\bead$ is a bead of a $\frac{3\gamma}{2}$-quantum wedge. Let $\mcl W^\bead_t$ be defined analogously with the pair $(h^\bead,\eta^\bead)$ in place of the pair $(\wt h^\bead , \wt\eta^\bead)$. 
If $\ep >0$ and we condition on $Z^\bead|_{[0,\ep]}$ and $\tau^\bead(\ep)$, then the conditions in the theorem statement are satisfied with embeddings into $(\BB H , 0, \infty)$ of the pair of curve-decorated quantum surfaces $(\wt{\mcl W}^\bead_\ep , \wt\eta^\bead_{\wt{\mcl W}^\bead_\ep})$ and $(\mcl W^\bead_\ep , \eta^\bead_{\mcl W^\bead_\ep})$ in place of $(h^\bead,\eta^\bead)$ and $(\wt h^\bead , \wt\eta^\bead)$; the restriction of $\Phi^\bead$ to the unbounded connected component of $\BB H\setminus \eta^\bead([0,t])$, post-composed and pre-composed with appropriate conformal maps, in place of $\Phi^\bead$; and the triple $(\frk a - \tau^\bead(\ep) , L^\bead_\ep , R^\bead_\ep)$ in place of $(\frk a , \frk l^L , \frk l^R)$. 
By local absolute continuity of $h$ with respect to an embedding into $(\BB H , 0 , \infty)$ of a bead of a $\frac{3\gamma}{2}$-quantum wedge with random area and boundary length, we infer that the joint law of $(\frk a - \tau^\bead(\ep) , L^\bead_\ep , R^\bead_\ep)$ is mutually absolutely continuous with respect to Lebesgue measure on $(0,\frk a - \ep) \times (0,\infty)^2$. 
Applying the theorem in the case when $(\frk a , \frk l^L ,\frk l^R)$ is random and sending $\ep \rta 0$ now yields the theorem in the case when $(\frk a , \frk l^L, \frk l^R)$ is deterministic. 
\end{proof}

\section{ Characterizations of $\SLE_6$ on a Brownian surface }
\label{sec-metric-char}

\subsection{Space-filling and chordal statements}
\label{sec-metric-char-result}
 
In the special case when $\kappa'=6$ (so $\gamma = \sqrt{8/3}$), the $\sqrt{8/3}$-LQG metric on a $\sqrt{8/3}$-LQG surface is well-defined (recall Section~\ref{sec-lqg-metric}).  In this case, we can re-phrase Theorems~\ref{thm-wpsf-char} and~\ref{thm-bead-char} in terms of metric measure spaces rather than quantum surfaces.

For the theorem statements in this subsection, we recall the definition of the internal metric: if $(X,d)$ is a metric space and $Y\subset X$, the internal metric $d_Y$ of $d$ on $Y$ is defined by setting $d_Y(y_1,y_2)$ for $y_1,y_2 \in Y$ to be the infimum of the $d$-lengths of all paths in $Y$ from $y_1$ to $y_2$.  Note that $d_Y(y_1,y_2)$ may be infinite.

In order to make sense of random metric measure spaces, we endow the space of compact finite metric measure spaces with the \emph{Gromov-Prokhorov} or \emph{Gromov-weak} topology~\cite{gpw-metric-measure}, which is the same topology used in~\cite{tbm-characterization}. This is the topology whereby two metric measure spaces are close if they can be isometrically embedded into a common metric space such that the corresponding measures are close in the Prokhorov distance. In the case of locally compact, locally finite metric length spaces (such as the Brownian plane), we instead use the local variant of the Gromov-Prokhorov topology, the \emph{Gromov-vague} topology, which is defined in~\cite{alw-gromov-vague}.
 
We define a \emph{singly (resp.\ doubly) marked Brownian disk} to be a Brownian disk together with one (resp.\ two) marked points sampled uniformly from its natural boundary measure. A doubly marked Brownian disk has a  notion of left and right quantum boundary lengths, corresponding to the lengths of the clockwise and counterclockwise boundary arcs between the two marked points (we take the marked points to be ordered). In the special case when $\gamma = \sqrt{8/3}$, a single bead of a $(\frac{3\gamma}{2} = \sqrt 6)$-quantum wedge is the same as a quantum disk with two marked boundary points (c.f.\ Section~\ref{sec-wedge}), which in turn is equivalent as a metric measure space to a doubly marked Brownian disk~\cite[Corollary~1.5]{lqg-tbm2}. 
We also recall that by~\cite[Corollary~1.5]{lqg-tbm2}, the $\sqrt{8/3}$-quantum cone is equivalent to the Brownian plane~\cite{curien-legall-plane}. 
 
We first state a metric space version of Theorem~\ref{thm-wpsf-char}, which characterizes a coupling of an instance $(\wt X ,\wt d , \wt\mu , \wt x)$ of the Brownian plane (equipped with its natural metric, area measure, and marked point), a curve $\wt\eta' : \BB R\rta \wt X$ with $\wt\eta'(0) = 0$, and a correlated two-dimensional Brownian motion $Z$ with variances and covariance as in~\eqref{eqn-bm-cov} for $\kappa'=6$.  For the statement of the theorem, we need to define $\sigma$-algebras analogous to the $\sigma$-algebras $\wt{\mcl F}_{a,b}$ in Theorem~\ref{thm-wpsf-char} (but in terms of the metric measure space rather than quantum surface structure).  For $a, b \in \BB R \cup \{-\infty,\infty\}$, with $a<b$, let $\wt X_{a,b}$ be the interior of $\wt\eta'([a,b])$ and let $\wt d_{a,b}$ be the internal metric of $\wt d$ on $\wt X_{a,b}$. Also let $\wt{\mcl F}_{a,b}^{\op{m}}$ be the $\sigma$-algebra generated by $(Z-Z_a)|_{[a,b]}$ (or $(Z-Z_b)|_{[a,b]}$ if $a = - \infty$) and the pointed metric measure spaces $(\wt X_{v_Z(s) , s} , \wt d_{v_Z(s) , s} , \wt\mu|_{\wt X_{v_Z(s) , s}}, \wt\eta' (s) )$ where $s$ ranges over all $\pi/2$-cone times for $Z$ which are maximal in some interval contained in $(a,b)$ with rational endpoints. Here we recall that $v_Z(s)$ is the start time of the $\pi/2$-cone excursion (Definition~\ref{def-cone-time}).
 
\begin{thm}[Characterization of whole-plane space-filling SLE$_6$ on the Brownian half-plane] \label{thm-wpsf-mchar}
Let $\kappa'=6$ and $\gamma = \sqrt{8/3}$. Suppose that $(\wt X ,\wt d , \wt\mu , \wt x)$ is a coupling of an instance of the Brownian plane (equipped with its natural metric, area measure, and marked point), a curve $\wt\eta' : \BB R\rta \wt X$ with $\wt\eta'(0) = 0$, and a correlated two-dimensional Brownian motion $Z$ with variances and covariance as in~\eqref{eqn-bm-cov} for $\kappa'=6$.  Assume that the following conditions are satisfied.
\begin{enumerate}
\item \label{item-wpsf-mchar-wedge} (Markov property) For each $t\in\BB R$, the $\sigma$-algebras $\wt{\mcl F}_{-\infty,t}^{\op{m}}$ and $\wt{\mcl F}_{t,\infty}^{\op{m}}$ defined just above are independent. Furthermore, for each $t\in\BB R$ the ordered collection of connected components of the metric measure space $(\wt X_{t,\infty} ,\wt d_{t,\infty} ,  \wt\mu|_{\wt X_{t,\infty}})$ (in the order they are filled in by $\wt\eta'$) has the same law as the beads of a $\sqrt 6$-quantum wedge equipped with their $\sqrt{8/3}$-LQG metrics and area measures, i.e.\ its law is that of a Poissonian collection of doubly marked Brownian disks, lying at infinite internal distance from each other, with areas and left/right boundary lengths specified by the increments between the times when $L$ and $R$ attain a simultaneous running infimum relative to time $t$ and the increments between the values of $L$ and $R$ at these times. Furthermore, the ordered collection of connected components of the metric measure space $(\wt X_{t,\infty} ,\wt d_{t,\infty} ,  \wt\mu|_{\wt X_{t,\infty}}   )$ is independent from $\wt{\mcl F}_{-\infty,t}^{\op{m}}$. 
\item \label{item-wpsf-mchar-homeo}   (Topology and consistency) The curve-decorated topological space $(\wt X ,  \wt\eta')$ is equivalent to the infinite-volume peanosphere generated by $Z$. Equivalently, if $((\BB C , h , 0, \infty) ,\eta')$ is the pair consisting of a $\gamma$-quantum cone and an independent space-filling $\SLE_{\kappa'}$ from $\infty$ to $\infty$ parameterized by $\gamma$-quantum mass with respect to $h$ which is determined by $Z$ via~\cite[Theorem~1.11]{wedges}, then there is a homeomorphism $\Phi : \BB C\rta\wt X$ with $\Phi\circ \eta' = \wt\eta'$. Moreover, $\Phi$ a.s.\ pushes forward the $\sqrt{8/3}$-quantum length measure on $\bdy\eta'([t,\infty))$ with respect to $h$ to the natural length measure on $\bdy\wt\eta'([t,\infty))$ (which is defined via the length measures on the Brownian disks which are the connected components of $\wt X_{t,\infty}$).
\end{enumerate}
Then $(\wt X , \wt d , \wt \mu , \wt \eta')$ is equivalent (as a curve-decorated metric measure space) to a $\sqrt{8/3}$-quantum cone decorated by an independent whole-plane space-filling $\SLE_{\kappa'}$ from $\infty$ to $\infty$ parameterized by $\sqrt{8/3}$-quantum mass with respect to $h$. In fact, the map $\Phi$ of condition~\ref{item-wpsf-char-homeo} is a.s.\ an isometry.
\end{thm}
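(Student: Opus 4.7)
The plan is to reduce Theorem~\ref{thm-wpsf-mchar} to Theorem~\ref{thm-wpsf-char} via the equivalence between $\sqrt{8/3}$-LQG and Brownian metric measure spaces. The key inputs are the results of~\cite{lqg-tbm2,lqg-tbm3}: the Brownian plane is equivalent as a metric measure space to a $\sqrt{8/3}$-quantum cone, and moreover the metric measure space structure of a $\sqrt{8/3}$-LQG surface a.s.\ determines its quantum surface structure. Applying this to $(\wt X,\wt d,\wt\mu,\wt x)$ gives a canonical (up to rotation/scaling) way to view $\wt X$ as an embedded $\sqrt{8/3}$-quantum cone: there exist a homeomorphism $\iota : \wt X \rta \BB C$ and a GFF-type distribution $\wt h$ on $\BB C$ such that the embedded quantum cone $(\BB C,\wt h,0,\infty)$ satisfies $\frk d_{\wt h}=\iota_* \wt d$ and $\mu_{\wt h}=\iota_* \wt\mu$; in particular $\iota$ is a measure-preserving isometry. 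Transport the curve by setting $\wt\eta'_{\BB C} := \iota\circ \wt\eta':\BB R\rta\BB C$.

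First I will verify that the pair $(\wt h,\wt\eta'_{\BB C})$ satisfies the hypotheses of Theorem~\ref{thm-wpsf-char}. For the topology/consistency condition~\ref{item-wpsf-char-homeo}, the composition $\iota\circ\Phi:\BB C\rta\BB C$ satisfies $(\iota\circ\Phi)\circ\eta'=\wt\eta'_{\BB C}$ by construction, so it suffices to check that $\iota\circ\Phi$ pushes forward the $\gamma$-quantum boundary length on $\bdy\eta'([t,\infty))$ with respect to $h$ to the $\gamma$-quantum length on $\bdy\wt\eta'_{\BB C}([t,\infty))$ with respect to $\wt h$ for rational $t$. The latter measure coincides with $\iota_*$ of the natural length measure on $\bdy\wt\eta'([t,\infty))$, because the Brownian-disk-to-quantum-disk equivalence from~\cite{lqg-tbm2} identifies the natural boundary length of a Brownian disk with the $\sqrt{8/3}$-LQG boundary length of the corresponding quantum disk; combined with hypothesis~\ref{item-wpsf-mchar-homeo} this gives the desired push-forward property. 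For the Markov property~\ref{item-wpsf-char-wedge}, the measurability theorem~\cite[Theorem~1.4]{lqg-tbm3} applied to each bubble $\wt\eta'([v_Z(s),s])$ and to each bead of the future surface shows that the quantum-surface $\sigma$-algebra $\wt{\mcl F}_{a,b}$ agrees a.s.\ with the metric $\sigma$-algebra $\wt{\mcl F}_{a,b}^{\op{m}}$, and that the beaded quantum surface $\wt{\mcl S}_{t,\infty}=(\wt\eta'_{\BB C}([t,\infty)),\wt h|_{\wt\eta'_{\BB C}([t,\infty))},\wt\eta'_{\BB C}(t),\infty)$ is in bijective correspondence with the ordered collection of connected components of $(\wt X_{t,\infty},\wt d_{t,\infty},\wt\mu|_{\wt X_{t,\infty}})$. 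Invoking~\cite{lqg-tbm2} for each individual bead identifies the latter with the beads of a $\tfrac{3\gamma}{2}=\sqrt 6$-quantum wedge, so hypothesis~\ref{item-wpsf-mchar-wedge} translates directly into the required independence and wedge law.

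Theorem~\ref{thm-wpsf-char} then yields that $\wt\eta'_{\BB C}$ is a whole-plane space-filling $\SLE_6$ sampled independently from $\wt h$ and parameterized by $\sqrt{8/3}$-quantum mass, and that the map $\iota\circ\Phi$ is multiplication by some complex number $\alpha$. In particular $\iota\circ\Phi$ is a conformal map that is an LQG coordinate change between two embeddings of the same $\sqrt{8/3}$-quantum cone into $(\BB C,0,\infty)$, so by the conformal invariance of the $\sqrt{8/3}$-LQG metric and area measure (see Section~\ref{sec-lqg-metric}) it is a measure-preserving isometry from $(\BB C,\frk d_h,\mu_h)$ to $(\BB C,\frk d_{\wt h},\mu_{\wt h})$. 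Since $\iota$ is a measure-preserving isometry by construction, $\Phi=\iota^{-1}\circ(\iota\circ\Phi)$ is a measure-preserving isometry from $(\BB C,\frk d_h,\mu_h)$ to $(\wt X,\wt d,\wt\mu)$ intertwining $\eta'$ and $\wt\eta'$, giving the desired metric measure space equivalence and showing that $\Phi$ is a.s.\ an isometry.

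The main obstacle will be the bookkeeping in the verification step: carefully identifying the metric $\sigma$-algebra $\wt{\mcl F}_{a,b}^{\op{m}}$ with the quantum $\sigma$-algebra $\wt{\mcl F}_{a,b}$ and showing that the conditional law of the beaded metric space $(\wt X_{t,\infty},\wt d_{t,\infty},\wt\mu|_{\wt X_{t,\infty}})$ determines and is determined by the conditional law of the beaded quantum surface $\wt{\mcl S}_{t,\infty}$. This requires applying~\cite[Theorem~1.4]{lqg-tbm3} not only globally but also simultaneously on each of countably many bubbles and beads, and tracking consistency of the natural and $\sqrt{8/3}$-LQG boundary length measures under the resulting identifications.
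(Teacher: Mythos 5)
Your overall strategy — embed the Brownian plane as a $\sqrt{8/3}$-quantum cone via the measurability theorem of~\cite{lqg-tbm3}, transport the curve, and then verify the hypotheses of Theorem~\ref{thm-wpsf-char} — is the same reduction the paper performs. However, there is a genuine gap in the step where you verify the Markov-property hypothesis~\ref{item-wpsf-char-wedge}, and it is exactly the obstacle the paper flags at the end of Section~\ref{sec-metric-char-result}.

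You write that ``the measurability theorem~\cite[Theorem~1.4]{lqg-tbm3} applied to each bubble \dots\ and to each bead of the future surface shows that the quantum-surface $\sigma$-algebra $\wt{\mcl F}_{a,b}$ agrees a.s.\ with the metric $\sigma$-algebra $\wt{\mcl F}_{a,b}^{\op{m}}$'' and that ``invoking~\cite{lqg-tbm2} for each individual bead identifies the latter with the beads of a $\sqrt 6$-quantum wedge.'' This does not follow directly from~\cite[Theorem~1.4]{lqg-tbm3}, because the deterministic function supplied by that theorem is allowed to depend on the law of the quantum surface. The hypotheses of Theorem~\ref{thm-wpsf-mchar} tell you only the \emph{law of the metric measure space structure} of the bubbles and beads; you do not know a priori that the quantum surface $\wt{\mcl S}_{t,\infty}$ obtained by restricting $\wt h$ has the law of a $\sqrt 6$-quantum wedge, and so you cannot simply invoke the measurability theorem on these sub-surfaces. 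It is a priori possible that the quantum surface structure induced by restricting $\wt h$ and the quantum surface structure that makes each bead into a quantum disk do not coincide. This is not bookkeeping; it is the content that requires proof.

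The paper bridges this gap by proving two local, law-independent statements in Section~\ref{sec-metric-function}: Proposition~\ref{prop-metric-function}, which produces a \emph{fixed} deterministic functional $F_r$ translating the metric measure space structure of a filled $\sqrt{8/3}$-LQG metric ball into its quantum surface structure, valid for any quantum surface locally absolutely continuous to a GFF; and Proposition~\ref{prop-unique-isometry}, which shows the only isometry of a filled metric ball in a $\sqrt{8/3}$-quantum cone to itself is the identity. These are then used in Lemmas~\ref{lem-mchar-wedge} and~\ref{lem-mchar-ind}: one first constructs a $\sqrt 6$-quantum wedge $\rng{\mcl S}_{t,\infty}$ from the metric data alone (this is where~\cite[Theorem~1.4]{lqg-tbm3} can be applied, since the law is known), obtains isometries between its beads and the connected components of $\wt X_{t,\infty}$, and then shows these isometries must be conformal by comparing the quantum-surface structure of filled metric balls on both sides via $F_r$, using the uniqueness of isometries to force agreement. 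Your argument needs to be supplemented with these ingredients; without them the verification of condition~\ref{item-wpsf-char-wedge} is not justified.
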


We next state a metric space version of Theorem~\ref{thm-bead-char}, which is a precise version of Theorem~\ref{thm-bead-mchar-intro}. 

\begin{thm}[Characterization of chordal SLE$_6$ on the Brownian disk] \label{thm-bead-mchar}
Suppose $\kappa' = 6$ and $\gamma = \sqrt{8/3}$. 
Let $(\frk a , \frk l^L, \frk l^R )\in (0,\infty)^3$ and suppose we are given a coupling of a doubly-marked Brownian disk $(\wt X^\bead , \wt d^\bead , \wt \mu^\bead , \wt x , \wt y)$ with area $\frk a$ and left/right boundary lengths $\frk l^L$ and $\frk l^R$, a random continuous curve $\wt\eta^\bead : [0,\frk a] \rta \wt X^\bead$ from $\wt x$ to $\wt y$, parameterized by the $\wt\mu^\bead$-mass it disconnects from $\infty$ (Definition~\ref{def-chordal-parameterization}), and a random discontinuous process $Z^\bead = (L^\bead, R^\bead)$ as described in Section~\ref{sec-chordal} for $\kappa'=6$.  Assume that the following conditions are satisfied.
\begin{enumerate}
\item \label{item-bead-mchar-wedge}  (Laws of complementary connected components) For $t\in [0,\frk a]$ let~$\wt{\mcl U}^\bead_t$ be the collection of singly marked metric measure spaces of the form $(U , \wt d_U^\bead , \wt\mu^\bead|_U , \wt x_U^\bead)$ where~$U$ is a connected component of~$\wt X^\bead \setminus \wt\eta^\bead([0,t])$, $\wt d_U^\bead$ is the internal metric of~$\wt d^\bead$ on $U$, and $\wt x_U^\bead$ is the point where~$\wt\eta^\bead$ finishes tracing $\bdy U$.  If we condition on $Z^\bead|_{[0,t]}$ and the time $\tau^\bead(t)$ from~\eqref{eqn-bead-bubble-process}, then the conditional law of $\wt{\mcl U}^\bead_t$ is that of a collection of independent singly marked Brownian disks with areas and boundary lengths specified as follows. The elements of $\wt{\mcl U}^\bead_t$ corresponding to the connected components of $\wt X^\bead \setminus \wt\eta^\bead([0,t])$ which do not have the target point $\wt\eta^\bead(\frk a)$ on their boundaries are in one-to-one correspondence with the intervals of time in $[0,\tau^\bead(t)]$ on which $Z^\bead$ is constant, and their areas and boundary lengths are determined by $Z^\bead$ as in~\eqref{eqn-bead-bubble-process}. The element of $\wt{\mcl U}^\bead_t$ corresponding to the connected component of $\wt X^\bead \setminus \wt\eta^\bead([0,t])$ with $\wt\eta^\bead(\frk a)$ on its boundary has area $\frk a - \tau^\bead(t)$ and boundary length $L^\bead_t + R^\bead_t  $. 
\item \label{item-bead-mchar-homeo}  (Topology and consistency) 
There is a pair $((\BB H , h^\bead , 0, \infty) ,\eta^\bead)$ consisting of a doubly marked quantum disk with area $\frk a$ and left/right boundary lengths $\frk l^L$ and $\frk l^R$ and an independent chordal $\SLE_6$ from $0$ to $\infty$ in $\BB H$ parameterized by the $\mu_{h^\bead}$-mass it disconnects from~$\infty$ and a homeomorphism $\Phi^\bead : \BB H\rta \wt X^\bead$ with $\Phi^\bead\circ \eta^\bead  = \wt\eta^\bead $. Moreover, for each $t\in [0,\frk a] \cap \BB Q$, $\Phi^\bead$ a.s.\ pushes forward the $\sqrt{8/3}$-quantum length measure with respect to $h^\bead$ on the boundary of the unbounded connected component of $\BB H\setminus \eta^\bead([0,t])$ to the natural boundary length measure on the connected component of $\wt X^\bead \setminus \wt\eta^\bead([0,t])$ with $\wt\eta^\bead(\frk a)$ on its boundary (which is well-defined since we know the internal metric on this component is that of a Brownian disk).  
\end{enumerate}
Then $(\wt X^\bead , \wt d^\bead , \wt \mu^\bead , \wt\eta^\bead)$ is equivalent (as a curve-decorated metric measure space) to a doubly-marked quantum disk with area $\frk a$ and left/right boundary lengths $\frk l^L$ and $\frk l^R$ equipped with its $\sqrt{8/3}$-LQG area measure and metric together with an independent chordal $\SLE_{6}$ from $0$ to $\infty$ in $\BB H$ parameterized by the $\sqrt{8/3}$-quantum mass it disconnects from $\infty$. 
\end{thm}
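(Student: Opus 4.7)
The plan is to reduce Theorem~\ref{thm-bead-mchar} to its quantum-surface counterpart, Theorem~\ref{thm-bead-char}, by using the equivalence of Brownian disks and $\sqrt{8/3}$-LQG doubly-marked quantum disks established in~\cite{lqg-tbm2}, together with the fact that the metric-measure-space structure and the quantum surface structure of a $\sqrt{8/3}$-LQG surface a.s.\ determine each other~\cite{lqg-tbm3}. First I would use these results to canonically endow the doubly-marked Brownian disk $(\wt X^\bead, \wt d^\bead, \wt\mu^\bead, \wt x, \wt y)$ with its quantum surface structure: this yields an embedding $\wt h^\bead$ into $(\BB H, 0, \infty)$ of a doubly-marked quantum disk with area $\frk a$ and left/right boundary lengths $\frk l^L, \frk l^R$, where we push $\wt\eta^\bead$ forward through this embedding to get a curve on $\BB H$. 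Recall from Section~\ref{sec-wedge} that for $\gamma = \sqrt{8/3}$, a doubly-marked quantum disk with fixed area and boundary lengths is the same as a single bead of a $\sqrt 6 = \frac{3\gamma}{2}$-quantum wedge with the same data, so we are precisely in the setting of Theorem~\ref{thm-bead-char}.

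Next I would verify the two hypotheses of Theorem~\ref{thm-bead-char} for the pair $(\wt h^\bead, \wt\eta^\bead)$ thus constructed. For condition~\ref{item-bead-char-wedge}: hypothesis~\ref{item-bead-mchar-wedge} of Theorem~\ref{thm-bead-mchar} says that the complementary connected components of $\wt\eta^\bead([0,t])$, equipped with their internal metrics and restricted measures, are conditionally independent Brownian disks with prescribed areas and boundary lengths. Applying the Brownian disk $\leftrightarrow$ quantum disk equivalence componentwise, this translates verbatim into the statement that the corresponding quantum surfaces are conditionally independent quantum disks, with the unbounded component corresponding to a bead of a $\frac{3\gamma}{2}$-quantum wedge as required; here I use that the map from metric measure structure to quantum structure is a measurable functional, so conditional independence is preserved. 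For condition~\ref{item-bead-char-homeo}, the homeomorphism $\Phi^\bead$ in the metric statement also serves as the required homeomorphism in the quantum statement, and the boundary length consistency at rational times follows from hypothesis~\ref{item-bead-mchar-homeo} of Theorem~\ref{thm-bead-mchar} together with the fact that the $\sqrt{8/3}$-quantum boundary length on a quantum disk coincides with the natural Brownian disk boundary length measure (this identification is part of~\cite{lqg-tbm2}).

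Having verified the hypotheses, Theorem~\ref{thm-bead-char} applies and gives that $(\wt h^\bead, \wt\eta^\bead)$ is distributed as an embedding of a doubly-marked quantum disk of area $\frk a$ and boundary lengths $\frk l^L, \frk l^R$ together with an independent chordal $\SLE_6$ from $0$ to $\infty$ in $\BB H$, parameterized by the $\mu_{\wt h^\bead}$-mass it disconnects from $\infty$. Finally I would transfer this back to the metric measure side: the quantum surface $(\BB H, \wt h^\bead, 0, \infty)$ equipped with the curve $\wt\eta^\bead$ has the same joint law as $(h^\bead, \eta^\bead)$ as curve-decorated quantum surfaces, and by~\cite[Theorem~1.4]{lqg-tbm3} the metric-measure structure $(\wt X^\bead, \wt d^\bead, \wt\mu^\bead)$ is an a.s.\ deterministic functional of the quantum surface. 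Pushing forward along this functional yields the desired equality in law of curve-decorated metric measure spaces.

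The main technical obstacle I anticipate is ensuring that the conditional independence in condition~\ref{item-bead-mchar-wedge} transfers cleanly to the quantum setting: we need the quantum surface structure on each complementary component to be obtained as a measurable functional of its metric measure structure (and conditionally only on its own area and boundary length), which requires invoking~\cite[Theorem~1.4]{lqg-tbm3} pointwise and checking that the conditional factorization behaves well. A secondary subtlety is reconciling the two notions of boundary length on the internal-metric components in hypothesis~\ref{item-bead-mchar-homeo}; this is resolved by noting that on a Brownian disk, the natural boundary measure coincides with the $\sqrt{8/3}$-quantum boundary measure of the equivalent quantum disk, so the pushforward condition for $\Phi^\bead$ is the same in both settings. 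Beyond these points, the argument is essentially a translation and invocation of Theorem~\ref{thm-bead-char}.
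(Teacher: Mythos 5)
Your high-level plan matches the paper's, but the step labelled ``translates verbatim'' hides the central technical difficulty, and your stated anticipated obstacle (that ``the conditional factorization behaves well'') is not quite the right concern. The real issue, which the paper explicitly flags at the end of Section~\ref{sec-metric-char-result}, is that the determination in \cite[Theorem~1.4]{lqg-tbm3} of the quantum surface structure by the metric measure space structure is allowed to depend on the \emph{law} of the quantum surface. You apply this theorem once to the ambient doubly-marked disk $(\wt X^\bead,\wt d^\bead,\wt\mu^\bead,\wt x,\wt y)$, obtaining a field $\wt h^\bead$ on $\BB H$, and then you want to conclude that for each complementary component $U$ of $\wt\eta^\bead([0,t])$, the restricted quantum surface $(U,\wt h^\bead|_U)$ is a quantum disk. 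Hypothesis~\ref{item-bead-mchar-wedge} only tells you that the metric measure space $(U,\wt d_U^\bead,\wt\mu^\bead|_U)$ is a Brownian disk. Applying \cite[Theorem~1.4]{lqg-tbm3} to this metric measure space (with the singly-marked quantum disk law) produces \emph{some} quantum disk, but there is no a priori reason this coincides as a quantum surface with $(U,\wt h^\bead|_U)$: the two applications of the law-dependent functional are to different laws, and you do not yet know that $(U,\wt h^\bead|_U)$ even has the law of a quantum disk --- that is what you are trying to prove.

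The paper devotes Section~\ref{sec-metric-function} precisely to closing this gap. Proposition~\ref{prop-metric-function} establishes that for a filled $\sqrt{8/3}$-LQG metric ball centered at a $\mu_h$-typical point, there are deterministic functions $F_r, G_r$ --- \emph{not} depending on the particular law of the ambient quantum surface --- that convert between the metric measure space structure and the quantum surface structure; this is proved via a local absolute continuity argument reducing to the $\sqrt{8/3}$-quantum cone case, where the needed determination follows from Lemmas~\ref{lem-inside-outside} and~\ref{lem-tbm-determined}. Proposition~\ref{prop-unique-isometry} then shows that the only self-isometry of a filled metric ball is the identity, which lets one upgrade an abstract isometry between the two candidate quantum surfaces to an honest conformal equivalence. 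In the proofs of Lemmas~\ref{lem-mchar-wedge} and~\ref{lem-mchar-ind} these two propositions are combined (by covering each bubble $U$ by metric balls) to show that the quantum structure on $U$ obtained by restriction from $\wt h^\bead$ agrees a.s.\ with the one produced by the Brownian disk $\leftrightarrow$ quantum disk equivalence applied to $(U,\wt d_U^\bead,\wt\mu^\bead|_U)$. Your proof would need this material; without it, the componentwise invocation of \cite{lqg-tbm3} does not establish that $(\wt h^\bead,\wt\eta^\bead)$ satisfies condition~\ref{item-bead-char-wedge} of Theorem~\ref{thm-bead-char}. One further point the paper flags: condition~\ref{item-bead-mchar-wedge} only fixes the \emph{total} boundary length $L_t^\bead + R_t^\bead$ of the component containing the target point, and it is condition~\ref{item-bead-mchar-homeo} that pins down the individual left/right lengths needed to match the bead-of-wedge law; your write-up does not address this.
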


In the remainder of this section we will prove Theorems~\ref{thm-wpsf-mchar} and~\ref{thm-bead-mchar}.   In Section~\ref{sec-bead-mchar-nat}, we will also deduce a variant of Theorem~\ref{thm-bead-mchar} where we parameterize by quantum natural time instead of disconnected quantum area. In fact, this is the precise characterization statement which we use in~\cite{gwynne-miller-perc}.

We know from~\cite[Theorem~1.4]{lqg-tbm3} that the metric measure space structure and the quantum surface structure of a $\sqrt{8/3}$-LQG surface a.s.\ determine each other, i.e.\ each is a.s.\ given by a deterministic measurable function of the other.
However, this together with Theorem~\ref{thm-wpsf-char} does not immediately imply Theorem~\ref{thm-wpsf-mchar} for the following reason. 
The deterministic function in~\cite[Theorem~1.4]{lqg-tbm3} is allowed to depend on the particular law of the quantum surface, so it is a priori possible, e.g., that two quantum surfaces with the same metric measure space structure are not equivalent if they have different laws.  
The hypotheses of Theorem~\ref{thm-wpsf-mchar} only tell us about the laws of the metric measure space structures of certain quantum surfaces, so we cannot immediately conclude anything about the conformal structures of these quantum surfaces. Similar considerations apply in the setting of Theorem~\ref{thm-bead-mchar}.

To get around the above difficulty, we will consider in Section~\ref{sec-metric-function} a single metric ball in a $\sqrt{8/3}$-LQG surface and argue using local absolute continuity that the quantum surface structure of this metric ball is a.s.\ determined by its metric measure space structure \emph{in a manner that does not depend on the particular law of the quantum surface} (Proposition~\ref{prop-metric-function}). We will also need a result to the effect that the only isometry from a $\sqrt{8/3}$-LQG metric ball to itself is the identity map (Proposition~\ref{prop-unique-isometry}). This latter result will allow us to show that certain isometries between $\sqrt{8/3}$-LQG surfaces have to be conformal by producing a conformal map which relates the corresponding fields via the $\sqrt{8/3}$-LQG coordinate change formula (such a map is automatically an isometry). 

Using the above results, we will reduce Theorems~\ref{thm-wpsf-mchar} and~\ref{thm-bead-mchar}, respectively, to Theorems~\ref{thm-wpsf-char} and~\ref{thm-bead-char} by covering the various surfaces appearing in the theorem statements by metric balls (Section~\ref{sec-wpsf-char}).
In Section~\ref{sec-bead-mchar-nat}, we will state and prove the aforementioned variant of Theorem~\ref{thm-bead-mchar} where we parameterize by quantum natural time.

Throughout this section, we will use the following notation. For a GFF-type distribution $h$ on a domain $D\subset \BB C$ we write $\frk d_h$, $\mu_h$, and $\nu_h$, respectively, for the $\sqrt{8/3}$-LQG metric, area measure, and length measure induced by $h$. 
For $r > 0$ and $z\in D$, we write $B_r(z;\frk d_h)$ for the closed $\frk d_h$-ball of radius $r$ centered at $z$. 
If $D \not=\BB C$ and $B_r(z;\frk d_h)$ does not intersect $\bdy D$ (resp.\ if $D  = \BB C$), we define the filled metric ball $B_r^\bullet(z; \frk d_h)$ to be the union of $B_r(z;\frk d_h)$ and the set of points which it disconnects from $\bdy D$ (resp.\ $\infty$).

\subsection{Filled metric balls in $\sqrt{8/3}$-LQG surfaces}
\label{sec-metric-function}

In this subsection we will prove some general facts about the filled $\sqrt{8/3}$-LQG metric balls $B_r^\bullet(z ;\frk d_h)$ for $(D,h)$ a $\sqrt{8/3}$-LQG surface.
We recall from~\cite[Corollary~1.5]{lqg-tbm2} that $\sqrt{8/3}$-LQG surfaces are equivalent to Brownian surfaces: in particular, the Brownian map (resp.\ plane, disk) is equivalent as a metric measure space to the quantum sphere (resp.\ $\sqrt{8/3}$-quantum wedge, quantum disk) equipped with its $\sqrt{8/3}$-LQG metric and area measure.

The first main result of this section, whose proof will occupy most of the section, says that the quantum surface structure of a filled $\sqrt{8/3}$-LQG metric ball a.s.\ determines and is determined by its metric measure space structure in a manner which does not depend on the particular law of the quantum surface (see the end of Section~\ref{sec-metric-char-result} for a discussion of why we need this result).  

\begin{prop} \label{prop-metric-function}
For each $r > 0$, there exists a deterministic function $F_r$ from the set of pointed metric measure spaces to the set of singly marked quantum surfaces and a deterministic function $G_r$ going in the opposite direction such that the following is true. 
Let $(D , h )$ be a quantum surface with the following property. For each bounded open set $V\subset D$ lying at positive distance from $\bdy D$ and each $R>0$ such that $\ol V\subset B_R(0)$, the field $h|_V$ is absolutely continuous with respect to the corresponding restriction of a whole-plane GFF normalized so that its circle average over $\bdy B_R(0)$ is zero. 
Let $U$ be any open subset of $D$ such that $\mu_h(U) <\infty$ a.s.\ and let $z$ be sampled uniformly from $\mu_h|_U$, normalized to be a probability measure.
On the event $\{\frk d_h(z , \bdy U)  >  r\}$, a.s.\ 
\begin{align} \label{eqn-metric-function}
F_r\left( B_r^\bullet(z ; \frk d_h) , \frk d_{h |_{B_r^\bullet(z ; \frk d_h)}  } , \mu_h |_{B_r^\bullet(z ; \frk d_h)}  , z \right) &= \left( B_r^\bullet(z ; \frk d_h) ,  h |_{B_r^\bullet(z ; \frk d_h)} , z \right) \quad \op{and} \notag\\
G_r\left( B_r^\bullet(z ; \frk d_h) ,  h |_{B_r^\bullet(z ; \frk d_h)} , z \right) &= \left( B_r^\bullet(z ; \frk d_h) , \frk d_{h |_{B_r^\bullet(z ; \frk d_h)}  } , \mu_h |_{B_r^\bullet(z ; \frk d_h)}  , z \right) . 
\end{align}
\end{prop}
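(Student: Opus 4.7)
The plan is to define $F_r$ and $G_r$ using the $\sqrt{8/3}$-quantum cone as a reference and then extend to arbitrary $(D,h)$ by a local absolute continuity argument. The backwards direction $G_r$ is immediate: we take $G_r$ to be the functional which sends a singly marked quantum surface $(D',h',z')$ to the pointed metric measure space $(D', \frk d_{h'}, \mu_{h'}, z')$, where $\frk d_{h'}$ and $\mu_{h'}$ are constructed from $h'$ via~\cite{lqg-tbm1,lqg-tbm2,lqg-tbm3}. This is a deterministic functional of the singly marked quantum surface, and the second equality in~\eqref{eqn-metric-function} holds by construction.

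For $F_r$, let $(\BB C, h^{\op{cone}}, 0, \infty)$ be a $\sqrt{8/3}$-quantum cone. By~\cite[Theorem~1.4]{lqg-tbm3}, the whole singly marked quantum surface $(\BB C, h^{\op{cone}}, 0)$ is a.s.\ determined by its pointed metric measure space $(\BB C, \frk d_{h^{\op{cone}}}, \mu_{h^{\op{cone}}}, 0)$. Since the filled metric ball $B_r^\bullet(0; \frk d_{h^{\op{cone}}})$ is a measurable function of the metric measure space, the singly marked quantum surface $(B_r^\bullet(0; \frk d_{h^{\op{cone}}}), h^{\op{cone}}|_{B_r^\bullet}, 0)$ is a.s.\ determined by the pointed metric measure space $(B_r^\bullet(0; \frk d_{h^{\op{cone}}}), \frk d_{h^{\op{cone}}}|_{B_r^\bullet}, \mu_{h^{\op{cone}}}|_{B_r^\bullet}, 0)$. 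Hence we may choose a Borel measurable function $F_r$ from pointed metric measure spaces to singly marked quantum surfaces so that the first equality in~\eqref{eqn-metric-function} holds a.s.\ for the cone centered at $0$, and we define $F_r$ arbitrarily on the complement of the support.

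To show that this same $F_r$ works for any $(D,h)$ satisfying the hypothesis, we use a local absolute continuity argument. For $R > 0$, let $E_R$ be the event that $\frk d_h(z,\bdy U) > r$, $B_r^\bullet(z;\frk d_h) \subset B_R(0) \cap U$, and $B_R(0) \subset D$ lies at positive distance from $\bdy D$; since $\{\frk d_h(z,\bdy U)>r\}$ is the countable union of the $E_R$ (for $R\in\BB N$) it suffices to verify the first equality of~\eqref{eqn-metric-function} a.s.\ on each $E_R$. On $E_R$, both the filled ball and the associated singly marked quantum surface are measurable functions of $(z, h|_{B_R(0)})$. By the hypothesis, the law of $h|_{B_R(0)}$ is absolutely continuous with respect to the restriction to $B_R(0)$ of a whole-plane GFF (suitably normalized); weighting by $\mu_h(U \cap B_R(0))$ and sampling $z$ from the corresponding normalized measure then yields a joint law for $(z, h|_{B_R(0)})$ which is absolutely continuous with respect to the analogous rooted law for the GFF. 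By the standard identification of the rooted GFF measure with the $\sqrt{8/3}$-quantum cone centered at the sampled point (see~\cite[Proposition~3.4]{shef-kpz} and the construction of the cone in~\cite[Section~4.3]{wedges}, where the additive constant and logarithmic singularity at $z$ are absorbed into the quantum surface equivalence), this gives absolute continuity of the joint law of $(z, h|_{B_R(0)})$, and hence of the pair $\bigl((B_r^\bullet(z;\frk d_h), \frk d_{h|_{B_r^\bullet}}, \mu_h|_{B_r^\bullet}, z), (B_r^\bullet(z;\frk d_h), h|_{B_r^\bullet}, z)\bigr)$, on $E_R$, with respect to the corresponding joint law for the cone. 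Since the first equality of~\eqref{eqn-metric-function} holds a.s.\ for the cone, it holds a.s.\ on $E_R$.

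The main technical obstacle is the careful bookkeeping of absolute continuity at the level of equivalence classes rather than at the level of the field: quantum surfaces are equivalence classes modulo the LQG coordinate change~\eqref{eqn-lqg-coord}, and to transfer absolute continuity of the field to absolute continuity of the input to $F_r$ one must verify that the random shift of the additive constant and the conformal rescaling implicit in the comparison of $h$ near $z$ with a quantum cone centered at $0$ are absorbed into the quantum surface equivalence and do not alter the pointed metric measure space fed into $F_r$. Once this is done, the remaining steps reduce to a standard Radon-Nikodym argument combined with~\cite[Theorem~1.4]{lqg-tbm3}.
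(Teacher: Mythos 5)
The second half of your argument (the local absolute continuity transfer from the $\sqrt{8/3}$-quantum cone to a general $(D,h)$) is essentially the same as the paper's, using Lemma~\ref{lem-cone-abs-cont} and the approximation~\eqref{eqn-domain-approx}. The issue is in the first half, where you dispose of the cone case too quickly: the claim that the restricted quantum surface is a.s.\ determined by the restricted pointed metric measure space does not follow from the fact that the whole quantum surface is determined by the whole metric measure space.

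Concretely, \cite[Theorem~1.4]{lqg-tbm3} gives a map (whole metric measure space) $\mapsto$ (whole quantum surface), and composing with restriction gives (whole metric measure space) $\mapsto$ (restricted quantum surface). What you need, and what you silently assert, is (restricted metric measure space) $\mapsto$ (restricted quantum surface). The observation that $B_r^\bullet(0;\frk d_{h^{\op{cone}}})$ is a measurable function of the full metric measure space is irrelevant here: a priori the full metric measure space carries strictly more information than its restriction to the filled ball, so the composite map need not factor through the restriction. To close the gap you must show that the restricted quantum surface is conditionally independent of the full metric measure space given the restricted metric measure space. This is exactly what the paper's Lemma~\ref{lem-metric-function-cone} establishes, and it is the real content of the proposition. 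The proof there introduces an extra boundary marked point $w$ sampled from $\nu_h|_{\bdy B_r^\bullet}$ and then uses two nontrivial inputs: (i) the conditional independence of the interior quantum surface $\mathcal{S}_r^\bullet(w)$ and the exterior quantum surface given the boundary length (Lemma~\ref{lem-inside-outside}, the spatial Markov property of QLE$(8/3,0)$), and (ii) the fact that the interior and exterior metric measure spaces glued according to boundary length recover the full metric measure space (Lemma~\ref{lem-tbm-determined}). These two facts together give the needed conditional independence, and hence that $\frk B_r^\bullet(w)$ determines $\mathcal{S}_r^\bullet(w)$; one then drops the auxiliary point $w$ by a further conditional independence argument. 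Without this step the existence of $F_r$ is not justified even for the cone.

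Your $G_r$ and the final absolute continuity bookkeeping are fine.
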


We emphasize that the functions $F_r$ and $G_r$ appearing in Proposition~\ref{prop-metric-function} do \emph{not} depend on the particular law of the quantum surface $(D,h)$. 

We will deduce Proposition~\ref{prop-metric-function} from a similar but less general statement which applies only in the case of a $\sqrt{8/3}$-quantum cone and which follows from~\cite[Theorem 1.4]{lqg-tbm3}, together with a local absolute continuity argument. 
In the next several lemmas, we let $(\BB C , h , 0, \infty)$ be a $\sqrt{8/3}$-quantum cone and to lighten notation we write
\eqbn
B_r^\bullet := B_r^\bullet(0 ; \frk d_h)  .
\eqen
The main input in the proof of Proposition~\ref{prop-metric-function} is the following lemma. 

\begin{lem} \label{lem-metric-function-cone}
For each $r>0$, the pointed metric measure space $\left( B_r^\bullet  , \frk d_{h|_{B_r^\bullet} } , \mu_h|_{B_r^\bullet}  ,0 \right)$ and the quantum surface $\left( B_r^\bullet  ,  h|_{B_r^\bullet} , 0 \right)$ a.s.\ determine each other.  
In other words, there exist deterministic functions $F_r$ and $G_r$ such that~\eqref{eqn-metric-function} holds a.s.\ with $(\BB C ,h , 0, \infty)$ a $\gamma$-quantum cone and $z = 0$.   
\end{lem}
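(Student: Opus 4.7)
The direction encoded by $G_r$ is the easy one. Given the quantum surface $(B_r^\bullet, h|_{B_r^\bullet}, 0)$, the restricted $\sqrt{8/3}$-LQG area measure $\mu_h|_{B_r^\bullet}$ is a local measurable functional of $h|_{B_r^\bullet}$ via Duplantier--Sheffield regularization. The internal metric $\frk d_{h|_{B_r^\bullet}}$ is likewise a functional of $h|_{B_r^\bullet}$: by~\cite[Theorem~1.4]{lqg-tbm3} the field $h$ determines its $\sqrt{8/3}$-LQG metric $\frk d_h$, and the compatibility of the QLE$(8/3,0)$ construction with restriction ensures that the internal metric on $B_r^\bullet$ is a deterministic functional of the field on that set alone. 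Hence $G_r$ is just ``build the internal metric and restrict the area measure from the field,'' a definition which manifestly does not depend on the law of the surface.

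For $F_r$, the starting point is~\cite[Theorem~1.4]{lqg-tbm3}, which provides a deterministic function $\Psi$ such that a.s.\ $(\BB C, h, 0, \infty) = \Psi(\BB C, \frk d_h, \mu_h, 0)$ as quantum surfaces. In particular $\mcl S_r$ is an a.s.\ deterministic functional of the full pointed metric measure space $(\BB C, \frk d_h, \mu_h, 0)$, and the filled metric ball $B_r^\bullet$ and its induced internal metric/restricted measure are themselves measurable functions of this full data. The nontrivial point is that this functional can be chosen to factor through the restricted data $\mcl M_r$. To show this, the plan is to exhibit a coupling argument: sample a $\sqrt{8/3}$-quantum cone $h$, and produce $\wt h$ by resampling the ``exterior'' given the interior quantum surface $\mcl S_r(h)$ in the natural sense provided by the QLE$(8/3,0)$ Markov property (i.e.\ the conditional independence of the exterior quantum surface and the interior, given the quantum boundary length of $\bdy B_r^\bullet$, which is built into the construction of $\frk d_h$). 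This coupling preserves the law of the quantum cone and ensures $\mcl S_r(\wt h) = \mcl S_r(h)$ and $\mcl M_r(\wt h) = \mcl M_r(h)$, while the outside data is resampled. Iterating this and invoking \cite[Theorem~1.4]{lqg-tbm3} then shows that the conditional law of $\mcl S_r$ given $\mcl M_r$ is a.s.\ a point mass, which yields $F_r$.

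The hard part is verifying that the construction of $F_r$ can be made into a genuine deterministic measurable function rather than just a statement about conditional laws. The cleanest way to do this is to argue the contrapositive: if $\mcl S_r$ were not $\sigma(\mcl M_r)$-measurable, then with positive probability we could produce two independent samples $h_1, h_2$ of the $\sqrt{8/3}$-quantum cone with $\mcl M_r(h_1) = \mcl M_r(h_2)$ but $\mcl S_r(h_1) \neq \mcl S_r(h_2)$. Conditionally on this, we could glue the common $\mcl M_r$ to either exterior to form two metric measure space cones, and applying $\Psi$ to each produces two different quantum surface structures on $B_r^\bullet$. But the Markov property for the filled metric ball implies that the conditional law of the exterior given $\mcl S_r(h_i)$ depends only on the quantum boundary length of $\bdy B_r^\bullet$, which is a functional of $\mcl M_r$; so swapping exteriors in the construction of $\Psi(\BB C, \frk d_h, \mu_h, 0)$ yields a valid quantum cone and thus a valid output of $\Psi$, forcing the two quantum surface structures to agree and yielding the desired contradiction.

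The principal obstacle is locating and applying a version of the QLE$(8/3,0)$ Markov property sharp enough to justify the independent resampling of the exterior quantum surface given only the boundary-length functional of $\mcl M_r$. This is implicit in the construction of $\frk d_h$ in \cite{lqg-tbm1,lqg-tbm2,lqg-tbm3}, but it must be invoked carefully because the filled metric ball is not a standard quantum wedge or disk. Once that ingredient is in hand, the measure-theoretic reconstruction of $F_r$ as a deterministic function (not merely one measurable with respect to the completion under the quantum cone law) follows from standard regular conditional probability arguments combined with the universality of $\Psi$.
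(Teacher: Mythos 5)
Your overall strategy is the same as the paper's: use \cite[Theorem~1.4]{lqg-tbm3} to get determination of the quantum surface by the full pointed metric measure space, then cut this down to determination by the ball via a Markov property argument for filled metric balls.  However, there are two concrete gaps in your proposal.

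First, you claim that the quantum boundary length $\nu_h(\bdy B_r^\bullet)$ is ``a functional of $\mcl M_r$'' and use this as a given, but this is a nontrivial input, not a free observation.  The paper cites the discussion in \cite[Section~3.6]{tbm-characterization} (together with local absolute continuity between Brownian plane and Brownian map) for exactly this fact: that the boundary length of a filled metric ball in a Brownian surface is measurable with respect to the internal metric measure space structure of the ball alone.  Without this, your contrapositive argument does not close, because you cannot justify that the resampled exterior can be glued consistently to produce a quantum cone with the same law.

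Second, your proposal omits the need for an extra marked point $w$ on $\bdy B_r^\bullet$, which the paper introduces precisely to make the gluing argument (Lemma~\ref{lem-tbm-determined}) well-posed: to say the full pointed metric measure space is recovered by welding the interior and exterior metric measure spaces according to boundary length, one must specify how the identification is anchored, and this requires a marked boundary point on each piece.  The paper carries $w$ through the whole argument (showing $\frk B_r^\bullet(w)$ a.s.\ determines $\mcl S_r^\bullet(w)$) and then strips it off at the very end by noting that the conditional law of $w$ given the unmarked metric measure space depends only on $\frk B_r^\bullet$.  Without this, your ``swap exteriors and apply $\Psi$'' step does not produce a well-defined metric measure space to feed to $\Psi$.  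The paper's direct conditional-independence formulation (Lemma~\ref{lem-inside-outside} plus Lemma~\ref{lem-tbm-determined} plus the boundary-length measurability) also sidesteps the regular-conditional-probability manipulations you flag as ``the hard part'': once one shows $(\BB C, \frk d_h, \mu_h, 0, w)$ is conditionally independent from $\mcl S_r^\bullet(w)$ given $\frk B_r^\bullet(w)$, the fact that $\mcl S_r^\bullet(w)$ is determined by $(\BB C, \frk d_h, \mu_h, 0, w)$ immediately forces it to be determined by $\frk B_r^\bullet(w)$.
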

 
For the proof of Lemma~\ref{lem-metric-function-cone}, we first need to collect some facts about filled metric balls which follow from results in~\cite{lqg-tbm1,lqg-tbm2,lqg-tbm3}.  
Our first lemma will be used in particular to relate $h|_{B_r^\bullet}$ to the internal metric of $\frk d_h$ on $B_r^\bullet$.

\begin{lem} \label{lem-internal-metric}
Almost surely, for each open set $U\subset \BB C$ the $\sqrt{8/3}$-LQG metric $\frk d_{h|_U}$ is equal to the internal metric of $\frk d_h$ on $U$. 
\end{lem}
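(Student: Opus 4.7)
The plan is to exploit two properties of the $\sqrt{8/3}$-LQG metric established in~\cite{lqg-tbm1,lqg-tbm2,lqg-tbm3}: first, that $\frk d_h$ is a length metric (so distances are infima of path lengths); and second, that the QLE$(8/3,0)$ construction of $\frk d_h$ is local, in the sense that for $z \in U$ and $r < \frk d_h(z, \bdy U)$, the filled metric ball $B_r^\bullet(z;\frk d_h)$ agrees with the hull produced by running the QLE growth from $z$ inside the quantum surface $(U, h|_U)$. In particular, $B_r^\bullet(z;\frk d_h) = B_r^\bullet(z;\frk d_{h|_U})$ whenever $r < \frk d_h(z,\bdy U)$.

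First I would establish this local comparison for filled metric balls. Concretely, whenever $z \in U$ and $r < \frk d_h(z,\bdy U)$, the QLE$(8/3,0)$ hull at time $r$ lies at positive distance from $\bdy U$, so by the Markovian/local nature of the QLE$(8/3,0)$ growth the hull is a measurable function of $h|_U$ alone. This immediately gives $\frk d_h(z,w) = \frk d_{h|_U}(z,w)$ for every $w \in B_r^\bullet(z;\frk d_h)$, and also identifies their internal metrics on this common ball.

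Next I would globalise to arbitrary pairs $z,w \in U$. Given any rectifiable path $\gamma \subset U$ from $z$ to $w$, I would cover $\gamma$ by finitely many small filled $\frk d_h$-balls, each compactly contained in $U$, and apply the local comparison on each ball to deduce that every sub-arc of $\gamma$ lying inside one of these balls has the same $\frk d_h$- and $\frk d_{h|_U}$-length. Summing gives that $\gamma$ has the same length under both metrics. Since $\frk d_{h|_U}$ is a length metric on $U$ and the internal metric of $\frk d_h$ on $U$ is by definition the infimum of $\frk d_h$-lengths of paths in $U$, both $\frk d_{h|_U}(z,w)$ and the internal metric coincide with this common infimum. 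A standard separability argument, running over a countable basis of open sets, upgrades the a.s.\ equality to hold simultaneously for all open $U$.

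The hard part will be the local comparison step, which requires tracking the QLE$(8/3,0)$ growth process and carefully justifying that, up until the first time the growing hull reaches $\bdy U$, the process depends only on $h|_U$. Once this locality is in place, the rest of the argument is a routine length-space manipulation and, crucially, does not use any special feature of the particular law of $(D,h)$ beyond the fact that $\frk d_h$ is well-defined and a length metric.
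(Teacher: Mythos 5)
Your proposal is correct in spirit but takes a genuinely different route from the paper. The paper does not re-derive QLE$(8/3,0)$ locality; it cites an existing result (Lemma~2.2 of \texttt{gwynne-miller-gluing}) which establishes that, a.s.\ simultaneously for every $V$ in the \emph{countable} family $\mcl V$ of finite unions of rational Euclidean balls, $\frk d_{h|_V}$ equals the internal metric of $\frk d_h$ on $V$. It then passes to a general open $U$ by exhausting $U$ from within by a sequence $V_n \in \mcl V$ and using the \emph{definition} of $\frk d_{h|_U}$ as the monotone limit of $\frk d_{h|_{V_n}}$, together with the compactness observation that any path from $z$ to $w$ inside $U$ is eventually contained in $V_n$. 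This is entirely deterministic once the a.s.\ event for $\mcl V$ is in place, which is what handles the quantification over all open $U$.

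By contrast, you propose to re-establish locality from scratch via the Markovian structure of QLE$(8/3,0)$, obtain agreement of filled metric balls $B_r^\bullet(z;\frk d_h) = B_r^\bullet(z;\frk d_{h|_U})$ for $r < \frk d_h(z,\bdy U)$, and then cover a path from $z$ to $w$ by such balls and sum up arc-lengths. The hard step you flag (that the QLE growth, up to the first hitting time of $\bdy U$, depends only on $h|_U$) is morally the content of the cited lemma, so your approach re-proves rather than reuses it; the paper's version buys brevity at that point. Two smaller cautions with your write-up: (i) the local comparison as you state it gives agreement of distances \emph{from the center} $z$ to points of the ball, whereas to conclude that arc-lengths of sub-arcs agree you need the two metrics to agree on all pairs of points inside the ball, which requires invoking locality of the full restricted surface structure, not just filled-ball hulls rooted at $z$; and (ii) the closing ``standard separability argument'' over a countable basis is vaguer than what is actually needed, because the metrics $\frk d_{h|_U}$ for different $U$ are distinct objects (not restrictions of one metric), so one must, as the paper does, first establish the statement a.s.\ for a countable family and then deduce the statement for general $U$ by a deterministic exhaustion/limit argument tied to the definition of $\frk d_{h|_U}$, rather than by a measure-theoretic approximation of $U$ alone.
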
 
\begin{proof}
This is essentially obvious from the construction of the $\sqrt{8/3}$-LQG metric, but we give a careful justification for the sake of completeness. 
Let $\mcl V$ be the set of those open sets $V\subset \BB C$ which can be expressed as a finite union of open Euclidean balls with rational radii whose center has rational coordinates.
By~\cite[Lemma~2.2]{gwynne-miller-gluing}, it is a.s.\ the case that for each $V\in \mcl V$, the $\sqrt{8/3}$-LQG metric $\frk d_{h|_V}$ induced by $h|_V$ is equal to the internal metric of $h$ on $V$. 

Suppose now that this is the case (which happens with probability 1) and fix an open set $U\subset \BB C$. Almost surely, there is an increasing sequence $\{V_n\}_{n\in\BB N}$ of open sets belonging to $\mcl V$ whose closures are contained in $U$ and whose union is $U$. 
By definition (c.f.~\cite[Section~2.2.2]{gwynne-miller-gluing}), for each $z,w \in U$, 
\eqbn
\frk d_{h|_U}(w_1,w_2) = \lim_{n\rta\infty} \frk d_{h|_{V_n}}(z,w) .
\eqen
Each path from $z$ to $w$ contained in $U$ is contained in $V_n$ large enough $n\in\BB N$, in which case its $\frk d_{h|_{V_n}}$-length is the same as its $\frk d_{h|_U}$-length and its $\frk d_h$-length. The statement of the lemma follows.
\end{proof}

We next collect some facts about the laws of the quantum surfaces parameterized by the filled metric ball $B_r^\bullet$ and its complement. 

\begin{lem} \label{lem-inside-outside}
Let $r > 0$ and let $w$ be sampled uniformly from $\nu_h|_{\bdy B_r^\bullet}$, normalized to be a probability measure.
The quantum surfaces
\eqbn
\left( B_r^\bullet   ,  h|_{B_r^\bullet} , 0 , w \right) \quad \op{and} \quad \left( \BB C\setminus B_r^\bullet  ,   h|_{\BB C\setminus B_r^\bullet } , \infty , w \right)
\eqen
parameterized by a filled metric ball and its complement are conditionally independent given the quantum length $\nu_h(\bdy B_r^\bullet)$. 
\end{lem}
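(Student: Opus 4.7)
The plan is to deduce this lemma from the QLE$(8/3,0)$-based construction of the $\sqrt{8/3}$-LQG metric in~\cite{lqg-tbm1,lqg-tbm2,lqg-tbm3}. In that construction, for a $\sqrt{8/3}$-quantum cone $(\BB C, h, 0, \infty)$, the filled metric ball $B_r^\bullet$ arises as the hull at time $r$ of a QLE$(8/3,0)$ process grown from $0$, coupled with $h$ in such a way that $B_r^\bullet$ is a local set for $h$ in the sense of~\cite{ss-contour}. Its boundary $\bdy B_r^\bullet$ is a Jordan curve carrying a well-defined quantum length measure $\nu_h|_{\bdy B_r^\bullet}$, and the whole quantum cone can be recovered as the conformal welding, along $\nu_h$, of the ``inside'' quantum surface $\mcl S^{\op{in}} := (B_r^\bullet, h|_{B_r^\bullet}, 0)$ and the ``outside'' quantum surface $\mcl S^{\op{out}} := (\BB C \setminus B_r^\bullet, h|_{\BB C\setminus B_r^\bullet}, \infty)$.

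First I would invoke the decomposition result implicit in the QLE$(8/3,0)$ construction: conditional on the quantum boundary length $\ell := \nu_h(\bdy B_r^\bullet)$, the quantum surfaces $\mcl S^{\op{in}}$ and $\mcl S^{\op{out}}$ are independent, with conditional laws depending only on $\ell$ (and $r$). The intuition is that QLE$(8/3,0)$ grows its hull one ``pivotal'' event at a time and, after stopping at metric radius $r$, the Markov property for this growth, combined with the fact that the boundary-length process is a continuous-state branching process depending only on its current value, ensures that everything in the exterior is conditionally independent of everything in the interior once $\ell$ is fixed. This is essentially the ``metric-ball / complement'' decomposition of a $\sqrt{8/3}$-quantum cone, and it is the same mechanism used in~\cite{lqg-tbm2,lqg-tbm3} to identify $\mcl S^{\op{in}}$ with a quantum disk (restricted to the event that the radius about its interior marked point is at least $r$) and to identify $\mcl S^{\op{out}}$ with a specific unbounded quantum surface with boundary length $\ell$.

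To incorporate the marked boundary point $w$, I would observe that $w$ is sampled uniformly from $\nu_h|_{\bdy B_r^\bullet}$ normalized, and that this boundary length measure is a functional of either $\mcl S^{\op{in}}$ or $\mcl S^{\op{out}}$ via the natural identification of $\bdy B_r^\bullet$ as a common boundary; in each case it has total mass $\ell$. Adding the same uniformly chosen point $w$ to each surface therefore amounts to choosing a common ``angular offset'' for the gluing, which (given $\ell$) is independent of both $\mcl S^{\op{in}}$ and $\mcl S^{\op{out}}$. Conditional independence of the two marked surfaces $(\mcl S^{\op{in}}, w)$ and $(\mcl S^{\op{out}}, w)$ given $\ell$ then follows from conditional independence of $\mcl S^{\op{in}}$ and $\mcl S^{\op{out}}$ given $\ell$.

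The main obstacle is extracting the precise conditional independence in the first step in the form stated above from~\cite{lqg-tbm1,lqg-tbm2,lqg-tbm3}; although it is foundational to the QLE$(8/3,0)$ construction, the exact formulation in terms of quantum surfaces modulo conformal maps (rather than specific embeddings) and with conditioning reduced to the single real parameter $\ell$ may require some assembly. If it is not directly available, I would prove it by running QLE$(8/3,0)$ from $0$, stopping at the (a.s.\ finite) first time the metric radius equals $r$, and applying the strong Markov property of this growth process together with the scale invariance of the $\sqrt{8/3}$-quantum cone and the characterization of the boundary-length process.
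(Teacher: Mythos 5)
Your proposal is correct and takes essentially the same route as the paper, which simply cites the QLE$(8/3,0)$ construction from~\cite{lqg-tbm1,lqg-tbm2} in a one-line proof; your longer write-up supplies the reasoning behind that citation. One small point worth making explicit in your last step: passing from conditional independence of the unmarked surfaces to conditional independence of the marked surfaces $(\mcl S^{\op{in}},w)$ and $(\mcl S^{\op{out}},w)$ given $\ell$ uses not just that $w$ is sampled uniformly, but also that the conditional law of each of $\mcl S^{\op{in}}$ and $\mcl S^{\op{out}}$ given $\ell$ is invariant under rotating the boundary parameterization (otherwise marking the same physical point on both boundaries could introduce a correlation); with that rotational invariance in hand, the common "angular offset" decouples as you describe.
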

\begin{proof}
This follows from the construction of the metric $\frk d_h$ via QLE$(8/3, 0)$ in~\cite{lqg-tbm1,lqg-tbm2}.
\end{proof}

\begin{lem} \label{lem-tbm-determined}
Let $r > 0$ and let $w$ be sampled uniformly from $\nu_h|_{\bdy B_r^\bullet}$, normalized to be a probability measure.
Almost surely, $(\BB C, \mu_h , \frk d_h , 0)$ is the metric space quotient of 
$\left( B_r^\bullet  , \frk d_{ h|_{B_r^\bullet} } , \mu_h|_{B_r^\bullet}  ,0 ,w \right)$ and
$\left( \BB C\setminus B_r^\bullet ,   \frk d_{ h|_{\BB C\setminus B_r^\bullet} },  \mu_h|_{\BB C\setminus B_r^\bullet} , \infty ,w \right)$, 
glued together according to quantum length along their boundaries in such a way that the second marked points in the two metric measure spaces are identified.
In particular, $(\BB C, \mu_h , \frk d_h , 0, w)$ is a.s.\ determined by
$\left( B_r^\bullet  , \frk d_{ h|_{B_r^\bullet} } , \mu_h|_{B_r^\bullet}  ,0 ,w \right)$ and
$\left( \BB C\setminus B_r^\bullet ,   \frk d_{ h|_{\BB C\setminus B_r^\bullet} },  \mu_h|_{\BB C\setminus B_r^\bullet} , \infty ,w \right)$.
\end{lem}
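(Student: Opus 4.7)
The plan is to deduce this from Lemma~\ref{lem-internal-metric} together with basic properties of the $\sqrt{8/3}$-LQG boundary length measure on $\bdy B_r^\bullet$ that come from the QLE$(8/3,0)$ construction of $\frk d_h$ in~\cite{lqg-tbm1,lqg-tbm2}. The point is that once we know the two internal metric measure spaces on $B_r^\bullet$ and $\BB C\setminus B_r^\bullet$, plus a canonical way to identify their boundaries, the ambient metric $\frk d_h$ is forced: a geodesic between two points on opposite sides of the boundary must cross $\bdy B_r^\bullet$ finitely many times, and on each maximal excursion into either side the $\frk d_h$-length equals the internal length of the side.

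First I would apply Lemma~\ref{lem-internal-metric} to the open sets $\mathring B_r^\bullet$ (the interior of the filled ball) and $\BB C\setminus B_r^\bullet$. This gives that the internal metric of $\frk d_h$ on each of these open sets coincides with $\frk d_{h|_{\mathring B_r^\bullet}}$ and $\frk d_{h|_{\BB C\setminus B_r^\bullet}}$ respectively, and these in turn extend continuously to the closures by approximation, yielding the internal metrics on $B_r^\bullet$ and on the closure of its complement. Next I would invoke the construction of $\frk d_h$ via QLE$(8/3,0)$ in~\cite{lqg-tbm1,lqg-tbm2} to record that $\bdy B_r^\bullet$ carries an intrinsic $\sqrt{8/3}$-quantum length measure $\nu_h|_{\bdy B_r^\bullet}$ of finite total mass, and that this measure is simultaneously the boundary length measure of the quantum surface $(B_r^\bullet,h|_{B_r^\bullet})$ and of the quantum surface $(\BB C\setminus B_r^\bullet,h|_{\BB C\setminus B_r^\bullet})$; moreover the two induced circular parameterizations of $\bdy B_r^\bullet$ by quantum length (starting from the marked point $w$ and going around in the canonical orientation inherited from the planar embedding) agree.

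Given these inputs, the identification of $\bdy B_r^\bullet$ as seen from inside with $\bdy B_r^\bullet$ as seen from outside is the unique length-preserving homeomorphism of circles pinning $w$ to $w$, and this is exactly the gluing described in the statement. To conclude that $\frk d_h$ is the glued metric on the resulting quotient, I would take $z,y\in\BB C$ and consider a minimizing sequence of continuous paths from $z$ to $y$. Each such path decomposes into a (possibly countable) alternating sequence of excursions in $B_r^\bullet$ and in $\BB C\setminus B_r^\bullet$ with endpoints on $\bdy B_r^\bullet$; by Lemma~\ref{lem-internal-metric} the $\frk d_h$-length of each excursion equals its internal length on the appropriate side, so the path length is $\ge$ the corresponding sum of internal distances between successive boundary hits. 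The reverse inequality comes from concatenating almost-geodesics for the internal metrics on the two sides, using continuity of the boundary parameterization to make the endpoints match under the gluing. Taking infima gives $\frk d_h(z,y)$ equal to the quotient distance, and since $\mu_h$ is clearly the sum of its restrictions to the two pieces, the first assertion follows. The second, measurability assertion is immediate, with the quotient being applied with the marked boundary point $w$ providing the base of the parameterization.

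The main obstacle is the input about the boundary length measure on $\bdy B_r^\bullet$: one must cite carefully that $\nu_h$ is well-defined on $\bdy B_r^\bullet$, has finite total mass, and provides the same parameterization from both sides, since this is what makes the gluing identification canonical. All of this is contained in the QLE$(8/3,0)$ construction in~\cite{lqg-tbm1,lqg-tbm2}, but it is the one step where one genuinely needs the metric construction rather than a soft argument. Once this is in hand, the decomposition of paths across $\bdy B_r^\bullet$ and the argument that the glued metric equals $\frk d_h$ are essentially formal.
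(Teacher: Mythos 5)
Your argument is correct in spirit but takes a genuinely different route from the paper, and it contains a gap that the paper sidesteps.

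The paper's actual proof is a short citation argument: it observes that the Brownian map analog of this metric gluing statement is explained just after~\cite[Proposition~2.7]{tbm-characterization}, and then transfers from the Brownian map to the $\gamma$-quantum cone (equivalently, the Brownian plane) using the local absolute continuity of the Brownian plane at its marked point with respect to the Brownian map at a uniformly random point, via~\cite[Proposition~4]{curien-legall-plane}. No path decomposition is carried out directly. Your approach, by contrast, attempts to reprove the underlying gluing lemma from scratch, which is a reasonable ambition but is exactly the work that~\cite{tbm-characterization} was written to do.

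The gap is in your upper bound step (showing the quotient pseudodistance is at most $\frk d_h$). You decompose an almost-geodesic $\gamma$ into maximal excursions in $\mathring B_r^\bullet$ and in $\BB C\setminus B_r^\bullet$ and claim the sum of internal distances between successive boundary hits bounds the quotient distance. But the set of times $C=\gamma^{-1}(\bdy B_r^\bullet)$ may a priori be an infinite closed set, so the excursion decomposition is countable, not finite, whereas the quotient pseudometric is defined as an infimum over finite chains. Truncating to finitely many excursions requires showing that the $\frk d_h$-length of $\gamma$ restricted to $C$ is zero (and moreover that the discarded small excursions contribute negligibly), and neither of these facts is automatic: $\bdy B_r^\bullet$ is a fractal with $\frk d_h$-Hausdorff dimension $2$, so the length of $\gamma|_C$ is not obviously zero. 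What actually resolves this is a nontrivial structural fact about Brownian map geodesics (in particular, that geodesics meet $\bdy B_r^\bullet$ in a set of zero length, which one extracts from confluence-of-geodesics-type arguments); this is implicit in the proof of~\cite[Proposition~2.7]{tbm-characterization} but is not "essentially formal."

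Two further comments. First, your stated "main obstacle" -- that the boundary length parameterization must agree when viewed from the two sides -- is not actually an obstacle: $\nu_h|_{\bdy B_r^\bullet}$ is a single intrinsically defined measure on a single subset of $\BB C$, so there is nothing to match up. Second, you do need as input that $\bdy B_r^\bullet$ is a topological circle in order for the "unique length-preserving homeomorphism pinning $w$" to be well-posed; this again comes out of the QLE/Brownian map theory and should be flagged explicitly if you want a self-contained argument. Given all of this, the paper's citation-based proof is the more economical and safer route, while your argument conveys the underlying mechanism but leaves the genuinely hard geodesic-regularity step unaddressed.
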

\begin{proof}
The analogous statement in the case when $(\BB C , h , 0, \infty)$ is a doubly marked unit area quantum sphere rather than a $\gamma$-quantum cone, in which case $(\BB C , \mu_h , \frk d_h , 0)$ is equivalent to the Brownian map with two marked points sampled uniformly from its area measure, is explained just after the statement of~\cite[Proposition 2.7]{tbm-characterization}. 
The case of a $\gamma$-quantum cone follows since the $\gamma$-quantum cone is equivalent to the Brownian plane and the local behavior of the latter at its marked point is the same as the local behavior of the Brownian map at a uniformly random point sampled from its area measure~\cite[Proposition~4]{curien-legall-plane}. 
\end{proof}

\begin{proof}[Proof of Lemma~\ref{lem-metric-function-cone}]
To lighten notation, define the quantum surface and metric measure space
\eqb \label{eqn-metric-function-spaces}
\mcl S_r^\bullet := \left( B_r^\bullet  ,  h|_{B_r^\bullet} , 0   \right) \quad \op{and} \quad
\frk B_r^\bullet := \left( B_r^\bullet  , \frk d_{h|_{B_r^\bullet} } , \mu_h|_{B_r^\bullet}  ,0 \right) .
\eqe 
Since $h|_{B_r^\bullet}$ a.s.\ determines $\frk d_{h|_{B_r^\bullet} }$ and $\mu_{h|_{B_r^\bullet}}$ by construction, it is clear that $\mcl S_r^\bullet$ a.s.\ determines $\frk B_r^\bullet$. 

We now prove the reverse is true. In fact, in order to apply Lemma~\ref{lem-tbm-determined}, we will first prove a slightly different statement.  Namely, let $w$ be sampled uniformly from $\nu_h|_{\bdy B_r^\bullet}$ and define the quantum surface $\mcl S_r^\bullet(w)$ and the metric measure space $\frk B_r^\bullet(w)$ as in~\eqref{eqn-metric-function-spaces} but with the extra marked boundary point $w$.  We will show that the metric measure space $\frk B_r^\bullet(w)$ a.s.\ determines the quantum surface $\mcl S_r^\bullet(w)$.
 
By~\cite[Theorem~1.4]{lqg-tbm3}, the metric measure space $\left( \BB C  , \frk d_h , \mu_h , 0  , w \right)$ (which is a Brownian plane with an extra marked point) and the quantum surface $(\BB C , h , 0, \infty  ,w)$ a.s.\ determine each other. In particular, $\mcl S_r^\bullet(w)$ is a.s.\ determined by $\left( \BB C  , \frk d_h , \mu_h , 0 ,w \right)$. To complete the proof it suffices to show that $\mcl S_r^\bullet(w)$ is conditionally independent from $\left( \BB C  , \frk d_h , \mu_h , 0 ,w \right)$ given $\frk B_r^\bullet(w)$. 

By Lemma~\ref{lem-inside-outside}, the interior and exterior quantum surfaces $\mcl S_r^\bullet(w)$ and $\left( \BB C\setminus B_r^\bullet ,   h|_{\BB C\setminus B_r^\bullet} , \infty ,w \right)$ are conditionally independent given $\nu_h(\bdy B_r^\bullet )$.  In particular, the exterior metric measure space $ \left( \BB C\setminus B_r^\bullet ,   \frk d_{ h|_{\BB C\setminus B_r^\bullet} },  \mu_h|_{\BB C\setminus B_r^\bullet} , \infty ,w \right)$ is conditionally independent from $\mcl S_r^\bullet(w)$ given $\nu_h(\bdy B_r^\bullet )$. 
 
By Lemma~\ref{lem-tbm-determined}, the interior and exterior metric measure spaces $\frk B_r^\bullet(w)$ and $ \left( \BB C\setminus B_r^\bullet ,   \frk d_{ h|_{\BB C\setminus B_r^\bullet} },  \mu_h|_{\BB C\setminus B_r^\bullet} , \infty ,w \right) $ together a.s.\ determine the full metric measure space $\left( \BB C  , \frk d_h , \mu_h , 0 , w\right)$.  The discussion in~\cite[Section~3.6]{tbm-characterization} (together with local absolute continuity between the Brownian plane and Brownian map) implies that $\nu_h(\bdy B_r^\bullet)$ is a.s.\ determined by $\frk B_r^\bullet(w)$.   
Hence $\left( \BB C  , \frk d_h , \mu_h , 0  , w\right)$ is conditionally independent from $\mcl S_r^\bullet(w)$ given $\frk B_r^\bullet(w)$. 
Therefore, $\frk B_r^\bullet(w)$ a.s.\ determines $\mcl S_r^\bullet(w)$. 

The conditional law of $\frk B_r^\bullet(w)$ given $(\BB C , \frk d_h , \mu_h , 0)$ depends only on $\frk B_r^\bullet$ (since $w$ is just a uniform sample from the boundary measure on $\frk B_r^\bullet$).  In particular, $\frk B_r^\bullet(w)$ and $\mcl S_r^\bullet $ (which is a.s.\ determined by $(\BB C , \frk d_h , \mu_h , 0)$) are conditionally independent given $\frk B_r^\bullet$, so $\frk B_r^\bullet$ a.s.\ determines $\mcl S_r^\bullet$. 
\end{proof}

Proposition~\ref{prop-metric-function} concerns a filled metric ball centered at a uniformly random point, whereas Lemma~\ref{lem-metric-function-cone} concerns a filled metric ball centered at the fixed point $0$. The following lemma will be used to bridge the gap between these choices of center point. 

\begin{lem}
\label{lem-cone-abs-cont}
Let $h$ be any embedding of a $\gamma$-quantum cone into $(\BB C , 0 ,\infty)$.  Let $V\subset \BB C$ be a deterministic bounded open set and let $z$ be sampled uniformly from $\mu_h|_V$, normalized to be a probability measure.  Then the law of the quantum surface $(\BB C , h , z ,\infty)$ is absolutely continuous with respect to the law of $(\BB C , h , 0,\infty)$. 
\end{lem}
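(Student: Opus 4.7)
The plan is to invoke the standard rerooting invariance of the $\gamma$-quantum cone, namely that a $\gamma$-quantum cone rerooted at a point sampled from its own area measure is itself a $\gamma$-quantum cone. Since the statement of the lemma concerns only the law of an equivalence class of quantum surfaces, it suffices to work with the circle-average embedding of $h$, in which case $h|_{\BB D}$ has the law of the restriction to $\BB D$ of $h^0 - \gamma\log|\cdot|$ for $h^0$ a whole-plane GFF with circle average zero on $\bdy\BB D$.

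The first step is to rewrite the joint law of $(h,z)$, where $z\sim \mu_h|_V/\mu_h(V)$, using the Kahane rooted-measure / Cameron-Martin identity for GMC, exactly as in~\cite[Section~4.2]{wedges} or~\cite{shef-kpz}. This identity says that sampling $z$ from $\mu_h|_V$ and then reading off $h$ from its posterior is equivalent to sampling $z$ from Lebesgue measure on $V$ and then sampling $h$ from its original law but with mean shifted by $\gamma K(\cdot,z)$, where $K$ is the covariance kernel of $h^0$. Since $K(w,z)\sim -\log|w-z|$ as $w\to z$, the shifted field acquires a new $-\gamma\log|\cdot - z|$ singularity at $z$. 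The second step is to translate by $-z$ and adjust by an additive constant to produce the canonical embedding $\tilde h(w):=h(w+z)+c$ of $(\BB C,h,z,\infty)$. Under this translation, the new singularity lands at the origin (matching the quantum cone structure), while the original singularity at $0$ moves to the interior point $-z$. The Radon-Nikodym derivative of the law of $\tilde h$ with respect to the law of the circle-average embedding of a $\gamma$-quantum cone then factors as $(1/\mu_h(V))$ from the sampling, times a Girsanov density accounting for the ``extra'' interior log singularity at $-z$, computable as a regularized Wick exponential in the $\gamma$-scaled circle averages of the reference field at $-z$. Absolute continuity follows once this product is shown to be finite almost surely.

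The main obstacle is that $V$ is allowed to contain $0$, so $-z$ can be arbitrarily close to the existing log singularity of the reference cone at $0$, which can make the Girsanov density large. However, only a.s.\ finiteness is required, not boundedness of the density, and since $V$ is bounded and $-z$ lies in a deterministic compact subset of the interior, the relevant circle averages are a.s.\ finite. An alternative that avoids any explicit computation in the troublesome regime is to decompose $V=(V\setminus B_\epsilon(0))\cup(V\cap B_\epsilon(0))$: for the former piece $-z$ is bounded away from $0$ and the Girsanov density is immediately a.s.\ finite, while the latter piece can be reduced to the former by the scaling property~\cite[Proposition~4.13(i)]{wedges} of the $\gamma$-quantum cone. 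Taking a countable cover of $V$ by such pieces then yields the claimed absolute continuity.
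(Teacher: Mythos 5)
Your opening move — reduce to the rerooting invariance of the $\gamma$-quantum cone — is the right high-level idea, and that is indeed what the paper's proof does.  But the execution you propose via a Girsanov/Cameron-Martin computation has a genuine gap, and fixing it would essentially force you back to the paper's argument.

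The problem is in the sentence where you claim the Radon--Nikodym derivative ``factors as $(1/\mu_h(V))$ from the sampling, times a Girsanov density accounting for the extra interior log singularity at $-z$.''  The function $w\mapsto \gamma\log|w-(-z)|$ is \emph{not} in the Cameron--Martin space $\dot H^1$ of the GFF: $|\nabla(\gamma\log|w+z|)|^2\sim \gamma^2/|w+z|^2$ is not locally integrable at $-z$.  Consequently, for each fixed $z$ the conditional law of the translated, rooted field is \emph{mutually singular} with, not absolutely continuous to, the reference cone law; the ``regularized Wick exponential'' $\lim_{\ep\to 0}\ep^{\gamma^2/2}e^{\gamma h_\ep(-z)}$ that you propose as the density is a.s.\ zero.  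This is exactly the singularity of the $\gamma$-LQG measure that makes the KPZ/Seiberg-bound story non-trivial, and the decomposition of $V$ into pieces near and far from $0$ does not help: the extra $\gamma$-log singularity sits at $-z$, not at $0$, so pushing $z$ away from $0$ does not make the shift admissible.  Absolute continuity in the lemma holds only after averaging over $z$; it fails for each fixed $z$.

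The paper sidesteps this entirely.  It takes the (already-established, non-perturbative) rerooting identity $(\BB C,h,\eta'(t),\infty)\eqD(\BB C,h,0,\infty)$ from~\cite[Theorem~1.9]{wedges}, where $\eta'$ is a quantum-mass-parameterized space-filling $\SLE_{\kappa'}$ independent of $h$ with $\eta'(0)=0$ and $t\sim\mathrm{Unif}[0,1]$, and then shows that the conditional law of $z$ given $h$ is absolutely continuous with respect to the conditional law of $\eta'(t)$ given $h$ (because $\eta'([0,1])$ has positive $\mu_h$-mass overlap with any bounded $\mu_h$-positive set $A$ with positive conditional probability).  The absolute continuity statement is then between the \emph{joint} laws $(h,z)$ and $(h,\eta'(t))$, not between conditional laws for fixed marked point; the singular shift never appears.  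If you want to keep your approach, you should replace the Girsanov step with a direct invocation of~\cite[Theorem~1.9]{wedges} (or an equivalent rerooting theorem) combined with an absolute-continuity comparison of the two sampling measures for the root, which is precisely the paper's route.
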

\begin{proof}
Let $\eta'$ be a whole-plane space-filling $\SLE_{\kappa'}$ from $\infty$ to $\infty$, sampled independently from $h$ and then parameterized by $\gamma$-quantum mass with respect to $h$ in such a way that $\eta'(0) = 0$.  Let $t$ be sampled uniformly from $[0,1]$.  If $A$ is a bounded subset of $\BB C$ with $\mu_h(A) > 0$, then $\BB P[A\subset \eta'([0,1])  \,|\, h ]  >0$ and hence $\BB P[ \eta'(t) \in A \,|\, h ] > 0$.  Therefore, the conditional law of $z$ given $h$ is a.s.\ absolutely continuous with respect to the conditional law of $\eta'(t)$ given $h$.  Hence the joint law of $(h,z)$ is absolutely continuous with respect to the joint law of $(h,\eta'(t))$.  By~\cite[Theorem~1.9]{wedges}, $(\BB C ,h , \eta'(t) , \infty) \eqD (\BB C , h , 0, \infty)$.  The conclusion of the lemma follows.
\end{proof}

\begin{proof}[Proof of Proposition~\ref{prop-metric-function}]
Fix $r>0$ and $\ep \in (0,1)$.  Let $(D,h)$ be a quantum surface as in the proposition statement, let $U$ be an open subset of~$D$ such that $\mu_h(U) <\infty$, and let~$z$ be sampled uniformly from $\mu_h|_U$, normalized to be a probability measure.  On the event $\{\frk d_h(z , \bdy U)  > r\}$, the filled metric ball $B_r^\bullet(z ; \frk d_h)$ a.s.\ lies at positive Euclidean distance from $\bdy U$.  Hence there exists a bounded open set $V\subset U$ such that 
\eqb \label{eqn-domain-approx}
\BB P\left[  B_r^\bullet(z ; \frk d_h)  \subset V ,\, \frk d_h(z , \bdy U)  > r \right] \geq (1-\ep) \BB P\left[ \frk d_h(z , \bdy U)  > r \right] .
\eqe 
By Lemma~\ref{lem-internal-metric}, on the event $\{ B_r^\bullet(z ; \frk d_h)  \subset V\}$, we have $B_r^\bullet(z ; \frk d_h) = B_r^\bullet(z ; \frk d_{h|_V})$.  By assumption, the law of the field $h|_V$ is absolutely continuous with respect to the corresponding restriction of a whole-plane GFF normalized so that its circle average over $\bdy B_R(0)$ is $0$, for $R>0$ such that $\ol V\subset B_R(0)$. This law, in turn, is absolutely continuous with respect to the law of $\wh h|_V$, where $\wh h$ is an appropriate choice of embedding of a $\sqrt{8/3}$-quantum cone into $\BB C$.

By Lemma~\ref{lem-cone-abs-cont}, if $w$ is sampled uniformly from $\mu_{\wh h}|_V$, normalized to be a probability measure, then the law of the quantum surface $(\BB C , \wh h , w, \infty)$ is absolutely continuous with respect to the law of a $\sqrt{8/3}$-quantum cone.  By definition, the metrics $\frk d_{h|_V}$ and $\frk d_{\wh h|_V}$ are obtained by applying the same deterministic functional to~$h$ and~$\wh h$.  Hence the law of the quantum surface $\left( B_r^\bullet(z ; \frk d_h) , h|_{B_r^\bullet(z ; \frk d_h)} , z \right)$ is absolutely continuous with respect to the law of $\left( B_r^\bullet(0 ; \frk d_{\wh h} ) , \wh h|_{B_r^\bullet(0 ; \frk d_{\wh h})} , 0 \right)$ on the event $\{ B_r^\bullet(z ; \frk d_h)  \subset V\}$.  By Lemma~\ref{lem-metric-function-cone}, we infer that~\eqref{eqn-metric-function} occurs a.s.\ on this event.  By~\eqref{eqn-domain-approx} and since $\ep \in (0,1)$ is arbitrary, we infer that~\eqref{eqn-metric-function} holds a.s.
\end{proof}

We end this section by recording a result to the effect that there is a.s.\ only one isometry from a filled metric ball in the Brownian plane ($\sqrt{8/3}$-quantum cone) to itself.

\begin{prop} \label{prop-unique-isometry}
Let $(\BB C , h , 0, \infty)$ be a $\sqrt{8/3}$-quantum cone and let $B_r^\bullet = B_r^\bullet(0 ; \frk d_h)$ be the filled $\sqrt{8/3}$-LQG metric ball of radius $r$ centered at 0. Almost surely, the only isometry from $(B_r^\bullet , \frk d_{h|_{B_r^\bullet}}   )$ to itself is the identity.
\end{prop}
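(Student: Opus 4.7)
The plan is to argue in two main steps: first, that any self-isometry $\phi$ of $B_r^\bullet$ is induced by a conformal self-map of the interior $U := \operatorname{int}(B_r^\bullet)$ satisfying the $\sqrt{8/3}$-LQG coordinate change formula; second, that only the identity can play this role. At the outset I note that $\phi$ must preserve the area measure $\mu_h|_{B_r^\bullet}$ in addition to the metric, because $\mu_h$ is a deterministic functional of $\frk d_h$ (essentially as a Hausdorff-type measure, following the determination results of~\cite{lqg-tbm3}). By Lemma~\ref{lem-internal-metric} the internal metric on $U$ agrees with $\frk d_{h|_U}$, so $\phi|_U$ is an automorphism of the metric measure space $(U, \frk d_{h|_U}, \mu_h|_U)$, and $\phi$ maps $\bdy B_r^\bullet$ to itself.

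For the first step, pick a bounded open set $V \Subset U$ and sample $z$ uniformly from $\mu_h|_V$ (on an augmented probability space, independently of $h$). Since $\phi$ is determined by $h$ and preserves $\mu_h|_U$, the pair $(h, \phi(z))$ has the same joint law as $(h, z)$, so Proposition~\ref{prop-metric-function}, whose existence is established in this special setting by Lemma~\ref{lem-metric-function-cone} and local absolute continuity (Lemma~\ref{lem-cone-abs-cont}), applies to both marked points simultaneously. Thus for each sufficiently small $s > 0$, on the event $\{\frk d_h(z, \bdy U) > s\}$ the deterministic function $F_s$ sends $(B_s^\bullet(z; \frk d_h), \frk d_h, \mu_h, z)$ to the pointed quantum surface $(B_s^\bullet(z), h|_{B_s^\bullet(z)}, z)$, and similarly at $\phi(z)$. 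Since $\phi$ gives an isomorphism of pointed metric-measure spaces $B_s^\bullet(z) \to B_s^\bullet(\phi(z))$, applying $F_s$ produces a conformal map $\psi_z : B_s^\bullet(z) \to B_s^\bullet(\phi(z))$ taking $z$ to $\phi(z)$ and pushing $h|_{B_s^\bullet(z)}$ forward to $h|_{B_s^\bullet(\phi(z))}$ via the LQG coordinate change.

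The main obstacle is to show that $\psi_z$ actually coincides with the restriction $\phi|_{B_s^\bullet(z)}$, rather than being merely another isometry taking $z$ to $\phi(z)$: their composition $\psi_z^{-1}\circ \phi|_{B_s^\bullet(z)}$ is an isometry of $B_s^\bullet(z;\frk d_h)$ fixing $z$, and one must rule out nontrivial such isometries. I would handle this via a scaling/rigidity argument exploiting the Markov property of the $\sqrt{8/3}$-quantum cone embedding at a quantum typical point: as $s \to 0$, the suitably rescaled metric-measure structure of $B_s^\bullet(z; \frk d_h)$ approaches a canonical local model whose group of pointed self-isometries is a.s.\ trivial, forcing triviality for small $s$. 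Varying $z$ over a countable dense subset of $U$ and using that two conformal maps agreeing on an open set coincide, one patches the $\psi_z$'s into a global conformal map $\Psi : U \to U$ which equals $\phi|_U$ and satisfies $h|_U = h|_U \circ \Psi^{-1} + Q \log |(\Psi^{-1})'|$.

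Finally, since $U$ is a.s.\ simply connected, its group of conformal self-homeomorphisms is a three-real-dimensional Lie group conjugate to the Möbius group of $\BB D$, and the set $G_h$ of group elements preserving $h|_U$ via LQG coordinate change is a closed subgroup. Using absolute continuity of $h|_V$ with respect to a restriction of the whole-plane GFF (for $V \Subset U$) together with standard rigidity for the GFF---for each fixed nontrivial conformal map the probability that the Gaussian random distribution $h - h \circ \Psi^{-1}$ equals the smooth function $Q\log|(\Psi^{-1})'|$ is zero---one deduces via a countable dense subset argument that $G_h = \{\mathrm{id}\}$ a.s. Hence $\Psi = \mathrm{id}$, so $\phi|_U = \mathrm{id}$, and continuity of $\phi$ extends this to all of $B_r^\bullet$.
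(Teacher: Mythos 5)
Your approach is genuinely different from the paper's, but it contains a gap at the step you correctly flag as ``the main obstacle.'' You need to show that $\psi_z^{-1}\circ \phi|_{B_s^\bullet(z)}$ is the identity, i.e.\ that the only pointed self-isometry of a filled metric ball around a quantum-typical point is trivial. But by local absolute continuity this statement is \emph{equivalent} to the proposition itself (a neighborhood of a quantum-typical interior point of a $\sqrt{8/3}$-quantum cone has law mutually absolutely continuous with a neighborhood of the cone point, cf.\ Lemma~\ref{lem-cone-abs-cont}), so the proposed ``scaling/rigidity argument'' is circular. Moreover, even granting that the rescaled balls converge to a canonical tangent-cone model (the Brownian plane) with trivial pointed isometry group, rigidity of a scaling limit does not transfer to rigidity at each fixed positive scale $s$ without additional input. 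You also assert at the outset that $\phi$ automatically preserves $\mu_h|_{B_r^\bullet}$ because the area measure is a deterministic functional of the metric alone; this is true for Brownian surfaces but is a nontrivial fact requiring its own citation (it is not contained in~\cite[Theorem~1.4]{lqg-tbm3}, which takes the pair $(\frk d_h,\mu_h)$ as input), and without it the application of Proposition~\ref{prop-metric-function} to $\phi(z)$ does not go through. Finally, in the last step you invoke absolute continuity of $h|_V$ with respect to a GFF for $V\Subset U$, but $U$ contains the cone point where $h$ has a $-\gamma\log|\cdot|$ singularity; one must take $V$ away from the origin, which is harmless but should be said.

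The paper avoids all of this by a purely metric (rather than conformal) argument: it notes $\phi(0)=0$, observes that $\phi$ must preserve the metric net $\{\Gamma_t\}$ of filled-ball boundaries (i.e.\ the QLE$(8/3,0)$ trace) since each $\Gamma_t$ is defined intrinsically from the metric, and then uses that (a) a.s.\ no two bubbles of the L\'evy net are disconnected simultaneously, so $\phi$ fixes each bubble setwise, (b) the bubbles are dense in $\Gamma_r$, so $\phi$ fixes $\Gamma_r$ pointwise, and (c) each bubble is a Brownian disk, for which the pointed-boundary-fixing isometry rigidity is already established in~\cite[Proposition~2.3]{tbm-characterization}. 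That route converts the global rigidity question into a known rigidity statement for Brownian disks, sidestepping the need to compare $\phi$ with a conformal map. If you want to salvage your approach, you would need an independent proof of pointed isometry rigidity at small scales that does not reduce back to the proposition; as written, that input is missing.
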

\begin{proof}
We will deduce the proposition from the description of the law of a filled metric ball in the Brownian map established in~\cite[Proposition 4.4]{tbm-characterization}. 

Let $\mcl I$ be the set of isometries from $ (B_r^\bullet , \frk d_{h|_{B_r^\bullet}}   )$ to itself. 
We must show that a.s.\ $\mcl I$ contains only the identity map.  We first observe that a.s.\ $\phi(0) = 0$ for each $\phi \in \mcl I$ since $0$ is a.s.\ the only point in $B_r^\bullet$ whose distance to every point in $\bdy B_r^\bullet$ is equal to $r$. 

Let $\{\Gamma_t\}_{t\geq 0}$ be the metric net process started from $0$ and targeted at $\infty$, i.e.\ for each $t\geq 0$, 
\eqbn
\Gamma_t = \bigcup_{s\leq t} \bdy \mcl B_s^\bullet.
\eqen
Equivalently, $\{\Gamma_t\}_{t\geq 0}$ is the QLE$(8/3,0)$ process started from $0$ and parameterized by quantum distance time (see~\cite[Section~6]{lqg-tbm1} and~\cite[Section~2.2]{lqg-tbm2}). We also let $\mcl U_t$ be the set of bubbles cut out by $\Gamma_t $. 

By~\cite[Proposition~4.4]{tbm-characterization} and local absolute continuity between the Brownian plane and Brownian map (see, e.g.,~\cite[Proposition~4]{curien-legall-plane}), the law of $\{\Gamma_t\}_{t\in [0,r] }$ (viewed as an increasing family of topological spaces) is absolutely continuous with respect to the law of a $3/2$-stable L\'evy net modulo a deterministic affine change of time (the $3/2$-stable L\'evy net is defined in~\cite[Section~3]{tbm-characterization}).  
Furthermore, the conditional law of the bubbles of $\mcl U_r$ given their boundary lengths is absolutely continuous with respect to a collection of independent Brownian disks with a certain measure on areas, which depends only on the boundary lengths.\footnote{In fact, these bubbles are exactly Brownian disks if we condition on their areas and boundary lengths: by the analog of~\cite[Proposition~6.5]{lqg-tbm1} for the $\sqrt{8/3}$-quantum cone (which is proven in the same manner; c.f.~\cite[Section~2.2]{lqg-tbm2}), the joint law of the quantum surfaces parameterized by the bubbles cut out by $\{\Gamma_t\}_{t \geq 0}$ is the same as the joint law of the quantum surfaces parameterized by the bubbles cut out by a whole-plane $\SLE_6$ from $0$ to $\infty$ independent from $h$. 
By~\cite[Theorem~1.17]{wedges} these bubbles are independent quantum disks if we condition on their boundary lengths. 
By~\cite[Corollary~1.5]{lqg-tbm2}, quantum disks are equivalent to Brownian disks.}
In particular, it is a.s.\ the case the following is true.
\begin{enumerate}
\item \label{item-net-single} No two distinct bubbles in $\mcl U_r$ are disconnected from $\infty$ by $\{\Gamma_t\}_{t \in [0,r]}$ at the same time. 
\item \label{item-net-dense} Each point of $\Gamma_r$ is a limit of a sequence of sets in $\mcl U_r$ whose diameters tend to $0$.
\item \label{item-net-disk} The internal metric of $\frk d_h$ on each bubble $U$ in $\mcl U_r$ is geodesic and extends continuously to $\ol U$. 
\end{enumerate}

Since each map $\phi \in \mcl I$ is an isometry, each such map satisfies $\phi(\Gamma_t) =  \Gamma_t$ for each $t\in [0,r]$. In particular, for each $t\in [0,r]$ the image of each bubble in $\mcl U_t$ under $\phi$ must be another bubble in $\mcl U_t$. By property~\ref{item-net-single} above, it follows that a.s.\ $\phi(U) = U$ for each $U\in\mcl U_r$ and each $\phi \in \mcl I$. By combining this with property~\ref{item-net-dense}, we infer that a.s.\ $\phi(z) = z$ for each $z\in \Gamma_r$ and each $\phi \in \mcl I$. 
In particular, $\phi$ fixes the boundary of each $U\in \mcl U_r$ pointwise.
Since each $\phi\in \mcl I$ is an isometry, each such map restricts to an isometry from $U$ to $U$ with respect to the internal metric on $U$ for each $U\in\mcl U_r$. By~\cite[Proposition~2.3]{tbm-characterization} and property~\ref{item-net-disk} above, a.s.\ $\phi|_U$ is the identity map for each $U\in \mcl U_r$ and each $\phi\in \mcl I$. 
Therefore a.s.\ each $\phi\in\mcl I$ is the identity map. 
\end{proof}

\subsection{Proof of Theorems~\ref{thm-wpsf-mchar} and~\ref{thm-bead-mchar}}
\label{sec-wpsf-mchar}

Suppose we are in the setting of Theorem~\ref{thm-wpsf-mchar}.  We will prove the theorem by reducing it to Theorem~\ref{thm-wpsf-char}.  Let $(\BB C , \wt h , 0,\infty)$ be the $\gamma$-quantum cone whose associated $\sqrt{8/3}$-LQG metric measure space structure is given by $(\wt X  , \wt d , \wt \mu , \wt x)$, which a.s.\ exists and is unique by~\cite[Theorem~1.4]{lqg-tbm3}. In other words, if $\frk d_{\wt h}$ and $\mu_{\wt h}$ are the $\sqrt{8/3}$-LQG metric and measure induced by $\wt h$, respectively, then $(\BB C , \frk d_{\wt h } , \mu_{\wt h} , 0)$ and $(\wt X  , \wt d , \wt \mu , \wt x)$ are equivalent as pointed metric measure spaces. 

Fix some choice of embedding $\wt h$ and henceforth assume without loss of generality that in fact $(\BB C , \frk d_{\wt h } , \mu_{\wt h} , 0) = (\wt X  , \wt d , \wt \mu , \wt x)$, so that $\wt\eta' : \BB R \rta \BB C$ and $\wt\eta'(0) = 0$; and the homeomorphism $\Phi $ of condition~\ref{item-wpsf-mchar-homeo} takes $ \BB C$ to $\BB C$.  We will check that the pair $(\wt h , \wt\eta')$ satisfies the conditions of Theorem~\ref{thm-wpsf-char}.  The following lemma implies in particular the statement about the law of the future beaded quantum surfaces in condition~\ref{item-wpsf-char-wedge} of Theorem~\ref{thm-wpsf-char}. 

\begin{lem} \label{lem-mchar-wedge}
For $t\in\BB R$, let $\wt{\mcl S}_{t,\infty} = (\wt\eta'([t,\infty)) , \wt h|_{\wt\eta'([t,\infty))} , \wt\eta'(t) , \infty)$ be the future quantum surface defined as in Theorem~\ref{thm-wpsf-char} for the above choice of $(\wt h , \wt\eta')$.  Then $\wt{\mcl S}_{t,\infty}$ has the law of a $\sqrt 6$-quantum wedge.  Furthermore, the ordered collection of connected components of the metric measure space $(\wt X_{t,\infty} ,\wt d_{t,\infty} ,  \wt\mu|_{\wt X_{t,\infty}}  )$ from Theorem~\ref{thm-wpsf-mchar} a.s.\ determines the quantum surface $ \wt{\mcl S}_{t,\infty}  $.  
\end{lem}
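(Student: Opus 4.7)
The plan is to deduce Lemma~\ref{lem-mchar-wedge} from Proposition~\ref{prop-metric-function} applied bead by bead. First I would verify the hypothesis of Proposition~\ref{prop-metric-function} for the $\sqrt{8/3}$-quantum cone $\wt h$: by the construction of the quantum cone in~\cite[Section~4.3]{wedges}, the restriction of $\wt h$ to any bounded open set lying at positive Euclidean distance from the origin is absolutely continuous with respect to the corresponding restriction of a whole-plane GFF (up to a fixed additive constant depending on the choice of embedding). Since $\wt\eta'((t,\infty))$ a.s.\ avoids a neighborhood of the origin for each fixed $t > 0$, and since the $t \leq 0$ case can be handled similarly by an exhaustion (replacing $0$ by any other fixed point and invoking translation invariance together with Lemma~\ref{lem-cone-abs-cont}), we can split each bead of $\wt{\mcl S}_{t,\infty}$ into compactly contained open sets on which the absolute continuity hypothesis of Proposition~\ref{prop-metric-function} applies.

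Next I would prove the measurability assertion by a ball-exhaustion argument. Fix a bead $B$ of $\wt{\mcl S}_{t,\infty}$ (i.e.\ a connected component of $\wt X_{t,\infty}$). Sample a countable sequence $\{z_n\}$ of points i.i.d.\ from $\wt\mu|_B$, normalized to be a probability measure, and consider rational radii $r_{n,k}$ such that $B_{r_{n,k}}^\bullet(z_n ; \frk d_{\wt h}) \subset B$. By Proposition~\ref{prop-metric-function} applied in the setting above, the singly marked quantum surface $\bigl(B_{r_{n,k}}^\bullet(z_n;\frk d_{\wt h}) , \wt h|_{B_{r_{n,k}}^\bullet(z_n;\frk d_{\wt h})}, z_n\bigr)$ is a.s.\ recovered by applying the deterministic function $F_{r_{n,k}}$ to its metric measure space data, which is readable off from $(B,\wt d|_B, \wt\mu|_B)$. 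The filled metric balls $B_{r_{n,k}}^\bullet(z_n;\frk d_{\wt h})$ exhaust the interior of $B$ a.s., and on each overlap the two applications of $F_r$ produce the same restriction of $\wt h$ by uniqueness, so gluing yields $\wt h|_{\op{int}(B)}$ as a deterministic function of $(B, \wt d|_B, \wt\mu|_B)$. The two marked points on $\bdy B$ (the points where $\wt\eta'$ enters and exits $B$) are encoded by the peanosphere Brownian motion $Z$ together with the natural boundary length measure on $\bdy B$, both of which are supplied by the data of condition~\ref{item-wpsf-mchar-wedge} and are preserved by the map $\Phi$ from condition~\ref{item-wpsf-mchar-homeo} (extended to irrational times by continuity). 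This gives the determination statement of the lemma.

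For the law statement, I would couple $\wt{\mcl S}_{t,\infty}$ with an actual $\sqrt 6$-quantum wedge $\mcl W$ (constructed on an auxiliary probability space). By condition~\ref{item-wpsf-mchar-wedge} and~\cite[Corollary~1.5]{lqg-tbm2}, the joint law of the ordered collection of pointed metric measure spaces associated to the beads of $\wt{\mcl S}_{t,\infty}$ coincides with the joint law of the corresponding data for $\mcl W$. Applying the deterministic reconstruction procedure of the previous paragraph to both sides of this equality in law then yields $\wt{\mcl S}_{t,\infty} \eqD \mcl W$ as beaded quantum surfaces. The main obstacle I anticipate is the careful treatment of the two boundary marked points on each bead: Proposition~\ref{prop-metric-function} is formulated only for singly-marked quantum surfaces with an interior marked point, so the transfer of the boundary marked points requires the auxiliary input from $Z$ and $\Phi$ described above, and one must check that this auxiliary data is consistent between the two sides of the coupling. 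A secondary technical point is to verify that Lemma~\ref{lem-metric-function-cone} has a variant for quantum disks so that $F_r$ applied to the metric measure space of a Brownian disk indeed recovers the quantum disk structure on filled balls; this should follow from an absolute continuity argument analogous to the one used in the proof of Proposition~\ref{prop-metric-function} itself, using~\cite[Corollary~1.5]{lqg-tbm2}.
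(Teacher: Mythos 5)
Your overall strategy --- use Proposition~\ref{prop-metric-function} on filled metric balls centered at $\mu$-sampled points, then exhaust each bead by such balls --- is the same as the paper's, and your identification of the absolute-continuity inputs (including Lemma~\ref{lem-cone-abs-cont}) is correct. But the central gluing step has a genuine gap, and it is exactly the point where the paper deploys Proposition~\ref{prop-unique-isometry}, which you never invoke.

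Here is the problem. The function $F_r$ outputs a \emph{quantum surface}, i.e.\ an equivalence class modulo conformal maps, with no preferred embedding. To glue the outputs $F_{r_{n,k}}(\text{ball data})$ over a countable cover of $\op{int}(B)$, one must identify the overlap of two balls as a subset of both parameterizing domains, and this identification has to be produced from the metric measure space data alone (since that is all the ``reconstruction procedure'' is allowed to use). Your justification --- ``on each overlap the two applications of $F_r$ produce the same restriction of $\wt h$ by uniqueness'' --- is circular: it appeals to the ambient field $\wt h$ to see that the pieces fit, but $\wt h$ is precisely what the reconstruction is supposed to recover. What one actually needs is that the isometry from the abstract metric ball to any embedding of $F_r(\text{ball data})$ is \emph{unique}; this is Proposition~\ref{prop-unique-isometry}, and without it the gluing is not well-defined. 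The paper sidesteps gluing entirely: it first builds an abstract $\sqrt 6$-quantum wedge $\rng{\mcl S}_{t,\infty}$ from the metric data via~\cite[Theorem~1.4]{lqg-tbm3} (legitimate because condition~\ref{item-wpsf-mchar-wedge} pins down the law of the metric data), obtains a single global measure-preserving isometry $f_t^s$ onto each bead of $\wt{\mcl S}_{t,\infty}$, and then shows $f_t^s$ is conformal on a filled ball by combining Proposition~\ref{prop-metric-function} (which yields a conformal map $g_z$ on the ball with the right field pushforward) with Proposition~\ref{prop-unique-isometry} applied to $(f_t^s)^{-1}\circ g_z$ to conclude $g_z = f_t^s$ there. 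A short patching argument --- conditioning on $(\wt h ,\wt\eta' , \rng{\mcl S}_{t,\infty} , f_t^s)$ and noting that each $w$ is a.s.\ captured by the ball around $z$ --- then upgrades local conformality to global. Your ``secondary technical point'' about a Brownian-disk variant of Lemma~\ref{lem-metric-function-cone} is not actually needed in the paper's argument, since Proposition~\ref{prop-metric-function} is applied with the ambient $\sqrt{8/3}$-quantum cone, not the disk; the real missing ingredient is the unique-isometry input, and you should restructure the patching step around it.
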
 
\begin{proof}
We will use Proposition~\ref{prop-unique-isometry} and~\ref{prop-metric-function} to show that $\wt{\mcl S}_{t,\infty}$ a.s.\ coincides with the unique $\sqrt 6$-quantum wedge with the same metric measure space structure as  $(\wt X_{t,\infty} ,\wt d_{t,\infty} ,  \wt\mu|_{\wt X_{t,\infty}}  )$. 
\medskip

\noindent\textit{Step 1: constructing a quantum wedge.}
We first construct a $\sqrt 6$-quantum wedge from the future metric measure space $(\wt X_{t,\infty} , \wt d_{t,\infty} , \wt\mu|_{\wt X_{t,\infty}})$, which we will eventually show is equivalent (as a quantum surface) to the quantum surface parameterized by $\wt\eta'([t,\infty))$. 

For $s \geq t$, let $\wt U_t^s$ be the connected component of the interior of $\eta'([t,\infty))$ containing $s$ (or let $\wt U_t^s = \{\eta'(s)\}$ if $\eta'(s) \in \bdy \eta'([t,\infty))$). Also let $\wt x_t^s$ (resp.\ $\wt y_t^s$) be the point of $\bdy \wt U_t^s$ where $\wt\eta'$ starts (resp.\ finishes) filling in $\wt U_t^s$. 
By Lemma~\ref{lem-internal-metric}, it is a.s.\ the case that the internal metric of $\frk d_{\wt h}$ on each $\wt U_t^s$ coincides with the $\sqrt{8/3}$-LQG metric $\frk d_{\wt h|_{\wt U_t^s}}$.  

By condition~\ref{item-wpsf-mchar-wedge} in Theorem~\ref{thm-wpsf-mchar}, the collection of doubly pointed metric measure spaces $\{ (\wt U_t^s , \frk d_{\wt h|_{\wt U_t^s}} , \mu_{h|_{\wt U_t^s}} , \wt x_t^s ,\wt y_t^s) \}_{s \geq t}$ agrees in law with the collection of beads of a $\sqrt 6$-quantum wedge, parameterized by the accumulated quantum mass of the previous beads. 
By~\cite[Theorem~1.4]{lqg-tbm3}, a $\sqrt 6$-quantum wedge is a.s.\ determined by its metric measure space structure. 
Let $\rng{\mcl S}_{t,\infty}$ be the $\sqrt 6$-quantum wedge determined by $\{ (\wt U_t^s , \frk d_{\wt h|_{\wt U_t^s}} , \mu_{h|_{\wt U_t^s}} , \wt x_t^s ,\wt y_t^s) \}_{s \geq t}$. 
For $s\geq t$, let $\rng{\mcl D}_t^s$ be the first bead of $\rng{\mcl S}_{t,\infty}$ with the property that the sum of the quantum areas of the previous beads is at least $s-t$, so that $\rng{\mcl D}_t^s$ is a doubly marked quantum surface. Let $\rng h_t^s$ be an embedding of $\rng{\mcl D}_t^s$ into $(\BB D ,  1, -1)$ (chosen in some manner which depends only on $\rng{\mcl S}_{t,\infty}$). By our construction of $\rng{\mcl S}_{t,\infty}$, for each rational $s\geq t$, there a.s.\ exists an isometry $f_t^s : (\BB D , \frk d_{\rng h_t^s} ) \rta (\wt U_t^s , \frk d_{\wt h|_{\wt U_t^s}})$ such that $(f_t^s)_* \mu_{\rng h_t^s} = \mu_{\wt h|_{\wt U_t^s}}$, $f_t^s(1) = \wt x_t^s$, and $f_t^s(-1) = \wt y_t^s$.
\medskip

\noindent\textit{Step 2: $f_t^s$ is conformal on metric balls.}
Now fix $s\geq t$. 
To prove the first statement of the lemma, we will show that the map $f_t^s$ defined just above is a.s.\ conformal, so that $(\BB D , \rng h_t^s ,  1, -1)$ and $(\wt U_t^s , \wt h|_{\wt U_t^s} , \wt x_t^s , \wt y_t^s)$ are equivalent as quantum surfaces. 
To this end, let $z \in \BB D$ be sampled uniformly from $\mu_{\rng h_t^s}$, normalized to be a probability measure. 
For $r>0$, define the metric balls 
\eqbn
\rng B_r^\bullet  := B_r^\bullet(z ; \frk d_{\rng h_t^s} ) \quad \op{and} \quad \wt B_r^\bullet  := B_r^\bullet(f_t^s(z) ; \frk d_{\wt h} ) ,
\eqen
so that (since $f_t^s$ is an isometry) $f_t^s(\rng B_r^\bullet) = \wt B_r^\bullet$. 
The main step of the proof is to show that $f_t^s$ is a.s.\ conformal on $\rng B_r^\bullet$. 

Define the event
\eqb \label{eqn-bdy-dist-event}
E_t^s(r) := \{ \frk d_{\rng h_t^s}(z ,\bdy \BB D) > r \}  =  \{ \frk d_{\wt h}( f_t^s(z) , \bdy \wt U_t^s) > r \}      ,
\eqe  
and note that the two definitions agree since $f_t^s$ is an isometry.

By Proposition~\ref{prop-metric-function}, if we let $F_r$ be the deterministic function from that proposition then on the event $E_t^s( r)$, it is a.s.\ the case that
\eqb \label{eqn-wedge-bead-function}
F_r\left( \rng B_r^\bullet   , \frk d_{\rng h_t^s|_{  \rng B_r^\bullet  )}} , \mu_{\rng h_t^s}|_{ \rng B_r^\bullet  } , z \right) =
\left( \rng B_r^\bullet ,  \rng h_t^s|_{  \rng B_r^\bullet   } , z \right) .
\eqe 
Since $f_t^s$ is measure-preserving, the point $f_t^s(z)$ is sampled uniformly from $\mu_{\wt h|_{\wt U_t^s}}$, normalized to be a probability measure. 
Since $\wt U_t^s$ is a.s.\ bounded, if we are given $\ep \in (0,1)$, we can find a deterministic bounded open set $V\subset \BB C$ such that
\eqbn
  \BB P\left[\wt U_t^s \subset V   \right] \geq 1-\ep  .
\eqen
On the event $\{\wt U_t^s \subset V\}$, the conditional law of $z$ given $\wt h$ is a.s.\ absolutely continuous with respect to a uniform sample from $\mu_{\wt h}|_V$, normalized to be a probability measure. Since $\ep \in (0,1)$ is arbitrary and $\wt h$ is an embedding of a $\sqrt{8/3}$-quantum cone, it follows from Proposition~\ref{prop-metric-function} that on the event $E_t^s(r)$, a.s.\ 
\eqb \label{eqn-wedge-bead-function'}
F_r\left( \wt B_r^\bullet  , \frk d_{\wt h|_{\wt B_r^\bullet }} , \mu_{\wt h}|_{ \wt B_r^\bullet } , f_t^s(z) \right) =
\left( \wt B_r^\bullet , \wt h|_{ \wt B_r^\bullet } , f_t^s(z) \right) .
\eqe 

Since $f_t^s$ is a measure-preserving isometry, on the event $E_t^s(r)$ the left sides of~\eqref{eqn-wedge-bead-function} and~\eqref{eqn-wedge-bead-function'} a.s.\ coincide, so we have the equality of quantum surfaces
\eqb \label{eqn-ball-surface-agree}
 \left( \rng B_r^\bullet  ,  \rng h_t^s|_{  \rng B_r^\bullet   } , z \right) =    
 \left( \wt B_r^\bullet , \wt h|_{ \wt B_r^\bullet } , f_t^s(z) \right) ,\quad \text{on $E_t^s(r)$} .
\eqe 
Hence on this event there a.s.\ exists a conformal map $g_z  :    \rng B_r^\bullet   \rta   \wt B_r^\bullet  $ such that 
\eqbn
\wt h |_{ \wt B_r^\bullet  }  = \rng h_t^s \circ g_z^{-1} + Q\log |(g_z^{-1})'|. 
\eqen
By the LQG coordinate change formula for $\frk d_h$ (recall Section~\ref{sec-lqg-metric}), a.s.\ $g_z$ is an isometry from $\left( \rng B_r^\bullet   , \frk d_{\rng h_t^s|_{  \rng B_r^\bullet  )}}   \right)$ to $\left( \wt B_r^\bullet  , \frk d_{\wt h|_{\wt B_r^\bullet }}  \right)$.
Since $f_t^s$ also restricts to an isometry between these metric spaces, it follows from Proposition~\ref{prop-unique-isometry} (applied to the isometry $(f_t^s)^{-1} \circ g_z$) and local absolute continuity that a.s.\ $g_z = f_t^s|_{ \rng B_r^\bullet}$.  In other words, $f_t^s$ is a.s.\ conformal on $\rng B_r^\bullet $ on the event $E_t^s(r)$. 
\medskip

\noindent\textit{Step 3: conclusion.}
Since $z$ was a uniform sample from the measure $\mu_{h_t^s}$, whose closed support is a.s.\ equal to all of $\BB D$, for each $w \in \BB D$ it a.s.\ holds with positive conditional probability given $(\wt h , \wt\eta')$, $\rng{\mcl S}_{t,\infty}$, and $f_t^s$ that $\frk d_{\rng h_t^s}(z,w) \leq \frac12 \frk d_{\rng h_t^s}(z , \bdy \BB D)$. If this is the case then the above discussion implies that $f_t^s$ is conformal on a neighborhood of $w$ and that the restriction of $h_t^s$ (resp.\ $\wt h$) to this neighborhood (resp.\ its image under $f_t^s$) are related by the $\sqrt{8/3}$-LQG coordinate change formula. Hence $f_t^s$ is a.s.\ conformal on all of $\BB D$ and $\wt h|_{\wt U_t^s} = \rng h_t^s\circ (f_t^s)^{-1} + Q\log |((f_t^s)^{-1})'|$.  Therefore, a.s.\ $\wt{\mcl S}_{t,\infty} = \rng{\mcl S}_{t,\infty}$ as quantum surfaces.

By construction $\rng{\mcl S}_{t,\infty}$ is a.s.\ determined by $(\wt X_{t,\infty} ,\wt d_{t,\infty} ,  \wt\mu|_{\wt X_{t,\infty}} , \wt\eta'(t) )$ and we showed above that $  \rng{\mcl S}_{t,\infty}   =  \wt{\mcl S}_{t,\infty}$ a.s., which yields the last statement of the lemma.
\end{proof}

We next check the independence conditions in Theorem~\ref{thm-wpsf-char}.  In light of Lemma~\ref{lem-mchar-wedge}, it suffices to prove that if $\wt{\mcl F}^{\op{m}}_{-\infty,t}$ and $\wt{\mcl F}_{t,\infty}^{ \op{m}}$ are the $\sigma$-algebras from Theorem~\ref{thm-wpsf-mchar}, then $\wt{\mcl F}^{\op{m}}_{-\infty,t}$ (resp.\ $\wt{\mcl F}_{t,\infty}^{\op{m}}$) contains the $\sigma$-algebra generated by the quantum surfaces $\wt{\mcl S}_{v_Z(s) , s} =  (\wt\eta'([v_Z(s) , s])  , \wt h|_{\wt\eta'([v_Z(s) , s]) } , \wt\eta'(s))$ as $s$ ranges over all $\pi/2$-cone intervals for $Z$ which are maximal in some interval in $(-\infty , t)$ (resp.\ $(t,\infty)$) with rational endpoints. By Lemma~\ref{lem-internal-metric}, in the notation of Theorem~\ref{thm-wpsf-mchar} it holds that
\eqbn
 \left(\wt X_{v_Z(s) , s} , \wt d_{v_Z(s) , s} , \wt\mu|_{\wt X_{v_Z(s) , s}}, \wt\eta'_{\wt X_{v_Z(s) , s}}(s) \right)  =  \left(\wt\eta'([v_Z(s) , s])  , \frk d_{\wt h|_{\wt\eta'([v_Z(s) , s]) } }  ,  \mu_{\wt h}|_{ \wt\eta'([v_Z(s) , s])  }  , \wt\eta'(s)\right)
\eqen
as pointed metric measure spaces. It therefore suffices to prove the following.

\begin{lem} \label{lem-mchar-ind}
Suppose we are in the setting of Theorem~\ref{thm-wpsf-char}. 
Let $[a,b] \subset \BB R$ be an interval with rational endpoints.
Also let $c \in (a,b)$ and let $s$ be the maximal $\pi/2$-cone time for $Z$ in $[a,b]$ with $c \in [v_Z(s) , s]$. 
The quantum surface $\wt{\mcl S}_{v_Z(s) , s} = (\wt\eta'([v_Z(s) , s])  , \wt h|_{\wt\eta'([v_Z(s) , s]) } , \wt\eta'(s))$ is a.s.\ determined by its corresponding metric measure space structure $(\wt\eta'([v_Z(s) , s])  , \frk d_{\wt h|_{\wt\eta'([v_Z(s) , s]) } }  ,  \mu_{\wt h}|_{ \wt\eta'([v_Z(s) , s])  }  , \wt\eta'(s))$.
\end{lem}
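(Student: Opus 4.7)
The plan is to follow the structure of the proof of Lemma~\ref{lem-mchar-wedge}, but with a single quantum disk in place of a $\sqrt 6$-quantum wedge. Since $s$ is a maximal $\tfrac{\pi}{2}$-cone time for $Z$ in $[a,b]$, Proposition~\ref{prop-general-disk} implies that the conditional law of $\wt{\mcl S}_{v_Z(s),s}$ given $Z|_{(-\infty,a]}$, $Z|_{[b,\infty)}$, and $\{Z_{\tau_{a,b}(t)} : t\in [a,b]\}$ is that of a singly marked quantum disk with area $s - v_Z(s)$ and boundary length $|Z_s - Z_{v_Z(s)}|$. Combining \cite[Corollary~1.5]{lqg-tbm2} (which identifies a quantum disk equipped with its $\sqrt{8/3}$-LQG metric and area measure with a Brownian disk) with \cite[Theorem~1.4]{lqg-tbm3} (the quantum surface is a.s.\ determined by its metric measure space structure for quantum disks), I would obtain a quantum disk $\rng{\mcl S}$ that is a measurable function of the pointed metric measure space $(\wt\eta'([v_Z(s),s]), \frk d_{\wt h|_{\wt\eta'([v_Z(s),s])}}, \mu_{\wt h}|_{\wt\eta'([v_Z(s),s])}, \wt\eta'(s))$. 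Fix any measurable choice of embedding $\rng h$ of $\rng{\mcl S}$ into $(\BB D, -1)$. Then there a.s.\ exists a measure-preserving isometry $f : (\BB D, \frk d_{\rng h}, \mu_{\rng h}) \rta (\wt\eta'([v_Z(s),s]), \frk d_{\wt h|_{\wt\eta'([v_Z(s),s])}}, \mu_{\wt h}|_{\wt\eta'([v_Z(s),s])})$ with $f(-1) = \wt\eta'(s)$.

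The remaining task is to show that $f$ is a.s.\ conformal, since this identifies $\rng{\mcl S}$ with $\wt{\mcl S}_{v_Z(s),s}$ as quantum surfaces and thereby proves the lemma. Sample $z$ uniformly from $\mu_{\rng h}$, normalized to be a probability measure. For any $r > 0$, on the event $\{\frk d_{\rng h}(z,\bdy\BB D) > r\}$ the filled metric ball $\rng B_r^\bullet := B_r^\bullet(z;\frk d_{\rng h})$ is mapped by $f$ to $\wt B_r^\bullet := B_r^\bullet(f(z); \frk d_{\wt h|_{\wt\eta'([v_Z(s),s])}})$, and the latter coincides with $B_r^\bullet(f(z); \frk d_{\wt h})$ by Lemma~\ref{lem-internal-metric}. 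Applying Proposition~\ref{prop-metric-function} once to the quantum disk $(\BB D,\rng h)$ and once to the $\gamma$-quantum cone $(\BB C,\wt h)$, with $U$ taken to be a sufficiently small neighborhood of a point sampled uniformly from $\mu_{\wt h}$ restricted to a deterministic open set containing $\wt\eta'([v_Z(s),s])$, I obtain the a.s.\ equality of singly marked quantum surfaces
\begin{equation*}
\left(\rng B_r^\bullet , \rng h|_{\rng B_r^\bullet}, z \right) = \left(\wt B_r^\bullet, \wt h|_{\wt B_r^\bullet}, f(z)\right) .
\end{equation*}
Hence there exists a conformal map $g_z : \rng B_r^\bullet \rta \wt B_r^\bullet$ satisfying $\wt h|_{\wt B_r^\bullet} = \rng h \circ g_z^{-1} + Q \log |(g_z^{-1})'|$, and the LQG coordinate change for $\frk d_h$ then forces $g_z$ to be an isometry of $(\rng B_r^\bullet, \frk d_{\rng h|_{\rng B_r^\bullet}})$ onto $(\wt B_r^\bullet, \frk d_{\wt h|_{\wt B_r^\bullet}})$. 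By Proposition~\ref{prop-unique-isometry} combined with local absolute continuity between quantum disks and $\sqrt{8/3}$-quantum cones, the isometry $(f|_{\rng B_r^\bullet})^{-1} \circ g_z$ from the filled ball to itself must a.s.\ be the identity, so $f = g_z$ on $\rng B_r^\bullet$ and $f$ is a.s.\ conformal there.

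Since $z$ was sampled from the full support measure $\mu_{\rng h}$, by varying $z$ and $r$ one sees that $f$ is a.s.\ conformal on a neighborhood of each interior point of $\BB D$, and $\rng h$ and $\wt h$ are related on these neighborhoods by the $\sqrt{8/3}$-LQG coordinate change formula. Therefore $\wt{\mcl S}_{v_Z(s),s} = \rng{\mcl S}$ a.s.\ as quantum surfaces. Since $\rng{\mcl S}$ was produced by a deterministic measurable functional of $(\wt\eta'([v_Z(s),s]), \frk d_{\wt h|_{\wt\eta'([v_Z(s),s])}}, \mu_{\wt h}|_{\wt\eta'([v_Z(s),s])}, \wt\eta'(s))$, the lemma follows.

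The main technical obstacle is verifying the hypothesis of Proposition~\ref{prop-metric-function} for the field $\wt h$ restricted to the bubble $\wt\eta'([v_Z(s),s])$: namely, that on bounded open subsets lying at positive distance from the boundary, this field is absolutely continuous with respect to a suitably normalized whole-plane GFF. Proposition~\ref{prop-general-disk} identifies the relevant conditional law with that of a quantum disk, and this local absolute continuity is a standard property of a quantum disk (it is exactly the property used in the proof of Lemma~\ref{lem-mchar-wedge} for the $\sqrt 6$-quantum wedge), so this step should go through with the same justification.
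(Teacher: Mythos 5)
Your approach differs from the paper's in a way that matters for how Lemma~\ref{lem-mchar-ind} is used, and the very first step creates a circularity problem.

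You invoke Proposition~\ref{prop-general-disk} to identify the (conditional) law of $\wt{\mcl S}_{v_Z(s),s}$ as a quantum disk, and then cite \cite{lqg-tbm3} for quantum disks to produce a measurable candidate $\rng{\mcl S}$. But Proposition~\ref{prop-general-disk} is proven under the Markov-property hypothesis (condition~\ref{item-wpsf-char-wedge}) of Theorem~\ref{thm-wpsf-char}, and Lemma~\ref{lem-mchar-ind} is deployed in the proof of Theorem~\ref{thm-wpsf-mchar} precisely in order to \emph{verify} that hypothesis for the pair $(\wt h,\wt\eta')$ extracted from a curve-decorated Brownian plane. So appealing to Proposition~\ref{prop-general-disk} would make that verification circular. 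The paper flags the underlying concern explicitly: ``we note that this is not immediate from the results of~\cite{lqg-tbm3} since we do not a priori know the exact law of these quantum surfaces.''

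The paper's proof avoids determining the law of the bubble at all. It samples a quantum surface $\wt{\mcl S}'_{v_Z(s),s}$ from the regular conditional distribution of $\wt{\mcl S}_{v_Z(s),s}$ given its metric measure space structure, taken conditionally independent from $\wt{\mcl S}_{v_Z(s),s}$ itself, and shows $\wt{\mcl S}'_{v_Z(s),s}=\wt{\mcl S}_{v_Z(s),s}$ a.s.\ via the filled-metric-ball isometry argument of Propositions~\ref{prop-metric-function} and~\ref{prop-unique-isometry}. The only facts used are that $\wt{\mcl S}_{v_Z(s),s}$ is a sub-surface of a $\sqrt{8/3}$-quantum cone (for local absolute continuity of $\wt h$) and that $\wt{\mcl S}'_{v_Z(s),s}\eqD\wt{\mcl S}_{v_Z(s),s}$ (for local absolute continuity of the embedded field $\wt h_s'$); neither requires the Markov hypothesis. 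Your isometry argument in the second and third paragraphs is essentially correct, and your technical concern in the last paragraph about the hypotheses of Proposition~\ref{prop-metric-function} is addressed exactly by those two facts, without any need for Proposition~\ref{prop-general-disk}. So the fix is: drop the quantum-disk identification and the appeal to \cite{lqg-tbm3}, replace the measurable candidate $\rng{\mcl S}$ by a conditionally independent copy $\wt{\mcl S}'_{v_Z(s),s}$, and run the same isometry argument between the two.
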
 
\begin{proof}
To lighten notation, let $\wt U_s$ be the interior of the bubble $\wt\eta'([v_Z(s) , s]) $. 
Condition on the metric measure space $( \wt U_s  , \frk d_{ \wt h|_{\wt U_s} }  ,  \mu_{\wt h}|_{\wt U_s}  , \wt\eta'(s))$ and let $\wt{\mcl S}_{v_Z(s) , s}'$ be a singly marked quantum surface sampled from the conditional law of $\wt{\mcl S}_{v_Z(s) ,s}$ given $( \wt U_s  , \frk d_{ \wt h|_{\wt U_s} }  ,  \mu_{\wt h}|_{\wt U_s}  , \wt\eta'(s))$ in such a way that it is conditionally independent from $\wt{\mcl S}_{v_Z(s) ,s}$.  
Then $\wt{\mcl S}_{v_Z(s) , s}$ and $\wt{\mcl S}_{v_Z(s) , s}'$ are two quantum surfaces with the same law which a.s.\ generate the same metric measure space structure. We must show that a.s.\ $\wt{\mcl S}_{v_Z(s) , s} = \wt{\mcl S}_{v_Z(s) , s}'$  as quantum surfaces (we note that this is not immediate from the results of~\cite{lqg-tbm3} since we do not a priori know the exact law of these quantum surfaces).

Let $\wt h_s'$ be an embedding of $\wt{\mcl S}_{v_Z(s) ,s}'$ into $(\BB D , 1)$. 
By our choice of coupling, there a.s.\ exists an isometry $f : (\BB D , \wt h_s') \rta (\wt U_s , \frk d_{h|_{\wt U_s}}) $ such that $f_*  \mu_{\wt h_s'} = \mu_{\wt h}|_{\wt U_s}$ and $f(1) = \wt\eta'(s)$. 

If we sample $z$ uniformly from $\mu_{\wt h_s}$, then by Propositions~\ref{prop-unique-isometry} and~\ref{prop-metric-function} and an absolute continuity argument as in the proof of Lemma~\ref{lem-mchar-wedge} (here we use that $\wt{\mcl S}_{v_Z(s) , s}$ is a sub-surface of a $\sqrt{8/3}$-quantum cone and that $\wt{\mcl S}_{v_Z(s) , s}' \eqD \wt{\mcl S}_{v_Z(s) , s}$), we find that for $r>0$, a.s.\ 
\eqbn
\left(  B_r^\bullet ( z   ; \frk d_{\wt h}) , \wt h|_{ B_r^\bullet ( z   ; \frk d_{\wt h})}    \right)  =  \left(  B_r^\bullet ( z   ; \frk d_{\wt h_s'}) , \wt h_s'|_{ B_r^\bullet ( z   ; \frk d_{\wt h_s'})}   \right) 
\eqen
as quantum surfaces on the event $\{\frk d_{\wt h_s'}(z , \bdy \BB D)  > r\}$. 
By the same argument used in the proof of Lemma~\ref{lem-mchar-wedge}, this implies that on the event $\{\frk d_{\wt h_s'}(z , \bdy \BB D)  > r\}$, $f$ is a.s.\ conformal on $B_r^\bullet ( z   ; \frk d_{\wt h})$  and the restriction of $\wt h_s'$ (resp.\ $\wt h$) to $B_r^\bullet ( z   ; \frk d_{\wt h_s'})$ (resp.\ $B_r^\bullet ( z   ; \frk d_{\wt h})$) are related by the $\sqrt{8/3}$-LQG coordinate change formula. 
Since the closed support of $\mu_{\wt h}$ is a.s.\ equal to all of $\BB C$, it follows that the closed support of $\mu_{\wt h_s'}$ is a.s.\ equal to all of $\BB D$. From this, we infer that $f$ is a.s.\ conformal on all of $\BB D$ and that a.s.\ $\wt h|_{\wt U_s} = \wt h \circ f^{-1} + Q\log |(f^{-1})'|$. 
Hence a.s.\ $\wt{\mcl S}_{v_Z(s) , s} = \wt{\mcl S}_{v_Z(s) , s}'$.
\end{proof}

\begin{proof}[Proof of Theorem~\ref{thm-wpsf-mchar}]
As described at the beginning of this subsection, we let $(\BB C ,\wt h , 0,\infty)$ be the $\sqrt{8/3}$-quantum cone determined by $(\wt X ,\wt d , \wt \mu , \wt x)$ and view $\wt\eta'$ as a curve in $\BB C$. 
We claim that the pair $(\wt h , \wt\eta')$ satisfies the conditions of Theorem~\ref{thm-wpsf-char}.
Indeed, condition~\ref{item-wpsf-char-homeo} is immediate from the corresponding condition in Theorem~\ref{thm-wpsf-mchar}, and condition~\ref{item-wpsf-char-wedge} in Theorem~\ref{thm-wpsf-char} is implied by the corresponding condition in Theorem~\ref{thm-wpsf-mchar} together with Lemmas~\ref{lem-mchar-wedge} and~\ref{lem-mchar-ind}.  
The theorem statement now follows immediately from Theorem~\ref{thm-wpsf-char}.  
\end{proof}

\begin{proof}[Proof of Theorem~\ref{thm-bead-mchar}]
Since a bead of a $\frac{3\gamma}{2}$-quantum wedge is the same as a quantum disk for $\gamma=\sqrt{8/3}$ (recall Section~\ref{sec-wedge}) and a quantum disk is equivalent (as a metric measure space) to the Brownian disk by~\cite[Corollary~1.5]{lqg-tbm2}, this can be deduced from Theorem~\ref{thm-bead-char} and the results of Section~\ref{sec-metric-function} via essentially the same argument used to prove Theorem~\ref{thm-wpsf-mchar}. Note that condition~\ref{item-bead-mchar-homeo} determines the left/right boundary lengths of the doubly marked Brownian disk parameterized by the unbounded connected component of $\wt X^\bead \setminus \wt\eta^\bead([0,t])$, even though these boundary lengths are not specified in condition~\ref{item-bead-mchar-wedge}. 
\end{proof}

\subsection{$\SLE_6$ parameterized by quantum natural time}
\label{sec-bead-mchar-nat}

We now give a version of Theorem~\ref{thm-bead-mchar} where we parameterize the curve by quantum natural time and we only condition on the left/right boundary lengths of the Brownian disk, not its area. This is the statement that is used in~\cite{gwynne-miller-perc} to identify the law of a subsequential limit of percolation on random quadrangulations with boundary.

Suppose $(\BB H , h^\bead , 0, \infty)$ is a doubly-marked quantum disk and $\eta^\bead$ is an independent chordal $\SLE_6$ from $0$ to $\infty$. Roughly speaking, parameterizing $\eta^\bead$ by quantum natural time with respect to $h^\bead$ is equivalent to parameterizing by the ``quantum local time" at the set of times when $\eta^\bead$ disconnects a bubble from $\infty$. Formally, quantum natural time is defined for a chordal $\SLE_{\kappa'}$, $\kappa' \in (4,8)$, and an independent free-boundary GFF on $\BB H$ with a $\gamma/2$-log singularity at the origin in~\cite[Definition~6.23]{wedges}; and is defined for the pair $(h^\bead , \eta^\bead)$ via local absolute continuity.  We will typically denote an $\SLE_6$ curve parameterized by quantum natural time and its associated left/right boundary length process by a superscript $\nat$. 
 
Our characterization theorem for $\SLE_6$ parameterized by quantum natural time will be stated in terms of free Boltzmann Brownian disks, which are introduced in \cite[Section~1.5]{bet-mier-disk} and defined as follows.  For $\frk l > 0$, a \emph{free Boltzmann Brownian disk with boundary length $\frk l$} is the random metric measure space obtained by first sampling a random area $\frk a$ from the probability measure $  \frac{\frk l^3}{ \sqrt{2\pi a^5 } } e^{-\frac{\frk l^2}{2 a} } \BB 1_{(a\geq 0)} \, da$, then sampling a Brownian disk with boundary length $\frk l$ and area $\frk a$. Note that a free Boltzmann Brownian disk with boundary length $\frk l$ is obtained from a free Boltzmann Brownian disk with unit boundary length by scaling areas by $\frk l^2$, boundary lengths by $\frk l$, and distances by $\frk l^{1/2}$. 
A singly (resp.\ doubly) marked free Boltzmann Brownian disk is a free Boltzmann Brownian disk together with one (resp.\ two) points sampled uniformly from its natural boundary length measure. In the doubly marked case, the left and right boundary lengths are the lengths of the arcs between the two marked points.  A free Boltzmann Brownian disk is equivalent (as a metric measure space with a boundary length measure) to a quantum disk with fixed boundary length.

\begin{thm}[Characterization of $\SLE_6$ on a Brownian disk with quantum natural time] \label{thm-bead-mchar-nat} 
Let $( \frk l^L, \frk l^R )\in (0,\infty)^2$ and suppose we are given a coupling of a doubly-marked free Boltzmann Brownian disk $(\wt X^\nat , \wt d^\nat , \wt \mu^\nat , \wt x , \wt y)$ with left/right boundary lengths $\frk l^L$ and $\frk l^R$, a random continuous curve $\wt\eta^\nat : [0,\infty) \rta \wt X^\nat$ from $\wt x$ to $\wt y$, and a random process $ Z^\nat = ( L^\nat , R^\nat)$ which has the law of the left/right boundary length process of a chordal $\SLE_6$ on a doubly marked quantum disk with left/right boundary lengths $\frk l^L$ and $\frk l^R$, parameterized by quantum natural time (this process is defined at the beginning of Section~\ref{sec-chordal}).   
Assume that the following conditions are satisfied.
\begin{enumerate}
\item (Laws of complementary connected components) For $u\geq 0$, let $\wt{\mcl U}^\nat_u$ be the collection of singly marked metric measure spaces of the form $(U , \wt d^\nat_U  , \wt\mu^\nat|_U  , \wt x_U^\nat)$ where $U$ is a connected component of $\wt X^\nat \setminus \wt\eta^\nat ([0,u])$, $\wt d_U^\nat$ is the internal metric of $\wt d^\nat$ on $U$, and $\wt x_U^\nat$ is the point where $\wt\eta^\nat$ finishes tracing $\bdy U$.  If we condition on $Z^\nat |_{[0,u]}$, then the conditional law of $\wt{\mcl U}^\nat_u$ is that of a collection of independent singly marked free Boltzmann Brownian disks with boundary lengths specified as follows. The elements of $\wt{\mcl U}^\nat_u$ corresponding to the connected components of $\wt X^\nat \setminus \wt\eta^\nat ([0,u])$ which do not have the target point $\wt y$ on their boundaries are in one-to-one correspondence with the downward jumps of the coordinates of $Z^\nat|_{[0,u]}$, with boundary lengths given by the magnitudes of the corresponding jump. The element of $\wt{\mcl U}^\nat_u$ corresponding to the connected component of $\wt X^\nat \setminus \wt\eta^\nat ([0,u])$ with $\wt y$ on its boundary has boundary length $L_u^\nat   +   R_u^\nat   $. 
\label{item-bead-mchar-nat-wedge}  
\item (Topology and consistency) 
There is a pair $((\BB H , h^\nat , 0, \infty) ,\eta^\nat)$ consisting of a doubly marked quantum disk with left/right boundary lengths $\frk l^L$ and $\frk l^R$ and an independent chordal $\SLE_6$ from $0$ to $\infty$ in $\BB H$ parameterized by quantum natural time and a homeomorphism $\Phi^\nat : \BB H\rta \wt X^\nat$ with $\Phi^\nat_* \mu_{h^\nat} = \wt\mu^\nat$ and $\Phi^\nat\circ \eta^\nat  = \wt\eta^\nat $. Moreover, for each $u\in [0,\infty)\cap \BB Q$, $\Phi^\nat$ a.s.\ pushes forward the $\sqrt{8/3}$-quantum length measure with respect to $h^\nat$ on the boundary of the unbounded connected component of $\BB H\setminus \eta^\nat([0,u])$ to the natural boundary length measure on the connected component of $\wt X^\nat \setminus \wt\eta^\nat([0,u])$ with $\wt y$ on its boundary.  \label{item-bead-mchar-nat-homeo} 
\end{enumerate}
Then $(\wt X^\nat , \wt d^\nat , \wt \mu^\nat , \wt\eta^\nat)$ is equivalent (as a curve-decorated metric measure space) to a doubly-marked quantum disk with left/right boundary lengths $\frk l^L$ and $\frk l^R$ equipped with its $\sqrt{8/3}$-LQG area measure and metric together with an independent chordal $\SLE_{6}$ between the two marked points, parameterized by quantum natural time. 
\end{thm}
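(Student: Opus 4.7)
The plan is to reduce Theorem~\ref{thm-bead-mchar-nat} to Theorem~\ref{thm-bead-mchar} by introducing the total area as an extra conditioning variable and reparameterizing the curve $\wt\eta^\nat$ by disconnected area rather than quantum natural time. Specifically, set $\frk a := \wt\mu^\nat(\wt X^\nat)$. Since $(\wt X^\nat, \wt d^\nat, \wt\mu^\nat)$ is a free Boltzmann Brownian disk with boundary lengths $(\frk l^L, \frk l^R)$, the area $\frk a$ has the explicit density $\frac{(\frk l^L+\frk l^R)^3}{\sqrt{2\pi a^5}} e^{-(\frk l^L+\frk l^R)^2/(2a)}\,\BB 1_{(a>0)}\,da$, and conditional on $\frk a$ the triple $(\wt X^\nat, \wt d^\nat, \wt\mu^\nat,\wt x,\wt y)$ is a doubly marked Brownian disk with area $\frk a$ and left/right boundary lengths $(\frk l^L, \frk l^R)$. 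Reparameterize $\wt\eta^\nat$ by the $\wt\mu^\nat$-mass it disconnects from $\wt y$ (Definition~\ref{def-chordal-parameterization}) to obtain a curve $\wt\eta^\bead : [0,\frk a] \rta \wt X^\nat$ with its associated boundary length process $Z^\bead$ as in Theorem~\ref{thm-bead-mchar}. The goal is to verify, conditional on $\frk a$, that the tuple $(\wt X^\nat,\wt d^\nat,\wt\mu^\nat,\wt\eta^\bead,Z^\bead)$ satisfies the hypotheses of Theorem~\ref{thm-bead-mchar} for Lebesgue-a.e.\ $\frk a$, and then to recover Theorem~\ref{thm-bead-mchar-nat} by averaging over $\frk a$ and undoing the reparameterization.

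The topology/consistency condition of Theorem~\ref{thm-bead-mchar} transfers directly from condition~\ref{item-bead-mchar-nat-homeo} of Theorem~\ref{thm-bead-mchar-nat}: apply the same $\mu_{h^\nat}$-disconnected-mass reparameterization to the model curve $\eta^\nat$ to obtain $\eta^\bead$, and condition the model quantum disk on $\mu_{h^\nat}(\BB H) = \frk a$ (which produces a quantum disk of area $\frk a$ and the specified boundary lengths). The homeomorphism $\Phi^\bead$ is simply the underlying set-theoretic map of $\Phi^\nat$, which does not change under a time change of the curve; the boundary length pushforward property is unaffected. The main work is in verifying the complementary-components condition. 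The key point is that $Z^\bead$ encodes both boundary lengths and areas of bubbles cut out by $\wt\eta^\bead$: by~\eqref{eqn-bead-bubble-process} the magnitude of the downward jump of $Z^\bead$ at $\sigma^\bead(t)$ is the bubble's boundary length, and the length $\tau^\bead(t)-\sigma^\bead(t)$ of the ensuing constant interval is the bubble's area. Consequently, the $\sigma$-algebra $\sigma(Z^\bead|_{[0,t]},\tau^\bead(t))$ coincides with the $\sigma$-algebra generated by $Z^\nat|_{[0,u(t)]}$ together with the areas of all bubbles cut out by natural time $u(t)$, where $u(t)$ is the natural time corresponding to area time $t$.

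Given this matching of $\sigma$-algebras, the verification proceeds as follows. Condition~\ref{item-bead-mchar-nat-wedge} of Theorem~\ref{thm-bead-mchar-nat} says that given $Z^\nat|_{[0,u]}$, the bubbles cut out before natural time $u$ are independent free Boltzmann Brownian disks with the boundary lengths read off from the jumps of $Z^\nat$, while the target component is an independent free Boltzmann Brownian disk with boundary length $L_u^\nat + R_u^\nat$. Further conditioning each free Boltzmann bubble on its area turns it (by the very definition of the free Boltzmann law) into a Brownian disk with the specified area and boundary length. Finally, conditioning on $\frk a$ fixes the target component's area at $\frk a - \tau^\bead(t)$ and turns it into a Brownian disk with that area and boundary length $L_t^\bead + R_t^\bead = L_{u(t)}^\nat + R_{u(t)}^\nat$. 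This is exactly the conditional law required by condition~\ref{item-bead-mchar-wedge} of Theorem~\ref{thm-bead-mchar}. Applying Theorem~\ref{thm-bead-mchar} conditional on $\frk a$ (which is legitimate for Lebesgue-a.e.\ $\frk a$ by Lemma~\ref{lem-bead-sle-as} and the mutual absolute continuity of $\frk a$ with respect to Lebesgue measure) yields that $(\wt X^\nat,\wt d^\nat,\wt\mu^\nat,\wt\eta^\bead)$ is equivalent, conditional on $\frk a$, to a doubly-marked quantum disk of area $\frk a$ and boundary lengths $(\frk l^L,\frk l^R)$ decorated by an independent chordal SLE$_6$ parameterized by disconnected $\sqrt{8/3}$-quantum mass. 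Integrating over the free Boltzmann density of $\frk a$ and reparameterizing the curve back from disconnected mass to quantum natural time (a deterministic time change that preserves the metric measure space and the curve as an unparameterized path) yields the conclusion.

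The principal obstacle I anticipate is the careful identification of $\sigma$-algebras under the two parameterizations and the handling of the \emph{unbounded} component's role: in Theorem~\ref{thm-bead-mchar} the target component is a Brownian disk with area $\frk a - \tau^\bead(t)$, while in Theorem~\ref{thm-bead-mchar-nat} it is free Boltzmann with boundary length $L_u^\nat + R_u^\nat$, and one must verify that the passage from the latter to the former is the natural one effected by conditioning on $\frk a$ and the bubble areas. A minor secondary point is that Theorem~\ref{thm-bead-mchar} is stated for a deterministic triple $(\frk a,\frk l^L,\frk l^R)$, so one must use the Lebesgue-a.e.\ validity of Theorem~\ref{thm-bead-char}'s hypotheses (as noted in Lemma~\ref{lem-bead-sle-as}) to legitimately apply it in the conditional setting, and then invoke the free Boltzmann density to conclude by a disintegration argument.
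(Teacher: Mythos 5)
Your strategy is the same as the paper's: reparameterize $\wt\eta^\nat$ by the $\wt\mu^\nat$-mass it disconnects from $\wt y$, match up the boundary-length processes and the associated $\sigma$-algebras, verify the hypotheses of Theorem~\ref{thm-bead-mchar} conditional on the total area $\frk a$, then average over the free Boltzmann density of $\frk a$ and undo the time change using that the natural-time/disconnected-area change of time is a deterministic functional of the boundary length process.

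There is, however, one genuine gap you do not address, and it is precisely the main technical content of the paper's proof. You correctly identify the $\sigma$-algebra equality $\sigma\bigl(Z^\bead|_{[0,t]},\tau^\bead(t)\bigr)=\wt{\mcl G}^\nat_{\sigma_t}$, where $\wt{\mcl G}^\nat_u$ is generated by $Z^\nat|_{[0,u]}$ together with the bubble areas up to time $u$, and $\sigma_t=u(t)$ is the natural-time first passage of disconnected area past level $t$. But condition~\ref{item-bead-mchar-nat-wedge} is a statement about conditioning at a \emph{deterministic} time $u$; you need it at the \emph{random} time $\sigma_t$, and a $\sigma$-algebra identity alone does not convert a fixed-time Markov-type property into one that holds at a stopping time. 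Some form of strong Markov argument is required. The paper supplies exactly this: it shows $\sigma_t$ is a stopping time for $\{\wt{\mcl G}^\nat_u\}_{u\geq 0}$, replaces $\sigma_t$ by its dyadic approximations from above $\sigma_t^n\downarrow\sigma_t$ (which are countable-valued, so the fixed-time hypothesis applies on each value), and then passes to the limit $n\rta\infty$ using right-continuity of $Z^\nat$, continuity of $\wt\eta^\nat$, and continuity in boundary length of the singly marked free Boltzmann law under the Gromov-Prokhorov topology. Without this limiting step, the claim that the conditional law at $\sigma_t$ ``follows from condition~\ref{item-bead-mchar-nat-wedge}'' is an assertion, not a proof, and it is the place where the argument could most plausibly fail if stated carelessly.

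Two minor remarks. First, your ``Lebesgue-a.e.\ $\frk a$'' hedge is unnecessary: Theorem~\ref{thm-bead-mchar} (like Theorem~\ref{thm-bead-char}) is proved for \emph{every} deterministic triple $(\frk a,\frk l^L,\frk l^R)$, so you may apply it for every value that $\frk a$ takes; Lemma~\ref{lem-bead-sle-as} is about whether the \emph{actual} SLE$_6$ satisfies the hypotheses, which is not what you need here. Second, when you undo the time change at the end, it is worth noting explicitly, as the paper does, that the inverse change of time $\sigma^{-1}$ is a deterministic functional of $Z^\nat$ (inverse local time at the endpoints of constancy intervals), so the reparameterization preserves not only the underlying curve-decorated metric measure space but also the coupling with $Z^\nat$.
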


Note that the process $Z^\nat$ does not encode the quantum areas of the bubbles disconnected from $\infty$ by $\eta^\nat$, so we need to assume that the map $\Phi^\nat$ of condition~\ref{item-bead-mchar-nat-homeo} is area-preserving.  

In the companion paper~\cite{gwynne-miller-sle6}, we prove that condition~\ref{item-bead-mchar-nat-wedge} is satisfied in the case when $(\wt X^\nat , \wt d^\nat , \wt \mu^\nat , \wt\eta^\nat)$ is actually a doubly-marked quantum disk equipped with its $\sqrt{8/3}$-LQG area measure and metric together with an independent chordal $\SLE_{6}$ between the two marked points, parameterized by quantum natural time; we do not need this statement for the proof of Theorem~\ref{thm-bead-mchar-nat}, but theorem statement is vacuous without it. We also provide a description of the law of the process $Z^\nat$ in terms of its Radon-Nikodym derivative with respect to the law of a pair of independent $3/2$-stable processes with no upward jumps. 

As we will now show, Theorem~\ref{thm-bead-mchar-nat} is an easy consequence of Theorem~\ref{thm-bead-mchar} (essentially it follows by re-parameterizing the curve $\wt\eta^\nat$).

\begin{proof}[Proof of Theorem~\ref{thm-bead-mchar-nat}]
Suppose we are in the setting of Theorem~\ref{thm-bead-mchar-nat}.
Let $\frk a = \wt\mu^\nat(\wt X^\nat)$ be the random area of the free Boltzmann Brownian disk $\wt X^\nat$.  For $t \in [0,\frk a]$, let $\sigma_t$ be the smallest $u\geq 0$ for which the $\mu_{\wt h^\nat}$-mass of the region disconnected from the target point $\wt y$ by $\wt\eta^\nat([0,u])$ is at least $t$. Also let $\sigma_t = \sigma_{\frk a}$ for $t\geq \frk a$.  Since the homeomorphism $\Phi^\nat$ in condition~\ref{item-bead-mchar-nat-homeo} satisfies $\Phi^\nat_*  \mu_{h^\nat} = \wt\mu^\nat$, $\sigma_t$ can equivalently be described as the smallest $u\geq 0$ for which the $\mu_{ h^\nat}$-mass of the region disconnected from $\infty$ by $ \eta^\nat([0,u])$ is at least $t$.

Let $\wt\eta^\bead(t) := \wt\eta^\nat(\sigma_t)$, $\eta^\bead(t) := \eta^\nat(\sigma_t)$, and $Z^\bead_t := Z^\nat_{\sigma_t}$.  Then $\wt\eta^\bead$ (resp.\ $\eta^\bead$) is parameterized by the $\mu_{\wt h^\bead}$- (resp.\ $\mu_{h^\bead}$-) mass it disconnects from~$\infty$ and $ Z^\bead$ is the left/right boundary length process for each of $\wt\eta^\bead$ and $\eta^\bead$.  We will check the hypotheses of Theorem~\ref{thm-bead-mchar} for the curve-decorated metric measure space $(\wt X^\nat , \wt d^\nat , \wt \mu^\nat ,\wt\eta^\bead)$, the field-curve pair $(h^\nat , \eta^\bead)$, and the process $Z^\bead$. 

It is clear from condition~\ref{item-bead-mchar-nat-homeo} of the present theorem that condition~\ref{item-bead-mchar-homeo} of Theorem~\ref{thm-bead-mchar} is satisfied in the present setting, with $\Phi^\bead = \Phi^\nat$. Hence we only need to check condition~\ref{item-bead-mchar-wedge} of Theorem~\ref{thm-bead-mchar}.

For $t\in [0,\frk a]$, let $\tau^\bead(t)$ be the right endpoint of the largest interval of times containing $t$ on which $Z^\bead$ is constant, as in the discussion just above Theorem~\ref{thm-bead-mchar}. Equivalently, $\tau^\bead(t)$ is the $\wt\mu^\nat$-mass of the region disconnected from $\wt y$ by $\wt\eta^\bead([0,t])$.  For $u \geq 0$, let $\wt{\mcl G}^\nat_u$ be the $\sigma$-algebra generated by $Z^\nat|_{[0,u]}$ and the ordered sequence of $\wt\mu^\nat$-masses of the bubbles disconnected from $\infty$ by $\wt\eta^\nat$ before time $u$.  Then for $t\geq 0$, $\sigma_t$ is a $\{\wt{\mcl G}^\nat_u\}_{u\geq 0}$-stopping time and $\wt{\mcl G}^\nat_{\sigma_t}$ is the same as the $\sigma$-algebra generated by $ Z^\bead|_{[0,t]}$ and $\tau^\bead(t)$.

For $u\geq 0$, let $\wt{\frk W}_u^\nat$ be the element of $\wt{\mcl U}_u^\nat$ which contains the target point $\wt y$ on its boundary.  Now fix $t\geq 0$ and for $n\in\BB N$ let $\sigma_t^n$ be the smallest integer multiple of $2^{-n}$ which is at least $\sigma_t$, so that each $\sigma_t^n$ is a $\{\wt{\mcl G}^\nat_u\}_{u\geq 0}$-stopping time which takes on only countably many possible values and $\sigma_t^n$ decreases to $\sigma_t$.  By condition~\ref{item-bead-mchar-nat-wedge} in the theorem statement, for each $n\in\BB N$ the conditional law given $\wt{\mcl G}^\nat_{\sigma_t^n}$ of the collection of metric measure spaces $\wt{\mcl U}^\nat_{\sigma_t^n} $ corresponding to the connected components of $\wt X^\nat\setminus \wt\eta^\nat([0,\sigma_t^n])$ is as in condition~\ref{item-bead-mchar-nat-wedge} with $\sigma_t^n$ in place of $u$, except that we condition on the area of each of each element of $\wt{\mcl U}^\nat_{\sigma_t^n} \setminus \{\wt{\frk W}_{\sigma_t^n}^\nat\}$ in addition to its boundary length.

We now take a limit as $n\rta\infty$ to get the same statement with $\sigma_t$ in place of $\sigma_t^n$.  We have $\bigcap_{n=1}^\infty ( \wt{\mcl U}_{\sigma_t^n}^\nat \setminus \{\wt{\frk W}_{\sigma_t^n}^\nat\} )  =   \wt{\mcl U}_{\sigma_t }^\nat \setminus \{\wt{\frk W}_{\sigma_t }^\nat\} $ and by continuity of $\wt\eta^\nat$ a.s.\ the unexplored metric spaces $\wt{\frk W}^\nat_{\sigma_t^n}$ converge to $\wt{\frk W}^\nat_{\sigma_t}$ as $n\rta\infty$ in, e.g., the pointed Gromov-Prokhorov topology~\cite{gpw-metric-measure}.  Since $Z^\nat$ is right continuous, the boundary length $L_{\sigma_t^n}^\nat + R_{\sigma_t^n}^\nat$ of $\wt{\frk W}^\nat_{\sigma_t^n}$ converges to the boundary length $L_{\sigma_t}^\nat + R_{\sigma_t}^\nat$ of $\wt{\frk W}^\nat_{\sigma_t}$ as $n\rta\infty$.  By scaling, it is clear that the law of a singly free Boltzmann Brownian disk depends continuously on its boundary length in the Gromov-Prokhorov topology.

Combining the preceding two paragraphs shows that the conditional law given $\wt{\mcl G}^\nat_{\sigma_t}$ (equivalently, given $ Z^\bead|_{[0,t]}$ and $\tau^\bead(t)$) of $\wt{\mcl U}_{\sigma_t}^\nat$ is as in condition~\ref{item-bead-mchar-nat-wedge} with $\sigma_t $ in place of $u$, except that we condition on the area of each element of $\wt{\mcl U}_{\sigma_t }^\nat  \setminus \{\wt{\frk W}_{\sigma_t^\nat}\} $ in addition to its boundary length.  Further conditioning on $\frk a$ is equivalent to conditioning on the total area of $\wt{\frk W}^\nat_{\sigma_t}$. Hence condition~\ref{item-bead-char-wedge} in Theorem~\ref{thm-bead-mchar} holds for  the conditional law of $(\wt X^\nat , \wt d^\nat \wt\mu^\nat , \wt\eta^\bead)$, $(h^\nat , \eta^\bead)$ and $Z^\bead$ given $\frk a$. 

By Theorem~\ref{thm-bead-mchar}, we infer that $(\wt X^\nat , \wt d^\nat ,\wt\mu^\nat , \wt\eta^\bead)$ is equivalent (as a curve-decorated metric measure space) to a doubly-marked quantum disk equipped with its $\sqrt{8/3}$-LQG area measure and metric together with an independent chordal $\SLE_6$ between the two marked points parameterized by the $\sqrt{8/3}$-quantum mass it disconnects from $\infty$.  It follows from the definition of quantum natural time that the inverse time change $\sigma^{-1}(u)$ such that $Z^\nat_u = Z_{\sigma^{-1}(u)}^\bead$ and $\eta^\nat(u) = \eta^\bead(\sigma^{-1}(u))$ is a deterministic functional of $Z^\nat_u$ ($\sigma^{-1}$ is the inverse of the local time that $Z^\nat_u$ at the endpoints of the intervals where it is constant). Since $\wt\eta^\nat(u) = \wt\eta^\bead(\sigma^{-1}(u))$, we obtain the desired description of $(\wt X^\nat , \wt d^\nat, \wt\mu^\nat , \wt\eta^\nat)$.
\end{proof}

\appendix

\section{Index of notation}
\label{sec-index}
 
Here we record some commonly used symbols in the paper, along with their meaning and the location where they are first defined. Symbols used only locally are not included.

\begin{multicols}{2}
\begin{itemize}
\item $Z = (L,R)$: left/right boundary length process (peanosphere BM);~\eqref{eqn-bm-cov}.
\item $(\wt h , \wt\eta' )$: $\gamma$-quantum cone coupled with a curve we want to show is SLE$_{\kappa'}$; Theorem~\ref{thm-wpsf-char-intro} or~\ref{thm-wpsf-char}.
\item $(h,\eta')$: $\gamma$-quantum cone and independent space-filling SLE$_{\kappa'}$ determined by $Z$; Theorem~~\ref{thm-wpsf-char}.
\item $\eta_{\mcl S}$: segment of the curve $\eta$, viewed as a curve on the quantum surface $\mcl S$; Definition~\ref{def-surface-curve}. 
\item $v_Z(t)$: cone entrance time for a $\pi/2$-cone time $t$; Definition~\ref{def-cone-time}.
\item $\wt{\mcl F}_{a,b}$: $\sigma$-algebra generated by $(Z-Z_a)|_{[a,b]}$ and ``bubbles" filled in by $\wt\eta'$ during $[a,b]$; Theorem~\ref{thm-wpsf-char}.
\item $\mcl C$ and $\wt{\mcl C}$: the quantum surfaces $(\BB C ,h , 0,\infty)$ and $(\BB C , \wt h , 0, \infty)$, resp.;~\eqref{eqn-cone-def}. 
\item $\mcl S_{a,b}$ and $\wt{\mcl S}_{a,b}$: quantum surfaces parameterized by $\eta'([a,b])$ and $\wt\eta'([a,b])$, resp.;~\eqref{eqn-increment-surface}.
\item $\eta_{a,b}$ and $\wt\eta_{a,b}$: curve obtained by skipping the bubbles filled in by $\eta'$ (resp.\ $\wt\eta'$) during $[a,b]$; just above~\eqref{eqn-tau-def}. 
\item $[\sigma_{a,b}(t) , \tau_{a,b}(t)]$: maximal interval in $[a,b]$ containing $t$ during which $\eta'$ (equivalently $\wt\eta'$) fills in a bubble;~\eqref{eqn-tau-def}. Often the subscript $a,b$ is dropped.
\item $\mcl S_{a,b}^0$ and $\wt{\mcl S}_{a,b}^0$: sub-surfaces of $\mcl S_{a,b}$ and $\wt{\mcl S}_{a,b}$, resp., parameterized by the bubbles; just after~\eqref{eqn-tau-def}. 
\item $P_{t,\infty}$: function which encodes areas and left/right boundary lengths of beads of $\mcl S_{t,\infty}$ (equivalently, $\wt{\mcl S}_{t,\infty}$);~\eqref{eqn-bead-function}. 
\item $\mcl A_{C,t_0}(S,r,a,b)$: stability event; Section~\ref{sec-stability}.
\item $\rng{\mcl C}_{n,k}$: $\gamma$-quantum cone obtained by welding $\wt{\mcl S}_{a,\tau(t_{n,k})}$, $\wt{\mcl S}_{\tau(t_{n,k}),\infty}$, and an independent $\frac{3\gamma}{2}$-quantum wedge;~\eqref{eqn-surface-inc-function}.
\item $\rng\eta_{n,k}'$: space-filling curve on $\rng{\mcl C}_{n,k}$; just after~\eqref{eqn-surface-inc-function}.
\item $\rng\eta_{n,k}$: non-space-filling curve in $\rng\eta_{n,k}'([a,b])$; just after~\eqref{eqn-surface-inc-function}.
\item $\rng h_{n,k}$: embedding of $\rng{\mcl C}_{n,k}$ into $\BB C$; Section~\ref{sec-surface-inc-embed}.
\item $f_{n,k_1,k_2}$: map taking $\eta_{n,k_1}'$ to $\eta_{n,k_2}'$; Lemma~\ref{lem-inc-homeo}.
\item $\BB K_{n,k}$: set disconnected from $\infty$ by $\rng\eta'_{n,k}([a,b])$; just after~\eqref{eqn-inc-homeo-asymp}.
\item $\BB K = \BB K_{n,n}$, $\wt{\BB K} = \BB K_{n,0}$, and $\BB f = f_{n,n,0}$;~\eqref{eqn-end-hull}. 
\item $\frk a , \frk l^L$, $\frk l^R$; area and left/right boundary lengths of a bead of a $\frac{3\gamma}{2}$-quantum wedge; Theorem~\ref{thm-bead-char}.
\item $Z^\bead = (L^\bead,R^\bead)$: left/right boundary length process for SLE$_{\kappa'}$ on a bead of a $\frac{3\gamma}{2}$-quantum wedge; Theorem~\ref{thm-bead-char}.
\item $(h^\bead,\eta^\bead)$: embedding into $\BB H$ of a chordal SLE$_{\kappa'}$ on a bead of a $\frac{3\gamma}{2}$-quantum wedge; Theorem~\ref{thm-bead-char}.
\item $(\wt h^\bead , \wt\eta^\bead)$: field $\wt h^\bead \eqD h^\bead$ coupled with a curve we want to show is SLE$_{\kappa'}$; Theorem~\ref{thm-bead-char}.
\item $\wt{\mcl F}_{a,b}^{\op{m}}$: metric space analog of $\wt{\mcl F}_{a,b}$; Theorem~\ref{thm-wpsf-mchar}.
\item $\frk d_h$: $\sqrt{8/3}$-LQG metric induced by $h$; Section~\ref{sec-lqg-metric}.
\end{itemize}
\end{multicols}

\bibliography{cibiblong,cibib}
\bibliographystyle{hmralphaabbrv}

\end{document}